\documentclass[12pt]{article}
\usepackage[utf8x]{inputenc}
\usepackage{comment}
\usepackage{amsthm}
\usepackage{multirow}
\usepackage{array}
\newcolumntype{P}[1]{>{\centering\arraybackslash}p{#1}}
\usepackage{longtable}
\usepackage[top=2cm,bottom=2cm,left=2cm,right=2cm]{geometry}
\usepackage{amsmath,amssymb}
\usepackage[mathscr]{euscript}
\usepackage[usenames,dvipsnames]{color}
\usepackage{booktabs}
\usepackage[colorlinks]{hyperref}
\usepackage{pstricks}
\usepackage{graphicx,caption}
\usepackage{dsfont}
\usepackage[all]{xy}
\usepackage{tabularx}
\usepackage{bbm}
\usepackage{todonotes}
\usepackage{hyperref}
\usepackage[french,english]{babel}
\usepackage{fancyhdr}
\usepackage{mathtools}

\usepackage{tikz}
\usepackage{tikz-3dplot}
\usepackage{amsmath}
\usetikzlibrary{arrows.meta,decorations.markings}
\usetikzlibrary{3d,decorations.markings}

\tikzset{
  ->-/.style={decoration={
    markings,
    mark=at position #1 with {\arrow{>}}},postaction={decorate}},
  ->-/.default=0.5
}

\newcommand{\dd}{\mathrm{d}}

\renewcommand{\geq}{\geqslant}
\renewcommand{\leq}{\leqslant}

\newcommand{\N}{ \mathbb{N} } 
\newcommand{\R}{ \mathbb{R} }

\newcommand{\G}{\mathcal{G}}

\newcommand{\Tr}{ \mathrm{Tr} }
\newcommand{\Ad}{ \mathrm{Ad} }

\newcommand{\g}{\mathfrak{g}}

\newtheorem{remark}{Remark}
\numberwithin{equation}{section}

\newtheorem{thm}{Theorem}[section]
\newtheorem{proposition}[thm]{Proposition}
\newtheorem{corollary}[thm]{Corollary}

\newtheorem{definition}[thm]{Definition}
\newtheorem{example}[thm]{Example}
\newtheorem{lemma}[thm]{Lemma}

\newcommand{\EN}[1]{{\color{blue} #1}}

\title{The Yang--Mills measure on compact surfaces as a \\ universal scaling limit of lattice gauge models}
\author{Nguyen Viet {\sc{Dang}\footnote{IRMA, Université de Strasbourg, 7 rue René Descartes, 67084 Strasbourg Cedex, France AND Institut Universitaire de France, Paris, France. Email : nvdang@unistra.fr} } and Elias \sc{Nohra}\footnote{Sorbonne Université, Université Paris Cité, CNRS, Laboratoire de Probabilités, Statistique et Modélisation, LPSM, F-75005, France. Email : elias.nohra@sorbonne-universite.fr}}
\date{\today}

\begin{document}
\maketitle
\thispagestyle{empty}
\begin{abstract}
\par In this article, we study the 2–dimensional Yang--Mills measure on compact surfaces from a unified continuum and discrete perspective. We construct the Yang--Mills measure as a random distributional $1$-form on surfaces of arbitrary genus equipped with an arbitrary smooth area form, using the analytic concept of pseudo-coordinates. Our approach yields a canonical noise–flat decomposition of the measure, reflecting the topology of the surface.
\par We prove a universality theorem stating that the continuum Yang--Mills measure arises as the scaling limit of a wide class of lattice gauge theories --- including Wilson, Manton, and Villain actions --- on any compact surface. We study the convergence in natural spaces of distributions with anisotropic regularity. As further consequences, we obtain a new intrinsic construction of the Yang--Mills measure, independent of the previous constructions in the literature, and prove the convergence of correlation functions and Segal amplitudes on all compact surfaces. \\ \\ 
Keywords: Anisotropic Sobolev and H\"older spaces,  holonomy process, lattice gauge theory, Morse theory, Yang--Mills measure, random distributional $1-$forms, second micro-localization, scaling limit, tightness,  universality. \\ \\
\end{abstract}

\tableofcontents

\section{Introduction}

\par Yang--Mills theory is a quantum field theory that aims to describe interactions between fundamental particles. In Feynman's path integral formulation, the study of an Euclidean quantum field theory proceeds as follows. Given an Euclidean space-time $M$--- for instance $\R^d$ or a more general manifold--- and a space of fields ---for instance functions, or generalized functions--- over $M$, we associate two quantities of interest:
\begin{itemize}
    \item observables, which describe the physically relevant aspects of the fields;
    \item an action functional $S$, which represents a cost associated with a field's configuration.
\end{itemize}

Roughly speaking, at least for our concern, describing a quantum field theory amounts to computing the expected values of observables, where the field $\Phi$ is chosen at random according to the probability measure
\begin{equation} \label{formalmeasure}
    \mu(\dd \Phi)\coloneq \frac{1}{Z}\exp(-S(\Phi))\, \dd \Phi
\end{equation}
defined on the space of all possible fields. In (\ref{formalmeasure}), the constant $Z$ denotes the normalization factor ensuring that $\mu$ is a probability measure; and the measure $\dd \Phi$ is the formal Lebesgue measure on the space of all fields.
\par The main mathematical difficulty lies in the rigorous construction of this probability measure, since the space of all possible fields is typically infinite-dimensional. The first issue is that the Lebesgue measure $\dd \Phi$ is, in general, ill-defined. The second issue is that sometimes, the normalization constant $Z$ appears to be infinite, even at a formal level. The third issue is that the fields $\Phi$ sampled by the measure  $\mu$ are expected to be too irregular for the quantity $S(\Phi)$ to be well-defined.

\par A typical example arises when one attempts to construct quantum mechanics as a quantum field theory. In the simplest setting, the spacetime is $M=[0,1]$, the fields are functions $\Phi:[0,1]\rightarrow \R$, the observables are the joint values of $\Phi$ at $0\leq t_1<\cdots <t_n\leq 1$, and the action functional is given by
\[
S(\Phi)=\frac12\int_{[0,1]}\vert \Phi'(t)\vert ^2\,\dd t.
\]
There is little doubt that the rigorous construction of the measure $\mu$ in this case is the Wiener measure, so that a field sampled at random according to this measure is a standard Brownian motion $(B_t)_{t\in [0,1]}$. However, we emphasize that $S\big((B_t)_{t\in [0,1]}\big)$ is ill-defined, since $B$ is almost surely nowhere differentiable.
\par A natural question is then how one can determine in which sense this measure is indeed the correct realization of $\mu$. Several results support this identification.
\begin{itemize}
    \item The Cameron--Martin theorem, which states that the only translations of $B$ that preserve the measure up to absolute continuity are given by functions with finite action.
    \item Schilder's theorem, which establishes a large deviation principle and relates the action functional  $S$ to the Wiener measure.
    \item The well-known fact that Brownian motion can be obtained as the scaling limit of discrete (finite dimensional) approximations of $\mu$.
\end{itemize}
\par In the case of Yang--Mills theory, the situation is somewhat analogous. The spacetime is a $d$-dimensional Riemannian manifold $M$ equipped with a volume measure that we denote by $\mathrm{vol}$. The fields are differential $1$-forms valued in the Lie algebra $\g$ of a compact Lie group $G$, that we will assume without loss of generality matricial. The choice of the Lie group $G$ depends on the type of interactions one wishes to describe\footnote{For example, $G=U(1)$ for quantum electrodynamics, or $G=SU(3)$ for quantum chromodynamics.}. 

The action of a field $A\in \Omega^1(M,\g)$ is defined as half the $L^2$ norm of its curvature:
\[
S_{YM}(A)\coloneq \frac12\int_{M}\|\dd A+A\wedge A\|^2\,\dd\mathrm{vol}.
\label{YMaction}\]
Let us describe an important class of observables of Yang--Mills theory, called the Wilson loops. Given a field $A\in \Omega^1(M,\g)$, and a closed loop $c:[0,1]\to M$, the holonomy of $A$ along $c$, that we denote by $\mathrm{Hol}(A,c)$, is the solution to the ordinary differential equation
\begin{eqnarray*}
    \begin{cases}
        \overset{.}{y}_t=-y_tA_{c_t}(\overset{.}{c}_t) \\
        y_0=1_G
    \end{cases}
\end{eqnarray*}
taken at time one. The Wilson loop observable associated with $c$ is then 
$
\Tr\big(\mathrm{Hol}(A,c)\big). \label{hol}
$
\footnote{In the case of electrodynamics, this quantity represents the change in the phase of the wave function of a particle as it travels along $c$.} 
\par A fundamental physical feature of Yang--Mills theory is its gauge invariance. This invariance asserts that many seemingly different $1$-forms $A\in\Omega^1(M,\g)$ actually represent the same physical field. Indeed, any $1$-form of the form
\[
g\cdot A \coloneq  g^{-1}Ag - g^{-1}\dd g, \label{GaugeTransform}
\]
where $g\in \mathcal{G}\coloneq C^\infty(M,G)$\label{GaugeGroup}, represents the same physical field as $A$. In particular, they yield the same Wilson loop observables for all loops in $M$. Two such $1$-forms are said to be gauge equivalent. Moreover, they satisfy
$
S_{YM}(A)=S_{YM}(g\cdot A).
$
This suggests that the Yang--Mills measure can be formally factorized as
\begin{equation} \label{factor}
\mu_{YM}(\dd A) = \frac{1}{Z}\left(\exp(-S_{YM}([A]))\,\dd [A]\right)\otimes \mathcal{D}g,
\end{equation}
where the first factor is interpreted as a measure on $A\,\mathrm{mod}\,\mathcal{G}$, and $\mathcal{D}g$ denotes the (formal) Haar measure on the infinite-dimensional group $\mathcal{G}$.

\par We can now state the main mathematical difficulties involved in the construction of this measure, two of which were already present in the case of Brownian motion:
\begin{itemize}
    \item there is no Lebesgue measure on the space of $1$-forms;
    \item one expects typical fields $A$ to be so irregular that neither $\dd A\label{ExtDiff}$ is defined pointwise, nor the product $A\wedge A$ is well-defined.
\end{itemize}
Of course, in the present setting these issues are more severe than in the case of Brownian motion. Indeed, since the Brownian action is quadratic, it falls within the framework of Gaussian measures, for which a general and well-developed theory exists. In contrast, the Yang--Mills action contains the quartic term coming from $A\wedge A$, and the resulting measure is therefore no longer Gaussian. Another difficulty, specific to gauge theories, is that the factorization suggested by (\ref{factor}) formally leads to $Z=\infty$, introducing an additional divergence that must be addressed.

\par Due to these mathematical complications, a rigorous construction of the Yang--Mills measure in three or more dimensions is currently unknown. However, over the last $50$ years or so, the two-dimensional Yang--Mills measure has been successfully constructed by several authors, and its properties have been extensively studied.

\par In the physics literature, the study of two-dimensional Yang--Mills theory dates back to the work of Migdal~\cite{Migdal} in 1975 on $\mathbb{T}^2$, whose approach was discrete, in the spirit of lattice gauge theories. This construction was later extended to surfaces of arbitrary genus in the groundbreaking work of Witten~\cite{Witten1, Witten2}, where deep connections with many other areas of mathematics were revealed.

\par From a mathematical perspective, what makes the two-dimensional case far more tractable is the fact that the theory is locally free, meaning that for any $1$-form $A$, one can find a gauge-equivalent $1$-form $B$ such that locally, $B\wedge B=0$.
This observation was central to the work of Driver~\cite{driver1989ym2}, whose construction for $M=\R^2$ is sufficiently important for the present work that it is recalled in Section~\ref{ss:DriverPlane} and adapted to the cylinder in Section~\ref{ss:DriverCylinder}. Sengupta subsequently generalized this approach to the sphere~\cite{Sengupta92}, and then to all compact surfaces~\cite{Sengupta97}.
\par These constructions led to the so-called Driver--Sengupta formula, which describes the joint law of the holonomies along a finite collection of loops. This, in turn, gave rise to the holonomy process, a $G$-valued stochastic process indexed by loops on the surface, first introduced in all generality by Lévy in his PhD thesis~\cite{Levyphd}. 
\par In~\cite{Levyphd, LevyMarkov}, the construction yields a holonomy process that can be interpreted as a random geometric connection, in the sense that it creates one random object that associates a parallel transport to each curve on the surface. However, \textbf{no analytic connection was constructed}: in particular, \textbf{no random element $A\in \Omega^1(M,\g)$} whose parallel transport generates the holonomy process was obtained.
\par In 2018~\cite{Chevyrev_2019}, in a remarkable advance, Chevyrev constructed for the first time a genuine Yang--Mills measure directly on distributional connections on the flat torus $\mathbb{T}^2$ as a scaling limit of discrete random connections whose holonomies satisfy the Driver--Sengupta formula. This result provided a rigorous realization of the Yang--Mills measure at the level of random connections, rather than merely at the level of holonomies. A key feature of Chevyrev's approach is that it produces a random $1$--form with isotropic regularity $0^{-}$. More precisely, the resulting object is a random distributional $1$--form on the torus,
\[
A = A_1\,\mathrm{d}x + A_2\,\mathrm{d}y,
\]
where both components $A_1$ and $A_2$ have regularity $0^{-}$. 

By contrast, Driver’s construction leads to an object with the same total regularity ($0^{-} = \tfrac{1}{2}^{-} - \tfrac{1}{2}^{-}$), but distributed heterogeneously. In the sequel, we will describe such regularity as \textbf{anisotropic} in the sense the regularity is not the same in every direction. In fact,
 there is no longer a $\dd y$ component and $A_1$ has regularity $\tfrac{1}{2}^{-}$ in the $y$ direction and regularity $-\tfrac{1}{2}^{-}$ in the $x$ direction. This asymmetry prevents the direct application of the solution theory based on regularity structures. Chevyrev’s isotropic construction avoids this issue as both components have the same regularity $0^-$, and the solution theory can be applied separately and consistently to each of them. 

\par This breakthrough stimulated a series of important developments within the stochastic partial differential equations community, aimed at understanding the stochastic quantization of gauge theories. Notable contributions in this direction include the works of Chandra--Chevyrev--Hairer--Shen~\cite{CCHS1,CCHS2}, Cao--Chatterjee~\cite{CC1,CC2}, Bringmann--Cao~\cite{BringCao1,BringCao2}, Chevyrev--Shen~\cite{ChevShen}, Shen--Smith--Zhu~\cite{SSZ1}, and Shen--Zhu--Zhu~\cite{SZZ1}, and the very interesting work~\cite{CKM} in preparation by Chevyrev--Klose--Mohamed.

\par We also mention that it would be interesting to investigate potential connections between our approach and some other recent works on 2-dimensional gauge theories, like for instance the works of Park--Pfeffer--Sheffield--Yu~\cite{PPSY}, Sauzedde~\cite{IS}, or Dahlqvist--Lemoine~\cite{LD1,LD2}. 

\subsection{Contribution of the paper} \label{ss:Contributions}
\par Let us start by outlining that the first author, together with his coauthors in~\cite{BCDRT}, introduces on a general surface a random distributional $1$-form whose holonomies generate Lévy’s random holonomy process.  They investigate the problem of constructing the Yang--Mills measure as a random distributional $1-$form from a complementary perspective to ours, most notably through the lens of dynamical systems, leading to results of independent interest.  They construct the measure directly in the continuum and study the associated holonomy process using a novel gauge : the Morse gauge. The latter, viewed as an analytic object in its own right, is studied in depth in~\cite{BCDRT,BCDRT2}, where it is developed within the frame of classical gauge theory.
\paragraph{Separation of probabilistic and geometrical considerations. } The first contribution of the present work is to present a new version of the construction from~\cite{BCDRT} made possible by the introduction of \textbf{pseudo-coordinates} (see Section~\ref{s:globalresolutionsurface}). The construction of these pseudo-coordinates relies crucially on the recent microlocal approach to 
Morse--Smale (and more generally Axiom A) flows developed by the first author with G.~Rivière~\cite{DR16, DRWitten} which was inspired by several results on the microlocal analysis of hyperbolic dynamical systems~\cite{FaureSjostrand,DZ16,Baladibook}. 
Thanks to these tools, an arbitrary surface can be represented by a cylinder (see Section~\ref{s:fundamentalformula}), which allows all probabilistic arguments to be carried out on the cylinder and only afterwards pulled back to the original surface. This yields a complete separation between the analytical and geometrical aspects of the problem, on the one hand, and its probabilistic aspects, on the other.
\par From an analytical point of view, the focus lies on the problem of pulling back currents on the cylinder to the original surface, as well as gluing distributions. The proof relies on a mixture of Morse theory with microlocal and harmonic analysis techniques. From a probabilistic point of view, all steps are performed directly on the cylinder.

\paragraph{A universal scaling limit.}  Taken together with~\cite{BCDRT}, the present paper proposes, for the first time, a construction of a candidate Yang--Mills measure on the space of distributional $1$--forms on a surface of arbitrary genus and smooth area form. We show 
 that the 2-dimensional Yang--Mills measure arises as the scaling limit of a wide range of lattice gauge theories (see Sections~\ref{s:CLT} and~\ref{s:functspaces}). These lattice gauge theories can be interpreted as discretizations of the continuum measure on a sequence of well chosen lattices. The approach relies on a discrete analogue of the Morse gauge, and the convergence is established towards the same gauge fixed Yang--Mills measure constructed in~\cite{BCDRT}. We also analyze some aspects of this convergence in Besov spaces. Beyond its foundational role in the present work, the Morse gauge provides a powerful and conceptually elegant framework to state and prove universality.
\par We further observe that this universal scaling limit provides a new, intrinsic construction of the measure, independent of any of the previous constructions in the literature.  Moreover, it is possible to define the parallel transport associated with the limiting rough object using the Wong--Zakai theorem, which gives back the holonomy process. We will now state our first main result, after describing the setting of the present work. 
\par Let $\Sigma$ be a closed compact surface of genus $g$,  endowed with an area measure $\sigma$. Let $G$ be a compact Lie group, with Lie algebra $\g$ always assumed to be equipped with an invariant scalar product. Let  $(Q_t\dd g)_{t>0}$  be a family of probability measures on $G$, arising from the Manton, Wilson, or Villain action. 
\par Using the flow lines and level sets of a Morse function on $\Sigma$, we construct a sequence $(\mathcal{T}_N\coloneq (\mathbb{V}_N,\mathbb{E}_N,\mathbb{F}_N))_{N\geq 0}$ of increasingly fine lattice approximations of $\Sigma$ (see Section~\ref{s:Morselattice} for the construction of the lattice, and 
Section~\ref{s:morse} for a recollection on Morse functions).  
\par For each $N\geq 0$, we define a piecewise smooth random $\mathfrak{g}$-valued $1$-form $A_N$ on $\Sigma$ such that the corresponding holonomy process $(\mathrm{Hol}(A_N,c))_{c\in \mathrm{Loops}(\mathcal{T}_n)}$, verifies the Driver--Sengupta formula associated to $(Q_t\dd g)_{t>0}$, meaning that $(\mathrm{Hol}(A_N,c))_{c\in \mathrm{Loops}(\mathcal{T}_n)}$ is equal to the holonomy process associated with the lattice measure $\mu_N$ defined by 
\begin{equation}\label{eq:DSformula}
\dd\mu_N\big((g_e)_{e\in\mathbb{E}_N}\big)=\prod_{F\in \mathbb{F}_N}Q_{\sigma(F)}\big(\operatorname{Hol}(g,\partial F)\big)\underset{e\in \mathbb{E}_N}{\bigotimes} \dd g_e.
\end{equation}

All along our paper, whenever we will mention the Driver--Sengupta formula for a lattice approximation $\mathcal{T}$ of a surface $\Sigma$ (which could be closed or with boundary), we mean a measure on $G^{\mathbb{E} } $ given by the above equation (\ref{eq:DSformula}).

Then, the main theorem is as follows.
\begin{thm}[Universal scaling limit of lattice gauge theories] \label{thm:mainthmintro}
  The sequence $(A_N)_{N\geq 0}$ converges in law, in the H\"older space $C_\mathrm{loc}^{-(\frac{5}{2}+\varepsilon)}(\Sigma\setminus f^{-1}(\max (f));\g)$ of distributional $\mathfrak{g}$-valued $1$-forms, to the Morse-gauge-fixed Yang--Mills measure on $\Sigma$.
Moreover outside the union of some finite number of closed curves, the sequence $A_N$ converges to $A$ in local anisotropic Hölder spaces of regularity $(-\frac{1}{2}^-, \frac 1 2^-)$.
\end{thm}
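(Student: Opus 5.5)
The strategy is to transfer everything to the cylinder by means of the pseudo-coordinates of Section~\ref{s:globalresolutionsurface}. All probabilistic estimates are proved there, and only at the end are they pulled back to $\Sigma$.

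\textbf{Step 1: reduction to the cylinder.} Choose the Morse function $f$ whose flow lines and level sets produce the lattices $\mathcal{T}_N$ (Section~\ref{s:Morselattice}). The pseudo-coordinates identify $\Sigma\setminus f^{-1}(\max f)$, up to a finite union of stable/unstable curves and critical level sets, with a union of cylinders $[a,b]\times S^1$. In these coordinates the area form becomes $\dd t\wedge \dd\theta$ after reparametrising the level parameter. The discrete Morse gauge is the analogue of the continuum one: on each cylinder, $A_N$ has no component along the flow direction, and its component along the levels is determined by the holonomies around the level circles together with the plaquette variables. In this gauge the Driver--Sengupta formula (\ref{eq:DSformula}) decouples. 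The plaquette holonomies along each column become independent increments distributed according to $Q_{\sigma(F)}$, and are coupled to a finite-dimensional family of "flat" variables (the holonomies along the cycles of the fundamental formula of Section~\ref{s:fundamentalformula}) through an explicit density. So on the cylinder $A_N(t,\theta)=X_N(t,\theta)\,\dd\theta$, with $X_N$ obtained from a $G$-valued random walk in $t$ whose steps have law $Q_{\sigma(F)}$, transported to $\g$ via the logarithm. This is the noise-flat decomposition at lattice level.

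\textbf{Step 2: convergence on the cylinder.} Use the central limit theorem for random walks on $G$ with step laws $Q_t$ in the Manton, Wilson and Villain cases, as stated in Section~\ref{s:CLT}. These laws have the short-time expansion of the heat kernel: centred, with covariance $t$ times the invariant scalar product, and with good moment bounds. Under this input, $X_N$ converges in finite-dimensional distributions to the Driver field on the cylinder: a Brownian sheet-type object that is white in $\theta$ and Brownian in $t$, conditioned through the flat variables. This gives the identification of the limit as the Morse-gauge-fixed measure of~\cite{BCDRT}, which is recalled in Sections~\ref{ss:DriverPlane}--\ref{ss:DriverCylinder}.

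\textbf{Step 3: tightness.} Obtain uniform moment bounds $\mathbb{E}|\langle A_N,\varphi_{\lambda}\rangle|^{p}\lesssim \lambda^{p(\alpha_1+\alpha_2)}$ on anisotropically rescaled test functions. These follow from martingale (Burkholder) inequalities for the walk in $t$, and from independence across columns in $\theta$. A Kolmogorov-type criterion in the anisotropic Besov spaces of Section~\ref{s:functspaces} then gives tightness in $C^{(-\frac12^-,\frac12^-)}_{\mathrm{loc}}$ away from the exceptional curves. This proves the second claim of Theorem~\ref{thm:mainthmintro}.

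\textbf{Step 4: pull-back and gluing to get the global Hölder statement.} This is the main obstacle. Near critical points and along the exceptional curves the pseudo-coordinates degenerate, so the anisotropic bounds deteriorate. I would invoke the analytic results on pulling back currents by pseudo-coordinates, which rest on microlocal estimates for Morse--Smale flows~\cite{DR16}. Those results show that this pull-back is a continuous map from the cylinder Hölder/Besov space into $C^{-(\frac52+\varepsilon)}_{\mathrm{loc}}(\Sigma\setminus f^{-1}(\max f))$, uniformly in $N$. The derivative losses near the degenerate set account for the drop from $0^-$ to $-\frac52^-$. I would then glue the cylinder pieces with a partition of unity, which is continuous on distributions. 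By the continuous mapping theorem, the convergence in law from Steps 2--3 transfers to convergence in law of $A_N$ in the stated space. The limit is the pull-back of the Driver field, which is the Morse-gauge-fixed Yang--Mills measure by definition. The delicate point is to check that the discrete $A_N$, which is only piecewise smooth, satisfies the uniform bounds near the critical levels. I would first try to verify the hypotheses of the pull-back theorem for $A_N$ uniformly in $N$, using that the lattice is adapted to the flow lines.
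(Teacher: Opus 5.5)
There is a genuine gap: your argument never closes the surface. Steps~2 and~3 use two facts: the plaquette walks in different $\theta$-columns are independent, and the $2g$ flat variables are Haar and independent of them. That is the structure of the \emph{free-boundary} measure on the surface $\mathcal{S}$ obtained by blowing up $\max f$. The measure on the closed surface $\Sigma$ is that free measure conditioned on $\mathrm{Hol}(A_N,\partial\mathcal{S})=1_G$. This holonomy is a word in the flat variables and in all the top-level horizontal variables, hence in every column. It is not a density on finitely many flat variables. If you integrate out the faces adjacent to $\max f$, you get a reweighting by a convolution of the $\rho_{\sigma(F)}$ over a cap of area $O(2^{-N})$, which tends to a Dirac mass at $1_G$. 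So the constraint destroys the column independence behind your Burkholder/Kolmogorov bounds. Moreover, convergence in law and tightness of the free field say nothing about its conditioning on a null event, especially since holonomies are not continuous functionals on the distribution spaces involved.

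The paper's mechanism (Subsection~\ref{ss:closing}) supplies three ingredients you are missing.
\begin{enumerate}
\item A causal Markov property gives the Bayes-type formula of Proposition~\ref{lem:keyLemmacondition}. For functionals supported in $\{f\le \max f-\epsilon\}$, it replaces the singular conditioning by integration against a kernel $\kappa_N(1_G,\cdot)$ built from the density $p_N$ of the band between an intermediate level and $\max f$.
\item The local limit theorem, Theorem~\ref{convck}, gives $p_N\to p_\infty$ uniformly. This is non-trivial for Wilson or Manton weights, which have no semigroup property. It also yields the positive lower bound on the limiting density used for tightness of the conditioned laws.
\item Joint convergence in law of $A_N$ with the holonomy of a level circle (Lemma~\ref{jointMeas}) is needed, and it requires a rough-path Donsker theorem.
\end{enumerate}
This, not the degeneracy of the coordinates, is why the statement is local on $\Sigma\setminus f^{-1}(\max f)$. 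The same ingredients are needed for the anisotropic claim, which also concerns the conditioned $A_N$. As written, your Steps~1--4 give at best the free-boundary Proposition~\ref{prop:boundarycase}.

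A second gap is near the saddle points. The area form cannot be brought to $\dd t\wedge\dd\theta$ by reparametrising levels: $\Psi_*\sigma$ behaves like $(r^2+4\theta^2)^{-1/2}\,\dd r\wedge\dd\theta$ at the images of saddle points. The pull-back result you invoke, Proposition~\ref{ContPullBack}, needs $A_N$ to be bounded uniformly in $N$ in the \emph{weighted} anisotropic norms. Those norms control dyadic rescalings around $\Psi(\mathrm{Crit}(f)_1)$. Your Step~3 only gives bounds away from the exceptional curves, and you leave exactly this point open. The paper settles it with the dyadic corona estimate of Lemma~\ref{lem:dyadiccorona}: faces at distance $2^{-j}$ from a saddle image have area of order $2^{j-2N}$. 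This estimate is fed into discretised weighted Gagliardo norms and BDG bounds.
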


 In fact, we prove more detailed regularity results for the limiting random connection on the surface $\Sigma$ depending on the region of the surface $\Sigma$. We refer the curious reader to paragraph~\ref{sss:regimeofregularity} where we detail these different 
regions and the corresponding regularities. 
Let us outline the main steps leading to this theorem. The first step consists in blowing up the surface at the maximum of the Morse function, thereby obtaining a new surface $\mathcal{S}$ with one boundary component. Next, we encode this surface using pseudo-coordinates $(r, \theta)$ that behave like polar coordinates: the Morse function serves as the radial (or height) coordinate, while the base angle provides the angular one. This construction (see Section~\ref{s:globalresolutionsurface}) yields an almost-diffeomorphism between the surface and a cylinder, together with a well-defined continuous pullback of currents from the cylinder to the surface.
\par The second step is probabilistic. We transfer the lattice structure onto the cylinder and define, as before, a sequence of random $\mathfrak{g}$-valued $1$-forms $A_N$ realizing this time the \emph{free-boundary} lattice Yang--Mills measure, meaning that there are no constraints on the bonds located along the boundary of the cylinder. 

\par For topological reasons discussed in Subsection~\ref{pant}, each $A_N$ decomposes into the sum of two terms,
\[
A_N = A_N^{\mathrm{noise}} + A_N^{\mathrm{flat}},
\]
where $A_N^{\mathrm{flat}}$ is a random flat connection and $A_N^{\mathrm{noise}}$ is the noise component. This comes from the fact that one expects the measure $\exp(-S_{YM}(A))$ to look like random perturbations around the minimizers of $S_{YM}(A)$, and the above decomposition of $A_N$ says that it is the sum of two independent term: a random flat connection, that is a random minimizer of $S_{YM}$, plus a noise term.
\par We then establish the convergence of the free lattice measure in suitable functional spaces on the cylinder. Finally, the continuity of the pullback map leads the result on $\mathcal{S}$, as stated in the following proposition.
\begin{proposition}[Bulk--singular decomposition of the free boundary Yang--Mills measure]\label{prop:boundarycase}
The sequences $A_N^{\mathrm{bulk}}$ and $A_N^{\mathrm{sing}}$ are independent. Moreover, the following convergence statements hold.
\begin{itemize}
    \item The noise component $A_N^{\mathrm{noise}}$ converges in law, in anisotropic Hölder spaces of regularity $(-\frac{1}{2}^-, \frac 1 2^-)$, to a de Rham primitive of a $\mathfrak g$--valued white noise on $\mathcal{S}$.
    \item The flat component $A_N^{\mathrm{flat}}$is a constant random $1$--current of the form $
        \sum_{a=1}^{2g} U_a\left(\log g_a\right),
    $
    where $(U_a)_{0\leq a\leq 2g}$ are de Rham currents of degree $1$ supported on smooth curves in $\mathcal{S}$, 
    and $(g_a)_{1\leq a\leq 2g}$ are independent Haar--distributed random variables on $G$.
\end{itemize}
\end{proposition}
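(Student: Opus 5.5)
The argument has two parts. We first prove an exact independent decomposition at fixed $N$ on the cylinder using the discrete Morse gauge. We then pass to the limit for each piece separately and transport the result to $\mathcal{S}$ by the continuous pullback of currents from Section~\ref{s:globalresolutionsurface}. The statement writes ``bulk/singular'' for what the preceding discussion calls ``noise/flat''; we read $A_N^{\mathrm{bulk}}=A_N^{\mathrm{noise}}$ and $A_N^{\mathrm{sing}}=A_N^{\mathrm{flat}}$.

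\textbf{Step 1: exact decomposition at fixed $N$.} On the lattice $\mathcal{T}_N$ transported to the cylinder, we fix the discrete Morse gauge. Edges along discretized flow lines (the radial direction) are set to $1_G$, except on a family of $2g$ cut edges crossing the stable/unstable curves of the index-one critical points. The free-boundary Driver--Sengupta measure \eqref{eq:DSformula} has no constraint at the boundary. Hence a change of variables face by face (a discrete ``lasso'' basis) shows the following:
\begin{itemize}
\item the holonomies around the elementary faces are independent, with laws $Q_{\sigma(F)}$;
\item the $2g$ holonomies $g_a$ along the cut edges carry no weight in the density, so they are Haar distributed and independent of the face variables.
\end{itemize}
This is the usual argument that, for a graph with a boundary face left free, the face variables are independent and the remaining generators of the free group $\pi_1$ are Haar. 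We then define $A_N^{\mathrm{noise}}$ as the piecewise smooth $1$-form built from the face variables: on each face $F$ it is the $\dd\theta$-component $\log$ of the ordered product of plaquette increments in the radial direction, smeared over $F$. We define $A_N^{\mathrm{flat}}=\sum_a U_a(\log g_a)$, where $U_a$ is the integration current along the $a$-th cut curve. Independence of the two pieces is then immediate from the factorisation.

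\textbf{Step 2: convergence of the noise.} On the cylinder, $A_N^{\mathrm{noise}}(r,\theta)$ is a random walk in the radial variable whose increments are the face variables, indexed by angular cells. Each $Q_t$ in the Manton, Wilson or Villain class satisfies a local CLT at small times with covariance $t\,\mathrm{Id}_{\g}$. So the process $r\mapsto \sum_{F\text{ below } r}\log(g_F)$ tested against angular test functions converges to $\int_{r_0}^r W(\dd s,\dd\theta)$, where $W$ is white noise for $\sigma$. This uses a discrete rough-path/Wong--Zakai argument for products of group elements, with moment bounds on $\log g_F$ of order $\sigma(F)^{k/2}$. That limit is exactly a de Rham primitive of $\g$-valued white noise.

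\textbf{Tightness in anisotropic H\"older spaces.} We use a Kolmogorov-type criterion. We bound $\mathbb{E}|\langle A_N^{\mathrm{noise}},\varphi^{\lambda_1,\lambda_2}\rangle|^{p}$, with test functions rescaled anisotropically, by $\lambda_1^{-p/2}\lambda_2^{p/2}$ up to $\varepsilon$ losses. Combined with compact embedding, this yields regularity $(-\tfrac12^-,\tfrac12^-)$. The main obstacle is uniformity of these moment bounds down to the lattice scale and near the boundary circles. There we would use BDG-type martingale inequalities in the radial variable, with independence across angular cells, together with hypercontractivity-free moment bounds coming directly from the heat-kernel-like tails of $Q_t$.

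\textbf{Step 3: the flat part and transfer to $\mathcal{S}$.} The flat part is constant in $N$ up to the smooth convergence of the discrete cut curves to the continuum curves. It therefore converges trivially in $C^{-1-\varepsilon}$ currents, and jointly with the noise by independence. Finally we push both limits to $\mathcal{S}$ through the continuous pullback map of Section~\ref{s:globalresolutionsurface}, and apply the continuous mapping theorem to conclude.
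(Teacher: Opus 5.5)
Your fixed-$N$ step matches the paper (Proposition~\ref{DriverSenguptaDiscrete}, Theorem~\ref{convAn}): discrete Morse gauge, independent face variables with laws $Q_{\sigma(F)}$, and $2g$ Haar variables carried by unstable currents, which gives independence. So does your identification of the limit. You do not need a rough-path or Wong--Zakai input there. The paper works with characteristic functions and Fernique's theorem, and it controls the BCH discrepancy between the $\log$ of a column product and the sum of the logs directly inside the recursion of Lemma~\ref{EstimBruitBlanc}, using symmetry of $\mu_t$ and Lemma~\ref{estim}.

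The genuine gap is that you never deal with the singularities of the resolution map $\Psi$ at the saddle points. The boundary circles, which you single out as the obstacle, are harmless. Near each point $\Psi(a)$, $a\in\mathrm{Crit}(f)_1$, the pushforward area $\Psi_*\sigma$ has density $\asymp|(r,\theta)-\Psi(a)|^{-1}$; this comes from $f=x^2-y^2$, $\theta=xy$ in the Morse chart. Accordingly, lattice faces at distance $2^{-j}$ from $\Psi(a)$ have area $\asymp 2^{j-2N}$ rather than $2^{-2N}$ (Lemma~\ref{lem:dyadiccorona}). So your uniform bound $\mathbb{E}|\langle A_N^{\mathrm{noise}},\varphi^{\lambda_1,\lambda_2}\rangle|^p\lesssim\lambda_1^{-p/2}\lambda_2^{p/2}$ fails there, and not only at the lattice scale. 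Already for the limit, disjoint squares of side $2^{-k}$ at distance $\asymp 2^{-k}$ from $\Psi(a)$ carry independent rectangular increments of $W=\int_0^\theta A$ with variance $\asymp 2^{-k}$ instead of $2^{-2k}$. Hence the unweighted mixed $(\alpha,\alpha)$ H\"older norm of $W$ near $\Psi(a)$ is a.s.\ infinite for $\alpha>\tfrac14$. Tightness in an unweighted anisotropic space of regularity $(-\tfrac12^-,\tfrac12^-)$ on the whole cylinder therefore cannot hold.

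The same singularities break your last step. $\Psi$ jumps across the unstable curves and is the branched map $(x,y)\mapsto(x^2-y^2,xy)$ at the saddles. So there is no continuous pullback from anisotropic H\"older currents on the cylinder to anisotropic H\"older currents on $\mathcal{S}$ to which the continuous mapping theorem could be applied. What the paper provides instead is Proposition~\ref{ContPullBack}. The pullback is continuous from the \emph{weighted} space $\mathcal{C}^{\alpha,\alpha-1,s}(\mathbf{Cyl}\setminus\Psi(\mathrm{Crit}(f)_1))$ into the isotropic space $\mathcal{C}^{-\beta-2}(\mathcal{S})$. The weighted norm controls the dyadic rescalings $\mathcal{S}^{2^{-n}*}$ around each $\Psi(a)$ through a scaling exponent $s$. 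The map is built in two stages:
\begin{itemize}
\item localise with indicators of the hexagons and glue across the unstable curves (Example~\ref{ex:pullbackgluing}, Lemma~\ref{lem:extcurves});
\item extend through the saddles by a spatial Littlewood--Paley decomposition (Lemmas~\ref{lem:extcylinder} and~\ref{lem:extsurface}).
\end{itemize}
To close your argument you must prove tightness in these weighted spaces, as in Theorem~\ref{Bounded}. Discretise the weighted Gagliardo norm over dyadic coronas around each $\Psi(a)$, and treat the scales $n\le N$ and $n>N$ separately (Proposition~\ref{MainBound}). Then combine the corona area bounds with the BDG estimate of Proposition~\ref{mainestimate}. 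Only after that can you pass to $\mathcal{S}$. What you obtain globally is convergence in a negative isotropic (dual) H\"older space; the anisotropic $(-\tfrac12^-,\tfrac12^-)$ convergence holds only locally, away from the unstable curves.
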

 To recover the measure on the closed surface, we condition the holonomy on the boundary component to be $1_G$ (see Section~\ref{ss:closing}).  We summarize this procedure in  Figure~\ref{fig:steps}.
  \begin{figure}[t]
      \centering
    \includegraphics[]{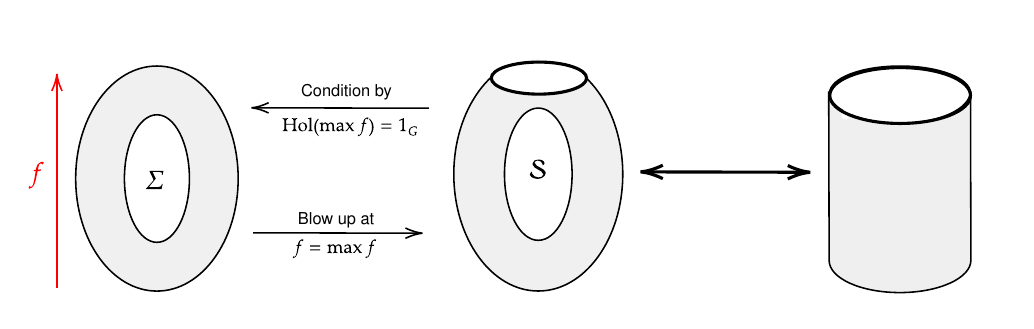}
      \caption{Main steps.}
      \label{fig:steps}
  \end{figure}
\par Let us make three remarks.
\begin{itemize}
    \item     We have the convergence for a much wider family of actions which are recalled in Section~\ref{ss:classicalactions}; however, for the sake of clarity, we leave the details to the core of the paper.
    \item Our result is in fact stronger. It allows one to work on surfaces with boundary, to impose a prescribed constraint on the conjugacy class of the boundary holonomy, and to construct a sequence of discrete $1$--forms satisfying this constraint, whose continuum limit can be identified. The proof of Theorem~\ref{thm:mainthmintro} actually relies on a disintegration of the free $1-$forms of Proposition~\ref{prop:boundarycase} along the boundary holonomy, followed by an analysis of the resulting sequence of conditioned measures.  This approach naturally leads to the study of the associated partition function, which yields results of independent interest, discussed later in this introduction.
    \item A first universality result was established by Sengupta in~\cite{Sengupta92}. He showed that, for any reasonable action (for instance the Manton or Wilson action), on the sphere, the law of the Wilson loop associated with a fixed base lattice converges to its law under the Villain action, provided the lattice is successively refined into finer and finer lattices whose faces all have the same area. Our result, by contrast, proves the convergence of the random $1$-form as a whole single object. In this sense, Sengupta’s result can be viewed as a “convergence of finite-dimensional marginals”, whereas ours plays the role of the “tightness” part in a classical Donsker-type invariance principle. Moreover, our result removes the equal-area assumption on faces and extends the universality statement to arbitrary compact surfaces.  
    \item    A second universality result for the Yang–Mills measure was proven in a groundbreaking paper by Chevyrev and Shen~\cite{ChevShen} when $\Sigma$ is the \emph{flat two-dimensional torus}, in the context of SPDEs. The present result provides an alternative proof of this theorem, notably without SPDEs, and generalizes it to surfaces of \emph{arbitrary genus with arbitrary smooth area form} and also controls the convergence of the partition function and Segal amplitudes. 
\end{itemize}

\paragraph{A rigorous link between the action and Lévy's holonomy process.}
\par In Lévy's constructions~\cite{Levyphd, LevyMarkov}, the Yang--Mills action -- which is a fundamental quantity in the theory -- appears only at a formal level. The main ingredient in this construction is, however, the heat kernel. Through the Driver--Sengupta formula, it is used as a way of expressing the Yang--Mills action of a discrete connection. Although Migdal~\cite{Migdal96} argued that the heat kernel is the most physically relevant choice, its use can also be considered somewhat arbitrary. In fact, many other actions appear in the literature, and this will be discussed in Section~\ref{actions}. 

\par This raises the natural question of whether the measure constructed by Lévy is indeed the 2D Yang--Mills measure. An argument supporting this identification is the large deviation principle for the Yang--Mills measure established by Lévy and Norris~\cite{LevyNorris}, which relates the measure to the Yang--Mills action. Other results in the same directions include~\cite{Chevyrev_2019, ChevyrevGarban}.

\par Our universality result (Sections~\ref{s:CLT} and~\ref{s:functspaces}), can be interpreted as further evidence for this link. Specifically, it shows that under any reasonable discretizations of the Yang--Mills action, the measure  coincides, in the scaling limit, with the Yang--Mills measure in the sense of~\cite{Levyphd, LevyMarkov}. This supports from a mathematical point of view the arguments of Migdal.

\paragraph{Convergence of correlation and partition functions  on all compact surfaces.} As a corollary of the key result of Theorem~\ref{convck}, we obtain the convergence of correlation functions and Segal amplitudes of lattice gauge theories on all compact surfaces. This generalizes the results of Balaban~\cite{Balaban} from the planar case to surfaces of arbitrary genus. Let us give a precise statement. 
\par Consider a compact Lie group $G$. We use conventional notations of representation theory of compact Lie groups, that we recall in Appendix~\ref{symb}. Consider a family of probability measures $(\mu_t = \rho_t(\,\cdot\,)\,dg)_{t>0}$ on $G$ satisfying the conditions of Definition~\ref{def:CLTplus}. Let $\Sigma$ be a compact surface of genus $g$ with $k$ boundary components $\partial\Sigma_1,\dots,\partial\Sigma_k$, endowed with an area measure $\sigma$. For a graph $\mathcal{T}=(\mathbb{V},\mathbb{E},\mathbb{F})$ on $\Sigma$, the partition function of the lattice gauge measure on $\mathcal{T}$ associated with $(\mu_t)_{t>0}$, and with prescribed boundary conditions
\[
\mathrm{Hol}(\partial \Sigma_1) = [g_1],\dots,\mathrm{Hol}(\partial \Sigma_k) = [g_k], \quad g_1,\dots,g_k\in G,
\]
is given by
\[
Z_{\mathcal{T}}(g_1,\dots,g_k)
\coloneq  \int_{G^{E}} 
 \prod_{i=1}^k \delta_{[g_i]}\!\bigl(\mathrm{Hol}(\partial\Sigma_i)\bigr)
\prod_{F\in \mathbb{F}} \rho_{\sigma(F)}\!\bigl(\mathrm{Hol}(\partial F)\bigr) 
\prod_{e\in \mathbb{E}} \dd g_e.
\] \label{SegalAmp}
We can give two formulas to compute $Z_{\mathcal{T}}$ . The first one is the integral formula 
\[Z_{\mathcal{T}}(g_1,\dots,g_k)=\int_{G^{2g}}\left (\underset{F\in \mathbb{F}}{\bigstar}\rho_{\sigma(F)}\right)([a_1,b_1]\cdots [a_g,b_g]c_1g_1c_1^{-1}\cdots c_kg_kc_k^{-1})\dd a \dd b \dd c .\] 
The second one is the character expansion of the latter, and it is given by 
\[Z_{\mathcal{T}}(g_1,\dots,g_k)
= \sum_{\lambda\in\widehat{G}} 
\left( \prod_{F\in \mathbb{F}} \frac{\widehat{\mu}_{\sigma(F)}(\lambda)}{d_\lambda} \right)
\frac{\chi_\lambda(g_1)\cdots \chi_\lambda(g_k)}{d_\lambda^{\,2g-2+k}}.
\]
In the special case where $\rho$ is the heat kernel $p$ on $G$, thanks to the semi-group property, the quantity $Z_{\mathcal{T}}$ becomes independent of the graph $\mathcal{T}$. It is therefore an intrinsic property of the surface denoted by $Z_{\Sigma}$, and it reads
\begin{align*}
Z_\Sigma(g_1,\dots,g_k) =\int_{G^{2g}} p_{\sigma(\Sigma)}([a_1,b_1]\cdots [a_g,b_g]c_1g_1c_1^{-1}\cdots c_kg_kc_k^{-1})\otimes_{i=1}^g\dd a \dd b \dd c  \\
\hspace{-3cm}=\sum_{\lambda\in\widehat{G}} 
e^{-\sigma(\Sigma)c_2(\lambda)}
\frac{\chi_\lambda(g_1)\cdots \chi_\lambda(g_k)}{d_\lambda^{2g-2+k}}.    
\end{align*}
These formulas, and some of their applications are studied in a work in progress of the second author and Thibaut Lemoine~\cite{NL}
We are ready to state the second main theorem. 
\begin{thm}[Convergence of Segal amplitudes]\label{ConvSegalAmp}
Let $(\mathcal{T}_n\coloneq (\mathbb{V}_n,\mathbb{E}_n,\mathbb{F}_n))_{n\ge 0}$ be a sequence of triangulations of $\Sigma$ such that there exist constants $a,A>0$ for which 
\[
\liminf_{n\to\infty} 
\frac{\Bigl\lvert \bigl\{f\in \mathbb{F}_n : \tfrac{a^2}{\lvert \mathbb{F}_n\rvert}\leq \sigma(f)\leq \tfrac{A^2}{\lvert \mathbb{F}_n\rvert}\bigr\}\Bigr\rvert}{\lvert \mathbb{F}_n\rvert} > 0.
\]
Then, for all $(g_1,\dots,g_k)\in G^k$,
$Z_{\mathcal{T}_n} \underset{n\to\infty}{\xrightarrow{C^\infty(G^k)}}Z_{\Sigma}$ in the 
 $C^{\infty}(G^k)$ topology.
\end{thm}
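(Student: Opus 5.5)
The plan is to work with the character expansion of $Z_{\mathcal{T}_n}$ and to prove a quantitative central limit theorem for the convolution $\nu_n\coloneq \bigstar_{F\in\mathbb{F}_n}\rho_{\sigma(F)}$. By the integral formula, $Z_{\mathcal{T}_n}(g_1,\dots,g_k)=\int_{G^{2g+k}}\nu_n(w(a,b,c;g))\,\dd a\,\dd b\,\dd c$, with $w$ the product of commutators and conjugates. The same formula with $\nu_n$ replaced by $p_{\sigma(\Sigma)}$ gives $Z_\Sigma$. The map $\nu\mapsto\int\nu(w(\cdot;g))$ is linear, and differentiation in the variables $g_i$ can be moved onto $\nu$ by left/right invariance of Haar measure. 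Hence $C^\infty(G^k)$ convergence reduces to showing $\nu_n\to p_{\sigma(\Sigma)}$ in $C^\infty(G)$, or at least in every Sobolev norm $H^s(G)$. For $k\ge1$, or even $k=0$ after a reduction, weak convergence paired with the smoothing coming from the class functions would already nearly suffice. The cleaner route is to prove the statement on Fourier coefficients directly.

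\textbf{Step 1: matching the Fourier coefficients.} On the Fourier side,
\[
\frac{\widehat{\nu_n}(\lambda)}{d_\lambda}=\prod_{F\in\mathbb{F}_n}\frac{\widehat{\mu}_{\sigma(F)}(\lambda)}{d_\lambda},
\]
and we must compare this with $e^{-\sigma(\Sigma)c_2(\lambda)}$. Here I invoke the conditions of Definition~\ref{def:CLTplus}, which I expect to give an expansion
\[
\frac{\widehat{\mu}_t(\lambda)}{d_\lambda}=1-t\,c_2(\lambda)+O\bigl(t^{1+\delta}(1+c_2(\lambda))^{m}\bigr)
\]
for small $t$, uniformly in $\lambda$. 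Since $\sum_F\sigma(F)=\sigma(\Sigma)$ is fixed and the mesh $\max_F\sigma(F)\to0$, taking logarithms gives pointwise convergence $\widehat{\nu_n}(\lambda)/d_\lambda\to e^{-\sigma(\Sigma)c_2(\lambda)}$ for each $\lambda$. One caveat: the hypothesis only controls a positive proportion of faces. I would still need the mesh to go to zero, which I would either extract from Definition~\ref{def:CLTplus} or from the triangulation assumption. Alternatively I would use only that the remaining faces contribute factors of modulus at most $1$.

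\textbf{Step 2: uniform decay in $\lambda$, the main obstacle.} Pointwise convergence is not enough; to sum the series in every $C^r$ norm we need a summable dominating bound. The norm of $\chi_\lambda$ in $C^r(G)$ grows polynomially in $|\lambda|$, and $d_\lambda^{-(2g-2+k)}$ also grows at most polynomially when $2g-2+k<0$. It therefore suffices to show
\[
\sup_n\,\Bigl|\prod_F \frac{\widehat{\mu}_{\sigma(F)}(\lambda)}{d_\lambda}\Bigr|\le C_M(1+|\lambda|)^{-M}\quad\text{for every }M.
\]
This is where the hypothesis on the proportion of faces enters. A positive proportion $\eta|\mathbb{F}_n|$ of faces have area comparable to $1/|\mathbb{F}_n|$, and all other factors have modulus at most $1$. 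For each comparable face I need a bound of the form $|\widehat{\mu}_t(\lambda)/d_\lambda|\le \exp(-c\min(t\,c_2(\lambda),1))$ for $t$ in the relevant range, a non-degeneracy assumption presumably contained in Definition~\ref{def:CLTplus}. Multiplying these bounds gives $\exp(-c\,\eta\min(a^2c_2(\lambda),|\mathbb{F}_n|))$. For $c_2(\lambda)\lesssim|\mathbb{F}_n|$ this is Gaussian decay in $\lambda$. The regime $c_2(\lambda)\gg|\mathbb{F}_n|$ is delicate: there the bound degenerates to $e^{-c|\mathbb{F}_n|}$, which is small but not uniformly decaying in $\lambda$. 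To handle it I would use the regularity of $\rho_t$ at fixed $t$. If $\rho_t\in C^\infty$, its coefficients decay faster than any polynomial, so the product of $\gtrsim|\mathbb{F}_n|$ such factors, each bounded by $\min(1,C_t|\lambda|^{-M})$, yields the needed decay. Making this uniform over the range of $t$ is exactly what the ``CLT$+$'' conditions must supply, so I expect this to be the crux.

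\textbf{Step 3: conclusion.} With the dominated bound from Step 2 and the pointwise limit from Step 1, dominated convergence applied to the series of $\partial^\alpha$-derivatives gives
\[
\sum_{\lambda\in\widehat{G}}\sup_{G^k}\bigl|\partial^\alpha\bigl(\text{difference of terms}\bigr)\bigr|\to0
\]
for every multi-index $\alpha$. This is precisely $Z_{\mathcal{T}_n}\to Z_\Sigma$ in $C^\infty(G^k)$. The trivial representation and the $k=0$, $g\le1$ cases are covered by the same estimate, since $d_\lambda$ powers are polynomial.
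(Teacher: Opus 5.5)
Your architecture matches the paper's. You reduce to $C^\infty(G)$ convergence of $\nu_n$; the paper does this through iterated coproducts and the pairing with $\Omega^{\otimes g}$, and your chain-rule argument on the integral formula is an equivalent, more direct version. You get the limit coefficientwise, which is the paper's appeal to the central limit theorem in $\mathcal{D}'(G)$. You then dominate the character series uniformly in $n$ using only the faces of comparable area, split by the size of $c_2(\lambda)/\lvert\mathbb{F}_n\rvert$ (Proposition~\ref{boundedinCk}).

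\textbf{The high-frequency regime is left open.} The gap is the decisive regime $c_2(\lambda)\gg\lvert\mathbb{F}_n\rvert$. The comparable faces have $t\in[a^2/\lvert\mathbb{F}_n\rvert,A^2/\lvert\mathbb{F}_n\rvert]\to0$, so fixed-$t$ smoothness of $\rho_t$ only yields constants $C_t$ with no control as $t\to0$, and proves nothing. You are right that the uniformity must come from the hypotheses. It does, in scale-invariant form: item~4 of Definition~\ref{def:CLTplus} states $\lvert\widehat{\mu}_t(\lambda)\rvert\le C\,d_\lambda\bigl(1+\sqrt{t\,c_2(\lambda)}\bigr)^{-s}$ with $C$ independent of $t$. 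Choose $M_2$ with $C(1+M_2)^{-s/2}\le1$. When $\sqrt{c_2(\lambda)/\lvert\mathbb{F}_n\rvert}\ge M_2/a$, each of the at least $\eta\lvert\mathbb{F}_n\rvert$ comparable faces contributes at most $\bigl(1+a\sqrt{c_2(\lambda)/\lvert\mathbb{F}_n\rvert}\bigr)^{-s/2}$. The product is therefore at most $\bigl(1+a\sqrt{c_2(\lambda)/\lvert\mathbb{F}_n\rvert}\bigr)^{-s\eta\lvert\mathbb{F}_n\rvert/2}$. Summing this against the polynomial weights over dyadic shells of $\sqrt{c_2(\lambda)}$, counted by Weyl's law, gives a bound uniform in $n$ (Lemma~\ref{SecondSum}).

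The rest of your assumed bound $\exp(-c\min(t\,c_2(\lambda),1))$ must also be derived rather than presumed. For $\sqrt{t\,c_2(\lambda)}\le M$ it follows from items~2--3 by Taylor expanding $\chi_\lambda$ (Lemma~\ref{small}). On the window $M\le\sqrt{t\,c_2(\lambda)}\le M_2$ the paper uses the strict inequality $\lvert\widehat{\mu}_t(\lambda)\rvert<d_\lambda$ for non-trivial $\lambda$, which comes from inversion invariance. It then asserts a uniform $\delta<1$ on that window without further argument (Lemma~\ref{ThirdSum}).

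\textbf{The mesh condition is missing.} Step~1 needs $\max_{F\in\mathbb{F}_n}\sigma(F)\to0$, and neither source you name supplies it. Definition~\ref{def:CLTplus} constrains the measures, not the triangulations, and the proportion hypothesis says nothing about the remaining faces. Your fallback, that the other factors have modulus at most $1$, is fine for the domination in Step~2 but says nothing about the limit.

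This is a genuinely additional hypothesis. Take one face of area $\sigma(\Sigma)/2$ and refine the rest uniformly. The liminf condition holds, yet $\nu_n\to\rho_{\sigma(\Sigma)/2}\star p_{\sigma(\Sigma)/2}$, which differs from $p_{\sigma(\Sigma)}$ unless $\rho_{\sigma(\Sigma)/2}=p_{\sigma(\Sigma)/2}$. On the disc ($g=0$, $k=1$) the integral formula gives $Z_{\mathcal{T}_n}=\nu_n$ exactly, so the conclusion fails. The same example refutes your aside that, for $k\ge1$, weak convergence plus smoothing from class functions nearly suffices, though you do not rely on it.

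The paper's proof uses the mesh condition silently when it invokes the central limit theorem in Theorem~\ref{convck}. The right fix is to add $\max_F\sigma(F)\to0$ to the hypotheses rather than try to derive it; it also forces $\lvert\mathbb{F}_n\rvert\to\infty$. With it, your Step~1 goes through using the $o(t)$ from item~2 in place of your $O(t^{1+\delta})$.
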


\subsection{Structure of the paper}\label{ss:Structure}
\par First, in Section~\ref{s:Driver} we revisit Driver's construction on the plane and adapt it to the cylinder. We then explain how this construction can be intuitively generalized to arbitrary surfaces, and we identify thereby the technical challenges we need to take care of.
\par In Section~\ref{s:morse}, we recall the  preliminaries from Morse theory and the microlocal analysis of Morse--Smale flows that we need in Section~\ref{s:globalresolutionsurface} to construct a system of polar pseudo-coordinates on a surface with one outgoing boundary component. This covers every such a surface by a cylinder, and lets us transfer the construction from the cylinder to the surface. Justifying this transfer requires us to 
introduce many functional norms and relies on tools from harmonic and functional analysis.
In particular, we recover in Section~\ref{s:fundamentalformula} the main formula for the Yang--Mills random $1$-form as it appears in~\cite{BCDRT}.
\par In Section~\ref{s:Morselattice} we use Morse theory to construct the sequence of well-chosen lattice approximations of surfaces that shows up in the statement of Proposition \ref{prop:boundarycase}. 
\par In Section~\ref{TwoCLT}, we show a local limit theorem, crucial for the study of the measure on closed surfaces which involves singular conditioning. However, as a by product, we show the corollary about the convergence in $C^\infty$ topology of the partition functions and Segal amplitudes, which is a result of independent interest. 
\par In Section~\ref{LieGroupRW},
we establish some technical estimates for Lie groups-valued random walks that we will need in identifying the scaling limit of lattice gauge theories in Section~\ref{s:CLT}. 
\par In Section~\ref{s:functspaces}, we investigate the convergence of lattice models in more refined functional spaces. 
\par Finally, in Section~\ref{ProveMainTH}, we close the surface by means of conditioning, and conclude the proof of the main theorem. 
\par In the appendix, we gather several important technical results we need along the way whose proof would interrupt the flow of the article. 

\par For a reference, the reader can refer to Figure \ref{summary} for an illustrative diagram describing the structure of the paper and the link between the sections.
\begin{figure}
    \centering
    \includegraphics[width=1\linewidth]{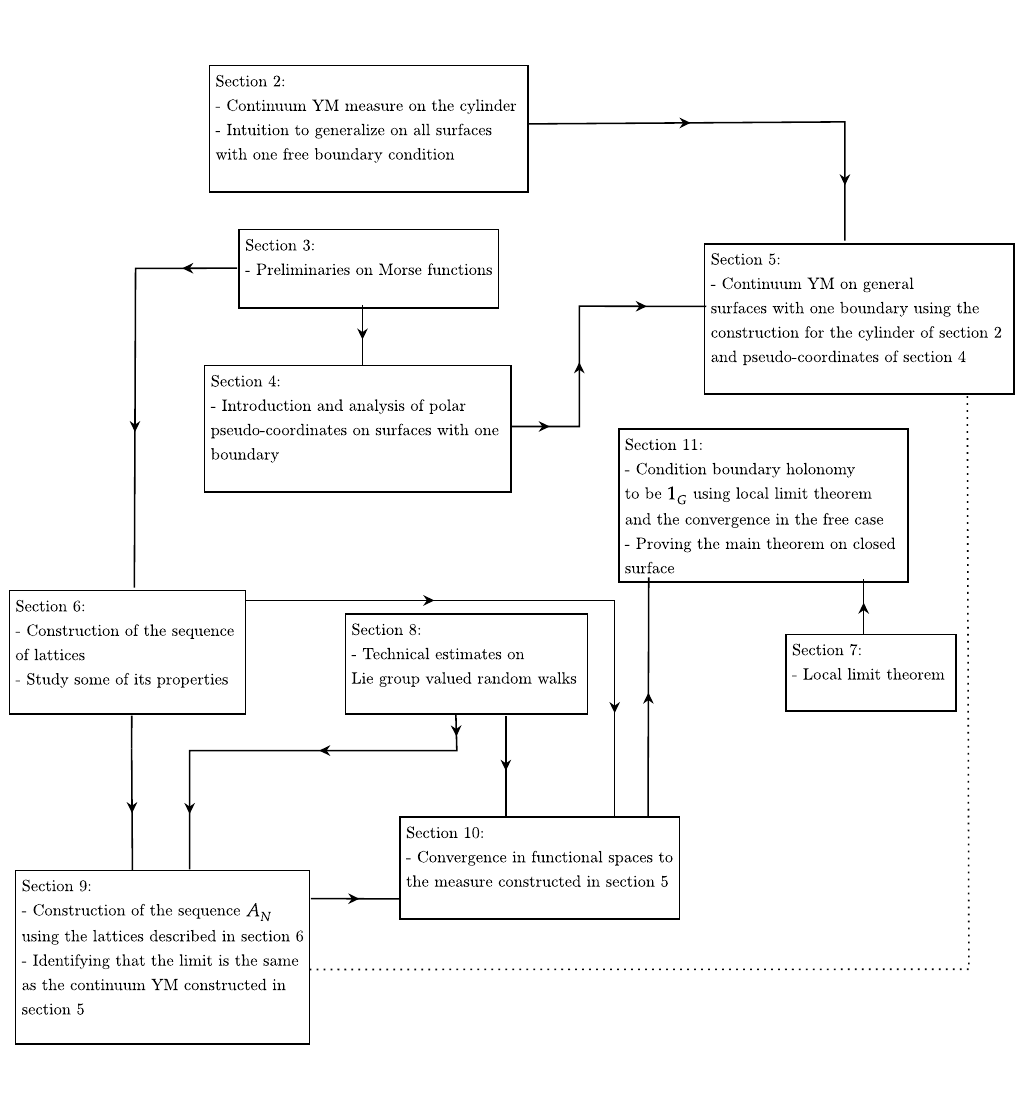}
    \caption{Structure of the paper}
    \label{summary}
\end{figure}
\section*{Acknowledgments}
\par E.N. wishes to express his deep gratitude to his PhD advisor, Thierry Lévy, for asking him the main question of this paper, for countless long and productive discussions, and for carefully reading parts of this work. He also thanks David García-Zelada and Thibaut Lemoine for many suggestions and close help, and Luis Zegarra for introducing him to Morse theory. Furthermore, he thanks Vladimir Bošković, Nicolas Fournier, François Jacopin, Thomas Jaffard, Thomas Le Guerch, Abdulwahab Mohamed, Sami Mustapha, Breki Pálsson, Maxence Petit, Damien Simon, and Lorenzo Zambotti for many interesting discussions. Finally, he acknowledges funding from the “Fondation CFM pour la Recherche” and thanks the foundation for providing excellent working conditions during his PhD.
\par N.V.D would like to express his deep gratitude to Rongchan and Xiangchan Zhu for explaining to him the results of Driver, this allowed him to start working on the subject and is the main source of inspiration. He also warmly thanks
his collaborators Yannick Guedes--Bonthonneau, Reda Chhaibbi, T\^o T\^at Dat, Gabriel Rivière for many discussions around gauge theories, probability and Yang--Mills, and also Ismael Bailleul, Laurent Charles, Ilya Chevyrev, Benoit Estienne, Léonard Ferdinand, Vladimir Fock, Colin Guillarmou, Frédéric Hélein, Paul Laurain, Thibault Lefeuvre, Thibaut Lemoine, Jiasheng Lin, Julien Marché, Antoine Mouzard, Phan Thanh Nam, Tristan Robert, Scott Smith, Nikolay Tzvetkov, Martin Vogel for interesting questions, remarks when we gave talks or just discussed the present work, which motivated us to pursue.
He also thanks Benjamin Melinand for giving wonderful references on fractional Sobolev spaces.
N.V.D acknowledges the support of the Institut Universitaire de France.

\section{Revisiting Driver's construction of $\mathrm{YM}^2$} \label{s:Driver}
In this section we revisit first Driver's argument~\cite{driver1989ym2} for the construction of the Yang--Mills measure on the plane. We then generalize it to the cylinder  with one outgoing free boundary and 
discuss possible generalization to all surfaces.  
\par We consider in this section  a compact Lie group $G$ with Lie algebra $\mathfrak{g}$, equipped with a bi-invariant scalar product.
\subsection{The case of the plane}\label{ss:DriverPlane}
 The Yang--Mills measure on $\R^2$ is the probability measure on the set of $1-$forms modulo gauge transformations $\Omega^1(\R^2,\g)/ \G $ with formal expression $d\mu = \frac{1}{Z}\exp(-S_{YM}(A))\dd A,$
where 
\begin{enumerate}
    \item the constant $Z$, also called \emph{partition function}, is a normalizing constant,
    \item the quantity $S_{YM}(A)$ is the Yang--Mills action of $A$, i.e. half of the squared $L^2$ norm of the curvature $F$ of $A\coloneq A_1\dd x+A_2\dd y$ defined as
    \begin{eqnarray*}
        F(A)&\coloneq & \dd A+A\wedge A=\left(\frac{\partial A_2}{\partial x}-\frac{\partial A_1}{\partial y}+[A_1,A_2]_{\g}\right)\dd x \wedge \dd y,
    \end{eqnarray*}
    \item the measure $dA$ is the formal Lebesgue measure on $\Omega^1(\R^2,\g)/\G$.
\end{enumerate}
This measure is ill-defined for several reasons. One way of giving a rigorous meaning to this measure was achieved in the seminal work of ~\cite{driver1989ym2}. In this article, it is discussed that every connection on $\R^2$ is gauge equivalent to a connection of the form 
\[A \dd x \ \  \text{ such that }\ \  A(\cdot ,0)=0, \]
and that the map 
\[{\rm{ax}} : \Omega^1(\R^2,\g) / \G \rightarrow \mathcal{A}_{ax}\coloneq \{A \, \dd x \, ; A(\cdot ,0)=0\}\]
has a constant (formal) Jacobian (or Fadeev--Popov determinant). 
For a connection $[A]\in \Omega^1(\R^2,\g) / \G$, the differential form ${\rm{ax}}{([A])}$ is said to be an axial gauge representation of $[A]$. The core idea is that the Yang--Mills action becomes a quadratic functional  
when restricted to connections in axial gauge.  Indeed 
\[S_{YM}(A\dd x) = \left\|\frac{\partial A}{\partial y}\right\|_{L^2(\R^2)}^2.\]
The Yang--Mills measure induced  on the slice $\mathcal{A}_{ax}$ has therefore the formal expression 
\[d\mu_{ax} \coloneq  \frac{1}{Z_{ax}}\exp\left(-\left\|\frac{\partial A}{\partial y}\right\|_{L^2(\R^2)}^2\right).\]
This means that in this representation, $\frac{\partial A}{\partial y}$ is a two dimensional white noise. Formally, $A$ can be constructed by taking a white noise $\xi_{\mathbb{R}^2}$ in $\R^2$, and defining 
\[A(x,y) = \int_0^y \xi_{\mathbb{R}^2}(x,u)\dd u ,\]
which is not straightforward to define mathematically. In fact, the white noise belongs to $C^{(-1)^-}(\R^2)$, and cannot be integrated along lines without further justification.
\par One way to do it goes as follows. 
Write the planar white noise $\xi_{\mathbb{R}^2}$ as a random series 
$\xi_{\mathbb{R}^2}= \sum_{n\geq 0} \xi^{\mathfrak{g}}_{n}(y)e_n(x),$ where $(\xi^{\mathfrak{g}}_n)_{n\geq 0}$ are i.i.d $\g-$ valued white noises on the real line $\mathbb{R}$ and $(e_n)_{n\geq 0}$ is some orthonormal basis of $L^2(\mathbb{R})$ (like for instance the Hermite polynomials times Gaussian, the Haar basis or any orthonormal wavelet basis of $L^2(\mathbb{R})$). The random series converges in $\mathcal{S}^\prime(\mathbb{R}^2)$ and the integral along the $y$ axis can be defined as
\[  \sum_{n\geq 0} \left(\int_0^y\xi^{\mathfrak{g}}_n(u)du \right) e_n(x).\]
It is natural to recognize in the above expression that 
$\left(\int_0^y\xi^{\mathfrak{g}}_n(u)du  \right)$
is a $\mathfrak{g}$--valued Brownian motion $B^{\mathfrak{g}}_n(y)$ started at $0$ and therefore to propose the random series
\[A\coloneq  \sum_{n\geq 0} B^{\mathfrak{g}}_n(y) e_n(x)\dd x\] as a natural candidate for the random YM connection on the plane $\mathbb{R}^2$. The reader will recognize that our candidate is nothing but the cylindrical Brownian motion on $L^2_x(\mathbb{R})$ where $y$ plays the role of time for the cylindrical Brownian motion.

Another way to derive a sensible object in this special gauge where the curvature is exactly $\dd A$, is to use Stokes theorem to formally define $A$ as a Schwartz distribution as follows.  Let $A_{\rm{ax}}\coloneq T\dd x$ where
\[\forall \phi \in S(\R^2),  \langle T, \phi \rangle =\left \langle \xi, (x,y) \mapsto \int_{\R} \phi(x,z)[1_{0\leq y\leq z}-1_{z\leq y\leq 0}] \dd z\right \rangle .\]
Then, Driver defines a parallel transport for the random connection $A$ thus constructed using stochastic calculus, and computes the joint law of the parallel transport for a finite set of curves. This is known as the Driver--Sengupta formula on the plane. We will revisit the main ideas but in the slightly different case of the cylinder. 

\subsection{The case of the cylinder}\label{ss:DriverCylinder}

\par Let us see how we can mimic Driver's construction on a closed disk, seen as a manifold with boundary, that can also represent a spherical cap, or a cylinder with one outgoing boundary component.  Let us use polar coordinates to describe our disc $D$. Concretely, it means that we have a smooth submersion $[0,1]_r \times \mathbb{S}^1\mapsto D$, where $\mathbb{S}^1$ is the circle parametrized by the angle $[0,2\pi[$. Let $\sigma$ be a smooth area measure on $D$. The cylinder is obtained by blowing up the disc at the origin $(0,0)$, the thing is that the induced area form on the cylinder, still denoted by $\sigma$, vanishes at $r=0$. On the blow--up space, the one form $\dd r$ is defined globally, as well as the corresponding vector field $\partial_r$.    \par Distributions and currents on the cylinder are defined as the topological dual of the space of smooth forms that vanish near $1$ and are smooth up to $r=0$. 
Conceptually, it means our distributions are extendible near $r=0$ but not near $r=1$.

\par Based on the previous discussion, 
let us consider a white noise $\xi_\sigma$ on $(D,\sigma)$ corresponding to the area form $\sigma$, that we will see as a random distributional $2-$form. We will try to define a primitive $A$ of $\xi_\sigma$ such that $A_{|r=0}=0$, and such that $i_{\partial r}A =0$. This means that in polar coordinates, $A$ does not have a component along $\dd r$.  For some $\theta \in [0,2\pi)$, let us denote by $0 \xrightarrow{\theta} r$ the curve whose polar parametrization is given by $c(t)=(t,\theta), 0\leq t\leq r$. 

\begin{definition}[Definition-Proposition]
    Let $A$ be defined as follows by duality.
    \[\forall\,  \alpha \in  \Omega^1(D,\g),  \langle A, \alpha \rangle \coloneq \left \langle \xi_\sigma  , (r,\theta) \mapsto -\int_{r \xrightarrow{\theta} 1} \alpha \right \rangle.\]
The well-defined random de Rham current $A$ is called the axial-gauge Yang--Mills distributional $1-$form. 

We can also define $A$ by some explicit formula: 
\begin{align*}
A\coloneq  \sum_{n\geq 0} \left( \int_0^r \sqrt{\sigma(s,\theta)} \xi_n^{\mathfrak{g}}(s)ds \right)   e_n(\theta)\dd \theta, 
\end{align*}
where $(\xi_n^{\mathfrak{g}})_{n\geq 0}$ are i.i.d $\mathfrak{g}$--valued white noise.
\end{definition}
\begin{proof}
For $a \in  \Omega^1(D,\g)$, the function
\[ (r,\theta) \mapsto -\int_{r \xrightarrow{\theta} 1} \alpha\] 
is smooth and vanishes at all orders at the boundary circle $\{r=1\}$ (since $\alpha$ does). Moreover, the map 
\[\alpha \in \Omega^1(D,\g)\mapsto \left\{ (r,\theta) \mapsto -\int_{r \xrightarrow{\theta} 1} \alpha \in C^{\infty}(D,\g) \right\}\] 
is continuous for the topology of test functions that vanish up to all order at $\{r=1\}$.

To get  another formula, we shall use another representation of the $2D$ white noise in polar coordinates.
Rewrite $\xi_\sigma$ as a random series using Fourier decomposition in $\theta$:
\begin{align*}
\xi_\sigma= \sum_{n\geq 0}  \left(\sigma(r,\theta) \right)^{\frac{1}{2}} \xi^\mathfrak{g}_n(r)e_n(\theta)\dd r \wedge \dd\theta    
\end{align*}
where $\xi_n^{\mathfrak{g}}$ are i.i.d $\mathfrak{g}$--valued white noise and $(e_n(.))_{n\geqslant 0}$ is any ONB of $L^2([0,2\pi])$. The factor $\left(\sigma(r,\theta) \right)^{\frac{1}{2}}$ ensures we get the correct white noise normalized w.r.t. the area form $\sigma$. Then contract the two form $\xi_\sigma$ with the radial vector field $\partial_r$ and take the primitive in the radial direction.
This is represented as the random series
\begin{align*}
    \sum_{n\geq 0} \underbrace{\left( \int_0^r\left(\sigma(s,\theta) \right)^{\frac{1}{2}} \xi^\mathfrak{g}_n(s)ds \right)} e_n(\theta)\dd\theta 
\end{align*}
where the reader has to think of the terms underbraced as independent reparametrized $\mathfrak{g}$--valued Brownian motions. 
The above series yields a de Rham primitive of $\xi_\sigma $ which is the correct random distribution $A$.
\end{proof}

By the correct we mean that it is possible to define parallel transport of this object, and that these parallel transports verify the Driver--Sengupta formula for a wide class of graphs. This is what we will do in the remaining of this section.

\paragraph{Defining line integrals.}

We will now define a natural parallel transport for $A$ around curves of the form 
$\theta_1\xrightarrow{r}\theta_2 $
for $r\in (0,1]$ and $0\leq \theta_1 <\theta_2 \leq 2\pi$.
Consider our random $1-$form and the curve $\theta_1\xrightarrow{r}\theta_2$ defined as the curve parametrized in the polar coordinates as $c(t)=(r,t); \theta_1\leq t \leq t_2$.
The first thing to note is that $\dd A=\xi$.
\begin{proposition}
    The equation $\dd A=\xi$ holds in the sense of currents. 
\end{proposition}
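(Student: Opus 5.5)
We prove $\dd A=\xi_\sigma$ by duality, testing against compactly supported smooth $0$-forms. Recall that $A$ is defined on test $1$-forms by $\langle A,\alpha\rangle=\langle \xi_\sigma, P\alpha\rangle$, where
\[
(P\alpha)(r,\theta)\coloneq -\int_{r\xrightarrow{\theta}1}\alpha .
\]
By definition of the exterior derivative of a current (with the sign convention for $1$-currents on a surface), for every test function $\phi$ vanishing to all orders near $\{r=1\}$ we have
\[
\langle \dd A,\phi\rangle=\langle A,\dd\phi\rangle \quad\text{(up to the fixed convention sign)}.
\]
The claim therefore reduces to a deterministic identity for the operator $P$, namely $P(\dd\phi)=\phi$, where on the right $\phi$ is viewed as the test function paired against the $2$-form $\xi_\sigma$.

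The key step is the fundamental theorem of calculus along radial rays. Write $\dd\phi=\partial_r\phi\,\dd r+\partial_\theta\phi\,\dd\theta$. The radial segment $r\xrightarrow{\theta}1$ has tangent $\partial_r$, so only the $\dd r$ component contributes:
\[
P(\dd\phi)(r,\theta)=-\int_r^1\partial_s\phi(s,\theta)\,\dd s=\phi(r,\theta)-\phi(1,\theta)=\phi(r,\theta),
\]
because $\phi$ vanishes near $r=1$. Hence $\langle \dd A,\phi\rangle=\langle\xi_\sigma,\phi\rangle$ for all test $\phi$, which is exactly $\dd A=\xi_\sigma$ in the sense of currents. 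Note that no regularity of $\xi_\sigma$ is used. Continuity of $P$ on test forms, already established in the Definition-Proposition, guarantees that the pairing is well defined almost surely.

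The main obstacle is bookkeeping rather than analysis, and it has three parts.
\begin{itemize}
\item The signs must match: the orientation $\dd r\wedge\dd\theta$, the identification of the $2$-current $\xi_\sigma$ with its pairing against functions, and the sign in $\langle \dd T,\phi\rangle=(-1)^{k+1}\langle T,\dd\phi\rangle$ for a $k$-current $T$ must be made consistent. The minus sign in the definition of $P$ is chosen exactly so that these conventions agree.
\item The behaviour at $r=0$ must be checked. Test forms are only required to be smooth up to $r=0$, and $\phi$ need not vanish there; however, the rays start at $r$ and go to $1$, so no boundary term appears at $r=0$.
\end{itemize}

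As a sanity check, the identity can be verified on the series representation $A=\sum_n\bigl(\int_0^r\sqrt{\sigma}\,\xi_n^{\g}\bigr)e_n(\theta)\,\dd\theta$. Formally,
\[
\dd A=\sum_n\sqrt{\sigma}\,\xi_n^{\g}(r)\,e_n(\theta)\,\dd r\wedge\dd\theta=\xi_\sigma,
\]
and this is justified termwise by the $L^2$ convergence of the series against test forms.
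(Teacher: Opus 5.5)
Your proposal is correct and is the same argument as the paper's. You reduce $\langle \dd A,\phi\rangle=\langle A,\dd\phi\rangle$ to the identity $P(\dd\phi)=\phi$, obtained from the fundamental theorem of calculus along the radial segment $r\xrightarrow{\theta}1$ together with the vanishing of $\phi$ near $r=1$. Your bookkeeping is cleaner than the paper's display, which evaluates the ray integral as $-\phi(1,\theta)+\phi(r,\theta)$ and invokes vanishing near $r=0$; what is actually used is the minus sign in the definition of $A$ and vanishing near $r=1$, as you say. (Your bullet list announces three parts but lists only two.)
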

\begin{proof}
For $\phi \in C^\infty(D,\g)$, we have 
    \[  \langle \dd A, \phi \rangle =   \langle A, \dd\phi \rangle = \left \langle \xi, (r,\theta) \mapsto \int_{[r,1]\times\{\theta\}} \dd\phi \right \rangle= \left \langle \xi, (r,\theta) \mapsto -\phi(1,\theta)+\phi(r,\theta)\right  \rangle = \langle \xi, \phi \rangle \] 
where we used the fact that our test function $\phi$ vanishes near $r=0$.
In the proof of the Lemma, the duality pairing $\left\langle \psi_1,\psi_2 \right\rangle $ means 
$\int_D \left\langle\psi_1\wedge \psi_2\right\rangle_{\mathfrak{g}}$ where we used the exterior product and 
the natural inner product in the Lie algebra $\mathfrak{g}$.
\end{proof}

Then, Stokes theorem let us define the line integral of $A$ along $\theta_1\xrightarrow{r}\theta_2$ as follows. Since $i_{\partial r} A =0$, in whatever way we define this integral, it is natural for    
\[\int_{0\xrightarrow{r}\theta} A
 = \int_{\partial \square(0,r,0,\theta)} A \]
 to hold where $\square(r_1,r_2,\theta_1,\theta_2)$ is the unique rectangle in the cylinder with vertices $(r_1,\theta_1)$, $(r_2,\theta_2)$, $(r_1,\theta_2)$, and $(r_2,\theta_1)$, see Figure \ref{rec1}.
\begin{figure}
    \centering
    \includegraphics[width=1.2\linewidth, trim={3cm 1cm 0 0}]{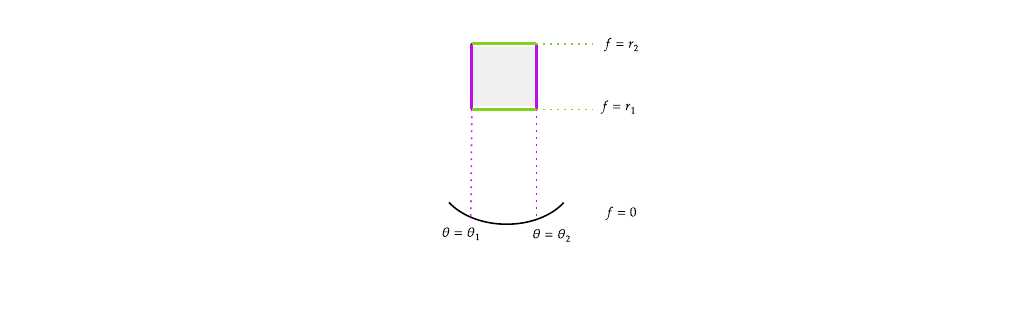}
    \caption{The rectangle $\square(r_1,r_2,\theta_2,\theta_2)$ in the cylinder.}
    \label{rec1}
\end{figure}
 Then, we use Stokes theorem to define 
 \[\int_{\partial \square(0,r,0,\theta)} A\coloneq \int_{\square(0,r,0,\theta)}\dd A =\xi(\mathds{1}_{\square(0,r,0,\theta)}).\] We get therefore the following definition which describes completely the probability distribution of line integrals.

 \begin{definition}
     Let $r\in (0,1]$ and $0\leq \theta_1 <\theta_2 \leq 2\pi$. The line integral of $A$ along $\theta_1\xrightarrow{r}\theta_2$ is defined as 
     \[\int_{\theta_1\xrightarrow{r}\theta_2}A = \xi(\mathds{1}_{\square(0,r,\theta_1,\theta_2)}).\]

   The pairing $\xi(\mathds{1}_{\square(0,r,\theta_1,\theta_2)})$ is well-defined since the indicator function 
   $\mathds{1}_{\square(0,r,\theta_1,\theta_2)} \in L^2$ hence it belongs to the Hilbert space indexing 
   the white noise.
 \end{definition}

 This definition is a bit restrictive, in the sense we only have line integrals on level curves $\theta_1\xrightarrow{r}\theta_2$. We can actually define the line integral on some slightly more general class of curves. This is treated in detail in the paper~\cite{BCDRT}.
We can sketch the main idea here. Assume we are given a piece of arc $\gamma$ : $$\gamma: \theta\in [a,b]\subset [0,2\pi]\mapsto (\theta, r(\theta)) $$ where $r\in C^\infty([a,b], \mathbb{R}_{\geqslant 0})$, 
that we can represent as a piece of vertical graph on the cylinder. Then we can define the 
line integral over $\gamma$ either as the random series
\begin{equation}
  \int_{a}^b \gamma^*A:= \sum_{n\geq 0} \int_a^b \left( \int_0^{r(\theta)} \sqrt{\sigma(s,\theta)} \xi_n^{\mathfrak{g}}(s)ds \right)   e_n(\theta)\dd \theta
\end{equation}
where the series converges almost surely by some Martingale argument or by the pairing
\begin{equation}
\xi\left( \mathds{1}_{\square} \right) \text{ where }\square:=\{ (u,\theta); \theta\in [a,b], u\in [0,r(\theta)] \}.    
\end{equation}

\paragraph{From line integrals to stochastic parallel transport.}
 
 To define the parallel transport of $A$ along horizontal curves, let us recall that if $\alpha$ is a connection, the holonomy of $\alpha$ along the curve $c:[0,1]\rightarrow M$ is given by the solution at time $1$ of an ODE that can be written as 
\begin{equation} \label{PT}
\begin{cases}
       \dd X_t = X_t \dd \int_{0}^t\alpha_s(c'(s))ds \\ 
     X_0=1
\end{cases}.
\end{equation}
The idea is that  the holonomy equation for $A$ can be interpreted as a Stratonovich SDE thanks to the following proposition.  
\begin{proposition} \label{sheet}
    Let $r\in (0,1]$. The stochastic process 
    \[(r,\theta) \mapsto \int_{0\xrightarrow{r}\theta} A\]
    is a time changed $\g-$valued Brownian sheet with time changing function
   $(r,\theta) \mapsto \sigma(\square(0,r,0,\theta)).$
\end{proposition}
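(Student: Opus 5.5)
By the preceding definition, for every $r\in(0,1]$ and $\theta\in[0,2\pi]$ we have $\int_{0\xrightarrow{r}\theta}A=\xi_\sigma(\mathds{1}_{\square(0,r,0,\theta)})$. The plan is therefore to show that the family $W(r,\theta)\coloneq \xi_\sigma(\mathds{1}_{\square(0,r,0,\theta)})$ is a centred $\g$-valued Gaussian process whose covariance is exactly that of a Brownian sheet after the time change $(r,\theta)\mapsto\sigma(\square(0,r,0,\theta))$.

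\textbf{Gaussianity.} The white noise $\xi_\sigma$ is, by definition, the isonormal Gaussian process on $L^2(D,\sigma;\g)$. Hence for any finite family of indicator functions $\mathds{1}_{\square(0,r_i,0,\theta_i)}$, which all lie in $L^2$, the vector $(W(r_i,\theta_i))_i$ is jointly centred Gaussian. Its coordinates in an orthonormal basis of $\g$ are independent, and each has covariance
\[
\mathbb{E}\big[W(r,\theta)W(r',\theta')\big]=\big\langle \mathds{1}_{\square(0,r,0,\theta)},\mathds{1}_{\square(0,r',0,\theta')}\big\rangle_{L^2(\sigma)}=\sigma\big(\square(0,r\wedge r',0,\theta\wedge\theta')\big),
\]
because the intersection of two rectangles anchored at $(0,0)$ is again such a rectangle.

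\textbf{Identifying the law.} A (standard) $\g$-valued Brownian sheet $\mathbb{W}$ on a product of intervals is the centred Gaussian process with covariance $\mathbb{E}[\mathbb{W}(s,t)\mathbb{W}(s',t')]=(s\wedge s')(t\wedge t')$, acting as the identity on $\g$. More generally, for a finite measure $m$ on the product, the time-changed sheet is $m(\square(0,r,0,\theta))$-indexed in the sense that its covariance is $m(\square(0,r\wedge r',0,\theta\wedge\theta'))$; equivalently, it is the white noise of $m$ evaluated on anchored rectangles. The covariance computed above matches this with $m=\sigma$. One can also make this concrete. Write $\sigma=\rho(r,\theta)\,\dd r\,\dd\theta$ and let $\mathbb{W}$ be a standard sheet. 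Then $\int_0^r\!\int_0^\theta\sqrt{\rho}\,\dd\mathbb{W}$ has the same covariance, which is the random-series formula of the Definition-Proposition summed in $n$. Since a Gaussian law is determined by its covariance, the finite-dimensional distributions coincide.

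\textbf{Continuity.} The step needing the most care is upgrading equality of finite-dimensional marginals to a statement about a genuine process with continuous paths. For this I would apply Kolmogorov's criterion to $W$. Write $\square_1=\square(0,r,0,\theta)$ and $\square_2=\square(0,r',0,\theta')$. Since $\sigma$ has a bounded smooth density $\rho$,
\[
\mathbb{E}|W(r,\theta)-W(r',\theta')|^2=\sigma\big(\square_1\triangle\square_2\big)\le \|\rho\|_\infty\big(2\pi|r-r'|+|\theta-\theta'|\big).
\]
Gaussian hypercontractivity then gives all moments of the form $C_p(|r-r'|+|\theta-\theta'|)^{p/2}$, so $W$ admits a continuous modification. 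The degeneracy of $\sigma$ at $r=0$, where the density vanishes, only helps the estimate, so no obstacle arises there. This yields the claimed identification with a continuous time-changed Brownian sheet.
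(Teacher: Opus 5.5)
Your proof is correct and follows essentially the same route as the paper, which just observes that $(r,\theta)\mapsto \xi(\mathds{1}_{\square(0,r,0,\theta)})$ is Gaussian and computes its covariance, and only at fixed $r$, giving $\sigma(\square(0,r,0,\theta))\wedge\sigma(\square(0,r,0,\theta'))$. Your full two-parameter covariance $\sigma(\square(0,r\wedge r',0,\theta\wedge\theta'))$ and the Kolmogorov continuity step are additional rigor that the paper omits, not a different approach.
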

\begin{proof} 
     The process 
    $(r,\theta) \mapsto \xi(\mathds{1}_{\square(0,r,0,\theta)})$ 
    is gaussian, and for $\theta ,\theta' \in [0,2\pi)$,  \[\mathbb{E}\left[\xi(1_{\square(0,r,0,\theta)})\xi(1_{\square(0,r,0,\theta')})\right ] = \sigma (\square(0,r,0,\theta))\land \sigma (\square(0,r,0,\theta)).\]
\end{proof}
\begin{definition}
    Let $r\in (0,1]$ and $0\leq \theta_1 \leq \theta_2 \leq 2\pi$. The parallel transport of $A$ along $\theta_1\xrightarrow r \theta_2$ is defined as the strong solution of the Stratonovich SDE 
      \[\label{Strato}
  \begin{cases}
      \dd X_t = X_t \circ \dd \int_{\theta_1\xrightarrow{r}\theta_1 + t}A \\
      X_0 = 1
  \end{cases}
  \]
  at time $\theta_2-\theta_1$. It is denoted by 
  \[\mathrm{Hol}(A,\theta_1\xrightarrow r \theta_2).\]
\end{definition}
The main properties of this parallel transport are regrouped in the next proposition.
\begin{proposition} \label{DriverSenForA}
    The following hold.
    \begin{enumerate}
        \item For fixed $r\in (0,1]$, $\mathrm{Hol}(A,\theta \xrightarrow r \theta +\cdot)$ is a $G-$valued time-changed Brownian motion. The time changing function is 
   $t\mapsto \sigma (\square (0,r,\theta,\theta+t)).$
        \item For fixed $ 0\leq \theta_1 \leq \theta_2 \leq 2\pi$,  
        $r\mapsto \mathrm{Hol}(A,(\theta_1 \xrightarrow r \theta_2)$ is $G-$valued time-changed Brownian motion. The time changing function is 
   $t\mapsto \sigma (\square (0,t,\theta_1,\theta_2-\theta_1)).$ 
        \item For any $0 = \theta_1 <\cdots < \theta_n =2\pi $,  the family of processes $(r\mapsto \mathrm{Hol}(A,(\theta_i \xrightarrow r \theta_{i+1}))_{1\leq i \leq n}$ are independent. 
    \end{enumerate}
\end{proposition}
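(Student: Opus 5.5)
The three items all follow from the structure of the Gaussian field $W(r,\theta)\coloneq \xi(\mathds{1}_{\square(0,r,0,\theta)})$ together with standard facts about Stratonovich SDEs driven by $\g$-valued Brownian motions on the compact group $G$. The basic fact I would use throughout is this: if $(M_t)$ is a continuous $\g$-valued martingale with deterministic bracket $\langle M\rangle_t = \phi(t)\,\mathrm{Id}$ (relative to an orthonormal basis of $\g$ for the invariant scalar product), then the solution of $\dd X_t=X_t\circ \dd M_t$, $X_0=1$, is a $G$-valued Brownian motion time-changed by $\phi$. This follows from Lévy's characterisation, since $M_{\phi^{-1}(s)}$ is a standard $\g$-Brownian motion. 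It then remains to apply the uniqueness of strong solutions and the identification of the $G$-valued Brownian motion as the solution of $\dd Y_s=Y_s\circ \dd B_s$.

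\emph{Item 1.} Fix $r$ and $\theta$, and set $M_t\coloneq \int_{\theta\xrightarrow r\theta+t}A=\xi(\mathds{1}_{\square(0,r,\theta,\theta+t)})$. By Proposition~\ref{sheet}, or directly from the covariance of white noise, $M$ is a centred Gaussian process with independent increments, since increments correspond to disjoint rectangles. Its variance is $\sigma(\square(0,r,\theta,\theta+t))\,\mathrm{Id}_\g$. Hence $M$ is a continuous martingale, after taking a continuous version via Kolmogorov, using that $\sigma$ has a bounded density. The basic fact above then gives item 1 directly.

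\emph{Item 2.} This is the substantive step, because here the curve is fixed and $r$ varies, so $r\mapsto \mathrm{Hol}(A,\theta_1\xrightarrow r\theta_2)$ is not defined by an SDE in $r$. I would argue through the Brownian sheet structure and a Markov property in $r$. For $r<r'$, split $\square(0,r',\theta_1,\theta_2)$ into the lower strip $\square(0,r,\cdot)$ and the upper strip $\square(r,r',\cdot)$. The driving process at level $r'$ is $M^{r'}_t=M^{r}_t+N_t$, where $N$ depends only on the white noise in the upper strip and is therefore independent of $\mathcal F_r\coloneq\sigma(\xi|_{\{s\le r\}})$. I expect the key identity to be
\[
\mathrm{Hol}(A,\theta_1\xrightarrow{r'}\theta_2)\overset{(d)}{=} \mathrm{Hol}(A,\theta_1\xrightarrow{r}\theta_2)\cdot \tilde H \quad\text{conditionally on }\mathcal F_r,
\]
with $\tilde H$ independent of $\mathcal F_r$, conjugation-invariant, and distributed as the heat kernel $p_{\sigma(\square(r,r',\theta_1,\theta_2))}$. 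To prove this, I would use a discretisation in $\theta$ with Wong--Zakai approximations. Write the holonomy as a limit of products $\prod_k \exp(\Delta_k M^r+\Delta_k N)$. By rotational invariance of the Gaussian increments $\Delta_k N$ under $\Ad$, one can successively conjugate each $\exp(\Delta_k N)$ through the prefix, which is independent of it, and collect these factors on the right. Their product then converges to an independent Brownian motion at time $\sigma$ of the upper strip. Independent multiplicative increments with heat-kernel law, together with continuity in $r$ (again via Kolmogorov), characterise a time-changed $G$-Brownian motion with clock $t\mapsto\sigma(\square(0,t,\theta_1,\theta_2))$.

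\emph{Item 3.} For a partition $0=\theta_1<\dots<\theta_n=2\pi$, the holonomy along $\theta_i\xrightarrow r\theta_{i+1}$ is a measurable functional, via the strong SDE solution, of the restriction of $\xi$ to the sector $\{\theta\in[\theta_i,\theta_{i+1}]\}$, jointly for all $r$. White noise restricted to disjoint sets gives independent fields, so the processes are independent.

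The main obstacle is item 2: justifying that conjugation-invariance lets the upper-strip increments be factored out in the limit, uniformly enough to pass through the Stratonovich limit.
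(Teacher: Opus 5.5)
Your arguments for items 1 and 3 are essentially the paper's. Item 1 recognises the driver $t\mapsto\xi(\mathbf{1}_{\square(0,r,\theta,\theta+t)})$ as a time-changed $\mathfrak{g}$-valued Brownian motion and uses strong uniqueness. Item 3 uses measurability of each holonomy process with respect to the white noise on the sector $[\theta_i,\theta_{i+1}]\times[0,1]$.

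Item 2, however, is not yet proved: you state the key identity as an expectation, and the discretisation you propose has an obstacle you leave unresolved. The conjugation step is exact at fixed mesh. Conditionally on the lower-strip factors $g_k=\exp(\Delta_kM)$, the factors $Q_k^{-1}\exp(\Delta_kN)Q_k$ are again independent with the same $\mathrm{Ad}$-invariant law. (Collecting on the right means conjugating by the suffix $Q_k=g_{k+1}\cdots g_K$, not the prefix.)

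The trouble is the splitting $\exp(\Delta_kM+\Delta_kN)\approx\exp(\Delta_kM)\exp(\Delta_kN)$. The BCH error $\tfrac12[\Delta_kM,\Delta_kN]$ is of order $\Delta t$ per step, so the bi-invariant triangle inequality only bounds the accumulated error by $O(1)$. You would need one of two things:
\begin{itemize}
\item a cancellation argument for these centred errors inside the product, or
\item a rough-path argument, using that the staircase and piecewise-linear lifts have the same limit because $\sum_k(\Delta_kM\otimes\Delta_kN-\Delta_kN\otimes\Delta_kM)\to0$ in $L^2$, together with It\^o--Lyons continuity.
\end{itemize}

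The paper avoids all of this by working in continuous time. Let $X_t$, $Y_t$ be the holonomies along $\theta_1\xrightarrow{r}\theta_1+t$ and $\theta_1\xrightarrow{r'}\theta_1+t$ with $r<r'$. Then $\mathrm{d}X=X\circ\mathrm{d}M$ and $\mathrm{d}Y=Y\circ(\mathrm{d}M+\mathrm{d}N)$, with $N$ independent of $\mathcal{F}_r$, and the Stratonovich product rule gives
\[
\mathrm{d}\big(YX^{-1}\big)=\big(YX^{-1}\big)\,\mathrm{Ad}_{X_t}(\circ\,\mathrm{d}N_t).
\]
Since $X$ is driven by $M$, it has zero covariation with $N$. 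So $\tilde N=\int_0^\cdot\mathrm{Ad}_{X_s}\,\mathrm{d}N_s$ is an It\^o integral whose bracket equals that of $N$, because $\mathrm{Ad}$ is orthogonal. L\'evy's characterisation in the filtration $\mathcal{F}_r\vee\sigma(N_s,\,s\le t)$ then shows that $\tilde N$ is a time-changed Brownian motion independent of $\mathcal{F}_r$.

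Hence $Y_TX_T^{-1}$ is independent of $\mathcal{F}_r$ with law $p_{\sigma(\square(r,r',\theta_1,\theta_2))}$. By conjugation invariance, so is your right increment $X_T^{-1}Y_T$. Iterating over $r_1<\dots<r_n$ is exactly the paper's computation of $\mathrm{d}\big(X^i(X^{i-1})^{-1}\big)$. This supplies your missing key identity in a few lines, with no discretisation. Your Kolmogorov argument for continuity in $r$ is a harmless addition that the paper omits.
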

\begin{proof}
For the first point, the parallel transport $\mathrm{Hol}(A,(.)$ is the solution of 
          \[
  \begin{cases}
      \dd X_t = X_t \circ \dd \int_{\theta_1\xrightarrow{r}\theta_1 + t}A \\
      X_0 = 1
  \end{cases}.
  \]
  However, from Proposition~\ref{sheet}, the process $\int_{\theta_1\xrightarrow{r}\theta_1 + \cdot}A$ is a time changed Brownian motion, so that the solution has the same law as the solution of $\dd Z_t = Z_t\sqrt{F'(t)}\circ \dd B_t$, 
where $B$ is a Brownian motion and 
$F(t)=\sigma(\square(0,r,\theta_1,\theta_1+t)).$
The density of the law of the solution at time $t$ is hence 
$p_{\sigma(\square(0,t,\theta_1,\theta_2 ))}(g)\dd g.$ However, the solution to~\ref{PT} is strong and it is adapted to $\left(\int_{\theta_1\xrightarrow{\rho}\theta_1+t} A\right)_{\substack {0\leq \rho \leq r \\0\leq t \leq \theta_2-\theta_1 }}$. 
\par For the second point, fix $0\leq \theta_1 \leq \theta_2 \leq 2\pi$, and $0\leq r_1 <\cdots < r_n \leq 1$. Let us calculate the joint law of 
  \[(X^1,\dots,X^n)\coloneq ({\rm Hol}_A\{\theta_1\xrightarrow{r_1} \theta_2 \},\dots,{\rm Hol}_A\{\theta_1\xrightarrow{r_n} \theta_2 \}).\]
   The vector $(X^1,\dots,X^n)$ is the strong solution to 
  \[
  \begin{cases}
      \dd X^1_t = X^1_t \circ \dd \int_{\theta_1\xrightarrow{r_1}\theta_1+t}A \,\,; X^1_0=1 \\
      \vdots \\
     \dd X^n_t = X^n_t \circ \dd \int_{\theta_1\xrightarrow{r_n}\theta_1+t}A \,\,; X^n_0=1
  \end{cases}.
  \]
  However, we have for $i=2,\dots,n$, 
  \[\int_{\theta_1\xrightarrow{r_i}\theta_1 + t}A = \xi(\mathds{1}_{\square(0,r_i,\theta_1,\theta_1+t)})=\xi(\mathds{1}_{\square(r_{i-1},r_i,\theta_1,\theta_1+t)})+\xi(\mathds{1}_{\square(0,r_{i-1},\theta_1,\theta_1+t)}),\]

and we have the following equality in law 
\[\left( \dd \int_{\theta_1\xrightarrow{r_i}\theta_1+t}A \right)_{i=1,\dots, n} = \left(F_1(t)\circ \dd B^1_t+\cdots +F_i(t)\circ \dd B^i_t\right)_{i=1\dots,n},\]  
where $(B^i)_{i=1,\dots, n}$ are independent Brownian motions and 
\[F_i(t) = \frac{\dd}{\dd t} \sigma(\square(r_{i-1},r_i,\theta_1,\theta_1 +t)).\]
The vector $(X^1,\dots,X^n)$ has thus the same law as the solution of 
  \[
  \begin{cases}
      \dd  X^i_t = X^i_t \left(F_1(t)\circ \dd B^1_t+\cdots +F_i(t)\circ \dd B^i_t\right) \\
      X^i_0 = 1  \\
      1\leq i\leq n
  \end{cases},
  \]
Let's calculate $\dd \{X^i (X^{i-1}_t)^{-1}\}$. To simplify notations, we will call $Y\coloneq X^i$ and $X\coloneq X^{i-1}$. We have 
\[\dd(Y_t X_t^{-1})= \dd  Y_t X_t^{-1} + Y_t \dd  X_t^{-1} + \dd  Y_t \dd  X_t^{-1}.\]
To compute $\dd  X_t^{-1}$ we write 
\[0=\dd(X_tX_t^{-1})=\dd  X_t X_t^{-1} + X_t \dd  X_t^{-1} + \dd  X_t \dd  X_t^{-1},\]
which gives 
\[\dd  X_t^{-1}=-X_t^{-1}\dd  X_t X_t^{-1} - X_t^{-1}\dd  X_t \dd  X_t^{-1}.\]
This equation tells us that the local-martingale part of $\dd  X_t^{-1}$ is contained in $-X_t^{-1}\dd  X_t X_t^{-1}$, the second term containing only processes of finite variations. Therefore, 
\[\dd  X_t^{-1}=-X_t^{-1}\dd  X_t X_t^{-1} + X_t^{-1}\dd  X_t X_t^{-1}\dd  X_t X_t^{-1}.\]
Now, we have 
\begin{eqnarray*}
    &&\dd(Y_t X_t^{-1})\\
    &=& \dd  Y_t X_t^{-1} + Y_t \dd  X_t^{-1} + \dd  Y_t \dd  X_t^{-1} \\
   &=& Y_t (Y_t^{-1}\dd  Y_t) X_t^{-1} - Y_t(X_t^{-1}\dd  X_t) X_t^{-1} + Y_t(X_t^{-1}\dd  X_t)(X_t^{-1}\dd  X_t)X_t^{-1}- Y_t(Y_t^{-1}\dd  Y_t)(X_t^{-1}\dd  X_t)X_t^{-1}\\
   &=&Y_t \left(F_1(t)\circ \dd B^1_t+\cdots +F_i(t)\circ \dd B^i_t\right) X_t^{-1} - Y_t\left( F_1(t)\circ \dd B^1_t+\cdots +F_{i-1}(t)\circ \dd B^{i-1}_t\right) X_t^{-1} \\
   &-& Y_t\left(F_1(t)\circ \dd B^1_t+\cdots +F_i(t)\circ \dd B^i_t\right)\left( F_1(t)\circ \dd B^1_t+\cdots +F_{i-1}(t)\circ \dd B^{i-1}_t\right)X_t^{-1} \\
   &+& Y_t\left( F_1(t)\circ \dd B^1_t+\cdots +F_{i-1}(t)\circ \dd B^{i-1}_t\right)\left(F_1(t)\circ \dd B^1_t+\cdots +F_{i-1}(t)\circ \dd B^{i-1}_t\right)X_t^{-1} \\
   &=&Y_t \left(F_i(t)\circ \dd B^i_t\right) X_t^{-1} \\
   &=&(Y_tX_t^{-1}) F_i(t)X_t(\circ \dd B^i_t)X_t^{-1}.
\end{eqnarray*}
where we have used that $\circ \dd B^i_t \circ \dd B^{i-1}_t = 0$ since $B^i$  and $B^{i-1}$ are independent.
Therefore, the vector $\left(X^1_t, X^2_t(X_t^{1})^{-1},\dots,X^n_t(X_t^{n-1})^{-1}\right)$ has the same solution as the vector $(Z^1,\dots,Z^n)$, solution of 
  \[
  \begin{cases}
      \dd  Z^1_t = Z^1_t F_1(t)\circ \dd B^1_t\\
      \dd Z^i_t=Z^i F_i(t) \Ad_{\{Z^i_tZ^{i-1}_t\cdots Z^1_t \}}\left(\circ \dd B^i_t\right) \, \, ;i=2,\dots, n \\
      Z^i_0 = 1 \,\, ; i=1,\dots, n  
  \end{cases}.
  \]
Moreover, 
\[\left\{\circ \dd B^1_t, \Ad_{\{ Z^1_t \}}\left(\circ \dd B^i_t\right), \cdots ,\Ad_{\{Z^i_tZ^{i-1}_t\cdots Z^1_t \}}\left(\circ \dd B^i_t\right)\right\}\] 
is a family of independent white noises driving independent Brownian motions. The vector 
$$\left(X^1_t, X^2_t(X_t^{1})^{-1},\dots,X^n_t(X_t^{n-1})^{-1}\right)$$ 
has finally the same law as the solution to 
  \[
  \begin{cases}
      \dd  Z^i_t = Z^i_t F_i(t)\circ \dd W^i_t  \\
      Z^i_0 = 1 \\
    i=1,\dots, n  
  \end{cases},
  \]
where $(W^i)_{i}$ are independent Brownian motions. Therefore, $\left\{X^1_t, X^2_t(X_t^{1})^{-1},\dots,X^n_t(X_t^{n-1})^{-1}\right\}$  is an independent family and  
\begin{itemize}
    \item the law of $X^1_{ {  \theta_2-\theta_1} }$ is
    \[ \mathbb{P} \left( X^1_{\theta_2-\theta_1}\in dg \right)  = p_{\sigma(\square(0,r_1,\theta_1,\theta_2))}(g)\dd g ,\]
    \item the law, for some $i$, of $X^i_{{  \theta_2-\theta_1}}(X_{{  \theta_2-\theta_1}}^{i-1})^{-1}$ is 
    \[ \mathbb{P} \left( X^i_{\theta_2-\theta_1}(X_{\theta_2-\theta_1}^{i-1})^{-1}\in dg \right) = p_{\sigma(\square(r_{i-1},r_i,\theta_1,\theta_2))}(g)  \dd g.\]
\end{itemize}
which proves the second point. 

\begin{figure}[t]
    \centering
    \includegraphics[width=\linewidth]{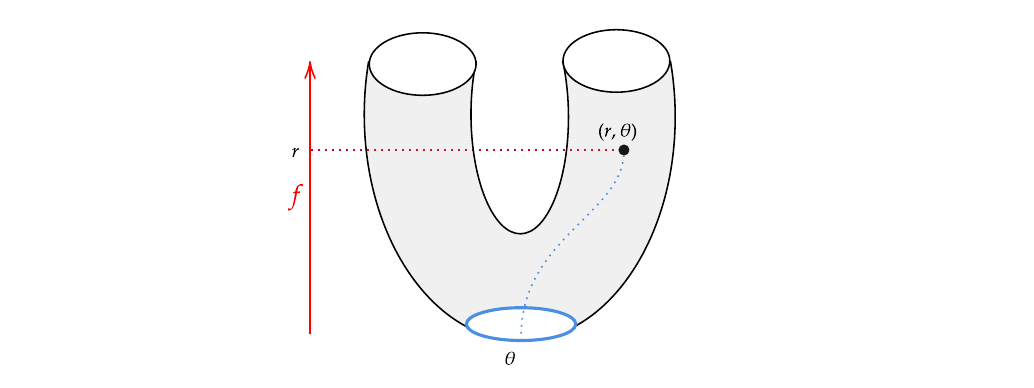}
    \caption{Morse coordinates.}
    \label{Morsecoord}
\end{figure}

\par For the third point, and since all the solutions are strong, it is enough to note from the proof of the first point that the process $\mathrm{Hol}(A,(\theta_i\xrightarrow \cdot \theta_{i+1} )$ is measurable with respect to 
\[\left\{\xi(f); f\in L^2(D), \mathrm{supp} f \subset (\theta_{i},\theta_{i+1})\times[0,1]\right\}.\]
\end{proof}
As a direct consequence of the previous proposition, we get the following corollary. 
\begin{corollary}
    \begin{enumerate}
        \item (Segal Amplitudes) the stochastic process $({\rm Hol}_A\{0\xrightarrow{r} 2\pi \})_{r \in [0,1]}$ has the same law as the restriction of Lévy's holonomy process to the family of curves $\{0\xrightarrow{r} 2\pi \}, r\in [0,1]$.
        \item (Driver--Sengupta formula) For any graph $\Lambda$ whose edges are horizontal or vertical lines, the process
        $(\mathrm{Hol}(A,c))_{c\in \mathrm{Loops}(\Lambda)}$
        verifies the Driver--Sengupta formula~\ref{eq:DSformula}.
    \end{enumerate}
\end{corollary}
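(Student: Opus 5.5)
The plan is to derive both items from Proposition~\ref{DriverSenForA}, using the multiplicativity of parallel transport along concatenated curves and the Markov/semigroup structure of the heat kernel $(p_t)_{t>0}$. The heat kernel is the case $Q_t=p_t$ of the Driver--Sengupta formula~\eqref{eq:DSformula} on the cylinder with free outer boundary.

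For the first item, I would apply point~2 of Proposition~\ref{DriverSenForA} with $\theta_1=0$, $\theta_2=2\pi$. Fix $0\le r_1<\dots<r_n\le 1$. The proof of that point shows that the increments $X^1,\ X^2(X^1)^{-1},\dots,X^n(X^{n-1})^{-1}$ are independent, with laws $p_{\sigma(\square(r_{i-1},r_i,0,2\pi))}(g)\,\dd g$. These are exactly the finite-dimensional marginals of Lévy's holonomy process restricted to the nested family of circles $\{0\xrightarrow{r}2\pi\}$. Indeed, in Lévy's process the loop $0\xrightarrow{r_i}2\pi$ bounds the annulus region together with $0\xrightarrow{r_{i-1}}2\pi$, and the Driver--Sengupta formula on the graph made of these concentric circles and one radial edge $\theta=0$ gives the same product of heat kernels. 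To see this, I would gauge-fix the radial edge to $1_G$, which is legitimate because of the Haar integration over edges, and integrate out nothing else. Equality of all finite-dimensional marginals then yields equality in law of the processes.

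For the second item, take a graph $\Lambda$ with horizontal edges $\theta_a\xrightarrow{r_i}\theta_{a+1}$ and vertical edges $\theta=\theta_a$. Since $i_{\partial_r}A=0$, vertical edges carry trivial holonomy, so $\Lambda$ is already in a (discrete) axial gauge. I would first refine $\Lambda$ to the full grid $\{\theta_a\}\times\{r_i\}$; the Driver--Sengupta formula is invariant under such subdivisions by the convolution property $p_s*p_t=p_{s+t}$. The joint law of horizontal holonomies is then determined as follows. Point~3 gives independence across angular sectors $(\theta_a,\theta_{a+1})$. Within a sector, point~2 gives that the increments $h_{a,i}h_{a,i-1}^{-1}$ are independent and heat-kernel distributed with parameter $\sigma$(face). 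Since the boundary holonomy of the face $\square(r_{i-1},r_i,\theta_a,\theta_{a+1})$ is, up to conjugation and orientation, exactly such an increment (the vertical sides being trivial), the law of the edge variables is $\prod_F p_{\sigma(F)}(\mathrm{Hol}(\partial F))\,\dd h$ on the horizontal edges, with the vertical edges set to $1_G$ and the innermost edges $r=0$ trivial.

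It then remains to compare this with~\eqref{eq:DSformula}, where all edges are Haar distributed. By gauge invariance of $\prod_F p_{\sigma(F)}(\mathrm{Hol}(g,\partial F))$ and class-function invariance of $p_t$, one can perform a gauge transformation that sets the edges of a maximal tree to $1_G$; the vertical edges together with one spine form such a tree. The pushforward of~\eqref{eq:DSformula} to loop holonomies is then unchanged. Because the outer boundary is free, each remaining edge variable enters exactly one new face per step. The main obstacle is bookkeeping this gauge-fixing step carefully, in particular the basepoint and orientation conventions when identifying $\mathrm{Hol}(\partial F)$ with $h_{a,i}h_{a,i-1}^{-1}$ versus $h_{a,i-1}^{-1}h_{a,i}$. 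This is harmless since $p_t$ is conjugation and inversion invariant, and the loop holonomies are gauge invariant up to conjugation at the basepoint.
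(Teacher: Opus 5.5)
Your proposal is correct and follows the same route as the paper, which records this corollary simply as a direct consequence of Proposition~\ref{DriverSenForA}, with a pointer to Driver's planar theorems. Your write-up fills in the bookkeeping the paper leaves implicit: the increments come from point~2, independence across sectors from point~3, refinement invariance from $p_s*p_t=p_{s+t}$, and the maximal-tree gauge fixing (the radial edges, plus the degenerate arcs at $r=0$) uses conjugation and inversion invariance of $p_t$, with the comparison understood for loops based at the centre, i.e.\ modulo gauge.
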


We refer to \cite[Thm 6.4 p.~592 and Thm 6.6 p.~595]{driver1989ym2} for the original planar version of this formula due to Driver and to \cite[p.~289]{levysurvey}.

 \subsection{The case of a general surface} \label{pant}
Now that we have recalled Driver’s construction on the disk, let us adapt this framework to the case of a disk sector, such as the one illustrated in Figure \ref{disk}, viewed as a surface with boundary. Because of the axial gauge and the imposed free boundary conditions, repeating Driver’s procedure in this setting produces a Yang–Mills measure with free boundary condition along the circular arc and zero boundary condition along the boundary radial edges.
\begin{figure}
    \centering
    \includegraphics[width=\linewidth]{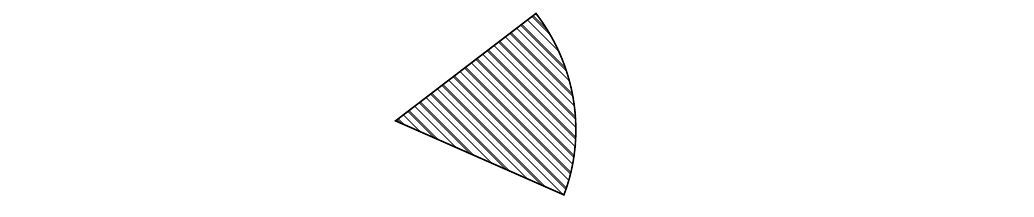}
    \caption{An angular disk sector.}
    \label{disk}
\end{figure}
Next, observe that any oriented surface with boundary can be decomposed into such angular sectors, glued together along their radial edges, so that the resulting boundary of the surface corresponds to the union of the circular boundaries of the sectors (see Figure \ref{gluingtorus}, which illustrates the construction of a torus with one boundary component by gluing together colored edges). The natural question is whether the random connection obtained by gluing the Driver random $1$-forms of each sector yields the free‑boundary Yang–Mills measure on the entire surface. The answer is no. This becomes clear when one computes the distribution of the boundary holonomy: the result does not coincide with the one given by the Driver--Sengupta measure. The discrepancy arises because the gluing operation alters the topology of the domain.
\begin{figure}
    \centering
    \includegraphics[width=\linewidth]{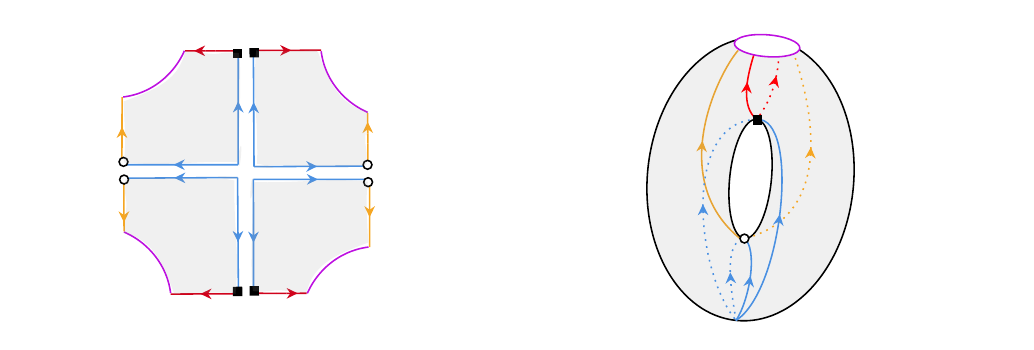}
    \caption{A torus from gluing pieces of angular sector.}
    \label{gluingtorus}
\end{figure}
Indeed, gluing changes the fundamental group $\pi_1(\Sigma)$, and therefore modifies the set of minimizers of the Yang--Mills action $S_{\mathrm{YM}}$. According to the Laplace principle, the measure $\exp(-S_{\mathrm{YM}}(A))\,\mathrm{d}A$ should concentrate equally around all such minimizers, a phenomenon neglected when one glues the constructions sector-by-sector; the patched field constructed directly from the disk sectors concentrates only around the trivial connection $[0]$.
\par To account for this, one needs to describe explicitly the set $\{[B] : S_{\mathrm{YM}}(B)=0\}$, which, in the case of free boundary conditions, can be identified with $G^{2g}$, where $g$ is the genus of $\Sigma$. Each minimizer corresponds to a random flat connection determined by $2g$ independent Haar-distributed elements $(g_a)_{1\le a\le 2g}$. 
\par The main idea is therefore to select one minimizer $B$ at random and superpose it with the fluctuation field constructed by patching. To pick one minimizer $B$ at random, we need to be able to construct a flat connection that assigns prescribed holonomies to the generator of $\pi_1(\Sigma)$. 
\par An easy way to do this is by means of distributional currents. These integration currents (we refer to~\cite[Appendix D]{DRconorm} and \cite[Appendix A p.~62]{DRDuke} for quick and efficient recollections on these notions), denoted by $[c]$ for a given smooth oriented curve $c$ can be integrated against transverse curves as follows~: take a curve $\gamma$ which is everywhere transverse to $c$. Then the integral $\int_\gamma [c]$ of the current $[c]$ over $\gamma$ can be interpreted as the distributional pairing 
$ \int_\Sigma [\gamma]\wedge [c]=\pm 1 $
depending on the orientations at intersection points of $\gamma$ with $c$. Therefore, the current $[c]\log g$ assigns a holonomy of $g$ or $g^{-1}$ for some curves that crosses $c$, depending on the direction~\cite[Lemma 6.2 p.~1836]{DRWitten}.
\par To adapt this to our case, look at Figure \ref{disk}. We would like to construct, for given group elements $g_1$ and $g_2$, a connection that assigns $g_1$ for the holonomy of the small blue loop, and $g_2$ for the big blue loop. Since the orange and red loops cross the blue loops transversally, we can simply set 
\[[\mathrm{orange}]\log g_1 +[\mathrm{red}]\log g_2.\]

\par This leads to an explicit expression for the free-boundary Yang--Mills connection on $\Sigma$:
\[
A_{\mathrm{YM}, \Sigma, \mathrm{Free}}
= A_{\mathrm{noise}} 
+ \sum_{1 \le a \le 2g} [c_a]\log g_a,
\]
where $c_a$ are loops transverse to basic loops of $\pi_1(\Sigma)$ that could for example be unstable curves of some Morse function on $\Sigma$ such that $\nabla f$ satisfies the Smale transversality condition. 
The first term represents the noise contribution, and the second term encodes a random flat connection.
\par Let us compute the holonomy of the boundary component of $A_{\mathrm{YM}, \Sigma, \mathrm{Free}}$ where $\Sigma$ is of genus $g$. It is given by the product
\[\mathrm{Hol}(\partial \Sigma)=X_1U_1\cdots X_{2g}U_{2g}X_{2g+1}U_1^{-1}\cdots X_{2g}U_{2g}^{-1},\]
where $(U_i)_{1\leq i\leq 2g}$ are independent and Haar distributed $G-$valued random variables, and the $X_i$ are as before, the holonomy of the De Rham primitive of the white noise in the sector within $U_i, U_{i+1}$. 
Let $f$ be a measurable function. We have 
\begin{eqnarray*}
    \mathbb{E}[f(\mathrm{Hol}(\partial \Sigma))] 
    &=&\int_{G^{6g}} f(x_1u_1\cdots x_{2g}u_{2g}x_{2g+1}u_1^{-1}\cdots x_{4g}u_{2g}^{-1})p_{\sigma_1}(x_1)\cdots p_{\sigma_{4g}}(x_{4g})\dd x \dd u,
\end{eqnarray*}
where $\sigma_i$ is the are of the corresponding sector.
\begin{eqnarray*}
        \mathbb{E}[f(\mathrm{Hol}(\partial \Sigma))] &=&\int_{G^{6g}} f(\alpha)p_{\sigma_1}(\alpha u_{2g}x_{4g}^{-1}\cdots u_{1}x_{2g+1}^{-1} u_{2g}^{-1}x_{2g}^{-1}\cdots x_{2}^{-1} u_1^{-1})p_{\sigma_2}(x_2)\cdots p_{\sigma_{4g}}(x_{4g}) \dd x \dd u \\
    &=&\int_{G} f(\alpha) d\alpha \int_{G^{6g-1}}p_{\sigma(\Sigma)}(\alpha[a_1,b_1]\cdots [a_{2g},b_{2g}]) \dd a \dd b.
\end{eqnarray*}
Therefore, 
\[\mathbb{P}(\mathrm{Hol}(\partial \Sigma)\in \dd g) = \dd g \int_{G^{6g-1}}p_{\sigma(\Sigma)}(g[a_1,b_1]\cdots [a_{2g},b_{2g}]) \dd a \dd b,\]
which is the same as what we would have obtained from the Driver--Sengupta formula. 
\par There are some technical difficulties we need to face to define this measure as describes here. Namely, we need to understand how to glue distributions. This will be done in subsequent sections.

\section{Preliminaries on Morse theory} \label{s:morse}

We gather the main definitions and results that we will use, namely on how to decompose surfaces using Morse theory.
\par Let $\Sigma$ be a smooth, compact, oriented and closed surface.
We recall that $f:\Sigma\rightarrow\R$ is a \emph{Morse function} if all its critical points, the set of which we denote by $\mathrm{Crit}(f)$, are non-degenerate. It can be shown that the set of Morse functions is open and dense in the $\mathcal{C}^\infty$ topology. A Morse function is said to be perfect if it has exactly $2g+2$ critical points, where $g$ is the genus of the surface $\Sigma$, and if all its critical values are distinct. Note that the set of perfect Morse functions is also open (and nonempty) in the $\mathcal{C}^\infty$ topology.

\par A fundamental property of Morse functions is the following so-called Morse Lemma.
\begin{lemma}[Morse Lemma] Let $f:\Sigma\rightarrow \mathbb{R}$ be a Morse function. Then, for any $a\in \operatorname{Crit}(f)$, there exists a smooth chart $x\in U\subset M\mapsto (x_1(x),x_2(x))\in \R^2$ centered at $a$, called a \emph{Morse Chart near $a$}\label{MorseChart}, in which the function $f$ reads
$$
f(x)=f(a)+\frac {1}{2}\left(\varepsilon_1x_1^2+\varepsilon_2 x_2^2\right),
$$
with $\varepsilon_1,\varepsilon_2\in\{-1,1\}$. Moreover, 
\begin{itemize}
    \item If $\varepsilon_1=\varepsilon_2=1$, we say that $a$ has index $0$, or is a local minimum.
    \item If $\varepsilon_1=1,\varepsilon_2=-1$, we say that $a$ has index $1$, or is a saddle point.
    \item If $\varepsilon_1=\varepsilon_2=-1$, we say that $a$ has index $2$, or is a local maximum. 
\end{itemize}  
\end{lemma}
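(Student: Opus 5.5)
The statement is the classical Morse Lemma in dimension two, and I would prove it by the standard Hadamard-type argument followed by a diagonalization of a smoothly varying quadratic form. Fix $a\in\operatorname{Crit}(f)$ and any smooth chart $y=(y_1,y_2)$ centered at $a$, defined on a convex neighbourhood. Since $f(a)$ is a critical value with $\dd f(a)=0$, Taylor's formula with integral remainder, applied twice, gives
\[
f(y)=f(a)+\sum_{i,j=1}^2 h_{ij}(y)\,y_iy_j,\qquad h_{ij}(y)=\int_0^1(1-t)\,\partial_i\partial_j f(ty)\,\dd t,
\]
with $h_{ij}=h_{ji}$ smooth and $h_{ij}(0)=\tfrac12\partial_i\partial_jf(0)$. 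Non-degeneracy of $a$ means exactly that the symmetric matrix $H(y)=(h_{ij}(y))$ is invertible at $y=0$, hence on a small neighbourhood.

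The key step is then an inductive completion of squares, with smooth dependence on the base point. After a preliminary linear change of coordinates I may assume $h_{11}(0)\neq 0$; this is possible since $H(0)\neq0$, and if only the off-diagonal entry is nonzero the rotation $y\mapsto (y_1+y_2,y_1-y_2)$ creates a nonzero diagonal entry. Near $0$, set
\[
x_1=\sqrt{2|h_{11}(y)|}\Bigl(y_1+\frac{h_{12}(y)}{h_{11}(y)}y_2\Bigr),
\]
so that $\sum h_{ij}y_iy_j=\tfrac12\varepsilon_1x_1^2+k(y)y_2^2$ with $\varepsilon_1=\operatorname{sign}h_{11}(0)$ and $k=(h_{11}h_{22}-h_{12}^2)/h_{11}$ smooth. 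Since $\det H(0)\neq0$, we get $k(0)\neq0$, and I set $x_2=\sqrt{2|k(y)|}\,y_2$ with $\varepsilon_2=\operatorname{sign}k(0)$. The Jacobian of $y\mapsto(x_1,x_2)$ at $0$ is triangular with nonzero diagonal entries $\sqrt{2|h_{11}(0)|}$ and $\sqrt{2|k(0)|}$. By the inverse function theorem, $(x_1,x_2)$ is therefore a smooth chart on some neighbourhood $U$ of $a$, and in it $f=f(a)+\tfrac12(\varepsilon_1x_1^2+\varepsilon_2x_2^2)$.

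The classification into index $0$, $1$ and $2$ follows from Sylvester's law of inertia: the number of negative signs among the $\varepsilon_i$ equals the number of negative eigenvalues of the Hessian of $f$ at $a$, so it is independent of the chart. Reordering the coordinates lets us write the saddle case as $\varepsilon_1=1$, $\varepsilon_2=-1$. The only mildly delicate point is keeping the square roots smooth. This is handled by shrinking $U$ so that $h_{11}$ and $k$ do not vanish on it, and by ensuring the preliminary linear change yields $h_{11}(0)\neq0$; everything else is routine.
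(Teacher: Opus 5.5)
Your proof is correct. It is the classical argument, as in Milnor's \emph{Morse Theory}: Hadamard's lemma gives $f=f(a)+\sum_{i,j}h_{ij}(y)y_iy_j$ with smooth symmetric $h_{ij}$, then you complete squares with smoothly varying coefficients and apply the inverse function theorem. Your triangular-Jacobian check and the appeal to Sylvester's law of inertia for the index are exactly what is needed.

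The paper does not prove this lemma at all; it is recalled as a standard fact of Morse theory, so there is no alternative route to compare. The only case you leave implicit is $h_{11}(0)=0\neq h_{22}(0)$, which is handled by swapping $y_1$ and $y_2$ before completing the square.
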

With this Lemma at hand, we introduce the notion of adapted  metrics~\cite[\S2]{HarveyLawson}.

\begin{definition}[Adapted Metric] Let $f:\Sigma\rightarrow \mathbb{R}$ be a Morse function. We fix a Morse chart $x_a$
near every point in $a\in \operatorname{Crit}(f)$. We say that a metric $g$ on $\Sigma$ is adapted to $f$ if it reads 
\[
g= \dd x_{a,1}^2+ \dd x_{a,2}^2.
\] 
in every Morse chart $x_a$.\label{AdaptedMetric}
\end{definition}
\begin{figure}[t]
    \centering
    \includegraphics[width=\linewidth]{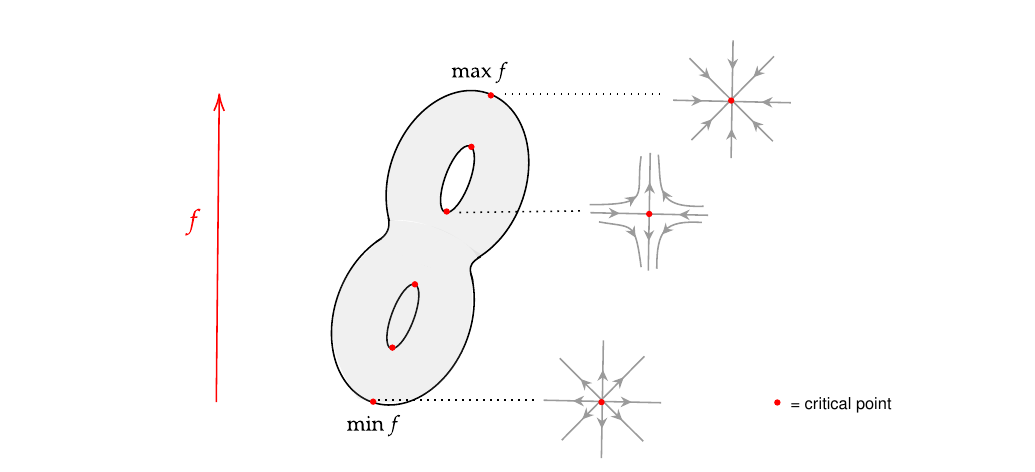}
    \caption{A perfect Morse function.}
    \label{fig:perfectMorse}
\end{figure}
Such a metric is flat near the critical locus of $f$.
By a partition of unity argument, it is not hard to verify that such metrics exist. 
We now fix $f:\Sigma\rightarrow \mathbb{R}$ a perfect Morse function once and for all. Given a metric $g$, one can define the corresponding gradient vector field $\nabla_gf$ by duality
$$
\forall x\in\Sigma,\quad \dd_xf=g_x(\nabla_gf(x),.).
$$
This induces a complete and smooth flow on $\Sigma$ that we denote by $\varphi_f^t:\Sigma\rightarrow \Sigma$. One can verify that
\begin{equation}\label{e:lyapunov-property}
 \forall t_1,t_2\in\R,\ \forall x\in\Sigma,\quad
 f\circ\varphi_f^{t_2}(x)-f\circ\varphi_f^{t_1}(x)=\int_{t_1}^{t_2}\|\dd_{\varphi_f^t(x)}f\|_{g^*(x)}^2\dd t,
\end{equation}
where $g^*$ is the metric induced by $g$ on $T^*\Sigma$. In particular, $f$ is non-decreasing along the flow lines of $\varphi_f^t$. A key property of gradient flows is that, for any $x\in M$, there exists $x_-$ and $x_+$ in $\mathrm{Crit}(f)$ such that
\begin{equation}\label{e:limitset}
 \lim_{t\rightarrow\pm\infty}\varphi_f^t(x)=x_\pm.
\end{equation}

We say that the pair $(f,g)$ has the Morse-Smale \label{MorseSmale} property if there is no gradient line connecting two distinct saddle points of $f$. It is proved in~\cite[Th.~14.4]{HarveyLawson}\cite{Laudenbach}\cite[Thm 2.2.5 p.~40]{AudinDamian} that there exists Morse-Smale pair with $g$ being an adapted metric and $f$ a perfect Morse function.
This condition is essential for defining the celebrated Morse--Witten complex.
A typical example of a perfect Morse function is shown in~\ref{fig:perfectMorse}. 

\emph{From this point on, we will always assume that the pair $(f,g)$ is Morse-Smale, the function $f$ is perfect and the metric $g$ is adapted.} 
Given a critical point $a$ of $f$, we define the unstable and stable manifolds of $a$ as \label{StableUnstable}
$$
W^u(a)\coloneq \left\{x\in\Sigma:\ \lim_{t\rightarrow-\infty}\varphi_f^t(x)=a\right\}\ \text{ and } \
W^s(a)\coloneq \left\{x\in\Sigma:\ \lim_{t\rightarrow+\infty}\varphi_f^t(x)=a\right\}.
$$
These are smooth embedded curves of $\Sigma$ which are respectively  diffeomorphic to $\R^{2-\mathrm{ind}(a)}$ and to $\R^{\mathrm{ind}(a)}$. In particular when $a$ is a saddle point of $f$, these are gradient flow lines connecting the saddle point to either the minimum or maximum of $f$; check~\ref{fig:stableunstable}.
\begin{figure}
    \centering
    \includegraphics[width=1.1\linewidth, trim={1cm 0 0 0}]{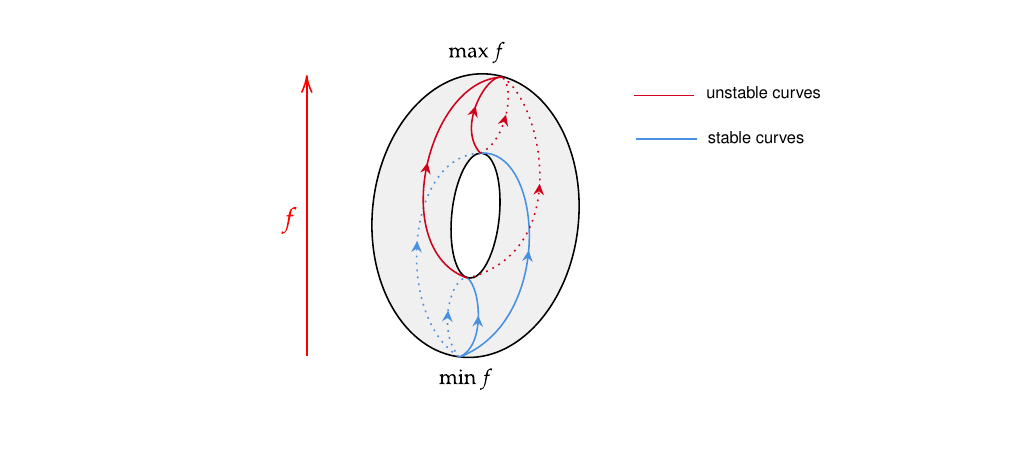}
    \caption{Stable and Unstable curves.}
    \label{fig:stableunstable}
\end{figure}
\par As we are working with compact surfaces, one can verify that these sub-manifolds induce de Rham currents, i.e. the unstable currents $U_a$ defined as~: \label{UnstableCurrents}
$$
\forall\psi\in\Omega^{2-\mathrm{ind}(a)}(\Sigma),\quad\langle U_a ,\psi\rangle\coloneq \int_{W^u(a)}\psi,
$$
and the stable currents $S_a$ defined as
$$
\forall\psi\in\Omega^{\mathrm{ind}(a)}(\Sigma),\quad\langle S_a ,\psi\rangle\coloneq \int_{W^s(a)}\psi.
$$
Thanks to the local expression of the vector field, one has

\begin{enumerate}
\item near the maximum
$U_a=\delta_0(x_1,x_2)\dd x_1\wedge \dd x_2;$
\item near the saddle points
$U_a=\delta_0(x_2)\dd x_2;$
\item near the minimum $U_a=1$. In fact, as $f$ has a single minimum, one has globally $U_a=1$ in the case where $\mathrm{ind}(a)=0$.
\end{enumerate}

In the sequel, we will denote by $N^*W^{u}(a)\coloneq \{(x;\xi); x\in W^u(a), \xi\in \left(T_xW^u(a)\right)^\perp \}$ (resp $N^*W^{s}(a)$) the conormal bundle of the unstable (resp stable) manifold $W^u(a)$ (resp $W^s(a)$).

\section{A global resolution of a surface by a cylinder}
\label{s:globalresolutionsurface}
Let us state informally the goal of this section. Resolving the surface
$\Sigma$ allows us to identify it with some cylinder. This lets us transfer our computations in~\ref{ss:DriverCylinder} from the cylinder to general surfaces. Furthermore, our resolution aims to provide a transparent discussion of the unstable currents that appear in the expression of the Yang--Mills measure, as in figures~\ref{pant} and~\ref{fig:stableunstable}.
\par Consider a triple $(\Sigma,f,g)$ of a surface $\Sigma$ equipped with a Morse function $f$ and an adapted metric $g$ as above.
 Blow up the surface $\Sigma$ at both $\min(f)$ and $\max(f)$. This produces a  surface $\mathcal{S}$ with two boundary components, as shown in~\ref{fig:blowup}. We will call the one obtained by blowing up the minimum ingoing : $\partial\mathcal{S}_{\mathrm{in}}$;  and the other one, outgoing :  $\partial\mathcal{S}_{\mathrm{out}}$. We still denote by $W^{u/s}(a)$ the lift of the unstable/stable curves to the blow--up surface $\mathcal{S}$.
\begin{figure}
    \centering
    \includegraphics[width=0.9\linewidth, trim={0 2cm 0 0}]{ 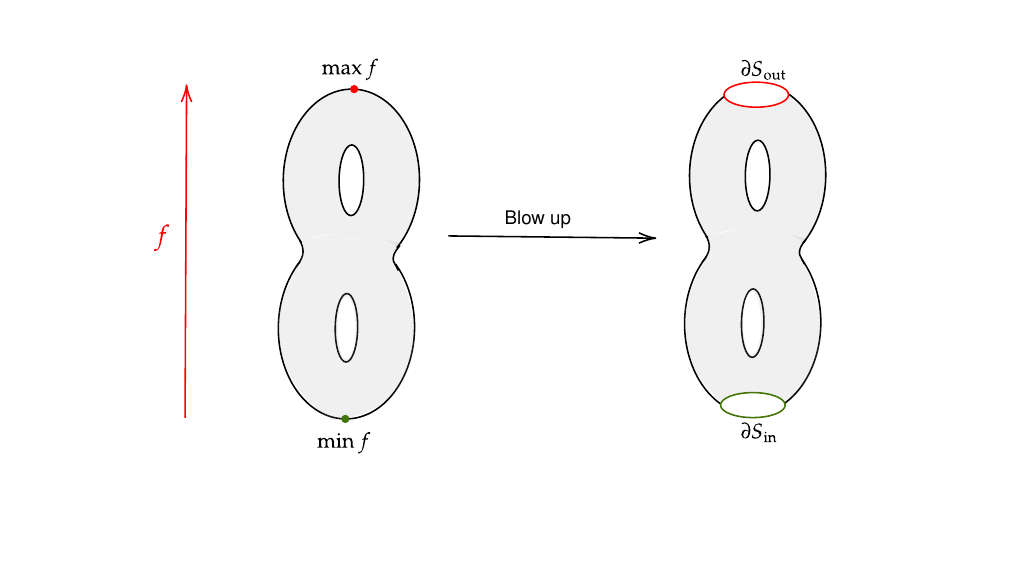}
    \caption{Blown up surface.}
    \label{fig:blowup}
\end{figure}

\subsection{A global angular form and period coordinates}

Concretely, blowing up the surface at $\min(f)$ consists in working in polar coordinates. For $(x_1,x_2)$ a Morse chart at $\min(f)$, 
\[\nabla f=x_1\partial_{x_1}+x_2\partial_{x_2}.\]
Set 
\[r=\sqrt{x_1^2+x_2^2}\,\, , \,\,\,\, x_1=r\cos(\theta), \text{ and }\,\, x_2=r\sin(\theta);\]
and the map
\[(r,\theta)\in [0,\varepsilon)\times \mathbb{S}^1_\theta\mapsto  x_1=r\cos(\theta),x_2=r\sin(\theta) \]
realizes the blow--up. The preimage of $\min(f)=(0,0)$ is the circle $\{0\}\times \mathbb{S}^1$ which is also given in polar coordinates by the equation $r=0$.
The vector field $\nabla f$ lifts automatically to some $b$--vector field still denoted by $V$ (the subscript $b$ stands for boundary) on the resolved surface $\mathcal{S}$, the corresponding flow is still denoted by $(\varphi^t_f)_{t\in \mathbb{R}}$. The $b$-vector fields are in particular always tangent to the boundary.
We refer to~\cite{Grieserbcalc},~\cite{Vasy18},~\cite[2.1 p.~10]{Hintzbcalc} for more discussions on $b$--vector fields.
In polar coordinates near $r=0$, the vector field $V$ reads
$V=r\partial_r$.
Informally, the goal of what comes next is to extend these angular variables $\theta$, which is only defined near $\min(f)$, to the whole surface $\mathcal{S}$ in such a way that the level sets of $\theta$ are flowlines of $V$.

Let us try to give some integral formula,. We work in polar coordinates $(r,\theta)$ near the blown-up minimum and where the vector field $V$ writes $V=r\partial_r$. 
Start from any function $\chi$ which equals $1$ near the minimum and vanishes outside $r\leq 3$ say.
Then observe that 
\[\lim_{T\rightarrow +\infty} \chi\circ \varphi^{-T}_f=1\]
where the convergence holds almost everywhere, since every point 
\[x\in \mathcal{S}\setminus \overline{\cup_{a\in \mathrm{Crit}(f)_1} W^u(a) }\]
is attracted by $\min(f)$ under the backward flow.
Therefore, using the fundamental Theorem of calculus and the definition of the following definition of the Lie derivative \[\frac{\dd}{\dd t}F\circ \varphi^{t}_f=\left(\mathcal{L}_VF\right)\circ \varphi^{t}_f,\] 
we may write 
\begin{align*}
\chi\circ \varphi^{-T}_f=\chi+\chi\circ  \varphi^{-T}_f-\chi=\chi+\int_0^T \left(-\mathcal{L}_V\chi\right)\circ  \varphi^{-s}_f\dd s.    
\end{align*}
Now we let $T\rightarrow +\infty$ and set $\psi=-\mathcal{L}_V\chi$ which is smooth on $\Sigma$ since it vanishes near $\min(f)$. This yields
a continuous partition of unity adapted to the gradient flow which writes
\begin{align*}
 1=\chi+\int_0^\infty \psi\circ  \varphi^{-s}_f\dd s.   
\end{align*}

The above suggests to define, at least at the formal level, our global angular form $\overline{\alpha}$ on $\mathcal{S}$ by the integral formula
\begin{align*}
   \overline{\alpha}= \chi(r)\dd \theta +\int_0^\infty \left(\psi \dd \theta\right)\circ  \varphi^{-t}_f \dd t
\end{align*}
where  the element $\left(\psi \dd \theta\right)$ is well--defined. Indeed, $\dd \theta$ is well--defined on the support of $\psi$ which happens to be contained in the disc $\{r\leq 3\}$.

\begin{lemma}\label{lem:angvar1}
The above $1$-form $\overline{\alpha}$ is well--defined, smooth on $\mathcal{S}\setminus 
\overline{\cup_{a\in \mathrm{Crit}(f)_1} W^u(a)}$, 
and satisfies the following.
\begin{enumerate}
\item  For any $x$ in the domain of definition of the chart $(r,\theta)$, we have
$\overline{\alpha}=\dd \theta$. This means that $\overline{\alpha}$ is an angular form,
\item The form $\overline{\alpha}$ is invariant by the flow $\mathcal{L}_V   \overline{\alpha}=0,$ and is closed in the sense of currents $
\dd \overline{\alpha}=0$, 
with periods in $2\pi\mathbb{Z}$.
\end{enumerate}
\end{lemma}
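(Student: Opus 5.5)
The plan is to show that, on the open set $\Omega\coloneq\mathcal{S}\setminus\overline{\cup_{a\in\mathrm{Crit}(f)_1}W^u(a)}$, the integral defining $\overline{\alpha}$ is locally a finite integral. Everything then follows from pointwise identities for smooth forms.

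\textbf{Local finiteness.} Every $x\in\Omega$ is attracted by $\partial\mathcal{S}_{\mathrm{in}}$ under the backward flow. By compactness and continuity of the flow, for any compact $K\subset\Omega$ there is a $T_K$ with $\varphi_f^{-t}(K)\subset\{r<1\}$ for all $t\geq T_K$. Near the blown-up minimum the lifted field is $V=r\partial_r$, so the backward flow only decreases $r$ and keeps points inside $\{r<1\}$. Since $\psi=-\mathcal{L}_V\chi$ is supported in $\{1\leq r\leq 3\}$, we can choose $\chi$ with $\chi=1$ on $\{r\leq 1\}$. Then $(\psi\,\dd\theta)\circ\varphi_f^{-t}$ vanishes on $K$ for $t\geq T_K$. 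Hence on $K$
\[
\overline{\alpha}=\chi\,\dd\theta+\int_0^{T_K}(\varphi_f^{-t})^*(\psi\,\dd\theta)\,\dd t,
\]
which is an integral of smooth forms over a compact interval, so $\overline{\alpha}$ is smooth on $\Omega$.

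\textbf{Point 1.} On the chart domain $\{r<\varepsilon\}$, the flow acts by $(r,\theta)\mapsto(e^{t}r,\theta)$, so $(\varphi_f^{-t})^*\dd\theta=\dd\theta$. The integral becomes $\dd\theta\int_0^\infty\psi\circ\varphi_f^{-t}\,\dd t$. The scalar partition of unity $1=\chi+\int_0^\infty\psi\circ\varphi_f^{-s}\,\dd s$ proved above then gives $\overline{\alpha}=\dd\theta$. The only care needed is that the pullback of $\dd\theta$ is used only where the flowed point stays in the chart, which holds since $\psi$ is supported in $\{r\leq 3\}$.

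\textbf{Point 2, invariance.} I would compute $(\varphi_f^{s})^*\overline{\alpha}$ for $s\geq 0$. The change of variables $t\mapsto t-s$ in the integral produces
\[
(\varphi_f^s)^*(\chi\,\dd\theta)+\int_0^s(\varphi_f^{s-t})^*(\psi\,\dd\theta)\,\dd t,
\]
and on $\{r\leq 3\}$ this equals $\big(\chi\circ\varphi_f^s+\int_0^s\psi\circ\varphi_f^{s-t}\,\dd t\big)\dd\theta=\chi\,\dd\theta$, again by the fundamental theorem of calculus. So $(\varphi_f^s)^*\overline{\alpha}=\overline{\alpha}$, and differentiating at $s=0$ gives $\mathcal{L}_V\overline{\alpha}=0$.

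\textbf{Point 2, closedness.} On $\Omega$, Cartan's formula gives
\[
\dd\overline{\alpha}=\dd\chi\wedge\dd\theta+\int_0^\infty(\varphi_f^{-t})^*(\dd\psi\wedge\dd\theta)\,\dd t .
\]
Both $\chi$ and $\psi$ are functions of $r$ alone near the minimum, so $\dd\chi\wedge\dd\theta$ and $\dd\psi\wedge\dd\theta$ are multiples of $\dd r\wedge\dd\theta$ and do not vanish individually. The useful fact is that $i_V\overline{\alpha}=0$, because $i_V\,\dd\theta=0$ and $V$ commutes with the pullbacks. With $\mathcal{L}_V\overline{\alpha}=0$, Cartan's formula gives $i_V\dd\overline{\alpha}=0$. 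In dimension two a $2$-form annihilated by a nonvanishing vector field is zero, so $\dd\overline{\alpha}=0$ pointwise on $\Omega$.

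\textbf{Main obstacle.} The hard step is the current statement across $\overline{\cup_a W^u(a)}$, which has measure zero. I would show that the coefficients of $\overline{\alpha}$ are locally bounded up to the unstable curves, with one-sided limits. Near a saddle, in the Morse chart, $\overline{\alpha}$ equals $h(x)\,\dd x_2$ with $h$ bounded, since it is a pullback of $\dd\theta$ along flow lines. Boundedness makes $\overline{\alpha}$ $L^1_{\mathrm{loc}}$, hence a current. To test $\dd\overline{\alpha}=0$ against $\phi$, I would excise a $\delta$-tube around the unstable curves, apply Stokes, and show that the boundary terms cancel as $\delta\to0$. They cancel because the pullback of $\overline{\alpha}$ to curves parallel to $W^u(a)$ vanishes: these curves are tangent to $V$ and $i_V\overline{\alpha}=0$.

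\textbf{Periods.} Any closed curve in $\Omega$ can be flowed backwards into the collar $\{r<\varepsilon\}$ by invariance, since its image is compact. There $\overline{\alpha}=\dd\theta$, whose periods lie in $2\pi\mathbb{Z}$. Curves crossing the unstable curves are handled with the jump analysis above: the jumps are integrals of $\overline{\alpha}$ along small transverse segments, which are in turn period-type quantities.
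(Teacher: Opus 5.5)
Your argument breaks at its last step, and that step cannot be repaired: the claim it aims at is false for cycles crossing the unstable curves.

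You show yourself that $\overline{\alpha}$ has bounded coefficients. So its integral over a transverse segment of length $\delta$ is $O(\delta)$, and it does not see the jump of $\theta$ across $W^u(a)$. That jump is $\theta_+(0)-\theta_-(0)$, where $\theta_\pm(0)$ are the angles at which the two branches of $W^s(a)$ reach $\partial\mathcal{S}_{\mathrm{in}}$, and these are distinct points of the circle.

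Concretely, work in the hyperbolic box of $a$, with flow $(e^{-t}x,e^{t}y)$ and $W^u(a)=\{x=0\}$. Let $\theta_\pm(s)$ be the angle of the flow line through $(\pm1,s)$. Let $c$ be the closed curve made of four pieces: the flow line from $\{r=\tfrac12\}$ up to $(-1,\epsilon)$, the segment $\{y=\epsilon\}$ from $(-1,\epsilon)$ to $(1,\epsilon)$, the flow line from $(1,\epsilon)$ down to $\{r=\tfrac12\}$, and an arc of $\{r=\tfrac12\}$ closing up. The contributions are as follows.
The two flow-line pieces contribute $0$, since $\iota_V\overline{\alpha}=0$.
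On the segment, $\theta$ is continuous on each side of $x=0$, so it contributes $\theta_+(\epsilon)-\theta_+(0)+\theta_-(0)-\theta_-(\epsilon)$.
The arc contributes $\theta_-(\epsilon)-\theta_+(\epsilon)+2\pi k$.
Hence $\int_c\overline{\alpha}=\theta_-(0)-\theta_+(0)+2\pi k\notin 2\pi\mathbb{Z}$.

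The paper's own argument for transverse cycles does not rescue the claim either. The weak limit of $\varphi_f^{T*}[\gamma]$ contains arcs of $\partial\mathcal{S}_{\mathrm{in}}$ joining the endpoints of the curves $W^s(a)$. The right-hand side of Lemma~\ref{lem:convergencecycles} has boundary $\sum_a(\int U_a\wedge[\gamma])\,\partial S_a\neq 0$ as soon as some $\int U_a\wedge[\gamma]\neq0$, whereas every $\varphi_f^{T*}[\gamma]$ is closed.

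The rest of your proposal does establish the following: $\overline{\alpha}$ is smooth and flow-invariant on $\mathcal{S}\setminus\overline{\cup_aW^u(a)}$, it is closed as a current on $\mathrm{int}(\mathcal{S})$, and its periods lie in $2\pi\mathbb{Z}$ on cycles contained in $\mathcal{S}\setminus\overline{\cup_aW^u(a)}$. That is all one needs to define $\theta$ there, and $\theta$ must then jump along the unstable curves, as Proposition~\ref{prop:regpseudo} records.

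For these parts your route is a genuine, more elementary alternative to the paper's. Local finiteness of the integral replaces the exponential convergence in $\mathcal{H}^m$ and the wave-front-set bound. The identity $\iota_V\overline{\alpha}=0$ together with Cartan's formula replaces the paper's Step~4.

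A few repairs are needed:
\begin{itemize}
\item Near a saddle, the bound comes from the fact that $\overline{\alpha}$ is a bounded multiple of $\dd(xy)$ on each side of $W^u(a)$ (Proposition~\ref{prop:regpseudo}), not from an expression $h\,\dd x_2$. With this, your tube argument goes through, the boundary terms being $O(\delta)$ rather than exactly zero.
\item Points of $\partial\mathcal{S}_{\mathrm{out}}$ are fixed by the lifted flow, so local finiteness fails there and smoothness up to that boundary needs a separate remark.
\item The identity $\chi\,\dd\theta+\int_0^T(\varphi_f^{-t})^*(\psi\,\dd\theta)\,\dd t=(\varphi_f^{-T})^*(\chi\,\dd\theta)=(\chi\circ\varphi_f^{-T})\,\overline{\alpha}$, valid a.e., together with dominated convergence, shows that your pointwise form is the paper's current. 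In particular that current has no singular part on $W^u(a)$, which is exactly why the jumps are invisible to it.
\end{itemize}
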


\begin{proof}
Step 1. The first step is to prove that our formula gives a $1$--form that coincides with the angular form $\dd \theta$ near $\min(f)$. This follows
from the partition of unity formula satisfied by the pair $(\chi,\psi)$ and the fact that $\dd \theta$ is locally invariant by the flow.

Step 2. The second step consists in showing that
\[\int_0^\infty \left(\psi(r) \dd \theta\right)\circ \varphi^{-t}_f \dd t\]
exists, meaning that it converges in  appropriate spaces.
The $1$--form $\left(\psi(r) \dd \theta\right)$ is smooth. By a result of the first author and Rivière~\cite{DR16}, \cite{DRWitten}, there exists an anisotropic Sobolev space $\mathcal{H}^m\left( \Sigma\right)$ of currents adapted to the dynamics ~\cite[section 4 p.~1421]{DR16}, with $C^\infty\left( \Sigma\right)\subset \mathcal{H}^m\left( \Sigma\right) \subset \mathcal{D}^\prime(\Sigma)$, and where $m\in S^0(T^*M)$ is a symbol of degree $0$ acting as a variable order for the Sobolev space~\cite[4.1.2 p.~1422 and 4.1.3 p.~1423]{DR16}, such that
the pull--back $\left(\psi(r) \dd \theta\right)\circ \varphi^{-t}_f$ converges exponentially fast to the equilibrium in $\mathcal{H}^m\left( \Sigma\right)$~\cite[Prop 5.7 p.~1429]{DR16}. Quantitatively, 
\begin{align*}
\left(\psi(r) \dd \theta\right)\circ \varphi^{-t}_f=\sum_{a\in \mathrm{Crit}(f)} \left(\int_{W^s(a)} \psi(r) \dd \theta \right) U_a+\mathcal{O}_{\mathcal{H}^m}(e^{-t})  
=\mathcal{O}_{\mathcal{H}^m}(e^{-t}),
\end{align*}
where 
we used
the fact that the $1$--form $\psi(r) \dd \theta $ belongs to $\ker(\iota_V)$. Therefore, its integral over all stable curves $W^s(a)$ vanishes : $\int_{W^s(a)}\psi(r) \dd \theta=0$.
The remainder converges to $0$ exponentially fast in the appropriate anisotropic Sobolev space $\mathcal{H}^m$, namely
\[ \Vert \left(\psi(r) \dd \theta\right)\circ \varphi^{-t}_f\Vert_{\mathcal{H}^m(\Sigma)}\leq Ce^{-Kt} .\]
The partial conclusion is that $\int_0^\infty \left(\psi(r) \dd \theta\right)\circ \varphi^{-t}_f \dd t$ converges in the anisotropic Sobolev space $\mathcal{H}^m$ and therefore $\overline{\alpha}$ is well--defined in $\mathcal{H}^m$.

Recall given a closed conic set $\Gamma\subset T^*\text{int}(\mathcal{S})$, we denote by $\mathcal{D}^\prime_{\Gamma}$ the currents whose wave front set is contained in the conic set $\Gamma$.
We refer the reader to paragraph~\ref{ss:WF} of the appendix for recollections on the notion of wave front set of a current.
By the results of~\cite[section 7.2 p.~1841]{DRWitten}, the discussion of \cite[section 3.1.2 p.~1810]{DRWitten} ensures that once $\overline{\alpha}$ belongs to $\mathcal{H}^m$ for some given order function $m$, it belongs to  $\mathcal{H}^{Lm}$ for all $L\geqslant 1$, in other words one can scale the order functions. Then arguing as in 
~\cite[section 7.2 p.~1841]{DRWitten}, we can ensure that the intersection of all 
$\mathcal{H}^{Lm}$ for all $L\geqslant 1$ is contained in $\mathcal{D}^\prime_{\Gamma_1}$ for 
$\Gamma_1=\overline{\cup_{a\in \mathrm{Crit}(f)_1} N^*W^u(a) }$.
The wave front set of $\overline{\alpha}$ is contained in 
\[\overline{\cup_{a\in \mathrm{Crit}(f)_1} N^*W^u(a) },\] and therefore 
$\overline{\alpha}$ is smooth outside 
\[\overline{\cup_{a\in \mathrm{Crit}(f)_1} W^u(a)}.\]
Step 3. We calculate $\mathcal{L}_V\overline{\alpha}$. 
Observe that, by construction, we have the identity 
$\mathcal{L}_V\chi+\psi=0.$
We must first choose some order function $m\in S^0(T^*M)$ in such a way that in the space
$\mathcal{H}^{m-1}$, we have 
\[\Vert \left(\psi(r) \dd \theta\right)\circ \varphi^{-t}_f\Vert_{ \mathcal{H}^{m-1}}=\mathcal{O}(e^{-Ct}) \]
for some $C>0$. 
We have 
\begin{align*}
 &\mathcal{L}_V\overline{\alpha}=\mathcal{L}_V\left( \chi \dd \theta +\int_0^T \left(\psi \dd \theta\right)\circ  \varphi^{-t}_f\dd t  \right) =-\psi \dd \theta-\int_0^T  \frac{\dd}{\dd t} \left(\psi \dd \theta\right)\circ  \varphi^{-t}_f \dd t\\
&=-\psi \dd \theta-(\left(\psi \dd \theta\right)\circ \varphi^{-t}_f-\psi \dd \theta)=-\left(\psi \dd \theta\right)\circ  \varphi^{-T}_f.
\end{align*}
We use that $\Vert \left(\psi \dd \theta\right)\circ \varphi^{-T}_f\Vert_{\mathcal{H}^{m-1}} \lesssim e^{-CT}$, which goes to $0$ when $T\rightarrow +\infty$. 
We deduce from the above that
\[\mathcal{L}_V\overline{\alpha}=\lim_{T\rightarrow +\infty} -\left(\psi \dd \theta\right)\circ \varphi^{-T}_f=0\in \mathcal{H}^{m-1}.\]

Step 4. We calculate $\dd\overline{\alpha}$. Near $\{r=0\}$, since $\overline{\alpha}=\dd \theta$ we have immediately that $\dd\overline{\alpha}=\dd^2\theta=0$ so $\overline{\alpha} $ is locally closed near $\{r=0\}$.
Now we use the fact that $\overline{\alpha}\in \ker(\mathcal{L}_V)$ to propagate the closedness property to 
\[\Sigma\setminus \overline{\cup_{a\in \mathrm{Crit}(f)_1} W^u(a)}. \]
Since $\overline{\alpha}\in \ker(\mathcal{L}_V)$, then $\implies \varphi^{-T*}_f \overline{\alpha}=\overline{\alpha}$ for all $T>0$.
Therefore for all $T>0$,
\begin{align*}
\dd\overline{\alpha}=\dd \varphi^{-T*}_f \overline{\alpha}=  \varphi^{-T*}_f \dd\overline{\alpha}.  
\end{align*}

Outside $\overline{\cup_{a\in \mathrm{Crit}(f)_1} W^u(a)} $, choosing $T$ large enough allows to conclude that $\dd\overline{\alpha}=0$. 
In fact $\dd\overline{\alpha}$ is closed when tested against \emph{smooth forms which vanish} at $r=0$. 
We need to decompose
\begin{align*}
\dd\overline{\alpha}=\dd\left(\chi(r)\dd \theta \right)+ \dd\int_0^\infty \left(\psi(r)\dd \theta\right)\circ  \varphi^{-T}_f \dd t \\
=\dd\left(\chi(r)\dd \theta \right)+ \int_0^\infty \left(\dd\left(\psi(r)\dd \theta\right) \right)\circ \varphi^{-T}_f\dd t\\
=\left(\dd\chi \wedge \dd \theta \right)+ \int_0^\infty \left(\left(\dd\psi \wedge \dd \theta\right) \right)\circ \varphi^{-T}_f\dd t=0
\end{align*}
where we could invert the integral and $\dd$ since we have absolute convergence of the integral in the appropriate anisotropic Sobolev space. Then using the identity
\[ 0=\dd\chi+\int_0^\infty (\dd\psi)\circ \varphi^{-s}_f\dd s  \]
and the property of the supports of $(\dd\psi)\circ \varphi^{-s}_f$.

The key point is that $\overline{\alpha}$ is closed but non exact but with periods in $2\pi\mathbb{Z}$ by construction. 
For any closed cycle $\gamma$ supported on $\Sigma\setminus \min(f)$ and such that the support is contained in the polar coordinate chart, then the claim is obvious, we have 
\[\int_\gamma \overline{\alpha}=\int_\gamma \dd \theta=2i\pi k\] where
$k=\mathrm{deg}_{\gamma}(\min(f))$ is the degree of the closed curve $\gamma$ around $\min(f)$.
Consider any closed oriented cycle $\gamma$ which is transverse to the unstable curves and whose support does not meet $\max(f),\min(f)$. We denote by $[\gamma]$ the corresponding current. Choose some oriented flowline $\gamma_0$ going from $\min(f)$ to $\max(f)$ and consider its lift, still denoted by $\gamma_0$, to the blow--up surface $\mathcal{S}$; see Figure \ref{CurveGamma}.
\begin{figure}[t]
    \centering
    \includegraphics[width=\linewidth, trim={0 1cm 1cm 0}]{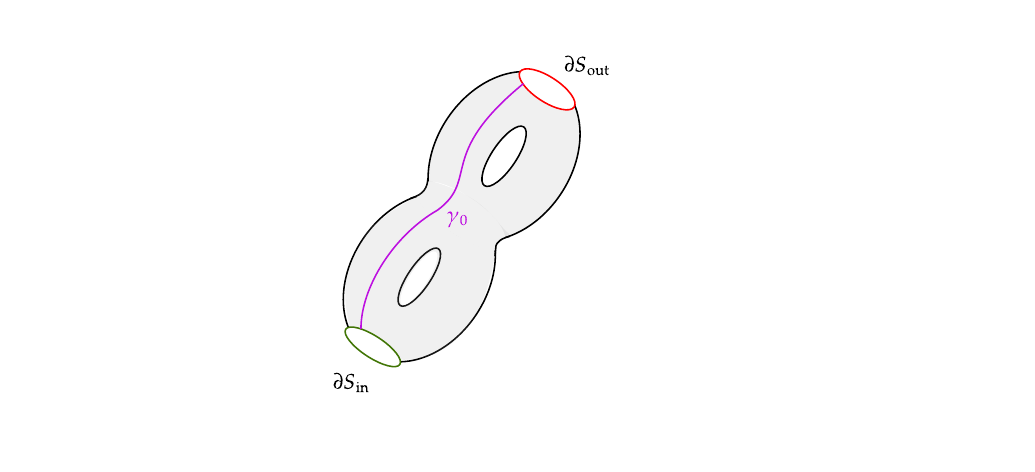}
    \caption{Thee curve $\gamma_0$.}
    \label{CurveGamma}
\end{figure}

By Lemma~\ref{lem:convergencecycles} that we prove in appendix,
for any smooth closed curve $\gamma$ whose support does not meet $\partial\mathcal{S}$ and is everywhere transverse to $V$, we get that 
$$\varphi^{T*}_f[\gamma]\underset{T\rightarrow +\infty}{\longrightarrow}  \sum_{a\in \mathrm{Crit}(f)}\left(\int_\Sigma U_a\wedge [\gamma]\right)S_a+ \left(\int_{\Sigma} [\gamma_0]\wedge [\gamma] \right) \delta_{\{0\}}^{\mathbb{R}}(r)\dd r $$
where the current $\delta_{\{0\}}^{\mathbb{R}}(r)\dd r$ is just the integration current $[\partial \mathcal{S}_{\mathrm{in}}]$ on the incoming boundary of $\mathcal{S}$. 

Here away from $\partial\mathcal{S}_{in} $, the convergence of $\varphi^{T*}_f[\gamma] \rightarrow  \sum_{a\in \mathrm{Crit}(f)}\left(\int_\Sigma U_a\wedge [\gamma]\right)S_a$ holds in the space $\mathcal{D}^\prime_{\Gamma_2}\left(\text{int}(\mathcal{S}) \right)$ for $\Gamma_2=\overline{\cup_{a\in \text{Crit}(f)_1} N^* W^s(a) }$ by the results of~\cite{DR16}~\footnote{In fact it holds true in the dual anisotropic space $\mathcal{H}^{-m}_{loc}$, it is as if we worked in the closed surface $\Sigma$} and near $\partial\mathcal{S}_{in} $, the convergence
$$\varphi^{T*}_f[\gamma]\underset{T\rightarrow +\infty}{\longrightarrow}  \sum_{a\in \mathrm{Crit}(f)}\left(\int_\Sigma U_a\wedge [\gamma]\right)S_a+ \left(\int_{\Sigma} [\gamma_0]\wedge [\gamma] \right) \delta_{\{0\}}^{\mathbb{R}}(r)\dd r$$
holds true only in the weak topology by Lemma~\ref{lem:convergencecycles}. 
Then by flow invariance, we have~:
\begin{equation}\label{eq:pairingangular}
\int_\gamma \overline{\alpha}=\int_{\mathcal{S}} \varphi^{T*}_f[\gamma]\wedge \overline{\alpha}\rightarrow   \left(\int_{\Sigma} [\gamma_0]\wedge [\gamma] \right) \int_{\partial \mathcal{S}_{in}} \dd \theta \in 2\pi\mathbb{Z}   
\end{equation}
where we used the fact that $\dd \theta$ integrates as zero on the preimages $W^s(a)$ of the stable curves, we also rely on the bound $WF\left(\overline{\alpha}\right)\subset\Gamma_1$ on the wave front set of $\overline{\alpha}$ and $\varphi^{T*}_f[\gamma]$ converges in $\mathcal{D}^\prime_{\Gamma_2}$ where $\Gamma_1,\Gamma_2$ \textbf{are transverse} which justifies the convergence of the above pairing in equation~\ref{eq:pairingangular} by the hypocontinuity of the wedge product of currents whose wave front sets are transverse recalled in Lemma~\ref{lem:prodWF}.
\end{proof}
So the $1$--form $\overline{\alpha}$ is our global angular form.

\begin{definition}
Denote by $\theta:\mathcal{S}\mapsto \mathbb{R}/2\pi\mathbb{Z}$ the circle valued function which is the unique de Rham primitive of $\overline{\alpha}$ which belongs to $\ker(\mathcal{L}_V)$ and coincides with the actual polar coordinates $\theta$ in the chart near $\min(f)$.
\end{definition}

The reader should be aware that
the circle valued function
$\theta$ is only piecewise smooth  
with singularities along unstable curves. This is illustrated in~\ref{fig:discont}.
\begin{figure}
    \centering
    \includegraphics[width=1.1\linewidth, trim={0 1cm 0cm 1cm}]{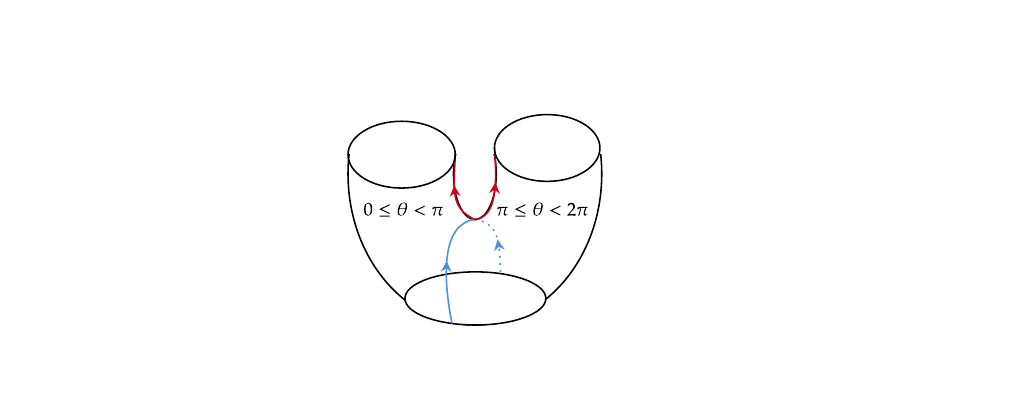}
    \caption{Singularities along unstable curves.}
    \label{fig:discont}
\end{figure}

\paragraph{General surfaces with boundary.}

In general, given any bordered surface $\Sigma$ assumed to have both, in and out boundary components, with smooth area form $\sigma$, we can  choose a vector field 
$V$ which points inward along $\partial\Sigma_{\mathrm{in}}$ and points outward
along $\partial\Sigma_{\mathrm{out}}$.
Then, up to changing the vector field $V$ by a conformal factor 
 ($V_b=\rho V$ where $\rho\geq 0$ is some function 
which vanishes at order $1$ near $\partial\Sigma$ and is $>0$ in the interior), one can easily establish that
the above formula 
\begin{align*}
\overline{\alpha}\coloneq \chi(r)\dd \theta +\int_0^\infty \left(\psi(r) \dd \theta\right)\circ \varphi^{-t}_f \dd t  
\end{align*}
also defines a closed angular form in the interior of $\Sigma$ with periods in $2\pi\mathbb{Z}$. Then, the de Rham primitives $\theta$ of the above closed form $\overline{\alpha}$ 
yields global angular coordinates on $\Sigma$.

\subsection{Pseudo coordinates}

Once we have constructed the global angular variable $\theta$, 
the pair $(f,\theta)$ yields global functions on $\mathcal{S}$ 
which act as coordinate functions outside $\cup_{a\in \mathrm{Crit}(f)_1}W^u(a)$. In fact $f$ is smooth everywhere and is nondegenerate except at critical values of $f$, and $\theta$ is piecewise smooth with discontinuities along $\overline{\cup_{a\in \mathrm{Crit}(f)_1} W^u(a)}$.

\begin{proposition}[Regularity of the pseudo-coordinates]\label{prop:regpseudo}
The pseudo-coordinate $\theta:\mathcal{S}\mapsto \mathbb{S}^1$ is piecewise $C^\infty$ on $\mathcal{S}$ with jumps along the unstable manifolds. Both $(f,\theta)$ are
regular coordinate functions
on 
\[\mathcal{S}\setminus \overline{  \cup_{a\in \mathrm{Crit}(f)_1}W^u(a)}.\] The $1-$form $\dd \theta$ vanishes at order $1$ at $\mathrm{Crit}(f)_1$ and $\dd f$ vanishes at order $1$ at $\mathrm{Crit}(f)_1\cup \partial\mathcal{S}$.
\end{proposition}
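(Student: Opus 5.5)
The plan is to deduce everything from Lemma~\ref{lem:angvar1} together with the local normal forms of $V$ given by the Morse lemma and the adapted metric.

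\emph{Step 1: piecewise smoothness of $\theta$.} By Lemma~\ref{lem:angvar1}, $\overline{\alpha}$ is smooth and closed on the open set $\Omega\coloneq\mathcal{S}\setminus\overline{\cup_{a\in\mathrm{Crit}(f)_1}W^u(a)}$, with periods in $2\pi\mathbb{Z}$. Hence on each connected component of $\Omega$ its primitive $\theta$ is a smooth $\mathbb{R}/2\pi\mathbb{Z}$-valued function. Since $\mathcal{L}_V\overline{\alpha}=0$ and $\iota_V\overline{\alpha}=0$ (the latter holds near $r=0$ and is propagated by flow invariance), $\theta$ is constant along flow lines.

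To see what happens at the unstable curves, fix a saddle $a$ and a point $x\in W^u(a)$ away from $a$. Flow backward by $\varphi_f^{-T}$. Points on either side of $W^u(a)$ near $x$ come from points near the two branches of $W^s(a)$, and these in turn come from two distinct arcs of $\partial\mathcal{S}_{\mathrm{in}}$ adjacent to the endpoint angles of $W^s(a)$. On each side $\theta$ is therefore the pullback of the smooth function $\theta|_{\partial\mathcal S_{\rm in}}$ by a smooth map (the backward flow to the collar), and this map extends smoothly up to $W^u(a)$ from that side. The extension follows from the linear normal form $V=x_1\partial_{x_1}-x_2\partial_{x_2}$ in the Morse chart, via the usual Hartman--Grobman/passage-time computation. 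This gives one-sided smooth extensions with a jump equal to the difference of the two limiting angles.

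\emph{Step 2: $(f,\theta)$ are coordinates on $\Omega$.} On $\Omega$ the form $\dd f$ is nonzero, because there are no critical points there; at the boundary it degenerates as in Step~3. Moreover $\overline{\alpha}(W)\neq 0$ for a vector $W$ tangent to the level sets of $f$. This holds near $\partial\mathcal S_{\rm in}$ since there $\overline\alpha=\dd\theta$. It propagates along the flow because $\varphi_f^{t*}\overline\alpha=\overline\alpha$ and $\dd\varphi_f^t$ maps a transversal to $V$ to a transversal to $V$, while $\iota_V\overline\alpha=0$. Hence $\dd f\wedge\overline{\alpha}\neq0$ on the interior of $\Omega$, and the inverse function theorem gives local coordinates.

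\emph{Step 3: vanishing orders.} In a Morse chart at a saddle, $f=f(a)+\tfrac12(x_1^2-x_2^2)$, so $\dd f=x_1\dd x_1-x_2\dd x_2$ vanishes to order exactly $1$. Near $\partial\mathcal S$ one uses the blow-up coordinates: $f-f(\min)=r^2/2$ gives $\dd f=r\,\dd r$, which vanishes to first order on $\{r=0\}$, and similarly at the blown-up maximum. For $\dd\theta$ near a saddle, I expect $\theta$ to be a smooth function of the flow-invariant quantity $x_1x_2$ on each quadrant, while the constraint $\iota_V\dd\theta=0$ forces $\dd\theta\propto x_2\dd x_1+x_1\dd x_2$. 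This vanishes to order $1$ at $a$.

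\textbf{Main obstacle.} The hardest step is the last one: making rigorous that $\theta$ factors smoothly through $x_1x_2$ near the saddle, with nonzero derivative. The plan is to write $\theta$ on a quadrant as $\Theta(x_1x_2)$, where $\Theta$ comes from transporting a transversal segment $\{x_2=\delta\}$ (respectively $\{x_1=\delta\}$) to $\partial\mathcal S_{\rm in}$ by the backward flow. The smoothness and non-degeneracy of $\Theta$ then follow from the fact that this transport is a smooth embedding of the transversal.
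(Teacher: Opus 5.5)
Your proposal is correct and follows essentially the paper's argument: flow invariance of $\theta$ together with the exact linear normal form in the hyperbolic box gives $\theta=F_\pm(x_1x_2)$ on either side of $W^u(a)$, where $F_\pm$ is smooth Cauchy data transported from the in-face. This yields the jump along $W^u(a)$ and $\dd\theta=F_\pm'(x_1x_2)(x_2\,\dd x_1+x_1\,\dd x_2)$. Two remarks: the normal form comes directly from the adapted metric, so Hartman--Grobman is not needed (its merely $C^0$ conjugacy would not suffice anyway). Your Step~2 (nonvanishing of $\dd f\wedge\overline{\alpha}$ on the interior of $\Omega$ via flow invariance and $\iota_V\overline{\alpha}=0$) and the boundary computation $\dd f=r\,\dd r$ usefully supply claims that the paper's proof leaves implicit.
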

\begin{proof}
By propagation of singularities, it is enough to study the regularity of $\theta$ near a saddle point, then use the dynamics to propagate the singularities.
Use coordinates $(x,y)\in [-1,1]\times [-1,1]$ near a saddle point $a\coloneq (0,0)$ where the flow reads
$(e^{-t}x,e^ty)$ and the unstable (resp. stable) manifold $W^u(a)$ (resp. $W^s(a)$) reads locally $\{x=0\}$ (resp $\{y=0\}$).
In the hyperbolic box $[-1,1]_x\times [-1,1]_y$, the in face for the dynamics reads $\{\pm 1\}_x\times [-1,1]_y$ any flowline that enters the box must intersect the in face once; the out face reads
$[-1,1]_x\times \{\pm 1\}_y $, every flow-line entering the box must escape via the out face by intersecting it exactly once; see~\ref{fig:inoutdynamics}.
\begin{figure}
    \centering
    \includegraphics[width=\linewidth, trim={0 0 1cm 0}]{ 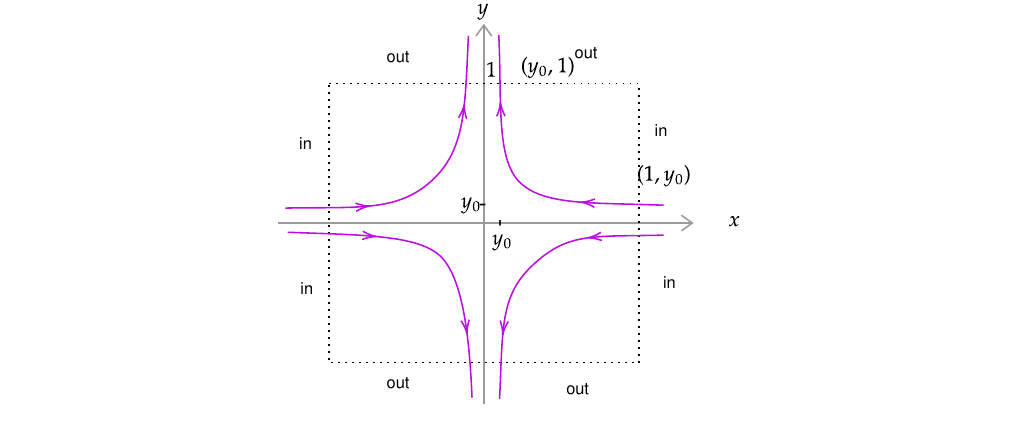}
    \caption{Dynamics of in and out faces.}
    \label{fig:inoutdynamics}
\end{figure}
We know $\theta$ has smooth Cauchy data from the in face
$\theta(\pm 1,y)=F_\pm(y)$ where $F_\pm$ is smooth since we are coming from the minimum. 
Then by flow invariance 
\[\theta(x,y)= 1_{\mathbb{R}_{\geq 0}}(x) F_+(xy)+1_{\mathbb{R}_{\geq 0}}(-x) F_-(xy) \] is a piecewise smooth function of the pair $(x,y)$ with 
a discontinuity exactly along the $y$--axis which is the piece of unstable manifold  $\{x=0\}$.

Calculate $\dd \theta$ in a quadrant, say $x>0$ reads
\[\dd \theta= F_+^\prime(xy) (y\dd x+x\dd y) \] and let $x\rightarrow 0^+$ yields  $\dd \theta= F_+^\prime(0) y\dd x $ which vanishes at order $1$ at $(x,y)=(0,0)$.
\end{proof}

We next introduce a crucial concept which will be useful for us in the sequel.

\subsection{Resolution of the surface $\mathcal{S}$ by a cylinder with defect lines}

The pair $(f,\theta)$
defines a global map
\begin{equation}\label{eq:globalPsi}
 \Psi: (f,\theta):\mathcal{S} \mapsto [\min(f),\max(f)]\times \mathbb{S}^1_\theta .
\end{equation} 

Let us describe a bit the geometry of both unstable and stable curves $W^{u/s}(a), a\in \mathrm{Crit}(f)_1$ when lifted to $\mathcal{S}$. The $2g$ unstable (resp stable) curves intersect transversely the blow up circle at $\max(f)$ (resp $\min(f)$) at exactly $4g$ points.
In the angular variables, each $W^u(a)\cup W^s(a)$ corresponds to some distinguished level 
$\theta(a)$ so that the saddle point $a$ is given in pseudo-coordinates by $(f(a),\theta(a))$.
Remove the $2g$ unstable curves and the $2g$ stable curves, this yields a union of $4g$ hexagons  with $6$ edges which are described as follows:
\begin{enumerate}
\item there is one edge coming from the blow--up $\max(f)$,
\item there is one edge coming from the blow--up $\min(f)$,
\item there are two unstable edges,
\item there are two stable edges,
\item there are two vertices which are identified with saddle points.
\end{enumerate}

\begin{definition}[Resolution of $\mathcal{S}$ abstract version]
These $4g$ hexagons are interiors of $4g$ surfaces with smooth corners $H_1,\dots,H_{4g}$ which are smoothly embedded inside the 
surface $\mathcal{S}$, obtained by gluing smoothly these $4g$ hexagons along well chosen edges. 
The pair $(f,\theta)$ induces smooth maps
\begin{equation}
(f,\theta):H_i\subset \mathcal{S} \longmapsto  [\min(f),\max(f)]\times I_i ,  
\end{equation}
where the collection $I_1,\dots,I_{4g}$ are $4g$--intervals on the unit circle with disjoint interiors and only endpoints can coincide.
So the cylinder $[\min(f),\max(f)]\times \mathbb{S}^1_\theta$ writes as a union of $4g$ rectangles
$ \left([\min(f),\max(f)]\times I_i\right)_{i=1}^{4g} $ which are glued along lines
$\left([\min(f),\max(f)]\times \partial I_i\right)_{i=1}^{4g}$. 
These lines are the defect lines and are broken flowlines made of a stable curve glued with an unstable curve.

The \emph{bands of zero area} are just diffeomorphic to tubular neighborhoods of the above lines and are obtained by thickening these lines as shown in~\ref{fig:zeroAreas}.
\begin{figure}
    \centering
    \includegraphics[width=0.8\linewidth]{ 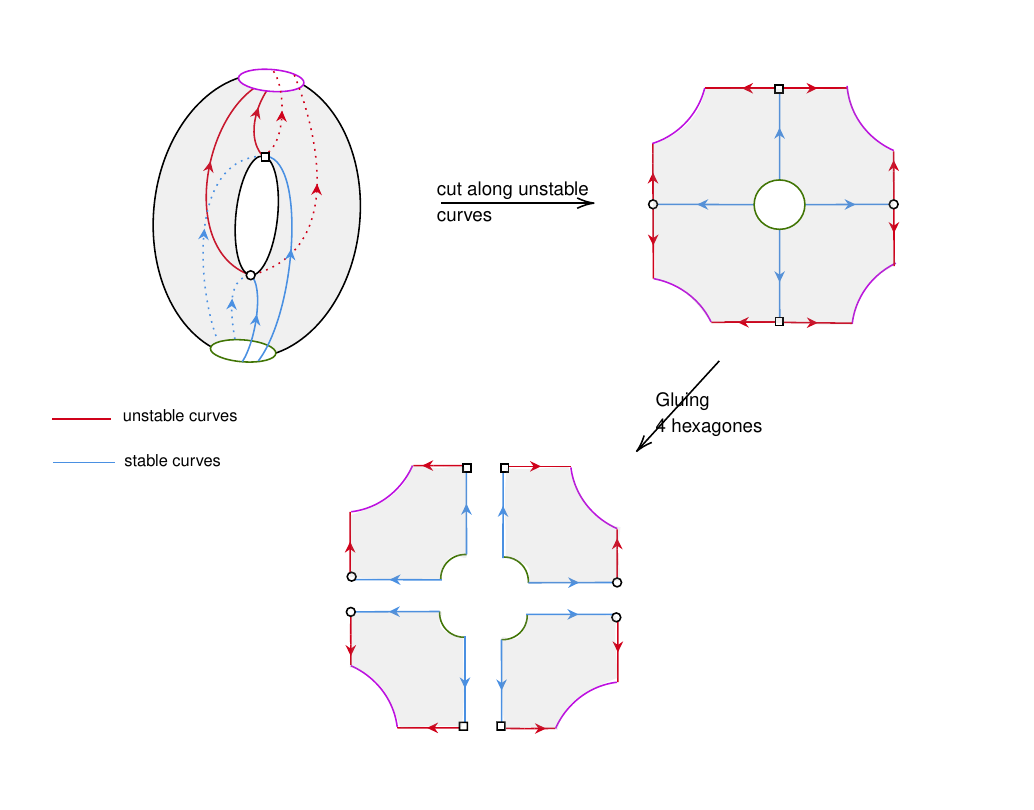}
    \caption{Zero area bands.}
    \label{fig:zeroAreas}
\end{figure}
The restriction of the pair $(f,\theta)$ to every hexagon $H_i, i\in \{1,\dots, 4g\}$ is smooth.
\end{definition}

The consequence of Proposition~\ref{prop:regpseudo} is that for every hexagon $H_i$, the map $\Psi:H_i\setminus \mathrm{Crit}(f)_1 \longmapsto  \Psi \left( H_i\setminus \mathrm{Crit}(f)_1 \right) $ is a diffeomorphism up to the edges of the hexagon, the only singularities occurs at the corners corresponding to saddle points.

In practice and for the next part of the article, we will work on each of these rectangles with the pushforward of the
area form $\sigma$ via the map $(f,\theta)$.
All estimates and random currents are constructed  on the rectangles $\left([\min(f),\max(f)]\times I_i\right)_{i=1}^{4g} $ first, then using the pair $(f,\theta)$ we will pull--back these random objects on the initial blow--up surface $\mathcal{S}$.

\subsection{Regularity of area function}

The pushforward area $\Psi_*\sigma$ form reads
$\sigma(f,\theta)\dd f\wedge \dd \theta$, where the density $\sigma$ is expected to blow up near critical points. This is going to affect area estimates near critical points.

\paragraph{A further modification of the angular form $\overline{\alpha}$.}

In our investigation, we need a further
modification of the angular form $\overline{\alpha}$ so that is has a very specific form near the saddle point. This specific form is very important in our application.
Let us be more precise.
In a hyperbolic box in the local Morse chart, up to shifting coordinates by constants and up to some smooth change of variables, we may assume that
$f=x^2-y^2$, and $\theta=xy$.
To justify this choice, we just mimick the proof of
Proposition~\ref{prop:regpseudo}.

\begin{proposition}\label{prop:angvar2}
There exists a global angular form $d\theta$ on $\mathcal{S}$ such that for every saddle point $a\in \mathrm{Crit}(f)_1$, there is a neighborhood $\Omega_a$ of $a$ contained in the Morse chart, in which $d\theta|_{\Omega_a}= \pm d(xy)$ where $(x,y)$ is the Morse chart corresponding to $a$.    
\end{proposition}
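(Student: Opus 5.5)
Near each saddle $a$ the gradient vector field in the Morse chart (with $f=\tfrac12(x^2-y^2)$, adapted metric) is $V=x\partial_x-y\partial_y$, and $xy$ is a first integral: $\mathcal{L}_V(xy)=xy-xy=0$. So $\pm\dd(xy)$ is locally closed and flow-invariant, and the task is to modify $\overline{\alpha}$ so that it agrees with such a form near every saddle, without losing the properties of Lemma~\ref{lem:angvar1}. The plan is not to alter the global construction, but to change the Cauchy data of $\theta$ on the in faces of the hyperbolic boxes, exactly as in the proof of Proposition~\ref{prop:regpseudo}, and to push this change back to $\partial\mathcal{S}_{\mathrm{in}}$ along the flow.

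\emph{Step 1 (local normal form).} In the hyperbolic box $[-1,1]_x\times[-1,1]_y$, flow invariance gives $\theta(x,y)=F_\pm(xy)$ on $\{\pm x>0\}$, with $F_\pm$ smooth on the in faces $\{x=\pm1\}$. The angular function we want is instead $\theta=\theta(a)\pm xy$ near $a$. This requires $F_\pm(s)=\theta(a)\pm s$ for $|s|$ small, with a choice of signs consistent with the orientation of $\mathcal{S}$ and of the global angle. Since $F_\pm$ is already a local diffeomorphism ($F_\pm'(0)\neq0$, because $\theta$ is a coordinate transverse to $V$), it suffices to post-compose $\theta$ with a reparametrization of the circle on each in face.

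\emph{Step 2 (transport back to the minimum).} Each point of an in face, off $W^s(a)$, flows backward to $\partial\mathcal{S}_{\mathrm{in}}$; the segments $\{\pm1\}\times(-\delta,\delta)$ correspond to small arcs $J_{a,\pm}$ of the blown-up circle around the $4g$ points where the stable curves $W^s(a)$ meet $\partial\mathcal{S}_{\mathrm{in}}$. These arcs are disjoint for $\delta$ small. We choose an orientation-preserving diffeomorphism $h:\mathbb{S}^1\to\mathbb{S}^1$ such that, on each $J_{a,\pm}$, $h$ composed with the transport map equals the affine map $s\mapsto\theta(a)\pm s$. The diffeomorphism exists because we only prescribe increasing germs on disjoint arcs, which can be interpolated monotonically provided the total lengths are compatible. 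If they are not, we shrink $\Omega_a$, or equivalently rescale $(x,y)\mapsto(\lambda x,\lambda y)$ in the chart; this changes $xy$ by a constant factor, which we absorb into the chart.

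\emph{Step 3.} We replace $\dd\theta$ near $\min(f)$ by $\dd(h\circ\theta)=h'(\theta)\,\dd\theta$ and rerun the construction of $\overline{\alpha}$. Equivalently, we set the new angular function to be $h\circ\theta$, which is again flow-invariant. Closedness, the $2\pi\mathbb{Z}$ periods and the wave front bounds of Lemma~\ref{lem:angvar1} survive because $h$ is a circle diffeomorphism of degree one. By Step 1, near each saddle the new form is $\pm\dd(xy)$ on $\Omega_a$ minus the unstable line, and it extends smoothly across it since both sides are given by the same formula.

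The main obstacle is Step 2. One must check that the backward-flow identification from the in faces to $\partial\mathcal{S}_{\mathrm{in}}$ is a smooth diffeomorphism onto arcs, which holds because the stable curves hit $\partial\mathcal{S}_{\mathrm{in}}$ transversally. One must also manage the compatibility of the signs $\pm$ and the orientation of $h$ across all $2g$ saddles simultaneously, to obtain a single smooth circle diffeomorphism.
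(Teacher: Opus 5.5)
Your strategy is the paper's. You change the angle only on $\partial\mathcal{S}_{\mathrm{in}}$, near the $4g$ landing points of the stable branches, so that there it equals the backward transport $\mathcal{P}$ of $\pm\dd(xy)$ from the in faces. You interpolate so the period stays $2\pi$, shrinking the in-face segments if needed, and let flow invariance carry this into the hyperbolic boxes. Two of your variants are fine: taking $h\circ\theta$ directly, which is automatically flow invariant, instead of rerunning the construction of Lemma~\ref{lem:angvar1}; and your continuity argument for disjointness of the arcs (the transport is smooth near the $4g$ distinct landing points) in place of the paper's broken-trajectory compactness argument.

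As written, however, Steps 1--2 prescribe something impossible. The two branches of $W^s(a)$ land at distinct points of $\partial\mathcal{S}_{\mathrm{in}}$, so $J_{a,+}\cap J_{a,-}=\emptyset$. An injective $h$ therefore cannot send both arcs onto neighbourhoods of the same value $\theta(a)$. Equivalently, no flow-invariant angle equals $\theta(a)\pm xy$ on a full neighbourhood of $a$: it must jump across $W^u(a)$.

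Only the derivative can be prescribed. Require $h\circ\mathcal{P}(s)=c_{a,\pm}+\varepsilon_a s$ on $J_{a,\pm}$, with the constants $c_{a,\pm}$ left free and fixed by the interpolation; that is, $\dd h=\varepsilon_a\,\mathcal{P}_*\dd(xy)$ on the arcs. This is exactly why the paper works with the $1$-forms $\varepsilon_i\mathcal{P}_*(\dd(x_iy_i))$.

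For your claim that the form extends across $W^u(a)$, the sign $\varepsilon_a$ must be the same on both arcs of a given saddle. This is automatic. In the box, $\dd f\wedge\dd(xy)=\pm(x^2+y^2)\,\dd x\wedge\dd y$ has constant sign. Moreover, $\dd f\wedge\overline{\alpha}$ has constant sign on the connected set $W^u(\min f)\setminus\{\min f\}$, which contains both sides of $W^u(a)$ near $a$. On each arc the sign is then forced by requiring $h$ increasing, so there is no further global compatibility issue.

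Finally, drop the option of rescaling $(x,y)\mapsto(\lambda x,\lambda y)$ and absorbing it into the chart. The statement fixes the Morse chart, and rescaled coordinates no longer put $f$ and the adapted metric in normal form. It is also unnecessary: the prescribed total length is $O(\delta)$, so shrinking $\delta$ suffices, which is the role of the cut-offs $\chi_{i,n}$ and of $\lambda_n$ in the paper. (With your $V=x\partial_x-y\partial_y$, the in faces are $\{y=\pm1\}$ and the jump is across $\{y=0\}$; this relabelling is harmless.)
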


\begin{proof}
Assume some angular form $\theta$ is already constructed.

Let us number the saddle points as $a_1,\dots, a_{2g}$ where $g$ is the genus of the initial surface $\Sigma$. Near each saddle point $a_i$, $i\in \{1,\dots,2g\}$, there is a local Morse coordinate system
$(x_i,y_i)\in [-1,1]^2$ where $a_i=(0,0)$ and the dynamics is totally linear of the form
$(e^{-t}x_i,e^ty_i)$.  Recall that $\{\pm 1\}_{x_i}\times [-1,1]_{y_i} $ is the in face of the hyperbolic box around $a_i$. 

For every $\varepsilon\in (0,1)$ and every $i\in \{1,\dots, 2g\}$, we define the ribbon of thickness $\varepsilon>0$ the following set~:
\begin{align*}
R_i(\varepsilon):= \overline{\{\varphi^{-t}_f( \{\pm 1\}_{x_i}\times [-\varepsilon,\varepsilon]_{y_i}  ), t\geqslant 0  \}} \end{align*}
which contains all the flow lines starting from the blown-up minimum $\partial\mathcal{S}_{in}$ and hitting the 
small intervals  $\{\pm 1\}_{x_i}\times [-\varepsilon,\varepsilon]_{y_i}$  contained in the in face of the hyperbolic box around $a_i$. Geometrically, this is the union of two very thin rectangles; see Figure \ref{Rib1}.
\begin{figure}[t]
    \centering
    \includegraphics[width=\linewidth]{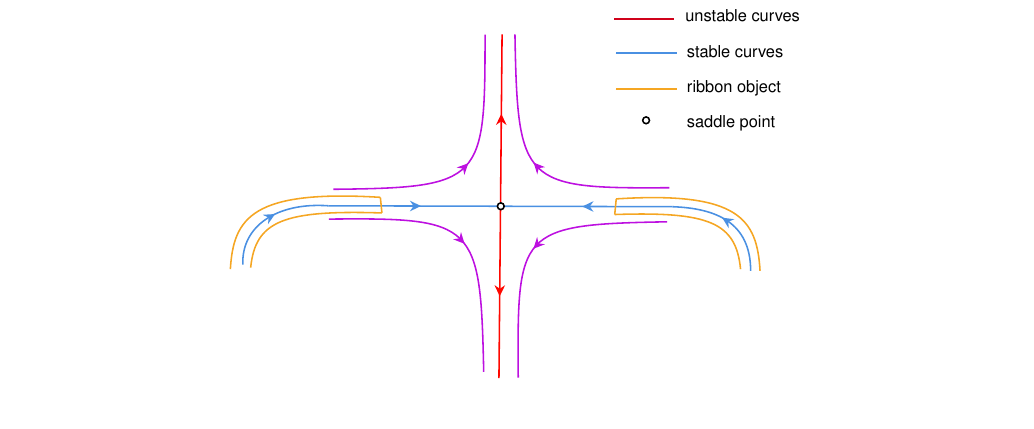}
    \caption{Ribbon object.}
    \label{Rib1}
\end{figure}

We make the following claim, there exists $\varepsilon>0$ small enough such that all the ribbons of thickness $\varepsilon$ 
$R_i(\varepsilon), i\in \{1,\dots,2g\}$ are two by two disjoint; see Figure \ref{Rib3}.
\begin{figure}[t]
    \centering
    \includegraphics[width=1.3\linewidth, trim={3cm 0 0 0}]{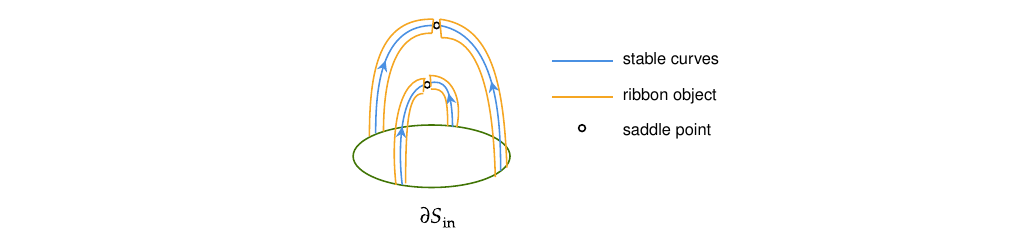}
    \caption{Ribbon object (bis).}
    \label{Rib3}
\end{figure}
To prove the claim, let us argue by contradiction.  Up to extraction we can always assume there is a given pair $i< j\in \{1,\dots,2g\}^2 $ and a sequence $y_i(n)\rightarrow 0^+$ when $n\rightarrow +\infty$ such that the flowline $ \varphi^{[-\infty,0]}_f\left( (1,y_i(n))\right)$ ending at the in face of $a_i$, gets at distance $\leqslant \frac{1}{n}$ from the critical point $a_j$. Then letting $n\rightarrow +\infty$, the sequence of curves $ \varphi^{[-\infty,0]}_f\left( (1,y_i(n))\right)_n, ,\geqslant 0$ should converge in $C^0$ topology to a broken flowline using the compactness of the space of broken gradient flowlines~\cite[section 3.2 p.~57]{AudinDamian}. We deduce that there exists a broken flowline connecting the saddle points $a_i$ and $a_j$ which contradicts Smale's transversality.

Therefore we have $4g$ two by two disjoint rectangles that connect in faces of every saddle points and the circle $\partial\mathcal{S}_{in}$. This decomposes the circle $\partial\mathcal{S}_{in}$ as a union of $4g$ two by two disjoint intervals $I_1,\dots,I_{4g}$ which are in bijection with $4g$ pieces of in faces that we will denote $J_1,\dots,J_{4g}$ in such a way that $I_i=\overline{\varphi^{[-\infty,0]}_f(J_i)}\cap \partial\mathcal{S}_{in}$.
We denote by $\mathcal{P}_i: J_i\mapsto I_i $ the natural smooth diffeomorphism mapping $J_i$ to $I_i$ and which is induced by the backward flow: $\mathcal{P}(u)=v$ if and only if $v=\lim_{t\rightarrow +\infty}\varphi^{-t}_f(u)$; see Figure \ref{Rib4}.
\begin{figure}
    \centering
    \includegraphics[width=\linewidth]{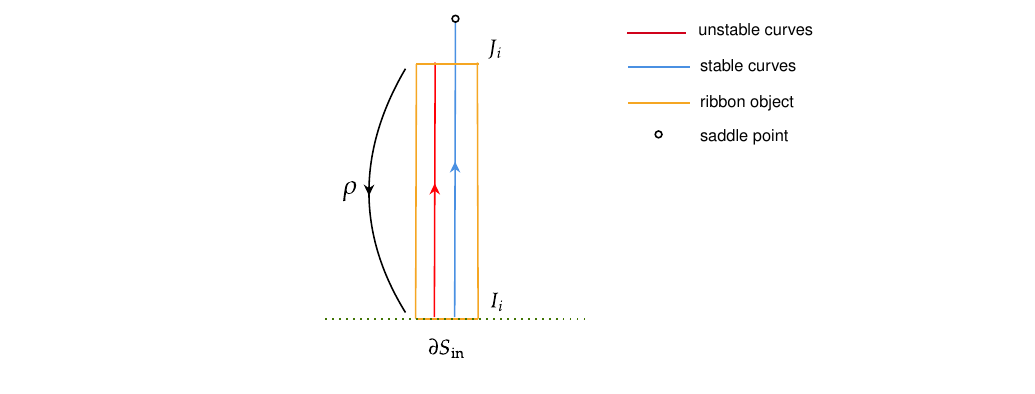}
    \caption{Ribbon object (bis bis).}
    \label{Rib4}
\end{figure}
Now we modify the angular variable $\theta$ on $\partial\mathcal{S}_{in}$ in such a way that on each interval $I_j=\partial\mathcal{S}_{in}\cap R_i(\varepsilon)$, it coincides with the $1$--form 
\begin{equation}\label{eq:constraintang}
d\theta=\varepsilon_i\mathcal{P}_*(d(x_iy_i)), \varepsilon_i\in \{\pm 1\}
\end{equation}
which intuitively represents the $1$-form $\pm d(x_iy_i)$ 
defined on the in face of $a_i$ and then
transported back by the flow.
We choose $\varepsilon_i$ so that each piece of $1$-form $\varepsilon_i\mathcal{P}_*(d(x_iy_i))$ has positive orientation on $\mathbb{S}^1$.
We also allow ourselves to make the intervals describing the in faces $J_i, i=1,\dots,4g$ smaller. 
At the end of this step, we basically claim we can smoothly interpolate some $1$ form defined locally on segments of $\mathbb{S}^1$ and we need to be careful since $\theta$ must be a coordinate function on the circle and has exact period $2\pi$ which is non trivial to achieve.

Let us justify more precisely this step. This reduces to solving the following exercise : we are given $4g$ $1$-forms $du_1,\dots, du_{4g}$ on two by two disjoint subintervals   $I_1,\dots,I_{4g} $ of the circles $\mathbb{S}^1$, the intervals are centered at points $\theta_1,\dots,\theta_{4g}$ respectively. These $4g$ points correspond to the intersection of the stable curves with $\partial\mathcal{S}_{in}$. We assume each $du_i$ is non degenerate and $du_i(\partial_\theta)>0$ on each $I_i$ (this is always possible thanks to choosing the coefficients $\varepsilon_i,i=1,\dots,4g$ carefully). We just need to prove there exists a global non degenerate closed form $d\tilde{\theta}$ which coincides with $du_i$ near each $\theta_i$ and such that $\int_{\mathbb{S}^1}d\tilde{\theta}=2\pi$. Consider a sequence $\chi_{i,n}\in C^\infty_c(I_i)$ of functions such that $\chi_{i,n}\geqslant 0$, $\chi_{i,n}=1$ near $\theta_i$ and $\chi_{i,n}=0$ outside $\frac{2}{n}$ neighborhood of $\theta_i$. 
Then observe that for $n$ large enough so that $\int_{\mathbb{S}^1}\sum_{i=1}^{4g} \chi_{i,n} du_i<2\pi$, we can always find some $\lambda_n>0$ s.t. $d\tilde{\theta}=\sum_{i=1}^{4g} \chi_{i,n} du_i+\lambda_n (1-\sum_{i=1}^{4g} \chi_{i,n} ) d\theta$ has period $2\pi$. Also observe that $d\tilde{\theta}$ never vanishes by construction.

Once we modified the angular function in a new angular function $\tilde{\theta}$ which satisfies the above constraint equation~\ref{eq:constraintang} on each of these intervals, we repeat the construction of the angular variable by applying to $\tilde{\theta}$ the procedure described in the proof of Lemma~\ref{lem:angvar1}. 
\end{proof}

Thus, 
$\dd f(x,y)\wedge \dd \theta(x,y)=2(x^2+y^2)\dd x\wedge \dd y.$
Therefore, we have the explicit inverse relation: 
\begin{equation}
\Psi_*\sigma= \tilde{\sigma}(f,\theta)  \dd x(f,\theta)\wedge \dd y(f,\theta)=  \tilde{\sigma}(f,\theta)(f^2+4\theta^2)^{-\frac{1}{2}}\dd f\wedge \dd \theta
\end{equation}
where $\tilde{\sigma}$ is bounded and the above yields the asymptotics of $\Psi_*\sigma$ near singular points in $\Psi\left(\mathrm{Crit}(f)_1 \right)$. Therefore for every domain $\square\subset [\min(f),\max(f)]\times \mathbb{S}^1$ of finite area with respect to $drd\theta$, $\int_\square \Psi_*\sigma$ is \emph{integrable}.

\end{proof}
\end{comment}

\section{The Yang--Mills connection in the continuum}\label{s:fundamentalformula}

\subsection{A formula for the Yang--Mills connection}

In the parallel work~\cite{BCDRT}, we give a new formula for a 
random  connection in Morse gauge under the free boundary Yang--Mills measure.
In the notations of the present paper, this formula reads formally
\begin{equation}\label{eq:BCDRT}
A:=\int_0^\infty \varphi_f^{-t*}\left(\iota_V\xi_\sigma\right) dt +\sum_{a\in \mathrm{Crit}(f)_1} \log(g_a)U_a
\end{equation}
where  $\xi_\sigma$ is a $\mathfrak{g}$--valued white noise viewed as random $\mathfrak{g}$--valued current of degree $2$ associated to the area form $\sigma$, 
$\iota_V$ is the contraction with the gradient vector field $V=\nabla f$, $U_a:=[W^u(a)]$ are the currents of integration on unstable curve $W^u(a)$ and where the second sum runs over critical points of index $1$ (saddle points).
In~\cite{BCDRT}, we actually prove that equation~\ref{eq:BCDRT} defines an actual random current of degree $1$ of Sobolev regularity $<-1$ by solving a random cohomological equation.

In the current paper,
we will rather use our pseudo-coordinates to write another \emph{global formula} for the 
random connection $A$ from equation ~\ref{eq:BCDRT}.
For every $r,\theta)\in [\min(f),\max(f)]\times [0,2\pi),$ we define
$\square(r,\theta)$ as the unique rectangle obtained by connecting
$(0,\theta), (r,\theta), (0,0), (r,0) $ by straight lines.
Inspired by our discussion of Driver's result, we can write two explicit formulas for the Yang--Mills connection. Recall we use the letter $\xi$ for the white noise on the cylinder adapted to the pushforward area form $\Psi_*\sigma$, it is a random current of degree $2$. Then we may \emph{write formally} the random connection $A$ as~:
\begin{equation} \label{YMFormula}
A= \Psi^*\left(\partial_\theta\left\langle \xi, 1_{\square(r,\theta)} \right\rangle  \dd \theta\right)+\sum_{a\in \mathrm{Crit}(f)_1} \log(g_a)U_a,
\end{equation}
where as above, $U_a:=[W^u(a)]$ are the currents of integration on unstable curve $W^u(a)$ and where the second sum runs over critical points of index $1$ (saddle points).
Let us comment on the formula of equation~(\ref{YMFormula}).
The function $1_{\square(r,\theta)}$ is in $L^2(\Psi_*\sigma)$ therefore it is immediate that the Brownian sheet $W\coloneq \left\langle \xi,  1_{\square(r,\theta)}\right\rangle$, reparametrized by the pushforward area $\Psi_*\sigma$, is well--defined globally as process on the cylinder. We can also define $\left( \partial_\theta W \right)\dd\theta $ globally as a current of degree $1$ on the cylinder, however it is not obvious whether we can pull--back this random current of degree $1$ as $\Psi^*\left( \left( \partial_\theta W \right)\dd\theta\right)$ globally on $\mathcal{S}$, it is only immediate outside unstable curves: $\Psi^*\left( \left( \partial_\theta W \right)\dd\theta\right)\in \mathcal{D}^\prime\left(\mathcal{S}\setminus \overline{  \cup_{a\in \text{Crit}(f)_1} W^u(a)} \right)$. Therefore the global extension is still subtle and still require some non trivial work. This is the topic of the next paragraphs.

We immediately give a second \emph{formal} formula which involves some random series. This second formula will also suggest the correct regularity estimates for $A$. The rigorous estimation of correct regularities will be done later.  
\begin{equation}\label{eq:continuum2}
A= \Psi^*\left(\sum_{n\geq 0} \left(\int_0^r\sqrt{\sigma(s,\theta)}\xi_n^{\mathfrak{g}}(s)ds\right)   e_n(\theta) \dd \theta \right) +   \sum_{a\in \mathrm{Crit}(f)_1} \log(g_a)U_a
\end{equation}
where $\xi_n^{\mathfrak{g}}$ are i.i.d $\mathfrak{g}$ valued white noise, $(e_n)_n$ is a ONB of functions on the circle $\mathbb{S}^1$. The goal of the whole section is to give a precise mathematical meaning to the above two formulas.
The proof of the correct regularity will be done when we give a rigorous meaning to the above formula. 
In fact, we observe that we will also recover these regularity estimates when we study scaling limit of connections coming from the discrete gauge theory, we refer the reader to section \ref{s:functspaces}. 
The random series $A$ will be defined on $\mathcal{S}$ by the singular pull--back and the detailed study of the regularity of derivatives in $\theta$ of the area functional. But formula (\ref{eq:continuum2}) already suggests that
outside $\mathrm{Crit}(f)$, the random connection $A$ is expected to have regularity $\frac{1}{2}-$ along the flow.

We start by summarizing what we know and what are the main difficulties.

\subsection{The motivation to deal with singularities}

Let us explain what is going on and why we need the results of the present subsection. We start by defining some random distribution $\tilde{A}$ on the cylinder 
$\mathbf{Cyl}:=[\min(f),\max(f)]\times \mathbb{S}^1$ in a way very similar to Driver's 
construction using the push--forward area $\Psi_*\sigma\in \Omega^2\left( \mathbf{Cyl}\right)$ induced from the surface area $\sigma\in \Omega^2(\mathcal{S})$. Since $\Psi$ fails to be a diffeomorphism along unstable curves, the pushforward area form $\Psi_*\sigma$ is singular at images of saddle points $\Psi\left(\mathrm{Crit}(f)_1 \right)$ by $\Psi$.
\begin{definition}[The random connection $\tilde{A}$ upstairs ]
The  random distribution $\tilde{A}$ defined upstairs on the cylinder 
$\mathbf{Cyl}:=[\min(f),\max(f)]\times \mathbb{S}^1$
is given by the formula
\begin{equation}\label{eq:tildeA}
 \tilde{A}:=\sum_{n\geq 0} \left(\int_0^r\sqrt{\sigma(s,\theta)}\xi_n^{\mathfrak{g}}(s)ds\right)   e_n(\theta) \dd \theta = \partial_\theta \left\langle \xi,1_{\square(r,\theta)} \right\rangle  d\theta    
\end{equation}
in the coordinates of the cylinder and where $\xi$ is the $\mathfrak{g}$-valued white noise w.r.t. to the area form $\Psi_*\sigma$.
\end{definition}

Then we can
integrate $\tilde{A}(r,\theta)\dd\theta$ globally on the cylinder $\mathbf{Cyl}$ 
along the $\theta$ direction
which 
defines an element denoted by $W(r,\theta)$ that we control in certain anisotropic H\"older spaces outside $\Psi(\mathrm{Crit}(f)_1)$.
However, the singularities of $\Psi_*\sigma$ imply that we can control $W(r,\theta)$ only
in some weighted anisotropic spaces where the weight controls 
singularities when we approach the points in $\Psi(\mathrm{Crit}(f)_1)$. These weighted anisotropic spaces are defined by scalings directly on the cylinder.
The second problem we need to handle is that $\Psi:\mathcal{S}\mapsto [\min(f),\max(f)]\times \mathbb{S}^1$ fails to be a diffeomorphism along unstable curves, so it is not obvious that the pull--back of any current by $\Psi$ should be well--defined. What we will prove is that if we can decompose the 
random connection $\tilde{A}$ as sums of pieces and glue together these pieces along the unstable curves except at saddle points, then we will
scale near saddle points and this allows us to define the pull--back $\Psi^*\left(  \tilde{A}d\theta\right)$ globally on $\mathcal{S}$. 
We next define precisely the functional spaces involved in our discussion.

\subsection{Weighted anisotropic Sobolev and H\"older norms}\label{DistSpaces}

The formula from equation \ref{eq:continuum2}
suggests that the random connection $A$ has \emph{regularity $\frac{1}{2}-$ along the flow direction and $-\frac{1}{2}-$ along the levels sets} of the Morse function $f$.
So we expect the regularity of $A$ to be \emph{anisotropic} which motivates the introduction of anisotropic Banach spaces of distributions of low regularity to describe precisely our random current $A$.  
Our Banach spaces are somewhat reminiscent of the anisotropic Banach spaces appearing in the functional analysis of hyperbolic dynamical systems, specially the spaces from the works of Baladi--Tsujii~\cite{BaladiTsujii1,BaladiTsujii2}. We refer the reader to the book~\cite{Baladibook} and the extensive survey~\cite{Baladisurvey} and the references therein.

\subsubsection{Non weighted anisotropic semi-norms}

We start by some semi-norm on the cylinder.
\begin{definition}[Anisotropic semi-norms on cylinder]\label{def:anisocyl}
Set $\alpha\in (0,1)$.
Outside $\Psi(\mathrm{Crit}(f)_1)$, on some product of intervals $I\times J\subset [\min(f),\max(f)]_r\times \mathcal{S}^1_\theta $ which avoids $\Psi(\mathrm{Crit}(f)_1)$, we define these local semi-norms on smooth connections as~:
\begin{equation}\label{eq:weightedSobo}
\Vert \tilde{A}\Vert_{\mathcal{W}^{\alpha,\alpha-1;p}_{r,\theta}(I\times J) }\coloneq    \Vert W\Vert_{\mathcal{W}^{\alpha,\alpha;p}_{r,\theta}(I\times J) }= 
\left(
\int_{I^2} \frac{\Vert W(r_1,.)-W(r_2,.) \Vert^p_{W^{\alpha,p}_{\theta,J}}}{\vert r_1-r_2\vert^{1+\alpha p}} \dd r_1\dd r_2 \right)^{\frac{1}{p}} + \Vert W \Vert_{L^p(I\times J)}  \end{equation}
for $\tilde{A}(r,u) \coloneq \partial_\theta W(r,\theta) $ and $W(.,0)=0$~\footnote{This acts like some half--Dirichlet condition on $W$.} for $J=[0,a]$.

The H\"older version reads
\begin{equation}\label{eq:anisoHolder}
 \Vert \tilde{A}\Vert_{\mathcal{C}^{\alpha,\alpha-1}_{r,\theta}(I\times J) }\coloneq    \Vert W\Vert_{\mathcal{C}^{\alpha,\alpha}_{r,\theta}(I\times J) }= 
\sup_{ r_1\neq r_2\in  I^2} \frac{\Vert W(r_1,.)-W(r_2,.) \Vert^p_{\mathcal{C}^{\alpha}_{\theta,J}}}{\vert r_1-r_2\vert^{\alpha}}+ \sup_{(I\times J)}\vert W\vert.    
\end{equation}
\end{definition}

We consider the completion of $C^\infty$ connections $A$, with half--Dirichlet condition $A(0,.)=0$ for the above seminorms where $I\times J$ runs over all product intervals avoiding $\Psi(\mathrm{Crit}(f)_1)$. This defines local anisotropic Sobolev spaces (resp H\"older spaces).

These anisotropic semi-norms of fractional Sobolev type are defined in the spirit of the work of Gagliardo and Slobodeckij~\cite[Def 1.12 p.~13]{Leoni}, \cite[equation (2.2) p.~524]{Nezzafractional} and involve integral formulas in position space as opposed to the Fourier based definitions. This will be very convenient for our probabilistic applications: estimating the regularities of our random connections and proving tightness of sequences of measures on piecewise affine connections. 

We can also define a similar semi-norm but this time on $\mathcal{S}$~:
\begin{definition}[Anisotropic semi-norms on $ \mathcal{S}$ ]
Set $\alpha\in (0,1)$.
On every flowbox $\square \simeq [0,1]^2 $ \emph{avoiding the unstable curves} with local coordinates $(r,\theta)$ where $V=\partial_r$, we define these local semi-norms on smooth connections as~:
\begin{equation}\label{eq:weightedSobo}
\Vert A\Vert_{\mathcal{W}^{\alpha,\alpha-1;p}_{r,\theta}(\square) }\coloneq  \left( \int_{I^2} \frac{\Vert W(r_1,.)-W(r_2,.) \Vert^p_{W^{\alpha,p}_{\theta,J}}}{\vert r_1-r_2\vert^{1+\alpha p}} \dd r_1\dd r_2 \right)^{\frac{1}{p}}+\Vert W\Vert_{L^p(\square)}  \end{equation}
for $\tilde{A}(r,u) \coloneq \partial_\theta W(r,\theta) $ and $W(.,0)=0$.
\end{definition}

After completion w.r.t the above semi-norms, 
this defines a \emph{local anisotropic Sobolev} space outside the saddle points.
A similar completion process w.r.t. the anisotropic H\"older semi-norms allows to define \emph{local anisotropic H\"older} norms.

We state an easy, yet essential lemma. It plays an essential
role in the sequel.
\begin{lemma}
For $\alpha\in (0,1)$ in the setting of definition~\ref{def:anisocyl}, any distribution $T\in \mathcal{C}_{r,\theta}^{\alpha,\alpha-1}([0,1]\times [0,1])$ can be viewed as a H\"older continuous function of $r$ valued in distributions in the $\theta$-variable in $\mathcal{D}^\prime((0,1))$.    
\end{lemma}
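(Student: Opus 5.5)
The plan is to work directly from the definition of the anisotropic Hölder norm. By Definition~\ref{def:anisocyl}, an element $T\in \mathcal{C}^{\alpha,\alpha-1}_{r,\theta}([0,1]\times[0,1])$ is of the form $T=\partial_\theta W$, where $W$ has finite $\mathcal{C}^{\alpha,\alpha}_{r,\theta}$ norm and $W(\cdot,0)=0$. For smooth connections I first define the map $r\mapsto T(r,\cdot):=\partial_\theta W(r,\cdot)\in\mathcal{D}'((0,1))$. For a test function $\phi\in C^\infty_c((0,1))$ the pairing is
\[
\langle T(r,\cdot),\phi\rangle=-\int_0^1 W(r,\theta)\,\phi'(\theta)\,\dd\theta .
\]
This gives
\[
|\langle T(r_1,\cdot)-T(r_2,\cdot),\phi\rangle|\leq \|W(r_1,\cdot)-W(r_2,\cdot)\|_{L^\infty_\theta}\,\|\phi'\|_{L^1}\leq \|W\|_{\mathcal{C}^{\alpha,\alpha}}\,|r_1-r_2|^{\alpha}\,\|\phi'\|_{L^1}.
\]
So $r\mapsto T(r,\cdot)$ is $\alpha$-Hölder into $\mathcal{D}'((0,1))$, uniformly on bounded sets of test functions. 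In fact it is $\alpha$-Hölder into the Banach space $\partial_\theta \mathcal{C}^{\alpha}_\theta\subset \mathcal{C}^{\alpha-1}_\theta$, with constant $\|W\|_{\mathcal{C}^{\alpha,\alpha}}$. The same bound also controls $\sup_r |\langle T(r,\cdot),\phi\rangle|$ through $\sup|W|$.

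Next I extend this to the completion. Take a Cauchy sequence of smooth $T_k=\partial_\theta W_k$ in the anisotropic Hölder norm. The $W_k$ then converge uniformly, since $\sup|W|$ is part of the norm, to some continuous $W$. This $W$ satisfies $W(\cdot,0)=0$ and has the same Hölder-in-$r$ bound with values in $\mathcal{C}^\alpha_\theta$, by lower semicontinuity of the seminorm under uniform convergence. The estimate above is linear in $W$, so the maps $r\mapsto T_k(r,\cdot)$ converge uniformly in $r$ in $\mathcal{D}'((0,1))$, and even in $\mathcal{C}^{\alpha-1}_\theta$. The limit is the map $r\mapsto \partial_\theta W(r,\cdot)$, and it inherits the Hölder bound. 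I then check consistency with the two-variable distribution $T$ on $[0,1]^2$: for $\Phi(r,\theta)=a(r)\phi(\theta)$,
\[
\langle T,\Phi\rangle=\int a(r)\langle T(r,\cdot),\phi\rangle\,\dd r ,
\]
and density of such tensor products identifies $T$ with the family. This shows that the map from $T$ to the family is well defined and injective.

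I expect the only delicate step to be this last identification. One has to make sure the completion does not produce a distribution $T$ that is not represented by a uniform limit $W$, and that the constraint $W(\cdot,0)=0$ makes $W$, and not only $\partial_\theta W$, determined by $T$. I would handle it using $\sup|W|$ in the norm and the half-Dirichlet normalization: $W(r,\theta)=\langle T(r,\cdot),1_{[0,\theta]}\rangle$ is recovered by approximating $1_{[0,\theta]}$ by test functions and using the uniform continuity of $W$ in $\theta$. The rest is routine.
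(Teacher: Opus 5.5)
The paper states this lemma without proof, treating it as immediate from Definition~\ref{def:anisocyl}. Your argument is exactly that unpacking and is correct: you pair $\partial_\theta W(r,\cdot)$ with test functions via $-\int W(r,\theta)\phi'(\theta)\,\dd\theta$, which gives $\alpha$-H\"older dependence in $r$ uniformly over bounded sets of test functions, i.e.\ in the strong topology of $\mathcal{D}'((0,1))$. Your handling of the completion fills in precisely the identification the paper leaves implicit: uniform convergence of the $W_k$ from the $\sup|W|$ term, lower semicontinuity of the H\"older seminorm, and recovery of $W$ from $T$ through the normalization $W(\cdot,0)=0$. (If one reads the stray exponent $p$ in the H\"older norm of Definition~\ref{def:anisocyl} literally, only the H\"older exponent in $r$ changes, to $\alpha/p$.)
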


For $\alpha\in (0,\frac{1}{2})$, we have continuous injections $$\mathcal{W}^{\alpha,\alpha-1,p}\hookrightarrow  \mathcal{C}^{\alpha-\frac{1}{p},\alpha-1-\frac{1}{p}} \text{ for } \alpha>\frac{1}{p}$$ and 
$$ \mathcal{C}^{\alpha,\alpha-1} \hookrightarrow \mathcal{C}^{\alpha-1} $$
whose proof is done in Proposition~\ref{prop:continuousinjection} of the appendix. In the appendix, we also establish the necessary compact injections $\mathcal{C}^{\alpha+\varepsilon,\alpha+\varepsilon-1} \hookrightarrow \mathcal{C}^{\alpha,\alpha-1} $, $\forall\varepsilon>0$ which are needed 
to show tightness of the sequence of measures on these anisotropic spaces.

\begin{remark}
An important remark is that for every $\alpha>-1$ and every interval $[a,b]\subset \mathbb{R}$, it makes perfect sense to think about $\mathcal{C}^{\alpha}([a,b])$ as the space of H\"older distribution which are supported on $[a,b]$. Moreover, we prove in Lemma~\ref{lem:restrictionBesov} in the appendix that any distribution $T$ in $\mathcal{C}^\alpha(\mathbb{R})$ can be multiplied with the indicator function $1_{[a,b]}$ to define
$T1_{[a,b]}$ which is the canonical extension of $T\in \mathcal{D}^\prime((a,b))$.
However the multiplication with the indicator function produces some loss of regularity which is discussed in Lemma~\ref{lem:restrictionBesov}.
\end{remark}

Moreover, we will also often use the following result: for every $\alpha\in (0,1)$, we can always integrate anisotropic distributions $T\in \mathcal{C}^{\alpha,\alpha-1}(\mathbf{Cyl})$ in the $\theta$ direction and define 
$W:=\int_0^\theta T(r,u)du$.

\subsubsection{The weighted anisotropic spaces by scalings}

Recall that the map 
$\Psi$ has both discontinuities along 
unstable curves and 
singularities at the saddle points which also induces singularities of the pushforward area $\Psi_*\sigma$ at $\Psi\left(\mathrm{Crit}(f)_1 \right)$. Therefore we can only define $\tilde{A}$ outside $\Psi\left(\mathrm{Crit}(f)_1 \right)$ and we will need some functional spaces that will control how $\tilde{A}$ behaves as we zoom in closer and closer to points in $\Psi\left(\mathrm{Crit}(f)_1 \right)$. This is the main motivation
to introduce some new weighted norms near the critical points and their image by $\Psi$.
 So there are two types of spaces we will introduce: on the cylinder which resolves $\mathcal{S}$ and on the surface $\mathcal{S}$ itself.  
 Our ideas are very close to those appearing in the work of Bony on second micro-localization~\cite[b) page 7]{Bony1},~\cite[Eq 2.12 Def 2.5 p.~18]{Bony2} and also Meyer's work revisiting certain aspects of second micro-localization~\cite{MeyerWaveletsVibrationsScalings}. As usual, we rely 
on scalings and dyadic decompositions to define our spaces. 

We next use the previous semi-norms together with local scalings to define weighted spaces of
distributions.
\begin{definition}[Scalings and weighted semi-norms]
Set $\alpha\in (0,1)$.
On the cylinder $[\min(f),\max(f)]_r \times \mathbb{S}^1_\theta $ near any element $(r_0,\theta_0)$,
we define local scaling centered at $(r_0,\theta_0)$ as~:
$$ \mathcal{S}^{\lambda}_{r_0,\theta_0}(r,\theta)\coloneq (\lambda(r-r_0)+r_0,\lambda(\theta-\theta_0)+\theta_0  ), \forall \lambda\in \mathbb{R}_{>0} . $$

We define weighted semi-norms a singular point $(r_0,\theta_0)\in \Psi(\mathrm{Crit}(f)_1) \subset [\min(f),\max(f)]_r \times \mathbb{S}^1_\theta$ in the cylinder.
We require that for a certain scaling exponent $s<2\alpha-1$,
\begin{equation}\label{eq:weightedHolder}
\Vert \tilde{A}\Vert_{\mathcal{W}^{\alpha,\alpha-1;p;s}_{r,\theta}(\mathrm{Cyl})}^p\coloneq   \sum_{(r_0,\theta_0)\in \Psi(\mathrm{Crit}(f)_1)} \sum_{I\times J} \sum_{n=0}^\infty \left(2^{n(s-\frac{2}{p})}\Vert \mathcal{S}^{2^{-n} *}_{(r_0,\theta_0)} \tilde{A}\Vert_{\mathcal{W}^{\alpha,\alpha-1;p}_{r,\theta} (I\times J) }\right)^p + \sum_{I_2\times J_2} \Vert \tilde{A} 
\Vert^p_{\mathcal{W}^{\alpha,\alpha-1;p}_{r,\theta}(I_2\times J_2)} 
 \end{equation}
where we take the first sum over some \emph{finite cover} of the form $I_r\times J_\theta$ of a certain corona of the form $\{ m\in \Sigma; \mathrm{dist}(m,\Psi(a))\in [1,2] \}$ centered near a singular point $\Psi(a)=(r_0,\theta_0)$ and the second sum $\sum_{I_2\times J_2} \Vert \tilde{A} \Vert^p_{\mathcal{W}^{\alpha,\alpha-1,p}_{I_2\times J_2}}$ runs over a finite cover of $\{m, \mathrm{dist}\left(m,  \Psi\left(\mathrm{Crit}(f)_1\right)\right)\geqslant 1 \}$.
\end{definition}
The index $\alpha$ indicates the regularity in the $r$ variable, this means regularity along the flow, 
the index $\alpha-1$ indicates the regularity in the $\theta$ variable, this means transversal to the flow,
the index $p\geq 2$ tells us we used $L^p$ and $\ell^p$ norms to define our Sobolev spaces and finally the
exponent $s\in \mathbb{R}$ is a scaling exponent which indicates how our connection $\tilde{A}$ blows up as measured in Sobolev norms when we approach the critical point $a$; check~\ref{fig:Corona}.

\begin{figure}[t]
    \centering
    \includegraphics[width=1.1\linewidth]{ 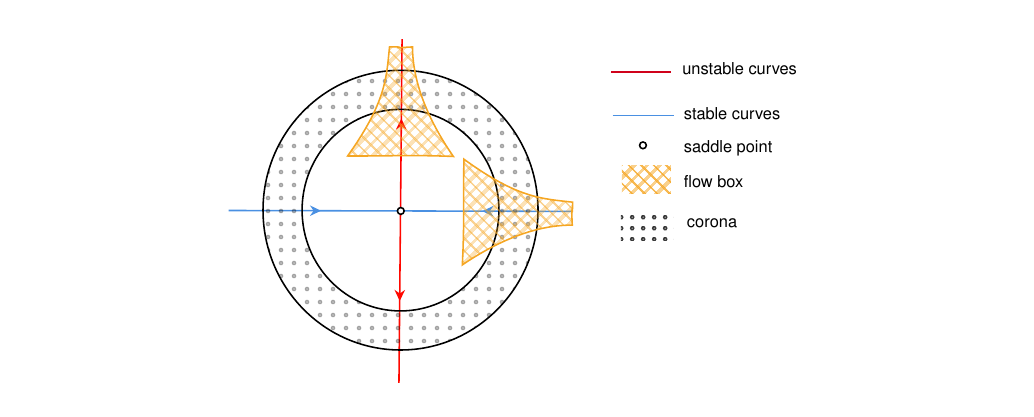}
    \caption{A corona around a critical point.}
    \label{fig:Corona}
\end{figure}
We also define
such weighted norms for anisotropic H\"older norms, for $s<2\alpha-1$ 
\begin{equation}
\Vert \tilde{A}\Vert_{\mathcal{C}^{\alpha,\alpha-1;s}(\mathrm{Cyl}) }  \coloneq \sup_{(r_0,\theta_0)\in \Psi(\mathrm{Crit}(f)_1)} \sup_{I\times J} \sup_{n\geq 0} 2^{ns}\Vert \mathcal{S}^{2^{-n} *} A\Vert_{\mathcal{C}^{\alpha,\alpha-1}_{r,\theta}(I\times J) }. 
 \end{equation}

\subsection{Regularity estimates in anisotropic spaces}

We estimate the regularity of 
$\tilde{A}\in \mathcal{D}^\prime([\min(f),\max(f)] \times \mathbb{S}^1_\theta )$ defined by equation~(\ref{eq:tildeA}) in global coordinates, start from the integrated object
$W(r,\theta)=\int_0^\theta \tilde{A}(r,u)du$.
Recall we would like to control the anisotropic norm of 
$W$ on some product of intervals $I_1\times I_2$ as follows~:
\begin{align*}
\Vert W \Vert^{2p}_{\mathcal{W}^{\alpha,\alpha;2p}_{r,\theta}(I_1\times J_1)}\coloneq \int_{I_1^2\times I_2^2} \frac{\vert W(r_1,\theta_1)-W(r_2,\theta_1)- W(r_1,\theta_2)+W(r_2,\theta_2)  \vert_{\mathfrak{g}}^{2p} }{ \vert r_1-r_2 \vert^{1+2\alpha p}\vert \theta_1-\theta_2 \vert^{1+2\alpha p}  }  \dd r_1\dd r_2\dd\theta_1\dd\theta_2    
\end{align*}
Then we get an estimate of the form~:
\begin{align*}
\mathbb{E}\left[\Vert W \Vert^p_{\mathcal{W}^{\alpha,\alpha;p}_{r,\theta}(I_1\times I_2)}\right] & =\int_{I_1^2\times I_2^2} \frac{\mathbb{E}\left[ \vert W(r_1,\theta_1)-W(r_2,\theta_1)- W(r_1,\theta_2)+W(r_2,\theta_2)  \vert_{\mathfrak{g}}^{2p} \right] }{ \vert r_1-r_2 \vert^{1+2\alpha p}\vert \theta_1-\theta_2 \vert^{1+2\alpha p}  }  \dd r_1\dd r_2\dd\theta_1\dd\theta_2 \\
&\leq C_p\int_{I_1^2\times I_2^2} \frac{\mathbb{E}\left( \vert W(r_1,\theta_1)-W(r_2,\theta_1)- W(r_1,\theta_2)+W(r_2,\theta_2)  \vert_{\mathfrak{g}}^{2} \right)^p }{ \vert r_1-r_2 \vert^{1+2\alpha p}\vert \theta_1-\theta_2 \vert^{1+2\alpha p}  }  \dd r_1\dd r_2\dd\theta_1\dd\theta_2\\
&\leq C_p\int_{I_1^2\times I_2^2} \frac{ \left(\int_{\square(r_1,r_2,\theta_1,\theta_2)} \left(\Psi_*\sigma\right)(r,\theta)\dd rd\theta \right) ^p }{ \vert r_1-r_2 \vert^{1+2\alpha p}\vert \theta_1-\theta_2 \vert^{1+2\alpha p}  }  \dd r_1\dd r_2\dd\theta_1\dd\theta_2
\end{align*}
where we used Fubini, then hypercontractivity of $W$ since it is Gaussian where $\square(r_1,r_2,\theta_1,\theta_2) $ is the rectangle bordered by $(r_1,\theta_1),(r_2,\theta_1),(r_1,\theta_2),(r_2,\theta_2)$ then the above is bounded for $p$ large enough as soon as $\alpha<\frac{1}{2}$. If we specialize our estimate to rectangular regions $I_1\times I_2$ in $(r,\theta)$ that avoid $\Psi\left(\mathrm{Crit}(f)_1\right)$, then
we have upper bounds
of the form $\int_{\square(r_1,r_2,\theta_1,\theta_2)} \left(\Psi_*\sigma \right) (r,\theta)\dd r\dd\theta =\mathcal{O}(\vert r_1-r_2\vert \vert \theta_1-\theta_2 \vert)$.

Now we deal with neighborhoods of singular points in $\Psi\left(\mathrm{Crit}(f)_1\right)$. We
assume without loss of generality that 
the singular point reads $(0,0)$ in coordinates of the cylinder.
We just need to scale the previous estimate taking into account that the 
area form blows up at the singular point $(0,0)$:
\begin{align*}
    \mathbb{E}\left[\Vert  \mathcal{S}^{\lambda *}_{r,\theta} W \Vert^{2p}_{\mathcal{W}^{\alpha,\alpha;2p}_{r,\theta}}  \right] &\lesssim C_p\int_{I_1^2\times I_2^2} \frac{ \left(\int_{\square(\lambda r_1, \lambda r_2, \lambda \theta_1, \lambda \theta_2)}  \left(\Psi_*\sigma \right)(r,\theta)\dd r\dd\theta \right) ^p }{ \vert r_1-r_2 \vert^{1+2\alpha p}\vert \theta_1-\theta_2 \vert^{1+2\alpha p}  }  \dd r_1\dd r_2\dd\theta_1\dd\theta_2\\
    \lesssim  C_p\int_{I_1^2\times I_2^2} &\frac{ \left(\int_{\square(\lambda r_1, \lambda r_2, \lambda \theta_1, \lambda \theta_2)} \left(\Psi_*\sigma \right)(r,\theta)\dd r\dd\theta \right) ^p }{ \vert r_1-r_2 \vert^{1+2\alpha p}\vert \theta_1-\theta_2 \vert^{1+2\alpha p}  }  \dd r_1\dd r_2\dd\theta_1\dd\theta_2\\
    = C_p&\int_{I_1^2\times I_2^2} \frac{ \left(\int_{\lambda r_1}^{\lambda r_2} \int_{\lambda \theta_1}^{\lambda \theta_2} \left(\Psi_*\sigma \right)(r,\theta)\dd r\dd\theta \right) ^p }{ \vert r_1-r_2 \vert^{1+2\alpha p}\vert \theta_1-\theta_2 \vert^{1+2\alpha p}  }  \dd r_1\dd r_2\dd\theta_1\dd\theta_2\\
= &C_p\int_{I_1^2\times I_2^2} \frac{ \left(\int_{ r_1}^{ r_2} \int_{\theta_1}^{ \theta_2} \left(\Psi_*\sigma \right)(\lambda r, \lambda \theta) \lambda^2 \dd r\dd\theta \right) ^p }{ \vert r_1-r_2 \vert^{1+2\alpha p}\vert \theta_1-\theta_2 \vert^{1+2\alpha p}  }  \dd r_1\dd r_2\dd\theta_1\dd\theta_2\\
&\lesssim C_p\lambda^p\int_{I_1^2\times I_2^2} \frac{ \left(\int_{\square(r_1,r_2,\theta_1,\theta_2)} \dd rd\theta \right) ^p }{ \vert r_1-r_2 \vert^{1+2\alpha p}\vert \theta_1-\theta_2 \vert^{1+2\alpha p}  }  \dd r_1\dd r_2\dd\theta_1\dd\theta_2.
\end{align*}
where we use the bound $\Psi_*\sigma(r,\theta)=\mathcal{O}( (r^2+4\theta^2)^{-\frac{1}{2}} ) $
which implies that 
$$\Psi_*\sigma( \lambda r, \lambda \theta)|_{(r,\theta)\in \square(r_1,r_2,\theta_1,\theta_2)}=\mathcal{O}\left( \lambda^{-1} \right) $$
hence $ \left(\int_{ r_1}^{ r_2} \int_{\theta_1}^{ \theta_2} \left(\Psi_*\sigma \right)(\lambda r, \lambda \theta) \lambda^2 \dd r\dd\theta \right)^p=\mathcal{O}(\lambda^p(\vert r_1-r_2\vert \vert \theta_1-\theta_2 \vert)^p)$. 
From the above bound, for all $\alpha\in (0,\frac{1}{2})$, for all $\beta<\frac{1}{2}$, there exists $p$ large enough s.t. ~:
\begin{align*}
\sum_{n\geqslant 1} 2^{n(\beta-\frac{2}{2p}) 2p} \mathbb{E}\left(\Vert  \mathcal{S}^{2^{-n} *}_{r,\theta} W \Vert^{2p}_{\mathcal{W}^{\alpha,\alpha;2p}_{r,\theta}}  \right)  
\leqslant \mathbb{E}\left(\sum_{n\geqslant 1} 2^{n(\beta-\frac{2}{2p} )2p}\Vert  \mathcal{S}^{2^{-n} *}_{r,\theta} W \Vert^{2p}_{\mathcal{W}^{\alpha,\alpha;2p}_{r,\theta}}  \right)\\
\lesssim C_p \int_{I_1^2\times I_2^2} \frac{ (\vert r_1-r_2\vert \vert \theta_1-\theta_2 \vert)^p }{ \vert r_1-r_2 \vert^{1+2\alpha p}\vert \theta_1-\theta_2 \vert^{1+2\alpha p}  }  \dd r_1\dd r_2\dd\theta_1\dd\theta_2 \sum_{n\geqslant 1} 2^{n(\beta-\frac{2}{2p} -  \frac{1}{2})2p}<+\infty
\end{align*}
where the conclusion holds true because $I_1\times I_2$ never meets $(0,0)$.
By the commutation relation
$\partial_\theta\mathcal{S}^{2^{-n}*}_{r_0,\theta_0}=2^{-n}\partial_\theta\mathcal{S}^{2^{-n}*}_{r_0,\theta_0}$,
$$ \Vert \mathcal{S}^{2^{-n}*}\tilde{A} \Vert_{\mathcal{W}^{\alpha,\alpha-1,p}}=\Vert \mathcal{S}^{2^{-n}*}\partial_\theta W \Vert_{\mathcal{W}^{\alpha,\alpha-1,p}}=2^n\Vert \partial_\theta \mathcal{S}^{2^{-n}*} W  \Vert_{\mathcal{W}^{\alpha,\alpha-1,p}}=2^n\Vert  \mathcal{S}^{2^{-n}*} W  \Vert_{\mathcal{W}^{\alpha,\alpha,p}},$$
combining with the above estimate on $\sum_{n\geqslant 1} 2^{n(\beta-\frac{2}{2p}) 2p} \mathbb{E}\left(\Vert  \mathcal{S}^{2^{-n} *}_{r,\theta} W \Vert^{2p}_{\mathcal{W}^{\alpha,\alpha;2p}_{r,\theta}}  \right)  $, we get
\begin{align*}
\mathbb{E}\left( \sum 2^{n(s-\frac{2}{2p})2p}    \Vert \mathcal{S}^{2^{-n}*}\tilde{A} \Vert_{\mathcal{W}^{\alpha,\alpha-1,2p}}^{2p} \right) =
\mathbb{E}\left( \sum 2^{n(s-\frac{2}{2p})2p}   2^{2np}\Vert  \mathcal{S}^{2^{-n}*} W  \Vert_{\mathcal{W}^{\alpha,\alpha,2p}}^{2p} \right)\lesssim \sum 2^{n(s-\frac{2}{2p})2p}   2^{np}
\end{align*}
since $\mathbb{E}\left(\Vert  \mathcal{S}^{2^{-n}*} W  \Vert_{\mathcal{W}^{\alpha,\alpha,2p}}^{2p} \right)=\mathcal{O}(2^{-np})$ and the series converges as soon as $s<-\frac{1}{2}$ for large enough $p$.

We deduce the following Lemma:
\begin{lemma}[Regularity of $\tilde{A}$]
For any $s<-\frac{1}{2}$ and for the random connection $\tilde{A}$ given by equation (\ref{eq:tildeA})~:
\begin{equation}
\mathbb{E} \left(\Vert   \tilde{A} \Vert^p_{\mathcal{W}^{\alpha,\alpha-1,p,s}}  \right)<+\infty.  
\end{equation}
\end{lemma}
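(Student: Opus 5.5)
The plan is to bound the $p$-th moment of the weighted norm term by term, using that $W(r,\theta)=\langle \xi, 1_{\square(r,\theta)}\rangle$ is Gaussian. We may assume $p$ is even and large, say $p=2q$, since the finiteness for small $p$ follows from Jensen/Hölder. By definition of $\Vert\cdot\Vert_{\mathcal{W}^{\alpha,\alpha-1;p;s}}$ and Fubini, $\mathbb{E}\Vert\tilde A\Vert^p$ splits into a finite sum of terms $\mathbb{E}\Vert \tilde A\Vert^p_{\mathcal{W}^{\alpha,\alpha-1;p}(I_2\times J_2)}$ over boxes away from $\Psi(\mathrm{Crit}(f)_1)$, plus, for each singular point and each box $I\times J$ of the fixed corona cover, the series $\sum_n 2^{n(s-2/p)p}\,\mathbb{E}\Vert \mathcal{S}^{2^{-n}*}\tilde A\Vert^p_{\mathcal{W}^{\alpha,\alpha-1;p}(I\times J)}$. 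Each anisotropic seminorm of $\tilde A=\partial_\theta W$ is, by Definition~\ref{def:anisocyl}, expressed through $W$ via the Gagliardo-type double integral of rectangular increments together with $\Vert W\Vert_{L^p}$.

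Step one handles the regular boxes. Inside the integral, the rectangular increment $W(r_1,\theta_1)-W(r_2,\theta_1)-W(r_1,\theta_2)+W(r_2,\theta_2)$ equals $\pm\xi(1_{\square(r_1,r_2,\theta_1,\theta_2)})$, a centered Gaussian with variance $\Psi_*\sigma(\square(r_1,r_2,\theta_1,\theta_2))$. Gaussian hypercontractivity gives $p$-th moments bounded by $C_p$ times this area to the power $p/2$. On boxes avoiding the singular points, the density of $\Psi_*\sigma$ is bounded, so the area is $O(|r_1-r_2||\theta_1-\theta_2|)$. The resulting integral $\int |r_1-r_2|^{p/2-1-\alpha p}|\theta_1-\theta_2|^{p/2-1-\alpha p}$ converges since $\alpha<\tfrac12$. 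The $L^p$ term is bounded similarly, using that $\mathrm{Var}\,W(r,\theta)\le \sigma(\Sigma)<\infty$ because $\Psi_*\sigma$ is integrable.

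Step two handles the scaled terms near a singular point, taken to be $(0,0)$. Changing variables, the rectangle area entering $\mathcal{S}^{2^{-n}*}W$ becomes $\int_{\square}\Psi_*\sigma(2^{-n}r,2^{-n}\theta)\,2^{-2n}\,\dd r\,\dd\theta$. The boxes $I\times J$ lie in a corona at distance in $[1,2]$ from the origin, so the bound $\Psi_*\sigma=O((r^2+4\theta^2)^{-1/2})$ gives density $O(2^{n})$ there. The area is therefore $O(2^{-n}|r_1-r_2||\theta_1-\theta_2|)$, and hence $\mathbb{E}\Vert\mathcal{S}^{2^{-n}*}W\Vert^p_{\mathcal{W}^{\alpha,\alpha;p}}=O(2^{-np/2})$; the $L^p$ part scales in the same way. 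Using the commutation $\partial_\theta \mathcal{S}^{\lambda*}=\lambda^{-1}\mathcal{S}^{\lambda*}\partial_\theta$, we get $\Vert\mathcal{S}^{2^{-n}*}\tilde A\Vert=2^{n}\Vert \mathcal{S}^{2^{-n}*}W\Vert$ in the respective anisotropic norms. Each scaled term is then $O(2^{np}2^{-np/2})=O(2^{np/2})$, and the series is dominated by $\sum_n 2^{np(s-2/p+1/2)}$. This converges whenever $s<-\tfrac12+\tfrac2p$, and in particular for every $s<-\tfrac12$.

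The main obstacle is the identification $\Vert \mathcal{S}^{\lambda*}\partial_\theta W\Vert_{\mathcal{W}^{\alpha,\alpha-1;p}}=\Vert \mathcal{S}^{\lambda*}W\Vert_{\mathcal{W}^{\alpha,\alpha;p}}$ after rescaling. The half-Dirichlet normalisation $W(\cdot,0)=0$ is not scale invariant on the corona boxes, so the primitive must be renormalised on each box $J$ by subtracting its value at the left endpoint. I would do this by noting that the Gagliardo term only sees rectangular increments, which are unchanged by this renormalisation. The only genuinely new contribution is the $L^p$ norm of the renormalised primitive, which is again an increment $\xi(1_{\square})$ over a rectangle in the rescaled corona. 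It obeys the same $O(2^{-n})$ variance bound, so the argument of Step two applies to it unchanged.
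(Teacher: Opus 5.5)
Your proposal is correct and follows the paper's own argument essentially step by step: hypercontractivity reduces moments of rectangular increments of $W$ to powers of $\Psi_*\sigma$-areas, and the bound $\Psi_*\sigma=O\bigl((r^2+4\theta^2)^{-1/2}\bigr)$ makes these areas $O(2^{-n})$ on the rescaled corona; the identity $\Vert\mathcal{S}^{2^{-n}*}\tilde A\Vert=2^{n}\Vert\mathcal{S}^{2^{-n}*}W\Vert$ then leaves a geometric series converging for $s<-\tfrac12+\tfrac2p$. One small correction to your treatment of the $L^p$ term, which the paper does not discuss: the renormalised primitive $W(r,\theta)-W(r,\theta_J)$ (with $\theta_J$ the left endpoint of $J$) pairs the noise with a thin strip of width $\sim 2^{-n}$ running from the bottom of the cylinder up to the rescaled box, not with a rectangle inside the corona, so its variance is $O(n2^{-n})$ (the logarithmic loss of Lemma~\ref{areaestim}) rather than $O(2^{-n})$, which is harmless because the condition on $s$ is strict.
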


When we calculate the expectations $\mathbb{E}\left[\Vert  \mathcal{S}^{\lambda *}_{r,\theta} W \Vert^{2p}_{\mathcal{W}^{\alpha,\alpha;2p}_{r,\theta}}  \right] $, we need to justify that 
the integrand itself $\Vert  \mathcal{S}^{\lambda *}_{r,\theta} W \Vert^{2p}_{\mathcal{W}^{\alpha,\alpha;2p}_{r,\theta}}$ is measurable. A simple way to justify this is to go back to the white noise $\xi$ on $\mathbf{Cyl}$ and to replace $\xi$ with the finite sum
 $ \xi_N:=\sum_{i=0}^N c_i e_i  $ 
where $(c_i)_{i\in \mathbb{N}} $ is an i.i.d sequence of random variables distributed as $\mathcal{N}(0,1)$,
$(e_i)_{i\in \mathbb{N}}$ is any orthonormal basis of $L^2\left(\mathbf{Cyl},\Psi_*\sigma, \mathfrak{g} \right)$. This yields a sequence of random connection 
$$\tilde{A}_N:= \Psi^*\left( \partial_\theta \left\langle \xi_N, 1_{\square(r,\theta)} \right\rangle  d\theta \right) +\sum_{a\in \mathrm{Crit}(f)_1} \log(g_a)U_a $$ 
and $W_N:=\int_0^\theta \tilde{A}_N(r,u)du$. Then we can repeat all the calculations of the present paragraph with $\tilde{A}_N$ and $W_N$ instead of $\tilde{A}$ and $W$, all objects are measurable since $\tilde{A}_N$ and $W_N$ depend on a finite number of random variables. Now we can let $N\rightarrow +\infty$ in all terms since $W_N$ is a Martingale bounded in $L^2$. We refer the reader to the companion paper~\cite{BCDRT} for more details on this approach.

\subsection{Extending near defect lines and reducing to neighborhoods of saddle points}
\label{ss:singpullback}

\paragraph{Clarifying various notions of distributions on some domain with smooth corners.} We state very precisely what we mean by some distribution, or current, defined on some manifold with corner $H$ whose role will be played by the Hexagons $(H_i)_{i=1}^{4g}$. Let $H$ be a surface with smooth corners which is smoothly embedded in an closed compact surface $\mathcal{S}$: $H\hookrightarrow \mathcal{S}$. Then there are mostly four classes of distributions one can define on such surface with smooth corners, each definition corresponds to some choice for the class of test functions:
\begin{enumerate}
    \item If the test functions are in $C^\infty_c(\mathrm{int}(H))$ means their support does not meet the boundary, then the dual space reads $\mathcal{D}^\prime(\mathrm{int}(H))$, this is the largest possible space of distributions defined on the open set $\mathrm{int}(U)$.
    \item If the test functions belong to the ideal $\mathcal{S}(H)$ (by analogy with Schwartz class functions) of $C^\infty(H)$ that vanish at infinite order at the boundary $\partial H$, then the dual $\mathcal{S}^\prime(H)$ consists of all distributions in $\mathcal{D}^\prime(\mathrm{int}(H))$ which are extendible on $\mathcal{S}$. One can also think of them as distributions obtained by restricting elements in $\mathcal{D}^\prime(\mathcal{S})$ to $H$, this is some kind of extrinsic viewpoint, or as distributions in $\mathcal{D}^\prime(\mathrm{int}(H))$ with moderate growth when one approaches the boundary $\partial H$ a point of view pioneered by Kashiwara in ~\cite[section 3 p.~332]{KashiwaraRH} and also described in detail in~\cite[section 2 to 5]{DangHers}.
    The terminology Schwartz is inspired from \cite{AizenbudGourevitchNash} and \cite{Casselman}. Given an extendible distribution $T\in \mathcal{D}^\prime(\mathrm{int}(H))$, any extension $\overline{T}\in \mathcal{D}^\prime(\mathcal{S})$ is \emph{not necessarily unique}. 
    \item If the test functions are the smooth functions in $C^\infty(H)$ which are smooth up to the boundary. In this case, the topological dual can be realized as distributions in $\mathcal{D}^\prime(\mathcal{S})$ which are supported
    in $H$ (the rigorous proof uses Whitney's extension Theorem for smooth functions). This space is denoted by $\mathcal{D}^\prime_H(\mathcal{S})$.
    \item If the test functions are $\mathcal{C}^\alpha(H)$ functions  for $\alpha\in (0,1)$ and given $\overline{T}\in \mathcal{C}^\alpha(H)^\prime$, we identify its restriction to $\mathrm{int}(H)$ as an element $T\in \mathcal{D}^\prime(\mathrm{int}(H))$, and one can recover $\overline{T}$ as an element in $\mathcal{D}^\prime_K(\mathcal{S})$ from $T$ by a limiting procedure as follows~:
    $$\overline{T}:=\lim_{\varepsilon\rightarrow 0^+}\chi_\varepsilon T $$
    for $(\chi_\varepsilon)_\varepsilon$ a family of smooth functions in $C^\infty_c(\mathrm{int}(H))$ s.t. $\chi_\varepsilon=1$ outside some $\varepsilon$--neighborhood of $\partial H$. This is the most restricted class of distributions of the four classes of distributions we just described and this is the class of distributions that we will mostly encounter in this work.
\end{enumerate}

For a pedagogical presentation of the three first class of distributions with more references on these topics, we recommend to look at ~\cite[section 2 to 5]{DangHers} and the references therein, we also recommend to look at the papers~\cite{AizenbudGourevitchNash}, \cite[p.~157--165]{Casselman} for the relation with real algebraic and analytic geometry.
The discussion for currents is almost verbatim, the reader just has to replace the word test functions with test forms.

\paragraph{The three steps extensions.} We would like to prove the following.
\begin{proposition}\label{ContPullBack}
     Let $\alpha\in (0,\frac{1}{2})$ and $s\in (-2,0)$.
Let $\tilde{A}\dd\theta$ be any current in the weighted anisotropic space $\mathcal{C}^{\alpha,\alpha-1,s}\left( \mathbf{Cyl}\setminus \Psi(\mathrm{Crit}(f)_1) \right)$ of the cylinder $\mathbf{Cyl}:=[\min(f),\max(f)]_r\times \mathbb{S}^1_\theta$. 
Then for $\varepsilon$ small enough, the pull--back $\Psi^*\left(\tilde{A}d\theta\right)$ 
by the map $\Psi$ from equation~\ref{eq:globalPsi} which is well--defined on 
\[\mathrm{int}\left(\mathcal{S}\right)\setminus \overline{\cup_{a\in \mathrm{Crit}(f)_1} W^u(a)}\]
\emph{extends uniquely to some distribution} in $ \mathcal{C}^{-\beta-2}\left( \mathcal{S} \right) $, for all $\beta$ such that $\beta+\alpha-1>0$ and the pull--back operator followed by the extension map 
$$\Psi^*:\mathcal{C}^{\alpha,\alpha-1,s}\left(  [\min(f),\max(f)]\times \mathbb{S}^1 \setminus \Psi\left(\mathrm{Crit}(f)_1\right)  \right) \longmapsto  \mathcal{C}^{-\beta-2}(\mathcal{S}) $$ is linear continuous.
\end{proposition}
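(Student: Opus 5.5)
The plan is to follow the ``three steps extension'' suggested by the title of the subsection. We localize on $\mathcal{S}$ with a finite partition of unity subordinate to three kinds of regions, treat each region separately, and sum. The regions are:
\begin{itemize}
\item[(i)] the interiors of the hexagons $H_i$, away from the defect lines;
\item[(ii)] tubular neighborhoods of the unstable curves $W^u(a)$, away from the saddle points and from $\partial\mathcal{S}$;
\item[(iii)] small Morse boxes $\Omega_a$ around the saddle points $a\in\mathrm{Crit}(f)_1$, where Proposition~\ref{prop:angvar2} gives $f=x^2-y^2$ and $\theta=\pm xy$.
\end{itemize}
Linearity is immediate in every case. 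Uniqueness of the extension is also immediate: the exceptional set $\overline{\cup_a W^u(a)}$ is a finite union of curves, and the constructed extension will be the limit of $\chi_\varepsilon\Psi^*(\tilde A\dd\theta)$ for cutoffs $\chi_\varepsilon$ vanishing near that set, exactly as in item 4 of the list of distribution classes above. So the content of the proposition lies in the estimates.

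\textbf{Region (i).} Here $\Psi$ is a diffeomorphism onto a rectangle avoiding $\Psi(\mathrm{Crit}(f)_1)$. We write $\tilde A=\partial_\theta W$ and use the continuous injection $\mathcal{C}^{\alpha,\alpha-1}\hookrightarrow\mathcal{C}^{\alpha-1}$ from the appendix. Pulling back by a diffeomorphism preserves H\"older--Besov classes, so the pull-back lies in $\mathcal{C}^{\alpha-1}\subset\mathcal{C}^{-\beta-2}$.

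\textbf{Region (ii).} Near a point of $W^u(a)$, the map $\Psi$ restricted to each of the two adjacent hexagons is a diffeomorphism up to the edge. Each side is a diffeomorphic image of a closed rectangle $I\times J$, with $J$ ending at a defect line $\theta=\theta(a)$. By the lemma that $\mathcal{C}^{\alpha,\alpha-1}$ elements are H\"older in $r$ with values in $\theta$-distributions, together with Lemma~\ref{lem:restrictionBesov}, the product $\tilde A\,1_{J}$ is a canonical extension in $\mathcal{C}^{\alpha-1-\delta}$ for any $\delta>0$. This needs $\alpha-1>-1$, which holds. Equivalently, $W1_J$ is H\"older, so $\partial_\theta(W1_J)$ minus the jump term is controlled. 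We pull back each one-sided piece, which becomes a distribution supported on a half-tube with the same regularity, and add the two. The sum agrees with $\Psi^*(\tilde A\dd\theta)$ off $W^u(a)$, and its norm is bounded by $\|\tilde A\|_{\mathcal{C}^{\alpha,\alpha-1;s}}$ with some loss of regularity. This is one place where the margin $\beta+\alpha-1>0$ is used.

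\textbf{Region (iii): the main obstacle.} Here $\Psi$ degenerates ($\dd f\wedge\dd\theta=2(x^2+y^2)\dd x\wedge\dd y$) and the weight $s$ enters. We take a Littlewood--Paley type dyadic partition of unity $1=\chi_0+\sum_{n\ge1}\eta(2^{n}\cdot)$ in the coordinates $(x,y)$ around $a$. Since $f$ and $\theta$ are quadratic, the dilation $(x,y)\mapsto(2^{-n}x,2^{-n}y)$ on $\mathcal{S}$ corresponds to the scaling $\mathcal{S}^{2^{-2n}}_{\Psi(a)}$ on the cylinder, so
\[
\Psi^*\bigl(\tilde A\dd\theta\bigr)\circ\delta_{2^{-n}}=\Psi^*\bigl(\mathcal{S}^{2^{-2n}*}_{\Psi(a)}\tilde A\dd\theta\bigr).
\]
On the fixed annulus $\{1\le|(x,y)|\le2\}$, the piece $\eta\,\Psi^*(\mathcal{S}^{2^{-2n}*}\tilde A\dd\theta)$ is treated as in regions (i)--(ii), because the annulus stays away from the saddle point and $\Psi$ is a diffeomorphism onto the corona covered by the sets $I\times J$. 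By definition of the weighted norm, this gives a $\mathcal{C}^{-\beta-2}$ bound of order $2^{-2ns}\|\tilde A\|_{\mathcal{C}^{\alpha,\alpha-1;s}}$, with the dilation accounting for an exponent shift. Rescaling back to size $2^{-n}$ costs at most a factor $2^{n(\beta+2)}$ in $\mathcal{C}^{-\beta-2}$, since negative-order H\"older norms scale favourably: a distribution of order $-\gamma$ supported in a ball of radius $\lambda$ gains $\lambda^{\gamma}$ relative to its unit-scale norm, up to the homogeneity of the $1$-form. The $n$-th term is therefore bounded by $2^{-n(\beta+2+2s)}$ up to fixed powers. The series converges in $\mathcal{C}^{-\beta-2}$ provided the net exponent is negative, and this is where the hypotheses $s>-2$ and the choice of $\beta$, together with the extra $2$ in $-\beta-2$ that absorbs the worst homogeneity, must be balanced.

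I expect the bookkeeping of exponents in region (iii) to be the delicate point, specifically matching the anisotropic scaling on the cylinder with the isotropic Besov scaling on $\mathcal{S}$. If a direct estimate fails, I would first prove the bound for $W$ (a H\"older function) and then apply two derivatives, which explains the $-2$ in the target space. Summing the contributions of the three regions gives the continuous linear map into $\mathcal{C}^{-\beta-2}(\mathcal{S})$.
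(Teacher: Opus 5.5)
Your decomposition is the paper's three-step extension, and your region (ii) is Example~\ref{ex:pullbackgluing} and Lemma~\ref{lem:extcurves}. There is one genuine difference. Near a saddle you cut dyadically on the surface and use the exact intertwining $\Psi\circ\delta_\lambda=\mathcal{S}^{\lambda^2}_{\Psi(a)}\circ\Psi$. The paper instead cuts dyadically on the cylinder, localizes with the scale-invariant half-planes $1_{\Psi(H_i)}$, pulls back through smooth extensions $\tilde\Psi_i$, and estimates by duality using the explicit formula for $\Psi^{-1}$. Your route is fine, but region (iii) is the whole content of the proposition, and there your exponent count is wrong and left open, for three reasons.

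\begin{enumerate}
\item We have $\delta_{2^{-n}}^*\Psi^*(\tilde A\,\dd\theta)=2^{-2n}\,\Psi^*\big((\tilde A\circ\mathcal{S}^{2^{-2n}}_{\Psi(a)})\,\dd\theta\big)$, and the weighted norm controls only the coefficient $\tilde A\circ\mathcal{S}^{2^{-2n}}_{\Psi(a)}$. So the unit-scale piece has size $2^{-2n}2^{-2ns}$, not $2^{-2ns}$.
\item The claim that a distribution of order $-\gamma$ concentrated at scale $\lambda$ gains $\lambda^{\gamma}$ is false for $\gamma>2$. At unit test scale one only sees its mass, so the gain saturates at $\lambda^{2}$ (take a bump $\phi(\cdot/\lambda)$). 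Undoing the dilation on a $1$-form also multiplies its coefficients by $\lambda^{-1}$. The net rescaling factor is therefore $2^{-n}$, not $2^{\pm n(\beta+2)}$.
\item Even taken at face value, summability of $2^{-n(\beta+2+2s)}$ needs $s>-1-\beta/2$, which $s>-2$ does not give.
\end{enumerate}
Also, $\Psi$ is not a diffeomorphism on your annulus: $W^u(a)$ crosses it and $\Psi$ jumps there. So the one-sided restrictions of (ii), with the extensions $\tilde\Psi_i$, are needed at every scale.

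To close the argument, estimate by duality. Set $T_n=\eta(2^n\cdot)\,\Psi^*(\tilde A\,\dd\theta)$ and let $u_n=\delta_{2^{-n}}^*T_n$ be its unit-scale version. Then:
\begin{itemize}
\item $\langle T_n,\varphi\rangle=\langle u_n,\delta_{2^{-n}}^*\varphi\rangle$;
\item $\|\delta_{2^{-n}}^*\varphi\|_{C^\beta}\lesssim 2^{-n}\|\varphi\|_{C^\beta}$ on the annulus;
\item $u_n$ acts on $C^\beta$ forms with norm $\lesssim 2^{-2n}2^{-2ns}\|\tilde A\|_{\mathcal{C}^{\alpha,\alpha-1;s}}$, by Lemma~\ref{lem:aniso_inj_holder}, Lemma~\ref{lem:young} and $\beta+\alpha-1>0$.
\end{itemize}
Hence $|\langle T_n,\varphi\rangle|\lesssim 2^{-n(3+2s)}\|\tilde A\|_{\mathcal{C}^{\alpha,\alpha-1;s}}\|\varphi\|_{C^\beta}$. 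The series converges in $\mathcal{C}^\beta(\mathcal{S})'$, and Lemma~\ref{lem:continjectiondualHolder} (after shrinking $\beta$) puts the limit in $\mathcal{C}^{-\beta-2}$. That dual embedding, not two derivatives of $W$, is where the $-2$ comes from.

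Note, however, that this needs $s>-\tfrac{3}{2}$, and the threshold is sharp. Take $\tilde A=\chi\,|(r,\theta)-p|^{s}$ with $\chi\geq 0$ a cutoff near one point $p\in\Psi(\mathrm{Crit}(f)_1)$ over $a$; it lies in $\mathcal{C}^{\alpha,\alpha-1,s}$. Near $a$ its pull-back lives on a half-disc bounded by $W^u(a)$, on which $\theta$ is a constant plus $\pm xy$. So its $\dd y$-coefficient $\pm x\,(\tilde A\circ\Psi)$ has one sign and size $|x|\,|(x,y)|^{2s}$. The cutoff limit tested against $\phi\,\dd x$, with $\phi\geq 0$ and $\phi(a)>0$, therefore diverges for $s\le-\tfrac{3}{2}$.

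The paper's Lemma~\ref{lem:extsurface} reaches $s>-2$ only by bounding $\|\mathcal{S}^{2^{-n}*}_{0,0}\Psi^{-1*}\varphi\|_{C^\beta(K)}$ by $2^{-n}\|\varphi\|_{C^\beta(B)}$. But $\Psi^{-1}$ is homogeneous of degree $\tfrac{1}{2}$, which is exactly your intertwining relation, so the true factor is $2^{-n/2}$ and the threshold is again $-\tfrac{3}{2}$. This is harmless for the application, where $s$ is taken just below $-\tfrac{1}{2}$, but the statement should be proved for $s\in(-\tfrac{3}{2},0)$.
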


The proof of this extension is done in three steps, and is summarized in Figure \ref{threestepext}
\begin{figure}[t]
    \centering
    \includegraphics[width=\linewidth]{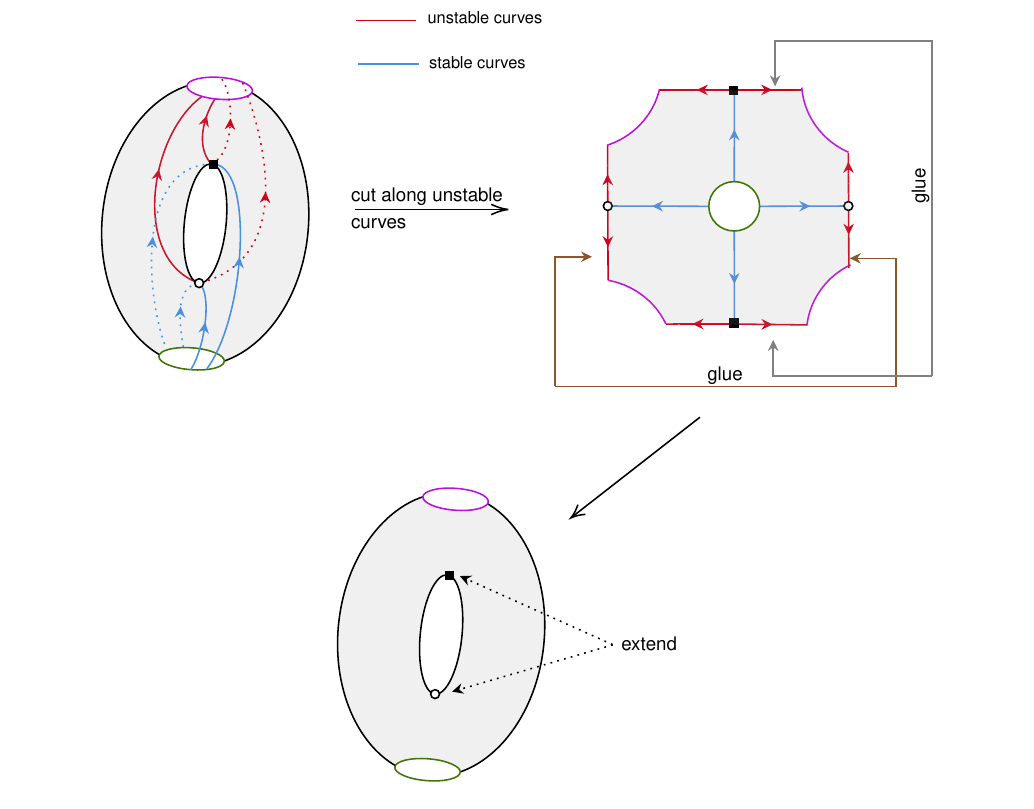}
    \caption{Three-step extension.}
    \label{threestepext}
\end{figure}

\begin{enumerate}
    \item First, since $\Psi$ is discontinuous along unstable curves argue that the pull--back $\Psi^*\left(\tilde{A}d\theta\right)$ extends across unstable curves and defines a distribution in 
    $\mathcal{S}\setminus \mathrm{Crit}(f)_1$. We deal with this part in Example~\ref{ex:pullbackgluing} and Lemma~\ref{lem:extcurves}. 
    \item Second, our distribution $ \tilde{A}d\theta$ is a current in the weighted anisotropic space 
    \[\mathcal{C}^{\alpha,\alpha-1,s}\left( \mathbf{Cyl}\setminus \Psi(\mathrm{Crit}(f)_1) \right)\] hence it is only well--defined on the pointed space $\mathbf{Cyl}\setminus \Psi(\mathrm{Crit}(f)_1)$ where we deleted images under $\Psi$ of all saddle points. So we need to prove that 
    the scaling of $ \tilde{A}d\theta$ near the singular points in $\Psi(\mathrm{Crit}(f)_1)$ which is controlled by the exponent $s$ from the weighted norm, allows us to extend  $ \tilde{A}d\theta$ canonically as a current in the topological dual $\mathcal{C}^{\beta}(\mathbf{Cyl})^\prime$ for all $\beta$ such that $\beta+\alpha-1>0$. This is done in Lemma~\ref{lem:extcylinder}.
    \item Third and finally, once $\tilde{A}d\theta$ is well--defined globally on $\mathbf{Cyl}$ and 
    also that we are allowed to define the singular pull--back $\Psi^*\left(\tilde{A}d\theta\right)$ on the pointed surface 
    $\mathcal{S}\setminus \mathrm{Crit}(f)_1$, we use all methods from the previous two steps to construct the extension of $\Psi^*\left(\tilde{A}d\theta\right)$ near all saddle points in $\mathrm{Crit}(f)_1$. This uses both ideas from gluing and also relies on the scaling behaviour of 
    $\Psi^*\left(\tilde{A}d\theta\right)$ as defined in the first step near each saddle point. This is done in Lemma~\ref{lem:extsurface}.
\end{enumerate}

\paragraph{First extensions across edges of the hexagons $H_i$.}

Recall each $H_i$ is our hexagon in the decomposition of $\mathcal{S}$.
Here the main technical problem consists in gluing distributions
defined on each hexagon $H_i$ into some global object defined on the surface $\mathcal{S}$ or conversely given some global distribution $T$ defined on the surface $\mathcal{S}$, how can we localize on each piece $H_i$ by multiplying with indicators $1_{H_i}$ of $H_i$.

We begin by illustrating the whole difficulty with an example which contains all the crux of the problem.
\begin{figure}
    \centering
    \includegraphics[width=\linewidth]{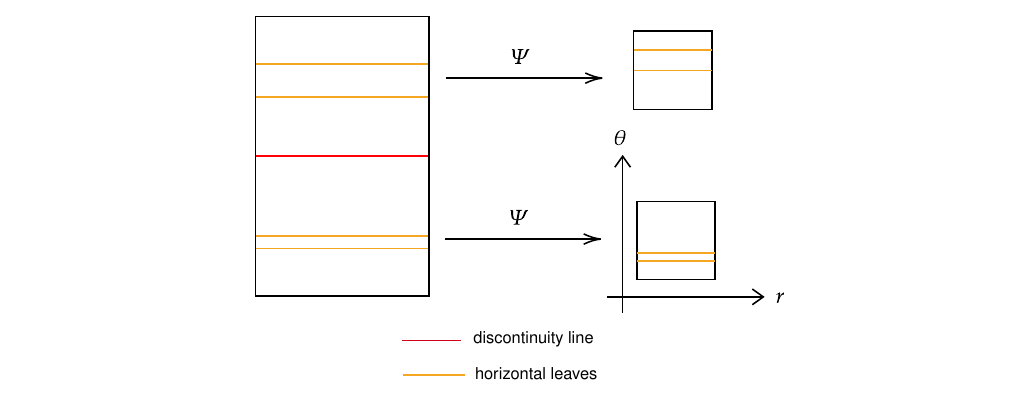}
    \caption{Discontinuity lines.}
    \label{disclines}
\end{figure}
\begin{example}[Pull--back by discontinuous map followed by gluing]\label{ex:pullbackgluing}
Let $\alpha\in (0,\frac{1}{2})$ and $\mathbb{R}^2$ with coordinates $(x,y)$.
Consider the two squares $\square_1:=[-1,1]_x\times [-1,0]_y $ and $\square_2:=[-1,1]_x\times [0,1]_y$ and a map
$$\Psi:[-1,1]_x\times [-1,1]_y \mapsto M $$ which is smooth on $\square_1$ and $\square_2$, in the sense of a smooth map from a differentiable manifold with corners with value into a smooth manifold $M$. The map $\Psi$ is eventually discontinuous on the wall $[-1,1]_x\times\{0\}$ in the sense that for all $x$, $\lim_{y\rightarrow 0^+}\Psi(x,y) \in \mathbb{R}^2$ and $\lim_{y\rightarrow 0^-}\Psi(x,y) \in \mathbb{R}^2 $ 
might differ.
We furthermore assume that $\Psi$ maps the horizontal foliation $\cup_{y} [-1,1]_x\times \{y\}$
to the foliation $\cup_{\theta \in \mathbb{R}} \mathbb{R}_r\times \{\theta\} $. The images $\Psi(\square_1)$ and $\Psi(\square_2)$
are disjoint smooth squares in the target space $\mathbb{R}^2$; see Figure \ref{disclines}.

Assume that on the target space $\mathbb{R}^2$, we have a global distribution $T\in \mathcal{D}^\prime(\mathbb{R}^2)$ which happens to be a derivative $T=\partial_\theta W$ in the sense of distributions, where $W\in \mathcal{C}^{\alpha,\alpha}_{r,\theta}(\mathbb{R}^2)$.   
Our  goal is to give a rigorous meaning to the pull--back $T_1=\Psi^*(T 1_{\Psi(\square_1)} )$
and $T_2:=\Psi^*(T 1_{\Psi(\square_2)} )$ and justify that we glue the two pieces $T_1$ and $T_2$ as the sum $T_1+T_2$.
 
 First observe that $T\in \mathcal{C}^{\alpha,\alpha-1}$ implies that the restrictions 
$(T 1_{\Psi(\square_1)} ) $ and $(T 1_{\Psi(\square_2)} )$ are both well--defined by Lemma~\ref{lem:restrictionBesov} from the appendix. 

 Second, a crucial observation is that since $\Psi$ is smooth on $\square_i$ up to the boundary, using a classical result of Seeley~\cite{Seeley} or the Whitney extension Theorem~\cite[Thm 4.1 p.~10]{Malgrange}, we may extend $\Psi$ smoothly to some neighborhood $\tilde{\square}_i$ of each $\square_i$, each extension is denoted by $\tilde{\Psi}_i $ and satisfies the crucial identity $\tilde{\Psi}_i|_{\square_i}=\Psi|_{\square_i}$,
in such a way that the image of extensions $\Psi(\tilde{\square}_1)$ and $\Psi(\tilde{\square}_2)$ are disjoint.
Since $d\Psi|_{\partial \square_i}$ is never vanishing, we know that up to taking a smaller $\tilde{\square}_i$ we may assume that the extended map $\tilde{\Psi}_i: \tilde{\square}_i \mapsto \tilde{\Psi}( \tilde{\square}_i )$ is a diffeomorphism on its image; see Figure \ref{extpsi}.
\begin{figure}
    \centering
    \includegraphics[width=\linewidth]{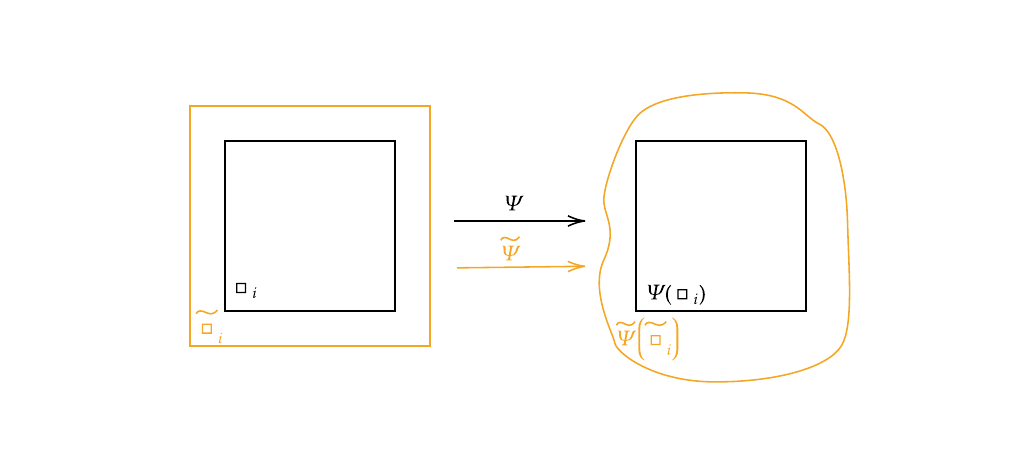}
    \caption{Extending $\Psi$ near $\square_i$ by $\tilde{\Psi}$.}
    \label{extpsi}
\end{figure}
 Third, each pull--back $\tilde{\Psi}_i^*T$ belongs to the space $\mathcal{C}^{\alpha,\alpha-1}_{x,y}$. Using the continuous injection $ \mathcal{C}^{\alpha,\alpha-1}_{x,y}\hookrightarrow \mathcal{C}^{\alpha-1}_{x,y}$ established in Lemma~\ref{lem:aniso_inj_holder}  from the appendix, this implies 
  $\tilde{\Psi}_i^*T$ also belongs to $\mathcal{C}^{\alpha-1}_{x,y}(\mathrm{int}(\tilde{\square}_i))$. Hence by Lemmas~\ref{lem:pairing} and \ref{lem:restrictionBesov}, this implies that $\tilde{\Psi}_i^*T$
 can be restricted to $\square_i$ by multiplication with the indicator function $1_{\square_i}$, hence $T_i:=\left(\tilde{\Psi}_i^*T \right) 1_{\square_i}$ is well--defined in $\mathcal{D}^\prime(\mathrm{int}(\tilde{\square}_i)  )$ with support contained in $\square_i$.
 Another approach to this third step involves \emph{leafwise extensions}. Since $\tilde{\Psi}_i^*T$ is $\alpha$-H\"older in $x$ valued in distributions of the $y$ variable, we may consider the leafwise  restriction $\tilde{\Psi}_i^*T(x,.)\in \mathcal{C}^{\alpha-1}_y$ for every $x\in [-1,1]$; see Figure \ref{leafwise}.

 The leaf is just the interval $ \{x\}\times  [-1,1]_y$.
 Since $\alpha-1>0$, Lemma~\ref{lem:restrictionBesov} allows us to multiply $\tilde{\Psi}_1^*T(x,.)$ (resp $\tilde{\Psi}_2^*T(x,.)$) with $1_{[-1,0]}(y)$ (resp $1_{[0,1]}(y)$) which yields distributions
 $\tilde{\Psi}_1^*T(x,.)1_{[-1,0]}(.)$ (resp $\tilde{\Psi}_2^*T(x,.)1_{[0,1]}(.)$) supported on $[-1,0]$ (resp $[0,1]$) for all $x\in [-1,1]$. By continuity of the product with indicators, we still work with continuous functions in $x$ valued in distributions of $y$.

 Finally, the sum $T_1+T_2$ answers our question.
\end{example}
 \begin{figure}
     \centering
     \includegraphics[width=\linewidth]{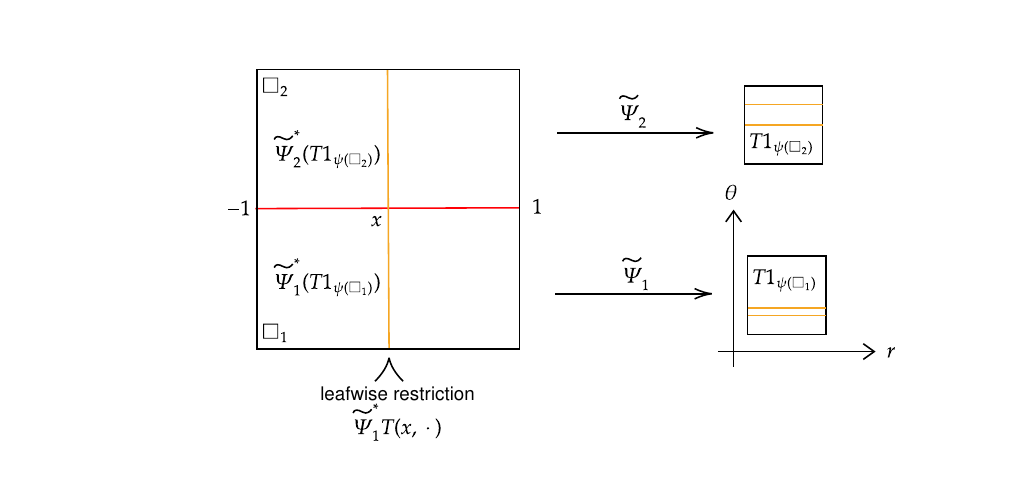}
     \caption{Gluing pull backs and leaf wise restrictions.}
     \label{leafwise}
 \end{figure}

Once we discussed the above key example, we can move on to our extension procedure along unstable curves.

\begin{lemma}[Extension along unstable curves]\label{lem:extcurves}
Let $\alpha=\frac{1}{2}-\varepsilon$.
Let $\tilde{A}d\theta$ be any current in the anisotropic space $\mathcal{C}^{\alpha,\alpha-1}_{I\times J}$ for any $I\times J\subset [\min(f),\max(f)]\times \mathbb{S}^1$ avoiding $\Psi\left(\mathrm{Crit}(f)_1\right)$. 

Then for $\varepsilon$ small enough, the pull--back 
$\Psi^*\left(\tilde{A}d\theta\right)$ extends uniquely on $\mathcal{S}\setminus \mathrm{Crit}(f)_1$.
\end{lemma}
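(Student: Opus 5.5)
The plan is to reduce the statement to Example~\ref{ex:pullbackgluing}, applied locally along each unstable curve away from the saddle points. The extension problem is local, so we first fix a point $m\in \overline{\cup_{a\in \mathrm{Crit}(f)_1} W^u(a)}\setminus \mathrm{Crit}(f)_1$ in $\mathrm{int}(\mathcal{S})$. Since $m$ is not critical, there is a flowbox $\square\simeq[-1,1]_x\times[-1,1]_y$ around $m$ with the following properties.
\begin{itemize}
\item The flow is $V=\partial_x$ up to a reparametrization in which $f$ is a smooth function of $x$ with $\partial_x f\neq 0$.
\item The unstable curve is the wall $\{y=0\}$.
\item The two half boxes $\square_1=\{y\leq 0\}$ and $\square_2=\{y\geq 0\}$ lie in two adjacent hexagons $H_i$, $H_j$.
\end{itemize}
By Proposition~\ref{prop:regpseudo} and the construction of the hexagons, $\Psi=(f,\theta)$ restricted to each $\square_k$ is smooth up to the wall and a diffeomorphism onto its image, which is a rectangle $I\times J_k$ in $\mathbf{Cyl}$. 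These rectangles avoid $\Psi(\mathrm{Crit}(f)_1)$ once $\square$ is small enough. Moreover $\Psi$ maps flow lines to the leaves $\{\theta=\mathrm{const}\}$, because $\theta\in\ker\mathcal{L}_V$. This is exactly the geometric setting of Example~\ref{ex:pullbackgluing}, with the roles of the foliations matched so that the $r$-direction is the flow direction.

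Next I would run the three moves of that example on $T=\tilde A\,d\theta$ restricted to a neighborhood of $\overline{I\times J_k}$.
\begin{itemize}
\item First, extend $\Psi|_{\square_k}$ by Seeley/Whitney to a diffeomorphism $\tilde\Psi_k$ on a slightly larger box $\tilde\square_k$, whose image still avoids the singular points.
\item Second, pull back. The key point is that $\tilde\Psi_k$ preserves the product structure: it has the form $(x,y)\mapsto(\phi(x,y),\vartheta_k(y))$, with $\vartheta_k$ a diffeomorphism because $\theta$ is flow invariant. Under such a map the anisotropic class is preserved, $\tilde\Psi_k^*(\tilde A\,d\theta)\in\mathcal{C}^{\alpha,\alpha-1}_{x,y}$; I would check this directly on the primitive $W$ via the Hölder seminorm \eqref{eq:anisoHolder}, using that $d\theta$ pulls back to $\vartheta_k'(y)\,dy$.
\item Third, by Lemma~\ref{lem:aniso_inj_holder} this pullback lies in $\mathcal{C}^{\alpha-1}$. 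Lemmas~\ref{lem:pairing} and \ref{lem:restrictionBesov} then let us multiply by $1_{\square_k}$. Equivalently, working leafwise, multiply $\tilde\Psi_k^*T(x,\cdot)$ by $1_{[-1,0]}$ or $1_{[0,1]}$, which is legitimate since the regularity index $\alpha-1=-\tfrac12-\varepsilon$ exceeds $-1$ for $\varepsilon$ small. This is where the smallness of $\varepsilon$ enters.
\end{itemize}
Set $T_\square:=T_1+T_2$.

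It remains to show that $T_\square$ is an extension and that it is unique and globally consistent. On $\square\setminus\{y=0\}$, $T_\square$ coincides with $\Psi^*(\tilde A\,d\theta)$ by construction, since $\tilde\Psi_k=\Psi$ on $\square_k$. The local pieces are also independent of the choice of extension $\tilde\Psi_k$, because the indicator cuts off everything outside $\square_k$, and of the choice of flowbox. Given this, the $T_\square$ agree on overlaps and patch by a partition of unity to a distribution on $\mathcal{S}\setminus\mathrm{Crit}(f)_1$. Uniqueness must be understood within the class of item 4 of the preceding discussion, that is, as the limit $\lim_{\varepsilon\to0}\chi_\varepsilon\Psi^*(\tilde A d\theta)$. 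In this class uniqueness follows because the leafwise restriction by indicators is the canonical extension (Lemma~\ref{lem:restrictionBesov}): a distribution of regularity $>-1$ in $y$ charges no mass on $\{y=0\}$. Differences between two extensions supported on the wall would involve $\delta(y)$ or its derivatives, which do not arise as such limits.

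I expect the main obstacle to be the second move: proving that the mixed anisotropic norm is invariant under the extended diffeomorphism $\tilde\Psi_k$. The construction only makes the leaves $\{\theta=\mathrm{const}\}$ correspond to flow lines, and the $x$-parametrization may depend on $y$. I would first try to choose the flowbox coordinate $x$ so that $x=f$ exactly, which is possible since $df\neq0$ away from critical points. Then $\Psi$ is literally $(x,\vartheta_k(y))$, and the invariance reduces to a one-variable change of variables in $\theta$.
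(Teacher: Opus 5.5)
Your proposal is correct and follows the paper's route. Like the paper, you reduce to Example~\ref{ex:pullbackgluing}, using that $\Psi$ is smooth with nondegenerate differential up to the unstable edges away from the saddles. You then restrict leafwise along level sets of $f$ via Lemma~\ref{lem:restrictionBesov}, using $\alpha-1=-\tfrac12-\varepsilon>-1$ to obtain the canonical extension by zero, and glue the two sides. Your choice of flowbox coordinates with $x=f$, so that $\Psi=(x,\vartheta_k(y))$ on each half-box and the anisotropic class is visibly preserved under pull-back, spells out a step that the paper's proof only asserts.
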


\begin{proof}
In fact we can almost reduce the above proof to our example~\ref{ex:pullbackgluing}.
The reader who just wants to get a flavour of the proof can skip what we wrote below and refer to our example~\ref{ex:pullbackgluing} and look at the pictures.
First observation, the map
\[\Psi:\mathrm{int}(H_i)\mapsto [\min(f),\max(f)]\times \mathbb{S}^1_\theta \]
is such that its differential $d\Psi$ is nondegenerate along $\partial H_i\setminus \mathrm{Crit}(f)_1$ and smooth up to the boundary minus the saddle points $\partial H_i\setminus \mathrm{Crit}(f)_1$. 
This follows from Proposition~\ref{prop:regpseudo}.

Because of the definition of $\Psi$ and the fact that $\tilde{A}\dd\theta$ is $\alpha=\frac{1}{2}-\varepsilon$ regular in the $r$--direction, its pull--back $\Psi^*\left( \tilde{A}d\theta \right)$ 
is continuous in the flow direction, valued into distributions of regularity $\alpha-1$ in the $\theta$ variable. The continuity in the flow direction being precisely captured by the anisotropic spaces.

Therefore, we use Lemma~\ref{lem:restrictionBesov} which shows that the leafwise restriction $\Psi^*\left(\tilde{A}\dd\theta\right)|_{\{f=\mathrm{constant}\}\cap \mathrm{int}\left( H_i\right) }$ has regularity $-\frac{1}{2}-\varepsilon$ along level sets $\{f=\mathrm{constant}\}\cap \mathrm{int}\left( H_i\right) $ which shows there exists a unique extension to $\{f=\mathrm{constant}\}$ where we extend by $0$ outside $\{f=\mathrm{constant}\}\cap H_i$.
\end{proof}

\paragraph{Various notions of scalings.}

To describe extensions of distributions defined on pointed spaces, we measure their singular behaviour with scalings.
This is why we need to recall the various notions of scalings that appear in our problem.
Fix an open set $U\subset \mathcal{S}$ near a saddle point and a system of Morse coordinates   $(x,y)$  on $U$, the scaling is a local diffeomorphism which writes~:
$$ \mathcal{S}^\lambda_{x_0,y_0}: (x,y) \in U\mapsto (\lambda (x-x_0)+x_0,\lambda( y-y_0)+y_0)\in U, \lambda\in (0,1]  .$$
The subtle point in our discussion is that there are two scalings, the one $$\mathcal{S}^\lambda_{x_0,y_0}: (x,y) \in U\mapsto (\lambda (x-x_0)+x_0,\lambda( y-y_0)+y_0)\in U, \lambda\in (0,1]$$ in the nice Morse  coordinates near the saddle point $a$, and the scaling 
$$\mathcal{S}^\lambda_{r_0,\theta_0}:(r,\theta) \mapsto (\lambda (r-r_0)+r_0,\lambda (\theta-\theta_0)+\theta_0), \lambda\in (0,1]$$ which is defined in coordinates $(r,\theta)$ in the cylinder $\mathbf{Cyl}$. 
These scalings do not match because $\Psi$ fails exactly to be a diffeomorphism at $\mathrm{Crit}(f)_1$ and
a point where we need to be careful is that the weighted 
norms were defined using scaling on the cylinder.

\paragraph{Extension of $\tilde{A}d\theta$ near $\Psi\left(\mathrm{Crit}(f)_1\right)$ on the cylinder $\mathbf{Cyl}$.}

\begin{lemma}\label{lem:extcylinder}
    Let $\alpha=\in (0,\frac{1}{2})$ and $s\in (-2,0)$.
Let $\tilde{A}\dd\theta$ be any current in the weighted anisotropic space $\mathcal{C}^{\alpha,\alpha-1,s}\left( \mathbf{Cyl}\setminus \Psi(\mathrm{Crit}(f)_1) \right)$ of the cylinder $[\min(f),\max(f)]_r\times \mathbb{S}^1_\theta$. 
Then $\tilde{A}d\theta$ extends uniquely as distribution in $\mathcal{C}^{-\beta-2}(\mathbf{Cyl})$ for all $\beta $ such that $\beta+\alpha-1>0$.
\end{lemma}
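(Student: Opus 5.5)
We extend $\tilde{A}\dd\theta$ across the finitely many points of $\Psi(\mathrm{Crit}(f)_1)$ by a dyadic (Littlewood--Paley in scale) partition of unity adapted to the cylinder scalings $\mathcal{S}^{\lambda}_{r_0,\theta_0}$. We localize near one singular point, assumed to be $(0,0)$. Fix $\chi\in C^\infty_c$ equal to $1$ near $(0,0)$ and set $\psi=\chi-\chi\circ\mathcal{S}^{2}_{0,0}$. Then $\psi$ is supported in a fixed corona $\{\mathrm{dist}\in[1/2,2]\}$, and for $N\geq 1$
\[
1-\chi\circ \mathcal{S}^{2^{N}}_{0,0}=(1-\chi)+\sum_{n=0}^{N-1}\psi\circ\mathcal{S}^{2^{n}}_{0,0}
\]
away from the origin. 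The candidate extension is
\[
\overline{T}:=(1-\chi)\tilde{A}\dd\theta+\sum_{n\geq0}\left(\psi\circ\mathcal{S}^{2^{n}}_{0,0}\right)\tilde{A}\dd\theta .
\]
Here the first term is already a globally defined distribution, because it is controlled by the unweighted anisotropic norm away from the singular set together with the injection $\mathcal{C}^{\alpha,\alpha-1}\hookrightarrow\mathcal{C}^{\alpha-1}$.

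The key step is to test the $n$-th piece against $\phi\in\mathcal{C}^{\beta}(\mathbf{Cyl})$ and change variables by the scaling. Since $\mathcal{S}^{2^{-n}*}$ brings the $n$-th corona back to the unit corona, we get
\[
\left\langle(\psi\circ\mathcal{S}^{2^{n}})\tilde{A}\dd\theta,\phi\right\rangle=2^{-2n}\left\langle \psi\,\mathcal{S}^{2^{-n}*}(\tilde{A}\dd\theta),\phi\circ\mathcal{S}^{2^{-n}}\right\rangle .
\]
On the unit corona, the weighted norm gives $\Vert\mathcal{S}^{2^{-n}*}\tilde{A}\Vert_{\mathcal{C}^{\alpha,\alpha-1}(I\times J)}\lesssim 2^{-ns}\Vert\tilde{A}\Vert_{\mathcal{C}^{\alpha,\alpha-1;s}}$, where we keep track of whether $\dd\theta$ contributes an extra factor $2^{-n}$ under pull-back. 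The anisotropic–H\"older pairing (Lemma~\ref{lem:pairing}, via $\mathcal{C}^{\alpha,\alpha-1}\hookrightarrow\mathcal{C}^{\alpha-1}$) then bounds the pairing by the $\mathcal{C}^{1-\alpha+}$ norm of $\phi\circ\mathcal{S}^{2^{-n}}$ on the corona. This norm is at most $\Vert\phi\Vert_{\mathcal{C}^{\beta}}$ whenever $\beta>1-\alpha$, because contraction only improves H\"older seminorms.

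The $n$-th term is therefore $O(2^{-n(2+s)}\Vert\tilde{A}\Vert\,\Vert\phi\Vert_{\mathcal{C}^\beta})$. The series converges precisely because $s>-2$, which explains the hypothesis $s\in(-2,0)$. The resulting linear bound $|\langle\overline{T},\phi\rangle|\lesssim\Vert\tilde{A}\Vert_{\mathcal{C}^{\alpha,\alpha-1;s}}\Vert\phi\Vert_{\mathcal{C}^\beta}$ places $\overline{T}$ in $\mathcal{C}^\beta(\mathbf{Cyl})'$. Testing against the $L^\infty$-normalized bumps that define the H\"older–Zygmund norm then embeds $\mathcal{C}^\beta(\mathbf{Cyl})'$ into $\mathcal{C}^{-\beta-2}(\mathbf{Cyl})$, where the $-2$ is the dimension loss coming from duality in two dimensions. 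We also check that $\overline{T}$ restricts to $\tilde{A}\dd\theta$ on the punctured cylinder, since the partial sums equal $(1-\chi\circ\mathcal{S}^{2^N})\tilde{A}\dd\theta$.

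For uniqueness, two extensions in $\mathcal{C}^\beta(\mathbf{Cyl})'$ differ by a distribution supported on finitely many points. Such a distribution is a finite combination of derivatives of Dirac masses, $\sum c_\gamma\partial^\gamma\delta$. With $\beta+\alpha-1>0$ and $\beta$ allowed to be small, that is $\beta<1$, no $\partial^\gamma\delta$ with $|\gamma|\geq1$ is continuous on $\mathcal{C}^\beta$, so only $\delta$ itself could appear. We rule out $\delta$ by requiring the extension to be the limit of $(1-\chi\circ\mathcal{S}^{2^N})\tilde{A}\dd\theta$, i.e.\ the canonical extension; the same scaling bound shows that $\chi\circ\mathcal{S}^{2^N}\overline{T}\to0$. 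Uniqueness is thus understood in this sense, as for the limiting extensions in item 4 above.

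The main obstacle is the change-of-variables bookkeeping: checking that the exponent $s$ in the weighted norm, the Jacobian $2^{-2n}$, and any homogeneity of $\dd\theta$ combine to exactly $2^{-n(2+s)}$. A secondary point is that the pairing lemma on the corona must hold with constants uniform in $n$, which we expect from the finite cover $I\times J$ of the corona.
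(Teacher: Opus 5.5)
Your proposal is correct and follows the paper's proof essentially step by step. It uses the same spatial dyadic partition of unity around the singular point and rescales each piece to a fixed corona. It obtains the bound $2^{-2n}2^{-ns}\Vert\tilde A\Vert_{\mathcal{C}^{\alpha,\alpha-1,s}}\Vert\varphi\Vert_{\mathcal{C}^\beta}$ via $\mathcal{C}^{\alpha,\alpha-1}\hookrightarrow\mathcal{C}^{\alpha-1}$ and the Young pairing for $\beta+\alpha-1>0$; that pairing is Lemma~\ref{lem:young}, not Lemma~\ref{lem:pairing}. It then gets summability from $s>-2$ and concludes with the embedding of $\mathcal{C}^\beta(\mathbf{Cyl})'$ into a negative H\"older space.

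Your remark on uniqueness is a correct clarification that the paper leaves implicit. Since $\delta_{(0,0)}\in\mathcal{C}^{-2}\subset\mathcal{C}^{-\beta-2}$, uniqueness can only refer to the canonical extension obtained as the limit of $(1-\chi\circ\mathcal{S}^{2^N})\tilde A\,\dd\theta$.
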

\begin{proof}
To extend the current $\tilde{A}\dd\theta$ near a singular point $a\in \Psi\left( \mathrm{Crit}(f)_1 \right)$, we will use the scaling  
of $\left(\tilde{A}d\theta\right)$ near $a \in \Psi\left( \mathrm{Crit}(f)_1 \right)$. Very similar ideas to what we are doing can be found in~\cite[Chapter 2 p.~43--53]{MeyerWaveletsVibrationsScalings} and also in ~\cite[Thm 4.4 p.~832]{DangIHP}.

Without loss of generality, $a$ is given by $(0,0)$ in coordinates $(r,\theta)$ of the cylinder.
Start from any function $\psi_0$ which equals $1$ near $a=(0,0)$, $\psi_0(r,\theta)=1 $ when $\vert (r,\theta)\vert\leqslant 1$ and $\psi_0(r,\theta)=0$ when $\vert (r,\theta)\vert \geqslant 2$.
Observe that the function $1-\psi_0(2^n.)$ equals $1$ on the complement of a ball of radius $2^{-n+1}$ near $(0,0)$ and vanishes on a ball of radius $2^{-n}$ so that the product $(1-\psi_0(2^n.)) \tilde{A}d\theta$ is well--defined globally in $\mathcal{D}^\prime(\mathbf{Cyl})$ and coincides with $\tilde{A}d\theta $ on the complement of a ball of radius $2^{-n+1}$.
Therefore for every test $1$--form $\varphi\in C^\infty_c(\mathbf{Cyl}\setminus \Psi\left( \mathrm{Crit}(f)_1 \right) )$, 
$\left\langle \tilde{A}d\theta, \varphi \right\rangle= \left\langle (1-\psi_0(2^n.)) \tilde{A}d\theta, \varphi \right\rangle $ for all $n\geqslant N$ for $N$ large enough, since $\mathrm{supp}(\psi_0(2^n.))\cap \mathrm{supp} (\varphi)=\emptyset$ for all $n\geqslant N$.
Then we would like to understand the convergence of $\lim_{n\rightarrow +\infty}(1-\psi_0(2^n.)) \tilde{A}d\theta $ in $\mathcal{D}^\prime(\mathbf{Cyl})$. 
For that purpose, we are going to decompose $1-\psi_0(2^n.)$ as a telescopic series of functions each of them is supported on some annular domain avoiding $(0,0)$ of smaller and smaller dyadic size.
Denote by $K$ the annular domain centered around a singular point $ a\in \Psi\left( \mathrm{Crit}(f)_1 \right) $, $K=\{  (r,\theta); \Vert (r,\theta) \Vert_2\in [\frac{1}{2},4] \}$. Following~\cite[p.~48-49]{MeyerWaveletsVibrationsScalings},
the main idea is to use a spatial version of the Littlewood--Paley 
partition of unity, we write 
$1=(1-\psi_0)+\sum_{n=1}^\infty \psi_n$, where for all $n\geqslant 1$, $\psi_n:=\psi_0(2^{n+1}.)-\psi_0(2^n.)$, $1-\psi_0$ vanishes near $(0,0)$ hence we do not need to control $(1-\psi_0)\tilde{A}d\theta$, $\psi_1$ is supported in $K$ and where each $\psi_n$ is supported in a concentric corona centered at $(0,0)$ of radius $\in [2^{-n-1},2^{-n+2}]$.
\begin{figure}
    \centering
    \includegraphics[width=0.8\linewidth]{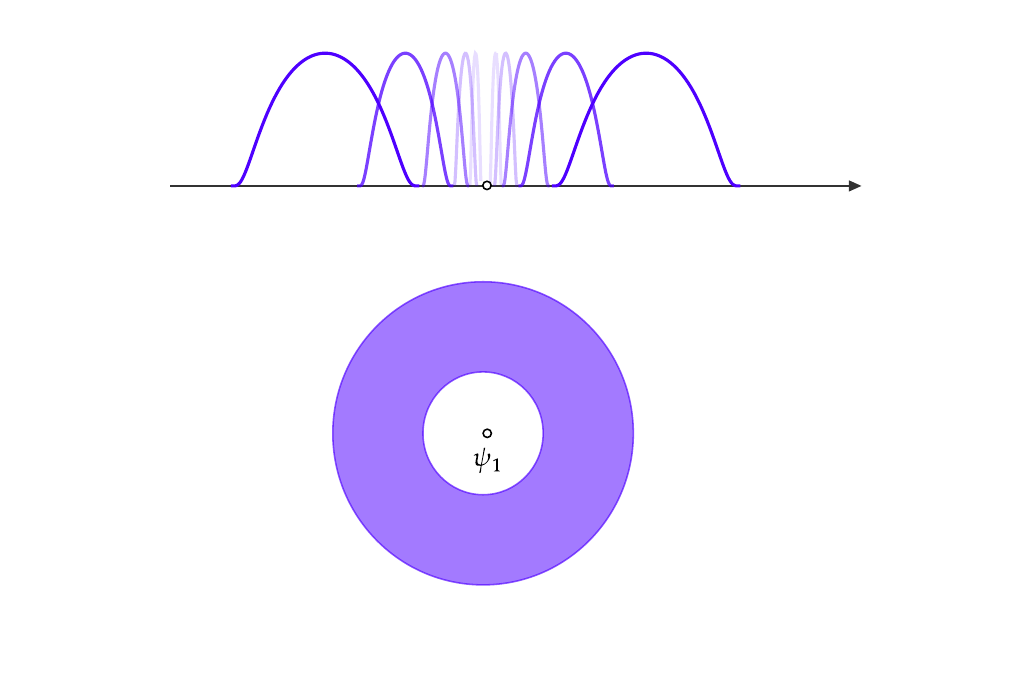}
    \caption{Littlewood--Paley decomposition in space, annular support of $\psi_1$.  }
    \label{rec}
\end{figure}

What is important is that $a\notin \mathrm{supp}(\psi_n)$ for all $n\geqslant 1$. 
Also note that for every test $1$--form $\varphi\in C^\infty_c(\mathbf{Cyl}\setminus \Psi\left( \mathrm{Crit}(f)_1 \right) )$, the sum
$\sum_{n=1}^\infty \psi_n\varphi $ reduces in fact to some finite sum $\sum_{n=1}^N \psi_n\varphi$  since for $N$ large enough, $\mathrm{supp}(\psi_n)\cap \mathrm{supp} (\varphi)=\emptyset$ for all $n\geqslant N$.
So we are left with the task to control the convergence of the series $\sum_{n=1}^\infty \psi_n \tilde{A}d\theta $ in $\mathcal{D}^\prime(\mathbf{Cyl})$.  
For any test $1$-form $\varphi\in \Omega^1(\mathbf{Cyl})$, we get
\begin{align}
 \left\langle \sum_{n=0}^\infty \psi_n \tilde{A}d\theta,\varphi\right\rangle= \sum_{n=0}^\infty \left\langle \mathcal{S}^{2^{-n}*}_{(r_0,\theta_0)}\left(\psi_n \tilde{A}d\theta \right) , \mathcal{S}^{2^{-n}*}_{(0,0)}\varphi \right\rangle   
\end{align}
where the right hand side follows from the change of variables formula.
We drop the $(0,0)$ subscript in the scaling to simplify the notations.

To study the convergence of the series in spaces of distributions, let us study an individual term $\left\langle \mathcal{S}^{2^{-n}*}\left(\psi_n \tilde{A}d\theta \right) , \mathcal{S}^{2^{-n}*}\varphi \right\rangle   $.
By the upper bound~:
\begin{align*}
\vert \left\langle \mathcal{S}^{2^{-n}*}\left(\tilde{A}d\theta \right),\psi_1 \mathcal{S}^{2^{-n}*}\varphi \right\rangle \vert \leqslant 2^{-2n} \Vert  \mathcal{S}^{2^{-n}*}\tilde{A} \Vert_{\mathcal{C}^{\alpha,\alpha-1}}  \Vert\psi_1 \mathcal{S}^{2^{-n*}}\varphi   \Vert_{\mathcal{C}^\beta} \lesssim 2^{-2n} 2^{-ns}\Vert \tilde{A}\Vert_{\mathcal{C}^{\alpha,\alpha-1,s}} \Vert \varphi\Vert_{\mathcal{C}^\beta(B)},
\end{align*}
using the continuous injections $\mathcal{C}^{\alpha,\alpha-1}(K)\hookrightarrow \mathcal{C}^{\alpha-1}(K)$ proved in Proposition~\ref{prop:continuousinjection}, $ \mathcal{C}^\beta\hookrightarrow \left(\mathcal{C}^{\alpha-1}\right)^\prime $ for all $\beta+\alpha-1>0$
which follows from the criterion on Young products of H\"older--Besov distributions, see \ref{lem:young} in the appendix, the definition of the weighted anisotropic norms, the family $\Vert\psi\mathcal{S}^{2^{-n}*}\varphi\Vert_{\mathcal{C}^\beta(K)}, n\geqslant 1$ is bounded by the $\mathcal{C}^\beta$ norm of $\varphi$ on a certain ball $B$ centered at $(0,0)$ and the $2^{-2n}$ in factor comes from $\mathcal{S}^{2^{-n}*}d\theta=2^{-n}d\theta$, $\mathcal{S}^{2^{-n}*}dr=2^{-n}dr$. So the series 
$ \sum_{n=0}^\infty \psi_n \tilde{A}d\theta$ converges in $\mathcal{C}^{\beta}(\mathbf{Cyl})^\prime$
for all $\beta+\alpha-1>0$ and $s>-2$ and we conclude using the embedding $(\mathcal{C}^{\beta})^\prime \hookrightarrow \mathcal{C}^{-\beta-2-\varepsilon}$, $\forall \varepsilon>0$ proved in Lemma~\ref{lem:continjectiondualHolder} in the appendix. 
\end{proof}

\paragraph{Extension of $\Psi^*\left(\tilde{A}d\theta \right)$ to the whole surface $\mathcal{S}$}

\begin{lemma}\label{lem:extsurface}
Under the assumptions of Lemma~\ref{lem:extcurves}, the pull--back   $\Psi^*\left(\tilde{A}d\theta \right)$ defined on the pointed surface $\mathcal{S}\setminus \mathrm{Crit}(f)_1$ extends uniquely as an element in $\mathcal{C}^{-\beta-2}(\mathcal{S})$ for all $\beta$ such that $\beta+\alpha-1>0$.  
\end{lemma}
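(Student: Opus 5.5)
We will argue exactly as in the proof of Lemma~\ref{lem:extcylinder}, but now using a spatial Littlewood--Paley partition of unity in the Morse coordinates $(x,y)$ centered at each saddle point $a\in\mathrm{Crit}(f)_1$, rather than in the cylinder coordinates. Fix $a$ with Morse chart $(x,y)$ on a neighborhood $U$ in which, by Proposition~\ref{prop:angvar2}, $f=x^2-y^2$ and $\theta=\pm xy$ up to constants. We take $\psi_0$ equal to $1$ on $\{|(x,y)|\leq 1\}$ and supported in $\{|(x,y)|\leq 2\}$, and set $\psi_n:=\psi_0(2^{n+1}\cdot)-\psi_0(2^n\cdot)$. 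Each $\psi_n$ is supported in an annulus avoiding $a$. On that annulus $\Psi^*(\tilde{A}\dd\theta)$ is already a well-defined distribution by Lemma~\ref{lem:extcurves}, including across the unstable curves. The candidate extension is therefore $(1-\psi_0)\Psi^*(\tilde{A}\dd\theta)+\sum_{n\geq 1}\psi_n\Psi^*(\tilde{A}\dd\theta)$. Uniqueness is automatic once convergence is proved, since a point-supported difference would be a finite sum of derivatives of $\delta_a$, and these do not belong to the negative-order H\"older class with the prescribed scaling. We would rather define the extension as the limit of the truncations $(1-\psi_0(2^n\cdot))\Psi^*(\tilde{A}\dd\theta)$, which makes it canonical.

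The key step is to compute the scaling of each dyadic piece. The map $\Psi$ intertwines the Morse scaling with the cylinder scaling in a quadratic way. If $(x,y)\mapsto(\lambda x,\lambda y)$, then $(f,\theta)\mapsto(\lambda^2 f,\lambda^2\theta)$, so $\Psi\circ\mathcal{S}^{\lambda}_{a}=\mathcal{S}^{\lambda^2}_{\Psi(a)}\circ\Psi$ locally. Hence
\[
\mathcal{S}^{2^{-n}*}_a\bigl(\Psi^*(\tilde{A}\dd\theta)\bigr)=\Psi^*\bigl(\mathcal{S}^{2^{-2n}*}_{\Psi(a)}(\tilde{A}\dd\theta)\bigr).
\]
On the fixed annulus $K=\{|(x,y)|\in[\tfrac12,4]\}$, the map $\Psi$ is piecewise a diffeomorphism onto a fixed compact region of the cylinder avoiding $\Psi(a)$. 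That region is covered by finitely many of the dyadic coronas $\{|(r,\theta)|\in[2^{-k},2^{-k+1}]\}$, $k$ in a bounded range. We then combine Lemma~\ref{lem:extcurves} (in its quantitative form, i.e.\ continuity of pullback followed by gluing along the unstable curves, as in Example~\ref{ex:pullbackgluing}) with the definition of the weighted norm. This gives
\[
\bigl\|\psi_1\,\mathcal{S}^{2^{-n}*}_a\Psi^*(\tilde{A}\dd\theta)\bigr\|_{(\mathcal{C}^\beta)'}\lesssim 2^{-2ns}\|\tilde{A}\|_{\mathcal{C}^{\alpha,\alpha-1;s}},
\]
for all $\beta$ with $\beta+\alpha-1>0$. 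The estimate uses the injection $\mathcal{C}^{\alpha,\alpha-1}\hookrightarrow\mathcal{C}^{\alpha-1}$, the Young product criterion (Lemma~\ref{lem:young}), and Lemma~\ref{lem:restrictionBesov} for the indicator cut-offs of the hexagons. The leafwise continuity in the flow direction survives pullback because $\Psi$ maps flow lines to $\theta=$ const lines.

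Pairing with a test $1$-form $\varphi$ and changing variables then gives
\[
\bigl|\langle\psi_n\Psi^*(\tilde{A}\dd\theta),\varphi\rangle\bigr|\lesssim 2^{-2n}\,2^{-2ns}\,\|\tilde{A}\|_{\mathcal{C}^{\alpha,\alpha-1;s}}\|\varphi\|_{\mathcal{C}^\beta}.
\]
The factor $2^{-2n}$ is the Jacobian of the two-dimensional scaling, and the $\mathcal{C}^\beta$ norms of $\psi_1\mathcal{S}^{2^{-n}*}\varphi$ are uniformly bounded. One must also account for the scaling of the form degree. Since $\mathcal{S}^{\lambda^2*}\dd\theta=\lambda^2\dd\theta$ on the cylinder side, while the test form scales like $\lambda$ per degree on the surface side, the bookkeeping of powers of $2^n$ changes. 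The main obstacle is precisely to get this exponent right and check that the resulting series is summable for $s\in(-2,0)$. Since $\dd\theta=\pm(y\dd x+x\dd y)$ vanishes at order one at $a$ (Proposition~\ref{prop:regpseudo}), we expect an extra gain of $2^{-n}$ that compensates the doubled scaling exponent. If the naive count fails, we would first try absorbing the vanishing of $\dd\theta$ into the test form, i.e.\ pairing $\tilde{A}$ against $\iota$-contractions of $\Psi_*\varphi$ on the cylinder, which is smaller by a factor of the radius.

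Once summability is established, the series converges in $(\mathcal{C}^\beta)'$. The embedding $(\mathcal{C}^\beta)'\hookrightarrow\mathcal{C}^{-\beta-2-\varepsilon}$ of Lemma~\ref{lem:continjectiondualHolder} then gives the claimed regularity, after renaming $\beta$. Linearity and continuity in $\tilde{A}$ follow from these same bounds.
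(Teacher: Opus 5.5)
Your architecture is sound. The paper puts the Littlewood--Paley partition on the cylinder around $\Psi(a)$ and pulls each piece back through smooth extensions $\tilde\Psi_i$ of $\Psi|_{H_i}$, using that $\Psi(H_i)$ is stable under the cylinder scaling. You put it in Morse coordinates and use the exact intertwining $\Psi\circ\mathcal{S}^{\lambda}_a=\mathcal{S}^{\lambda^2}_{\Psi(a)}\circ\Psi$, applied on each half-plane $\{\pm y>0\}$ separately with its own image point, since $\theta$ jumps across $W^u(a)$. Because $\sqrt{r^2+4\theta^2}=x^2+y^2$, the two dyadic decompositions agree after relabelling $n\leftrightarrow 2n$.

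The gap is that you stop at the one step that decides the lemma, and the range you aim at cannot be reached. Use the paper's convention, where the weight sits on the coefficient: $\|\mathcal{S}^{\mu*}\tilde A\|_{\mathcal{C}^{\alpha,\alpha-1}}\lesssim\mu^{s}$ on a fixed corona. Then $\mathcal{S}^{2^{-n}*}_a\Psi^*(\tilde A\,\dd\theta)=\Psi^*\left(2^{-2n}(\mathcal{S}^{2^{-2n}*}\tilde A)\,\dd\theta\right)$, so your first display is missing a factor $2^{-2n}$. In the pairing there is no separate Jacobian, because $\langle T,\varphi\rangle=\langle\mathcal{S}^{*}T,\mathcal{S}^{*}\varphi\rangle$. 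The only further gain is $\|\psi_1\mathcal{S}^{2^{-n}*}_a\varphi\|_{\mathcal{C}^\beta}\lesssim 2^{-n}\|\varphi\|_{\mathcal{C}^\beta}$. Hence
\[
\bigl|\langle\psi_n\Psi^*(\tilde A\,\dd\theta),\varphi\rangle\bigr|\lesssim 2^{-2n}\,2^{-2ns}\,2^{-n}\,\|\tilde A\|_{\mathcal{C}^{\alpha,\alpha-1;s}}\|\varphi\|_{\mathcal{C}^\beta}=2^{-n(3+2s)}\,\|\tilde A\|_{\mathcal{C}^{\alpha,\alpha-1;s}}\|\varphi\|_{\mathcal{C}^\beta}.
\]
Your anticipated extra $2^{-n}$ from the vanishing of $\dd(xy)$ is real, but it is all there is. The series converges if and only if $s>-\frac32$, not for all $s\in(-2,0)$.

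This threshold is sharp. In coordinates centred at one image point of $a$, take $\tilde A=\chi\,(r^2+4\theta^2)^{s/2}$, which lies in $\mathcal{C}^{\alpha,\alpha-1;s}$. Its pull-back on $\{y>0\}$ is $\pm\chi\,(x^2+y^2)^{s}(y\,\dd x+x\,\dd y)$. Pairing with $\eta\,\dd y$, where $\eta\geq 0$ is a bump at $a$, gives an integrand of fixed sign that behaves like $R^{2s+2}\,\dd R$ in polar coordinates, and this diverges for $s\le-\frac32$.

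Your fallback goes the wrong way. On the cylinder, $\Psi_*\varphi=(\Psi^{-1})^*\varphi$ is not smaller but grows like $(r^2+4\theta^2)^{-1/4}$, since $\Psi^{-1}$ is homogeneous of degree $\frac12$. So the lemma must be stated and proved for $s\in(-\frac32,0)$. This is harmless for the application, because the Yang--Mills $\tilde A$ lies in the weighted space for every $s<-\frac12$.

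The target $(-2,0)$ is inherited from Lemma~\ref{lem:extcylinder}, where it is correct on the cylinder but does not survive the singular pull-back. The paper's proof of the present lemma has the same slip. It claims $\|\mathcal{S}^{2^{-n}*}_{0,0}(\Psi^{-1*}\varphi)\|_{C^\beta(K)}\le C2^{-n}\|\varphi\|_{C^\beta(B)}$, but the homogeneity $\Psi^{-1}(\mu r,\mu\theta)=\mu^{1/2}\Psi^{-1}(r,\theta)$ only gives $C2^{-n/2}$. With that correction its bound becomes $2^{-n(3/2+s)}$, the same threshold as yours.

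Your uniqueness argument also needs adjusting. Since $\delta_a\,\dd x\in\mathcal{C}^{-2}\subset\mathcal{C}^{-\beta-2}$, uniqueness within $\mathcal{C}^{-\beta-2}$ alone is false. Bounds on the annular pieces $\psi_1\mathcal{S}^{2^{-n}*}_aT$ cannot detect point-supported terms at all. What does hold is that the limit of the truncations satisfies $\|\mathcal{S}^{\lambda*}_aT\|_{(\mathcal{C}^\beta(B))'}\lesssim\lambda^{2s+2}$ on a full ball $B$, whereas $\mathcal{S}^{\lambda*}_a(\partial^k\delta_a\,\dd x)=\lambda^{-1-k}\partial^k\delta_a\,\dd x$. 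Since $2s+2>-1$ exactly when $s>-\frac32$, the extension is unique among extensions with that scaling, and this is how ``uniquely'' should be read.
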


\begin{proof}
Step 1,
now $\Psi^*\left(\tilde{A}d\theta\right)$ is well--defined on $\mathcal{S}\setminus \mathrm{Crit}(f)_1$ thanks to Lemma~\ref{lem:extcurves}. 

Step 2,
to extend near a critical point, we will use the scaling  
of $\Psi^*\left(\tilde{A}d\theta\right)$ at a critical point. 

Step 3, 
to define the pull--back $\Psi^*\left(\tilde{A}d\theta \right) $ even at the critical points, the key is to control the pull--back of the convergent series appearing in the proof of Lemma~\ref{lem:extcylinder}. This suggests to study $ \sum_{n=1}^\infty\Psi^*\left( \psi_n \tilde{A}d\theta\right)$. 
But there is a problem since $\Psi$ is only piecewise smooth. This is why we need to use indicator functions to localize in regions where $\Psi$ is a well--defined smooth diffeomorphism exactly in the spirit of example~\ref{ex:pullbackgluing} and Lemma~\ref{lem:extcurves}. We consider the indicator function $1_{\Psi\left(H_i\right)}$ of image under $\Psi$ of one of the hexagon $H_i\subset \mathcal{S}$ where $\Psi^{-1}:\Psi(H_i)\mapsto H_i$ is well--defined and smooth. 
We make the crucial observation that the domain $\Psi\left(H_i\right)\subset \mathbf{Cyl}$ is stable by the scaling flow: $ \mathcal{S}^{2^{-n}}\left( \Psi\left(H_i\right)\right)\subset \Psi\left(H_i\right) $ for all $n\geqslant 0$, since in local coordinates each domain $\Psi\left(H_i\right)$ is defined in terms of \emph{linear inequalities} involving $r,\theta$. In fact,
in the local Morse chart in a hyperbolic box, up to shifting coordinates by constants and up to some smooth change of variables, we can always assume that
$f=x^2-y^2$, $\theta=xy$ and we work on the quadrant $H_i=\{x\geq 0, y\geq 0\}$.
Exactly for the same reason as in example~\ref{ex:pullbackgluing} and Lemma~\ref{lem:extcurves}~\footnote{We cannot pull--back by a partially smooth map some functional object supported exactly on the domain of the pull--back, we need some slight enlargement}, we need another extra ingredient, namely the existence of certain smooth extensions of our piecewise smooth diffeomorphism $\Psi$ in some neighborhood of every hexagon $H_i,i=1,\dots,4g$.
 As in example~\ref{ex:pullbackgluing}, we need to slightly extend the diffeomorphism 
  $\Psi:H_i\mapsto \Psi(H_i)$ to some slightly larger neighborhood $E_i$ of $H_i$ in such a way that the extended map denoted by $\tilde{\Psi}_i:E_i\mapsto \tilde{\Psi}_i(E_i)$ is still a diffeomorphism.
We used the fundamental fact that both $\Psi^*$ and $\tilde{\Psi}_i^*$ both coincide on distributions supported by $\Psi(H_i)$.

 So we will study instead the series $\sum_{i=1}^{4g}\sum_n\tilde{\Psi}_i^*\left(1_{\Psi(H_i)} \psi_n \tilde{A}d\theta\right)$
where we localized, decomposed in dyadic annular domains and then resum on $i=1,\dots, 4g$.

We rewrite the general term of the series using the following series of identities, for any test $1$--form $\varphi\in \Omega^1(\mathcal{S})$~:
\begin{align*}
&\left\langle \tilde{\Psi}_i^*\left(1_{\Psi(H_i)} \psi_n \tilde{A}d\theta\right), \varphi \right\rangle=
\left\langle  \tilde{\Psi}_i^*\mathcal{S}_{r_0,\theta_0}^{2^{n}*}\left(\psi\mathcal{S}_{r_0,\theta_0}^{2^{-n}*}\left(1_{\Psi(H_i)}\tilde{A} \dd \theta \right)\right), \varphi \right\rangle\\
&=\left\langle  \tilde{\Psi}_i^*\mathcal{S}_{r_0,\theta_0}^{2^{n}*}\left(\psi 1_{\Psi(H_i)}\mathcal{S}_{r_0,\theta_0}^{2^{-n}*}\left(\tilde{A} \dd \theta \right)\right), \varphi \right\rangle=
\left\langle \left(\psi\mathcal{S}_{r_0,\theta_0}^{2^{-n}*}\left(1_{\Psi(H_i)}\tilde{A} \dd \theta \right)\right),  \mathcal{S}_{r_0,\theta_0}^{2^{-n}*}\left(\tilde{\Psi}_i^{-1*} \varphi \right) \right\rangle
\end{align*}
where in the second line we use the scaling flow stability of $\Psi(H_i)$.

 The singular point reads $(0,0)$. Then we have the relation
\begin{align*}
x= \sqrt{\frac{r+(r^2+4\theta^2)^{\frac{1}{2}}}{2}}, y= \sqrt{ \frac{ (r^2+4\theta^2)^{\frac{1}{2}}-r}{2}}. 
\end{align*}

This implies that $\Psi^{-1}$ is locally given in coordinates
by
\begin{align*}
\Psi^{-1}:(r,\theta)\longmapsto   \left( \sqrt{\frac{r+(r^2+4\theta^2)^{\frac{1}{2}}}{2}}, \sqrt{ \frac{ (r^2+4\theta^2)^{\frac{1}{2}}-r}{2}} \right).
\end{align*}
Observe that the above formulas extend to arbitrary values 
of $(r,\theta)\in [-1,1]^2$ where 
we recognize immediately the only non smooth point at $(r,\theta)=(0,0)$.
We need to analyze the growth of the $1$-form $\mathcal{S}_{0,0}^{2^{-n}*}\left(\Psi^{-1*}\left(\varphi\right) \right)$ in $C^\beta(K)$ (recall $K$ is the annular domain $K=\{  (r,\theta); \Vert (r,\theta) \Vert_2\in [\frac{1}{2},4] \}$) for large $n$ for $\beta+\alpha-1 > 0 $ and we proceed exactly as above~:
\begin{align*}
  &\mathcal{S}_{0,0}^{2^{-n}*}\left(\Psi^{-1*}\left(\varphi\right) \right)=\varphi_1  \left( 2^{-n}\sqrt{\frac{r+(r^2+4\theta^2)^{\frac{1}{2}}}{2}}, 2^{-n}\sqrt{ \frac{ (r^2+4\theta^2)^{\frac{1}{2}}-r}{2}} \right) 2^{-n}\dd x\\
  &+\varphi_2  \left( 2^{-n}\sqrt{\frac{r+(r^2+4\theta^2)^{\frac{1}{2}}}{2}}, 2^{-n}\sqrt{ \frac{ (r^2+4\theta^2)^{\frac{1}{2}}-r}{2}} \right) 2^{-n}\dd y 
\end{align*}
where $x,y$ are viewed as implicit functions of $(r,\theta)$. The key idea is that $(r,\theta)$ belong to the annular domain $K$ and therefore all derivatives of $\sqrt{\frac{r+(r^2+4\theta^2)^{\frac{1}{2}}}{2}}, \sqrt{ \frac{ (r^2+4\theta^2)^{\frac{1}{2}}-r}{2}} $ in $(r,\theta)$ are bounded uniformly on $K$.

So we have the decay estimate 
$
 \Vert \mathcal{S}_{0,0}^{2^{-n}*}\left(\Psi^{-1*}\left(\varphi\right) \right) \Vert_{C^\beta(K)}\leq C2^{-n}\Vert \varphi\Vert_{C^\beta(B)} $ where as above $\beta$ is chosen in such a way that $\beta+\alpha-1>0$. 
Therefore
\begin{align*}
\left\vert    \sum_{i=1}^{4g} \left\langle \tilde{\Psi}_i^* \left(\sum_{n=1}^\infty \psi_n 1_{\Psi\left(H_i\right)} \tilde{A} \dd \theta \right),  \varphi \right\rangle \right\vert \leq \sum_{n=1}^\infty  C2^{-2n}2^{-ns} \Vert \tilde{A}\Vert_{\mathcal{C}^{\alpha,\alpha-1,s}}  \Vert \varphi\Vert_{\mathcal{C}^\beta(B)}   
\end{align*}
where we used again the fact that $\tilde{A}\in \mathcal{C}^{\alpha,\alpha-1,s}$.
So for all $s\in (-2,0)$, the pull--back $\Psi^* \left( \tilde{A}d\theta\right)$ has a unique extension in $\mathcal{C}^{\beta}(\mathcal{S})^\prime$ for all $\beta+\alpha-1>0$.
Finally we conclude again by the continuous embedding of Lemma~\ref{lem:continjectiondualHolder}.
\end{proof}

Now, the formal definition given in~\ref{YMFormula} gives perfect sense as a random distributional $1-$form on $\mathcal{S}$. The next theorem shows that the parallel transport generated by this $1-$form verifies the Driver--Sengupta formula~(\ref{eq:DSformula}).

\begin{thm}[Driver--Sengupta Formula]
    The random $1-$form $A$ defined in~\ref{YMFormula} induces via stochastic differential equations, on any graph on $\Sigma$ whose edges are either flow lines or level sets, holonomies verifying the Driver--Sengupta formula. 
\end{thm}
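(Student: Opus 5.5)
Recall from equation~(\ref{YMFormula}) that $A=\Psi^*(\tilde A\,\dd\theta)+\sum_{a}\log(g_a)U_a$, where the $g_a$ are Haar distributed and independent of $\xi$. The plan is to transport the whole question to the cylinder $\mathbf{Cyl}$ via $\Psi$, where Section~\ref{ss:DriverCylinder} already does the work, and then account for the flat part by an explicit Haar-averaging computation, as in Subsection~\ref{pant}. I take a graph $\Lambda$ on $\mathcal S$ whose edges are flow-line segments or level-set arcs. After refining $\Lambda$, which is harmless by multiplicativity of holonomy and the convolution (semigroup-type) structure of the Driver--Sengupta weights, I may assume three things: every unstable curve $W^u(a)$ and every stable curve is a union of edges, every saddle point is a vertex, and each face lies in a single hexagon $H_i$.

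\emph{Step 1: holonomies of the noise part.} Under $\Psi$, a flow-line edge inside $H_i$ maps to a segment $\{\theta=\mathrm{const}\}$. Since $\tilde A$ has no $\dd r$ component, its holonomy is $1_G$. A level-set edge $\{f=r\}\cap H_i$ maps to a horizontal segment $\theta_1\xrightarrow{r}\theta_2$. On that segment I define the holonomy by the Stratonovich SDE of Section~\ref{ss:DriverCylinder}, driven by the time-changed Brownian sheet $\xi(1_{\square(0,r,\theta_1,\theta)})$. This is legitimate because $\Psi_*\sigma$ is integrable near $\Psi(\mathrm{Crit}(f)_1)$, so Proposition~\ref{sheet} still holds. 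Proposition~\ref{DriverSenForA} and its Corollary then give the following: the holonomies of the pushed-forward graph $\Psi(\Lambda)$, whose edges are horizontal or vertical, satisfy the Driver--Sengupta formula on the cylinder with face weights $p_{\Psi_*\sigma(\Psi(F))}=p_{\sigma(F)}$. Here I use that $\Psi$ restricted to $H_i$ is a diffeomorphism preserving the pushed-forward area, so area and the independence of disjoint rectangles are preserved. I also need to check consistency: the SDE holonomy along a level arc must agree with the holonomy of $\Psi^*(\tilde A\,\dd\theta)$ produced by the extension of Lemma~\ref{lem:extcurves}. I would verify this on the regularization $\xi_N$ and pass to the limit by the Wong--Zakai theorem.

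\emph{Step 2: the jumps across unstable curves.} A level arc in $\mathcal S$ that crosses $W^u(a)$ is cut into two pieces, one in each adjacent hexagon. The current $\log(g_a)U_a$ contributes a multiplicative factor $g_a^{\pm1}$ at the crossing point, by the transversal-pairing property of integration currents~\cite[Lemma 6.2 p.~1836]{DRWitten}. Flow-line edges never cross unstable curves transversally, so they pick up no factor. Consequently the holonomy of any loop in $\Lambda$ is a word in the cylinder holonomies interlaced with the $g_a^{\pm1}$. The key check is that each $g_a$ appears exactly once with each sign around the boundaries of faces. This reflects the gluing of the hexagons, and it is precisely the product structure $X_1U_1\cdots U_{2g}^{-1}$ computed in Subsection~\ref{pant}.

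\emph{Step 3: integrating out the Haar variables.} Conditionally on the $g_a$, the cylinder measure from Step~1 is a Driver--Sengupta measure. Its edge variables along the cut lines are then shifted by the $g_a$. I integrate over $(g_a)\in G^{2g}$ against Haar measure and use bi-invariance of $\dd g$ together with the fact that the heat kernels are class functions. With these, I change variables edge by edge along the cut lines, exactly as in the boundary-holonomy computation of Subsection~\ref{pant}. The two copies of each cut edge in $\mathbf{Cyl}$ then become identified into a single Haar-distributed edge variable of $\Lambda$. The result is the density $\prod_F p_{\sigma(F)}(\mathrm{Hol}(\partial F))\bigotimes_e \dd g_e$ on $G^{\mathbb E}$, which is (\ref{eq:DSformula}).

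\emph{Main obstacle.} I expect the main difficulty to lie at the saddle points. There the faces touching $a$ have images in $\mathbf{Cyl}$ with corners at the singular point $\Psi(a)$, where the area density blows up like $(r^2+4\theta^2)^{-1/2}$. Level arcs through $f(a)$ also pass exactly through $\Psi(a)$. I would first handle this by approximation. I would perturb each such arc to the level $f(a)\pm\delta$, which avoids the saddle, and prove that its SDE holonomy converges in probability as $\delta\to0$; this should follow from the $L^2$-continuity of $\delta\mapsto\xi(1_{\square})$, since $\Psi_*\sigma$ is locally integrable. I would also prove the matching continuity of the face weights $p_{\sigma(F_\delta)}$. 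The formula at $\delta>0$ then passes to the limit. Together with Step~1's Wong--Zakai identification, this is the delicate part; the algebraic Haar averaging in Step~3 is routine.
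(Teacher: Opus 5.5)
Your Step~1 is sound and matches what the paper does inside each region between consecutive unstable curves. Step~2, however, misplaces where the flat part enters, so your bookkeeping of $\mathrm{Hol}(A,\cdot)$ is wrong at the saddles. Once every saddle $a$ is a vertex and every stable curve is a union of edges, the loop $W^s(a)$ (up one stable branch to $a$, down the other) is a loop of $\Lambda$ made only of flow-line edges. Your rule that flow-line edges pick up no factor therefore gives it holonomy $1_G$. With free boundary, though, these $2g$ loops generate $\pi_1(\mathcal S)$ and must carry i.i.d.\ Haar holonomies independent of the face lassos.

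The missing fact is that $W^s(a)$ crosses $W^u(a)$ transversally at $a$, so $\mathrm{Hol}(A,W^s(a))=g_a^{\pm1}$; this is exactly the ``uniform part'' in the paper's proof. In cylinder terms, $a$ corresponds to the two points $(f(a),\theta_\pm(a))$, with $\theta_\pm(a)$ the angles of its two stable branches, and passing from one to the other through $a$ is where $g_a$ is inserted. If you put the jumps on edges via a side convention, the stable edge at $a$ on the jump side must carry $g_a$. Otherwise the faces with a corner at $a$ on that side, whose boundary turns at $a$ from $W^s(a)$ into $W^u(a)$ (or into the level arc through $a$), keep an uncancelled $g_a$. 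So your ``key check'' fails precisely at the saddles. That topological point, not the area singularity, is the real difficulty there. Indeed $\Psi_*\sigma$ is integrable and the time change is $C^{1-}$, so the Stratonovich SDE along a level arc through $\Psi(a)$ is solved directly, and the $\delta$-perturbation is unnecessary.

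Step~3 also rests on an inaccurate picture of $\Psi$: it does not cut $\mathcal S$ open leaving two copies of each cut edge. In the Morse chart at $a$ (flow $(e^{-t}x,e^ty)$, $W^u(a)=\{x=0\}$, and $\theta=F_\pm(xy)$ on $\{\pm x>0\}$), the half-line $\{\theta=\theta_+(a),\ r>f(a)\}$ of $\mathbf{Cyl}$ has the branch $\{x=0,y>0\}$ of $W^u(a)$ on one side and the branch $\{x=0,y<0\}$ on the other. Thus $\Psi$ re-glues the two branches of each unstable curve, and faces adjacent along a slit in $\mathbf{Cyl}$ are not adjacent in $\Lambda$. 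Moreover, since $A$ is gauge-fixed, its edge law is singular on $G^{\mathbb E}$ and cannot be $\prod_F p_{\sigma(F)}\,\dd g$; only loop holonomies (or an independent Haar gauge average) can be compared.

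The paper avoids any change of variables. It uses a free basis of $\mathrm{Loop}_{\min f}(\Lambda)$ made of face lassos whose tails are flow lines from $\min f$, together with the $2g$ stable loops. The tails stay in one hexagon, have trivial holonomy, and so each lasso holonomy equals the cylinder face holonomy. Free-boundary Driver--Sengupta is characterized by these basis loops being independent with laws $p_{\sigma(F)}(g)\dd g$ and $\dd g$. Your Step~1 supplies the first family, the crossing at $a$ supplies the second, and independence holds because $(g_a)$ is independent of $\xi$.
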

\begin{proof}
    Consider a graph $\Lambda$ on $\Sigma$ whose edges are either flow lines of the Morse function, or level sets. Such a graph can be seen for instance on Figure \ref{MLatt}. The idea is to show that the form $A$ defined in~\ref{YMFormula} verifies 
    \[\big(\mathrm{Hol}(A,c)\big)_{c\in\mathrm{Loop}_{\min f}(\Lambda)}\]
    verifies the Driver--Sengupta formula \ref{eq:DSformula}. 
    \par Note first that there is no restriction in supposing that faces are entirely contained within two consecutive unstable curves. In fact, one only needs to add the unstable curves as edges. 
    \par Then, choose a generator of the free group $\mathrm{Loop}_{\min f}(\Lambda)$ as follows :
    \begin{itemize}
        \item A set of loops $(f_i)_{i\in \mathrm{Faces}(\Lambda)}$  based at $\min(f)$ and entangling the faces.
        \item A set of loops $(l_k)_{1\leq k\leq 2g}$ that generate $\pi_1(\Sigma)$ that we can choose to be the stable curves $W^s(a), a\in \textbf{Crit}(f)_1$~\footnote{The fact that either the stable or unstable curves generate the $\pi_1(\mathcal{S})$ can be proved for instance with the Seifert--Van Kampen Theorem, see \cite{BCDRT} for more on this topic}.
    \end{itemize}
    In the free boundary context, the Driver--Sengupta formula says that the joint law of $\big(\mathrm{Hol}(f_i),\mathrm{Hol}(l_k))_{k,i}$ is such that these random variables are independent, and 
    \[\mathrm{Hol}(f_i) \sim p_{\sigma(f_i)}(g)\dd g \ \text { and } \ \mathrm{Hol}(l_k) \sim \dd g.\]
    This completely characterizes their law, and since these loops generate   $\mathrm{Loop}_{\min f}(\Lambda)$, it completely characterizes the law of the stochastic process $\big(\mathrm{Hol}(c)\big)_{c\in\mathrm{Loop}_{\min f}(\Lambda)}$ under the Driver--Sengupta--Lévy measure. 
    \par Therefore, the only thing we need to show is that 
     \[\mathrm{Hol}(A,f_i) \sim p_{\sigma(f_i)}(g)\dd g \ \text { and } \ \mathrm{Hol}(A,l_k) \sim \dd g,\]
     and that all these random variables are independent. This can be done in each region between two consecutive unstable lines in the same way as it was done for the cylinder case. The second Uniform part comes from the adjunction of the unstable curves. 

    We would like to relate our random connection with the one appearing in~\cite{BCDRT}.
   For any smooth oriented curve $\gamma$ everywhere transverse to the flow, we need to show that
    \begin{equation}\label{eq:integralAovergamma}
    \mathbf{W}_\gamma:= \int_\gamma \Psi^*\left(\tilde{A}d\theta \right)   
    \end{equation}
    is well--defined as a $\mathfrak{g}$-valued normal random variable with variance the area of the rectangle $\square:=\{\varphi_f^{-t}(x); x\in \gamma, t\geqslant 0\}$ defined in terms of the curve $\gamma$.
    A key consequence from its definition is that $\Psi$ preserves the orientation.
    Beware that the curve $\gamma$ might intersect some unstable curve transversally hence its image $\Psi(\gamma)\subset \mathbf{Cyl}$ under $\Psi$ is only piecewise smooth~: $\Psi(\gamma)=\cup_{i=1}^k \gamma_i$ as a disjoint union of smooth curves. By transversality with the flow and since this transversality is preserved under mapping by $\Psi$, $\gamma_i\subset \mathbf{Cyl}$ is transverse with $\partial_r$ and  we can describe each curve $\gamma_i$ as some graph: $\theta\in I_i\subset [0,2\pi]\mapsto (\theta,r_i(\theta))$ where $r_i$ is a smooth function. A key consequence from its definition is that $\Psi$ preserves the orientation this is why we get the same orientation for all curves $\gamma_i$, they run counterclockwise if the initial curve $\gamma$ is oriented counterclockwise. Another consequence of transversality is that the open intervals
    $\mathrm{int}(I_i)_{i=1,\dots,k}$ are two by two disjoint.
     We decompose $\Psi_*[\gamma]=\sum_{i=1}^k [\gamma_i]$ where the r.h.s. is a finite sum of currents of integration of degree $1$ on the curves $\gamma_i, i=1,\dots,k$.
    The above integral in equation \ref{eq:integralAovergamma} can only be defined probabilistically as follows~: 
\begin{align*}
\int_\gamma \Psi^*\left(\tilde{A}\dd\theta \right)=\left\langle \Psi_*[\gamma], \tilde{A}\dd\theta \right\rangle_{\mathbf{Cyl}}=\sum_{i=1}^k\xi( 1_{\square_i} )   
\end{align*}
where each domain $\square_i$ is defined as follows 
 $$\square_i:=\{ (r,\theta);0\leqslant r\leqslant r_i(\theta), \theta\in I_i  \}\subset \mathbf{Cyl} $$
 and each term $\xi( 1_{\square_i} ) $ is well--defined since each $1_{\square_i} \in L^2(\mathbf{Cyl}, \Psi_*\sigma)$ belongs to the Cameron Martin space of $\xi$.
Then from the above identities
\begin{align*}
\mathbb{E}\left( \int_\gamma \Psi^*\left(\tilde{A}\dd\theta \right)\otimes \int_\gamma \Psi^*\left(\tilde{A}\dd\theta \right) \right)=\mathbb{E}\left((\sum_{i=1}^k \xi(1_{\square_i}))\otimes(\sum_{i=1}^k \xi(1_{\square_i})) \right)=\sum_{i=1}^k \sigma(\square_i) \mathrm{Id}^{\mathfrak{g}\otimes \mathfrak{g}}  
\end{align*}
where we used the crucial fact that all mutual intersections $\square_i\cap \square_j, i\neq j$ have measure $0$. Now observe that $\sum_{i=1}^k \sigma(\square_i)$ is the area on $\mathcal{S}$ of the set $\{x\prec \gamma, x\in \mathcal{S}  \}:=\overline{\{\varphi_f^{-t}(x); x\in \gamma\}}$ which implies we recover the result
that $\mathbf{W}_\gamma$ has normal law with variance equals to the area $\sigma\left(\{x\prec \gamma  \} \right)$ which is one of the main result of \cite{BCDRT} on the Gaussian part of the free boundary Yang--Mills measure.
\end{proof}

We use the notion of strong solutions for Stratonovich SDE, when we reparametrize our driving Brownian motion by some $C^{1-}$ function, this notion of strong solution still exists and the area function is only $C^{1-}$ so there is no problem here. We refer the reader to a separate note for the detailed proof~\cite{NotesRDELiegroups} or the appendix of \cite{BCDRT}.

\section{Morse lattice  from flow lines and level sets} \label{s:Morselattice}
We introduce a sequence of dyadic decompositions of the surface into rectangles whose sides are either flow lines of $ \nabla f $ or level sets of $ f $. This construction is simplified by the existence of global pseudo-coordinates $(r,\theta)$ on $ \Sigma $. 
\par 
Let $ c_1 < \dots < c_{2g+2} $ denote the $ 2g+2 $ critical levels of the Morse function $ f $. 
Each interval $ [c_i, c_{i+1}] $ is subdivided into $ 2^n $ levels of the form 
\[
[c_i + \tfrac{k}{2^n}(c_{i+1} - c_i)], \quad k \in \{0, \dots, 2^n\}.
\]
Hence, we obtain $(2g+1)2^n$ level sets of $ f $ given by 
\[
f^{-1}\!\Big(c_i + \tfrac{k}{2^n}(c_{i+1} - c_i)\Big), \quad i \in \{1, \dots, 2g+1\},\; k \in \{0, \dots, 2^n\}.
\]
These levels correspond, under the map $ \Psi $, to a level decomposition of the cylinder 
\[
[\min(f), \max(f)] \times \mathbb{S}^1.
\]

We then blow up the surface $ \Sigma $ at the minimum $ a_1 $, so that the blown-up space becomes a circle parametrized by an angular coordinate $\theta \in [0,2\pi]$. Let 
\[
0 < \theta_1 < \dots < \theta_{4g} < 2\pi
\]
be the $4g$ intersection points of the stable curves of the flow $(\Phi^t)_{t \in \mathbb{R}}$ with this blow-up circle. These define the intervals 
\[
I_1 = [\theta_1, \theta_2],\; I_2 = [\theta_2, \theta_3],\; \dots,\; I_{4g} = [\theta_{4g}, \theta_1].
\]

Next, for each $ i = 1, \dots, 4g $, we introduce $ 2^n $ directions
\[
\theta_i + \tfrac{k}{2^n}(\theta_{i+1} - \theta_i), \quad k \in \{0, \dots, 2^n - 1\},
\]
thus decomposing every interval $ I_i $ into $ 2^n $ dyadic subintervals. We then obtain $2^n \cdot 4g$ level sets of the angular variable $\theta$, which correspond to the same number of integral curves of the flow $\Phi^t$; these intersect the $(2g+1)2^n$ level sets of the Morse function $f$. For each $n$, the resulting structure defines a graph $\Lambda^n$ on $\Sigma$, compatible with the Morse function and referred to as the grid of resolution $n$.

Under the map $\Psi$, this grid is canonically identified with the decomposition
\[
\Psi(H_i) = [\min(f), \max(f)] \times I_i.
\]
Its cells are bounded either by flow lines of $\Phi^t$ or by smooth arcs orthogonal to the flow and contained within level sets of $f$. Our lattice gauge model will be defined on this Morse grid. An example is shown in Figure~\ref{fig:MorseGrid}.

\begin{figure}[t]
    \centering
    \includegraphics[width=1\linewidth, trim={-1cm 0 0 0}]{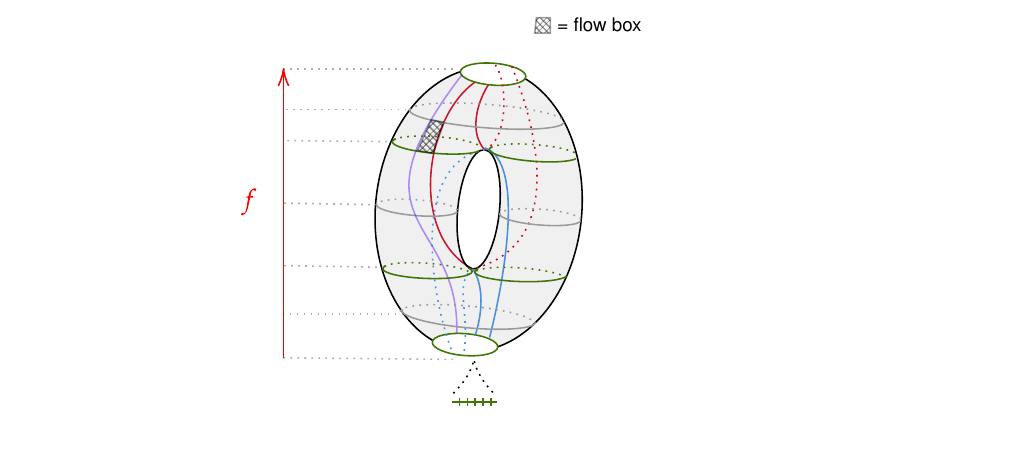}
    \caption{Morse lattice.}
    \label{fig:MorseGrid}
\end{figure}

Consider an integer $N$, and the graph $\Lambda_N$. For simplicity, we assume that the critical points have integer heights, so that in the discretizations $\Lambda^N$ of $\Sigma$, the variable $r$ takes values in 
\[
R^N \coloneq \Big\{ c + \tfrac{k}{2^N} \,;\, 0 \le k \le 2^N - 1,\; c \in \{0, \dots, 2g+1\} \Big\},
\]
and the angular variable $\theta$ in
\[
\Theta^N \coloneq \Big\{ c + \tfrac{k}{2^N} \,;\, 0 \le k \le 2^N - 1,\; c \in \{0, \dots, 2g-1\} \Big\}.
\]
We denote by 
\[
R^N_- \coloneq R^N \setminus \{2g + 2\}, \qquad \Theta^N_- \coloneq \Theta^N \setminus \{2g\}
\]
the sets of all but the last levels and flow lines, respectively.  

Vertices are labeled by their coordinates $(r, \theta)$. The horizontal edge between two vertices $(r, \theta_1)$ and $(r, \theta_2)$ is denoted $\theta_1 \xrightarrow{r} \theta_2$, while a vertical edge is written $r_1 \xrightarrow{\theta} r_2$. Moreover, for $r \in R^N_-$ and $\theta \in \Theta^N_-$, we set
\[
r^{(N)}_+ \coloneq r + 2^{-N}, \qquad \theta^{(N)}_+ \coloneq \theta + 2^{-N}.
\]
When the resolution $N$ is clear from context, we simply write $r_+$ and $\theta_+$ to lighten notation.

\par The important fact is that for any compact that does not meet the saddle points, we have 
that the area of a dyadic face $\simeq 2^{-2N}$ (of course the constant becomes worst when we approach the critical points). 
\begin{lemma}[Areas of small squares and dyadic corona decomposition]\label{lem:dyadiccorona}
We denote by $F^N$ the faces of $\Lambda^N$. 
Fix an arbitrary $\varepsilon>0$, then there exists $c_\varepsilon<C_\varepsilon$ such that for every $N$, for any face $\square$ of the grid $\Lambda^N$ that does meet $\cup_{a\in \mathrm{Crit}(f)_1} B(a,\varepsilon)$, we have
\begin{equation}
c_\varepsilon 2^{-2N}\leq \int_\square \sigma \leq C_\varepsilon 2^{-2N}.     
\end{equation}

Moreover, for all $N$, for all squares that are at approximate distance
$2^{-j}$ (measured in the coordinates $(r,\theta)$) of some given singular point in $\Psi\left(\mathrm{Crit}(f)_1 \right)$ $j=1,\dots,N$,  
\begin{equation}
c2^{j}2^{-2N} \leqslant
 \int_\square \sigma\leqslant C 2^j 2^{-2N}.   
\end{equation}
where the constants $c,C$ do not depend on $j,N$.
\end{lemma}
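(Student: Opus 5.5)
The plan is to compute $\int_\square \sigma$ in the pseudo-coordinates $(r,\theta)$, i.e. as $\int_{\Psi(\square)} \Psi_*\sigma$, where $\Psi(\square)$ is a dyadic rectangle of side lengths comparable to $2^{-N}$. The rectangle has $r$-side $2^{-N}(c_{i+1}-c_i)$ and $\theta$-side $2^{-N}(\theta_{i+1}-\theta_i)$, and these constants are bounded above and below independently of $N$. For the first estimate I would read the hypothesis as saying that the face does \emph{not} meet $\cup_{a} B(a,\varepsilon)$. Away from the saddle points, Proposition~\ref{prop:regpseudo} shows that $\Psi$ is a diffeomorphism up to the edges of each hexagon $H_i$. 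Hence the density of $\Psi_*\sigma$ with respect to $\dd r\,\dd\theta$ is smooth and strictly positive on the compact set $\Psi(\mathcal{S}\setminus \cup_a B(a,\varepsilon))$. One minor point needs care near the two blown-up boundary circles. Near $\partial\mathcal{S}_{in}$ the coordinate $f$ behaves like $r^2$ in polar coordinates, so the density of $\sigma$ in $(f,\theta)$ is again smooth and positive, and likewise near the maximum. The density is therefore pinched between $c_\varepsilon'$ and $C_\varepsilon'$, and multiplying by the Lebesgue area $\asymp 2^{-2N}$ gives the first bound.

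For the second estimate I would work in the hyperbolic box of a saddle point. By Proposition~\ref{prop:angvar2} (after an affine change of the $(r,\theta)$ coordinates, which only affects the constants) we may take $f=x^2-y^2$ and $\theta=xy$. Then $\dd f\wedge\dd\theta = 2(x^2+y^2)\,\dd x\wedge \dd y$, and $x^2+y^2=(r^2+4\theta^2)^{1/2}$. Consequently
\[
\Psi_*\sigma = \tilde\sigma(r,\theta)\,(r^2+4\theta^2)^{-\frac12}\,\dd r\wedge \dd\theta ,
\]
where $\tilde\sigma$ is the density of $\sigma$ in the Morse chart divided by $2$. This factor is smooth and bounded above and below by positive constants, since $\sigma$ is a smooth area form. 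The computation has to be done separately on each of the four quadrants $\Psi(H_i)$ meeting at the singular point, but the formula for the density is the same on each.

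Now take a face $\square$ with $\Psi(\square)$ at distance $\asymp 2^{-j}$ from the singular point $(0,0)$, for $1\le j\le N$. On $\Psi(\square)$ the quantity $(r^2+4\theta^2)^{1/2}$ is comparable to $\mathrm{dist}((r,\theta),0)$. Because the rectangle has diameter $\lesssim 2^{-N}\le 2^{-j}$, that distance is $\asymp 2^{-j}$ uniformly on $\Psi(\square)$. Here "approximate distance $2^{-j}$" must be interpreted as $c2^{-j}\le \mathrm{dist}(\Psi(\square),0)$, with a separate treatment of the $O(1)$ faces touching the singular point (the case $j=N$). For those, integrating $\rho^{-1}$ in polar coordinates over a region of diameter $2^{-N}$ gives $\asymp 2^{-N}=2^N2^{-2N}$, which is the same bound. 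Multiplying the density $\asymp 2^{j}$ by the Lebesgue area $\asymp 2^{-2N}$ then gives $c2^j2^{-2N}\le\int_\square\sigma\le C2^j2^{-2N}$. The constants are uniform in $j$ and $N$ because $\tilde\sigma$ is bounded above and below and the comparabilities used are purely geometric.

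The main obstacle is to justify the normal form $f=x^2-y^2$, $\theta=xy$ on a full neighborhood together with the exact identity $\dd f\wedge\dd\theta=2(x^2+y^2)\,\dd x\wedge\dd y$. A merely approximate identity must not spoil the two-sided bound, so I would rely on Proposition~\ref{prop:angvar2}, which makes $\dd\theta=\pm\dd(xy)$ exact in $\Omega_a$. A second point needs care: near the corners of the corona regions, faces lie partly inside and partly outside $\Omega_a$. I would patch these by compactness, using the first estimate outside $\cup_a\Omega_a$ with $\varepsilon$ chosen so that $B(a,\varepsilon)\subset\Omega_a$, where $j$ is bounded and $2^j$ is absorbed into the constants.
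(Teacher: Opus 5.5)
Your proposal is correct and follows essentially the same route as the paper. The paper's proof is just the scaling computation of $\int_\square (f^2+4\theta^2)^{-1/2}\,\dd f\,\dd\theta$ over a dyadic square, based on the normal form $f=x^2-y^2$, $\theta=xy$; this gives $O(2^{-N})$ for the worst faces and $\simeq 2^{j}2^{-2N}$ at distance $2^{-j}$. Your version additionally makes explicit several points the paper leaves implicit: reading the first hypothesis as ``does not meet'', the behaviour at the blown-up boundary circles, the faces touching the singular vertex, and the lower bounds.
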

Beware that it is important in the previous Lemma that $\varepsilon$ does not depend on $N$.
\begin{proof}
This is an immediate consequence of the fact that $\Psi_*\sigma$ fails to be smooth
at saddle points. Let us estimate the asymptotic of the worst dyadic faces of the decomposition
which are close to the saddle points~:
\begin{align*}
\int_{\square}\sigma\sim& \int_{k2^{-N}}^{(k+1)2^{-N}}\int_{\ell 2^{-N}}^{(\ell+1)2^{-N}} (f^2+4\theta^2)^{-\frac{1}{2}}  \dd f\dd \theta\\
=&\int_{k }^{(k+1) }\int_{\ell}^{(\ell+1)} 2^N(f^2+4\theta^2)^{-\frac{1}{2}}  2^{-2N} \dd f\dd\theta =\mathcal{O}(2^{-N}).      
\end{align*}
\end{proof}
We introduce a discrete corona decomposition that will be crucial to define \emph{discrete versions of the weighted norms}. 
We should prove a scaled estimate of the following form.
We would like to control the area of those dyadic squares that approach very close to the saddle points where $\Psi_*\sigma$ is singular.
Consider a small dyadic square $\square(r,r+2^{-N},\theta,\theta+2^{-N})$ at a fixed distance from the saddle points. Assume w.l.o.g. that $(0,0) $ is our saddle point.
For instance all dyadic squares at distance between $ \frac{1}{2}$ and $1$ from the saddle point write
$\square(k2^{-N},(k+1)2^{-N},\ell 2^{-N},(\ell+1)2^{-N}) $ where $ 2^{N-1}\leqslant k,\ell\leqslant 2^{N}$, so $r=k2^{-N}$, $\theta=\ell 2^{-N}$.

So at every scale $j=1,\dots, N-1$, 
$\square(k2^{-N},(k+1)2^{-N},\ell 2^{-N},(\ell+1)2^{-N}) $ where $ 2^{N-j-1}\leqslant k,\ell\leqslant 2^{N-j}$. So all dyadic squares can be decomposed as a union
\begin{align*}
\bigcup_{j=1,\dots,N-1} \left( \bigcup_{2^{N-j-1}\leqslant k,\ell\leqslant 2^{N-j} }  \square(k2^{-N},(k+1)2^{-N},\ell 2^{-N},(\ell+1)2^{-N}) \right) 
\end{align*}
where for each scale $j$, a square $\square(k2^{-N},(k+1)2^{-N},\ell 2^{-N},(\ell+1)2^{-N})$ will be contained in a region (a shell) at distance 
$ \simeq 2^{-j} $, there are approximately $(2^{2(N-j)})$ such squares.

For such a square, we have an estimate of the form
\begin{align*}
\int_{\square(k2^{-N},(k+1)2^{-N},\ell 2^{-N},(\ell+1)2^{-N}) }\sigma\simeq 2^{-2N}2^{j}=2^{j-2N}.   
\end{align*}

\begin{lemma}\label{areaestim}
    There exists $C,\beta,\gamma>0$, such that for all $N$,
    \[\sup_{\theta \in \Theta^N_-}\sigma(0,2g+2,\theta,\theta_+) \leq C2^{-N\beta}\ \text{ and } \sup_{\substack{r\in R^N_- \\ \theta \in \Theta^N_-}}\sigma(r,r^+,\theta,\theta^+)\leq C2^{-N}\]
\end{lemma}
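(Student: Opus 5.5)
We work on the cylinder through the map $\Psi$: both quantities are areas of rectangles in $\mathbf{Cyl}=[\min(f),\max(f)]\times\mathbb{S}^1$ for the pushforward area $\Psi_*\sigma$, with $\sigma(r_1,r_2,\theta_1,\theta_2)$ meaning $\int_{\square(r_1,r_2,\theta_1,\theta_2)}\Psi_*\sigma$. The only input is the local description of $\Psi_*\sigma$ from Proposition~\ref{prop:angvar2}. Away from $\Psi(\mathrm{Crit}(f)_1)$ the density of $\Psi_*\sigma$ with respect to $\dd r\,\dd\theta$ is bounded. Near each singular point $(r_0,\theta_0)=\Psi(a)$ we have $\Psi_*\sigma\leq C\big((r-r_0)^2+4(\theta-\theta_0)^2\big)^{-1/2}\dd r\,\dd\theta$, since $\tilde\sigma$ is bounded. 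A finite cover of the compact cylinder by a bounded region and $4g$ small neighbourhoods of the singular points (four branches per saddle) gives a global bound
\[
\Psi_*\sigma\leq C\Big(1+\sum_{(r_0,\theta_0)\in\Psi(\mathrm{Crit}(f)_1)}\big|(r-r_0,\theta-\theta_0)\big|^{-1}\Big)\dd r\,\dd\theta .
\]
Near the blown-up boundary circles the density is also bounded: the area form vanishes there rather than blowing up, since near the minimum $\dd f\wedge\dd\theta=r\,\dd r\wedge\dd\theta$ in polar coordinates.

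\textbf{Second bound (small squares).} Fix a dyadic square of side $2^{-N}$ and integrate the majorant over it. The bounded part contributes $C2^{-2N}$. For a singular part we use that $|z|^{-1}$ is radially decreasing, so its integral over any square of side $\ell$ is at most its integral over the square of side $\ell$ centred at the singularity. That integral is $\lesssim\int_0^{\ell}\rho^{-1}\rho\,\dd\rho=\mathcal O(\ell)$. With $\ell=2^{-N}$ this gives the bound $C2^{-N}$ uniformly over $r\in R^N_-$ and $\theta\in\Theta^N_-$, which is the scaling computation already carried out in Lemma~\ref{lem:dyadiccorona}.

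\textbf{First bound (thin vertical strips).} Here we integrate over a strip $[\min f,\max f]\times[\theta,\theta_+]$ of width $2^{-N}$ and full height. We use Hölder's inequality, since the density $|z|^{-1}$ lies in $L^q_{\mathrm{loc}}(\mathbb{R}^2)$ for every $q<2$. With $q=3/2$ and conjugate exponent $3$, this gives
\[
\sigma(\mathrm{strip})\leq \|\Psi_*\sigma\|_{L^{3/2}(\mathbf{Cyl})}\,|\mathrm{strip}|^{1/3}\lesssim 2^{-N/3}.
\]
Alternatively we integrate directly: for fixed $\theta'$ at distance $d$ from $\theta_0$,
\[
\int|(r-r_0,\theta'-\theta_0)|^{-1}\dd r\lesssim 1+\log(1/d),
\]
and integrating this over a $\theta'$-interval of length $2^{-N}$ gives $\mathcal O(N2^{-N})$. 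Either route yields a bound $C2^{-N\beta}$ for any $\beta<1$; with Hölder we get $\beta=1/3$.

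The main obstacle is the first step: justifying rigorously the global majorant for $\Psi_*\sigma$. Proposition~\ref{prop:angvar2} gives the form $\Psi_*\sigma=\tilde\sigma\,(f^2+4\theta^2)^{-1/2}\dd f\wedge\dd\theta$ only inside each hyperbolic box, so we must also check that $\tilde\sigma$ stays bounded there. Outside the boxes and away from the boundary circles, $\Psi$ is a local diffeomorphism on each hexagon up to its edges by Proposition~\ref{prop:regpseudo}, hence the density is bounded on compacts. Near the blown-up minimum and maximum we use the explicit polar form. The constant $\gamma$ in the statement plays no role in these bounds.
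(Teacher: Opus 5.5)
Your proof is correct. For the strip bound, your main argument takes a different route from the paper's.

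\textbf{The paper's route.} The paper proves the strip bound the way you describe as your ``alternative''. It reduces to the hyperbolic boxes (flow lines stay only a finite time outside them) and integrates $\tilde\sigma\,(r^2+4u^2)^{-1/2}$ over a strip of width $2^{-N}$ through the singular point. This gives $\mathcal{O}(N2^{-N})$, hence every $\beta<1$. For the small squares, the paper simply cites the scaling computation of Lemma~\ref{lem:dyadiccorona}, which is what your radial-rearrangement bound reproduces.

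\textbf{Your Hölder route.} This uses only the integrability of the majorant, $|z-z_0|^{-1}\in L^q_{\mathrm{loc}}$ for $q<2$. It needs no explicit integration and would work for any pushforward density with some $L^q$ integrability, $q>1$. The cost is the exponent: letting $q\uparrow 2$ you only reach $\beta<1/2$, whereas the paper gets $\beta<1$. This loss is harmless, because the lemma is only ever used with some $\beta>0$. In the characteristic-function estimate of Section~\ref{s:CLT}, the term $2^{-N/2}2^{-N\beta p}2^N$ is absorbed by taking $p$ large, and the other terms vanish for any $\beta>0$.

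Your write-up is also more explicit than the paper about the global majorant for $\Psi_*\sigma$. You note that the density is bounded near the two blown-up circles and on the closed hexagons away from the saddle corners; the paper compresses this into a single sentence about finite time outside hyperbolic boxes.
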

\begin{proof}
The second estimate was already proved.
The challenge is to give uniform bounds for area of very thin strips of dyadic thickness. If the angle $\theta$ is away from the singular angles then the bound is obvious. The difficult case is when strips intersect saddle points; see Figure \ref{strips}.
\begin{figure}
    \centering
    \includegraphics[width=1.2\linewidth]{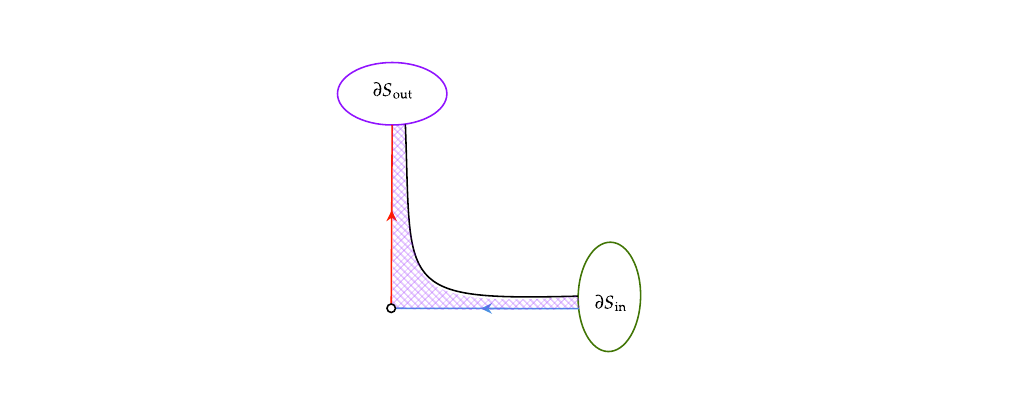}
    \caption{Strips near saddle points.}
    \label{strips}
\end{figure}

Because of the specific structure of Morse gradient flows, we only stay some \emph{finite time} outside hyperbolic boxed and everything reduces to bounding areas
\begin{align*}
&\int_0^1 \int_{k2^{-N}}^{(k+1)2^{-N}} \frac{\tilde{\sigma}(r,u)}{\sqrt{r^2+4u^2}}drdu  
=\int_0^1 \int_{k}^{(k+1)} \frac{\tilde{\sigma}(r,2^{-N}u)}{\sqrt{r^2+2^{-2N}4u^2}} 2^{-N} drdu\\
&= 2^{-N} \int_0^{2^N} \int_{k}^{(k+1)} \frac{\tilde{\sigma}( 2^{-N} r,2^{-N}u)}{\sqrt{r^2+4u^2}}  drdu\leqslant 2^{-N} \int_0^{2^N}  \frac{\tilde{\sigma}( 2^{-N} r,2^{-N}u)}{r}  dr=\mathcal{O}(N2^{-N}).
\end{align*}

So this proves the claim for all exponents $\beta<1$.
\end{proof}

\newcommand{\F}{\mathcal{F}}

\section{ A local limit theorem}\label{TwoCLT}

Let $G$ be a compact unitary Lie group with Lie algebra $\g$, that we equip with a bi invariant dot product $\langle\cdot,\cdot \rangle$. In this section, we would like to show two results of independent interest, that we will use in the sequel. The first one aims to study the convergence of the densities in the central limit theorem, and the second one can be seen as some type of a non-commutative Donsker theorem in which the random variables are not independent. Before we proceed, let us mention that Theorem~\ref{ConvSegalAmp} is a  direct corollary of Theorem~\ref{convck}.
\par Let us give a brief description of the result of this section. Let $(\mu_t)_{t> 0}$ be a family of probability measures such that for small $t$, $\mu_t$ is concentrated around the identity, in a way that will be made precise later. Suppose that the density $\rho_t$ of $\mu_t$ exists and enjoys some properties that we will discuss later. Then, we would like roughly to study the following convergence in $C^\infty(G)$ 
\[\underbrace{\rho_{\frac{1}{n}}\star \cdots \star\rho_{\frac{1}{n}}}_{n \text{ times}}\xrightarrow{n\to\infty} p_1,\]
where $p_1$ is the heat kernel at time $1$. The general idea is that we already know the convergence in $\mathcal{D}'(G)$, and therefore the only additional argument we need is the boundedness in $C^{\infty}(G)$. Let us now be more precise on the assumptions. 
\begin{definition}\label{def:CLTplus}
    Let $s>0$. A family of probability measures $(\mu_t)_{t>0}$ is said to have the property $C(s)$ if 
    \begin{enumerate}
    \item for each $t>0$, $\mu_t$ is ad-invariant and invariant by inversion,
    \item for all $v,w\in \g$, $\int_G \langle \log(g),v\rangle_{\mathfrak{g}} \langle \log(g),w\rangle_{\mathfrak{g}}  \mu_t(\dd g)=t\left\langle v,w \right\rangle_{\mathfrak{g}}  +o(t) $,
    \item $\int_G \vert \log g\vert^{3}\mu_t(\dd g)=O(t^{\frac{3}{2}}) $,
    \item For all $t>0$, $\mu_t$ has the following Fourier decay,
    \begin{equation}
\exists C>0; \forall \lambda\in \widehat{G},\forall t>0, \vert \widehat{\mu_t}(\lambda) \vert \leq \dim(V_\rho) C\left(1+\sqrt{tc_2(\rho)} \right)^{-s}
\end{equation}
where $c_2(\lambda)$ is the second Casimir number of the representation $\lambda$.
\end{enumerate}
\end{definition}

We would like to show the following theorem. This theorem and some of its generalizations are studied in a seperate note by the authors, Thibaut Lemoine, and Martin Vogel~\cite{DLNV}.
\begin{thm}\label{convck}
    Let  for each $n\in \N$,  $s^n_1,\dots, s^n_n \in \R_+$ such that $s^n_1+\cdots+ s^n_n=1$, and such that there exists $a>0$, and $A>1$ for which 
        \[\liminf_{n\to\infty} \frac{\left \vert T_n(a,A)\right \vert }{n} >0, \text{ with }\ T_n(a,A)\coloneq \left\{i=1,\cdots,n; \frac{a^2}{n}\leq s^n_i\leq \frac{A^2}{n}\right\}.\]
Then, if $(\mu_t\coloneq \rho_t\dd g)_{t>0}$ is a family of probability measures verifying the condition $C(s)$ for some $s>0$,  
\[\rho_{s^n_1}\star \cdots\star \rho_{s^n_n}\xrightarrow{n\to\infty} p_1 \text{ in } C^{\infty}(G),\]
where $p_1$ is the heat kernel. 
\end{thm}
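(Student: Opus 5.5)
The plan is to work entirely on the Fourier side, using the Peter--Weyl decomposition. Each $\rho_t$ is ad-invariant, hence a class function, so
\[\rho_t=\sum_{\lambda\in\widehat G}\frac{\widehat{\mu_t}(\lambda)}{d_\lambda}\, d_\lambda\chi_\lambda,\]
where $\widehat{\mu_t}(\lambda)=\int\overline{\chi_\lambda}\,\dd\mu_t$, up to the normalisation of Appendix~\ref{symb}. Write $m_t(\lambda)\coloneq\widehat{\mu_t}(\lambda)/d_\lambda$ for the normalised coefficient. Convolution of central functions multiplies these coefficients, so
\[\rho_{s^n_1}\star\cdots\star\rho_{s^n_n}=\sum_{\lambda} d_\lambda\Big(\prod_{i=1}^n m_{s^n_i}(\lambda)\Big)\chi_\lambda,\qquad p_1=\sum_\lambda d_\lambda e^{-c_2(\lambda)}\chi_\lambda.\]
Since $\Delta\chi_\lambda=-c_2(\lambda)\chi_\lambda$ and $\|\chi_\lambda\|_{C^k}\lesssim d_\lambda(1+c_2(\lambda))^{k/2}$, and since $d_\lambda$ and the number of $\lambda$ with $c_2(\lambda)\le R$ both grow polynomially in $c_2(\lambda)$, convergence in $C^\infty(G)$ reduces to two facts. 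The first is pointwise convergence $\prod_i m_{s_i^n}(\lambda)\to e^{-c_2(\lambda)}$ for each fixed $\lambda$. The second is a uniform bound
\[\Big|\prod_i m_{s_i^n}(\lambda)\Big|\le C_K(1+c_2(\lambda))^{-K}\quad\text{for every }K,\text{ uniformly in }n.\]
Dominated convergence on the series of $C^k$ norms then concludes.

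For the \textbf{pointwise step}, fix $\lambda$ and expand $\chi_\lambda(e^X)/d_\lambda$ around $X=0$ to third order. The first-order term vanishes because $\mu_t$ is invariant under inversion. The second-order term is $-\frac{1}{2d_\lambda}\mathbb{E}[\Tr\rho_\lambda(X)^2]$, which by condition~2 of Definition~\ref{def:CLTplus} and the Casimir identity equals $-t\,c_2(\lambda)/2+o(t)$ (up to the normalisation convention for $c_2$). The remainder is $O(t^{3/2})$ by condition~3. One subtlety is that the $o(t)$ must be summed over $i$; this works because $\max_i s_i^n\to 0$, which I would extract from the hypotheses, or else assume, as for uniform smallness. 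Then
\[\sum_i\log m_{s_i^n}(\lambda)=-\tfrac12c_2(\lambda)\sum_i s_i^n+o(1)+O\Big(\sum_i (s_i^n)^{3/2}\Big)\to -c_2(\lambda)\]
in the paper's normalisation, since $\sum_i (s_i^n)^{3/2}\le \max_i (s_i^n)^{1/2}\to0$.

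The \textbf{uniform decay step} is where the main obstacle lies, and the hypothesis on $T_n(a,A)$ is what handles it. Every factor satisfies $|m_t(\lambda)|\le 1$, so we may discard all factors except those with $i\in T_n(a,A)$. There are at least $\delta n$ of these for some $\delta>0$ and all large $n$. For those indices, condition~4 gives
\[|m_{s_i^n}(\lambda)|\le C\big(1+a\sqrt{c_2(\lambda)/n}\big)^{-s}.\]
I then split into two regimes.

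In the high-frequency regime $c_2(\lambda)\ge n$, each such factor is at most $C(1+a\sqrt{c_2/n})^{-s}$. For the product to beat any polynomial, we need these factors to be at most some $\eta<1$. I would first try to obtain this by combining the decay bound for large $c_2/n$ with a uniform spectral-gap bound $\sup_{t\in[a^2/n,A^2/n]}|m_t(\lambda)|\le\eta<1$ for $c_2(\lambda)\gtrsim n$. That spectral-gap bound can be derived from the Taylor bound when $c_2 t$ is small, and from non-concentration of $\mu_t$ on a proper closed subgroup otherwise. Granting it, the product is at most $\eta^{\delta n}\cdot\min\big(1,(C(1+a\sqrt{c_2/n})^{-s})^{\delta n}\big)$, which decays faster than any power of $c_2$ once $\delta n s>K$. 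Finitely many small $n$ are handled trivially.

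In the low-frequency regime $c_2(\lambda)\le n$, the Taylor expansion is uniform in $t c_2(\lambda)\le A^2$. It gives $|m_t(\lambda)|\le 1-c\,t\,c_2(\lambda)\le e^{-c t c_2}$ for $i\in T_n$, and therefore $|\prod_i m|\le e^{-c\delta a^2 c_2(\lambda)}$. This exponential bound gives all the polynomial decay we need.

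Making the Taylor expansion quantitative uniformly in $\lambda$ with $t c_2(\lambda)$ bounded is the delicate point. The third-order remainder carries a factor $\|\rho_\lambda(X)\|^3\lesssim c_2(\lambda)^{3/2}|X|^3$, so the error is $O((tc_2)^{3/2})$. That is not small relative to $tc_2$ when $tc_2\sim1$, and I would handle this intermediate range through the spectral-gap argument above.
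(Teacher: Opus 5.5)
Your strategy matches the paper's proof. You expand in characters, discard every factor with $i\notin T_n(a,A)$ using $|m_t(\lambda)|\le 1$, and split by the size of $t\,c_2(\lambda)$ into three regimes: a Taylor regime, a regime controlled by condition~4 of Definition~\ref{def:CLTplus}, and an intermediate regime. Your pointwise limit plus dominated convergence does the job of the paper's ``CLT in $\mathcal{D}'(G)$ plus uniform $C^k$ bounds''.

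\textbf{The granted spectral gap.} The step you grant is where the real content lies, and the routes you indicate do not close it. You need $M_0>0$, $\eta<1$ and $t_0>0$ such that $|m_t(\lambda)|\le\eta$ whenever $t\le t_0$ and $t\,c_2(\lambda)\ge M_0$. This must hold uniformly, since $\eta$ is raised to a power comparable to $n$.
\begin{itemize}
\item Condition~4 gives such a bound only beyond a threshold in $t\,c_2(\lambda)$ that depends on $C$, which may exceed $1$. The Taylor bound gives it only for small $t\,c_2(\lambda)$.
\item The strict inequality $|m_t(\lambda)|<1$ for each fixed nontrivial $\lambda$ and $t$ (the paper's intermediate-regime lemma) gives, for each $n$, a maximum $\delta_n<1$ over finitely many pairs. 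It says nothing about $\sup_n\delta_n$. The paper's step ``$0<\delta<1$'' has exactly this defect.
\item ``Non-concentration on a proper closed subgroup'' does not settle it either. The relevant pairs have $t\to 0$ and $c_2(\lambda)\sim 1/t$, so the obstruction sits at scale $\sqrt t$.
\end{itemize}
Here is an example. On $G=U(1)$, let $\theta/\sqrt t$ be $\tfrac12(\delta_1+\delta_{-1})$, smoothed at width $\epsilon(t)\to 0$ and renormalised to variance $1$. Conditions~1--3 hold, yet $m_t(n)\to -1$ along $t=\pi^2/n^2$. Only condition~4, with $C$ independent of $t$, excludes this.

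So condition~4 must be used quantitatively. In a central direction a doubling argument works. For $\phi(u)=\mathbb{E}\cos(uY)$ one has $1-\phi(2u)\le 4(1-\phi(u))$ and $\phi(2u)\ge 2\phi(u)^2-1$. Hence $|m_t(\lambda)|\ge 1-\epsilon$ forces $m_t(2^j\lambda)\ge 1-4^j\epsilon$. Condition~4 gives $m_t(2^j\lambda)\le\tfrac12$ once $2^j\sqrt{M_0}$ is large, so $\eta=1-\tfrac12 4^{-j}$ works.

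For semisimple $G$ the mechanism is different. There, $|m_t(\lambda)|\to 1$ with $c_2(\lambda)\sim 1/t$ forces $(\log/\sqrt t)_*\mu_t$ to put almost all its mass near $0$, which conditions~2--3 forbid. That argument, and its combination with the central case, still needs a proof uniform in $\lambda$. This lemma is missing from both your proposal and the paper.

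\textbf{The condition $\max_i s_i^n\to 0$.} This cannot be extracted from the hypotheses. Take $s_1^n=\tfrac12$ and $s_i^n=\tfrac{1}{2(n-1)}$ for $i\ge 2$. Then $|T_n(\tfrac12,2)|=n-1$ for $n\ge 9$, but the convolution tends to $\rho_{1/2}\star p_{1/2}$, which differs from $p_1$ unless $\rho_{1/2}=p_{1/2}$. So it must be added as a hypothesis. The paper's appeal to the CLT uses it tacitly. It does hold for the Morse lattices, where face areas are $O(2^{-N})$.

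\textbf{A small fix.} In your high-frequency bound, apply $\eta$ to one half of $T_n$ and the decay bound to the other half, rather than both bounds to the same factors.
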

\begin{proof}
    The central limit theorem tells us that 
    $\rho_{s^n_1}\star \cdots\star \rho_{s^n_n}\xrightarrow{n\to\infty} p_1 \text{ in } \mathcal{D}'(G).$
    The Proposition~\ref{boundedinCk}, whose proof is the subject of the remaining of the section, shows that for all $k\in \N$, the sequence  
    $(\rho_{s^n_1}\star \cdots\star \rho_{s^n_n})_{n\geq 1}$ is bounded in $C^k$. The result follows. 
\end{proof}

\begin{proposition}\label{boundedinCk}
    Using the same notations as in the previous theorem, for all $k\in \N$, the sequence  
    $(\rho_{s^n_1}\star \cdots\star \rho_{s^n_n})_{n\geq 1}$ is bounded in $C^k$.
\end{proposition}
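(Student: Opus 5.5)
We work on the Fourier side via the Peter--Weyl theorem. Since each $\mu_t$ is ad-invariant, $\rho_t$ is a class function, so $\widehat{\rho_t}(\lambda)$ is a scalar multiple of the identity on $V_\lambda$. Write $\widehat{\rho_t}(\lambda)=\frac{\widehat{\mu}_t(\lambda)}{d_\lambda}\,\mathrm{Id}$, with the scalar $\frac{\widehat{\mu}_t(\lambda)}{d_\lambda}$ real because $\mu_t$ is invariant by inversion. The convolution then factorizes:
\[
\rho_{s^n_1}\star\cdots\star\rho_{s^n_n}=\sum_{\lambda\in\widehat G} d_\lambda\Big(\prod_{i=1}^n \frac{\widehat{\mu}_{s^n_i}(\lambda)}{d_\lambda}\Big)\chi_\lambda .
\]
Using $\|\chi_\lambda\|_{C^k}\lesssim d_\lambda(1+c_2(\lambda))^{k/2}$, it suffices to show that for every $k$,
\[
\sup_n\sum_{\lambda}d_\lambda^2(1+c_2(\lambda))^{k/2}\prod_{i}\Big|\frac{\widehat{\mu}_{s^n_i}(\lambda)}{d_\lambda}\Big|<\infty .
\]
Here $|\widehat\mu_t(\lambda)/d_\lambda|\le 1$ for every factor, and the number of $\lambda$ with $c_2(\lambda)\le R$ together with $d_\lambda$ grow only polynomially in $R$ (Weyl dimension formula). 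So a bound that is polynomially decaying in $c_2(\lambda)$ with arbitrary power, uniformly in $n$, is enough.

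We keep only the factors with $i\in T_n(a,A)$, of which there are at least $\delta n$ for large $n$, and split according to the size of $c_2(\lambda)$ relative to $n$.

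\emph{Low frequencies, $c_2(\lambda)\le \eta n$.} Expand the character near the identity. Condition 2 of $C(s)$, ad-invariance and the third-moment bound give
\[
\frac{\widehat{\mu}_t(\lambda)}{d_\lambda}=1-t\,c_2(\lambda)+o(t)c_2(\lambda)+O\big(t^{3/2}c_2(\lambda)^{3/2}\big).
\]
The error must be uniform in $\lambda$ in the regime $t\,c_2(\lambda)\le \eta A^2$; to get this, bound the Taylor remainder of $\chi_\lambda(e^X)/d_\lambda$ by $|X|^3c_2(\lambda)^{3/2}$, using that $\|d\pi_\lambda(X)\|\lesssim |X|\sqrt{c_2(\lambda)}$. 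Hence each good factor satisfies $|\cdot|\le \exp(-c\,s_i^n c_2(\lambda))\le \exp(-c a^2 c_2(\lambda)/n)$. The product over the at least $\delta n$ good indices is then $\le e^{-c\delta a^2 c_2(\lambda)}$, which is summable against any polynomial weight.

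\emph{High frequencies, $c_2(\lambda)>\eta n$.} Here condition 4 gives, for each good $i$, $|\widehat{\mu}_{s_i^n}(\lambda)/d_\lambda|\le C(1+a\sqrt{c_2(\lambda)/n})^{-s}\le C(1+a\sqrt\eta)^{-s}$. If this constant is $<1$, the product over good indices is $\le \theta^{\delta n}$ with $\theta<1$. To obtain decay in $\lambda$ as well, additionally spend $m$ fixed factors on the bound $C^m(1+\sqrt{c_2(\lambda)/n})^{-sm}\lesssim n^{sm/2}c_2(\lambda)^{-sm/2}$ and absorb $n^{sm/2}$ into $\theta^{\delta n}$. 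Choosing $m$ large makes the sum finite uniformly in $n$.

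The main obstacle is that the constant $C$ in condition 4 may exceed $1$, so a single factor gives no contraction in the intermediate range $c_2(\lambda)\sim n$. To handle this, use the fact that for each fixed $t$ and $\lambda\neq 0$ the normalized coefficient has absolute value strictly less than $1$, combined with compactness in the scaling variable $u=t\,c_2(\lambda)\in[a^2\eta,\infty)$. The goal is to establish $\sup_{t\le A^2/n,\ t c_2(\lambda)\ge a^2\eta}|\widehat\mu_t(\lambda)/d_\lambda|\le\theta<1$. For large $u$, condition 4 already forces this bound below $1$. For bounded $u$, use a CLT-type limit: $\mu_t$ rescaled converges to a Gaussian, so the coefficient tends to $e^{-u}$. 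Verifying this uniformity, possibly after enlarging $\eta$ to match the two regimes, is the delicate step.
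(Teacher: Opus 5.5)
\paragraph{The gap: the intermediate regime.} Your three-regime plan is the paper's plan (Lemmas~\ref{FirstSum}, \ref{SecondSum}, \ref{ThirdSum}): keep only the factors indexed by $T_n(a,A)$, then use Taylor expansion at low frequency, condition~4 at high frequency, and a uniform contraction in between. Your low- and high-frequency arguments are sound, and somewhat more careful than the paper's: you keep the factor $d_\lambda^2$ that Lemma~\ref{Fourier_expansion} drops, and your cubic remainder is uniform in $\lambda$. The genuine gap is the intermediate regime $\eta_0 n\le c_2(\lambda)\le \eta_1 n$, which you leave open.

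The mechanism you propose for it does not work. Condition $C(s)$ prescribes only the covariance and a third-moment bound. So the laws of $t^{-1/2}\log g$ under $\mu_t$ need not converge to a Gaussian, and the normalized coefficients need not tend to $e^{-u}$. For instance, take $G=SU(2)$, small $t$, and let $\mu_t$ be the image under $\exp$ of the uniform law on a ball of radius $c\sqrt{t}$ in $\mathfrak g$, with $c$ chosen to match condition~2. This family satisfies $C(2)$. Yet in the regime $t\,c_2(\lambda)\approx u$ its normalized coefficients converge to the Fourier transform of the uniform law on a ball, which even changes sign.

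The paper does not close this step either. In Lemma~\ref{ThirdSum}, $\delta$ is a supremum over infinitely many pairs $(t,\lambda)$ with $t\to0$ and $c_2(\lambda)\to\infty$. The bound $\delta<1$ is inferred from the pointwise inequality $|\widehat{\mu}_t(\lambda)|<d_\lambda$, which gives no uniformity. So you have located the real crux, but not resolved it.

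\paragraph{A way to close it.} Argue by contradiction with an arbitrary (non-Gaussian) limit law. Let $\nu_t$ denote the law of $t^{-1/2}\log g$ under $\mu_t$; these laws are $\Ad$-invariant and tight as $t\to0$ by condition~2. Kirillov's character formula, together with the $\Ad$-invariance of $\nu_t$ and of the Jacobian $j$ of $\exp$, gives
\[
\widehat{\mu}_t(\lambda)/d_\lambda=\int_{\{|\sqrt{t}\,Y|<r_0\}} j^{-1/2}(\sqrt{t}\,Y)\,e^{i\langle\sqrt{t}(\lambda+\rho),Y\rangle}\,\nu_t(\dd Y)+O(t)
\]
uniformly in $\lambda$.

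Now suppose $t_j\to0$, $t_jc_2(\lambda_j)\to u\in[u_0,u_1]$ with $u_0>0$, and $|\widehat{\mu}_{t_j}(\lambda_j)|/d_{\lambda_j}\to1$. Extract $\nu_{t_j}\to\nu$ and $\sqrt{t_j}(\lambda_j+\rho)\to\eta$, where $\eta\neq0$ because $u>0$. The normalized coefficients then converge to the characteristic function $\widehat{\nu}(\eta)=\int e^{i\langle\eta,Y\rangle}\nu(\dd Y)$.

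Running the same limit along weights with $\sqrt{t_j}(\lambda_j+\rho)\to\xi$ shows that condition~4 survives: $|\widehat{\nu}(\xi)|\le C(1+|\xi|)^{-s}$ for $\xi$ in the positive chamber, hence for all $\xi$ by $\Ad$-invariance. But $|\widehat{\nu}(\eta)|=1$ forces $e^{i\langle\eta,Y\rangle}$ to be $\nu$-a.s.\ constant. Then $|\widehat{\nu}(k\eta)|=1$ for all $k\in\mathbb{Z}$, contradicting the decay.

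This yields $\theta<1$ and $t_0>0$ with $|\widehat{\mu}_t(\lambda)|\le\theta\,d_\lambda$ whenever $t\le t_0$ and $t\,c_2(\lambda)\in[u_0,u_1]$. That is exactly what your argument, and the paper's, requires. The role of condition~4 here is a uniform non-lattice property, not a Gaussian limit.
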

The proof of the above proposition needs many intermediate steps. Let us start with some preparatory work. 
\begin{lemma}\label{Fourier_expansion}
    Using the same notations, we have for all $k$
    \[\|\rho_{s^n_1}\star \cdots\star \rho_{s^n_n}\|_{C^k}\leq \sum_{\lambda \in \widehat{G}} (1+ \vert c_2(\lambda)\vert) ^{\frac{k}{2}} \prod_{i=1}^n\left\vert\frac{\widehat{\rho}_{s^n_i}(\lambda)}{d_{\lambda}}\right\vert .\]
\end{lemma}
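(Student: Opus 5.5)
The plan is to exploit that every $\rho_t$ is a class function, by item 1 of Definition~\ref{def:CLTplus}, and to diagonalise the $n$-fold convolution on characters. I will use the conventions of Appendix~\ref{symb}. There, $\widehat{\rho}(\lambda)=\int_G \rho(g)\,\overline{\chi_\lambda(g)}\,\dd g$, so that a class function $\rho$ expands as $\rho=\sum_{\lambda}\widehat{\rho}(\lambda)\chi_\lambda$. The relevant quantity is the normalised coefficient $\widehat{\rho}(\lambda)/d_\lambda$, which is at most $1$ in modulus for a probability density.

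\emph{Step 1: character expansion of the convolution.} By Schur orthogonality, $\chi_\lambda\star\chi_\mu=\delta_{\lambda\mu}\,d_\lambda^{-1}\chi_\lambda$. Since each $\rho_{s_i^n}$ is central, an induction on $n$ gives
\[
\rho_{s^n_1}\star\cdots\star\rho_{s^n_n}=\sum_{\lambda\in\widehat G} d_\lambda\prod_{i=1}^n\frac{\widehat{\rho}_{s^n_i}(\lambda)}{d_\lambda}\;\frac{\chi_\lambda}{d_\lambda}.
\]
Convergence of the sum is not an issue at this stage. Any term for which the right-hand side of the claimed inequality diverges makes the inequality trivial, and otherwise the series converges uniformly together with its derivatives (this is what Step 3 shows). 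It is therefore enough to bound the $C^k$ norm of each term and then sum.

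\emph{Step 2: reduce the $C^k$ norm to operator norms.} I take the $C^k$ norm to be the supremum over $j\le k$ and unit vectors $X_1,\dots,X_j\in\g$ of $\sup_g\vert X_1\cdots X_j F(g)\vert$, with left-invariant derivatives. Write $F_\lambda:=d_\lambda\cdot\chi_\lambda/d_\lambda$ for the $\lambda$-isotypic piece with its prefactor. Then $X_1\cdots X_j\chi_\lambda(g)=\Tr\big(\pi_\lambda(g)\,\dd\pi_\lambda(X_1)\cdots\dd\pi_\lambda(X_j)\big)$. The key estimate here is a Bernstein-type inequality with constant one: for a unit vector $X$, $\|\dd\pi_\lambda(X)\|_{\mathrm{op}}\le\sqrt{c_2(\lambda)}$. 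This holds because $-\sum_a\dd\pi_\lambda(X_a)^2=c_2(\lambda)\,\mathrm{Id}$ for an orthonormal basis $(X_a)$ of $\g$, and each $-\dd\pi_\lambda(X_a)^2$ is a positive operator. Hence each derivative costs at most a factor $(1+c_2(\lambda))^{1/2}$, and $j\le k$ derivatives cost at most $(1+\vert c_2(\lambda)\vert)^{k/2}$.

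\emph{Step 3 (main obstacle): bookkeeping of the $d_\lambda$ factors.} Bounding $\vert\Tr(\cdot)\vert$ crudely by $d_\lambda$ times the operator norm would produce exactly the spurious $d_\lambda^2$ that must be avoided. To avoid it, I will not estimate the trace of $\pi_\lambda(g)\,\dd\pi_\lambda(X_1)\cdots\dd\pi_\lambda(X_j)$ entrywise. Instead I will use that the normalised character $\chi_\lambda/d_\lambda$ is the normalised trace $\tfrac{1}{d_\lambda}\Tr$, whose modulus is bounded by the operator norm of its argument with constant one. This gives
\[
\sup_g\Big\vert X_1\cdots X_j\frac{\chi_\lambda}{d_\lambda}(g)\Big\vert\le c_2(\lambda)^{j/2}.
\]
The remaining factor $d_\lambda$ in $F_\lambda$ then has to be absorbed so that precisely $\prod_i\vert\widehat{\rho}_{s_i^n}(\lambda)/d_\lambda\vert$ remains. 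I will check this against the exact normalisation of $\widehat{\mu}$ fixed in Appendix~\ref{symb}, where $\widehat\mu$ is defined with the normalised character. Under that normalisation the coefficient in Step 1 reads $\prod_i\widehat{\rho}_{s_i^n}(\lambda)/d_\lambda$ in front of $\chi_\lambda/d_\lambda$ up to this convention, and matching it is the one delicate point of the proof.

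\emph{Step 4: conclusion.} Summing the term-wise bounds of Steps 2 and 3 over $\lambda\in\widehat G$ and taking the supremum over $j\le k$ yields the stated inequality of Lemma~\ref{Fourier_expansion}.
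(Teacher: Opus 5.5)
Step~3 is a genuine gap, and it cannot be closed: the factor you hope to absorb is really there, and the inequality as printed is false. First, your Step~1 formula already loses a $d_\lambda$. For $n=1$ it reads $\rho=\sum_\lambda\widehat\rho(\lambda)\chi_\lambda/d_\lambda$, which contradicts the expansion you started from. From $\rho=\sum_\lambda\widehat\rho(\lambda)\chi_\lambda$ and $\chi_\lambda\star\chi_\mu=\delta_{\lambda\mu}d_\lambda^{-1}\chi_\lambda$ one gets
\[
\rho_{s^n_1}\star\cdots\star\rho_{s^n_n}=\sum_{\lambda\in\widehat G}\frac{\prod_{i=1}^n\widehat\rho_{s^n_i}(\lambda)}{d_\lambda^{\,n-1}}\,\chi_\lambda=\sum_{\lambda\in\widehat G}d_\lambda^{2}\left(\prod_{i=1}^n\frac{\widehat\rho_{s^n_i}(\lambda)}{d_\lambda}\right)\frac{\chi_\lambda}{d_\lambda}.
\]
Your Step~2 bound $\sup_g\vert X_1\cdots X_j(\chi_\lambda/d_\lambda)(g)\vert\le c_2(\lambda)^{j/2}$ is correct, and sharp at $g=1_G$ for $j=0$. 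Combined with the display, it yields the lemma's right-hand side with an extra weight $d_\lambda^2$ in each term. No normalisation removes this weight. Appendix~\ref{symb} does not define $\widehat\mu$ at all. The convention in force is $\widehat\rho_t(\lambda)=\int_G\chi_\lambda\,\dd\mu_t$; this is the one under which Definition~\ref{def:CLTplus}(4), Lemma~\ref{small} and your own bound $\vert\widehat\rho/d_\lambda\vert\le 1$ make sense. Switching to the normalised character would divide every factor by a further $d_\lambda$, which only makes the right-hand side smaller.

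For a counterexample, let $G$ have an irreducible representation of dimension at least $2$ (e.g.\ $G=SU(2)$). Take $\rho_t=p_t$ the heat kernel, which satisfies $C(s)$, and $s^n_i=1/n$. Set $q_\lambda:=\widehat{p_1}(\lambda)/d_\lambda>0$. The semigroup property gives $\prod_{i=1}^n(\widehat{p}_{1/n}(\lambda)/d_\lambda)=q_\lambda$, and the convolution is $p_1$. Evaluating at $1_G$, where $\chi_\lambda(1_G)=d_\lambda$, gives
\[
\|p_1\|_{C^k}\ge((1+\Delta)^{k/2}p_1)(1_G)=\sum_{\lambda\in\widehat G}(1+c_2(\lambda))^{k/2}d_\lambda^2\,q_\lambda>\sum_{\lambda\in\widehat G}(1+c_2(\lambda))^{k/2}q_\lambda .
\]
The last sum is exactly the lemma's right-hand side; with your derivative-based norm, take $k=0$. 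What is true, and what your Steps~1--2 prove once Step~1 is corrected, is the bound with $d_\lambda^2(1+\vert c_2(\lambda)\vert)^{k/2}$ in place of $(1+\vert c_2(\lambda)\vert)^{k/2}$. The paper's own proof gives no more than this: its expansion omits the same $d_\lambda$, and its last line still carries $\|\chi_\lambda\|_\infty=d_\lambda$, which is then dropped. The corrected bound is all Proposition~\ref{boundedinCk} needs. By the Weyl dimension formula, $d_\lambda^2\le C(1+c_2(\lambda))^{m}$ with $m$ the number of positive roots. The extra weight is therefore absorbed by replacing $k$ with $k+2m$ in Lemmas~\ref{FirstSum}, \ref{SecondSum} and~\ref{ThirdSum}, which are stated for every $k$. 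You should state and prove that corrected inequality rather than try to match conventions.
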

\begin{proof}
    Using Fourier analysis on $G$, we can write
\[\rho_{s^n_1}\star \cdots\star \rho_{s^n_n}= \sum_{\lambda \in \widehat{G}}\left(\prod_{i=1}^n\frac{\widehat{\rho}_{s^n_i}(\lambda)}{d_{\lambda}}\right)\chi_\lambda,
\text{ where } 
\widehat{\rho}_t(\lambda)=\int_{G}\chi_\lambda(g) \mu_t(\dd g).\]
Note that 
\[
    \|\rho_{s^n_1}\star \cdots\star \rho_{s^n_n}\|_{C^k}\coloneq \left\|(1+\Delta)^{\frac{k}{2}} \rho_{s^n_1}\star \cdots\star \rho_{s^n_n}\right\|_{\infty},
\]
and that 
\[(1+\Delta)^{\frac{k}{2}} \rho_{s^n_1}\star \cdots\star \rho_{s^n_n}=\sum_{\lambda \in \widehat{G}}(1+c_2(\lambda))^{\frac{k}{2}}\left(\prod_{i=1}^n\frac{\widehat{\rho}_{s^n_i}(\lambda)}{d_{\lambda}}\right)\chi_\lambda,\]
which gives
\[\|\rho_{s^n_1}\star \cdots\star \rho_{s^n_n}\|_{C^k}\leq \sum_{\lambda \in \widehat{G}}(1+ \vert c_2(\lambda)\vert) ^{\frac{k}{2}}\left\vert \prod_{i=1}^n\frac{\widehat{\rho}_{s^n_i}(\lambda)}{d_{\lambda}}\right\vert \|\chi_\lambda\|_{\infty} .\]
\end{proof}
Next, we will use the previous lemma to bound the sum using ideas from micro local analysis.  There will be three separate regimes in each of which we will bound the terms differently. 

\paragraph{The small $\sqrt{c_2(\lambda)/n}$ regime.}
\begin{lemma}\label{small}
There exists $M,\delta_M>0$ such that for all
pairs $(t,\lambda)\in \R_+\times \widehat{G}$ such that
$\sqrt{t c_2(\lambda) } \leq M $, we have
$
\vert \widehat{\mu_t}(\lambda)\vert \leq d_\lambda e^{-\delta_M c_2(\lambda)t }  
$.
\end{lemma}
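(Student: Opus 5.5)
We Taylor expand the normalized character $\chi_\lambda(g)/d_\lambda$ around the identity and use the moment conditions of Definition~\ref{def:CLTplus}. Write $g=\exp(X)$ with $X=\log g$. Since $\mu_t$ is ad-invariant, $\widehat{\mu_t}(\lambda)=\int_G\chi_\lambda\,\dd\mu_t$ can be analysed through the representation matrix $\pi_\lambda(\exp X)$. We expand
\[
\pi_\lambda(\exp X)=\mathrm{Id}+\dd\pi_\lambda(X)+\tfrac12\dd\pi_\lambda(X)^2+R_\lambda(X),
\]
and bound the remainder by $\|R_\lambda(X)\|\le \tfrac16\|\dd\pi_\lambda(X)\|^3\le C\,c_2(\lambda)^{3/2}|X|^3$. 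Here we use $\|\dd\pi_\lambda(X)\|_{op}\lesssim \sqrt{c_2(\lambda)}\,|X|$, the standard bound via highest weights, $|\lambda|\lesssim\sqrt{c_2(\lambda)}$.

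\textbf{Linear term.} Invariance by inversion gives $X\mapsto -X$ symmetry in law, so the linear term integrates to zero.

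\textbf{Quadratic term.} Taking the trace and using condition 2 with an orthonormal basis $(v_j)$ of $\g$,
\[
\int\Tr\big(\dd\pi_\lambda(X)^2\big)\,\dd\mu_t=\sum_{j,k}\Big(\int\langle X,v_j\rangle\langle X,v_k\rangle\,\dd\mu_t\Big)\Tr\big(\dd\pi_\lambda(v_j)\dd\pi_\lambda(v_k)\big)=(t+o(t))\,\Tr\big(C_\lambda\big).
\]
Here $C_\lambda=\sum_j\dd\pi_\lambda(v_j)^2=-c_2(\lambda)\mathrm{Id}$. The $o(t)$ error is multiplied by traces bounded by $d_\lambda c_2(\lambda)$.

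\textbf{Resulting estimate.} With condition 3 for the cubic term, this gives
\[
\frac{\widehat{\mu_t}(\lambda)}{d_\lambda}=1-\tfrac12 t\,c_2(\lambda)+o(t)\,c_2(\lambda)+O\big((tc_2(\lambda))^{3/2}\big).
\]

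\textbf{Main obstacle: uniformity of the $o(t)$ term.} The $o(t)$ in condition 2 is only small for small $t$, and it is multiplied by $c_2(\lambda)$. The pair $(t,\lambda)$ ranges over all $t$ with $\sqrt{tc_2}\le M$. However, $c_2(\lambda)$ is bounded below by some $c_*>0$ for nontrivial $\lambda$; the trivial $\lambda$ is immediate, since there $\widehat\mu_t=1=d_\lambda$. Hence $t\le M^2/c_*$ is bounded.

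I would split into two cases.

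\emph{Case $t\le t_0$.} For $t_0$ small, $o(t)\le \tfrac18 t$. Choosing $M$ small so that $C(tc_2)^{3/2}\le CM\,tc_2\le\tfrac18 tc_2$, we get
\[
\frac{|\widehat\mu_t(\lambda)|}{d_\lambda}\le 1-\tfrac14 tc_2\le e^{-tc_2/4}.
\]
This uses that $tc_2\le M^2<1$, so the right-hand side is in $(0,1)$.

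\emph{Case $t_0<t\le M^2/c_*$.} Here only finitely many $\lambda$ satisfy $c_2(\lambda)\le M^2/t_0$. I would argue that $|\widehat\mu_t(\lambda)|<d_\lambda$ strictly for nontrivial $\lambda$. Equality would force $\pi_\lambda(g)$ to be a scalar of constant phase $\mu_t$-a.s. Since $\mu_t$ has a density, it charges open sets, and this contradicts irreducibility and nontriviality. One then needs a uniform gap over this compact range of $t$. Continuity in $t$ is not assumed, so instead I would take $M$ small enough that this case is empty, i.e.\ $M^2/c_*\le t_0$. This is the cleanest route, since the lemma only asserts the existence of some $M$.

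With that choice, $\delta_M=1/4$ works.
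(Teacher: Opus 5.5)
Your proof is correct and follows essentially the same route as the paper. Both use a second-order Taylor expansion at the identity (you expand $\pi_\lambda(\exp X)$, the paper expands $\chi_\lambda$ in exponential coordinates), kill the linear term by inversion symmetry, turn the quadratic term into $-c_2(\lambda)$ via condition 2 and the Casimir, and control the cubic remainder by condition 3 with a factor $c_2(\lambda)^{3/2}$. Your explicit handling of the $o(t)$ term is a worthwhile addition: you track its size $o(t)\,d_\lambda c_2(\lambda)$ and shrink $M$ so that $t\le M^2/c_*\le t_0$ for every nontrivial $\lambda$, a uniformity the paper's proof leaves implicit.
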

\begin{proof}
First Taylor expand the character $\chi_\lambda$ near the identity using the pseudo-coordinates defined by 
$g=\exp\sum_{i}x^i(g)\xi_i$,
where $(\xi_i)_i$ is a basis of $\g$.
This reads~:
 \begin{align*}
 \chi_\lambda(x)=& \chi_\lambda(0) +x^i\partial_{x^i}\chi_\lambda(0) + \frac{ x^ix^j}{2}\partial^2_{x^ix^j}\chi_\lambda(0)+\mathcal{O}(\vert x\vert^3 \Vert \chi_\lambda\Vert_{C^3} ).
\end{align*}

Now we replace this Taylor expansion in the definition of $\widehat{\mu_t}(\lambda)$, this reads
\begin{align*}
  \widehat{\mu_t}(\lambda)=& \int_G (\chi_\lambda(0) +x^i\partial_{x^i}\chi_\lambda(0) + \frac{ x^ix^j}{2}\partial^2_{x^ix^j}\chi_\lambda(0)+\mathcal{O}(\vert x\vert^3 \Vert \chi_\lambda\Vert_{C^3} ))\mu_t
  \\
  =&\int_G \left( \chi_\lambda(0) +x^i\partial_{x^i}\chi_\lambda(0) - \vert x\vert^2c_2(\lambda) \left(\chi_\lambda\right)(0)+\mathcal{O}(\vert x\vert^3 \Vert \chi_\lambda\Vert_{C^3}  )  \right) \mu_t  \\
  =& d_\lambda(1-c_2(\rho)t+\mathcal{O}(c_2(\lambda)^{\frac{3}{2}}t^{\frac{3}{2}})),
\end{align*}
where we observe that all the crossed terms $x^i(g)x^j(g)$ for $i\neq j$ integrated against the measure $\mu_t$ contribute $o(t)$, then
we use the fact that our coordinates are related to the Casimir and the fact that characters are 
eigenfunctions of the Casimir--Laplace operator $\Delta_G$ on $G$ with eigenvalue $-c_2(\lambda)$ and 
we have used classical bounds on the H\"older--Besov norms of the characters.
This implies the desired bound by since we can find a $\delta_M$ such that
\[ d_\lambda(1-c_2(\lambda)t+\mathcal{O}(c_2(\lambda)^{\frac{3}{2}}t^{\frac{3}{2}}))\leq d_\lambda e^{-\delta_M c_2(\lambda)t} .\]
\end{proof}
\begin{lemma}\label{FirstSum}
    We have 
    \[\sup_{n\geq 0}\sum_{\substack{\lambda \in \widehat{G}\\ \sqrt{c_2(\lambda)/n}\leq \frac{M}{A}}}\vert c_2(\lambda)\vert ^{\frac{k}{2}} \prod_{i=1}^n\left\vert\frac{\widehat{\rho}_{s^n_i}(\lambda)}{d_{\lambda}}\right\vert  <\infty. \]
\end{lemma}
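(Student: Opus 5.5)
We only need Lemma~\ref{small} and the normalization $s^n_1+\cdots+s^n_n=1$. We split the indices $i$ according to the size of $s^n_i$ and bound every factor $\vert\widehat{\rho}_{s^n_i}(\lambda)\vert/d_\lambda$.

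First we show that the factors with $s^n_i\le A^2/n$ are small. Let $\lambda$ satisfy $\sqrt{c_2(\lambda)/n}\le M/A$. Then $s^n_i c_2(\lambda)\le (A^2/n)\,c_2(\lambda)\le M^2$, so $\sqrt{s^n_i c_2(\lambda)}\le M$. Lemma~\ref{small} therefore gives
\[
\vert\widehat{\rho}_{s^n_i}(\lambda)\vert/d_\lambda\le e^{-\delta_M c_2(\lambda)s^n_i}.
\]
For the other indices we only use the trivial bound $\vert\widehat{\mu}_t(\lambda)\vert\le d_\lambda$. This bound holds because $\mu_t$ is a probability measure and $\vert\chi_\lambda\vert\le d_\lambda$.

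Next we keep only the indices in $T_n(a,A)$. By hypothesis there are $c>0$ and $n_0$ such that $\vert T_n(a,A)\vert\ge cn$ for all $n\ge n_0$. Each $i\in T_n(a,A)$ satisfies $s^n_i\ge a^2/n$, so
\[
\prod_{i=1}^n\left\vert\frac{\widehat{\rho}_{s^n_i}(\lambda)}{d_\lambda}\right\vert\le \exp\Big(-\delta_M c_2(\lambda)\sum_{i\in T_n}s^n_i\Big)\le \exp\big(-\delta_M c\,a^2\, c_2(\lambda)\big).
\]
This bound is uniform in $n\ge n_0$.

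The sum is then controlled by a fixed convergent series:
\[
\sum_{\lambda}\vert c_2(\lambda)\vert^{k/2}\exp\big(-\delta_M c\,a^2 c_2(\lambda)\big)<\infty.
\]
This series converges because $\widehat{G}$ is a lattice of highest weights and $c_2(\lambda)\asymp\vert\lambda\vert^2$, so the Gaussian factor beats the polynomial growth in $\vert\lambda\vert$. Equivalently, it is the trace of $\vert\Delta\vert^{k/2}e^{t\Delta}$ restricted to characters, which is finite. For the finitely many $n<n_0$, each sum is finite since it runs over the bounded set $\{c_2(\lambda)\le nM^2/A^2\}$. Taking the supremum over $n$ gives the lemma.

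The main point to check is the uniformity of $\delta_M$ in Lemma~\ref{small}, and that it applies for all small $t$, including $t=s^n_i$ tending to $0$. Lemma~\ref{small} is stated for all pairs with $\sqrt{tc_2}\le M$, so this is granted. The only counting step is the lower bound on $\vert T_n(a,A)\vert$, which comes directly from the $\liminf$ hypothesis.
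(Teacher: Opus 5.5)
Your argument is correct and is essentially the paper's proof. You discard the factors with $i\notin T_n(a,A)$ using $\vert\widehat{\mu}_t(\lambda)\vert\le d_\lambda$, apply Lemma~\ref{small} to the remaining ones (legitimate since $s^n_i c_2(\lambda)\le M^2$ on the summation range), and dominate by the convergent series $\sum_\lambda \vert c_2(\lambda)\vert^{k/2}e^{-\kappa c_2(\lambda)}$. Your version is also slightly more careful in two places. You use the lower bound $s^n_i\ge a^2/n$, whereas the paper's display puts $A^2$ in the exponent, which is not what the estimate yields. And you make the uniformity in $n$ explicit via $\vert T_n(a,A)\vert\ge cn$ for $n\ge n_0$, handling the finitely many small $n$ separately.
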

\begin{proof}
We have
\begin{eqnarray*}
   \sum_{\substack{\lambda \in \widehat{G}\\ \sqrt{c_2(\lambda)/n}\leq \frac{M}{A}}}\vert c_2(\lambda)\vert ^{\frac{k}{2}} \prod_{i=1}^n\left\vert\frac{\widehat{\rho}_{s^n_i}(\lambda)}{d_{\lambda}}\right\vert&\leq& \sum_{\substack{\lambda \in \widehat{G}\\ \sqrt{c_2(\lambda)/n}\leq \frac{M}{A}}}\vert c_2(\lambda)\vert ^{\frac{k}{2}} \prod_{i\in T_n(a.A)}\left\vert\frac{\widehat{\rho}_{s^n_i}(\lambda)}{d_{\lambda}}\right\vert \\
   &\leq& \sum_{\lambda \in \widehat{G}} \vert c_2(\lambda)\vert ^{\frac{k}{2}} \exp\left (-\frac{\vert T_n(a,A)\vert }{n}A^2\delta_Mc_2(\lambda)\right) <\infty. 
\end{eqnarray*}
   
\end{proof}
\paragraph{The big $\sqrt{c_2(\lambda)/n}$ regime.}
\begin{lemma}
    There exists $M_2$ such that for all
pairs $(t,\lambda)\in \R_+\times \widehat{G}$ such that
$\sqrt{t c_2(\lambda) } \geq M_2 $, we have
$
 \vert \widehat{\mu_t}(\lambda) \vert \leq d_\lambda \left(1+\sqrt{tc_2(\lambda)} \right)^{-{\frac{s}{2}}}.
$
\end{lemma}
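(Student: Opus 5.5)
The plan is to derive this directly from the Fourier decay hypothesis (item 4 of Definition~\ref{def:CLTplus}). The only work is to trade half of the decay exponent for the absorption of the constant $C$.

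\textbf{Step 1 (apply the hypothesis).} By item 4 of condition $C(s)$, for every $(t,\lambda)$ we have
\[
\vert \widehat{\mu_t}(\lambda)\vert \leq d_\lambda\, C\left(1+\sqrt{tc_2(\lambda)}\right)^{-s}.
\]
Here I read $\dim(V_\rho)$ as $d_\lambda$.

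\textbf{Step 2 (split the exponent).} Factor the right-hand side as
\[
d_\lambda\, C\left(1+\sqrt{tc_2(\lambda)}\right)^{-\frac{s}{2}}\cdot\left(1+\sqrt{tc_2(\lambda)}\right)^{-\frac{s}{2}}.
\]
It then suffices to make the factor $C\left(1+\sqrt{tc_2(\lambda)}\right)^{-\frac{s}{2}}$ at most $1$.

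\textbf{Step 3 (choose $M_2$).} Take $M_2\geq \max\left(0,\, C^{2/s}-1\right)$. Whenever $\sqrt{tc_2(\lambda)}\geq M_2$, we get $\left(1+\sqrt{tc_2(\lambda)}\right)^{s/2}\geq (1+M_2)^{s/2}\geq C$. This gives the claimed bound $\vert \widehat{\mu_t}(\lambda) \vert \leq d_\lambda \left(1+\sqrt{tc_2(\lambda)} \right)^{-\frac{s}{2}}$. If $C\leq 1$, any $M_2\geq 0$ works.

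There is essentially no obstacle; the argument is a one-line absorption of constants. The only point needing care is notation: the hypothesis is written with $\dim(V_\rho)$ and $c_2(\rho)$, and these must be identified with $d_\lambda$ and $c_2(\lambda)$ for the irreducible representation indexed by $\lambda$. The reason for losing a factor $s/2$ in exponent rather than keeping $s$ is to obtain a clean constant-free bound. This matters later: raising the bound to the power $\vert T_n(a,A)\vert$ in the large-frequency regime yields decay $\left(1+\sqrt{tc_2(\lambda)}\right)^{-\frac{s}{2}\vert T_n\vert}$ without a factor $C^{\vert T_n\vert}$ blowing up. I would note this motivation in a remark.
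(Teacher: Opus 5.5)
Your proof is correct and matches the paper's: both apply the Fourier-decay hypothesis (item 4 of Definition~\ref{def:CLTplus}), split the exponent as $s=\frac{s}{2}+\frac{s}{2}$, and choose $M_2$ large enough that $C(1+M_2)^{-s/2}\leq 1$. Your explicit choice $M_2\geq \max(0,C^{2/s}-1)$ is exactly what the paper's phrase ``taking $M_2$ as large as needed'' means.
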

\begin{proof}
By our Fourier's decay assumption,  
There exists $C>0$ and such that for all $\lambda \in \widehat{G}$,
\begin{align*}
 \vert \widehat{\mu_t}(\lambda) \vert \leq d_\lambda C\left(1+\sqrt{tc_2(\lambda)} \right)^{-s}\leq d_\lambda C\left(1+M_2 \right)^{-\frac{s}{2}}\left(1+\sqrt{tc_2(\lambda)} \right)^{-\frac{s}{2}}.
\end{align*}
The proof follows by taking $M_2$ as large as needed.
\end{proof}
\begin{lemma}\label{SecondSum}
    We have 
    \[\sup_{n\geq 0} \sum_{\substack{\lambda \in \widehat{G}\\ \sqrt{c_2(\lambda)/n}\geq \frac{M_2}{a}}}\vert c_2(\lambda)\vert ^{\frac{k}{2}} \prod_{i=1}^n\left\vert\frac{\widehat{\rho}_{s^n_i}(\lambda)}{d_{\lambda}}\right\vert <\infty. \]
\end{lemma}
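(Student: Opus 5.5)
We bound every factor $|\widehat{\rho}_{s^n_i}(\lambda)/d_\lambda|$ trivially by $1$ (the $\mu_t$ are probability measures and $|\chi_\lambda|\le d_\lambda$). We keep the decay only from the indices $i\in T_n(a,A)$, exactly as in Lemma~\ref{FirstSum}. For such indices $s^n_i\ge a^2/n$, so in the regime $\sqrt{c_2(\lambda)/n}\ge M_2/a$ we get $\sqrt{s^n_i c_2(\lambda)}\ge a\sqrt{c_2(\lambda)/n}\ge M_2$. The previous lemma (big regime) then applies and gives
\[
\left\vert\frac{\widehat{\rho}_{s^n_i}(\lambda)}{d_\lambda}\right\vert\le \Bigl(1+\sqrt{s^n_i c_2(\lambda)}\Bigr)^{-\frac s2}\le \Bigl(1+a\sqrt{c_2(\lambda)/n}\Bigr)^{-\frac s2}.
\]
By the liminf hypothesis there are $\eta>0$ and $n_0$ such that $|T_n(a,A)|\ge \eta n$ for $n\ge n_0$. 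The finitely many $n<n_0$ give finite sums anyway, since then only finitely many constraints matter and each term is bounded; more precisely, for fixed $n$ the sum is controlled by the same argument with $|T_n|\ge1$ once $c_2$ is large. Hence for $n\ge n_0$ the product is at most $\bigl(1+a\sqrt{c_2(\lambda)/n}\bigr)^{-s\eta n/2}$.

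Next we set $x=\sqrt{c_2(\lambda)/n}\ge M_2/a$. Then $1+ax\ge 1+M_2$, so $(1+ax)^{-s\eta n/2}\le (1+M_2)^{-s\eta n/4}\,(1+ax)^{-s\eta n/4}$. This splits the bound into an exponentially small factor $q^n$, with $q=(1+M_2)^{-s\eta/4}<1$, and a polynomially decaying factor in $c_2(\lambda)$ whose exponent $s\eta n/4$ grows with $n$. We then control $|c_2(\lambda)|^{k/2}=n^{k/2}x^k$ against this. We need a counting bound on $\widehat G$: by Weyl's law / highest weight parametrization, $\#\{\lambda: c_2(\lambda)\le R\}\lesssim (1+R)^{r/2}$ with $r$ the rank. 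It follows that $\sum_\lambda (1+c_2(\lambda))^{-m}<\infty$ as soon as $m>r/2$.

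For $n$ large enough that $s\eta n/4 > k+r+2$, we use $1+ax\ge c\,(1+c_2(\lambda))^{1/2}n^{-1/2}$. This gives
\[
|c_2(\lambda)|^{\frac k2}(1+ax)^{-\frac{s\eta n}{4}}\lesssim |c_2(\lambda)|^{\frac k2}\,\bigl(1+c_2(\lambda)\bigr)^{-\frac{k+r+2}{2}}\, n^{\frac{k+r+2}{2}}\,(1+ax)^{-\frac{s\eta n}{4}+k+r+2}.
\]
The last factor is at most $1$. Summing over $\lambda$ gives a finite constant times $n^{(k+r+2)/2}q^n$, which is bounded in $n$.

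The main obstacle is this bookkeeping. The polynomial decay exponent must grow linearly in $n$, while the loss from $c_2^{k/2}$ together with the lattice count must be absorbed by $q^n$ times a power of $n$. Fixing the split of the exponent as above, we take the supremum over $n$, and this yields the lemma.
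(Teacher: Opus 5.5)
The core of your argument for large $n$ is correct and matches the paper's. You discard the factors with $i\notin T_n(a,A)$ and use $s^n_i\ge a^2/n$ to enter the large-$\sqrt{t\,c_2}$ regime; you correctly use the lower bound $a$, whereas the paper's display writes $A$, which gives the inequality in the wrong direction. You then use $|T_n(a,A)|\ge \eta n$ to get a decay exponent linear in $n$, and extract a factor $q^n$ from the constraint $\sqrt{c_2(\lambda)/n}\ge M_2/a$. This factor absorbs the polynomial losses in $n$ coming from $c_2(\lambda)^{k/2}$ and from counting $\widehat G$. The paper does this last step with dyadic shells $2^j\le\sqrt{c_2(\lambda)}<2^{j+1}$ and a geometric series in $j$. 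Your splitting of the exponent together with $1+ax\ge\min(1,a)\,n^{-1/2}(1+c_2(\lambda))^{1/2}$ is an equivalent piece of bookkeeping.

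The genuine problem is the sentence that disposes of the finitely many $n<n_0$, and implicitly of the $n$ with $s\eta n/4\le k+r+2$. That there are only finitely many such $n$ is irrelevant: for each fixed $n$ the quantity is an infinite series over $\lambda$. The only decay available is of order $c_2(\lambda)^{-s|T_n(a,A)|/4}$, and at best $c_2(\lambda)^{-sn/2}$ from the Fourier assumption. When $sn$ is small compared with $k$ and the rank, this need not beat $c_2(\lambda)^{k/2}$ times the polynomially growing number of $\lambda$.

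Here is a concrete counterexample. Take $G=U(1)$, so $c_2(\lambda)=\lambda^2$ and $d_\lambda=1$, and let $\mu_t$ be the law of $e^{i\sqrt{t}\,Y}$ with $Y$ uniform on $[-\sqrt3,\sqrt3]$. This family satisfies $C(1)$, with $\widehat{\mu_t}(\lambda)=\sin(\sqrt{3t}\,\lambda)/(\sqrt{3t}\,\lambda)$. For $n=1$ the series $\sum_{|\lambda|\ge M_2/a}|\lambda|^{k-1}|\sin(\sqrt3\,\lambda)|/\sqrt3$ diverges for every $k\ge 0$, since $|\sin(\sqrt3\,\lambda)|$ has mean $2/\pi$ along the integers.

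So the supremum over all $n$ cannot be established, and the statement as written is too strong. What your argument proves, like the paper's (which concludes only ``bounded uniformly in $n$ for $n$ large enough''), is $\sup_{n\ge n_1(k)}<\infty$. That eventual bound is all Theorem~\ref{convck} needs, because convergence in $\mathcal{D}'(G)$ plus eventual boundedness in every $C^k$ already gives convergence in $C^\infty(G)$. State the lemma in that form and drop the claim about small $n$.
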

\begin{proof}
    We have in this case, 
    \begin{eqnarray*}
      &&\sum_{\substack{\lambda \in \widehat{G}\\ \sqrt{c_2(\lambda)/n}\geq \frac{M_2}{a}}}\vert c_2(\lambda)\vert ^{\frac{k}{2}} \prod_{i=1}^n\left\vert\frac{\widehat{\rho}_{s^n_i}(\lambda)}{d_{\lambda}}\right\vert \leq \sum_{\substack{\lambda \in \widehat{G}\\ \sqrt{c_2(\lambda)/n}\geq \frac{M_2}{a}}}\vert c_2(\lambda)\vert ^{\frac{k}{2}} \prod_{i\in T_n(a,A)}\left(1+\sqrt{s^n_ic_2(\lambda)} \right)^{-{\frac{s}{2}}}  \\
      &\leq&\sum_{\substack{\lambda \in \widehat{G}\\ \sqrt{c_2(\lambda)/n}\geq \frac{M_2}{a}}}\vert c_2(\lambda)\vert ^{\frac{k}{2}} \left(1+A\sqrt{\frac{c_2(\lambda)}{n}} \right)^{-{\frac{s\vert T_n(a,A)\vert }{2}}}\leq \sum_{\substack{\lambda \in \widehat{G}\\ \sqrt{c_2(\lambda)/n}\geq \frac{M_2}{a}}}\frac{\vert c_2(\lambda)\vert ^{\frac{k}{2}}}{\left(1+\sqrt{\frac{c_2(\lambda)}{n}} \right)^{\frac{s\vert T_n(a,A)\vert}{2}}}\\
    &=&\sum_{j\geq \log_{[2]}\frac{M_2\sqrt{n}}{a}} \sum_{2^j\leq \sqrt{c_2(\lambda)}< 2^{j+1}}\frac{\vert c_2(\lambda)\vert ^{\frac{k}{2}}}{\left(1+\sqrt{\frac{c_2(\lambda)}{n}} \right)^{\frac{s\vert T_n(a,A)\vert}{2}}}\leq \sum_{j\geq \log_{[2]}\frac{M_2\sqrt{n}}{a}} \sum_{2^j\leq \sqrt{c_2(\lambda)}< 2^{j+1}}\frac{2^{k(j+1)}}{\left(1+\frac{2^j}{\sqrt{n}} \right)^{\frac{s\vert T_n(a,A)\vert}{2}}}\\
    &=&\sum_{j\geq \log_{[2]}\frac{M_2\sqrt{n}}{a}} \frac{2^{k(j+1)}}{\left(1+\frac{2^j}{\sqrt{n}} \right)^{\frac{s\vert T_n(a,A)\vert}{2}}}\vert \{\lambda\in \widehat{G}; 2^{2j}\leq c_2(\lambda)< 2^{j2+2}\}\vert.
    \end{eqnarray*}
    By Weyl's law, there exists $\kappa>0$ such that, \[\forall\ j\geq 0,  
    \vert \{\lambda\in \widehat{G}; 2^{2j}\leq c_2(\lambda)< 2^{j2+2}\}\vert\leq \kappa 2^{\dim(G)j}.\]
Assume now, without loss of generality that $2^{\gamma}\coloneq \sqrt{n}$ and $2^\alpha\coloneq \frac{M_2}{a}$, this gives 
    \[
    \sum_{\substack{\lambda \in \widehat{G}\\ \sqrt{c_2(\lambda)/n}\geq \frac{M_2}{a}}}\vert c_2(\lambda)\vert ^{\frac{k}{2}} \prod_{i=1}^n\left\vert\frac{\widehat{\rho}_{s^n_i}(\lambda)}{d_{\lambda}}\right\vert \leq \sum_{j\geq \gamma+\alpha } 2^{{kj+k+\dim(G)j}-\frac{s\vert T_n(a,A)\vert (j-
    \gamma)}{2}}.\]
   Choose $n_0$ big enough so that for all $n\geq n_0$, $k+\dim(G)-\frac{s\vert T_n(a,A)\vert }{2}\leq -1-k$, this gives 
        \[
    \sum_{\substack{\lambda \in \widehat{G}\\ \sqrt{c_2(\lambda)/n}\geq \frac{M_2}{a}}}\vert c_2(\lambda)\vert ^{\frac{k}{2}} \prod_{i=1}^n\left\vert\frac{\widehat{\rho}_{s^n_i}(\lambda)}{d_{\lambda}}\right\vert \leq 2^{(k+\dim (G))(\gamma+\alpha)+k-\frac{s\vert T_n(a,A)\vert \alpha}{2}},\]
    which is bounded uniformly in $n$ for $n$ large enough since 
    $\liminf_{n\to\infty}\frac{\vert T_n(a,A)\vert }{n} >0$.
\end{proof} 
\paragraph{The intermediate regime.}
\begin{lemma}
For all \emph{non trivial} representation $\lambda$, we have the strict inequality
$
\vert\widehat{\mu_t}(\lambda)\vert < d_\lambda 
$.
\end{lemma}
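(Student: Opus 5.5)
The plan is to prove the strict inequality directly from the absolute continuity of $\mu_t=\rho_t\,\dd g$, via the equality case in the trivial bound $|\chi_\lambda|\le d_\lambda$. Throughout I will use that $G$ is connected. This is implicit in the section, since otherwise $\rho_{s_1^n}\star\cdots\star\rho_{s_n^n}$ could not converge to the heat kernel $p_1$. Without connectedness the lemma actually fails: a density supported on the identity component pairs to $d_\lambda$ with a nontrivial character trivial on that component.

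Fix a nontrivial irreducible unitary representation $\pi_\lambda:G\to U(V_\lambda)$ with $\dim V_\lambda=d_\lambda$. For every $g$, the eigenvalues of $\pi_\lambda(g)$ have modulus one, so $|\chi_\lambda(g)|\le d_\lambda$. Equality $\chi_\lambda(g)=d_\lambda e^{i\varphi}$ holds if and only if all eigenvalues equal $e^{i\varphi}$, that is, if and only if $\pi_\lambda(g)=e^{i\varphi}\mathrm{Id}$. Hence $|\widehat{\mu_t}(\lambda)|\le\int_G|\chi_\lambda|\,\dd\mu_t\le d_\lambda$.

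Now suppose $|\widehat{\mu_t}(\lambda)|=d_\lambda$ and write $\widehat{\mu_t}(\lambda)=d_\lambda e^{i\varphi}$. Then $\int_G\big(d_\lambda-\mathrm{Re}(e^{-i\varphi}\chi_\lambda)\big)\,\dd\mu_t=0$ with a nonnegative integrand. Therefore $\pi_\lambda(g)=e^{i\varphi}\mathrm{Id}$ for $\mu_t$-almost every $g$. In other words, $\mu_t$ is carried by the set $S_\varphi=\{g:\pi_\lambda(g)=e^{i\varphi}\mathrm{Id}\}$. This set is either empty or a coset $g_0\ker\pi_\lambda$ of the closed normal subgroup $\ker\pi_\lambda$. (The ad-invariance and symmetry in condition $C(s)$ even give $\widehat{\mu_t}(\lambda)\in\R$, so $e^{i\varphi}=\pm1$, although this is not needed.)

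The key step is that $S_\varphi$ has Haar measure zero. Since $\lambda$ is nontrivial, $\ker\pi_\lambda$ is a proper closed subgroup. By Cartan's theorem it is an embedded Lie subgroup. If it had full dimension it would be open, and an open subgroup of a connected group is the whole group. So it has dimension $<\dim G$ and, being compact, finitely many components. It is therefore Haar-null, and so is every coset of it. This contradicts the fact that $\mu_t$ is absolutely continuous with respect to $\dd g$ and gives $\mu_t(S_\varphi)=1$. Hence $|\widehat{\mu_t}(\lambda)|<d_\lambda$.

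The only delicate point is the measure-zero claim, which is exactly where connectedness of $G$ (or, alternatively, a positivity property of $\rho_t$ on every component) enters. The rest is the elementary equality case of the triangle inequality. In the subsequent argument I expect this lemma to be combined with continuity of $t\mapsto\widehat{\mu_t}(\lambda)$ and compactness to obtain a uniform bound $|\widehat{\mu_t}(\lambda)|\le(1-\eta)d_\lambda$ in the intermediate regime $M\le\sqrt{tc_2(\lambda)}\le M_2$.
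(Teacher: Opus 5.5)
Your proof is correct and uses the same mechanism as the paper: $\mu_t$ is absolutely continuous, and the character attains modulus $d_\lambda$ only on a Haar-null set. The paper first uses inversion invariance to write $\widehat{\mu_t}(\lambda)=\int_G \mathrm{Re}\,\chi_\lambda\,\dd\mu_t$ and then asserts $|\mathrm{Re}\,\chi_\lambda|<d_\lambda$ almost everywhere, whereas you handle the phase $e^{i\varphi}$ directly and need no symmetry. You also identify the extremal set as a coset of $\ker\pi_\lambda$ and prove its nullity by Cartan's theorem plus connectedness, which supplies a justification the paper omits, and you correctly flag that connectedness of $G$ (or positivity of $\rho_t$ on every component) is a hypothesis the statement needs but the paper leaves implicit.
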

\begin{proof}
We use the inversion invariance of our measure $\mu_t$ and the unitarity of the representation $\lambda$~:
 \begin{align*}
     \int_G \chi_\rho \mu_t=\frac{1}{2}  \int_G \chi_\rho(g)+\chi_\rho(g^{-1}) \mu_t=\int_G \emph{Re} \{\chi_\lambda(g)\} \dd \mu_t(g).
 \end{align*} 
 Then the lemma follows from the fact that for $g$  outside a set of zero measure, we have
$-d_\lambda <\emph{Re} \{\chi_\lambda(g)\}<d_\lambda.$
\end{proof}
\begin{lemma}\label{ThirdSum}
    We have 
    \[\sup_{n\geq 0}\sum_{\substack{\lambda \in \widehat{G}\\ \frac{M_1}{a}\leq \sqrt{c_2(\lambda)/n}\leq \frac{M}{A}}}\vert c_2(\lambda)\vert ^{\frac{k}{2}} \prod_{i=1}^n\left\vert\frac{\widehat{\rho}_{s^n_i}(\lambda)}{d_{\lambda}}\right\vert <\infty. \]
\end{lemma}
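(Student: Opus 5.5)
The plan is to show that in the intermediate window $\frac{M_1}{a}\leq\sqrt{c_2(\lambda)/n}\leq\frac{M}{A}$ every factor indexed by $T_n(a,A)$ is bounded by a fixed constant $q<1$. The sum then becomes a finite sum over $\lambda$ weighted by $q^{|T_n(a,A)|}$, which decays exponentially in $n$ and absorbs the polynomial growth of the Casimir weight and of the number of terms.

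First, discard the factors with $i\notin T_n(a,A)$ using the trivial bound $|\widehat{\rho}_t(\lambda)|\leq d_\lambda$. This keeps only indices with $a^2/n\leq s^n_i\leq A^2/n$. For such $i$ and $\lambda$ in the window, $\sqrt{s^n_i c_2(\lambda)}$ lies in the compact interval $[M_1,M]$, with $M_1>0$ chosen as in the statement.

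The key step, and the main obstacle, is a uniform strict contraction: there should exist $q<1$ with
\[
\sup\Bigl\{\tfrac{|\widehat{\mu}_t(\lambda)|}{d_\lambda} : \lambda\neq 0,\ M_1\leq\sqrt{tc_2(\lambda)}\leq M\Bigr\}\leq q .
\]
The preceding lemma gives strict inequality pointwise, but the set of pairs $(t,\lambda)$ is not compact, since $t\to0$ as $c_2(\lambda)\to\infty$. I would split into two cases.

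\emph{Bounded case.} If $c_2(\lambda)\leq L$, then $t$ ranges in a compact subinterval of $(0,\infty)$ and only finitely many $\lambda$ occur. Continuity of $t\mapsto\widehat{\mu}_t(\lambda)$, which I would add as a mild assumption or deduce from weak continuity of $\mu_t$, together with the strict inequality, gives a maximum strictly below $1$.

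\emph{Large case.} If $c_2(\lambda)>L$, then $t\leq M^2/L$ is small. Here I would exploit concentration: condition C(s) makes $\mu_t$ concentrated at scale $\sqrt t$ near the identity, and $\chi_\lambda/d_\lambda$ oscillates on scale $c_2(\lambda)^{-1/2}\sim\sqrt t$. I would Taylor expand as in Lemma~\ref{small}, but control the cubic error only through $\int|\log g|^3\mu_t=O(t^{3/2})$. I expect to need an extension of Lemma~\ref{small}, pushing its validity to $\sqrt{tc_2}\leq M$ with $M$ large, or else a CLT-type rescaling argument comparing $\widehat{\mu}_t(\lambda)/d_\lambda$ with the heat-kernel value $e^{-tc_2(\lambda)}\leq e^{-M_1^2}$. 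Failing both, I would choose the thresholds so that the windows of Lemmas~\ref{FirstSum} and~\ref{SecondSum} overlap; the Fourier decay in C(s) then already gives the bound $(1+M_1)^{-s}C$, which is less than $1$ once $M_1$ is large. I would try this last option first, since choosing $M_1=M_2$ reduces the problem to checking that the three regimes cover all $\lambda$.

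With $q$ in hand, the sum is bounded by
\[
\sum_{c_2(\lambda)\leq M^2n/A^2}|c_2(\lambda)|^{k/2}q^{|T_n(a,A)|}\lesssim n^{k/2}\,n^{\dim G/2}\,q^{cn}.
\]
The count of $\lambda$ comes from Weyl's law as in Lemma~\ref{SecondSum}, and $c>0$ comes from $\liminf|T_n|/n>0$. This is bounded uniformly in $n$, which completes the proof.
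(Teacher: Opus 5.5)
There is a genuine gap: the proposal never produces the constant $q<1$, which is the central step of the lemma.

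\textbf{The bounded case.} Here you use continuity of $t\mapsto\widehat{\mu}_t(\lambda)$. This is not among the hypotheses $C(s)$ of Definition~\ref{def:CLTplus}.

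\textbf{The large case.} None of your three options is carried out, and as described none would work.
\begin{itemize}
\item The Taylor argument of Lemma~\ref{small} gives no contraction once $tc_2(\lambda)$ is of order one, because its error term is $O\bigl((tc_2(\lambda))^{3/2}\bigr)$.
\item Condition $C(s)$ pins down $\mu_t$ only through its second moments. So the law of $\log g/\sqrt{t}$ under $\mu_t$ need not become Gaussian, and $\widehat{\mu}_t(\lambda)/d_\lambda$ need not be close to $e^{-tc_2(\lambda)}$ when $tc_2(\lambda)\asymp 1$.
\item Making the windows of Lemmas~\ref{FirstSum} and~\ref{SecondSum} overlap requires $M/A\geq M_2/a$. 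This cannot be arranged in general: the Taylor argument only provides a small $M$, while $M_2$ must satisfy $C(1+M_2)^{-s/2}\leq 1$ with $C$ arbitrary. Taking $M_1=M_2$ makes the lemma add nothing and leaves the range $M/A<\sqrt{c_2(\lambda)/n}<M_2/a$ uncovered in Proposition~\ref{boundedinCk}.
\end{itemize}

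\textbf{The missing observation.} For the statement as written, the endpoint $M/A$ carries the same $M$ as Lemma~\ref{small}. You correctly derive $M_1\leq\sqrt{s^n_i c_2(\lambda)}\leq M$ for $i\in T_n(a,A)$, so Lemma~\ref{small} applies directly:
$\vert\widehat{\rho}_{s^n_i}(\lambda)\vert/d_\lambda\leq e^{-\delta_M s^n_i c_2(\lambda)}\leq e^{-\delta_M M_1^2}$.
This is a uniform $q<1$ with no continuity in $t$ and no case split, and your Weyl-law count then finishes the proof. In fact the window lies inside that of Lemma~\ref{FirstSum}, and all summands are nonnegative, so the bound follows from that lemma outright.

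\textbf{Comparison with the paper.} The paper's proof sets $\delta$ equal to the supremum of $\vert\widehat{\mu}_t(\lambda)\vert/d_\lambda$ over the range $M\leq\sqrt{c_2(\lambda)/n}\leq M_2$, which is the actual gap between the other two regimes. It then asserts $\delta<1$ from the pointwise strict inequality, and it bounds all $n$ factors, not only those in $T_n(a,A)$ as you rightly do. Your objection that pointwise strictness over a non-compact family gives no uniform bound is therefore well founded.

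In that intermediate range Lemma~\ref{small} is unavailable, and a real argument is needed. One route is compactness. Weak limits of $\mu_t$ for $t$ in a compact subset of $(0,\infty)$, and of the laws of $\log g/\sqrt{t}$ as $t\to 0$, inherit the Fourier decay in $C(s)$. That decay excludes the degenerate limits at which a normalized Fourier coefficient could have modulus one: measures supported in $\{g:\chi_\lambda(g)=\pm d_\lambda\}$ in the first case, and measures supported on a family of parallel hyperplanes in the second.
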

\begin{proof}
    Set
\[ \delta\coloneq \sup_{\substack{\lambda\in \widehat{G} \\ M\leq \sqrt{c_2(\lambda)/n}\leq M_2 }} \frac{\vert \widehat{\mu_t}(\lambda) \vert}{d_\lambda}. \]
By the previous lemma, $0<\delta<1$.
Then, since  $\log \delta<0$,
\begin{eqnarray*}
    \sum_{\substack{\lambda \in \widehat{G}\\ \frac{M_1}{a}\leq \sqrt{c_2(\lambda)/n}\leq \frac{M}{A}}}\vert c_2(\lambda)\vert ^{\frac{k}{2}} \prod_{i=1}^n\left\vert\frac{\widehat{\rho}_{s^n_i}(\lambda)}{d_{\lambda}}\right\vert 
    &\leq&  \sum_{\substack{\lambda \in \widehat{G}\\ \frac{M_1}{a}\leq \sqrt{c_2(\lambda)/n}\leq \frac{M}{A}}} \vert c_2(\lambda)\vert ^{\frac{k}{2}} \delta^n\\
    &\leq& \sum_{\substack{\lambda \in \widehat{G}\\ \frac{M_1}{a}\leq \sqrt{c_2(\lambda)/n}\\\leq \frac{M}{A}}} \vert c_2(\lambda)\vert ^{\frac{k}{2}} \delta^{\frac{c_2(\lambda)a}{M_1}}\\
    &\leq&\sum_{\lambda \in \widehat{G}} \vert c_2(\lambda)\vert ^{\frac{k}{2}} \exp\left(\frac{c_2(\lambda)a}{M_1}\log \delta\right)<\infty.
\end{eqnarray*}
\end{proof}
\begin{proof}[Proof of Proposition\ref{boundedinCk}]
It is the direct consequence of the combination of Lemmas~\ref{Fourier_expansion},~\ref{FirstSum},~\ref{SecondSum}, and~\ref{ThirdSum}.
\end{proof}

\paragraph{Proof of Theorem \ref{ConvSegalAmp} about convergence of partition functions and Segal amplitudes.}

The convergence of the partition function is a particular case of the convergence
of Segal amplitudes for closed surfaces. The idea is to reduce to the proof of the previous local limit Theorem using ideas from Frobenius algebras~\cite[p.~32]{DerMar}, \cite[p.~8-9]{MnevBV}.
Recall that $(\mathcal{T}_n)_{n\geq 0}$ denotes a sequence of triangulation of some bordered surface.

Now we can conclude by the following Lemma.
\begin{lemma}
Define the coproduct acting on class functions
as $$f\in C^\infty(G)^G\mapsto \mathbf{\Delta} f(g_1,g_2):=\int_{G} f\left( g_1 hg_2h^{-1} \right)   dh .$$
Then the $k-1$--fold iterated coproduct is linear continuous from 
$C^\infty(G)^G\mapsto C^\infty(G^k)^G$.  
\end{lemma}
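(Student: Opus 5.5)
The map is $\mathbf{\Delta} f(g_1,g_2)=\int_G f(g_1hg_2h^{-1})\,dh$, and the $(k-1)$-fold iterate is
\[
\mathbf{\Delta}^{(k-1)}f(g_1,\dots,g_k)=\int_{G^{k-1}} f\bigl(g_1h_2g_2h_2^{-1}\cdots h_kg_kh_k^{-1}\bigr)\,dh_2\cdots dh_k .
\]
I would prove continuity by showing directly that each $C^m(G^k)$ seminorm of $\mathbf{\Delta}^{(k-1)}f$ is bounded by $C_m\|f\|_{C^m(G)}$. The first step is to justify this closed formula by induction on $k$. One applies $\mathbf{\Delta}$ in the last variable and uses Fubini together with the invariance of Haar measure to merge the conjugations; by class invariance of $f$, it does not matter which slot the coproduct acts on. The formula also shows that $\mathbf{\Delta}^{(k-1)}f$ is invariant under simultaneous conjugation. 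Each factor is in fact conjugation invariant separately, since $h_j$ absorbs the conjugation of $g_j$ and class invariance handles $g_1$.

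The key step is differentiation under the integral. Write
\[
\Phi(g,h)=g_1\prod_{j\ge2}h_jg_jh_j^{-1},
\]
which is a smooth map $G^k\times G^{k-1}\to G$. Take a left-invariant vector field $X$ acting in the variable $g_j$. Then $X_{g_j}\bigl(f\circ\Phi\bigr)(g,h)=\bigl(Y\!f\bigr)(\Phi(g,h))$, where $Y=\mathrm{Ad}(u)X$ is the vector field obtained by transporting $X$. Here $u$ is a product of group elements depending smoothly on $(g,h)$, and $Y$ can be written as a left-invariant field at $\Phi(g,h)$.

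Iterating this, any $m$-th order derivative becomes a sum of terms of the form $c(g,h)\,(Z_1\cdots Z_\ell f)(\Phi(g,h))$ with $\ell\le m$. The coefficients $c$ are smooth and built from matrix coefficients of $\mathrm{Ad}$, so they are bounded on the compact set $G^k\times G^{k-1}$ by constants depending only on $m$ and $G$. The $Z_i$ range over a fixed basis of $\mathfrak{g}$.

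Since $G$ is compact, dominated convergence justifies differentiating under the integral sign. Integrating over $h$ then gives $\|\mathbf{\Delta}^{(k-1)}f\|_{C^m(G^k)}\le C_{m,k}\|f\|_{C^m(G)}$. This gives continuity for the Fréchet topology, and the image lies in $C^\infty(G^k)^G$ by the invariance noted above.

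The only point needing care is the bookkeeping in the derivative step. When differentiating with respect to $g_j$ for $j\ge2$, the left translation must be moved through the factor $g_1h_2g_2\cdots$ that sits to its left. This produces $\mathrm{Ad}$ factors that depend on the integration variables, and one must check that their derivatives stay uniformly bounded. This holds because they are smooth functions on a compact manifold, so I do not expect any genuine obstacle.

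For the application to Theorem~\ref{ConvSegalAmp}, apply the lemma to $f_n=\bigstar_{F}\rho_{\sigma(F)}$ composed with the genus contribution. Theorem~\ref{convck} gives $f_n\to p_{\sigma(\Sigma)}$ in $C^\infty(G)$, and the continuity just proved transfers this convergence to $Z_{\mathcal{T}_n}\to Z_\Sigma$ in $C^\infty(G^k)$.
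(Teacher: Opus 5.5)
Your proof is correct and complete. The paper states this lemma without proof and uses it immediately, so there is no argument of the paper's to match. The surrounding text, however, works entirely on the spectral side, and the natural proof in that setting differs from yours.

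\textbf{The spectral route.} Schur orthogonality gives $\mathbf{\Delta}\chi_\lambda=\chi_\lambda\otimes\chi_\lambda/d_\lambda$. Hence $\mathbf{\Delta}^{(k-1)}$ acts diagonally on character expansions, $\sum_\lambda \hat f(\lambda)\chi_\lambda\mapsto\sum_\lambda \hat f(\lambda)\,d_\lambda^{1-k}\chi_\lambda^{\otimes k}$. Continuity then follows from three facts:
\begin{itemize}
\item the coefficients of a smooth class function decay rapidly;
\item $\|\chi_\lambda\|_{C^m}$ is bounded polynomially in $d_\lambda$ and $c_2(\lambda)$;
\item Weyl's law controls the number of $\lambda$ in each Casimir shell.
\end{itemize}
This route loses finitely many derivatives. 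Its advantage is that it directly produces the character formula for $Z_{\mathcal{T}_n}$ used in the proof of Theorem~\ref{ConvSegalAmp}.

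\textbf{Your route.} You observe that $f\circ\Phi$ is smooth on the compact manifold $G^k\times G^{k-1}$. The chain rule bounds its $C^m$ norm by $C_{m,k}\|f\|_{C^m(G)}$, and integrating out $h$ does not increase $C^m$ norms. This is more elementary and needs no representation theory. It also gives the loss-free estimate $\|\mathbf{\Delta}^{(k-1)}f\|_{C^m(G^k)}\le C_{m,k}\|f\|_{C^m(G)}$. Class invariance of $f$ is used only to show the image is conjugation invariant.

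\textbf{Minor corrections.}
\begin{itemize}
\item With left-invariant fields, i.e.\ right translations $g_j\mapsto g_je^{tX}$, the $\mathrm{Ad}$ factor comes from moving $e^{tX}$ through the factors to the \emph{right} of $g_j$. Concretely, $u=\bigl(\prod_{i>j}h_ig_ih_i^{-1}\bigr)^{-1}h_j$, not the factors on the left. This is harmless either way.
\item The two bracketings of the iterate agree by the substitution $h'=h_1h$ alone. No class invariance is needed for that step.
\item For the application to Theorem~\ref{ConvSegalAmp} you also need continuity of the partial pairing with $\Omega^{\otimes g}$. This is immediate because $\Omega\,dg$ is the law of a commutator $[a,b]$ of independent Haar-distributed variables, hence a probability measure.
\end{itemize}
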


Define $\Omega:=\sum_{\lambda\in \widehat{G}} \frac{\chi_\lambda}{d_\lambda} $ this is a class distribution $\mathcal{D}^\prime(G)^G$, this is the Segal amplitude of the one holed torus as in~\cite[p.~32]{DerMar}.
Then note that the Segal amplitude  $Z_{\mathcal{T}_n}$ of the triangulated surface $\Sigma$ of genus $g$ and with $k$ boundary components can be written in terms of iterated coproducts , the iterated convolution $\underset{F\in \mathbb{F}}{\bigstar}\ \rho_{\sigma(F)}$ and the class distributions $\Omega$:
\begin{align*}
Z_{\mathcal{T}_n}(\cdot,\dots,\cdot)=\left\langle\mathbf{\Delta}^{g+k-1} \underset{F\in \mathbb{F}}{\bigstar}\ \rho_{\sigma(F)}, \Omega^{\otimes g} \right\rangle=    \sum_{\lambda\in\widehat{G}} 
\left( \prod_{F\in \mathbb{F}_n} \frac{\widehat{\mu}_{\sigma(F)}(\lambda)}{d_\lambda} \right)
\frac{\chi_\lambda(\cdot) \dots \chi_\lambda(\cdot)  }{d_\lambda^{\,2g-2+k}} \in C^\infty(G^k)  
\end{align*}
where the pairing $\left\langle\mathbf{\Delta}^{g+k-1} \underset{F\in \mathbb{F}}{\bigstar}\ \rho_{\sigma(F)}, \Omega^{\otimes g} \right\rangle$ is a partial contraction between a function in $C^\infty(G^{k+g})$ with some $g$--fold tensor product $\Omega^{\otimes g}$. We invite the reader to compare with ~\cite[formula p.~32 ]{DerMar} and \cite[formula for $Z$ on p.~9]{MnevBV}.
This concludes the proof of Theorem \ref{ConvSegalAmp} since the iterated convolution
$\underset{F\in \mathbb{F}}{\bigstar}\ \rho_{\sigma(F)}$ converges in $C^\infty(G)$ to the heat kernel $p_{\text{Area}(\Sigma)}\in C^\infty(G)$ by Theorem \ref{convck} and by continuity of the iterated coproducts and of the partial pairings.

\section{Technical estimates on Lie group valued random walks}\label{LieGroupRW}
In the next section, we want to study the scaling limit of some lattice gauge theories. Roughly speaking, a lattice gauge theory associated to a graph is a collection of random matrices, one for each face of the graph, and such that the law of each matrix depends on the area of the face it is associated to. 
\par Therefore, to be able to define lattice gauge theories, we need a  family of probability measures $(\mu_t)_{t>0}$, where $t$ is meant to be replaced later on by the area of the face.
\par The idea in the next section, is to show that a large class of lattice gauge theories, in the scaling limit, converge in some sense to the Yang--Mills measure. To do this, we will need some technical estimates on the family $(\mu_t)_{t>0}$ that we will do in the current section. 

\subsection{Assumption on the actions}\label{actions}
First, we will discuss the assumptions on the family $(\mu_t)_{t>0}$ . The set of assumptions is not optimal; however, we will stick to them to simplify the presentation. 
Consider a family $(\mu_t)_{t>0}$ of probability measures on $G$. We assume that 
\begin{enumerate}
\item For all $t>0$, $\mu_t$ is symmetric and ad-invariant.
    \item For all $t>0$, $\mu_t$ has a density $\rho_t$ with respect to the Haar measure on $G$.
    \item For all $v,w\in \g$, 
    \[\int_G \langle v, \log a \rangle \langle w,\log a\rangle  d\mu_t(a)=\langle v,w\rangle t+o_{v,w}(t).\]
    \item  For all integers $N$, there exists a constant $C_N>0$ such that
    \[ \sup_{g\in G}\left\{ \left(1+\frac{\mathrm{dist}(g,1_G)}{\sqrt{t}} \right)^{N} \rho_t(g) \right\}\leq C_Nt^{-\frac{\dim(G)}{2}}.   \]
    \item  For all $\beta>0$, there exists $C_\beta>0$ such that 
    \[\forall t\leq 1, \int_{G}\vert \log x\vert ^{2\beta} \rho_{t}(x)\dd x \leq C_{\beta} t^{\beta}.\]
\end{enumerate}
Such a one parameter family is said to have the property $(H)$. The letter $H$ stands for \emph{heat like}. 
Let us briefly comment on each of the assumptions.
\begin{itemize}
    \item The symmetry in the first item means that in the lattice gauge theory associated to $(\mu_t)_{t>0}$, the law of the holonomy around a face does not depend on the cyclical order of the face. The $\mathrm{Ad}$-invariance ensures the gauge invariance of the model.
\item The second item allows easier handling of the problem without a big loss. 
\item The third item is essential because it characterizes the leading term of the central limit theorem. 
\item The fourth item means we have super-polynomial concentration of the measure near $1_G$. It important because, as we will see in the next section, we need a concentration property for products of matrices picked under the family $(\mu_t)_{t>0}$. 
\item The fifth item is not essential but facilitates many results and is verified by a wide family of actions. It says that the moments of $(\mu_t)_{t>0}$ behave like those of a Gaussian.
\end{itemize}
The following lemma is particularly useful because it tells us that computing integrals in a small (but large enough) ball near $1_G$ is almost the same as computing the integral over all of $G$.  
\begin{lemma}\label{conv}
    For all $\eta < \frac 1 2$, for all $p>1$, there exists $C_{\eta,p}$ such that 
    \[ \mu_t(B(0,t^\eta)^c) \leq C_{\eta,p} t^p.\]
\end{lemma}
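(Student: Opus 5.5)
The bound follows from a Markov inequality applied to a high moment of $\vert\log g\vert$, using the fifth assumption in property $(H)$. Fix $\eta<\frac12$ and $p>1$. Here $B(0,t^\eta)$ is read in logarithmic coordinates, so its complement is $\{g\in G:\ \vert\log g\vert > t^\eta\}$, where $\log$ is the fixed measurable choice of logarithm used in $(H)$. For any $\beta>0$ and $t\leq 1$, Markov's inequality applied to $\vert\log g\vert^{2\beta}$ gives
\[
\mu_t\big(B(0,t^\eta)^c\big)\leq \frac{1}{t^{2\beta\eta}}\int_G \vert \log x\vert^{2\beta}\rho_t(x)\,\dd x \leq C_\beta\, t^{\beta-2\beta\eta}= C_\beta\, t^{\beta(1-2\eta)}.
\]
Since $1-2\eta>0$, we choose $\beta\coloneq \frac{p}{1-2\eta}$ so that the exponent equals $p$. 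This yields $\mu_t(B(0,t^\eta)^c)\leq C_\beta t^p$ for $t\leq 1$. If $\eta\leq 0$ the ball has radius at least $1$ for $t\le1$, and the same computation still works, with $t^{-2\beta\eta}\le 1$ only improving the bound.

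For $t>1$ the estimate is trivial, since $\mu_t$ is a probability measure. Indeed $\mu_t(\cdot)\leq 1\leq t^p$, so the constant $C_{\eta,p}\coloneq \max(C_\beta,1)$ works for all $t>0$.

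An alternative route would use the fourth assumption, the pointwise tail bound on $\rho_t$. Integrating $C_N t^{-\dim G/2}(1+\mathrm{dist}(g,1_G)/\sqrt t)^{-N}$ over $\{\mathrm{dist}(g,1_G)\geq c\,t^\eta\}$ gives a bound of order $t^{-\dim G/2}\, t^{N(1/2-\eta)}$, since $G$ has finite volume. Taking $N$ large then yields any power $t^p$. This route needs comparability of $\vert\log g\vert$ and $\mathrm{dist}(g,1_G)$ near the identity, which holds locally.

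The only point that needs care is this identification of the ball in log coordinates with a metric ball, which is why the moment route via assumption five is preferable: it avoids geometry entirely.
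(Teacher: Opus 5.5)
Your argument is correct, but it takes a different route from the paper. The paper uses the pointwise tail bound (item 4 of property $(H)$). It bounds $\rho_t$ on $\{\mathrm{dist}(g,1_G)\geq\delta\}$ by $C_N t^{-\dim(G)/2}(1+\delta/\sqrt t)^{-N}\leq C_N t^{(N-\dim(G))/2}\delta^{-N}$. It then integrates against the normalized Haar measure, puts $\delta=t^{(1-\varepsilon)/2}$, and takes $N$ large with $N\varepsilon-\dim(G)>2p$. This is essentially the alternative route you sketch.

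Your main argument instead applies Markov's inequality to $\vert\log g\vert^{2\beta}$ with $\beta=p/(1-2\eta)$, using only the moment condition (item 5). It never uses the pointwise size of $\rho_t$. It is shorter, and it shows this concentration lemma needs item 5 alone, although the paper calls item 5 ``not essential'' and credits concentration to item 4. The cost is that item 5 is only stated for $t\leq 1$, which you handle correctly via $\mu_t\leq 1\leq t^p$ for $t>1$. The paper's route gives the estimate for metric balls and all $t>0$ at once.

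Your concern about identifying the two kinds of balls is unnecessary in the direction you need. For any choice of $\log$ with $\exp\circ\log=\mathrm{id}$, the path $s\mapsto\exp(s\log g)$, $s\in[0,1]$, joins $1_G$ to $g$ and has length $\vert\log g\vert$ for the bi-invariant metric. Hence $\mathrm{dist}(g,1_G)\leq\vert\log g\vert$ for every $g\in G$, so $\{\mathrm{dist}(g,1_G)>t^\eta\}\subset\{\vert\log g\vert>t^\eta\}$. Your Markov bound therefore also covers the metric-ball reading used in the paper's proof, with no local comparability argument. Finally, the case split for $\eta\leq 0$ is superfluous: the identity $t^{-2\beta\eta}\,t^{\beta}=t^{p}$ holds for every $\eta<\frac12$.
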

\begin{proof}
     Let $\delta>0$, we have, for all integers $N$,
    \begin{eqnarray*}
   \mu_t(B(0,\delta)^c)
    &\leq&  C_Nt^{-\frac{\dim(G)}{2}}\int_{B(0,\delta)^c}\frac{1}{\left(1+\frac{\mathrm{dist}(g,1_G)}{\sqrt{t}} \right)^{N}}dg\leq \frac{C_Nt^{-\frac{\dim(G)}{2}}}{\left(1+\frac{\delta}{\sqrt{t}} \right)^{N}}\leq C_Nt^{\frac{N-\dim(G)}{2}}\delta^{-N}.
    \end{eqnarray*}
    Now take $\delta = t^{\frac{1-\varepsilon}{2}}$, we get
    \[\mu_t(B(0,\delta)^c)\leq C_Nt^{\frac{N\varepsilon-\dim(G)}{2}}. \]
    It is enough to take $N$ big enough so that $N\varepsilon-\dim(G) >2p$.
\end{proof}

When $\rho_t = p_t$ is the heat kernel, the measure enjoys the semi-group property. This tells us that for two random variables distributed according to $p_{t_1}\dd g $ and $p_{t_2}\dd g$ respectively, their product follows $p_{t_1+t_2}\,dg$. This gives an automatic bound on the concentration of the product near $1_G$. This property is very important in studying lattice gauge theories, and appears when merging faces. It fails when $\rho$ is no longer the heat kernel. The next lemma, which relies essentially on the BDG inequality, aims to compensate for this loss in the case of a general measure.

\begin{lemma} \label{estim}
Under the $(H)-$property, for all $\beta>0$, there exists $C_{\beta}>0$ such that
\[  \forall k\geq 1, \forall s_1,\cdots,s_k\leq 1, \int_{G}\vert \log x\vert ^{2\beta} \rho_{s_1}\ast\cdots\ast \rho_{s_k} (x)\dd x \leq C_\beta(s_1+\cdots +s_k)^{\beta}.\]
\end{lemma}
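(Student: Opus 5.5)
We realize the convolution $\rho_{s_1}\ast\cdots\ast\rho_{s_k}$ as the law of a product $X_k=g_1g_2\cdots g_k$ of independent $G$-valued random variables with $g_i\sim\rho_{s_i}\,\dd g$. The plan is to compare $\log X_k$ with a $\mathfrak g$-valued discrete martingale and then apply the Burkholder--Davis--Gundy inequality. It suffices to treat large $\beta$, say $\beta\ge1$; smaller $\beta$ then follow by Jensen/Hölder. We may also assume $S:=s_1+\cdots+s_k\le 1$. Otherwise $|\log x|$ is bounded on the compact group $G$, so the left side is at most $C\le C S^\beta$.

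\textbf{Embedding and martingale increments.} Since $G$ is a compact matrix group, we work in the ambient matrix algebra, where $X_j=X_{j-1}g_j$. Write $g_j=\exp(Y_j)$ with $Y_j=\log g_j$. By symmetry and Ad-invariance of $\mu_{s_j}$ (assumption 1 of $(H)$), $\mathbb E[Y_j]=0$. This follows because inversion invariance gives $Y_j\overset{d}{=}-Y_j$ on the set where $\log$ is a true inverse. Lemma~\ref{conv} shows the complement has probability $O(s_j^p)$ for every $p$, so it contributes at most $O(s_j^{p})$.

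We then expand $g_j=1+Y_j+R_j$ with $|R_j|\lesssim|Y_j|^2$. This gives
\[
X_k-1=\sum_{j=1}^k X_{j-1}Y_j+\sum_{j=1}^k X_{j-1}R_j=:M_k+D_k .
\]
Here $M_k$ is a martingale with respect to $\mathcal F_j=\sigma(g_1,\dots,g_j)$, up to the negligible mean correction from the non-injectivity set of $\log$. The drift $D_k$ is bounded in norm by $\sum_j|Y_j|^2$.

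\textbf{Moment bounds.} By BDG and the Minkowski inequality in $L^\beta$, using $|X_{j-1}|\le C$,
\[
\mathbb E|M_k|^{2\beta}\le C_\beta\,\mathbb E\Big(\sum_j|Y_j|^2\Big)^{\beta}\le C_\beta\Big(\sum_j\big\|\,|Y_j|^2\big\|_{L^\beta}\Big)^\beta\le C_\beta\Big(\sum_j s_j\Big)^\beta .
\]
The last step uses assumption 5 of $(H)$, which gives $\mathbb E|Y_j|^{2\beta}\le C_\beta s_j^\beta$. The same Minkowski argument shows $\mathbb E|D_k|^{2\beta}\le C\,\mathbb E(\sum_j|Y_j|^2)^{2\beta}\le C S^{2\beta}\le CS^\beta$. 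Hence $\mathbb E|X_k-1|^{2\beta}\le C_\beta S^\beta$.

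\textbf{From $X_k-1$ back to $\log X_k$.} This is the step I expect to be the main obstacle. Near $1$ we have $|\log x|\le C|x-1|$, so the required bound follows on the event $\{|X_k-1|\le\epsilon_0\}$. Far from $1$, $|\log x|$ is bounded (taking $\log$ as a bounded measurable choice), and Markov's inequality gives
\[
\mathbb P(|X_k-1|>\epsilon_0)\le\epsilon_0^{-2\beta}\,\mathbb E|X_k-1|^{2\beta}\le C S^\beta .
\]
Combining the two events yields the bound $C_\beta S^\beta$ uniformly in $k$ and in the $s_j$.

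The delicate points are twofold. First, the mean-zero property of $Y_j$ must be justified carefully where $\log$ is discontinuous; Lemma~\ref{conv} makes this contribution $O(s_j^{p})$, and summing over $j$ gives at most $O(S)$ in the drift, which is harmless. Second, the constants must be uniform in $k$. This uniformity holds because every estimate above depends only on $\sum_j s_j$ and never on $k$ itself.
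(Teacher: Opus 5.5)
Your proof is correct and follows essentially the route the paper intends. The paper only defers to the BDG-based argument of Lemma~5.4 in Chevyrev's work, and your ambient matrix martingale $M_k=\sum_j X_{j-1}Y_j$, BDG combined with Minkowski's inequality in $L^\beta$ and assumption~5 of $(H)$, and the crude bound $S^{2\beta}\le S^\beta$ on the drift supply exactly those details uniformly in $k$. The step you flag as the main obstacle is in fact immediate: $|\log x|$ is bounded on $G$ while $|x-1|$ is bounded below away from $1_G$, so $|\log x|\le C|x-1|$ holds on all of $G$.
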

\begin{proof}
    The proof is essentially the same as the proof of lemma 5.4 in~\cite{Chevyrev_2019}. 
\end{proof}

\subsection{Technical estimates} \label{techest}
In Section~\ref{s:CLT}, we will show that lattice gauge theories converge to a universal limit. We will do this using characteristic functions. Here we describe the exact technical expression we need to bound, without further motivation. The motivation, however, will be clear from the next section.   

Consider, for each $N\geq0$, a sequence of positive real numbers $(s^{(N)}_i)_{0\leq i\leq 2^N-1}$. Let also $(X^{(N)}_i)_{0\leq i\leq 2^N-1}$ be a sequence of independent  $G-$ valued random variables such that 
\[\forall 0\leq i\leq 2^N-1,\ \mathbb{P}(X^{(N)}_i\in \dd g)=\mu_{s^{(N)}_i}(\dd g).\] Let finally $(M^{(N)}_k)_{0\leq k\leq 2^N-1}$ be a family of elements in $\g$ such that 
\[\max_{1\leq k\leq 2^N-1} \|M^{(N)}_k\| \leq 1.\]
\par We would like to understand the limiting behavior of
\[E^{(N)}_{2^N-1}\coloneq \mathbb{E}\left[\prod_{k=0}^{2^N-1}\exp i\left \langle \log\left(\prod_{i=0}^{k+1} X^N_i\right)-\log\left(\prod_{i=0}^{k} X^N_i\right),M^{(N)}_k\right\rangle\right],
\]
under the assumptions on the family of measure, 
as it will be important in the identification of the limit of lattice gauge theories. We will proceed by induction.
\begin{lemma}\label{8.3}
    We have 
    \[E^{(N)}_{2^N-1}\coloneq \mathbb{E}\left[F_{s^{(N)}_{2^N-1}}\left(\prod_{i=0}^{2^N-2} X^N_i,M^{(N)}_{2^N-1}\right)\prod_{k=0}^{2^N-2}\exp i\left \langle \log\left(\prod_{i=0}^{k+1} X^N_i\right)-\log\left(\prod_{i=0}^{k} X^N_i\right),M^{(N)}_k \right\rangle\right],\]
    where 
    \[F_t(g,v)\coloneq \int_G \exp \left(i\left\langle v, \log(ag)- \log(g) \right\rangle \right) d\mu_t(a).\]
\end{lemma}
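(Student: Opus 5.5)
Lemma~\ref{8.3} follows by conditioning on everything except the last increment. Write $P_k\coloneq \prod_{i=0}^{k} X^N_i$, so the $k$-th factor inside the expectation is $\exp i\langle \log P_{k+1}-\log P_k, M^{(N)}_k\rangle$. Note the indexing: the product defining $E^{(N)}_{2^N-1}$ involves $P_{k+1}$ for $k$ up to $2^N-1$, which would require $X^N_{2^N}$. I will adopt the convention that the last factor is the one containing the last variable $X^N_{2^N-1}$, and that all other factors are measurable with respect to $\mathcal{F}\coloneq \sigma(X^N_0,\dots,X^N_{2^N-2})$. This is consistent with the statement's $F_{s^{(N)}_{2^N-1}}(\prod_{i=0}^{2^N-2}X^N_i, M^{(N)}_{2^N-1})$. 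I will fix this reindexing explicitly at the start, and check that the remaining factors indeed only involve $P_0,\dots,P_{2^N-2}$.

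The steps are then as follows.
\begin{enumerate}
\item By the tower property, $E^{(N)}_{2^N-1}=\mathbb{E}\bigl[\mathbb{E}[\,\cdot\mid\mathcal{F}]\bigr]$. The $\mathcal{F}$-measurable factors can be pulled out of the inner conditional expectation; each is bounded by $1$ in modulus, so all integrability issues are trivial.
\item The remaining factor is $\exp i\langle \log(P_{2^N-2}X^N_{2^N-1})-\log P_{2^N-2}, M\rangle$. Since $X^N_{2^N-1}$ is independent of $\mathcal{F}$ with law $\mu_{s^{(N)}_{2^N-1}}$, the freezing lemma gives the conditional expectation as $\int_G \exp\bigl(i\langle M,\log(g a)-\log g\rangle\bigr)\,\mu_t(\dd a)$ evaluated at $g=P_{2^N-2}$.
\item This must be matched with $F_t(g,v)$, which is written with $ag$ rather than $ga$. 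I would use the $\mathrm{Ad}$-invariance in the $(H)$ property: substituting $a\mapsto g^{-1}ag$ turns $ga$ into $ag$ and preserves $\mu_t$. Alternatively, if the products are ordered so that the new variable multiplies on the left, $F_t$ applies directly.
\end{enumerate}

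Measurability of $(g,a)\mapsto \exp i\langle\cdot\rangle$ needs a word, because $\log$ is only defined off the cut locus. I would fix a measurable branch of $\log$, bounded on compact $G$. The integrand is then a bounded Borel function, which suffices both for Fubini and for the freezing lemma.

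The only genuine obstacle is bookkeeping: reconciling the off-by-one indexing and the left-versus-right multiplication convention in $F_t$. Ad-invariance resolves the latter; the rest is routine.
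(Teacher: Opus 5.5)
Your proposal is correct and follows the same route as the paper, which simply invokes the usual properties of conditional expectation: condition on $X^N_0,\dots,X^N_{2^N-2}$, pull out the measurable factors, and integrate out the independent last increment. Your explicit fixes for the off-by-one indexing (which affects both sides of the statement as written) and for the left-versus-right multiplication via the $\mathrm{Ad}$-invariance substitution $a\mapsto g^{-1}ag$ are valid, and they address points the paper leaves implicit.
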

\begin{proof}
    By usual properties of the conditional expectation.
\end{proof}
The strategy will be to decompose $F_t(g,v)$ in the sum of two terms, a leading term, and an error term that we will bound tightly. This will induce the decomposition of  
\[F_{s^{(N)}_{2^N-1}}\left(\prod_{i=0}^{2^N-2} X^N_i,M^{(N)}_{2^N-1}\right)\]
in two terms as well :
\begin{itemize}
    \item a leading deterministic term that will happen to be
\[A^{(N)}_{2^N-1}\coloneq  1-\frac{\|M^{(N)}_{2^N-1}\|^2}{2}s^{(N)}_{2^N-1}.\]
\item an error random term $R^{(N)}$. The random part of the error term is a polynomial function  (with possible fractional powers)   of $\prod_{i=0}^{2^N-2} X^N_i$, which follows $\mu_{s^{(N)}_0}\star \cdots \star \mu_{s^{(N)}_{2^N-2}}$. Therefore, Lemma~\ref{estim} lets us easily bound it. 
\end{itemize}
This will tell us that \[E_{2^N-1}^{(N)}=\left(1-\frac{\|M^{(N)}_{2^N-1}\|^2}{2}s^{(N)}_{2^N-1}\right)E_{2^N-2}^{(N)}+\mathbb{E}[\vert R^{(N)}\vert],\]
and will let us start with the induction. 
\par The aim of the two following lemmas is to decompose $F_t(g,v)$.
\begin{lemma} \label{devlim}
Let $\delta>0$ be such that $\exp : B(1_G,\delta) \to \g$ 
is a diffeomorphism on its image.
Let $p$ be an integer greater than $2$. There exists $C_p >0$ such that if  $\vert X\vert , \vert Y\vert \leq \delta$, 
\begin{enumerate}
    \item  there exists $\Lambda_{Y}\in \mathrm{End}(\g)$, such that 
   \[\left \vert \log(\exp X \exp Y)- X- Y - \Lambda_{Y}(X)\right \vert  \leq C_p\left( \vert X \vert ^2\vert Y \vert+\vert X \vert \vert Y\vert^{p}\right); \]
   \item we have $\left \vert  \log(\exp X \exp Y)- X- Y\right \vert \leq C_p \vert X \vert \vert Y\vert .$
\end{enumerate}
\end{lemma}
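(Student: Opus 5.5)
The plan is to work with the Baker--Campbell--Hausdorff (BCH) map $H(X,Y)\coloneq \log(\exp X\exp Y)$. It is real-analytic on the ball $\{\vert X\vert,\vert Y\vert\leq\delta\}$, shrinking $\delta$ if needed so that $\exp X\exp Y$ stays in the domain where $\log$ is a diffeomorphism. It satisfies the exact identities
\[
H(X,0)=X,\qquad H(0,Y)=Y .
\]
Both statements are then Taylor expansions of the remainder $K(X,Y)\coloneq H(X,Y)-X-Y$, which vanishes identically on both axes $\{X=0\}$ and $\{Y=0\}$.

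For item 2, I would use the integral form of Taylor's formula twice. Since $K(0,Y)=0$, we can write $K(X,Y)=\int_0^1 \partial_X K(tX,Y)[X]\,\dd t$. Since also $K(X,0)=0$ for all $X$, we have $\partial_XK(\cdot,0)=0$. Hence
\[
\partial_X K(tX,Y)=\int_0^1 \partial_Y\partial_X K(tX,uY)[\,\cdot\,,Y]\,\dd u .
\]
Bounding the mixed second derivative by its supremum over the compact ball gives $\vert K(X,Y)\vert\leq C\vert X\vert\vert Y\vert$.

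For item 1, I would expand $K$ to first order in $X$ with $Y$ frozen, and set
\[
\Lambda_Y\coloneq \partial_XK(0,Y)=\partial_XH(0,Y)-\mathrm{Id}\in\mathrm{End}(\g).
\]
This operator is explicit: $\partial_XH(0,Y)=\frac{\mathrm{ad}_Y}{1-e^{-\mathrm{ad}_Y}}$ up to the sign convention, by the standard formula for the differential of $\log$. The second-order Taylor remainder in $X$ is $\int_0^1(1-t)\,\partial_X^2K(tX,Y)[X,X]\,\dd t$. Since $\partial_X^2K(tX,0)=\partial_X^2H(tX,0)=0$ (because $H(\cdot,0)$ is the identity), a further expansion in $Y$ bounds this remainder by $C\vert X\vert^2\vert Y\vert$.

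This already yields item 1 with $\Lambda_Y$ as above and error $C(\vert X\vert^2\vert Y\vert)$, which is smaller than the stated bound. The term $\vert X\vert\vert Y\vert^p$ is therefore only needed if one insists on replacing $\Lambda_Y$ by its polynomial truncation $\sum_{j=1}^{p-1}c_j\,\mathrm{ad}_Y^j$, with $c_j$ the Bernoulli coefficients. In that case the tail of the analytic series of $\frac{\mathrm{ad}_Y}{1-e^{-\mathrm{ad}_Y}}$ contributes $O(\vert Y\vert^p)$ in operator norm, hence $O(\vert X\vert\vert Y\vert^p)$ after applying it to $X$. I would present that truncated version as the intended meaning, since the later induction presumably uses $\Lambda_Y$ as a polynomial in $\mathrm{ad}_Y$ whose odd terms vanish in expectation by Ad-invariance.

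I do not expect a real obstacle here. The only care needed is to keep the constants uniform. This follows by choosing $\delta$ small enough that the BCH series converges absolutely (e.g.\ $\vert X\vert+\vert Y\vert<\log 2/\|\mathrm{ad}\|$) and that all derivatives of $H$ are bounded on the closed ball.
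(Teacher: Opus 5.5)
Your proof is correct, and it proves a slightly stronger statement than the lemma by a somewhat different route.

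\textbf{The paper's route.} The paper's proof is the one-line remark that the lemma follows from the Baker--Campbell--Hausdorff series truncated at order $p$. There, $\Lambda_Y$ is the part of the series linear in $X$ up to degree $p-1$ in $Y$, i.e.\ the Bernoulli polynomial in $\mathrm{ad}_Y$. The discarded linear-in-$X$ tail gives the $\vert X\vert\vert Y\vert^p$ term. Every other term contains at least two $X$'s and one $Y$, which gives $\vert X\vert^2\vert Y\vert$.

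\textbf{Your route.} You use only smoothness of $H(X,Y)=\log(\exp X\exp Y)$ and the identities $H(X,0)=X$, $H(0,Y)=Y$, through Taylor's formula with integral remainder. Taking the exact derivative $\Lambda_Y=\partial_XH(0,Y)-\mathrm{Id}$ instead of a truncation has two advantages:
\begin{itemize}
\item it removes the $\vert X\vert\vert Y\vert^p$ term entirely;
\item it does not need convergence of the BCH series. You only need $\log$ to be a smooth inverse on a neighbourhood of $\exp(\bar B_\delta)\exp(\bar B_\delta)$.
\end{itemize}
That smallness condition on $\delta$ is also implicitly required by the paper's statement, so shrinking $\delta$ is legitimate. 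Your truncated variant reproduces exactly the paper's form.

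\textbf{How the lemma is used later.} Your guess about this is off. In Lemma~\ref{8.5}, the only property of $\Lambda_Y$ needed is linearity: $\int_G\Gamma(\log a)\,\mu_t(\dd a)=0$ for every $\Gamma\in\g^*$, by inversion symmetry. The mechanism is not Ad-invariance killing odd powers of $\mathrm{ad}_Y$. So there is no reason to prefer the truncated version. Your untruncated $\Lambda_Y$ would even remove the $\sqrt{t}\,\vert\log g\vert^p$ contribution in that lemma.

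\textbf{Sign of the linear term.} For the product $\exp X\exp Y$, the linear-in-$X$ part is $\frac{\mathrm{ad}_Y}{e^{\mathrm{ad}_Y}-1}(X)=X-\tfrac12[Y,X]+\cdots$. The expression $\frac{\mathrm{ad}_Y}{1-e^{-\mathrm{ad}_Y}}$ corresponds to the reversed product $\exp Y\exp X$.
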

\begin{proof}
It is a direct consequence of the BCH formula taken up to order $p$.
\end{proof}
\begin{remark}
Note that the presence of the linear function $\Lambda_Y$ can absorb as many higher-order commutator terms containing exactly one $X$, enabling us to increase the power $p$ in $Y$ on the right side as needed. This will be very useful because, thanks to symmetry,
 $\int_G \Lambda_Y(\log a) \, \mu_t(da) = 0,$ 
and we can therefore add this term whenever we need it, gaining powers in $Y$.
\end{remark}
\begin{lemma}\label{8.5}
    For each $p\geq 2$, there exists $C_p>0$ such that, for all $t\in [0,1]$, $g\in G$ and $v\in \g$, 
      \[F_t(g,v)=1-\frac{\|v\|^2}{2}t+R_t(g,v),\]
      where, 
      \[\vert R_t(g,v)\vert\leq  C_p((1+\|v\|^2)\vert \log g\vert t +\vert \log g\vert ^p\sqrt{t} +(\|v\|+\|v\|^2) t^p + \|v\|^3t^{\frac{3}{2}}+\|v\|^2\vert \log g\vert ^2 t).\]
\end{lemma}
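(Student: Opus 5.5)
We plan to split the integral defining $F_t(g,v)$ according to whether $a$ lies in a small ball around $1_G$, and to Taylor expand the integrand on that ball. Throughout, write $X=\log a$ and $Y=\log g$.

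\emph{Reductions.} If $\vert Y\vert>\delta/2$, with $\delta$ as in Lemma~\ref{devlim}, the bound is trivial. Indeed $\vert F_t\vert\le 1$, so $\vert R_t\vert\le 2+\|v\|^2 t/2$. On the other hand $\vert \log g\vert^p\sqrt t$ is bounded below by a constant times $\sqrt t$, which does not obviously dominate $1$. We therefore use instead the term $(1+\|v\|^2)\vert\log g\vert t$ together with the terms $\|v\|^2 t$. In the regime of interest, a safer route is to note that for $\vert Y\vert\ge\delta/2$ we can still write $F_t-1+\|v\|^2t/2$ and bound it by $\mathbb{E}\vert e^{i\langle v,\log(ag)-\log g\rangle}-1\vert+\|v\|^2t$. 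The first term is controlled by the Lipschitz continuity of $\log$ away from the cut locus, on a set of $\mu_t$-mass $1-O(t^p)$, so it is $\lesssim \|v\\|\vert Y\vert\,\mathbb{E}\vert X\vert$-type quantities absorbed into $\vert\log g\vert t$ after a second-order expansion. We therefore concentrate on $\vert Y\vert\le\delta/2$.

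\emph{Main decomposition.} Split $F_t=\int_{B(1,t^\eta)}+\int_{B(1,t^\eta)^c}$ with $\eta<1/2$. By Lemma~\ref{conv} the second piece, and the corresponding piece of $1-\|v\|^2t/2$, contribute $O(t^p)$. On the ball, Lemma~\ref{devlim}(1) gives
\[
\log(ag)-\log g=X+\Lambda_Y(X)+E,\qquad \vert E\vert\le C_p(\vert X\vert^2\vert Y\vert+\vert X\vert\vert Y\vert^{p}).
\]
Note that $\Lambda_Y$ is linear with $\|\Lambda_Y\|\lesssim\vert Y\vert$, since the commutator terms vanish when $Y=0$. Set $Z=X+\Lambda_Y(X)$. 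We expand
\[
e^{i\langle v,Z+E\rangle}=1+i\langle v,Z\rangle-\tfrac12\langle v,Z\rangle^2+O(\|v\|\vert E\vert+\|v\|^3\vert Z\vert^3+\|v\|^2\vert Z\vert\vert E\vert).
\]
We then integrate against $\mu_t$, restricted to the ball.
\begin{itemize}
\item The linear term $\langle v,Z\rangle$ is odd in $X$. By the symmetry in $(H)$ its integral vanishes, up to the $O(t^p)$ ball-truncation error times $\|v\|$.
\item For the quadratic term, property (3) of $(H)$ gives $\int\langle v,X\rangle^2\,d\mu_t=\|v\|^2t+o(t)$. The cross terms with $\Lambda_Y(X)$ are $\lesssim\|v\|^2\vert Y\vert t$ and $\|v\|^2\vert Y\vert^2t$, using the moment bound (5) with $\beta=1$.
\item The error terms are bounded through (5). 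We get $\|v\|\,\mathbb{E}\vert E\vert\lesssim\|v\|(\vert Y\vert t+\vert Y\vert^p\sqrt t)$, next $\|v\|^3\mathbb{E}\vert Z\vert^3\lesssim\|v\|^3t^{3/2}$, and finally $\|v\|^2\mathbb{E}\vert Z\vert\vert E\vert\lesssim\|v\|^2\vert Y\vert t^{3/2}+\dots$, all of the stated shape. The term $\|v\|\vert Y\vert^p\sqrt t$ with $\|v\|$ bounded matches $\vert\log g\vert^p\sqrt t$.
\end{itemize}

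\emph{Main obstacle.} The delicate point is the $o(t)$ in assumption (3): the statement asserts an error with no bare $o(t)$ term. We therefore either interpret (3) as exact up to terms absorbed into $t^{3/2}$, or carry the $o(t)\|v\|^2$ term along. Ad-invariance plus symmetry forces $\int X\otimes X\,d\mu_t=c(t)\,\mathrm{Id}$, so the leading term is really $\|v\|^2c(t)/2$. We will replace $t$ by $c(t)$, or assume the normalization $c(t)=t$, to obtain exactly $1-\|v\|^2t/2$. Otherwise the remainder must include $\|v\|^2\vert c(t)-t\vert$, which would be tracked into the inductive estimate of the next section.
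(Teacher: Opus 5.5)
On the regime $|\log g|\le\delta/2$ your argument is the paper's. You truncate to $B(1,t^\eta)$ by Lemma~\ref{conv}, expand the exponential to second order, use Lemma~\ref{devlim}(1) to split off the part $X+\Lambda_Y(X)$ that is linear in $X=\log a$ (killed by symmetry), and bound the rest with the moment estimates of $(H)$.

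Both of your caveats are correct, and the paper's proof passes over them silently.
\begin{itemize}
\item The paper writes the quadratic integral as $|v|^2t+O(\|v\|^2|\log g|^2t+\|v\|^2|\log g|t)$, dropping the $o(t)$ of $(H)(3)$. So the lemma as stated needs $(H)(3)$ with error $O(t^{3/2})$; for the heat kernel and the Manton family this error is in fact $O(t^2)$. A leftover $\|v\|^2o(t)$ would anyway be harmless downstream, since it sums to $o(1)$ over all faces when $\max_F\sigma(F)\to0$.
\item Ad-invariance does not force $\int X\otimes X\,d\mu_t=c(t)\,\mathrm{Id}$ when $\g$ is not simple (e.g.\ $\g=\mathfrak{u}(N)$ in the Wilson case). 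So a single rescaled time is not available in general.
\item The paper also drops the factor $\|v\|$ in front of $|\log g|^p\sqrt t$. This issue disappears if you take $\Lambda_Y$ to be the entire part of $\log(e^Xe^Y)-X-Y$ that is linear in $X$. That part is $O(|Y|)$, and every BCH term with two or more $X$'s also contains a $Y$. Hence the remainder is $O(|X|^2|Y|)$, the $|\log g|^p\sqrt t$ term never appears, and the bound holds for all $v$.
\end{itemize}

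The genuine gap is your reduction paragraph for $|\log g|>\delta/2$. It cannot be closed, because the statement is false there. Any extension of $\log$ to all of $G$ is discontinuous somewhere. For $g$ at or near a discontinuity, $\log(ag)-\log g$ jumps by an amount of order one on a set of $a$ of $\mu_t$-mass bounded below.

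Concretely, let $G=U(1)$, identify $\g$ with $\R$, and take the principal logarithm with values in $(-\pi,\pi]$, $g=-1$ and $v=\frac12$. For $a=e^{i\phi}$, the increment $\log(ag)-\log g$ equals $\phi$ if $\phi\in(-\pi,0]$ and $\phi-2\pi$ if $\phi\in(0,\pi]$. Since $e^{-i\pi}=-1$ and $\mu_t$ is symmetric,
\[
F_t\big(-1,\tfrac12\big)=2i\int_{\{\phi<0\}}\sin(\phi/2)\,\mu_t(d\phi)=O(\sqrt t).
\]
So $R_t\to-1$, while the right-hand side tends to $0$ as $t\to0$.

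Your ``safer route'' breaks exactly here. $\log$ is Lipschitz near $g$ only at positive distance from its discontinuity set, with constants blowing up as $g$ approaches it. Even away from that set, a first-order Lipschitz bound only yields $\|v\|\,\mathbb{E}|\log a|\sim\|v\|\sqrt t$, which is not of the stated form. There is also no bare $\|v\|^2t$ term on the right-hand side to absorb anything.

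The paper's proof has the same blind spot, since it applies Lemma~\ref{devlim} without checking $|\log g|\le\delta$. The correct statement restricts to $|\log g|\le\delta/2$. In Lemma~\ref{EstimBruitBlanc} the complementary event then costs
\[
O\big(\mathbb{P}(|\log g|>\delta/2)\big)\le C_\beta(2/\delta)^{2\beta}\Big(\sum_{i<k}s^{(N)}_i\Big)^\beta
\]
by Lemma~\ref{estim} and Markov's inequality. By Lemma~\ref{areaestim}, column areas are $O(2^{-N\kappa})$ for some $\kappa>0$, so this cost is negligible after summing over faces once $\beta$ is large.
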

\begin{remark}
      In the subsequent of this section, the notation $O(r)$ means a quantity whose absolute value is smaller than an absolute constant times $r$. An absolute constant is a constant that only depends on the fixed settings (such as the diameter of $G$).
\end{remark}
\begin{proof}
  
    We have 
    \begin{eqnarray*}
        F_t(g,v)&=&\int_G \exp {i\left\langle v, \log(ag)- \log(g) \right\rangle_{\mathfrak{g}} } d\mu_t(a) \\
        &=&\int_{B(0,t^\eta)} \exp {i\left\langle v, \log(ag)- \log(g) \right\rangle_{\mathfrak{g}} } d\mu_t(a)+\int_{B(0,t^\eta)^c} \exp {i\left\langle v, \log(ag)- \log(g) \right\rangle_{\mathfrak{g}} } d\mu_t(a).     
    \end{eqnarray*}
    Fix $\eta <\frac 12$. From Lemma~\ref{conv}, the second member of the second side verifies, for all $p$,
    \[\int_{B(0,t^\eta)^c} \exp {i\left\langle v, \log(ag)- \log(g) \right\rangle_{\mathfrak{g}} } d\mu_t(a)\leq \mu_t(B(0,t^\eta)^c)\leq C_p t^p.\]
    To treat the first member, let us develop the exponential. We have for all $g\in B(0,t^\eta)$, 
    \[ \exp {i\left\langle v, \log(ag)- \log(g) \right\rangle }=1+i\left\langle v, \log(ag)- \log(g) \right\rangle -\frac{\left\langle v, \log(ag)- \log(g) \right\rangle^2 }{2}+O(\|v\|^3\vert \log a\vert ^3).\]
   
    Now,
    \begin{eqnarray*}
        \int_{B(0,t^\eta)}1\dd \mu_t(a) = \int_{G}1\dd \mu_t(a)- \int_{B(0,t^\eta)^c}1\dd \mu_t(a)= 1 +O(C_p t^p).
    \end{eqnarray*}
    Moreover,
    \begin{eqnarray*}
        \int_{}\left\langle v, \log(ag)- \log(g) \right\rangle\dd \mu_t(a) &=&    \int_{G}\left\langle v, \log(ag)- \log(g) \right\rangle\dd \mu_t(a) +O(2\ \mathrm{diam}(G)\|v\|C_pt^p ).
    \end{eqnarray*}
    Now, using the fact that for $\Gamma \in \g^*$,
    $\int_G \Gamma(\log x)\mu_t(\dd x) = 0,$
    we have, from the first item of the Lemma~\ref{devlim}, 
    \begin{eqnarray*}
         \int_{G}\left\langle v, \log(ag)- \log(g) \right\rangle\dd \mu_t(a) &=&\int_{G}\left\langle v, \log(ag)- \log(g) -\log(a)-\Lambda_{\log(g)}(\log(a))\right\rangle\dd \mu_t(a)\\
         &=&O\left (C_p\int_G(\vert \log g\vert \vert \log a \vert^2 +\vert \log a\vert \vert \log g\vert ^p) \dd \mu_t(a)  \right),
    \end{eqnarray*}
    and from the fifth assumption on the family of measures,
    \begin{eqnarray*}
         \int_{G}\left\langle v, \log(ag)- \log(g) \right\rangle\dd \mu_t(a)
         &= & O(C_p \vert \log g\vert t +C_p\sqrt{t}\vert \log g\vert ^p).
    \end{eqnarray*}
Similarly, 
    \begin{eqnarray*}
        \int_{B(0,t^\eta)}\left\langle v, \log(ag)- \log(g) \right\rangle^2\dd \mu_t(a) &=&    \int_{G}\left\langle v, \log(ag)- \log(g) \right\rangle^2\dd \mu_t(a) +O(\|v\|^2 C_pt^p ).
    \end{eqnarray*}
    and 
    \begin{eqnarray*}
        \int_{G}\left\langle v, \log(ag)- \log(g) \right\rangle^2\dd \mu_t(a) 
        &= & \vert v\vert ^2t+ O(\|v\|^2\vert \log g\vert ^2 t+\|v\|^2\vert \log g\vert t).
    \end{eqnarray*}
    We conclude the proof by combining all the terms together.
\end{proof}
\begin{lemma}\label{EstimBruitBlanc}
Recall that $A^{(N)}_k\coloneq  1-\frac{\|M^{(N)}_k\|^2}{2}s^{(N)}_k.$
    For all $p>1$, we have 
    \begin{eqnarray*}
   \left  \vert E^{(N)}_{2^N-1}-\prod_{i=0}^{2^N-1}A^{(N)}_{i} \right \vert&\leq&  C_p\left (s_{N,1}^\frac 32 + s_{N,\frac 12}s_{N,1}^\frac p2+s_{N,p}+s_{N.\frac 32}+s_{N,1}^2\right )  
    \end{eqnarray*}
    where 
    $s_{N,\alpha}\coloneq \sum_{i=0}^{2^N-1}(s^{(N)}_i)^\alpha.$
\end{lemma}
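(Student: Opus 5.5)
The plan is to run a backward induction on $k$, peeling off one factor at a time with Lemma~\ref{8.3} and Lemma~\ref{8.5}, and then to sum the accumulated errors. For $0\le k\le 2^N-1$ write $E^{(N)}_k$ for the expectation of the product of the first $k+1$ exponential factors, with the convention $E^{(N)}_{-1}=1$, and set $P_k\coloneq \prod_{i=0}^{k}X^N_i$. Applying Lemma~\ref{8.3} at index $k$ (conditioning on $X^N_0,\dots,X^N_{k-1}$) and then Lemma~\ref{8.5} with $t=s^{(N)}_k$, $g=P_{k-1}$, $v=M^{(N)}_k$ gives
\[
E^{(N)}_k=A^{(N)}_k E^{(N)}_{k-1}+\mathbb{E}\big[R_{s^{(N)}_k}(P_{k-1},M^{(N)}_k)\,\Pi_{k-1}\big],
\]
where $\Pi_{k-1}$ is the product of the first $k$ exponential factors. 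Since $\vert\Pi_{k-1}\vert=1$, the remainder term is bounded by $\mathbb{E}\vert R_{s^{(N)}_k}(P_{k-1},M^{(N)}_k)\vert$.

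Next I control the products of the $A^{(N)}_i$. Because $\|M^{(N)}_k\|\le1$ and the $s^{(N)}_k$ are small, we have $0\le A^{(N)}_k\le 1$ for $N$ large, using $\sup_i s^{(N)}_i\to 0$, which follows from Lemma~\ref{areaestim} in the application. Unrolling the recursion then yields
\[
\Big\vert E^{(N)}_{2^N-1}-\prod_{i=0}^{2^N-1}A^{(N)}_i\Big\vert\le \sum_{k=0}^{2^N-1}\mathbb{E}\big\vert R_{s^{(N)}_k}(P_{k-1},M^{(N)}_k)\big\vert,
\]
because each remainder gets multiplied only by later $A$'s, all of modulus at most $1$.

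Each term is estimated with the bound of Lemma~\ref{8.5}, using $\|v\|\le 1$. The law of $P_{k-1}$ is $\mu_{s_0}\star\cdots\star\mu_{s_{k-1}}$, so Lemma~\ref{estim} gives $\mathbb{E}\vert\log P_{k-1}\vert^{q}\le C_q S_k^{q/2}$ with $S_k\coloneq \sum_{i<k}s^{(N)}_i\le s_{N,1}$. The terms are then summed in $k$ one by one:
\begin{itemize}
\item $\vert\log g\vert t$ contributes $\sum_k s_k S_k^{1/2}\le s_{N,1}^{3/2}$;
\item $\vert\log g\vert^p\sqrt t$ contributes $\sum_k s_k^{1/2}S_k^{p/2}\le s_{N,\frac12}s_{N,1}^{p/2}$;
\item $t^p$ contributes $s_{N,p}$;
\item $t^{3/2}$ contributes $s_{N,\frac32}$;
\item $\vert\log g\vert^2 t$ contributes $\sum_k s_kS_k\le s_{N,1}^2$.
\end{itemize}
Together these give exactly the five terms of the statement.

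The main obstacle is the exact form of the first remainder term in Lemma~\ref{8.5}. The term $\|v\|^2\vert\log g\vert t$ is harmless, but one has to make sure the $\vert\log g\vert t$ contribution really scales like $t\,\vert\log g\vert$, so that it produces $s_{N,1}^{3/2}$ rather than something like $\sum_k\sqrt{s_k}$. That is already encoded in Lemma~\ref{8.5}, so the proof reduces to the bookkeeping above. One also has to check that the Lemma~\ref{8.5} bound is valid for $g$ outside the injectivity ball, where $\log$ is defined only measurably. For such $g$, $\vert\log g\vert$ is bounded below, so the trivial bound $\vert R\vert\le 2$ is dominated by $C\vert\log g\vert^p$. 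Its expectation is then at most $CS_k^{p/2}$, which is absorbed into $s_{N,\frac12}s_{N,1}^{p/2}$ after adjusting $p$, since $s_k\le 1$.
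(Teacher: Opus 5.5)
Your main argument is the paper's proof. It consists of three steps:
\begin{itemize}
\item the one-step identity from Lemmas~\ref{8.3} and~\ref{8.5};
\item the moment bound $\mathbb{E}\vert\log P_{k-1}\vert^{q}\le C_qS_k^{q/2}$ from Lemma~\ref{estim};
\item a telescoping that uses $\vert A^{(N)}_k\vert\le 1$.
\end{itemize}
This produces exactly the paper's five terms. The paper leaves the bound $\vert A^{(N)}_k\vert\le 1$ implicit in its ``triangular inequality''. It follows directly from $\|M^{(N)}_k\|\le 1$ and $s^{(N)}_k\le 1$, which is already needed to apply Lemma~\ref{8.5}, so you do not need Lemma~\ref{areaestim} for it.

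The one incorrect step is your closing remark on $g$ outside the injectivity ball. You do not need it if you use Lemma~\ref{8.5} as stated for all $g\in G$, as the paper does. The concern itself is reasonable, since the proof of Lemma~\ref{8.5} only uses the BCH expansion near $1_G$, but your fix does not work.

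\begin{itemize}
\item You replace the bound of Lemma~\ref{8.5}, every term of which carries at least a factor $\sqrt t$, by the cruder $\vert R\vert\le C\vert\log g\vert^p$.
\item Summing over $k$ then gives $\sum_k S_k^{p/2}$: that is $2^N$ terms with no $s^{(N)}_k$ weight.
\item The condition $s^{(N)}_k\le 1$ gives no lower bound on $s_{N,\frac12}$ that could absorb this count.
\end{itemize}

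Concretely, take $s^{(N)}_k=c2^{-N}$ for all $k$. Then $\sum_k S_k^{q/2}\asymp c^{q/2}2^N$, while $s_{N,\frac12}s_{N,1}^{p/2}=c^{(p+1)/2}2^{N/2}$ and the other four terms stay bounded. So no choice of $q$ makes the absorption work.

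If you want to avoid relying on Lemma~\ref{8.5} far from $1_G$, apply Markov's inequality to $\mathbb{P}(P_{k-1}\notin B)$. This leaves an extra error $C_q2^Ns_{N,1}^{q/2}$ that must stay outside the statement. It is negligible in the application, but only because $s_{N,1}\le C2^{-N\beta}$ by Lemma~\ref{areaestim}, and this still holds after summing over the $O(2^N)$ columns $\theta$.
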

\begin{proof}
    We have from Lemmas~\ref{8.3} and~\ref{8.5},
    \begin{align*}
        E^{(N)}_{2^N-1}&=A^{(N)}_{2^N-1} E^{(N)}_{2^N-2}\\
        +\ &\ \mathbb{E}\left[R_{s^{(N)}_{2^N-1}}\left(\prod_{i=0}^{2^N-2} X^N_i,M^{(N)}_{2^N-1}\right)\prod_{k=0}^{2^N-2}\exp i\left \langle \log\left(\prod_{i=0}^{k+1} X^N_i\right)-\log\left(\prod_{i=0}^{k} X^N_i\right),M^{(N)}_k \right\rangle\right].
    \end{align*}

    Therefore,
    \[\vert E^{(N)}_{2^N-1}-A^{(N)}_{2^N-1} E^{(N)}_{2^N-2}\vert \leq \mathbb{E}\left[\left\vert R_{s^{(N)}_{2^N-1}}\left(\prod_{i=0}^{2^N-2} X^N_i,M^{(N)}_{2^N-1}\right)\right \vert \right].\]
    We apply Lemma~\ref{estim} and the previous lemma, to get
    \begin{eqnarray*}
    \vert E^{(N)}_{2^N-1}-A^{(N)}_{2^N-1} E^{(N)}_{2^N-2}\vert&\lesssim& s^{(N)}_{2^N-1}\cdot (s^{(N)}_0+\cdots +s^{(N)}_{2^N-1})^\frac 12   +\left(s^{(N)}_{2^N-1}\right)^\frac 1 2\cdot (s^{(N)}_0+\cdots +s^{(N)}_{2^N-1})^\frac p2\\
    &+&\left(s^{(N)}_{2^N-1}\right)^p+\left(s^{(N)}_{2^N-1}\right)^\frac 3 2+s^{(N)}_{2^N-1}\cdot (s^{(N)}_0+\cdots +s^{(N)}_{2^N-1})
    \end{eqnarray*}
    where the constant in $C_p(\max (\|v\|^3,1))$. We get the desired result by triangular inequality.
\end{proof}

\subsection{Classical lattice gauge actions}
\label{ss:classicalactions}

In this subsection we discuss some historically important actions that were used in both, physicist and mathematics literature. We will prove that all these actions verify the $(H)-$property. This, together with Section~\ref{s:functspaces} conclude the proof of Theorem~\ref{thm:mainthmintro}.
In practice, the proof consists in reducing the computation to the Gaussian case on the Lie algebra with a small error, thanks to the following lemma.
\begin{lemma}\label{integ}
     Let $G\subset U(N)$ be a compact Lie group with lie algebra $\g$.  Let $U$ be a neighborhood of $1_G$ of diameter less than $1$, for which $\log$ is a diffeomorphism on its image. Then, there exists a constant $C$ such that for any positive bounded measurable function $f:U\subset G\rightarrow \mathbb{R}$,
    \[\left\vert \int_U f(g)\dd g - \int_{\log U} f(\exp X)dX\right\vert \leq C \int_{\g}\vert X\vert ^2 f(\exp X)dX. \]
\end{lemma}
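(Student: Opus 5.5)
The plan is to compare the Haar measure $\dd g$ on $U$ with the pushforward under $\exp$ of Lebesgue measure on $\g$, via the change of variables formula, and then control the Jacobian. Since $\log:U\to\log U$ is a diffeomorphism, we can write
\[
\int_U f(g)\,\dd g=\int_{\log U} f(\exp X)\,J(X)\,\dd X,
\]
where $J(X)$ is the density of the Haar measure in exponential coordinates. We normalize Lebesgue measure on $\g$ so that $J(0)=1$, which is the implicit normalization in the statement. Then
\[
\left\vert \int_U f\,\dd g-\int_{\log U} f(\exp X)\,\dd X\right\vert\le \int_{\log U} \vert J(X)-1\vert\, f(\exp X)\,\dd X .
\]
Here positivity of $f$ is used to move the absolute value inside.

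The key step is to show that $\vert J(X)-1\vert\le C\vert X\vert^2$ on $\log U$. The classical formula for the differential of the exponential map gives
\[
J(X)=\det\left(\frac{1-e^{-\mathrm{ad}_X}}{\mathrm{ad}_X}\right).
\]
We expand it as $\det\bigl(1-\tfrac12\mathrm{ad}_X+O(\vert X\vert^2)\bigr)=1-\tfrac12\operatorname{Tr}(\mathrm{ad}_X)+O(\vert X\vert^2)$. The first-order term vanishes because $\operatorname{Tr}(\mathrm{ad}_X)=0$: for a compact group the invariant scalar product makes $\mathrm{ad}_X$ skew-symmetric, so $\g$ is unimodular. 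Alternatively, $J$ is smooth and even, since inversion $X\mapsto -X$ preserves Haar measure and Lebesgue measure, so its gradient at $0$ vanishes. Taylor's theorem with a uniform bound on the second derivatives of $J$ on the relatively compact set $\log U$ then yields $\vert J(X)-1\vert\le C\vert X\vert^2$. If needed, we shrink $U$ slightly or note that $J$ extends smoothly to a neighborhood of $\overline{\log U}$.

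Combining the two displays gives the claimed bound, with the integral over $\log U$ bounded by the integral over $\g$. This is legitimate because the integrand $\vert X\vert^2 f(\exp X)$ is nonnegative; we interpret $f(\exp X)$ as $0$ outside $\log U$, which is how $f$, defined only on $U$, should be read on the right-hand side.

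The only genuinely delicate point is the vanishing of the linear term in the Jacobian. Without it one only gets an $O(\vert X\vert)$ error, which would be insufficient for the later Gaussian comparisons. Either the unimodularity or the parity argument settles it cleanly. The rest is routine.
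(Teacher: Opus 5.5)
Your proof is correct and follows the paper's route: change of variables through $\log$, the Jacobian bound $\vert J(X)-1\vert\le C\vert X\vert^2$, and positivity of $f$ to move the absolute value inside and enlarge the domain to $\g$. The paper only asserts the Jacobian bound; your justification via $\det\bigl(\frac{1-e^{-\mathrm{ad}_X}}{\mathrm{ad}_X}\bigr)$ and unimodularity (equivalently, evenness of $J$) supplies that step, and your remark that the normalization $J(0)=1$ is necessary makes explicit an assumption the paper leaves implicit.
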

\begin{proof}
   By a change of variables $g\mapsto \log g$, we have 
   \[ \int_U f(g)\dd g = \int_{\log U} f(\exp X)\vert J(X)\vert dX, \]
   where $\vert J(X)\vert =\det (\dd \exp (g)) $.
   However, we have 
  $ \vert 1-\vert J(X)\vert\vert \leq C \vert X\vert ^2.$
   This gives 
    \[\left\vert \int_U f(g)\dd g - \int_{\log U} f(\exp X)dX\right\vert \leq C \int_{\log U}\vert X\vert ^2 f(\exp X)dX \leq C \int_{\g}\vert X\vert ^2 f(\exp X)dX.\]
\end{proof}
\paragraph{The Villain action.}
The heat kernel. Let for all $t>0$, 
        $\mu_t(\dd g)\coloneq p_t(g)\dd g,$
        where $p_t$ is the heat kernel on $G$. The heat kernel verifies all the assumptions and it was the standard choice use by Lévy for the construction of the measure. 
\paragraph{The Manton action.}
Consider $t>0$ and define 
        \[\mu_t(\dd g) =\frac{1}{Z_m(t)}\exp\left(-\frac{d(g,1_G)^2}{2t}\right) \dd g, \text{ with } Z_m(t)\coloneq \int_G \exp\left(-\frac{d(g,1_G)^2}{2t}\right)\dd g.\] 
        
        \begin{proposition}
            We have 
            $Z_m(t)=\left(2\pi t\right)^{{\frac{\dim(G)}{2}}}+O(t^{{\frac{\dim(G)}{2}}+1})$.
        \end{proposition}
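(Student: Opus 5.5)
The plan is to reduce the computation of $Z_m(t)$ to a Gaussian integral on the Lie algebra $\g$, using Lemma~\ref{integ}. First, I would split $G$ into a small neighborhood $U$ of $1_G$ on which $\log$ is a diffeomorphism, and its complement. On $U^c$ one has $d(g,1_G)\geq c>0$, so the contribution to $Z_m(t)$ is at most $e^{-c^2/(2t)}$. This is $O(t^{\frac{\dim G}{2}+1})$ as $t\to 0$. For $t$ bounded away from $0$, the statement is trivial, since both sides are bounded, so only small $t$ matters.

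On $U$, I would take $U$ of the form $\exp(B(0,\delta))$ with $\delta$ below the injectivity radius. For the bi-invariant metric, geodesics through $1_G$ are one-parameter subgroups, so $d(\exp X,1_G)=|X|$ there. Lemma~\ref{integ}, applied to $f(g)=\exp(-d(g,1_G)^2/2t)$, gives
\[
\int_U e^{-\frac{d(g,1_G)^2}{2t}}\dd g=\int_{B(0,\delta)} e^{-\frac{|X|^2}{2t}}\dd X+O\Big(\int_{\g}|X|^2e^{-\frac{|X|^2}{2t}}\dd X\Big).
\]
The error term scales as $t^{\frac{\dim G}{2}+1}$, by the substitution $X=\sqrt t\,Y$. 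The main term equals $(2\pi t)^{\frac{\dim G}{2}}$ minus the Gaussian tail outside $B(0,\delta)$, and that tail is exponentially small in $1/t$.

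The one delicate point is the normalization. The Haar measure $\dd g$ and the Lebesgue measure $\dd X$ on $\g$ must be matched so that the Jacobian of $\exp$ equals $1$ at $0$; this is implicit in Lemma~\ref{integ}, so I would take it as given. Summing the three pieces then gives $Z_m(t)=(2\pi t)^{\frac{\dim G}{2}}+O(t^{\frac{\dim G}{2}+1})$.
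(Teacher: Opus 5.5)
Your proposal is correct and follows essentially the same route as the paper: you reduce to the Gaussian integral on $\g$ via Lemma~\ref{integ} (Jacobian $1+O(|X|^2)$), the $|X|^2$-weighted Gaussian gives the $O(t^{\frac{\dim G}{2}+1})$ error, and all tails are exponentially small in $1/t$. The only difference is cosmetic: you cut at a fixed neighborhood $\exp(B(0,\delta))$, whereas the paper cuts at the shrinking ball $B(0,t^{\frac{1-\varepsilon}{2}})$; your normalization remark (Jacobian of $\exp$ equal to $1$ at $0$) is exactly the convention the paper uses implicitly.
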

        \begin{proof}
        We have
        \begin{eqnarray*}
            Z_m(t)&=&\int_G \exp\left(-\frac{d(g,1_G)^2}{2t}\right)\dd g\\
            &=& \int_{B\left(0,t^{\frac{1-\varepsilon}{2}}\right)} \exp\left(-\frac{d(g,1_G)^2}{2t}\right)\dd g+\int_{B\left(0,t^{\frac{1-\varepsilon}{2}}\right)^c} \exp\left(-\frac{d(g,1_G)^2}{2t}\right)\dd g \\
            &=&\int_{B\left(0,t^{\frac{1-\varepsilon}{2}}\right)} \exp\left(-\frac{d(g,1_G)^2}{2t}\right)\dd g+O(\exp(-t^{-\varepsilon})).
        \end{eqnarray*}
        Now using Lemma~\ref{integ},
        \begin{eqnarray*}
        &&\int_{B\left(0,t^{\frac{1-\varepsilon}{2}}\right)} \exp\left(-\frac{d(g,1_G)^2}{2t}\right)\dd g= \int_{\log B\left(0,t^{\frac{1-\varepsilon}{2}}\right) }\exp\left(-\frac{\vert X\vert^2}{2t}\right)\vert J(X)\vert \dd X \\
            &=&\int_{\log B\left(0,t^{\frac{1-\varepsilon}{2}}\right) } \exp\left(-\frac{\vert X\vert^2}{2t}\right)(1+O(\vert X\vert ^2))\dd X = \int_{\log B\left(0,t^{\frac{1-\varepsilon}{2}}\right) } \exp\left(-\frac{\vert X\vert^2}{2t}\right)\dd X +O(t^{{\frac{\dim(G)}{2}}+1})\\
               &=&\int_{\g} \exp\left(-\frac{\vert X\vert^2}{2t}\right)\dd X +O(\exp(-t^{-\varepsilon}))+O(t^{{\frac{\dim(G)}{2}}+1})=\left(2\pi t\right)^{{\frac{\dim(G)}{2}}}+O(t^{{\frac{\dim(G)}{2}}+1}). 
        \end{eqnarray*}
        \end{proof}
        \begin{proposition}
            For all $v\in \g$, 
    \[\int_G \langle v, \log a \rangle ^2 d\mu_t(a)=\vert v\vert ^2t+o(t).\]
        \end{proposition}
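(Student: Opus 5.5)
We will mimic the computation of $Z_m(t)$ in the preceding proposition: localize near $1_G$, pass to the Lie algebra through Lemma~\ref{integ}, and reduce to a Gaussian computation on $\g$. First split $\int_G \langle v,\log a\rangle^2\,\dd\mu_t(a)$ into the contribution of the ball $B(0,t^{\frac{1-\varepsilon}{2}})$ and of its complement. On the complement the integrand is bounded by $\mathrm{diam}(G)^2\|v\|^2$. The Manton density there is at most $Z_m(t)^{-1}\exp(-t^{-\varepsilon}/2)$, and $Z_m(t)^{-1}=O(t^{-\dim(G)/2})$ by the previous proposition. So this part is $O(\|v\|^2 t^{-\dim(G)/2}e^{-t^{-\varepsilon}/2})=o(t)$.

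On the ball, $d(a,1_G)=\vert\log a\vert$ because $G$ carries a bi-invariant metric, so geodesics through $1_G$ are one-parameter subgroups. Write $a=\exp X$ and apply Lemma~\ref{integ} (or directly the Jacobian bound $\vert 1-\vert J(X)\vert\vert\le C\vert X\vert^2$ used in its proof). This gives
\[
\int_{B}\langle v,\log a\rangle^2 e^{-\frac{d(a,1_G)^2}{2t}}\dd a=\int_{\log B}\langle v,X\rangle^2 e^{-\frac{\vert X\vert^2}{2t}}\dd X+O\Big(\|v\|^2\int_{\g}\vert X\vert^4e^{-\frac{\vert X\vert^2}{2t}}\dd X\Big).
\]
After the scaling $X=\sqrt{t}Y$, the error term is $O(\|v\|^2 t^{2+\dim(G)/2})$. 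We then extend the main integral back to all of $\g$, which costs only $O(e^{-t^{-\varepsilon}/4})$. By isotropy of the Gaussian, $\int_{\g}\langle v,X\rangle^2e^{-\vert X\vert^2/2t}\dd X=\|v\|^2 t\,(2\pi t)^{\dim(G)/2}$.

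Finally, divide by $Z_m(t)=(2\pi t)^{\dim(G)/2}(1+O(t))$. This yields $\|v\|^2 t(1+O(t))+O(\|v\|^2t^2)+o(t)=\|v\|^2t+o(t)$. Polarization then gives the bilinear version in item 3 of property $(H)$.

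The only delicate point is checking that every error term still beats $t$ after division by $Z_m(t)\sim t^{\dim(G)/2}$. This works because the Jacobian correction gains $t^2$ relative to the main term $t^{1+\dim(G)/2}$, and the tail terms are super-polynomially small.
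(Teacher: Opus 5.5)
Your proof is correct and follows the paper's argument. You split at the ball $B(0,t^{\frac{1-\varepsilon}{2}})$, pass to $\g$ via the Jacobian bound of Lemma~\ref{integ}, evaluate the Gaussian second moment, and divide by $Z_m(t)=(2\pi t)^{\dim(G)/2}+O(t^{\dim(G)/2+1})$. Your bookkeeping is slightly more careful than the paper's: you keep the factor $Z_m(t)^{-1}$ in the tail estimate, which the paper drops when it writes $O(\exp(-t^{-\varepsilon}))$, and you show explicitly that the Jacobian correction contributes only $O(t^2)$.
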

        \begin{proof}
        Let $v\in \g$, we have 
        \begin{eqnarray*}
        \int_G \langle v, \log a \rangle ^2 d\mu_t(a) &=& \int_{B\left(0,t^{\frac{1-\varepsilon}{2}}\right)} \langle v, \log a \rangle ^2 d\mu_t(a)+\int_{B\left(0,t^{\frac{1-\varepsilon}{2}}\right)^c} \langle v, \log a \rangle ^2 d\mu_t(a)\\
        &=&\frac{1}{Z_m(t)}\int_{B\left(0,t^{\frac{1-\varepsilon}{2}}\right)} \langle v, X \rangle ^2 \exp\left(-\frac{\vert X\vert^2}{2t}\right)(1+O(\vert X\vert ^2)) \dd X+O(\exp(-t^{-\varepsilon})).
    \end{eqnarray*}
    Now,
    \begin{eqnarray*}
        &&\frac{1}{Z_m(t)}\int_{B\left(0,t^{\frac{1-\varepsilon}{2}}\right)} \langle v, X \rangle ^2 \exp\left(-\frac{\vert X\vert^2}{2t}\right)(1+O(\vert X\vert ^2)) \dd X\\
        &=&\frac{\left(2\pi t\right)^{{\frac{\dim(G)}{2}}}}{\left(2\pi t\right)^{{\frac{\dim(G)}{2}}}+O(t^{{\frac{\dim(G)}{2}}+1})}\left(\vert v\vert^2 t + o(t)\right) =(1-O(t))\left(\vert v\vert^2 t + o(t)\right)= \vert v\vert^2 t + o(t).
    \end{eqnarray*}
        \end{proof}
Therefore, we get the following.
\begin{proposition}
    The Manton family has the property $(H)$.
\end{proposition}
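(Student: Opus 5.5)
To prove that the Manton family has property $(H)$, I will check the five items of Section~\ref{actions} one at a time. In each case the computation on $G$ will be reduced to a Gaussian computation on $\g$: first localize to a small ball around $1_G$, then use Lemma~\ref{integ} to replace Haar measure by Lebesgue measure on $\g$ up to a relative error $O(|X|^2)$.

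The first two items are immediate. The density $\rho_t(g)=Z_m(t)^{-1}\exp(-d(g,1_G)^2/2t)$ exists by definition. Since $d$ is the distance of a bi-invariant metric, $d(hgh^{-1},1_G)=d(g,1_G)$ and $d(g^{-1},1_G)=d(g,1_G)$; this gives ad-invariance and symmetry. Item three, the covariance condition, has just been proved for the diagonal case $v=w$, and the general case follows by polarization. Note that $\mathrm{Ad}$-invariance also kills the off-diagonal terms directly, through the reflection symmetry of the Gaussian on $\g$.

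For item four, I use the proposition $Z_m(t)\geq c\,t^{\dim(G)/2}$ for $t\leq 1$. For $t$ away from $0$ this lower bound holds trivially, since $Z_m$ is continuous and positive. This gives
\[
\Bigl(1+\tfrac{d(g,1_G)}{\sqrt t}\Bigr)^N\rho_t(g)\leq C\,t^{-\dim(G)/2}\sup_{u\geq 0}(1+u)^N e^{-u^2/2}\leq C_N\,t^{-\dim(G)/2}.
\]
The supremum is finite, so this is exactly the required bound with $u=d(g,1_G)/\sqrt t$.

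Item five, the moment bound, is the only step that needs an actual estimate, and I expect it to be the mildest obstacle. I split $G$ into $B(1_G,t^{(1-\varepsilon)/2})$ and its complement. On the complement $|\log x|$ is bounded by $\mathrm{diam}(G)$, and Lemma~\ref{conv} (which now applies, since item four is established) makes the contribution $O(t^p)$ for any $p$. On the ball I use Lemma~\ref{integ} together with $|J(X)|\leq 1+C|X|^2$ to bound the integral by
\[
Z_m(t)^{-1}\int_{\g}|X|^{2\beta}\bigl(1+C|X|^2\bigr)e^{-|X|^2/2t}\,\dd X .
\]
Rescaling $X=\sqrt t\,Y$ and using the asymptotics of $Z_m(t)$, this is $O(t^{\beta})$ for $t\leq 1$. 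Here $\log$ is the local inverse of $\exp$, and on the ball $d(\exp X,1_G)=|X|$, which is what makes the Gaussian form exact. Combining the five items gives property $(H)$.
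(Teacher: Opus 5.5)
Your proof is correct and takes essentially the paper's route: localize to $B(1_G,t^{(1-\varepsilon)/2})$, pass to Lebesgue measure on $\g$ via Lemma~\ref{integ}, and reduce to Gaussian integrals using $Z_m(t)=(2\pi t)^{\dim(G)/2}+O(t^{\dim(G)/2+1})$. The paper writes out only the normalization and diagonal covariance computations and leaves the other items of $(H)$ implicit, so your checks of items 1, 4 and 5, and polarization for item 3, fill in what it takes for granted; your aside crediting $\mathrm{Ad}$-invariance for the vanishing of the off-diagonal terms is not needed, since polarization already covers item 3, and it misattributes the mechanism, which is the reflection symmetry of the Gaussian ($\mathrm{Ad}$-invariance says nothing for abelian $G$).
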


\paragraph{The Wilson action.}
The Wilson action. In this example, we assume $G=U(N)$. In this case, for $g\in U(N)$, we have, when $g$ is close to the identity, 
        \[d(g,1)^2=\vert \log g\vert^2 =\frac{1}{2}\mathrm{Tr} (\log g)(\log g)^*\approx \frac{1}{2}\mathrm{Tr} (1- g)(1- g)^*=-\frac{1}{2}\mathrm{Tr}( 1-g^*-g+gg^*),\]
        which gives 
        \[d(g,1)^2\approx\frac{1}{2}\mathrm{Tr}( 1-g^*)+\frac{1}{2}\mathrm{Tr}( 1-g)=\mathbf{Re}\mathrm{Tr} (1-g).\]
        This motivates the Wilson actions which is defined, for all $t>0$ as 
        \[\mu_t(\dd g)=\frac{1}{Z_w(t)} \exp\left(-\frac{\mathbf{Re}\mathrm{Tr} (1-g)}{t}\right),\text{ with } Z_w(t)\coloneq \int_G\exp\left(-\frac{\mathbf{Re}\mathrm{Tr} (1-g)}{t}\right) \dd g.\]
        We can show as well the following.
        \begin{proposition}
            We have 
                 $Z_w(t)=\left(2\pi t\right)^{{\frac{\dim(G)}{2}}}+O(t^{{\frac{\dim(G)}{2}}+1}),$ and 
                the Wilson family verifies conditions $(H)$.          
        \end{proposition}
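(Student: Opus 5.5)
The statement to prove is that the Wilson family has normalisation $Z_w(t)=(2\pi t)^{\dim(G)/2}+O(t^{\dim(G)/2+1})$ and satisfies the conditions $(H)$. I would follow the Manton computation verbatim, after first comparing the Wilson weight $\phi(g)\coloneq \mathbf{Re}\mathrm{Tr}(1-g)$ with the squared distance.

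\textbf{Step 1: two comparison estimates.} Diagonalise $g=\exp X$ with $X\in\g$ skew-Hermitian, eigenvalues $i\vartheta_j$, $\vartheta_j\in(-\pi,\pi]$. Then
\[
\phi(g)=\sum_j(1-\cos\vartheta_j)\quad\text{and}\quad d(g,1)^2=c\sum_j\vartheta_j^2,
\]
where $c$ is the normalisation of the invariant product. I will assume this normalisation is chosen so that the quadratic terms match and $c=1/2$.
\begin{itemize}
\item \emph{Local estimate.} Near $1_G$ we get $\phi(\exp X)=|X|^2+O(|X|^4)$.
\item \emph{Global estimate.} The inequality $1-\cos\vartheta\geq \tfrac{2}{\pi^2}\vartheta^2$ on $[-\pi,\pi]$ gives $\phi(g)\geq \kappa\, d(g,1)^2$ for some $\kappa>0$ and all $g\in G$.
\end{itemize}

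\textbf{Step 2: the partition function.} Split $G$ into $B(0,t^{\frac{1-\varepsilon}{2}})$ and its complement.
\begin{itemize}
\item On the complement, the global estimate bounds the integrand by $\exp(-\kappa t^{-\varepsilon})$.
\item On the ball, $|X|^4/t\leq t^{1-2\varepsilon}$ is small, so $e^{-\phi/t}=e^{-|X|^2/(2t)}(1+O(|X|^4/t))$.
\end{itemize}
Apply Lemma~\ref{integ} for the Jacobian and use Gaussian moments: $\int|X|^2e^{-|X|^2/2t}\lesssim t^{d/2+1}$ and $\int|X|^4t^{-1}e^{-|X|^2/2t}\lesssim t^{d/2+1}$. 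This gives the expansion of $Z_w(t)$ exactly as for $Z_m$. The same splitting, now using the weight $\langle v,X\rangle\langle w,X\rangle$, gives condition (3), $\int\langle v,\log a\rangle\langle w,\log a\rangle\,\dd\mu_t=\langle v,w\rangle t+o(t)$, as in the Manton proposition.

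\textbf{Step 3: the remaining conditions.}
\begin{itemize}
\item \emph{Conditions (1) and (2).} These are immediate. $\phi$ is a class function, so $\mu_t$ is Ad-invariant. It is invariant under $g\mapsto g^{-1}=g^*$ because $\mathbf{Re}\mathrm{Tr}\,g^*=\mathbf{Re}\mathrm{Tr}\,g$. The density is explicit.
\item \emph{Condition (4).} By Step 2 and the global estimate, $\rho_t(g)\leq C t^{-d/2}e^{-\kappa d(g,1)^2/t}$. Also $\sup_{u\geq 0}(1+u)^Ne^{-\kappa u^2}<\infty$ for every $N$. Together these give the bound with $u=d(g,1)/\sqrt t$.
\item \emph{Condition (5).} The same Gaussian domination gives $\int|\log x|^{2\beta}\rho_t\leq Ct^{-d/2}\int_G d(g,1)^{2\beta}e^{-\kappa d^2/t}\,\dd g\lesssim t^\beta$. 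To see the last inequality, compute in exponential coordinates on a neighbourhood of $1_G$; outside that neighbourhood the integrand is exponentially small.
\end{itemize}

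\textbf{Main obstacle.} The only delicate point is normalisation. The factor $\tfrac12$ in $|\log g|^2=\tfrac12\mathrm{Tr}(\log g)(\log g)^*$ must match the quadratic term of $\mathbf{Re}\mathrm{Tr}(1-g)$. If it does not, the limiting variance in condition (3) comes out as a fixed multiple of $t$, and this would have to be absorbed by rescaling $t$ in the definition of the Wilson action. I would check this constant first, since the rest of the argument is a copy of the Manton argument.
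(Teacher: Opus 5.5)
Your route is the one the paper intends: it gives no separate proof and only says the Manton computation carries over. Your Steps 1 and 3 are fine. That includes the global bound $1-\cos\vartheta\ge\frac{2}{\pi^2}\vartheta^2$, the Gaussian domination $\rho_t(g)\lesssim t^{-\dim(G)/2}e^{-\kappa d(g,1)^2/t}$ giving (4) and (5), and the Ad/inversion invariance. The gap is in Step 2, at exactly the constant you flagged. With $c=\frac12$, your own local estimate says $\phi(\exp X)=|X|^2+O(|X|^4)$. Hence $e^{-\phi/t}=e^{-|X|^2/t}\bigl(1+O(|X|^4/t)\bigr)$, not $e^{-|X|^2/(2t)}\bigl(1+O(|X|^4/t)\bigr)$. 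Carried through correctly, your argument gives
\[
Z_w(t)=(\pi t)^{\frac{\dim(G)}{2}}+O\bigl(t^{\frac{\dim(G)}{2}+1}\bigr),\qquad \int_G\langle v,\log a\rangle\langle w,\log a\rangle\,\dd\mu_t(a)=\tfrac{t}{2}\langle v,w\rangle+o(t).
\]
So neither the stated asymptotic of $Z_w$ nor condition (3) of $(H)$ follows. You can check this on $U(1)$. There $|X|^2=\frac12\vartheta^2$ and $\dd X=\dd\vartheta/\sqrt2$, so $\int e^{-(1-\cos\vartheta)/t}\,\dd\vartheta/\sqrt2\approx\sqrt{\pi t}$, whereas the Manton integral $\int e^{-\vartheta^2/(4t)}\,\dd\vartheta/\sqrt2\approx\sqrt{2\pi t}$.

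Your diagnosis of the obstacle is inverted. Choosing $c=\frac12$ makes the quadratic part of $\mathbf{Re}\mathrm{Tr}(1-g)$ equal to $d(g,1)^2$, which is also the paper's heuristic. That is exactly what creates the factor $2$, because the Manton weight is $d(g,1)^2/(2t)$, not $d(g,1)^2/t$. The stated constants need $\phi(\exp X)=\frac12|X|^2+O(|X|^4)$. That means either the scalar product $\langle X,Y\rangle=\mathrm{Tr}(XY^*)$ (your $c=1$), or the Wilson weight $\exp\bigl(-\mathbf{Re}\mathrm{Tr}(1-g)/(2t)\bigr)$, equivalently the reparametrised family $t\mapsto\mu_{2t}$. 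With either correction your Steps 2 and 3 go through verbatim. Under the normalisation $|\log g|^2=\frac12\mathrm{Tr}(\log g)(\log g)^*$ written in the paper, the proposition itself needs this correction; your proof reaches it only through the arithmetic slip.
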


\section{Scaling limit of lattice gauge theories on surfaces}\label{s:CLT}

Let $\Sigma$ be a compact surface and let $f : \Sigma \to \mathbb{R}$ be a Morse function satisfying the Smale condition. 
Blow up $\Sigma$ at the maximal critical point $\max f$ to obtain a surface with one boundary component, which we denote by $\mathcal{S}$. 
Let $G$ be a compact Lie group with Lie algebra $\mathfrak{g}$, equipped with a fixed bi-invariant inner product $\langle \cdot,\cdot\rangle$. 
On $G$ we consider a family of probability measures $(\mu_t)_{t>0}$ of the form
\[
\mu_t = \rho_t \,\dd g,
\]
where $\dd g$ is the Haar measure on $G$ and the densities $(\rho_t)_{t>0}$ satisfy the hypothesis $(H)$ introduced at the beginning of Section~\ref{LieGroupRW}. 

Let $\mathbb{G} = (\mathbb{V},\mathbb{E},\mathbb{F})$ be a finite graph embedded in $\mathcal{S}$, with vertex set $\mathbb{V}$, edge set $\mathbb{E}$ and face set $\mathbb{F}$, that we assume are all contractible. 
We choose once and for all an orientation of each edge that we call \emph{positive} and write $\mathbb{E}_+$ for the set of positively oriented edges. 
For $e \in \mathbb{E}_+$ we write $e_+$ for $e$ endowed with this reference orientation, and $e_-$ for the same geometric edge with the opposite orientation. 
We then set $\mathbb{E}_- \coloneqq \{e_- : e \in \mathbb{E}_+\}$ and regard $\mathbb{E}_+ \cup \mathbb{E}_-$ as the set of oriented edges.

\begin{definition}[Discrete connection on a graph]
A \emph{discrete connection} on $\mathbb{G}$ is a map
\[
g : \mathbb{E}_+ \cup \mathbb{E}_- \longrightarrow G
\]
such that for every $e \in \mathbb{E}_+$ we have
\[
g(e_+) = g(e_-)^{-1}.
\]
For an edge $e\in \mathbb{E}_+\cup \mathbb{E}_-$, we denote by $i(e)$ and $f(e)$ the vertices such that $e=(i(e),f(e))$. A path in $\mathbb{G}$ is a sequence of edges  $a_1,\dots,a_n$ such that $f(a_k)=i(a_{k+1})$ for all $k=1,\dots,n-1$. If $c = a_1 \cdots a_n$ is a path in $\mathbb{G}$, given as a concatenation of oriented edges $a_i \in \mathbb{E}_+ \cup \mathbb{E}_-$, we define the \emph{holonomy} of $g$ along $c$ by
\[
\mathrm{Hol}(g,c) \coloneqq g(a_1)\cdots g(a_n) \in G.
\]
\end{definition}

We next introduce the lattice Yang--Mills measure of Driver and Sengupta on the space of discrete connections.

\begin{definition}[Driver--Sengupta measure]
Let $(\rho_t)_{t>0}$ be as above and let $\mathbb{G}$ be a finite graph on $\mathcal{S}$. 
The \emph{Driver--Sengupta measure} associated with $(\rho_t)_{t>0}$ and $\mathbb{G}$ is the probability measure on $G^{\mathbb{E}_+}$, denoted by $\mathrm{DS}(\rho,\mathbb{G})$, whose density with respect to the product Haar measure $\dd g$ on $G^{\mathbb{E}_+}$ is given by
\[
\dd \mathrm{DS}(\rho,\mathbb{G})\big((g_e)_{e \in \mathbb{E}_+}\big) 
 = \frac{1}{Z_{\rho,\mathbb{G}}}
   \prod_{F \in \mathbb{F}} \rho_{\sigma(F)}\big(\mathrm{Hol}(g,\partial F)\big)\, \dd g.
\]
Here $\partial F$ is the oriented boundary of $F$, the time parameter $\sigma(F)>0$ is the area of $F$, and $Z_{\rho,\mathbb{G}}$ is the normalizing constant.
\end{definition}

We also record the natural notion of gauge transformation on the graph.

\begin{definition}[Gauge transformations]
A \emph{gauge transformation} on $\mathbb{G}$ is a map
\[
h : \mathbb{V} \longrightarrow G.
\]
Given a discrete connection $g$ on $\mathbb{G}$, its gauge transform $g^h$ is the discrete connection defined on positively oriented edges by
\[
g^h(e) \coloneqq h(\mathrm{t}(e))\, g(e)\, h(\mathrm{s}(e))^{-1}, \qquad e \in \mathbb{E}_+,
\]
and extended to negative edges by $g^h(e_-) \coloneqq g^h(e_+)^{-1}$. 
Here $\mathrm{s}(e)$ and $\mathrm{t}(e)$ denote the source and target vertices of $e$.
\end{definition}

One readily checks that the Driver--Sengupta measure is invariant under gauge transformations, and that holonomies of loops based at a fixed vertex transform by conjugation at that basepoint. 
In particular, traces of loop holonomies (“Wilson loops”) provide gauge-invariant observables.

\medskip

We now specialize to the sequence of Morse lattices introduced in Section~\ref{s:Morselattice} and state the main result of this subsection.

\begin{proposition}
There exists a sequence of random $\mathfrak{g}$-valued $1$-forms $(A_N)_{N\geq 0}$ on $\mathcal{S}$ such that the following properties hold, where $(\Lambda_N)_{N\geq 0}$ is the sequence of Morse lattices from Section~\ref{s:Morselattice}.
\begin{enumerate}
\item For each $N$, let $(g_e)_{e \in \mathbb{E}_N}$ be a discrete connection on $\Lambda_N$ with law $\mathrm{DS}(\rho,\Lambda_N)$. 
Then the family of loop holonomies of $A_N$ has the same law as the family of loop holonomies of $(g_e)_{e \in \mathbb{E}_N}$, in the sense that
\[
\big(\mathrm{Hol}(A_N,c)\big)_{c \in \mathrm{Loop}_o(\Lambda_N)}
\;\stackrel{\mathrm{law}}{=}\;
\big(\mathrm{Hol}\big((g_e)_{e\in\mathbb{E}_N},c\big)\big)_{c \in \mathrm{Loop}_o(\Lambda_N)}.
\]
\item There exists a random distributional $1$-form $A$ on $\mathcal{S}$ such that
\[
A_N \xrightarrow[N\to\infty]{\;\mathrm{law}\;} A
\]
in $\mathcal{D}^{'^2}(\mathcal{S},\mathfrak{g})$, where the test $1$-forms are taken to vanish to all orders on the boundary of $\mathcal{S}$. 
Moreover, the limit $A$ coincides with the random $1$-form constructed in the first part of the article.
\end{enumerate}
\end{proposition}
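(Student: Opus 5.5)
We construct $A_N$ by discrete axial (Morse) gauge fixing on the Morse lattice $\Lambda_N$. We fix a maximal tree consisting of the flow-line edges $r\xrightarrow{\theta}r_+$ inside each strip $\Psi(H_i)$, together with the base circle $\partial\mathcal{S}_{\mathrm{in}}$. We gauge-fix all these edges to $1_G$, except for the $2g$ edges crossing the unstable curves, which carry the cycle variables. After this gauge fixing, the Driver--Sengupta measure $\mathrm{DS}(\rho,\Lambda_N)$ with free boundary factorizes as follows. Within each column $[\theta,\theta_+]$ the horizontal edge variables $g(\theta\xrightarrow{r}\theta_+)$, $r\in R^N$, form a $G$-valued random walk in $r$ whose increments $X_{r,\theta}$ are independent with laws $\mu_{\sigma(r,r_+,\theta,\theta_+)}$. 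The $2g$ variables $g_a$ on the edges crossing $W^u(a)$ are independent and Haar distributed, by the same computation as in Subsection~\ref{pant}.

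We then define the piecewise smooth form
\[
A_N:=\Psi^*\Big(\sum_{\theta\in\Theta^N_-}\sum_{r\in R^N_-}\mathbf 1_{[r,r_+)}(r')\,\log\!\big(\textstyle\prod_{\rho\le r}X_{\rho,\theta}\big)\,2^{N}\mathbf 1_{[\theta,\theta_+)}\,\dd\theta\Big)+\sum_a\log(g_a)U_a,
\]
with $U_a$ smoothed at scale $2^{-N}$. On each horizontal edge, the $\dd\theta$ part has exact holonomy $\exp$ of the integral, which is the edge variable. Vertical edges carry zero, so item 1 follows from this gauge-fixed description, using the generating family of loops as in the proof of the Driver--Sengupta theorem of Section~\ref{s:fundamentalformula}.

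For item 2 we separate finite-dimensional convergence from tightness. For the noise part, we test against $\varphi\in\Omega^1(\mathcal{S})$ vanishing at $\partial\mathcal{S}$. After the pullback and an Abel summation in $r$, $\langle A_N,\varphi\rangle$ becomes a sum over columns of $\sum_k\langle \log(\prod_{i\le k+1}X_i)-\log(\prod_{i\le k}X_i),M_k\rangle$. Here $M_k$ is built from the $\theta$-integrals of $\varphi$ over the cell, normalized so that $\|M_k\|\le 1$ after scaling by a constant. Independence across columns reduces the characteristic function to a product of the quantities $E^{(N)}$ of Section~\ref{techest}. Lemma~\ref{EstimBruitBlanc} then gives $E^{(N)}=\prod_k(1-\tfrac12\|M_k\|^2s_k)+o(1)$. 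For this we need $s_{N,1}$ bounded, which holds because the total column area is $\le C2^{-N\beta}$ by Lemma~\ref{areaestim}, while $s_{N,3/2},s_{N,p},s_{N,1}^2\to0$ since $\max s_k\le C2^{-N}$. The product converges to $\exp(-\tfrac12\sum\|M\|^2\sigma)$, which is exactly the Gaussian characteristic functional of $\Psi^*(\partial_\theta\langle\xi,\mathbf 1_{\square}\rangle\dd\theta)$. The flat part converges trivially, since $\log g_a$ does not depend on $N$ and the smoothed $U_a$ converge to $U_a$ in $\mathcal{D}'$. Independence of the two parts holds at every $N$ and therefore passes to the limit.

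The step I expect to be the main obstacle is tightness. I would prove it on the cylinder in the weighted anisotropic space $\mathcal{C}^{\alpha,\alpha-1;s}$ with $\alpha<\tfrac12$ and $s\in(-2,-\tfrac12)$, and then push it through the continuous map $\Psi^*$ of Proposition~\ref{ContPullBack} into $\mathcal{C}^{-\beta-2}(\mathcal{S})$. The approach mimics the Gaussian computation preceding the regularity lemma for $\tilde A$. We bound moments of rectangular increments of $W_N=\int_0^\theta \tilde A_N$ by $(\sigma(\square))^{p}$ via Lemma~\ref{estim}, which replaces hypercontractivity, and we use the dyadic corona area bounds of Lemma~\ref{lem:dyadiccorona} near $\Psi(\mathrm{Crit}(f)_1)$ to control the scalings. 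Rectangles smaller than a lattice cell must be handled separately, using piecewise linearity and the face bound $C2^{j-2N}$. Tightness then follows from the compact embeddings $\mathcal{C}^{\alpha+\varepsilon,\alpha+\varepsilon-1}\hookrightarrow\mathcal{C}^{\alpha,\alpha-1}$ proved in the appendix. Finally, uniqueness of the limit comes from the finite-dimensional identification above, which shows that $A$ agrees with formula~(\ref{YMFormula}).
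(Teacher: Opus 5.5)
\textbf{Verdict: there is a genuine gap, in the tightness step and in item~1.} Your identification of the limit is essentially the paper's computation: independence across columns, Lemma~\ref{EstimBruitBlanc}, and Lemma~\ref{areaestim} to make the column errors summable rather than merely $o(1)$ per column.

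\textbf{The tightness step cannot work as designed.}
\begin{itemize}
\item Your $A_N$ is piecewise \emph{constant} in $r$, because of the factor $\mathbf 1_{[r,r_+)}(r')$. Hence $W_N=\int_0^\theta\tilde A_N$ jumps across every level, and almost surely $\sup_{r_1\neq r_2}\|W_N(r_1,\cdot)-W_N(r_2,\cdot)\|_{C^\alpha_\theta}/|r_1-r_2|^{\alpha}=\infty$.
\item So $A_N$ lies in no $\mathcal{C}^{\alpha,\alpha-1;s}$ with $\alpha>0$. The $\mathcal{W}^{\alpha,\alpha-1;p}$ seminorm also diverges once $\alpha p\geq 1$, which is exactly what Proposition~\ref{prop:continuousinjection} needs. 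Consequently neither tightness nor Proposition~\ref{ContPullBack} is available. This is why the paper interpolates affinely in $r$.
\item Even after interpolation, Lemma~\ref{estim} does not bound rectangular increments. In each column the increment is $\log(xy)-\log x$, with $x$ the walk from the bottom, not $\log y$. Summing $l-k$ columns by the triangle inequality loses a power of $l-k$ that grows with $N$. You need the centering of $\log(ab)-\log a$ (from symmetry and $\mathrm{Ad}$-invariance), BCH, and BDG, as in Proposition~\ref{mainestimate}.
\end{itemize}

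\textbf{For this proposition you can drop tightness entirely.} The test space is nuclear, so by Fernique's theorem, pointwise convergence of $\mathbb{E}[\exp(i\langle A_N,\varphi\rangle)]$ to a functional continuous in $\varphi$ already gives convergence in law in $\mathcal{D}'$. This is how the paper concludes Theorem~\ref{convAn}, via $\xi_N$, Proposition~\ref{whitenoiseconv}, and continuity of $\iota_V\mathcal{L}_V^{-1}$ on the cylinder. Applied to your pulled-back test forms, it even gives the statement directly on $\mathcal{S}$. You would only need to check two things:
\begin{itemize}
\item the Abel-summed coefficients, namely cell averages of $\int_r^{\max f}\big((\Psi^{-1})^*\varphi\big)_r(u,\theta)\,\dd u$, stay bounded near $\Psi(\mathrm{Crit}(f)_1)$. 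This holds because $|\dd\Psi^{-1}|\lesssim|(r,\theta)|^{-1/2}$, which is integrable in $r$;
\item the limiting Gaussian functional is continuous in $\varphi$.
\end{itemize}

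\textbf{Item~1 is not established as written, because of the smoothing of $U_a$ at scale $2^{-N}$.} On a horizontal edge meeting the smoothed band, the parallel transport of $\big(2^N\log M_e+\log(g_a)\,\eta_N\big)\,\dd\theta$ is a time-ordered exponential. It is not $M_e\,g_a^{\pm1}$, since $\log M_e$ and $\log g_a$ do not commute. So the exact equality in law with $\mathrm{DS}(\rho,\Lambda_N)$ is lost. There are two fixes:
\begin{itemize}
\item keep $U_a=[W^u(a)]$ as a current, with the rule that a transverse crossing contributes $g_a^{\pm1}$, as the paper does; or
\item confine the bumps to zero-area bands where the noise form vanishes.
\end{itemize}

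\textbf{Two smaller points.}
\begin{itemize}
\item The set of flow-line edges, let alone with the base circle added, is not a tree: it already contains the $2g$ stable loops. This is why the paper keeps stable variables in $G^{\mathrm{St}}$ instead of trivializing all vertical edges. In your gauge, the cycle variables sit on one omitted edge per stable loop, not on edges crossing unstable curves.
\item With $\prod_{\rho\le r}X_{\rho,\theta}$ placed on $[r,r_+)$, the edge at level $r$ receives the variable of level $r_+$, an off-by-one in the indexing.
\end{itemize}
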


\subsection{Discrete Morse gauge}
The purpose of this section is to define a discrete analogue of the Morse gauge. Let us recall a few notation.
\begin{itemize}
    \item Call the graph obtained $\Lambda^N$, the set of its edges  $E^N$, and the set of its faces $F^N$.
    \item Denote by $H^N$ the set of its horizontal edges. This is the set of edges that are pieces of level sets of the Morse function.
    \item Denote by $V^N$ the set of vertical edges, i.e. that are pieces of flow lines of the gradient of the Morse function. 
\end{itemize}
We will assume that the edges are taken to be oriented in increasing in $r$ and clockwise in $\theta$. A configuration $M$ on $G^{E^N}$ can be indexed as  
\[(M_{H^N},M_{V^N})\coloneq \left(\left(M_{\theta\xrightarrow{r}\theta_+ }\right)_{\substack{\theta\in\Theta^N_-\\r\in R^N}},\left(M_{r\xrightarrow{\theta}{r_+} }\right)_{\substack{\theta\in\Theta^N\\r\in R^N_-}}\right).\]

A natural first attempt would be to set the connection to be trivial on all vertical edges. 
However, this cannot be achieved globally: the stable manifolds form closed loops (see Figure~\ref{MLatt}), and the holonomies along these loops define conjugacy classes in $G$ that are \emph{gauge-invariant}. 
If these holonomies were not already trivial, no gauge transformation can make them so. 
Thus, completely trivializing vertical edges would erase essential topological information. 
\begin{figure}
    \centering
    \includegraphics[width=\linewidth]{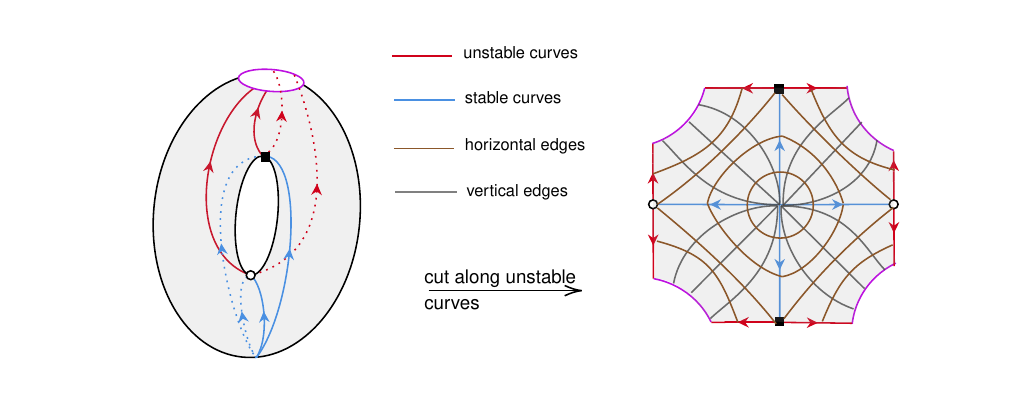}
    \caption{A Morse lattice.}
    \label{MLatt}
\end{figure}
This observation implies that a discrete connection in the Morse gauge should retain precisely two kinds of data:
\begin{enumerate}
    \item \emph{Face variables}, encoding the holonomy around each face of the Morse lattice. Since there are as many faces as horizontal edges, these holonomies can be concentrated on the horizontal edges $H^N$.
    \item \emph{Stable variables}, encoding the (conjugacy classes of) holonomies along the $2g$ closed stable curves.
\end{enumerate}
We get therefore the following definition of a discrete connection in the Morse gauge.
\begin{definition}[Discrete connection in Morse gauge]
    A configuration in the discrete Morse gauge on $\mathcal{S}$ is an element $(M,U)\in G^{H^N}\times G^{\mathrm{St}}$ where $H_N$ is the set of horizontal edges of $\Lambda_N$, and $\mathrm{St}$ is the set of the $2g$ closed stable curves. 
\end{definition}
The first component $M$ encodes local curvature data, while $U$ captures the global holonomy along the nontrivial cycles of the surface.
We now explain how such a pair $(M,U)$ determines a full discrete connection on $\Lambda^N$. 
Instead of assigning group elements to edges directly, it is convenient to view a discrete connection as an assignment of group elements to all closed loops in $\Lambda^N$ based at a fixed base point $0$. 
A natural basis for the free group $\mathrm{Loop}_0(\Lambda^N)$ consists of:
\begin{itemize}
    \item face loops $(l_F)_{F\in F^N}$, one around each face,
    \item stable loops $(s_i)_{1\le i\le 2g}$, one for each independent stable curve.
\end{itemize}
Given $(M,U)$, we define a family $H(M,U) := (h_c)_{c\in \mathrm{Loop}_0(\Lambda^N)}$ as follows: 
if a loop $c$ is expressed as a word in the basic loops, $h_c$ is the same word in the corresponding group elements (with $M$ assigned to face loops and $U$ to stable loops).

The main result of this section, which will motivate the definition of $A_N$ in the next section, is the following.

\begin{proposition}\label{DriverSenguptaDiscrete}
Let $M$ be a family of random variables, one for each horizontal edges with joint law
\[
\dd \mathbb{M}_N\left(\left(M_{\theta\xrightarrow{r}\theta_+ }\right)_{\substack{\theta\in\Theta^N_-\\r\in R^N}}\right)
\coloneq \frac{1}{Z_N}\prod_{\substack{\theta\in \Theta^N_-\\r\in R^N_-}}\rho_{\sigma\left(\square\left(r,r_+,\theta,\theta_+\right)\right)}\left(M_{\theta\xrightarrow{r}\theta_+}^{-1}M_{\theta\xrightarrow{r_+}\theta_+}\right)\dd M.
\]
Let $(U_i)_{1\le i\le 2g}$ be independent, uniformly distributed random variables in $G$, independent of $M$. 
Then $H(M,U)$ generates the same lattice gauge theory as the Driver--Sengupta measure.
\end{proposition}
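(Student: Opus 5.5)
The plan is to compute the law of the family of basic-loop holonomies under the Driver--Sengupta measure $\mathrm{DS}(\rho,\Lambda^N)$ on $\mathcal{S}$ (free boundary at the blown-up maximum). We will show that, after a discrete gauge fixing, this law has the product form required by the statement. Because the face loops $(l_F)_{F\in F^N}$ and the stable loops $(s_i)_{1\le i\le 2g}$ freely generate $\mathrm{Loop}_0(\Lambda^N)$, the joint law of their holonomies determines the law of the whole process $(\mathrm{Hol}(g,c))_{c}$. It therefore suffices to prove one identity in law,
\[
\big((\mathrm{Hol}(g,l_F))_F,(\mathrm{Hol}(g,s_i))_i\big)\overset{\mathrm{law}}{=}\big((h_{l_F})_F,(h_{s_i})_i\big),
\]
where $h=H(M,U)$.

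\textbf{Gauge fixing.} Choose a maximal tree $T$ of $\Lambda^N$ consisting of vertical edges: take all vertical edges except one edge on each of the $2g$ closed stable cycles, and root $T$ at $0$. Gauge invariance of $\mathrm{DS}(\rho,\Lambda^N)$ under $h:\mathbb{V}\to G$, together with the standard fact that pushing the product Haar measure through "set the tree edges to $1_G$" preserves the law of based loop holonomies, lets us assume $g_e=1_G$ on $T$. The remaining variables are then:
\begin{itemize}
\item the horizontal edges, relabelled $M_{\theta\xrightarrow{r}\theta_+}$;
\item the $2g$ vertical edges left out of $T$, whose values are, up to conjugation by a fixed word, the stable holonomies $U_i$.
\end{itemize}
Next, write the Driver--Sengupta density in these variables. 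Each face $\square(r,r_+,\theta,\theta_+)$ has its two vertical sides trivial, except for faces adjacent to a removed stable edge. Its holonomy is therefore $M_{\theta\xrightarrow{r}\theta_+}^{-1}M_{\theta\xrightarrow{r_+}\theta_+}$, possibly conjugated or multiplied by some $U_i$.

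\textbf{Removing the stable variables.} Use the ad-invariance of $\rho_t$, and change variables in the horizontal edges lying above the removed edge along that flow line (left or right translation by $U_i$, which preserves Haar measure). The $U_i$ then drop out of every factor. The density factorises as $\dd\mathbb{M}_N(M)\otimes\bigotimes_i\dd U_i$, which is exactly the claimed law: $M\sim\mathbb{M}_N$, $U$ Haar and independent of $M$.

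\textbf{Main obstacle.} The delicate step is this last change of variables near the saddle points and the defect lines, where the lattice faces are hexagon-adjacent and the stable cycle passes through vertices at which several unstable lines glue. I would first treat a single stable cycle on the cylinder model of Section~\ref{pant}, mirroring the computation of $\mathbb{E}[f(\mathrm{Hol}(\partial\Sigma))]$ given there. I would then argue inductively over the $4g$ rectangles $\Psi(H_i)$, checking at each gluing that the translated block of horizontal edges is disjoint from the others.

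Finally, I need to check that the face variables really identify with the $h_{l_F}$. Each based face loop is a lasso: a tree path $p$, then $\partial F$, then $p^{-1}$. Since $p$ runs through vertical tree edges, it contributes only the identity, or a $U_i$ when the path must cross a removed edge. In the latter case this is a conjugation, which matches the word prescription of $H(M,U)$.
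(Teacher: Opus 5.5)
Your argument follows the paper's route. The paper writes the Driver--Sengupta integral and substitutes $M_{\theta\xrightarrow{r_+}\theta_+}=\mathrm{Hol}(\partial\square(r,r_+,\theta,\theta_+))\,M_{\theta\xrightarrow{r}\theta_+}$ up each column, then reads off $\dd\mathbb{M}_N\otimes\dd U$. Your spanning tree of vertical edges makes explicit the gauge fixing that this one-line substitution leaves implicit. Your translations make explicit where ad-invariance of $\rho_t$ removes the $U_i$. The tree claim is correct: every vertex other than the minimum has exactly one downward vertical edge, except the $2g$ saddles, which have two. So the vertical subgraph is connected and its cycles are spanned by the stable loops.

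Two corrections.

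\textbf{The ``main obstacle'' is not one.} Once every vertical edge other than the removed edges $e_i$ is set to $1_G$, each face holonomy involves only that face's two horizontal edges, plus $U_i^{\pm1}$ for exactly the two faces flanking $e_i$. The translations therefore act only on the two column segments above $e_i$. Nothing about saddles, hexagons, or the regluing along unstable lines enters, so no induction over the rectangles $\Psi(H_i)$ is needed.

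\textbf{Your last paragraph does not hold as written.} The change of variables $M\mapsto M'$ must be applied to the observables, not only to the density. Take lasso tails in $T$ and set $Y_F=(M'_{\theta\xrightarrow{r}\theta_+})^{-1}M'_{\theta\xrightarrow{r_+}\theta_+}$. Then the holonomy of $l_F$ becomes $c_F\,Y_F^{\pm1}c_F^{-1}$, where $c_F$ is a word in $U$ and in the $Y$'s lying below $F$ in its column. Any $U$-part of $c_F$ comes from your translations, not from tails crossing removed edges, since tails in $T$ never cross them. This is not what $H(M,U)$ prescribes: $l_F$ is a basis loop, so $H(M,U)$ assigns it the bare face variable $Y_F$.

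The missing step is an invariance argument. Under $\mathbb{M}_N\otimes\mathrm{Haar}^{\otimes 2g}$, the $Y_F$ are independent of each other and of $U$, with laws $\rho_{\sigma(F)}(x)\,\dd x$; this follows from the Haar-preserving triangular change of variables up each column. Condition successively up each column, using that $\rho_t$ is symmetric and ad-invariant. This shows that $\bigl((c_FY_F^{\pm1}c_F^{-1})_F,(U_i)_i\bigr)$ has the same joint law as $\bigl((Y_F)_F,(U_i)_i\bigr)$, which is what the statement requires.
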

\begin{proof}
    We need to calculate the joint law of 
    \[\mathrm{Hol}(l_F), \mathrm{Hol}(s_i); \ F\in\mathbb{F}_N, i=1,\cdots,2g,\]
    under the Driver--Sengupta measure.
    Let $f:G^{\mathbb{F_N}}\times G^{2g}\rightarrow \R$ be a central measurable function. From the Driver--Sengupta formula, we have
    \begin{align*}
        &\mathbb{E}\left[f\left(\mathrm{Hol}(l_F), \mathrm{Hol}(s_i); \ F\in\mathbb{F}_N, i=1,\cdots,2g\right)\right] \\
        =&\int_{G^\mathbb{E}}f\left (\prod_{e\in\partial F}g_e,\prod_{e\subset \mathrm{st}}g_e; \  F\in\mathbb{F}_N, \mathrm{st}\in\mathrm{St} \right) \prod_{F\in \mathbb{F}}\rho_{\sigma(F)}\big(\mathrm{Hol}(g,\partial F)\big) \, \dd g.
    \end{align*}
 We do the following change of variable:
 \[M_{\theta\xrightarrow{r_+} \theta_+}=\prod_{e\in \partial \square(r,r_+,\theta,\theta_+)}g_e \cdot M_{\theta\xrightarrow{r} \theta_+},\]
 and we get 
     \begin{align*}
        &\mathbb{E}\left[f\left(\mathrm{Hol}(l_F), \mathrm{Hol}(s_i); \ F\in\mathbb{F}_N, i=1,\cdots,2g\right)\right] \\
        =&\int_{G^{\mathbb{F_N}}\times G^{2g}}f\left (\left(M_{\theta\xrightarrow{r}\theta_+ }\right)_{\substack{\theta\in\Theta^N_-\\r\in R^N}},U_i; \  F\in\mathbb{F}_N, 1\leq i\leq 2g \right) \prod_{F\in \mathbb{F}}\rho_{\sigma\left(\square\left(r,r_+,\theta,\theta_+\right)\right)}\left(M_{\theta\xrightarrow{r}\theta_+}^{-1}M_{\theta\xrightarrow{r_+}\theta_+}\right)\, \dd M\dd U,
    \end{align*}
    which concludes the proof.
\end{proof}

\subsection{Fixed $N$ analysis}

Now let us see how to define $A_N$. Let $(M,U)$ be a configuration in Morse gauge on the lattice of resolution $N$. We want to define a $1-$form $A^{(N)}_{(M,U)}$ on the surface such that its holonomies agree with the Yang--Mills measure. 
\par We will start by decomposing $A^{(N)}_{(M,U)}$ as the sum of two one forms: one that lives on the bulk of the surface $A^{(N)}_{M,\Sigma}$, and one that lives on the unstable lines $A^{(N)}_{U,\Sigma}$ giving 
\[A^{(N)}_{(M,U)}=A^{(N)}_{M}+A^{(N)}_{U}.\]
The idea behind $A^{(N)}_{U}$ is that it encodes the random flat connection. It does not depend on $N$ or on $Q$, because as seen in Proposition \ref{DriverSenguptaDiscrete}, $A^{(N)}_{U}$ should set the conjugacy class of the holonomy around the stable lines. The easiest way to settle this is to put
\[A^{(N)}_{U}=\sum_{\mathrm{unst}\in\mathrm{Unst}}[U_\mathrm{unst}]\log g_{\mathrm{st}},\]
where the $g_i$ are independent Haar distributed $G$-valued random variables.
\par Then, we start by defining $A^{(N)}_{M}$ which is supposed to encode the noise. Start by defining it on the skeleton of the graph by putting 
\[A^{(N)}_{M}(x)\coloneq \left(\sum_{e \in \mathrm{edges}}\frac{\log M_e}{\mathrm{length}(e)} 1_{x\in e}\right)\dd \theta,\]
This gives, for all  $r\in R^N$, a piecewise constant function $\theta \mapsto A^{(N)}_{M}(r,\gamma)$ which we interpolate in a piecewise affine function in $r\in R^N$. This gives
\[A^{(N)}_{M}(u,\gamma)=2^N\sum_{r\in R^N_-}\left((u-r)A^{(N)}_{M}(r_+,\gamma) + (r_+-u)A^{(N)}_{M}(r,\gamma)\right)1_{[r,r_+]}(u).\]
It is clear that the holonomy induced by $A^{(N)}_{(M,U)}$ on the horizontal edges is exactly the configuration $(M,U)$. Now, consider the mapping 
\[D_N:(M,U)\mapsto A^{(N)}_{(M,U)}.\]
The push forward $(D_N)_*\mathbb{M}_{N}$ of the discrete Morse gauge fixed Yang--Mills measure gives rise to a random connection  in $\Omega^1(\Sigma,\g)$.  We would like to study the convergence of this sequence of random connections to the Yang--Mills measure defined in the formula in (\ref{YMFormula}). 
\subsection{Limit in law}
The idea is to study the limit of 
$A^{(N)}_{(M,U)}=A^{(N)}_{M}+A^{(N)}_{U}$, seen as a sequence of probability measure in the set of $\g$-valued currents of degree $1$ on $\Sigma$. This set denoted by $\mathcal{D}^{',1}(\Sigma,\g)$, is the topological dual of the set $\Omega_0^1(\Sigma,\g)$ of smooth $\g$-valued $1-$forms on $\Sigma$, vanishing up to all orders at $\partial \Sigma$, equipped with the $C^\infty-$topology. Roughly speaking, currents have the same properties as the Schwartz distribution, and are differential forms with distributional coefficients. 
\par Since the two terms of the decomposition are (statistically) independent, they can be studied separately from the point of view of convergence in law.
\par It is clear that since
$A^{(N)}_{U}$ is independent of $N$, it converges almost surely to the unstable currents, the leftmost part of (\ref{YMFormula}). The remaining part is to study the convergence of first term. Indeed, we will show the following theorem in what remains of this subsection. 
\begin{thm}\label{convAn}
    The sequence of $1-$forms $(A^{(N)}_{M})_{N\geq 1}$ converges in  $\mathcal{M}\left(\mathcal{D}^{'1}(\mathrm{Cyl},\g)\right)$ to the random $1-$form 
  $  \left(\partial_\theta\left\langle \xi, 1_{\square(r,\theta)} \right\rangle \right) \dd \theta$,
    defined in (\ref{YMFormula}).
\end{thm}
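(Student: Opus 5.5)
We prove Theorem~\ref{convAn} in the usual two steps: tightness of the laws of $A^{(N)}_{M}$ in $\mathcal{D}^{'1}(\mathrm{Cyl},\g)$, and identification of every subsequential limit through characteristic functionals. Tightness comes from Kolmogorov-type moment bounds on the integrated field $W_N(r,\theta)\coloneq\int_0^\theta A^{(N)}_M(r,u)\,\dd u$, which on lattice points equals $\log$ of the product of the horizontal variables $M_{\theta'\xrightarrow{r}\theta'_+}$ in Morse gauge, interpolated affinely in $r$. For identification it suffices to show that for every test $1$-form $\varphi=\phi\,\dd r$ (only the $\dd r$ component pairs with $A\,\dd\theta$), $\mathbb{E}[\exp i\langle A^{(N)}_M,\varphi\rangle]\to \exp(-\tfrac12\|\Phi\|^2_{L^2(\Psi_*\sigma)})$. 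Here $\Phi(r,\theta)=\int_r^{\max f}\phi(\rho,\theta)\,\dd\rho$ is the function paired with $\xi$ in the definition of $\partial_\theta\langle\xi,1_{\square(r,\theta)}\rangle\,\dd\theta$, exactly as in the Driver cylinder computation of Section~\ref{ss:DriverCylinder}.

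\textbf{Identification.} By Proposition~\ref{DriverSenguptaDiscrete}, under $\mathbb{M}_N$ the increments along each column are independent. For fixed $\theta\in\Theta^N_-$ we set $X^{\theta}_k\coloneq M_{\theta\xrightarrow{r_k}\theta_+}^{-1}M_{\theta\xrightarrow{r_{k+1}}\theta_+}$; these are independent with laws $\mu_{\sigma(\square(r_k,r_{k+1},\theta,\theta_+))}$, and different columns are independent. Summation by parts in $r$ rewrites $\langle A^{(N)}_M,\varphi\rangle$, up to a deterministic interpolation error of order $2^{-N}$ times moments controlled by Lemma~\ref{estim}, as
\[
\sum_{\theta}\sum_k \left\langle \log\Big(\prod_{i\le k+1}X^\theta_i\Big)-\log\Big(\prod_{i\le k}X^\theta_i\Big),\,M^{(N)}_{k,\theta}\right\rangle,
\]
with coefficients $M^{(N)}_{k,\theta}\approx\Phi(r_k,\theta)$, possibly divided by a harmless constant so that they have norm at most $1$. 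The characteristic function then factorises over columns into expressions of the form $E^{(N)}_{2^N-1}$. Lemma~\ref{EstimBruitBlanc} replaces each factor by $\prod_k(1-\tfrac12\|M^{(N)}_{k,\theta}\|^2 s_k)$, where $s_k=\sigma(\square(r_k,r_{k+1},\theta,\theta_+))$. The product over all columns is a Riemann sum converging to $\exp(-\tfrac12\int\|\Phi\|^2\,\dd\Psi_*\sigma)$.

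The error terms are bounded using Lemmas~\ref{lem:dyadiccorona} and \ref{areaestim}. Per column, $s_{N,1}\lesssim N2^{-N}$, so after summing over the $\sim 2^N$ columns the terms $s_{N,1}^{3/2}$, $s_{N,1}^2$ and $s_{N,3/2}$ vanish. The term $s_{N,1/2}s_{N,1}^{p/2}$ is handled by taking $p$ large.

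\textbf{Tightness.} We bound
\[
\mathbb{E}\left|W_N(r_1,\theta_1)-W_N(r_2,\theta_1)-W_N(r_1,\theta_2)+W_N(r_2,\theta_2)\right|^{2p}
\]
by $C\,\sigma(\square)^p$ plus a lattice correction, using Lemma~\ref{estim} applied to products of the $X$'s along a rectangle. Here the BCH commutator terms of Lemma~\ref{devlim} are absorbed because they are of higher order in $\sigma(\square)$. This gives uniform bounds in the weighted spaces $\mathcal{W}^{\alpha,\alpha-1;p;s}$ by repeating the scaling computation that precedes the Regularity Lemma for $\tilde{A}$, with the area bounds of Lemma~\ref{lem:dyadiccorona} replacing the continuum density. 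Compact embeddings then give tightness in $\mathcal{D}^{'1}$.

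\textbf{Main obstacle.} The hard step is non-commutativity. $\log$ of a product is not the sum of the $\log$'s, so neither the moment bounds nor the Riemann-sum identification is Gaussian. The identification is already handled by Lemma~\ref{EstimBruitBlanc}. The delicate point is the $2p$-th moment of rectangle increments near saddle points, where areas degenerate. We would first try to prove these moments with BDG for the $G$-valued martingale $k\mapsto \prod_{i\le k}X_i$, uniformly in the corona scale $j$.
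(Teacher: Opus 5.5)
Your identification step is the paper's argument in slightly different form. The paper introduces the discrete white noise $\xi_N$ built from the increments $\log M_{\theta\xrightarrow{r_+}\theta_+}-\log M_{\theta\xrightarrow{r}\theta_+}$, and writes $A^{(N)}_M=\iota_V\mathcal{L}_V^{-1}(\xi_N\,\dd u\wedge\dd\gamma)$. It then factorises $\mathbb{E}[\exp i\langle\xi_N,\psi\rangle]$ over columns and applies Lemma~\ref{EstimBruitBlanc} column by column, with the area bounds of Lemmas~\ref{lem:dyadiccorona} and~\ref{areaestim}. Finally it transfers the result to $A^{(N)}_M$ by continuity of $\iota_V\mathcal{L}_V^{-1}$. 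Your pairing with $\phi\,\dd r$ through $\Phi(r,\theta)=\int_r^{\max f}\phi(\rho,\theta)\,\dd\rho$ is the same computation seen through the adjoint of $\iota_V\mathcal{L}_V^{-1}$. It has the small merit of making explicit that $\xi_N$ must be tested against functions that need not vanish at $r=\min f$.

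Two minor corrections to that step. First, $\langle A^{(N)}_M,\phi\,\dd r\rangle=\langle\xi_N,\Phi\rangle$ holds exactly, with coefficients the cell averages $K_{r,r_+,\theta,\theta_+}\Phi$, so there is no interpolation error to control. Second, you may not divide the coefficients by a constant to force $\|M^{(N)}_k\|\leq 1$, since that changes the characteristic function. Instead use that the constants in Lemmas~\ref{8.5} and~\ref{EstimBruitBlanc} depend only on an upper bound for $\|M^{(N)}_k\|$, here $\|\Phi\|_\infty$.

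The real issue is the tightness step. You flag it as the main obstacle and leave it as something you would try, so as written your proof has a gap there, and the justification you give is imprecise. In a column $\theta'$, write $M_{\theta'\xrightarrow{r_2}\theta'_+}=YZ$ with $Y$ the product below $r_1$. The BCH cross term $\frac12[\log Y,\log Z]$ is first order in $\log Z$, so it is not of higher order in $\sigma(\square)$. It is small only because each column is thin, so that $\log Y$ is small in every $L^q$ by Lemmas~\ref{estim} and~\ref{areaestim}. You would also need a compact embedding of the weighted spaces into $\mathcal{D}^{'1}(\mathrm{Cyl},\g)$ across the points $\Psi(\mathrm{Crit}(f)_1)$.

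None of this is needed, because the gap is removable. The paper closes the argument with Fernique's theorem, i.e.\ L\'evy's continuity theorem on the dual of a nuclear space. Pointwise convergence of the characteristic functionals to a functional continuous on the test space already gives convergence in law in $\mathcal{D}'$, with tightness for free. Here the limit is $\phi\mapsto\exp(-\frac12\|\Phi\|^2_{L^2(\Psi_*\sigma)})$, which is continuous since $\|\Phi\|_{L^2(\Psi_*\sigma)}\lesssim\|\phi\|_\infty$. So your identification step alone proves the theorem once you invoke that result. The moment bounds you sketch are what the paper establishes later (Proposition~\ref{mainestimate}, Theorem~\ref{Bounded}) to upgrade the convergence to anisotropic spaces.
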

The proof requires several steps. The first one is to note that in the continuum, the gauge fixed Yang--Mills measure was nothing but a de Rham primitive of a white noise. There is a discrete analogue of this fact which will make the computations easier. In fact, let
\begin{eqnarray*}
\xi_N(u,\gamma)&=&2^{2N}\sum_{\substack{r\in R^N_-\\\theta \in \Theta^N_-}}\left(\log\left(M_{\theta\xrightarrow {r_+} \theta_+}  \right) - \log\left(M_{\theta\xrightarrow {r} \theta_+}  \right)\right)1_{[\theta,\theta_+]}(\gamma)1_{[r,r_+]}(u)  ,\\
\end{eqnarray*}
which is a discrete approximation of the white noise. We have \
\[\left(\int_0^t\xi_N(u,\gamma)\dd u \right) \dd \gamma= A^{(N)}_{M}(t,\gamma). \]
This means that  $A^{(N)}_{U}=\iota_V\mathcal{L}_V^{-1}\left(\xi_N \dd u\wedge \dd \gamma \right)$,
where $\iota_V\mathcal{L}_V^{-1}$ is just integration w.r.t. the $r$ variable in our systems of pseudo-coordinates, and $\mathcal{L}_V^{-1}$ is the inverse of the Lie derivative operator $\mathcal{L}_V$, where $V=\nabla f$ is the gradient of the Morse flow. Therefore, the problem is reduced to study the convergence of $\xi_N$ and to prove the continuity of $\iota_V\mathcal{L}_V^{-1}$. Let's start with the continuity.

\begin{lemma}\label{continuity_ivlv}
Let $V=\partial_r$ be the radial vector field in the cylinder $\mathbf{Cyl}$.
    The linear function 
    \[\iota_V\mathcal{L}_V^{-1}: \mathcal{D}^{',2}(\mathbf{Cyl},\g) \rightarrow  \mathcal{D}^{',1}(\mathbf{Cyl},\g)\] is continuous, 
    where similarly to $\mathcal{D}^{',1}(\mathbf{Cyl},\g)$, the set $\mathcal{D}^{',2}(\mathbf{Cyl},\g)$ is the topological dual of smooth $2-$forms on $\mathbf{Cyl}$ vanishing up to all orders at $\partial \mathbf{Cyl}$.
\end{lemma}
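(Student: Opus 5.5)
The plan is to define $\iota_V\mathcal{L}_V^{-1}$ on currents by duality, exhibiting it as the transpose of an explicit continuous operator on test forms, exactly as in the Definition–Proposition of Section~\ref{ss:DriverCylinder}. On the cylinder $\mathbf{Cyl}=[r_{\min},r_{\max}]\times\mathbb{S}^1$ with $V=\partial_r$, the operator acts on smooth $2$-forms $\omega=w(r,\theta)\,\dd r\wedge \dd\theta$ by
\[
\iota_V\mathcal{L}_V^{-1}\omega=\Big(\int_{r_{\min}}^r w(u,\theta)\,\dd u\Big)\dd\theta ,
\]
that is, radial integration from the incoming boundary. This formula is consistent with the identity $\big(\int_0^t\xi_N\,\dd u\big)\dd\gamma=A^{(N)}_M$.

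I would compute its formal transpose with respect to the pairing $\langle T,\varphi\rangle=\int_{\mathbf{Cyl}}\langle T\wedge\varphi\rangle_{\g}$. For a test $1$-form $\alpha=a_1\,\dd r+a_2\,\dd\theta$ vanishing to all orders at $\partial\mathbf{Cyl}$, Fubini gives
\[
\int \Big(\int_{r_{\min}}^r w(u,\theta)\,\dd u\Big)\dd\theta\wedge a_1\,\dd r=-\int w(u,\theta)\Big(\int_u^{r_{\max}} a_1(r,\theta)\,\dd r\Big)\dd u\,\dd\theta .
\]
I would therefore set $K\alpha:=-\big(\int_u^{r_{\max}}a_1(r,\theta)\,\dd r\big)$, viewed as a function or $0$-form paired against $2$-forms. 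This matches the formula $(r,\theta)\mapsto-\int_{r\xrightarrow{\theta}1}\alpha$ from Section~\ref{ss:DriverCylinder}. For a current $T\in\mathcal{D}^{',2}$, the definition is then $\langle \iota_V\mathcal{L}_V^{-1}T,\alpha\rangle:=\langle T,K\alpha\rangle$.

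The key step is to check that $K$ maps $\Omega^1_0(\mathbf{Cyl},\g)$ continuously into the test space of $\mathcal{D}^{',2}$. Smoothness is clear, since derivatives in $\theta$ commute with the integral and derivatives in $u$ return $a_1$. The bounds are $\|\partial_u^j\partial_\theta^k K\alpha\|_\infty\le C\|\alpha\|_{C^{j+k}}$, with $C$ depending on the length of the cylinder. Flatness at $r=r_{\max}$ is inherited from $a_1$ because the integral runs over $[u,r_{\max}]$, where $a_1$ is flat.

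The main obstacle is the incoming boundary $r=r_{\min}$. There $K\alpha(r_{\min},\theta)=-\int_{r_{\min}}^{r_{\max}}a_1\,\dd r$ does not vanish in general, so $K\alpha$ is not flat at that end. I would handle this exactly as the paper does for the cylinder in Section~\ref{ss:DriverCylinder}: the currents on $\mathbf{Cyl}$ are understood as extendible near the incoming boundary, i.e.\ dual to forms smooth up to $r=r_{\min}$ and flat only near the outgoing boundary. Equivalently, the relevant currents $\xi_N$ and $\xi$ are in fact continuous linear functionals on $C^\infty$ forms up to $r_{\min}$; this holds since $\xi_N$ is $L^\infty$ and the white noise is an $L^2$-indexed process. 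With this convention, $K$ is continuous between the Fréchet spaces involved, and $\iota_V\mathcal{L}_V^{-1}=K^{t}$ is weak-$*$ continuous, and also continuous for the strong dual topology, since the transpose of a continuous map is continuous for both.

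Finally, I would verify consistency. On smooth $2$-forms the transpose coincides with the explicit formula by the Fubini computation above. Applying the construction to $\xi$ recovers $\big(\partial_\theta\langle\xi,1_{\square(r,\theta)}\rangle\big)\dd\theta$, by the same computation as in the Definition–Proposition of Section~\ref{ss:DriverCylinder}.
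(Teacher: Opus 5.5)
Your argument is correct and is the paper's argument made explicit. The paper only says that $\iota_V\mathcal{L}_V^{-1}$ is defined by duality and that continuity is straightforward; you identify the pre-dual operator $K\alpha=-\int_{r\xrightarrow{\theta}r_{\max}}\alpha$ and check its continuity.

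Your remark about the incoming boundary is a real point that the one-line proof skips. With test objects flat at both ends, radial integration from $r_{\min}$ has no continuous extension to all of $\mathcal{D}^{',2}$. For example, $T=(r-r_{\min})^{-5}\,\dd r\wedge\dd\theta$ pairs continuously with test functions flat at $r_{\min}$. Yet $\langle T,K\alpha\rangle$ diverges as soon as $\int_{r_{\min}}^{r_{\max}}a_1\,\dd r\neq0$. So passing to the convention of Section~\ref{ss:DriverCylinder} (test forms smooth up to $r_{\min}$, flat only at $r_{\max}$) is necessary rather than cosmetic.
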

\begin{proof}
  The definition of $\iota_V\mathcal{L}_V^{-1}$ is just by duality, and the continuity is straightforward.
\end{proof}
\begin{remark}
In fact we can tell a bit more.
For the moment, the operator $\iota_V\mathcal{L}_V^{-1}$ is defined only on distributions that are dual to test functions that do not see the boundary of the cylinder since they vanish at infinite order. We can extend $\iota_V\mathcal{L}_V^{-1}$ to our anisotropic spaces $\mathcal{W}^{\alpha,\alpha-1,p}_{loc}(\mathbf{Cyl}\setminus \Psi(\mathrm{Crit}(f)_1))$, $\alpha\in (0,\frac{1}{2})$ of distributions that feel the boundary. We prove in the appendix the fact that this space injects continuously in $\mathcal{C}^{\beta}_{loc}(\mathbf{Cyl}\setminus \Psi(\mathrm{Crit}(f)_1) )$ for some $\beta>-1$.
    For any $\beta>-1$, the linear map 
    \[\iota_V\mathcal{L}_V^{-1}: \mathcal{C}^\beta_{loc} ( \mathbf{Cyl}\setminus \Psi(\mathrm{Crit}(f)_1) ) \rightarrow  \mathcal{C}_{loc}^{\beta} (\mathbf{Cyl}\setminus \Psi(\mathrm{Crit}(f)_1)) \] is linear continuous.
    Let $T$ be a distribution on $\mathbb{R}$ which is in the H\"older--Besov space $\mathcal{C}^{\beta}_{loc}$ for $\beta>-1$. Then we 
can define the distribution $T 1_{\mathbb{R}\geq 0}(r)$ by restriction to test functions in $C^\infty_c(\mathbb{R}_{\geq 0})$ then  by extension procedure by $0$ on $\mathbb{R}_{>0}$, in such a way that the restriction plus extension map
is linear continuous.
The proof is just a consequence of Lemma~\ref{lem:pairing} in the Appendix.
\end{remark}

The second step, is to study the convergence of $(\xi_N)_{N\geq 1}$ when seen as a sequence of measures in $\mathcal{D}^{'2}(\Sigma,\g)$. To show this, we will use Fernique's theorem~\cite[Théorème III.6.5]{Fernique}. This theorem states that a sequence of random distribution converges in law --in the topology of Schwartz distributions -- if and only if the sequence of characteristic functions converges to a continuous functional. Therefore, we need to study the convergence of 
\[ \mathbb{E}\left[\exp(i\langle \xi_N,\psi \rangle\right)],\]
    for some test function $\psi$. This is what we will do in the remaining of this section, and we will use the results of Section~\ref{techest}. Let us first prove a key lemma explaining the general form of $\mathbb{E}\left[\exp(i\langle \xi_N,\psi \rangle\right)]$.
\begin{lemma} 
Let $\psi\in C^\infty(\mathrm{Cyl},\g)$. Define 
   \[\forall r_1,r_2,\theta_1,\theta_2, \ \ \ K_{r_1,r_2,\theta_1,\theta_2}\psi\coloneq \frac{1}{(r_2-r_1)(\theta_2-\theta_1)}\int_\Sigma \psi(u,\gamma)     1_{[\theta_1,\theta_2]}(\gamma)1_{[r_1,r_2]}(u) \dd u\dd \gamma .\]
    We have 
        \begin{eqnarray*}
        \mathbb{E}\left[\exp(i\langle \xi_N,\psi \rangle\right)]
        &=&\prod_{\theta\in \Theta^N_-}\left\{\prod_{r\in R^N_-}\left(1-\frac{\vert K_{r,r_+,\theta,\theta_+}\psi\vert ^2}{2}\sigma(\square(r,r_+,\theta,\theta_+))\right)+O_{N,\theta}\right\},
    \end{eqnarray*}
    where 
    $\sum_{\theta \in \Theta^N_-}\vert O_{N,\theta }\vert  \xrightarrow[N\to \infty]{}0$.
\end{lemma}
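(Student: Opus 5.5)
The plan is to exploit the product structure of $\mathbb{M}_N$ across angular columns, and then to apply Lemma~\ref{EstimBruitBlanc} column by column.

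\textbf{Step 1: reduction to one column.} First, compute $\langle \xi_N,\psi\rangle$ explicitly. By the definition of $\xi_N$, the factor $2^{2N}$ cancels the area of the dyadic rectangle $[r,r_+]\times[\theta,\theta_+]$ in coordinates, so
\[
\langle \xi_N,\psi\rangle=\sum_{\theta\in\Theta^N_-}\sum_{r\in R^N_-}\left\langle \log M_{\theta\xrightarrow{r_+}\theta_+}-\log M_{\theta\xrightarrow{r}\theta_+},\,K_{r,r_+,\theta,\theta_+}\psi\right\rangle .
\]
Next, observe that the density of $\mathbb{M}_N$ factorizes over $\theta\in\Theta^N_-$: it is a product of column measures in which only the horizontal edges $\theta\xrightarrow{r}\theta_+$, for $r\in R^N$, interact. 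Within a column, the change of variables $X_r:=M_{\theta\xrightarrow{r}\theta_+}^{-1}M_{\theta\xrightarrow{r_+}\theta_+}$ turns it into a random walk. Its increments $X_r$ are independent with laws $\mu_{\sigma(\square(r,r_+,\theta,\theta_+))}$, and the walk starts at $M_{\theta\xrightarrow{\min}\theta_+}=1$, which is the free/half-Dirichlet normalization of the Morse gauge. Note that $M_{\theta\xrightarrow{r_+}\theta_+}=X_0X_1\cdots X_r$ up to the left/right ordering convention; I would fix the ordering so that it matches Lemma~\ref{8.3}, using $\mathrm{Ad}$-invariance and symmetry of $\mu_t$ if needed. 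Independence across columns then gives
\[
\mathbb{E}\left[e^{i\langle\xi_N,\psi\rangle}\right]=\prod_{\theta\in\Theta^N_-}E^{(N)}_\theta ,
\]
where $E^{(N)}_\theta$ is exactly the quantity $E^{(N)}_{2^N-1}$ of Section~\ref{techest}. In that quantity, $s^{(N)}_i=\sigma(\square(r_i,r_{i+1},\theta,\theta_+))$ and $M^{(N)}_k=K_{r_k,r_{k+1},\theta,\theta_+}\psi$; the condition $\|M^{(N)}_k\|\le 1$ holds after rescaling $\psi$, or else the constants depend on $\|\psi\|_\infty$. The index set has size $(2g+1)2^N$ rather than $2^N$, which is harmless.

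\textbf{Step 2: applying Lemma~\ref{EstimBruitBlanc}.} Applying Lemma~\ref{EstimBruitBlanc} to each column gives $E^{(N)}_\theta=\prod_r A^{(N)}_r+O_{N,\theta}$, with
\[
|O_{N,\theta}|\le C_p\left(s_{N,1}^{3/2}+s_{N,1/2}\,s_{N,1}^{p/2}+s_{N,p}+s_{N,3/2}+s_{N,1}^2\right)
\]
and $s_{N,\alpha}=\sum_r\sigma(\square_r)^\alpha$ computed in that column. This is precisely the displayed form of the statement.

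\textbf{Step 3: summability over $\theta$.} This is the main obstacle: showing that $\sum_\theta|O_{N,\theta}|\to0$. There are $4g\,2^N$ columns, so each error must be $o(2^{-N})$ on average. I would use Lemma~\ref{areaestim}, which gives the column area $s_{N,1}=\sigma(0,2g+2,\theta,\theta_+)\le C2^{-N\beta}$ for any $\beta<1$, together with $\sum_\theta s_{N,1}(\theta)=\sigma(\mathcal S)$. The terms are then handled as follows.
\begin{itemize}
\item $\sum_\theta s_{N,1}^{3/2}\le \sup_\theta s_{N,1}^{1/2}\,\sigma(\mathcal S)\lesssim 2^{-N\beta/2}$.
\item $\sum_\theta s_{N,1}^2\lesssim 2^{-N\beta}$.
\item $\sum_\theta s_{N,3/2}\le \sup_{\square}\sigma(\square)^{1/2}\,\sigma(\mathcal S)\lesssim 2^{-N/2}$, using the face bound $\sigma(\square)\le C2^{-N}$. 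The same argument handles $s_{N,p}$.
\item For the mixed term, Cauchy--Schwarz gives $s_{N,1/2}\le (2^N(2g+1))^{1/2}s_{N,1}^{1/2}$. Hence
\[
s_{N,1/2}\,s_{N,1}^{p/2}\lesssim 2^{N/2}\,s_{N,1}^{(p+1)/2}\le 2^{N/2}\,2^{-N\beta(p-1)/2}\,s_{N,1},
\]
and summing over $\theta$ gives $\lesssim 2^{N(1/2-\beta(p-1)/2)}\to0$ once $p$ is chosen large.
\end{itemize}
The delicate point is that near saddle points the column areas are only $O(N2^{-N})$ rather than $O(2^{-N})$, which is why $\beta<1$ and the freedom in choosing $p$ are needed. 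One must also check that Lemma~\ref{estim} applies, namely that $s_i\le1$ and that partial sums stay bounded, which follows from compactness of the surface.
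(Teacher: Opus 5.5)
Your proposal is correct and follows the paper's proof. Like the paper, you factor the characteristic function over the independent angular columns, apply Lemma~\ref{EstimBruitBlanc} column by column, and then use Lemma~\ref{areaestim} with $p$ large so that $\sum_\theta |O_{N,\theta}|\to 0$. Your Cauchy--Schwarz treatment of the mixed term $s_{N,1/2}\,s_{N,1}^{p/2}$ keeps one factor of the column area, so its sum over $\theta$ is controlled by $\sigma(\mathcal S)$; this is a cleaner version of the paper's somewhat loose bookkeeping for that term.
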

\begin{proof}
    We have 
    \[\left \langle \xi_N,\psi \right\rangle =\sum_{\substack{r\in R^N_-\\\theta \in \Theta^N_-}}\left\langle\log\left(M_{\theta\xrightarrow {r_+} \theta_+}  \right) - \log\left(M_{\theta\xrightarrow {r} \theta_+}  \right),K_{r,r_+,\theta,\theta_+}\psi\right\rangle.\]
 
    Let us calculate the characteristic function. We have from the independence in $\theta$, and from Lemma~\ref{EstimBruitBlanc}, 
    \begin{eqnarray*}
        \mathbb{E}\left[\exp(i\langle \xi_N,\psi \rangle\right)]&=&\prod_{\theta\in \Theta^N_-}\mathbb{E}\left[\prod_{r\in R^N_-}\exp \left(i\left\langle\log\left(M_{\theta\xrightarrow {r_+} \theta_+}  \right) - \log\left(M_{\theta\xrightarrow {r} \theta_+}  \right),K_{r,r_+,\theta,\theta_+}\psi\right\rangle\right)\right]\\
        &=&\prod_{\theta\in \Theta^N_-}\left\{\prod_{r\in R^N_-}\left(1-\frac{\vert K_{r,r_+,\theta,\theta_+}\psi\vert ^2}{2}\sigma(\square(r,r_+,\theta,\theta_+))\right)+O_{N,\theta}\right\},
    \end{eqnarray*}
    where 
    \begin{eqnarray*}
        O_{N,\theta}&= &O\Bigg\{\Big(\sum_{r\in R^N_-}\sigma(r,r_+,\theta,\theta_+)\Big)^{\frac 32} + \sum_{r\in R^N_i}\sigma(r,r_+,\theta,\theta_+)^{\frac 12}\Big(\sum_{r\in R^N_-}\sigma(r,r_+,\theta,\theta_+)\Big)^p\\
        &+& \sum_{r\in R^N_-}\sigma(r,r_+,\theta,\theta_+)^p+\sum_{r\in R^N_-}\sigma(r,r_+,\theta,\theta_+)^{\frac 32}+\Big(\sum_{r\in R^N_-}\sigma(r,r_+,\theta,\theta_+)\Big)^{2}\Bigg\}
    \end{eqnarray*}
    We have 
     \begin{eqnarray*}
        O_{N,\theta}&= &O\Bigg\{\max_{\theta \in \Theta^N_-} \left(\sigma(0,2g+2,\theta,\theta_+)\right)^{\frac 12}\Big(\sum_{r\in R^N_-}\sigma(r,r_+,\theta,\theta_+)\Big) +2^{-\frac{N}{2}}2^{-N\beta p} 2^N\\
        &+& \big( \sup_{\substack{r\in R^N_- \\ \theta \in \Theta^N_-}}\sigma(r,r^+,\theta,\theta^+)\big)^{p-1}\sigma(0,2g,\theta,\theta_+)+\big( \sup_{\substack{r\in R^N_- \\ \theta \in \Theta^N_-}}\sigma(r,r^+,\theta,\theta^+)\big)^{\frac 12}\sigma(0,2g,\theta,\theta_+)\\
        &+&\max_{\theta \in \Theta^N_-} \left(\sigma(0,2g+2,\theta,\theta_+)\right)\Big(\sum_{r\in R^N_-}\sigma(r,r_+,\theta,\theta_+)\Big)\Bigg\},
    \end{eqnarray*}
    where for the second line, we have used Lemma~\ref{areaestim}.
    Now, 
         \begin{eqnarray*}
        \sum_{\theta\in \Theta^N_-}\vert O_{N,\theta}\vert &\lesssim & \max_{\theta \in \Theta^N_-} \left(\sigma(0,2g+2,\theta,\theta_+)\right)^{\frac 12}\sigma(\Sigma) + 2^{-\frac{N}{2}}2^{-N\beta p} 2^N+ \big( \sup_{\substack{r\in R^N_- \\ \theta \in \Theta^N_-}}\sigma(r,r^+,\theta,\theta^+)\big)^{p-1}\sigma(\Sigma)\\
        &+&\big( \sup_{\substack{r\in R^N_- \\ \theta \in \Theta^N_-}}\sigma(r,r^+,\theta,\theta^+)\big)^{\frac 12}\sigma(\Sigma)+\max_{\theta \in \Theta^N_-} \left(\sigma(0,2g+2,\theta,\theta_+)\right) \sigma(\Sigma),
    \end{eqnarray*}
    which for $p$ chosen big enough, and using Lemma~\ref{areaestim}, gives 
    $\sum_{\theta \in \Theta^N_-}\vert O_{N,\theta }\vert  \xrightarrow[N\to \infty]{}0$.
\end{proof}

We still need one more lemma before being ready to prove the convergence of $\xi_N$.
 \begin{lemma}
    There exists a constant $C>0$ that depends on $\psi$ such that for all $N\geq 1$, all $\theta\in \Theta^N_-$, 
    \[\prod_{r\in R^N_-}\left(1-\frac{\vert K_{r,r_+,\theta,\theta_+}\psi\vert ^2}{2}\sigma(\square(r,r_+,\theta,\theta_+))\right)\geq C\]
\end{lemma}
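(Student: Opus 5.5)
The plan is to take logarithms and reduce the statement to a uniform bound on the factors together with a uniform bound on the total area. First, $K_{r,r_+,\theta,\theta_+}\psi$ is an average of $\psi$ over a rectangle, so $\vert K_{r,r_+,\theta,\theta_+}\psi\vert\leq \|\psi\|_\infty$ for every $N,r,\theta$. Writing $x_{r,\theta}\coloneq \frac{\vert K_{r,r_+,\theta,\theta_+}\psi\vert^2}{2}\sigma(\square(r,r_+,\theta,\theta_+))$, we get
\[
0\leq x_{r,\theta}\leq \frac{\|\psi\|_\infty^2}{2}\sup_{\substack{r\in R^N_-\\ \theta\in\Theta^N_-}}\sigma(\square(r,r_+,\theta,\theta_+))\leq \frac{\|\psi\|_\infty^2}{2}\,C\,2^{-N},
\]
where the last inequality is the second bound of Lemma~\ref{areaestim}. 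Hence there is $N_0$, depending only on $\psi$, such that $x_{r,\theta}\leq \tfrac12$ for all $N\geq N_0$ and all $r,\theta$. In particular every factor is positive for $N\geq N_0$.

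Next, for $N\geq N_0$ I use the elementary inequality $\log(1-x)\geq -2x$ for $x\in[0,\tfrac12]$. This gives
\[
\log\prod_{r\in R^N_-}(1-x_{r,\theta})\geq -2\sum_{r\in R^N_-}x_{r,\theta}\geq -\|\psi\|_\infty^2\sum_{r\in R^N_-}\sigma(\square(r,r_+,\theta,\theta_+)).
\]
The last sum is the area of the whole vertical strip $\sigma(0,2g+2,\theta,\theta_+)$. It is bounded by $C2^{-N\beta}$ by the first estimate of Lemma~\ref{areaestim}, and in any case by $\sigma(\Sigma)$. Therefore, for $N\geq N_0$, the product is at least $\exp(-\|\psi\|_\infty^2\sigma(\Sigma))$, which is in fact close to $1$.

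The remaining case is the finitely many $N<N_0$, and this is the only delicate point. A factor $1-x_{r,\theta}$ could be zero or negative if some face has a large area. I would try first to absorb this into the choice of constant by taking the minimum over the finitely many $N$, which works provided each factor is nonzero. If a factor can vanish for small $N$, the statement has to be read for $N$ large enough. For the use made of this lemma, controlling $\log$ of the product when comparing with $\exp\!\big(-\tfrac12\sum\vert K\psi\vert^2\sigma\big)$, large $N$ is all that is needed. So I would state the constant $C=\exp(-\|\psi\|_\infty^2\sigma(\Sigma))$ for $N\geq N_0$ and note that the finitely many remaining $N$ only change the constant when the factors are nonzero. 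The main obstacle is thus simply ensuring uniform smallness of the individual $x_{r,\theta}$, which Lemma~\ref{areaestim} provides.
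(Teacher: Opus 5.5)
Your proof is correct and follows essentially the paper's route. Both arguments rest on $\vert K_{r,r_+,\theta,\theta_+}\psi\vert\leq\|\psi\|_\infty$ and the face bound $\sigma(\square)\leq C2^{-N}$ from Lemma~\ref{areaestim}, and then take logarithms. The paper bounds every factor below by $1-c2^{-N}$ and uses that there are $(2g+1)2^N$ factors, whereas you use $\log(1-x)\geq -2x$ and additivity of area along the strip, which even shows the product tends to $1$ uniformly in $\theta$.

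Restricting to large $N$ is also all the paper's limit argument actually proves, and all that Proposition~\ref{whitenoiseconv} needs. One small correction: to handle the finitely many small $N$ you would need each product to be positive, not merely each factor nonzero, since a product of nonzero factors can be negative.
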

\begin{proof}
    We have 
    $\vert K_{r,r_+,\theta,\theta_+}\psi\vert\leq \|\psi\|_{\infty},$
    and 
    \[\sigma(\square(r,r_+,\theta,\theta_+)\leq \sup_{r\in R^N_-,\theta\in\Theta^N_-}\sigma(\square(r,r_+,\theta,\theta)\leq C2^{-N}.\]

    Therefore,
\[\prod_{r\in R^N_-}\left(1-\frac{\vert K_{r,r_+,\theta,\theta_+}\psi\vert ^2}{2}\sigma(\square(r,r_+,\theta,\theta_+))\right)\geq \prod_{r\in R^N_-}\left(1-\frac{C\|\psi\|_{\infty}^2}{2}2^{-N}\right). \]
However,
\begin{eqnarray*}
    \log \prod_{r\in R^N_-}\left(1-\frac{C\|\psi\|_{\infty}^2}{2}2^{-N}\right)&=&\sum_{r\in R^N_-} \log\left(1-\frac{C\|\psi\|_{\infty}^2}{2}2^{-N}\right) =(2g+1)2^N\log\left(1-\frac{C\|\psi\|_{\infty}^2}{2}2^{-N}\right)\\
    &\xrightarrow[N\to\infty]{} &\exp\left(-\frac{C(2g+1)\|\psi\|_{\infty}^2}{2}\right) >0.
\end{eqnarray*}
\end{proof}

We are able now to conclude the proof of the convergence.

\begin{proposition}\label{whitenoiseconv}
    The sequence $(\xi_N \dd r\wedge \dd \theta )_{N\geq 1}$ converges in $\mathcal{M}\left(\mathcal{D}^{'2}(\mathrm{Cyl},\g)\right)$ to the white noise, seen as a $2-$form.
\end{proposition}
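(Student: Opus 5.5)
We plan to use Fernique's theorem, as announced above: a sequence of random distributions converges in law in $\mathcal{D}^{'2}(\mathrm{Cyl},\g)$ as soon as the characteristic functionals $\psi\mapsto\mathbb{E}[\exp(i\langle\xi_N,\psi\rangle)]$ converge pointwise, for every test form $\psi$ vanishing to all orders at $\partial\mathrm{Cyl}$, to a functional that is continuous on the nuclear test space. The target functional is the white noise one, $\psi\mapsto\exp\bigl(-\tfrac12\int_{\mathrm{Cyl}}\vert\psi\vert^2\,\Psi_*\sigma\bigr)$. It is continuous because $\Psi_*\sigma$ is integrable: by the estimate $\Psi_*\sigma=O((r^2+4\theta^2)^{-1/2})$ it has finite total mass. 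So everything reduces to a pointwise limit computation for fixed $\psi$.

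Fix $\psi$. By the key lemma above,
\[
\mathbb{E}\left[e^{i\langle \xi_N,\psi\rangle}\right]=\prod_{\theta\in\Theta^N_-}\Bigl\{P_{N,\theta}+O_{N,\theta}\Bigr\},\qquad P_{N,\theta}:=\prod_{r\in R^N_-}\Bigl(1-\tfrac{\vert K_{r,r_+,\theta,\theta_+}\psi\vert^2}{2}\sigma(\square(r,r_+,\theta,\theta_+))\Bigr),
\]
with $\sum_\theta\vert O_{N,\theta}\vert\to0$.

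The first step removes the errors. Each $P_{N,\theta}\in[C,1]$ by the preceding lemma, where $C>0$ is uniform. We therefore write $P_{N,\theta}+O_{N,\theta}=P_{N,\theta}(1+O_{N,\theta}/P_{N,\theta})$. Since all factors have modulus at most $1+\vert O_{N,\theta}\vert/C$, the telescoping bound $\vert\prod a_i-\prod b_i\vert\le \sum\vert a_i-b_i\vert\prod\max(\vert a_j\vert,\vert b_j\vert)$ gives
\[
\Bigl\vert\prod_\theta(P_{N,\theta}+O_{N,\theta})-\prod_\theta P_{N,\theta}\Bigr\vert\le e^{\sum_\theta\vert O_{N,\theta}\vert/C}\sum_\theta\vert O_{N,\theta}\vert\to0 .
\]

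The second step computes the limit of $\prod_{\theta,r}P$. We take logarithms and use $\log(1-x)=-x+O(x^2)$, where $x_{r,\theta}=\tfrac12\vert K\psi\vert^2\sigma(\square)\le C\Vert\psi\Vert_\infty^2 2^{-N}$ by Lemma~\ref{areaestim}. The quadratic remainder is then bounded by $\sup x\cdot\sum x\lesssim 2^{-N}\sigma(\mathcal{S})\to0$. It remains to show
\[
\sum_{r,\theta}\vert K_{r,r_+,\theta,\theta_+}\psi\vert^2\,\sigma(\square(r,r_+,\theta,\theta_+))\;\longrightarrow\;\int\vert\psi\vert^2\,\Psi_*\sigma .
\]
Note that $K\psi$ is the Lebesgue average of $\psi$ over the dyadic cell, so the left side equals $\int\vert\Pi_N\psi\vert^2\,\Psi_*\sigma$, where $\Pi_N\psi$ is the piecewise constant cell average. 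Since $\psi$ is smooth (hence uniformly continuous on the compact cylinder), $\Pi_N\psi\to\psi$ uniformly. Dominated convergence against the finite measure $\Psi_*\sigma$ then concludes. The singularity of $\Psi_*\sigma$ at $\Psi(\mathrm{Crit}(f)_1)$ does no harm here, because only its total mass enters.

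The main obstacle was the uniform control of the errors near the saddle points, where the cell areas are not of order $2^{-2N}$. That control is already supplied by the key lemma via Lemma~\ref{areaestim}, which gives strips of area $O(N2^{-N})$ and cells of area $O(2^{-N})$. What remains to check is that the degradation from $2^{-2N}$ to $2^{-N}$ in the sup of cell areas still makes $\sup x\sum x$ vanish, and by the bound above it does. Combining the two steps, the characteristic functionals converge to that of the $\Psi_*\sigma$-white noise, and Fernique's theorem gives the convergence in law.
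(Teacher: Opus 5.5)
Your proof is correct and follows the paper's route. Both arguments use Fernique's theorem, the factorized characteristic function from the key lemma, the uniform lower bound on $P_{N,\theta}$, and $\log(1-x)=-x+O(x^2)$ with the quadratic remainder controlled by the $O(2^{-N})$ cell-area bound. The differences are cosmetic: you remove the $O_{N,\theta}$ errors with a telescoping product bound instead of expanding $\log(1+O_{N,\theta}/P_{N,\theta})$, and you identify the limit of $\sum|K\psi|^2\sigma(\square)$ by uniform convergence of Lebesgue cell averages against the finite measure $\Psi_*\sigma$, where the paper appeals to martingale convergence.
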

\begin{proof} 
    By the previous lemmas,
    \begin{eqnarray*}
          \log \mathbb{E}\left[\exp(i\langle \xi_N,\psi \rangle\right)]&=&\sum_{\theta\in \Theta^N_-}\log\left\{\prod_{r\in R^N_-}\left(1-\frac{\vert K_{r,r_+,\theta,\theta_+}\psi\vert ^2}{2}\sigma(\square(r,r_+,\theta,\theta_+))\right)\right\}\\
          &+&\sum_{\theta\in \Theta^N_-}\log\left (1+\frac {O_{N,\theta}}{\prod_{r\in R^N_-}\left(1-\frac{\vert K_{r,r_+,\theta,\theta_+}\psi\vert ^2}{2}\sigma(\square(r,r_+,\theta,\theta_+))\right)}\right)\\
          &\xrightarrow[N\to\infty]{}&-\frac{\|\psi\|^2_{L^2(\Sigma,\sigma)}}{2}
    \end{eqnarray*}
   Since the second term converges to $0$ and 
    \[\log\left(1-\frac{\vert K_{r,r_+,\theta,\theta_+}\psi\vert ^2}{2}\sigma(\square(r,r_+,\theta,\theta_+))\right)=-\frac{\vert K_{r,r_+,\theta,\theta_+}\psi\vert ^2}{2}\sigma(\square(r,r_+,\theta,\theta_+))+O(\sigma(\square(r,r_+,\theta,\theta_+))^2)\]
    and therefore, 
    \begin{align*}
       & \lim_{N\to\infty}\sum_{\substack{r\in R^N_-\\\theta\in \Theta^N_-}}\log\left(1-\frac{\vert K_{r,r_+,\theta,\theta_+}\psi\vert ^2}{2}\sigma(\square(r,r_+,\theta,\theta_+))\right)=\lim_{N\to\infty} -\sum_{\substack{r\in R^N_-\\\theta\in \Theta^N_-}}\frac{\vert K_{r,r_+,\theta,\theta_+}\psi\vert ^2}{2}\sigma(\square(r,r_+,\theta,\theta_+)),
    \end{align*}
    
    which by classical martingale convergence arguments, is equal to $-\frac{\|\psi\|^2_{L^2(\mathrm{Cyl},\sigma)}}{2}=-\frac{\|\psi\|^2_{L^2(\Sigma,\sigma)}}{2}$.
\end{proof}
\begin{proof}[proof of Theorem~\ref{convAn}]
It is a combination of Lemma~\ref{continuity_ivlv}, Proposition~\ref{whitenoiseconv} and Fernique's theorem.
\end{proof}

\section{Convergence in functional spaces}\label{s:functspaces}
Consider again a surface $\Sigma$ with one outgoing boundary component. We will study the convergence of the free lattice Yang--Mills measures in more tailored functional spaces. 
\par In the previous section, we showed that for each $N$, $A^{(N)}_{(M,U)}$ can be decomposed into the sum of two independent functions $A^{(N)}_{M}$ and $A^{(N)}_{U}$: one living on the bulk of the surface, and the other one living on the unstable curves. Since the latter is constant, it converges, as $N$ goes to infinity, almost surely to the unstable currents associated to unstable curves. These currents belong to $\mathcal{C}^{-1-\epsilon}$, and we can say that the convergence of $A^{(N)}_{U}$ happens as well in these spaces. It remains to study the convergence of $(A^{(N)}_{M})_{N\geq 0}$ in functional spaces. To alleviate the notations in this section, we will denote $A^{(N)}_{M}$ simply by $A_{N}$.

\par Since we have already shown convergence in law, i.e. we identified the limit, the only missing argument is tightness. As scales of functional spaces enjoy compact embeddings from smaller into bigger spaces (which we will be showed for our special case in Appendix~\ref{C}), we only need to show that the sequence of the norms of $A_N$ is bounded independently of $N$. It is therefore crucial to identify functional spaces that reflects the anisotropic regularity of the limit connection on the one hand, and enables easy computations on the other hand. 

\subsection{Discretizing the spaces of distributional connections}
\begin{figure}
    \centering
    \includegraphics[width=1.1\linewidth]{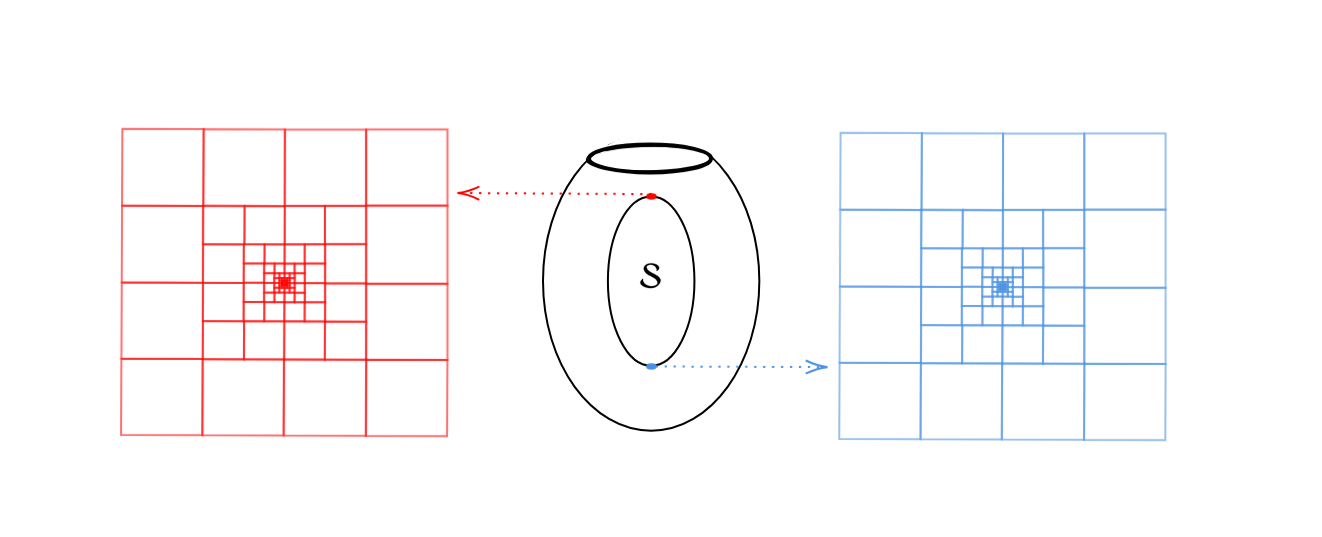}
    \caption{Corona formed by a collection of squares.}
    \label{fig:cor}
\end{figure}

In Section \ref{DistSpaces}, we introduced some norms on smooth connections, and defined spaces of distributional connections by completion of smooth $1$-forms with respect to these norms. The goal of this section is to explain how can one compute these norms for the special case of a connection that is piecewise constant in $\theta$ and piecewise affine in $r$, as is the case for $A_N$ introduced in the previous section. Let us first recall that for $\tilde{A}\in \Omega^1(\Sigma,\g)$, 
\[W(\tilde{A})(r,\theta)=\int_0^\theta A(0\xrightarrow r \theta)\dd \theta.\]
and 
\begin{equation}\label{eq:weightedHolder}
\Vert \tilde{A}\Vert_{\mathcal{W}^{\alpha,\alpha-1,p,s}_{r,\theta}}\coloneq   \left(\sum_{a\in\mathrm{Crit}(f)}\sum_{I_a\times J_a} \sum_{n=0}^\infty \left(2^{n(s-\frac{2}{p})}\Vert \mathcal{S}^{2^{-n} *}_{(r_0,\theta_0)} W(\tilde{A})\Vert_{\mathcal{W}^{\alpha,\alpha;p}_{r,\theta}(I\times J) }\right)^p \right)^{\frac{1}{p}}, 
 \end{equation}
where we take a sum over all critical points, and for each critical point $a$, the sum over a \emph{finite cover} of the form $I_a^r\times J_a^\theta$ of a certain corona of the form $\{ m\in \Sigma; \mathrm{dist}(m,\Psi(a))\in [1,2] \}$ centered near a singular point $\Psi(a)=(r_0,\theta_0)$. The factor $S^{-n}$ means that we are testing closer and closer to the critical point. The Figure \ref{fig:cor} explains the situation: the centers of the grids represent the critical points, and the blue and red lattices show the base corona formed of $12$ squares that we scale closer and closer to the critical point.

It is therefore enough to study $\Vert A_N\Vert_{\mathcal{W}^{\alpha,\alpha-1,p,s}_{r,\theta}}$ near an arbitrary  fixed saddle point, that we will assume has coordinates $(0,0)$ and for a fixed square  that we will call $I\times J$. Recall that $W(A_N)$ is piecewise affine in both directions, on a grid of mesh $2^{-N}$. To study its norm on a square of mesh $2^{-n}$, we should separate two cases: 

\begin{align*}
    \sum_{n=0}^\infty &\left(2^{n(s-\frac{2}{p})} \Vert \mathcal{S}^{2^{-n} *}_{(0,0)} W\Vert_{\mathcal{W}^{\alpha,\alpha;p}_{r,\theta}(I\times J) }\right)^p \\
    =&\sum_{n\leq N} \left(2^{n(s-\frac{2}{p})}\Vert \mathcal{S}^{2^{-n} *}_{(0,0)} W\Vert_{\mathcal{W}^{\alpha,\alpha;p}_{r,\theta}(I\times J) }\right)^p \\
    &+\sum_{n> N} \left(2^{n(s-\frac{2}{p})}\Vert \mathcal{S}^{2^{-n} *}_{(0,0)} W\Vert_{\mathcal{W}^{\alpha,\alpha;p}_{r,\theta}(I\times J) }\right)^p,
\end{align*}

\paragraph{The case $n<N$. } In this case, the problem reduces to bound the $\mathcal{W}^{\alpha,\alpha;p}_{r,\theta} $ norm of a piecewise linear function in both direction. In fact, since the $N$ resolution is larger than the $n$ resolution, the situation is as in Figure \ref{fig:case1}, where the red square is one of the squares of the corona, and the black squares are the region in which $W(A_N)$ is affine. 
\begin{figure}
    \centering
    \includegraphics[width=0.5\linewidth]{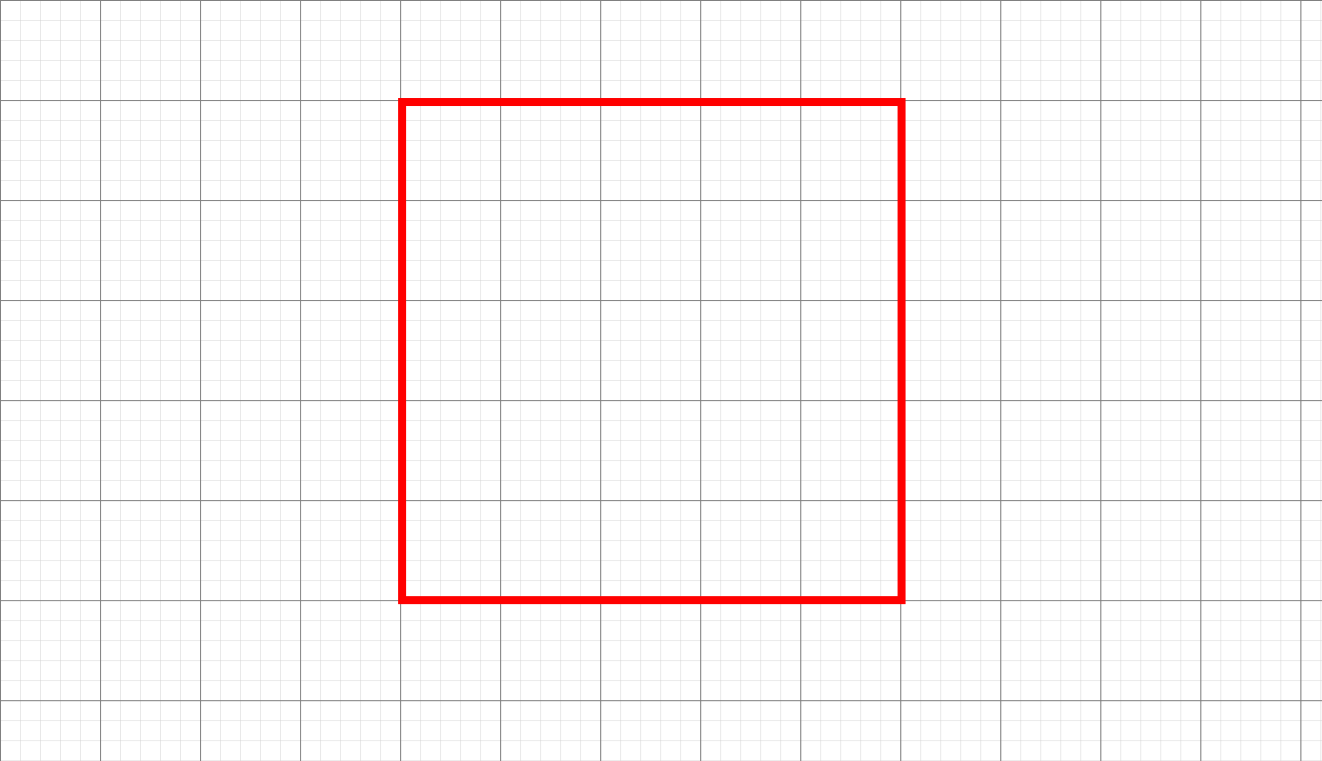}
    \caption{Resolution $N$ (black) bigger than resolution $n$ (red).}
    \label{fig:case1}
\end{figure}
We have that
\[\Vert \mathcal{S}^{2^{-n} *}_{(0,0)} W\Vert_{\mathcal{W}^{\alpha,\alpha;p}_{r,\theta}(I\times J) }=\int_{I_1\times I_1}\int_{I_1\times I_2}\frac{\left\vert A(2^{-n}\theta \xrightarrow{2^{-n}r'}2^{-n}\theta')-A(2^{-n}\theta \xrightarrow{2^{-n}r'}2^{-n}\theta')\right\vert ^p}{\vert r-r'\vert ^{1+p\alpha}\vert \theta-\theta'\vert ^{1+p\alpha}}\dd r\dd r'\dd \theta \dd \theta',\]
and by a change of variables, we get 
\[\Vert \mathcal{S}^{2^{-n} *}_{(0,0)} W\Vert_{\mathcal{W}^{\alpha,\alpha;p}_{r,\theta}(I\times J) }=2^{2n(1-p\alpha)}\int_{2^{-n}I_1\times 2^{-n}I_1}\int_{2^{-n}I_2\times 2^{-n}I_2}\frac{\left\vert A(\theta \xrightarrow{r'}\theta')-A(\theta \xrightarrow{r'}\theta')\right\vert ^p}{\vert r-r'\vert ^{1+p\alpha}\vert \theta-\theta'\vert ^{1+p\alpha}}\dd r\dd r'\dd \theta \dd \theta'.\]
Therefore, the problem reduces to understand a $1-$dimensional analogue: the behavior of a classical one dimensional Gagliardo norms for a piecewise affine function. This is done in the next lemma.
\begin{lemma} \label{Kepsilon}
   Let $M\in \mathbb{N}$. Let $f:[0,1]\rightarrow \g$ be  piecewise affine function such that $f$ is linear between $f(\frac{k}{M})$ and $f(\frac{k+1}{M})$ for all $k=0,\dots,M-1$. Then,  
\[\|f\|^p_{s,p} \coloneq \int_{[0,1]^2}\frac{\vert f(t)-f(s)\vert_\g ^p}{\vert t-s\vert ^{1+ps}}\dd t\dd s\leq \frac{12^p}{M^{1-ps}}\sum_{0\leq k < l \leq M}\frac{\left\vert f(\frac{k}{M})-f(\frac{l}{M})\right\vert^p}{(k-l)^{1+ps}}.\]

\end{lemma}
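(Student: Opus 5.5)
We bound the Gagliardo integral by cutting $[0,1]^2$ into the $M^2$ cells $Q_{k,l}=[\tfrac kM,\tfrac{k+1}M]\times[\tfrac lM,\tfrac{l+1}M]$ and estimating each cell in terms of the nodal values $f_j\coloneq f(\tfrac jM)$. Write $\delta_j\coloneq f_{j+1}-f_j$ for the increments, so that on $[\tfrac jM,\tfrac{j+1}M]$ the function $f$ has slope $M\delta_j$. By symmetry of the integrand it suffices to treat cells with $k\le l$ and multiply by $2$.

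\emph{Near-diagonal cells} $|k-l|\le 1$. For $t,s$ in such a cell, $f$ is Lipschitz on the union of at most two adjacent subintervals, so $|f(t)-f(s)|\le M\max(|\delta_k|,|\delta_l|)\,|t-s|$. Hence the cell contributes at most $M^p(|\delta_k|^p+|\delta_l|^p)\int\!\!\int_{|t-s|\le 2/M}|t-s|^{p-1-ps}\,\dd t\,\dd s$. Since $s<1$, the exponent $p-1-ps>-1$, and the inner integral is $\lesssim M^{-1}(2/M)^{p-ps}$. Each such cell is therefore bounded by $C^p M^{ps-1}(|\delta_k|^p+|\delta_l|^p)$. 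Each $|\delta_j|^p=|f_{j+1}-f_j|^p$ is exactly a term of the right-hand side with $l-k=1$, which carries the weight $M^{-(1-ps)}\cdot 1$.

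\emph{Far cells} $l\ge k+2$. Here $|t-s|\ge (l-k-1)/M\ge (l-k)/(2M)$, so the denominator is at least $((l-k)/(2M))^{1+ps}$. Since $f$ is affine on each subinterval, $|f(t)-f(s)|\le \max$ of $|f_a-f_b|$ over $a\in\{k,k+1\}$ and $b\in\{l,l+1\}$. Bound $|f(t)-f(s)|^p\le 4^{p}\max_{a,b}|f_a-f_b|^p$, or more simply by the sum of the four terms $|f_a-f_b|^p$. The cell has area $M^{-2}$, so it contributes at most
\[
C^p M^{-2}M^{1+ps}\sum_{a,b}\frac{|f_a-f_b|^p}{(l-k)^{1+ps}}.
\]
Since $|a-b|\le l-k+1\le 2(l-k)$, we can replace $(l-k)^{1+ps}$ by $|a-b|^{1+ps}$ at the cost of a factor $2^{1+ps}$. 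Each pair $(a,b)$ arises from at most four cells, so the total is $\lesssim C^pM^{ps-1}\sum_{a<b}|f_a-f_b|^p/(b-a)^{1+ps}$.

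Adding the two regimes gives the stated inequality. The prefactor $12^p$ comes from collecting the constants $2\cdot 4^p\cdot 2^{1+ps}$ together with the near-diagonal constant $(2^{p-ps}\cdot\text{const})$; this works for $p\ge 1$ and $ps<1$, and I would just check that the numerics fit under $12^p$. The main obstacle is the near-diagonal integral, specifically its integrability, which requires $ps<p$, i.e.\ $s<1$, and the correct power of $M$. The rest is bookkeeping of constants.
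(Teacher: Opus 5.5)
Your proof uses the same decomposition as the paper's: cut $[0,1]^2$ into the $M^2$ cells, use the affine structure near the diagonal, and elsewhere bound $|f(t)-f(s)|$ by nodal differences and $|t-s|$ from below. Your execution is more careful than the paper's, which has two defects that you avoid.
\begin{itemize}
\item The paper puts the adjacent cells $|k-l|=1$ into its off-diagonal case, where its lower bound $|x-y|>(l-k-1)/M$ is vacuous.
\item Its triangle inequality through $a_{k+1}$ and $a_l$ leaves increment terms $|a_{k+1}-a_k|^p/|x-y|^{1+ps}$. It bounds these by $M^{1+ps}|a_{k+1}-a_k|^p$ with no decay in $l-k$, so summing over $l$ costs a factor $M$.
\end{itemize}
Your Lipschitz bound on near-diagonal cells fixes the first issue. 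Your vertex bound $|f(t)-f(s)|\le\max_{a\in\{k,k+1\},\,b\in\{l,l+1\}}|f_a-f_b|$ fixes the second, since every pair carries its own weight $(b-a)^{-1-ps}$. The vertex bound is valid because $f(t)-f(s)$ is a convex combination of the four $f_a-f_b$.

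The step that does not go through as you state it is the constant.
\begin{itemize}
\item \emph{Where the constant breaks.} Put $q=p(1-s)$. On a diagonal cell the singular integral is exactly $\frac{2}{q(q+1)}M^{-1-q}$, so your ``$C^p$'' hides a factor of order $1/q$, which is unbounded as $q\to 0$.
\item \emph{A counterexample.} The inequality with $12^p$ is actually false in the range you claim ($p\ge 1$, $ps<1$). For $M=1$ and $f(t)=ct$, the left side is $\frac{2|c|^p}{q(q+1)}$ and the right side is $12^p|c|^p$. Taking $p=1$, $s=0.9$ gives about $18.2\,|c|>12\,|c|$.
\item \emph{The missing hypothesis.} You need a lower bound such as $p(1-s)\ge 1$. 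The paper's pointwise bound $|x-y|^{p-1-ps}\le M^{-(p-1-ps)}$ on diagonal cells tacitly assumes exactly this. It holds where the lemma is used ($s=\alpha<\tfrac12$, $p$ large).
\end{itemize}
Under $p(1-s)\ge 1$ your count closes:
\begin{itemize}
\item Diagonal cells contribute at most $M^{ps-1}\sum_j|\delta_j|^p$.
\item Adjacent cells contribute at most $2^{q+1}M^{ps-1}\sum_j|\delta_j|^p$, since each $\delta_j$ occurs in at most four of them.
\item Far cells contribute at most $8\cdot 4^{1+ps}M^{ps-1}\sum_{a<b}|f_a-f_b|^p(b-a)^{-1-ps}$.
\end{itemize}
So, for instance, with $p\ge 2$ and $ps\le 1$ the total constant is $1+2^{p+1}+128\le 12^p$. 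The lemma is only used up to constants later in the paper, so a constant $C(p,s)$ would suffice there anyway.
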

\begin{proof}
   
    We have 
    \begin{eqnarray*}
        &&\|f\|^p_{s,p}=\sum_{0\leq k,l\leq 2^N-1}\int_{\left[\frac{k}{M},\frac{k+1}{M}\right]\times \left[\frac{l}{M},\frac{l+1}{M}\right]}\frac{|f(x)-f(y)|^p}{|x-y|^{1+ps}}dxdy \\
        &=&\sum_{0\leq k\leq M-1}\int_{\left[\frac{k}{M},\frac{k+1}{M}\right]^2}\frac{|f(x)-f(y)|^p}{|x-y|^{1+ps}}dxdy +  \sum_{0\leq p\neq q\leq M-1}\int_{\left[\frac{k}{M},\frac{k+1}{M}\right]\times \left[\frac{l}{M},\frac{l+1}{M}\right]}\frac{|f(x)-f(y)|^p}{|x-y|^{1+ps}}dxdy.
    \end{eqnarray*}
    For $x\in \left[\frac{k}{M},\frac{k+1}{M}\right]$, and $y\in \left[\frac{l}{M},\frac{l+1}{M}\right]$, $$f(x)=M(a_{k+1}-a_k)\left(x-\frac{k}{M}\right) + a_k \text{ and } f(y)=M(a_{l+1}-a_l)\left(y-\frac{l}{M}\right) + a_l.$$
    Now, consider the three different cases below : 
    \begin{itemize}
        \item If $k=l$, then 
        $\vert f(x)-f(y)\vert =M \vert a_{k+1}-a_k \vert \vert x-y\vert ,$
        and 
        $$\frac{\vert f(x)-f(y)\vert^p}{\vert x-y\vert ^{1+ps }} \leq M^p \vert a_{k+1}-a_k\vert ^p\vert x-y\vert ^{p-1-ps} \leq M^{1+ps}\vert a_{k+1}-a_k\vert^p ,$$
        and therefore 
        $$\sum_{0\leq k\leq M-1}\int_{\left[\frac{k}{M},\frac{k+1}{M}\right]^p}\frac{|f(x)-f(y)|^p}{|x-y|^{1+ps}}dxdy\leq M^{ps-1}\sum_{0\leq k\leq M-1} \vert a_{k+1}-a_k\vert^2 \leq K.$$
        \item If not then 
        \begin{eqnarray*}
            \vert f(x)-f(y)\vert &\leq& \vert f(x)-a_{k+1}\vert + \vert a_{k+1}-a_{l} \vert + \vert a_{l}-f(y) \vert \\
            &\leq& \vert a_{k+1}-a_k \vert +\vert a_{k+1}-a_{l} \vert  +   \vert a_{l+1}-a_{l} \vert 
        \end{eqnarray*}
        and 
        $$\vert f(x)-f(y)\vert ^p \leq 3^p\left(\vert a_{k+1}-a_k\vert^2+\vert a_{l+1}-a_l\vert^2 + \vert a_{k+1}-a_{l} \vert^2 \right)$$
        since $\vert x-y\vert > (l-(k+1))$, then
        $$\frac{\vert f(x)-f(y)\vert ^p }{\vert x-y\vert ^{1+ps}} \leq 3^p\left(M^{1+ps}\vert a_{p+1}-a_p\vert^p+M^{1+ps}\vert a_{q+1}-a_q\vert^p + M^{1+ps}\frac{\vert a_{k+1}-a_{l} \vert^p}{\left( l-(k+1)\right)^{1+ps}} \right). $$
      Thus, 
      $$\sum_{0\leq p\neq q\leq 2^N-1}\int_{\left[\frac{p}{2^N},\frac{p+1}{2^N}\right]\times \left[\frac{q}{2^N},\frac{q+1}{2^N}\right]}\frac{|f(x)-f(y)|^2}{|x-y|^{1+2s}}dxdy \leq 3\times 3^pK,$$
    \end{itemize}
    and finally $\|f\|_{s,p} \leq 4\times 3^pK$. 
\end{proof}
\begin{lemma} \label{Membre1}
    For $N\geq  n$, we have 
    \begin{align*}
        \int_{2^{-n}I_1\times 2^{-n}I_1}&\int_{2^{-n}I_2\times 2^{-n}I_2}\frac{\left\vert A(\theta \xrightarrow{r'}\theta')-A(\theta \xrightarrow{r'}\theta')\right\vert ^p}{\vert r-r'\vert ^{1+p\alpha}\vert \theta-\theta'\vert ^{1+p\alpha}}\dd r\dd r'\dd \theta \dd \theta'\\
        &\leq \frac{12^{2p}}{2^{2N(1-ps)}}\sum_{\substack{2^{N-n}\leq k \leq l\leq 2^{N-n+1}\\2^{N-n}\leq p \leq q\leq 2^{N-n+1}}} \frac{\left\vert A_N(k2^{-N}\xrightarrow{q2^{-N}} l2^{-N})-A_N(k2^{-N}\xrightarrow{p2^{-N}} l2^{-N})\right\vert^p }{(l-k)^{1+p\alpha}(q-p)^{1+p\alpha}}.
    \end{align*}
    
\end{lemma}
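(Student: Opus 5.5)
The plan is to reduce the four-dimensional integral to a product of two one-dimensional Gagliardo integrals of piecewise affine functions, and then apply Lemma~\ref{Kepsilon} once in each variable.

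The key observation is about the integrand. It is the rectangular increment $W(r,\theta)-W(r',\theta)-W(r,\theta')+W(r',\theta')$, which equals the difference of line integrals of $A_N$ along $\theta\xrightarrow{r}\theta'$ and $\theta\xrightarrow{r'}\theta'$. By construction of $A_N$, the function $W(A_N)$ is piecewise affine in $r$ on the mesh $2^{-N}$. Because $A_N$ is piecewise constant in $\theta$, $W(A_N)$ is also piecewise affine in $\theta$ on the same mesh. Hence $W$ is \emph{bilinear} on each cell of the dyadic grid of resolution $N$. Since $N\geq n$, the rescaled domains $2^{-n}I_1$ and $2^{-n}I_2$ are unions of $M\simeq 2^{N-n}$ such cells, with endpoints at indices $2^{N-n}$ and $2^{N-n+1}$. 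This is exactly the setting of Figure~\ref{fig:case1}.

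First, fix $(\theta,\theta')$. The map $r\mapsto W(r,\theta)-W(r,\theta')$ is then piecewise affine on the mesh $2^{-N}$. After an affine rescaling of $2^{-n}I_1$ to $[0,1]$, apply Lemma~\ref{Kepsilon} with this $M$. This bounds the $r$-integral by $12^p M^{-(1-p\alpha)}$ times the discrete sum over grid pairs $p<q$, with weights $(q-p)^{-(1+p\alpha)}$, of $|W(q2^{-N},\theta)-W(p2^{-N},\theta)-W(q2^{-N},\theta')+W(p2^{-N},\theta')|^p$.

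Second, fix the grid pair $(p,q)$. The map $\theta\mapsto W(q2^{-N},\theta)-W(p2^{-N},\theta)$ is again piecewise affine on the mesh $2^{-N}$, so Lemma~\ref{Kepsilon} applies a second time in $\theta$. This gives the double discrete sum with weight $(l-k)^{-(1+p\alpha)}(q-p)^{-(1+p\alpha)}$. The increments it produces are precisely $A_N(k2^{-N}\xrightarrow{q2^{-N}} l2^{-N})-A_N(k2^{-N}\xrightarrow{p2^{-N}} l2^{-N})$. Multiplying the two constants gives the factor $12^{2p}$.

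The only bookkeeping step is the scaling factor, and this is the step most likely to need care. Lemma~\ref{Kepsilon} is stated on $[0,1]$, while here the domain has length $2^{-n}$ and mesh $2^{-N}$. Rescaling by $2^{n}$ in each variable produces Jacobian and kernel factors that, combined with the $M^{-(1-p\alpha)}$ from the lemma, collapse to $2^{-N(1-p\alpha)}$ per direction, hence $2^{-2N(1-p\alpha)}$ overall. The statement writes this exponent as $2^{-2N(1-ps)}$, with $s$ playing the role of $\alpha$. I would verify this power of two explicitly, since that is where an error could slip in.

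Tonelli's theorem justifies applying the one-dimensional bound inside the integral over the other variable, because all integrands are nonnegative. Finally, the restriction to ordered pairs $k\leq l$ and $p\leq q$ costs only a symmetry factor, and that factor is absorbed into the constant.
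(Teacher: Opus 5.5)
Your proposal is correct and is the same argument as the paper's, which simply applies Lemma~\ref{Kepsilon} once in $r$ and once in $\theta$ to the rectangular increment of the piecewise bilinear function $W(A_N)$. Your scaling bookkeeping checks out: the factor is $2^{-N(1-p\alpha)}$ per direction, with the $s$ in the statement read as $\alpha$. No extra symmetry factor arises, because Lemma~\ref{Kepsilon} already bounds the integral over the full square by a sum over ordered pairs.
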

\begin{proof}
We apply Lemma $\ref{Kepsilon}$ two times, once in $r$ and once in $\theta$.
\end{proof}
\paragraph{The case $n\geq N$.} In this case, the resolution $N$ is bigger than the resolution $n$, as depicted in Figure \ref{fig:Case2}, where the red squares are squares of the corona, and the black square is a region in which $W(A_N)$ is affine. This case reduces to computing a classical $1$-dimensional Gagliardo norm for (not anymore piecewise) affine function. 
\begin{figure}
    \centering
    \includegraphics[width=0.5\linewidth]{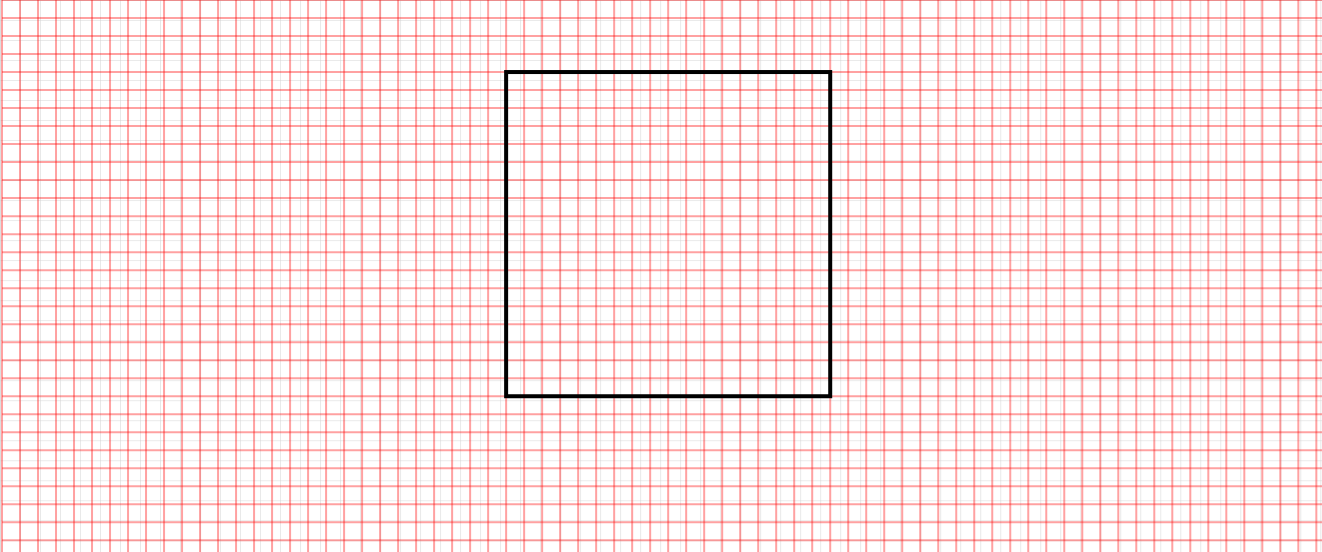}
    \caption{Resolution $N$ (black) bigger than resolution $n$ (red).}
    \label{fig:Case2}
\end{figure}
In this case, we have the following proposition.
\begin{proposition} \label{Membre2}
    We have    
    \begin{align*}
        \int_{2^{-n}I_1\times 2^{-n}I_1}&\int_{2^{-n}I_2\times 2^{-n}I_2}\frac{\left\vert A(\theta \xrightarrow{r'}\theta')-A(\theta \xrightarrow{r'}\theta')\right\vert ^p}{\vert r-r'\vert ^{1+p\alpha}\vert \theta-\theta'\vert ^{1+p\alpha}}\dd r\dd r'\dd \theta \dd \theta'\\
        & \lesssim  2^{2p(N-n)}\left\vert A(0\xrightarrow {2^{-N}} 2^{-N})-A(0\xrightarrow 0 2^{-N}) \right\vert^{p}.
    \end{align*}
\end{proposition}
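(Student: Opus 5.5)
The plan is to use the fact that, at resolution $N$, $W(A_N)(r,\theta)=\int_0^\theta A_N(r,u)\,\dd u$ is \emph{bilinear} on each dyadic cell $[r,r_+]\times[\theta,\theta_+]$ of $\Lambda^N$. Indeed, $A_N$ is piecewise constant in $\theta$ and affine in $r$ on each cell. So $W$ is affine in $\theta$, and it is affine in $r$ because it is a linear interpolation of two affine-in-$\theta$ functions. On such a cell the rectangular increment therefore factorizes exactly. Writing $\Delta_N$ for the rectangular increment of $W$ over the full cell, which is the quantity $A(0\xrightarrow{2^{-N}}2^{-N})-A(0\xrightarrow{0}2^{-N})$ appearing in the statement, we have
\[
W(r,\theta)-W(r',\theta)-W(r,\theta')+W(r',\theta') = 2^{2N}\,\Delta_N\,(r-r')(\theta-\theta').
\]
This identity is the whole mechanism.

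First I reduce to the case where the rescaled region $2^{-n}I_1\times 2^{-n}I_2$ is contained in a single cell. Since $n\geq N$, the region has side $\lesssim 2^{-n}\leq 2^{-N}$, so it meets at most a bounded number of cells (four, up to the fixed size of the corona pieces). I split $I_1$ and $I_2$ into at most two subintervals along the grid lines. The diagonal blocks lie inside one cell. For the off-diagonal blocks, the point pairs lie in adjacent cells, and I handle them by inserting the grid corner and using the triangle inequality, exactly as in the proof of Lemma~\ref{Kepsilon}. Each resulting piece is again a product increment inside a single cell, so only the cost of a constant $C^p$ is incurred. I also take $\Delta_N$ to be the largest such increment among the boundedly many neighbouring cells, which is harmless for the later tightness argument.

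Inside one cell, I substitute the factorization into the integrand. This gives
\[
2^{2Np}|\Delta_N|^p\int_{(2^{-n}I_1)^2}|r-r'|^{p-1-p\alpha}\,\dd r\,\dd r'\int_{(2^{-n}I_2)^2}|\theta-\theta'|^{p-1-p\alpha}\,\dd\theta\,\dd\theta'.
\]
Both one-dimensional integrals converge, since $p-1-p\alpha>-1$ because $\alpha<1$, and each one scales like $2^{-n(p+1-p\alpha)}$. The total is therefore $2^{2Np}2^{-2np}\,2^{-2n(1-p\alpha)}|\Delta_N|^p$.

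The main point to watch is this residual factor $2^{-2n(1-p\alpha)}$. For $p\alpha<1$ it is at most $1$, and the claimed bound $2^{2p(N-n)}|\Delta_N|^p$ follows directly. For $p\alpha>1$, which is the regime used later, it is exactly cancelled by the prefactor $2^{2n(1-p\alpha)}$ coming from the change of variables $\mathcal{S}^{2^{-n}*}$ performed before Lemma~\ref{Membre1}. So I will state the estimate for the rescaled norm $\Vert \mathcal{S}^{2^{-n}*}W\Vert^p$, which equals the displayed integral times $2^{2n(1-p\alpha)}$, and then read off the proposition with the implicit constant depending only on $p$, $\alpha$ and the fixed cover. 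With this bound in hand, the sum over $n>N$ in the weighted norm is a geometric series in $2^{-2p(n-N)}$. Its expectation is then controlled by $\mathbb{E}|\Delta_N|^p\lesssim\sigma(\text{cell})^{p/2}$ via Lemma~\ref{estim} together with Lemma~\ref{lem:dyadiccorona}.
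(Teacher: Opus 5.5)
Your argument is correct and is essentially the paper's. The paper applies twice a one-dimensional lemma: for $n\geq N$ the interval $2^{-n}[\tfrac12,1]$ lies in the single cell $[0,2^{-N}]$ where $f$ is affine, so the increment equals $2^{N-n}\big(f(2^{-N})-f(0)\big)(t-s)$ and only $\int|t-s|^{p-1-p\alpha}$ remains; this is your bilinear factorization of the rectangular increment of $W(A_N)$. Your remark on the factor $2^{-2n(1-p\alpha)}$ is also correct: the paper's computation is carried out in the rescaled variables on $[\tfrac12,1]^2$, i.e.\ it bounds $\Vert\mathcal{S}^{2^{-n}*}W\Vert^p$ rather than the integral over $2^{-n}I_1\times 2^{-n}I_2$ as displayed (which for $p\alpha>1$ is larger by $2^{2n(p\alpha-1)}$), so your reformulation, with the extra care about neighbouring cells, is the right reading of the statement.
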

 The proof consist in applying two times in a row the next lemma. 
 \begin{lemma}
     Let $f:[0,1]\rightarrow \R$ be a piecewise affine function, linear on each interval $[k2^{-N}, (k+1)2^{-N}]$. Then, for $n\geq N$, 
     \[\|S^{2^{-n}}f\|_{\mathcal{W}^{\alpha;p}}\lesssim 2^{p(N-n)}\vert f(2^{-N})-f(0)\vert ^p \]
 \end{lemma}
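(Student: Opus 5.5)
We reduce the estimate to an explicit computation, since on the relevant window $f$ is a single affine function. Set $a\coloneq f(0)$ and $b\coloneq f(2^{-N})$; the unit interval and the point $0$ play the role of the scaling centre. For $n\geq N$ the rescaled window $2^{-n}[0,1]$ lies inside $[0,2^{-N}]$, so on it
\[
f(x)=a+2^{N}(b-a)\,x ,\qquad\text{hence}\qquad S^{2^{-n}}f(t)=f(2^{-n}t)=a+2^{N-n}(b-a)\,t,\quad t\in[0,1].
\]
Thus $S^{2^{-n}}f$ is affine on $[0,1]$ with slope $2^{N-n}(b-a)$. If the window sits in a different cell $[k2^{-N},(k+1)2^{-N}]$, the same holds with the increment of $f$ over that cell; this is the only increment that appears in Proposition~\ref{Membre2}.

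Next we bound the Gagliardo seminorm of an affine function $g(t)=c+\lambda t$ on $[0,1]$:
\[
\int_{[0,1]^2}\frac{\vert g(t)-g(s)\vert^p}{\vert t-s\vert^{1+p\alpha}}\,\dd t\,\dd s=\vert\lambda\vert^p\int_{[0,1]^2}\vert t-s\vert^{p-1-p\alpha}\,\dd t\,\dd s .
\]
Since $\alpha<1$, we have $p-1-p\alpha>-1$, so the integral is a finite constant $C_{\alpha,p}$. This gives $\Vert S^{2^{-n}}f\Vert^p_{\mathcal{W}^{\alpha;p}}\leq C_{\alpha,p}\,2^{p(N-n)}\vert b-a\vert^p$, which is the claim. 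Here the seminorm is read as the $p$-th power, matching the convention of Lemma~\ref{Kepsilon}.

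The $L^p$ part of the norm, if it is included, is $\sup_{[0,1]}\vert g\vert\leq \vert a\vert+2^{N-n}\vert b-a\vert$. Under the half-Dirichlet normalisation $W(\cdot,0)=0$ used for $W$, together with the fact that only increments enter the mixed-difference norm, the constant $a$ does not contribute. So the bound reduces to the seminorm.

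The only genuinely delicate point is the bookkeeping of the window: the scaled square must not straddle two cells of mesh $2^{-N}$. If the centre lies on a grid line, a window of size $2^{-n}\le 2^{-N}$ may meet two adjacent cells. In that case I would split into at most two affine pieces. Cross terms across the breakpoint are handled as in the $k\neq l$ case of Lemma~\ref{Kepsilon}, bounded by sums of the two adjacent slopes. This yields the same bound up to a factor $2^p$, with $\vert f(2^{-N})-f(0)\vert$ replaced by the maximum of the two adjacent increments.

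Proposition~\ref{Membre2} then follows by applying the lemma twice, once in $r$ and once in $\theta$, to the bilinear interpolant $W(A_N)$. Its mixed second difference on a single cell is the constant plaquette increment, which produces the factor $2^{2p(N-n)}$.
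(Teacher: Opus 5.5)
Your proof is correct and follows the paper's own argument. For $n\geq N$ the rescaled window lies in the single cell $[0,2^{-N}]$, so $f(2^{-n}t)-f(2^{-n}s)=2^{N-n}\bigl(f(2^{-N})-f(0)\bigr)(t-s)$, and the Gagliardo integral factors as $2^{p(N-n)}\vert f(2^{-N})-f(0)\vert^p$ times the finite constant $\int\vert t-s\vert^{p-1-p\alpha}$. The paper integrates over the corona $[\frac12,1]^2$ rather than $[0,1]^2$, which changes nothing; your extra remarks on windows straddling a grid line and on the $L^p$ term go beyond what the paper's proof addresses and are not needed here.
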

\begin{proof}
We have 
\begin{align*}
    \|S^{2^{-n}}f\|_{\mathcal{W}^{\alpha;p}([\frac{1}{2},1])}&=\int_{[\frac{1}{2},1]\times [\frac{1}{2},1]}\frac{\vert f_N(2^{-n}t)-f_N(2^{-n}s)\vert ^p}{\vert t-s\vert ^{1+p\alpha } }\dd s \dd t\\
    &=2^{p(N-n)}\vert f(2^{-N})-f(0) \vert ^p \int_{[\frac{1}{2},1]\times [\frac{1}{2},1]}\vert t-s\vert ^{p-1-\alpha p} \dd t \dd s .
\end{align*}
\end{proof}
Finally, we obtain the desired bound in the following proposition.  
\begin{proposition} \label{MainBound}
    For all $N\geq 0$, we have 
    \begin{align*}\|A_N&\|_{\mathcal{W}^{\alpha;\alpha-1;p;s}(\mathrm{Cyl},\g)} \\
    &\leq 
  \sum_{0\leq n \leq N }  \frac{2^{np(s-2\alpha)}}{2^{2N(1-p\alpha)}}\sum_{\substack{ 2^{-n+N-2} \leq n < m \leq 2^{-n+N-1}\\  2^{-n+N-2}\leq k , l \leq 2^{-n+N-1}  }}\frac{\left\vert A_N(k2^{-N}\xrightarrow{m2^{-N}} l2^{-N})-A_N(k2^{-N}\xrightarrow{n2^{-N}} l2^{-N})\right\vert^p}{\vert l-k \vert ^{1+p\alpha}(m-n)^{1+p\alpha}}    \\
  &\ \ \  +2^{2pN(s-2\alpha)}\left\vert A(0\xrightarrow {2^{-N}} 2^{-N})-A(0\xrightarrow 0 2^{-N}) \right\vert^{p}.
\end{align*}
\end{proposition}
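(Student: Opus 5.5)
The bound in Proposition~\ref{MainBound} comes from splitting the dyadic scale sum in the weighted norm at the lattice resolution $N$ and treating each half with the one-scale estimates already established. We fix a saddle point, which we place at $(0,0)$, and one square $I\times J$ of the base corona; the sums over critical points and over the finite cover only contribute a multiplicative constant. We then write $\sum_{n\geq0}=\sum_{n\leq N}+\sum_{n>N}$, as at the start of this subsection.

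For $n\leq N$ we use the change of variables computed before Lemma~\ref{Kepsilon}. It turns $\Vert \mathcal{S}^{2^{-n}*}_{(0,0)}W\Vert^p_{\mathcal{W}^{\alpha,\alpha;p}(I\times J)}$ into $2^{2n(1-p\alpha)}$ times the Gagliardo double integral over $2^{-n}I\times 2^{-n}J$. On that region $W(A_N)$ is piecewise affine in each variable on the mesh $2^{-N}$, so the region contains about $2^{N-n}$ mesh cells per side. We then apply Lemma~\ref{Membre1}, which is Lemma~\ref{Kepsilon} used once in $r$ and once in $\theta$ after rescaling to unit length with $M=2^{N-n}$. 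This replaces the double integral by the discrete double sum of rectangular increments of $W$ at the lattice points $(k2^{-N},m2^{-N})$. The indices run over the dyadic block $[2^{N-n-2},2^{N-n-1}]$ that corresponds to the corona, the weights are $|l-k|^{-1-p\alpha}(m-n)^{-1-p\alpha}$, and the prefactor comes from the scaling of the mesh. Multiplying by $2^{n(sp-2)}$ and collecting the powers of $2^n$ and $2^N$ gives the first line of the bound. The rectangular increments are exactly $A_N(k2^{-N}\xrightarrow{m2^{-N}}l2^{-N})-A_N(k2^{-N}\xrightarrow{n2^{-N}}l2^{-N})$, since $W$ is the $\theta$-primitive of $A_N$.

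For $n>N$ the scaled square lies inside the single cell of the lattice touching the saddle point, where $W(A_N)$ is affine in each variable. Proposition~\ref{Membre2} bounds the scaled Gagliardo integral by $2^{2p(N-n)}$ times the $p$-th power of the single increment $A(0\xrightarrow{2^{-N}}2^{-N})-A(0\xrightarrow{0}2^{-N})$. After multiplying by $2^{n(sp-2)}$ and the change-of-variables factor, we get a geometric series in $n>N$. It converges as long as the exponent of $2^n$ is negative, which is where the constraints $s<2\alpha-1$ and $\alpha<\tfrac12$ enter. Its sum is comparable to its first term $n=N$, and that first term is the second line of the bound.

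The main obstacle is the exponent bookkeeping. The Jacobian $2^{2n(1-p\alpha)}$, the mesh factor $2^{-2N(1-p\alpha)}$ from Lemma~\ref{Kepsilon}, and the weight $2^{n(s-2/p)p}$ have to combine into $2^{np(s-2\alpha)}2^{-2N(1-p\alpha)}$. The geometric tail for $n>N$ also has to produce exactly $2^{2pN(s-2\alpha)}$. We would track these powers carefully in a single computation and absorb any mismatch into the implicit constant, which depends only on $p$, $\alpha$, $s$ and the cover, never on $N$ or $n$.
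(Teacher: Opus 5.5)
Your route is the paper's own: its proof is just ``combine Lemma~\ref{Membre1} and Proposition~\ref{Membre2}''. For $n\le N$ the bookkeeping you postpone closes exactly: the weight, the Jacobian of $\mathcal{S}^{2^{-n}}$ and the mesh factor of Lemma~\ref{Kepsilon} give $2^{n(sp-2)}\cdot 2^{2n(1-p\alpha)}\cdot 2^{-2N(1-p\alpha)}=2^{np(s-2\alpha)}2^{-2N(1-p\alpha)}$, which is the first-line prefactor. The genuine problem is the tail $n>N$. There your claim that the first tail term ``is the second line'' is false. Your repair, absorbing the mismatch into an $N$-independent constant, cannot work, because the mismatch is a power of $2^N$.

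Set $\Delta_N:=A(0\xrightarrow{2^{-N}}2^{-N})-A(0\xrightarrow{0}2^{-N})$. On the saddle cell $W(A_N)$ is bilinear, so the rectangular increments of $\mathcal{S}^{2^{-n}*}W$ over $I\times J$ are $2^{2(N-n)}\Delta_N(r_1-r_2)(\theta_1-\theta_2)$. What the proof of Proposition~\ref{Membre2} actually computes is therefore the rescaled norm, $\|\mathcal{S}^{2^{-n}*}W\|^p_{\mathcal{W}^{\alpha,\alpha;p}(I\times J)}\asymp 2^{2p(N-n)}|\Delta_N|^p$. No further change-of-variables factor should be applied; doing so double counts the Jacobian. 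Hence
\[
\sum_{n>N}2^{n(sp-2)}\,2^{2p(N-n)}|\Delta_N|^p\asymp 2^{N(sp-2)}|\Delta_N|^p .
\]
This series converges for every $s<2+\frac2p$, so $s<2\alpha-1$ and $\alpha<\frac12$ play no role there.

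Compare this with the displayed term: $2^{N(sp-2)}/2^{2pN(s-2\alpha)}=2^{Np(4\alpha-s-2/p)}\to\infty$ whenever $s<4\alpha-\frac2p$. That holds in particular on the whole range $s<2\alpha-1$ where the weighted norm is defined. With your extra Jacobian you would get $2^{Np(s-2\alpha)}|\Delta_N|^p$ instead, still too large by a factor $2^{Np(2\alpha-s)}$. The first line cannot compensate either: with the displayed index ranges, none of its rectangular increments involves the saddle cell.

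So your argument proves a bound for $\|A_N\|^p$ whose second line is $2^{N(sp-2)}|\Delta_N|^p$, not the displayed $2^{2pN(s-2\alpha)}|\Delta_N|^p$. This is exactly the first-line prefactor evaluated at $n=N$. It should also be summed over the cells adjacent to the saddle vertex that the rescaled cover meets, since these carry different increments, not a constant multiple of one. The paper's one-line proof hides the same mismatch; its Lemma~\ref{sCondition} in fact works with $2^{pN(s-2\alpha)}$. You should state and prove the corrected bound. It is harmless downstream: $\mathbb{E}|\Delta_N|^p\lesssim 2^{-Np/2}$ gives $2^{N(sp-2)}\mathbb{E}|\Delta_N|^p\lesssim 2^{Np(s-\frac12-\frac2p)}$. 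This is bounded for $s\le\frac12+\frac2p$, which covers the range used in Theorem~\ref{Bounded}.
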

\begin{proof}
    Combination of Lemmas \ref{Membre1} and \ref{Membre2}.
\end{proof}
\subsection{Probabilistic bounds}
Based on Proposition \ref{MainBound}, to be able to bound $\mathbb{E}[\|A_N\|_{\mathcal{W}^{\alpha;\alpha-1;p;s}(\mathrm{Cyl},\g)}]$, we need to understand how to bound 
\[\mathbb{E}\left[  \left\vert A_N(k2^{-N}\xrightarrow{m2^{-N}} l2^{-N})-A_N(k2^{-N}\xrightarrow{n2^{-N}} l2^{-N})\right\vert^p  \right].\]
Recall from the definition of $A_N$ that 
\begin{align*}
    &A_N(k2^{-N}\xrightarrow{m2^{-N}} l2^{-N})-A_N(k2^{-N}\xrightarrow{n2^{-N}} l2^{-N}) \\=&\sum_{i=k}^l\log\left(M_{ i2^{-N}\xrightarrow{m2^{-N}} (i+1)2^{-N}}  \right)-\log\left(M_{ i2^{-N}\xrightarrow{n2^{-N}} (i+1)2^{-N}}  \right),
\end{align*}
where $M$ is sampled under $\mu^N$. Therefore, we should understand, for $n < m\in \N$, $l\in \N$, and independent random variables $X_1,\dots,X_n, Y_1,\dots,Y_{m-n}$ such that 
$X_i \sim \mu_{s_i} \text{, and } Y_i\sim \mu_{t_i}$
the quantity 
\begin{align*}
\mathbb{E}\left[\left\vert \sum_{j=1}^l \log(X_1\cdots X_n Y_1\cdots Y_{m-n})-\log(X_1\cdots X_n)  \right\vert^{p}\right]. 
\end{align*}
In fact, we have the following estimate. 
\begin{proposition}\label{mainestimate}
    Consider a family of probability measures $(\mu_t)_{t>0}$ verifying  $M_\beta$ for some $\beta>0$. Let moreover $\varepsilon>0$. 
    \par There exists a constant $C_\beta$ such that for integers $n < m$ and $l$, and independent random variables $(X^j_u)_{\substack{1\leq j\leq l \\ 1\leq u \leq n  }}$ and $(Y^j_v)_{{\substack{1\leq j\leq l \\ 1\leq v \leq m-n  }}}$ such that 
\[\mathbb{P}(X^j_u\in \dd g)= \mu_{s^j_u}(\dd g) \text{ and } \mathbb{P}(Y^j_v\in \dd g)= \mu_{t^j_v}(\dd g), \]
 where 
 \[\max\Bigg (\max_{\substack{1\leq j\leq l \\ 1\leq u \leq n  }} s^j_u, \max_{{\substack{1\leq j\leq l \\ 1\leq v \leq m-n  }}} t^j_u\Bigg )\leq \varepsilon,\]
we have 
\begin{align*}
\mathbb{E}\left[\left\vert \sum_{j=1}^l \log(X^j_1\cdots X^j_n Y^j_1\cdots Y^j_{m-n})-\log(X^j_1\cdots X^j_n)  \right\vert^{2\beta}\right] \leq C_\beta l^{\beta} \left( (m-n)^\beta \varepsilon^\beta +n^{2\beta-}\varepsilon^{2\beta}\right).
\end{align*}
\end{proposition}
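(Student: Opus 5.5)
The plan is to write each summand as a leading increment plus a Baker--Campbell--Hausdorff correction, and to bound the two parts separately. For each $j$ set $P_j\coloneq X^j_1\cdots X^j_n$ and $Q_j\coloneq Y^j_1\cdots Y^j_{m-n}$. The $j$-th summand is then
\[
D_j\coloneq \log(P_jQ_j)-\log(P_j)=\log(Q_j)+R_j ,
\]
where $R_j$ is the BCH correction. The families $\{(P_j,Q_j)\}_j$ are independent across $j$. By the symmetry and Ad-invariance in assumption 1 of property $(H)$, each $\log Q_j$ and each $D_j$ has a law invariant under $v\mapsto -v$ (conditionally on $P_j$, using that $\mu$ is symmetric). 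Hence $\sum_j D_j$ is a sum of independent centered $\g$-valued variables, and for moments of order $2\beta\ge 2$ I will use the Rosenthal/Marcinkiewicz--Zygmund inequality:
\[
\mathbb{E}\Big|\sum_{j=1}^l D_j\Big|^{2\beta}\lesssim_\beta l^{\beta-1}\sum_{j=1}^l\mathbb{E}|D_j|^{2\beta}.
\]
(For $\beta<1$, Jensen reduces to the $L^2$ case.) This reduces the problem to a single-$j$ bound $\mathbb{E}|D_j|^{2\beta}\lesssim (m-n)^\beta\varepsilon^\beta+n^{2\beta-}\varepsilon^{2\beta}$. The stated $l^\beta$ then follows from $l^{\beta-1}\cdot l$.

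For the single-$j$ bound I split $|D_j|^{2\beta}\lesssim|\log Q_j|^{2\beta}+|R_j|^{2\beta}$. Lemma~\ref{estim} applied to $Q_j$, whose law is $\mu_{t_1}\ast\cdots\ast\mu_{t_{m-n}}$, gives directly
\[
\mathbb{E}|\log Q_j|^{2\beta}\le C_\beta\Big(\sum_v t^j_v\Big)^\beta\le C_\beta (m-n)^\beta\varepsilon^\beta .
\]
For the correction, item 2 of Lemma~\ref{devlim} gives $|R_j|\le C|\log P_j||\log Q_j|$ when both logarithms are small. On the complementary event, compactness bounds $|R_j|$ by a constant, and Lemma~\ref{conv} (through Markov together with Lemma~\ref{estim}) makes the probability of that event polynomially small. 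Since $P_j$ and $Q_j$ are independent, Lemma~\ref{estim} on each factor gives
\[
\mathbb{E}|R_j|^{2\beta}\lesssim (n\varepsilon)^\beta\,((m-n)\varepsilon)^\beta .
\]

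This product bound is the main obstacle: it is not of the target form, since it can exceed both $(m-n)^\beta\varepsilon^\beta$ and $n^{2\beta}\varepsilon^{2\beta}$. I would handle it with the inequality $ab\le a^2+b^2$ applied in a weighted way. Taking $a=(n\varepsilon)^\beta$ and $b=((m-n)\varepsilon)^\beta$, the term $(m-n)^{2\beta}\varepsilon^{2\beta}$ is at most $(m-n)^\beta\varepsilon^\beta$ as long as $(m-n)\varepsilon\lesssim 1$, which holds because the total area is bounded. The remaining term is $n^{2\beta}\varepsilon^{2\beta}$. Since $(m-n)\varepsilon\le C$, it is in fact at most $C(n\varepsilon)^\beta$, but that is too weak. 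So I instead plan to keep the exact product form $(n\varepsilon)^\beta((m-n)\varepsilon)^\beta\le C(m-n)^\beta\varepsilon^\beta$, using $n\varepsilon\lesssim \sigma$ of a thin strip, which is bounded by Lemma~\ref{areaestim}.

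If instead a sharper cancellation is required (this is what the $n^{2\beta-}$ exponent suggests), the route is the refined BCH expansion of item 1 of Lemma~\ref{devlim}. The linear term $\Lambda_{\log P_j}(\log Q_j)$ is centered given $P_j$, so it can be absorbed into the martingale part, and the remaining error $|\log Q_j|^2|\log P_j|+|\log Q_j||\log P_j|^p$ is estimated again by Lemma~\ref{estim}. The $n^{2\beta-}$ loss enters when Hölder's inequality is used to decouple moments of $\log P_j$ with a slightly larger exponent.
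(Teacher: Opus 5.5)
Your architecture is the paper's. You centre the independent summands and apply a BDG/Rosenthal inequality to get the factor $l^\beta$; your Jensen remark for $\beta<1$ is in fact more careful than the paper's convexity step. You then bound a single summand with BCH and Lemma~\ref{estim}.

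The gap is the claim $\mathbb{E}|R_j|^{2\beta}\lesssim (n\varepsilon)^\beta((m-n)\varepsilon)^\beta$. Item~2 of Lemma~\ref{devlim} controls $R_j$ only on the good event $\{|\log P_j|\le\delta,\ |\log Q_j|\le\delta\}$. Its complement has probability $\lesssim \mathbb{P}(|\log P_j|>\delta)+\mathbb{P}(|\log Q_j|>\delta)$, which is a \emph{sum}, not a product. Moreover, on $\{|\log P_j|>\delta\}$ the correction $R_j$ can be of order one however small $Q_j$ is, because the globally extended $\log$ jumps across the cut locus.

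Concretely, take $G=U(1)$ with the heat kernel, all areas equal to $\varepsilon$, $n\approx 1/\varepsilon$, $m-n=1$ and $l=1$. Then $|R_j|=2\pi$ whenever $\log P_j$ and $\log Q_j$ point the same way and $|\log P_j|+|\log Q_j|>\pi$. This event has probability $\asymp\sqrt{\varepsilon}$, so $\mathbb{E}|D_j|^{2\beta}\gtrsim\sqrt{\varepsilon}\gg \varepsilon^\beta\asymp(n\varepsilon)^\beta((m-n)\varepsilon)^\beta$ for $\beta>\tfrac12$.

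Hence the route you finally settle on proves a false estimate. That route keeps the product, bounds $n\varepsilon$ by a strip area, and concludes $\mathbb{E}|D_j|^{2\beta}\lesssim((m-n)\varepsilon)^\beta$. The term $(n\varepsilon)^{2\beta}$ is not an artefact of a crude BCH bound that a finer expansion could remove: it pays for the event $\{|\log P_j|>\delta\}$. So your refined-BCH paragraph does not help. Your $ab\le a^2+b^2$ ending reaches the right form only because $a^2$ happens to cover the missing bad-event term.

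The fix is to keep the bad event as a separate term. On the good event $|D_j|\le (1+C\delta)|\log Q_j|$, and on its complement $|D_j|\le 2\sup_G|\log|$. Markov's inequality with Lemma~\ref{estim} at orders $4\beta$ and $2\beta$ then gives $\mathbb{E}|D_j|^{2\beta}\lesssim((m-n)\varepsilon)^\beta+(n\varepsilon)^{2\beta}$. This uses neither the independence of $P_j$ and $Q_j$ nor any bound on total areas.

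The paper (Lemma~\ref{esperance}) does this in one stroke with the global estimate $|\log(xy)-\log x-\log y|\le C(|\log x|^2+|\log y|^2)$. It holds on all of $G$ precisely because the right-hand side is a sum and is $\ge C\delta^2$ off the good set. This yields $(n\varepsilon)^{2\beta}$, which is what Proposition~\ref{EstimImpo} uses, so the $n^{2\beta-}$ in the statement should be read that way.

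A smaller point: $D_j$ is centred only unconditionally, not given $P_j$, since the BCH term $\tfrac1{12}[\log Q_j,[\log Q_j,\log P_j]]$ is even in $\log Q_j$. The symmetry of its law comes from the law-preserving involution $(P_j,Q_j)\mapsto(P_j^{-1},P_jQ_j^{-1}P_j^{-1})$, which is what the paper's chain of substitutions amounts to. Unconditional centring is all the moment inequality needs.
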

We need several lemmas for the proof.

\begin{lemma}\label{esperance}
     Under the $M_\beta$ condition, there exists a constant $C>0$ such that for  all $0
       \leq s_1,\dots,s_n,t_1,\dots,t_m\leq1$, we have 
       \begin{align*}
           \int_{G^{n+m}} \vert \log(x_1\cdots x_ny_1\cdots y_m)-\log(y_1\cdots y_m)\vert ^{2\beta}\prod_{i=1}^n\mu_{s_i}(\dd x_i)\prod_{j=1}^m\mu_{t_j}(\dd y_j)\\
           \leq C_p((s_1+\cdots+s_n)^{\beta}&+(t_1+\cdots+t_m) ^{2\beta}).
       \end{align*}
      
\end{lemma}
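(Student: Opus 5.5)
The plan is to compare $\log(XY)$ with $\log Y$ directly through the Baker--Campbell--Hausdorff estimate of Lemma~\ref{devlim}. The two concentration inputs are Lemma~\ref{estim}, used once for the product of the $x_i$ and once for the product of the $y_j$, and the trivial bound coming from compactness of $G$.

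\textbf{Setup and reduction.} Write $X\coloneq x_1\cdots x_n$ and $Y\coloneq y_1\cdots y_m$. Under the product measure, $X$ and $Y$ are independent. The law of $X$ has density $\rho_{s_1}\ast\cdots\ast\rho_{s_n}$, and similarly for $Y$. Fix $\delta>0$ as in Lemma~\ref{devlim}, so that $\exp$ is a diffeomorphism near $0$. We take $\log$ to be a bounded measurable branch on all of $G$; then every term satisfies $\vert \log(XY)-\log Y\vert\le 2\,\mathrm{diam}(G)$. We split the expectation according to the event $\Omega_\delta\coloneq\{\vert\log X\vert\le\delta,\ \vert\log Y\vert\le\delta\}$ and its complement.

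\textbf{Good event.} On $\Omega_\delta$ (shrinking $\delta$ if necessary so that $XY$ also lies in the chart), the second item of Lemma~\ref{devlim}, applied with the roles ordered as $\exp(\log X)\exp(\log Y)$, gives
\[
\vert \log(XY)-\log Y\vert\le \vert\log X\vert + C\vert\log X\vert\,\vert\log Y\vert\le C'\vert\log X\vert .
\]
Raising this to the power $2\beta$ and applying Lemma~\ref{estim} to $X$ yields a contribution of order $C_\beta(s_1+\cdots+s_n)^{\beta}$.

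\textbf{Bad event.} On $\Omega_\delta^c$ we use the crude bound $(2\,\mathrm{diam}\,G)^{2\beta}$ and control the probability of each way the event can fail.
\begin{itemize}
\item By Markov's inequality and Lemma~\ref{estim} with exponent $2\beta$, we get $\mathbb{P}(\vert\log X\vert>\delta)\le \delta^{-2\beta}C_\beta(\sum s_i)^\beta$.
\item By Markov's inequality and Lemma~\ref{estim} with exponent $4\beta$ (that is, applying the lemma with $2\beta$ in place of $\beta$), we get $\mathbb{P}(\vert\log Y\vert>\delta)\le \delta^{-4\beta}C_{2\beta}(\sum t_j)^{2\beta}$.
\end{itemize}
This second, higher moment is exactly what produces the $(t_1+\cdots+t_m)^{2\beta}$ term in the statement. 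Summing the good-event and bad-event contributions gives the claimed bound.

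\textbf{Main obstacle.} The delicate point is that Lemma~\ref{estim} requires each $s_i,t_j\le 1$, which holds by assumption, but it does not require the sums $\sum s_i$ or $\sum t_j$ to be small. If these sums are large, the bound is trivial anyway, because the left-hand side is bounded by $(2\,\mathrm{diam}\,G)^{2\beta}$. So we may enlarge the constant $C$ to cover the regime $\sum s_i+\sum t_j\ge 1$ and assume both sums are small.

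A second technical point is to check that the branch of $\log$ is measurable and that the BCH remainder estimate is uniform on the $\delta$-ball. Both follow from compactness and from the choice of $\delta$ in Lemma~\ref{devlim}.
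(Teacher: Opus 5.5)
Your proof is correct and follows the same argument as the paper. Both compare $\log(XY)$ with $\log Y$ via the Baker--Campbell--Hausdorff estimate, then bound the resulting terms with Lemma~\ref{estim}, at order $2\beta$ for the $x_i$-product and order $4\beta$ for the $y_j$-product. The only difference is presentational: the paper folds the regime far from the identity into one global pointwise inequality $\vert\log(xy)-\log x-\log y\vert\le C(\vert\log x\vert^2+\vert\log y\vert^2)$, valid on all of $G$ after enlarging $C$, whereas you split explicitly on the event $\Omega_\delta$ and apply Markov's inequality.
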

\begin{proof}
    By the Baker--Campbell--Hausdorff formula, there exists a constant $C>0$ such that for all $x,y\in G$, 
    \[\vert \log(xy)-\log(x)-\log(y)\vert \leq C(\vert \log x\vert ^2+\vert \log y\vert ^2).\]
    This gives
     \[\vert \log(xy)-\log(x)\vert^{2\beta} \leq 3^{2\beta}C^{2\beta}(\vert \log y\vert ^{2\beta}+\vert \log x\vert ^{4\beta}+\vert \log y\vert ^{4
     \beta}).\]
    By taking the integral, we get 
\begin{eqnarray*}
    &&\int_{G^{n+m}} |\log(x_1\cdots x_ny_1\cdots y_m)-\log(y_1\cdots y_m)|^{2\beta}\prod_{i=1}^n\mu_{s_i}(\dd x_i)\prod_{j=1}^m\mu_{t_j}(\dd y_j) \\
    &\lesssim&\int_{G^{n+m}} |\log(x_1\cdots x_n)|^{2\beta}\prod_{i=1}^n\mu_{s_i}(\dd x_i)\prod_{j=1}^m\mu_{t_j}(\dd y_j)\\
    &+&\int_{G^{n+m}} |\log(x_1\cdots x_n)|^{4\beta}\prod_{i=1}^n\mu_{s_i}(\dd x_i)\prod_{j=1}^m\mu_{t_j}(\dd y_j)\\
    &+&\int_{G^{n+m}} |\log(y_1\cdots y_m)|^{4\beta}\prod_{i=1}^n\mu_{s_i}(\dd x_i)\prod_{j=1}^m\mu_{t_j}(\dd y_j)\\
    &\lesssim&(s_1+\cdots+s_n)^{\beta}+(t_1+\cdots+t_m) ^{2\beta},
\end{eqnarray*}
where the $2\beta^-$ comes from Lemma~\ref{estim}.
\end{proof}

\begin{lemma}
Using the same notations as Proposition~\ref{mainestimate}, let us define, for $1\leq k \leq l$ ,
\[Z_k \coloneq  \sum_{j=1}^k\log(X^j_1\cdots X^j_n Y^j_1\cdots Y^j_{m-n})-\log(X^j_1\cdots X^j_n).\]
    The sequence $(Z_k)_{1\leq k\leq l}$ is a martingale.
\end{lemma}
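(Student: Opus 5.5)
We prove that $(Z_k)_{1\leq k\leq l}$ is a martingale with respect to the natural filtration $\mathcal{F}_k\coloneq\sigma\big(X^j_u,Y^j_v;\ j\leq k\big)$. Write
\[
\Delta_j\coloneq\log(X^j_1\cdots X^j_nY^j_1\cdots Y^j_{m-n})-\log(X^j_1\cdots X^j_n),
\]
so that $Z_k=\sum_{j\leq k}\Delta_j$. Adaptedness is immediate, and integrability is automatic because $G$ is compact, so $\log$ (chosen as a measurable bounded branch) is bounded. The $\Delta_j$ are independent, since they are built from disjoint independent families. Hence $\mathbb{E}[Z_{k+1}-Z_k\mid\mathcal{F}_k]=\mathbb{E}[\Delta_{k+1}]$, and it suffices to show $\mathbb{E}[\Delta_j]=0$ for each $j$. (Independence with mean zero already gives the martingale property; the conditional-expectation formulation is what will feed into BDG later.)

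\textbf{Key step: the mean-zero property.} Fix $j$ and set $P\coloneq X^j_1\cdots X^j_n$ and $Q\coloneq Y^j_1\cdots Y^j_{m-n}$. These are independent, with laws $\nu_P=\mu_{s_1}\star\cdots\star\mu_{s_n}$ and $\nu_Q=\mu_{t_1}\star\cdots\star\mu_{t_{m-n}}$. Each $\mu_t$ is symmetric (invariant under $g\mapsto g^{-1}$) and $\mathrm{Ad}$-invariant, by the first assumption of $(H)$, and both properties are preserved under convolution. Indeed, conjugation invariance of each factor makes the convolution central, so the order of the factors can be reversed after inverting. Thus $\nu_P$ and $\nu_Q$ are symmetric and conjugation-invariant.

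Now consider the map $(p,q)\mapsto(p^{-1},q^{-1})$, followed by conjugation by $q$. Under it, $pq\mapsto p^{-1}q^{-1}$, which is conjugate to $q^{-1}p^{-1}=(pq)^{-1}$. More directly, I would use the joint transformation $(p,q)\mapsto(q^{-1}p^{-1}q,\,q^{-1})$. It preserves the law of $(P,Q)$, since $P$ is conjugation-invariant and independent of $Q$, and $Q$ is symmetric. It sends $pq$ to $q^{-1}p^{-1}qq^{-1}=q^{-1}p^{-1}=(pq)^{-1}$, and sends $p$ to $q^{-1}p^{-1}q$.

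Using $\log(h^{-1})=-\log h$ and $\log(q^{-1}hq)=\mathrm{Ad}_{q^{-1}}\log h$, valid off the cut locus, the transformed $\Delta_j$ equals $-\log(PQ)+\mathrm{Ad}_{Q^{-1}}\log P$. This is not exactly $-\Delta_j$, so a cleaner route is needed.

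\textbf{Cleaner route.} Treat the two terms separately. The term $\mathbb{E}[\log P]=0$ by symmetry of $\nu_P$. The term $\mathbb{E}[\log(PQ)]=0$ because $PQ$ has law $\nu_P\star\nu_Q$, which is again symmetric. Hence $\mathbb{E}[\Delta_j]=\mathbb{E}[\log(PQ)]-\mathbb{E}[\log P]=0$.

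\textbf{Main obstacle.} The only delicate point is the cut locus, where $\log$ is discontinuous and $\log(g^{-1})=-\log g$ may fail. I would handle it in one of two ways. The first option is to choose the branch of $\log$ on the cut locus so that $\log(g^{-1})=-\log g$ holds everywhere, which is possible since the cut locus is stable under inversion. The second option is to note that the cut locus has Haar measure zero, and all the laws involved have densities (assumption 2 of $(H)$, preserved by convolution). Either way, the symmetry identity holds almost surely, and the martingale property follows.
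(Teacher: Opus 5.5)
Your proof is correct and has the same skeleton as the paper's. The increments $\Delta_j$ are independent, so everything reduces to $\mathbb{E}[\Delta_j]=0$, using that symmetry and conjugation invariance survive convolution.

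The difference is in how you prove the mean-zero identity. The paper keeps the difference $\log(ab)-\log a$ together. It applies the substitutions $b\mapsto b^{-1}$, $a\mapsto a^{-1}$ and $b\mapsto aba^{-1}$ in turn, and shows that $\int_{G^2}(\log(ab)-\log a)\,\mu(\dd a)\nu(\dd b)$ equals its own negative. You instead split the difference and show each term vanishes separately:
\begin{itemize}
\item $\mathbb{E}[\log P]=0$ by symmetry of $\nu_P$;
\item $\mathbb{E}[\log(PQ)]=0$ because $\nu_P\star\nu_Q$ is symmetric. Indeed, central measures commute under convolution, so the image of $\nu_P\star\nu_Q$ under $g\mapsto g^{-1}$ is $\nu_Q\star\nu_P=\nu_P\star\nu_Q$.
\end{itemize}
Your route is shorter and proves slightly more: each term is centred, not just their difference. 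The hypotheses are the same, since the paper's last substitution $b\mapsto aba^{-1}$ is exactly the conjugation invariance you use to commute the convolution.

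You can delete the abandoned transformation $(p,q)\mapsto(q^{-1}p^{-1}q,q^{-1})$. On the cut locus, keep your second option: it is a Haar-null set and all the laws involved are absolutely continuous. Your first option cannot hold literally if $\log$ is required to invert $\exp$. An element with $g=g^{-1}\neq 1_G$, such as $-1\in SU(2)$, would force $\log g=0$. The paper uses $\log(g^{-1})=-\log g$ without comment, so your remark is a small gain in rigour.
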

\begin{proof}
    For fixed $k$, $Z_k$ is the sum of independent random variables. We only need to check that $Z_k$ is centered. Since the convolution of measures that are invariant by conjugation and inversion is again invariant by conjugation and inversion, we only need to show that 
    $\int_{G^2} (\log (ab)-\log(a)) \mu(\dd a)\nu(\dd b) =0$.
    for two measures $\mu$ and $\nu$ invariant by conjugation and inversion. However, we have 
    \begin{eqnarray*}
       && \int_{G^2} (\log (ab)-\log(a)) \mu(\dd a)\nu(\dd b)=\int_{G^2} (\log (ab^{-1})-\log(a)) \mu(\dd a)\nu(\dd b)\\
       &=&\int_{G^2} (\log ((ba^{-1})^{-1})-\log(a)) \mu(\dd a)\nu(\dd b)=-\int_{G^2} (\log (ba^{-1})-\log(a^{-1})) \mu(\dd a)\nu(\dd b)\\
       &=&-\int_{G^2} (\log (ba)-\log(a)) \mu(\dd a)\nu(\dd b)=-\int_{G^2} (\log (aba^{-1}a)-\log(a)) \mu(\dd a)\nu(\dd b)\\
              &=&-\int_{G^2} (\log (ab)-\log(a)) \mu(\dd a)\nu(\dd b).\\
    \end{eqnarray*}
    Therefore $Z_k$ is a martingale as the sum of centered independent random variables.  
\end{proof}
Recall below one form of the BDG inequality.
\begin{lemma}[BDG inequality for vector valued martingales] \label{BDG}
    Let $E$ be an euclidean space and $(Z_k)_{k\geq 0}$ be a $E-$valued martingales. Then, for all $p>1$, there exists a constant $c_p$ such that 
    \[\forall n \in \N, \mathbb{E}\left[\sup_{k\leq n}\|Z_k\|^p\right]\leq c_{p}\mathbb{E}\left[\Big(\sum_{k\leq n} \|Z_k-Z_{k-1}\|^2\Big)^{\frac{p}{2}}\right].\]
\end{lemma}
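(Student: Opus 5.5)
The plan is to reduce the statement to the classical scalar Burkholder--Davis--Gundy (Burkholder square function) inequality for discrete-time real martingales, and then pay a dimension-dependent constant. This is harmless here, since $E$ will always be the fixed finite-dimensional space $\mathfrak{g}$ (or a fixed power of it). Throughout we use the convention $Z_{-1}\coloneq 0$, so that the increment $Z_0-Z_{-1}=Z_0$ is included in the square function.

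First, fix an orthonormal basis $e_1,\dots,e_d$ of $E$, with $d=\dim E$, and set $Z^{(i)}_k\coloneq\langle Z_k,e_i\rangle$. Each $(Z^{(i)}_k)_{k\ge 0}$ is a real martingale with respect to the same filtration, by linearity of conditional expectation. Second, for $p>1$ we invoke the scalar discrete BDG inequality in the form of Burkholder's theorem combined with Doob's maximal inequality:
\[
\mathbb{E}\Big[\sup_{k\le n}|Z^{(i)}_k|^p\Big]\le c'_p\,\mathbb{E}\Big[\Big(\sum_{k\le n}|Z^{(i)}_k-Z^{(i)}_{k-1}|^2\Big)^{p/2}\Big].
\]
Third, we recombine the coordinates. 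On one side,
\[
\sup_{k\le n}\|Z_k\|^p=\sup_{k\le n}\Big(\sum_i |Z^{(i)}_k|^2\Big)^{p/2}\le d^{p/2}\max_i\sup_{k\le n}|Z^{(i)}_k|^p\le d^{p/2}\sum_{i=1}^d\sup_{k\le n}|Z^{(i)}_k|^p .
\]
On the other side, for each $i$ we have $\sum_k|Z^{(i)}_k-Z^{(i)}_{k-1}|^2\le\sum_k\|Z_k-Z_{k-1}\|^2$. Summing the scalar bounds over $i$ then gives the claim with $c_p\coloneq d^{1+p/2}c'_p$.

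The only substantive step is the scalar inequality itself, and we will cite it rather than reprove it. If a self-contained argument is preferred, the first thing to try is Burkholder's original route: the good-$\lambda$ inequality, or the martingale transform $\sum\varepsilon_k\Delta Z_k$ with Khintchine averaging over random signs. That route treats $1<p<\infty$ uniformly, and Doob's inequality removes the supremum. A dimension-free constant would also be available through Pinelis' Hilbert-space version, but it is not needed for the applications in this section.
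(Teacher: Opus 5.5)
Your argument is correct. The paper gives no proof of this lemma: it simply recalls the BDG inequality as a classical fact and uses it in the proof of Proposition~\ref{mainestimate}. Your coordinatewise reduction to the scalar Burkholder inequality combined with Doob's maximal inequality is therefore a valid way of supplying that citation. The cost is the constant $c_p=d^{1+p/2}c'_p$, which is harmless because $E=\mathfrak{g}$ is fixed. Quoting the Hilbert-space-valued square-function inequality (Burkholder, Pinelis) instead would give a dimension-free constant with no extra work, but nothing in the paper needs that.

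Your convention $Z_{-1}=0$ is not cosmetic. Without it, a constant nonzero martingale violates the inequality.

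In the paper's application, the square function is written as $\sum_{i=1}^{l-1}|Z_{i+1}-Z_i|^2$, which drops the first increment $Z_1$. Under your (correct) convention that term must be included. It has the same form as the other increments, so the resulting bound $C_\beta\, l^{\beta}\sup_j\mathbb{E}\bigl[\cdots\bigr]$ is unaffected.
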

\begin{proof}[proof of Proposition~\ref{mainestimate}]
By Lemma~\ref{BDG}, we get 
    \begin{eqnarray*}
        \mathbb{E}[\vert Z_l\vert ^{2\beta}]& \leq& C_\beta \mathbb{E}\left[\left(\sum_{i=1}^{l-1}\vert Z_{i+1}-Z_{i}\vert ^2\right)^{\beta}\right] \leq C_\beta l^{\beta-1} \sum_{i=1}^{l-1}\mathbb{E}\left[\vert Z_{i+1}-Z_{i}\vert^{2\beta}\right] \\
        &\leq& C_\beta l^{\beta} \sup_{1\leq j\leq k}\mathbb{E}\left[\left\vert \log(X^j_1\cdots X^j_n Y^j_1\cdots Y^j_{m-n})-\log(X^j_1\cdots X^j_n)  \right\vert^{2\beta}\right] \\
        &\leq& C_\beta l^{\beta} \left((m-n)^\beta \varepsilon^\beta +n^{2\beta-}\varepsilon^{2\beta-}\right),
    \end{eqnarray*}
    where the last step comes from Lemma~\ref{esperance}.
\end{proof}
We finally show the following main proposition of this subsection. 
\begin{proposition}\label{EstimImpo}
    We have, for all integers $k \leq n \leq m \leq l$,
    \begin{align*}
        &\mathbb{E}\Big[
            \big|
                A_N(k2^{-N} \xrightarrow{m2^{-N}} l2^{-N})
                - A_N(k2^{-N} \xrightarrow{n2^{-N}} l2^{-N})
            \big|^p
        \Big] \\
        &\qquad\leq
        \sup_{\substack{k \leq j \leq l \\ n \leq i \leq m}}
        \sigma\big(\square(i2^{-N},(i+1)2^{-N},j2^{-N},(j+1)2^{-N})\big)^{\beta}
        (l-k)^{\beta}(m-n)^{\beta} \\
        &\qquad\quad+
        \sup_{\substack{k \leq j \leq l \\ n \leq i \leq m}}
        \sigma\big(\square(i2^{-N},(i+1)2^{-N},j2^{-N},(j+1)2^{-N})\big)^{2\beta}
        (l-k)^{\beta} n^{2\beta}.
    \end{align*}
\end{proposition}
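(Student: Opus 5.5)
The plan is to reduce Proposition~\ref{EstimImpo} directly to Proposition~\ref{mainestimate}, using the structure of the discrete Morse gauge measure $\mathbb{M}_N$ from Proposition~\ref{DriverSenguptaDiscrete}. The exponent in the statement is $p=2\beta$, and constants depending only on $\beta$ are suppressed.

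First I would rewrite the density of $\mathbb{M}_N$ in terms of increments. Change variables column by column: for each $\theta\in\Theta^N_-$, set
\[
Y_{\theta,r}\coloneq M_{\theta\xrightarrow{r}\theta_+}^{-1}M_{\theta\xrightarrow{r_+}\theta_+}.
\]
By left invariance of Haar measure, under $\mathbb{M}_N$ the $Y_{\theta,r}$ are independent with laws $\mu_{\sigma(\square(r,r_+,\theta,\theta_+))}$. The bottom edge $M_{\theta\xrightarrow{0}\theta_+}$ is the identity (half-Dirichlet condition). Hence
\[
M_{\theta\xrightarrow{r}\theta_+}=Y_{\theta,0}\cdots Y_{\theta,r_-},
\]
a product of independent face variables taken in increasing $r$.

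Next, by the definition of $A_N$ as $\log M_e/\mathrm{length}(e)$ on horizontal edges, the line integral over $k2^{-N}\xrightarrow{m2^{-N}}l2^{-N}$ equals $\sum_{j=k}^{l-1}\log M_{j2^{-N}\xrightarrow{m2^{-N}}(j+1)2^{-N}}$. The increment in the statement therefore equals
\[
\sum_{j}\Big(\log(X^j_1\cdots X^j_n\,Y^j_1\cdots Y^j_{m-n})-\log(X^j_1\cdots X^j_n)\Big),
\]
where $X^j_u$ is the $u$-th face variable in column $j$ below height $n2^{-N}$ and $Y^j_v$ are the face variables between heights $n2^{-N}$ and $m2^{-N}$. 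This is exactly the setting of Proposition~\ref{mainestimate}, with $l-k$ columns and the independence across $j$ and $u,v$ established above. Two points need checking against the hypotheses there:
\begin{enumerate}
\item The face variables have laws in the family $(\mu_t)$, which satisfies $(H)$ and hence the moment condition $M_\beta$ (item 5 of $(H)$ together with Lemma~\ref{estim}).
\item All areas are bounded by $\varepsilon$. I would take $\varepsilon$ to be the supremum of $\sigma(\square)$ over the relevant squares, which is $\le C2^{-N}\le 1$ by Lemma~\ref{areaestim}.
\end{enumerate}

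Applying Proposition~\ref{mainestimate} then gives the bound
\[
(l-k)^\beta\big((m-n)^\beta\varepsilon^\beta+n^{2\beta}\varepsilon^{2\beta}\big),
\]
which is the claimed estimate.

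The main obstacle I expect is a mismatch between the supremum in the statement and what Proposition~\ref{mainestimate} actually uses. The $X$-variables are the faces with indices $0\le i<n$, not $n\le i\le m$, so the supremum must really run over the whole column below height $m2^{-N}$. I would handle this by proving the estimate with $\varepsilon$ equal to the supremum over $i\le m$. Alternatively, one can avoid needing a per-variable bound for the $X$-factors by invoking Lemma~\ref{estim} on the whole product $X^j_1\cdots X^j_n$, which is controlled by the column sum $\sum_{i<n}\sigma(\square)$. That sum is $\lesssim\sigma(0,2g+2,\theta,\theta_+)$, bounded via Lemma~\ref{areaestim}. This gives a term at least as good as $n^{2\beta}\varepsilon^{2\beta}$, so I would state the proposition in this slightly corrected form.

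A second point to verify is that the centering and inversion argument of the martingale lemma applies column by column. It does: the convolution $\mu_{s_1}\ast\cdots\ast\mu_{s_n}$ remains conjugation- and inversion-invariant, which is all that argument requires.
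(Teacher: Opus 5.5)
Your proposal is correct and follows the paper's proof, which is a direct application of Proposition~\ref{mainestimate} once the horizontal line integrals of $A_N$ are written as sums of logarithms of products of independent face variables, exactly as you do. Your remark about the supremum is also right: $\varepsilon$ in Proposition~\ref{mainestimate} must bound the areas of the $X$-faces too, so it has to range over the whole column below height $m2^{-N}$ rather than only $n\le i\le m$, and the paper's one-line proof implicitly needs the same correction.
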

\begin{proof}
    It is a direct application of Proposition \ref{mainestimate}.
\end{proof}

\subsection{Tightness result} \label{tightness}
In this section, we will show the tightness result. The first observation is the following.
\begin{lemma}\label{BoundAnSpace}
If for some parameters $s$ and $\alpha$, 
 \[\sup_{N\geq 0 } \mathbb{E}[\|A_N\|_{\mathcal{W}^{\alpha;\alpha-1;p;s}(\mathrm{Cyl},\g)}] < \infty,\]
   Then $(A_N)_{N\geq 0}$ is tight in $\mathcal{W}^{\alpha';\alpha'-1;p;s}(\mathrm{Cyl},\g)$ for any $\alpha'<\alpha$.
\end{lemma}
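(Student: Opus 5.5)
The plan is the standard Prokhorov argument: a uniform moment bound plus a compact embedding. Set $K:=\sup_{N}\mathbb{E}[\|A_N\|_{\mathcal{W}^{\alpha;\alpha-1;p;s}(\mathrm{Cyl},\g)}]<\infty$. For $R>0$ let $B_R$ be the closed ball of radius $R$ in $\mathcal{W}^{\alpha;\alpha-1;p;s}(\mathrm{Cyl},\g)$. Markov's inequality gives $\mathbb{P}(A_N\notin B_R)\leq K/R$ uniformly in $N$.

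It then suffices to show that $B_R$ is relatively compact in $\mathcal{W}^{\alpha';\alpha'-1;p;s}(\mathrm{Cyl},\g)$ for every $\alpha'<\alpha$. Granting that, its closure $\overline{B_R}$ in the larger space is a compact set carrying mass at least $1-K/R$ for every $N$. This is exactly tightness, and Prokhorov's theorem then yields relative compactness of the laws. One point to check is that the larger space is Polish (separable complete metric), so that Prokhorov applies. Since it is defined as the completion of smooth connections under a norm, it is complete, and separability follows from density of smooth forms.

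The main step, and the real obstacle, is the compact embedding $\mathcal{W}^{\alpha;\alpha-1;p;s}\hookrightarrow\mathcal{W}^{\alpha';\alpha'-1;p;s}$ for the weighted norm. I would prove it in two parts.

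\textbf{Fixed region.} For each fixed product region $I\times J$ avoiding the singular points, I would invoke the compact embedding of the unweighted anisotropic Gagliardo spaces $\mathcal{W}^{\alpha,\alpha;p}\hookrightarrow\mathcal{W}^{\alpha',\alpha';p}$ established in the appendix. It is the fractional Rellich--Kondrachov argument applied to $W$ in both variables: equicontinuity of translates in $L^p$, controlled by the Gagliardo seminorm in each direction, together with Fréchet--Kolmogorov.

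\textbf{Weighted tail.} A bounded set in the weighted norm is an $\ell^p$-bounded family of dyadic blocks $2^{n(s-2/p)}\|\mathcal{S}^{2^{-n}*}\tilde{A}\|$, one per scale $n$. Each block is compact at the lower regularity $\alpha'$ by the fixed-region step. What remains is uniform smallness of the tail in $n$. I would get it by interpolation: the $\alpha'$-seminorm of a scaled block is controlled by the $\alpha$-seminorm at the same scale, so the tail $\sum_{n\geq n_0}$ is bounded by $R^p$. This alone gives no decay, so I would use a slightly better weight. Either replace $s$ by $s'<s$ in the target space, or absorb the loss $\alpha\to\alpha'$ into a factor $2^{-n(\alpha-\alpha')}$ coming from the commutation of scalings with $\partial_\theta$. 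The fallback is to restate the lemma with target weight $s'<s$, which is harmless for the later applications since $s$ ranges over an open set ($s<-\tfrac12$).

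With the tail uniformly small, a diagonal extraction over finitely many blocks plus an $\varepsilon/3$ argument gives a Cauchy subsequence in the target norm. This proves relative compactness of $B_R$ and hence tightness.
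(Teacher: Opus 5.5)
Your argument is the paper's: Markov's inequality on the source norm, plus a compact embedding of the weighted spaces, which the paper takes from Proposition~\ref{prop:compactembeddingweighted}.

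The tail difficulty you flag is real. Commit to your fallback, and drop the idea of extracting a factor $2^{-n(\alpha-\alpha')}$ from the commutation of scalings with $\partial_\theta$. Each block norm is computed after rescaling to the fixed corona, and the factor produced by $\partial_\theta$ (the $2^{n}$ in $\|\mathcal{S}^{2^{-n}*}\tilde A\|=2^{n}\|\mathcal{S}^{2^{-n}*}W\|$) is the same for every regularity index.

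In fact the embedding $\mathcal{W}^{\alpha;\alpha-1;p;s}\hookrightarrow\mathcal{W}^{\alpha';\alpha'-1;p;s}$ with the same $s$ is not compact. Fix $W_\ast$ supported in the unit corona around a singular point, with $\partial_\theta W_\ast\neq 0$. Put $\tilde A_k:=\partial_\theta\bigl(\lambda_k W_\ast(2^{k}\,\cdot)\bigr)$, with $\lambda_k$ chosen so that the source norm equals $1$.
\begin{itemize}
\item Only boundedly many scales $n$ near $k$ see $\tilde A_k$.
\item On those scales the rescaled blocks are $k$-independent profiles times a common scalar, so the target norm of $\tilde A_k$ is a constant $c>0$.
\item The supports are disjoint in scale, so $\|\tilde A_k-\tilde A_m\|_{\mathcal{W}^{\alpha';\alpha'-1;p;s}}\geq c$ once $|k-m|\geq 3$.
\end{itemize}
So the implication cannot be derived from a moment bound at a single $s$: the deterministic sequence $(\tilde A_k)$ satisfies the hypothesis but is not tight in the target. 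The paper's own Proposition~\ref{prop:compactembeddingweighted} carries exactly this loss (``it is important that on the r.h.s.\ the weight has decreased''). It is stated there for the H\"older scale and with source and target regularities visibly interchanged. The lemma and its one-line proof nevertheless keep $s$ fixed.

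What your argument proves is tightness in $\mathcal{W}^{\alpha';\alpha'-1;p;s'}$ for every $\alpha'<\alpha$ and $s'<s$. On $B_R$ the tail $\sum_{n\geq n_0}$ of the target norm is $\lesssim 2^{-n_0(s-s')p}R^{p}$. Your blockwise diagonal extraction with an $\varepsilon/3$ argument then gives a Cauchy subsequence.

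This weaker statement is all that Theorem~\ref{Bounded} needs, because the uniform moment bound holds on an open range of $(\alpha,s)$. At that point the range is $s<\tfrac12$, in the normalization of Section~\ref{s:functspaces} where blocks are measured on $W(\tilde A)$, not the $s<-\tfrac12$ of the continuum estimate you quote. Both ranges are open, so this does not affect your conclusion.

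Finally, Polishness and Prokhorov's theorem are unnecessary here. The conclusion is tightness itself, which is exactly the existence of the compact set $\overline{B_R}$ carrying mass at least $1-K/R$ for all $N$.
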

\begin{proof}
 We show in the Proposition~\ref{prop:compactembeddingweighted} from the appendix that for $\alpha>\alpha '$, we have the compact embedding of $\mathcal{W}^{\alpha;\alpha-1;p;s}(\mathrm{Cyl},\g)$ in $\mathcal{W}^{\alpha';\alpha'-1;p;s}(\mathrm{Cyl},\g)$. Therefore, the ball $B_{\mathcal{W}^{\alpha;\alpha-1;p;s}(\mathrm{Cyl},\g)}(0,c)$, seen as a subset in $\mathcal{W}^{\alpha';\alpha'-1;p's}(\mathrm{Cyl},\g)$ is compact, and is of measure arbitrarily close to $1$, provided we choose $c$ big enough. 
 \end{proof}

\begin{lemma}\label{sCondition}
    For $s<\frac 1 2 +2\alpha$, We have 
    \[\sup_{N\geq 0 }2^{pN(s-2\alpha)}\mathbb{E}\left\vert A(0\xrightarrow {2^{-N}} 2^{-N})-A(0\xrightarrow 0 2^{-N}) \right\vert^{p} <\infty \]
\end{lemma}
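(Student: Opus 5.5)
We need to bound $\mathbb{E}|A(0\xrightarrow{2^{-N}}2^{-N})-A(0\xrightarrow{0}2^{-N})|^p$, the increment of $A_N$ along the single innermost horizontal edge at the saddle point $(0,0)$, between levels $r=0$ and $r=2^{-N}$. By the definition of $A_N$ (piecewise constant in $\theta$ with value $\log M_e/\mathrm{length}(e)$, then integrated in $\theta$ to form $W$), this difference equals $\log(M_{0\xrightarrow{2^{-N}}2^{-N}})-\log(M_{0\xrightarrow{0}2^{-N}})$. It is therefore exactly the quantity in Proposition~\ref{EstimImpo}, specialised to $l-k=1$, $m-n=1$, and with the level $r=0$ playing the role of $n$. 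After shifting so that the saddle sits at level index $n_0$ in its critical band, I would apply Proposition~\ref{mainestimate} with $l=1$, $m-n=1$ and $2\beta=p$.

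The estimate has two parts.

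\emph{Increment part.} The part coming from the single face $\square(0,2^{-N},0,2^{-N})$ contributes $\sigma(\square)^{p/2}$. This is the face adjacent to the saddle point, and Lemma~\ref{lem:dyadiccorona} (the case $j=N$) gives $\sigma(\square)\lesssim 2^{N}2^{-2N}=2^{-N}$. So this part is $\lesssim 2^{-Np/2}$.

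\emph{Accumulated part.} The second part involves the product of all faces below the saddle in the column $[0,2^{-N}]$, i.e. $X_1\cdots X_n$. I would not use the crude bound $n^{2\beta}\varepsilon^{2\beta}$. Instead I would go back to Lemma~\ref{esperance}, whose bound is $(\sum s_i)^{p/2}$ for the increment plus $(\sum t_j)^{p}$ for the accumulated column. The column sum is the area of a full thin strip of width $2^{-N}$, which by Lemma~\ref{areaestim} is $\lesssim N2^{-N}$. So this part is $\lesssim N^p2^{-Np}$, which is dominated by $2^{-Np/2}$.

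Combining the two parts,
\[
2^{pN(s-2\alpha)}\,\mathbb{E}|\cdots|^p\lesssim 2^{pN(s-2\alpha-\frac12)}+N^p2^{pN(s-2\alpha-1)},
\]
which is bounded uniformly in $N$ precisely when $s<\frac12+2\alpha$. This is the stated condition.

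The main obstacle is justifying that the accumulated-column term really uses the strip area, not $n$ times the maximal face area. The sum over $n\sim 2^N$ faces of size up to $2^{-N}$ would only give $O(1)$ and destroy the decay. So the key step is to invoke Lemma~\ref{estim} directly on the convolution $\mu_{s_1}\star\cdots\star\mu_{s_n}$ with total time equal to the strip area. The BCH bound in Lemma~\ref{esperance} then yields moments of order $(N2^{-N})^{p}$.

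A secondary point is normalisation: if $A$ is understood as $\log M/\mathrm{length}$ rather than $\log M$, an extra factor $2^{Np}$ appears. I would check against Proposition~\ref{Membre2}'s convention that the quantity is the increment of $W$, so that no such factor arises.
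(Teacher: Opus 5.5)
Your proposal is correct and is essentially the paper's argument: specialise the single-face moment bound of Proposition~\ref{EstimImpo} and use $\sigma(\square(0,2^{-N},0,2^{-N}))\lesssim 2^{-N}$ to get a $p$-th moment $\lesssim 2^{-Np/2}$, which gives boundedness for $s<\frac12+2\alpha$. The paper's one-line proof keeps only this $\sigma(\square)^{p/2}$ term and silently drops the faces below the saddle level; bounded literally as in Proposition~\ref{EstimImpo}, by $n^{p}\sup\sigma^{p}$ with $n\sim 2^N$ faces of area up to $2^{-N}$, that contribution would only be $O(1)$. Your use of Lemma~\ref{esperance} and Lemma~\ref{estim} with the strip-area bound $O(N2^{-N})$ from Lemma~\ref{areaestim} shows it is in fact $O(N^p2^{-Np})$, which justifies the step the paper glosses over rather than changing the route.
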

\begin{proof}
    We have from Proposition \ref{EstimImpo},
    \[\mathbb{E}\left\vert A(0\xrightarrow {2^{-N}} 2^{-N})-A(0\xrightarrow 0 2^{-N}) \right\vert^{p}\lesssim \sigma(\square(0,2^{-N},0,2^{-N}))^{\frac p 2} \leq 2^{-\frac{Np}{2}}\]
    which concludes the result.
\end{proof}

\begin{lemma}
    We have 
    \[\sup_{ 2^{N-n-2}\leq k,l \leq 2^{N-n-1}}\sigma(\square(k2^{-N},(k+1)2^{-N}, l2^{-N},(l+1)2^{-N})) \lesssim 2^{-2N+n}.\]
\end{lemma}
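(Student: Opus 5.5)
The estimate is local near the saddle point $\Psi(a)=(0,0)$ of the cylinder. We want to bound the $\Psi_*\sigma$-area of the dyadic faces of mesh $2^{-N}$ whose lower-left corner $(k2^{-N},l2^{-N})$ has $2^{N-n-2}\le k,l\le 2^{N-n-1}$. These faces lie in the corona at distance $\simeq 2^{-n}$ from the singular point. I would write the area as an integral of the pushed-forward density,
\[
\sigma\big(\square(k2^{-N},(k+1)2^{-N},l2^{-N},(l+1)2^{-N})\big)=\int_{k2^{-N}}^{(k+1)2^{-N}}\int_{l2^{-N}}^{(l+1)2^{-N}} \tilde\sigma(r,\theta)\,(r^2+4\theta^2)^{-\frac12}\,\dd r\,\dd\theta .
\]
Here I use the expression $\Psi_*\sigma=\tilde\sigma(f,\theta)(f^2+4\theta^2)^{-1/2}\dd f\wedge\dd\theta$ obtained from Proposition~\ref{prop:angvar2}, where the angular form is normalized so that $f=x^2-y^2$ and $\theta=xy$ in the Morse chart. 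The density $\tilde\sigma$ is bounded.

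The key step is a pointwise lower bound on $(r^2+4\theta^2)^{1/2}$ over the face. For $(r,\theta)$ in such a face we have $r\ge k2^{-N}\ge 2^{-n-2}$, and similarly $\theta\ge 2^{-n-2}$. Hence
\[
(r^2+4\theta^2)^{1/2}\ \ge\ r\ \ge\ 2^{-n-2}\qquad\text{on the face}.
\]
This gives $\tilde\sigma(r,\theta)(r^2+4\theta^2)^{-1/2}\le 4\|\tilde\sigma\|_\infty 2^{n}$. Integrating over a face of Lebesgue area $2^{-2N}$ then yields the bound $C2^{n}2^{-2N}=C2^{-2N+n}$, with $C$ independent of $k,l,n,N$. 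This is the dyadic-corona estimate of Lemma~\ref{lem:dyadiccorona}, specialized to the index range at hand. The supremum over $k,l$ follows immediately because the bound is uniform.

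The only point needing care, and the step I expect to be the main obstacle, is the justification that $\tilde\sigma$ is bounded uniformly in a full neighbourhood of the singular point. This holds because $\Psi^{-1}$ is given explicitly by the formulas $x=\sqrt{(r+\sqrt{r^2+4\theta^2})/2}$ and $y=\sqrt{(\sqrt{r^2+4\theta^2}-r)/2}$. From $\dd f\wedge\dd\theta=2(x^2+y^2)\dd x\wedge\dd y$ and $x^2+y^2=(r^2+4\theta^2)^{1/2}$, the density of $\sigma$ in Morse coordinates, which is smooth, just gets multiplied by $\tfrac12(r^2+4\theta^2)^{-1/2}$. The indices considered stay in one quadrant, which is one hexagon image $\Psi(H_i)$, so no defect line issue arises. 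Faces away from the Morse chart are covered by the first part of Lemma~\ref{lem:dyadiccorona}, where the area is $\lesssim 2^{-2N}\le 2^{-2N+n}$.
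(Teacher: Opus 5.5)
Your argument is correct and is essentially the paper's route: the paper proves this lemma by citing Lemma~\ref{lem:dyadiccorona}, whose corona bound $\int_\square\sigma\lesssim 2^{j}2^{-2N}$ for faces at distance $\simeq 2^{-j}$ from $\Psi(a)$ rests on the same local form $\Psi_*\sigma=\tilde\sigma\,(f^2+4\theta^2)^{-1/2}\dd f\wedge\dd\theta$. Your pointwise bound $(r^2+4\theta^2)^{1/2}\ge r\ge 2^{-n-2}$ on the face is a slightly cleaner way of carrying out that computation with $j=n$, and you also handle the faces that leave the Morse chart for small $n$.
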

\begin{proof}
    It is a direct consequence of Lemma \ref{lem:dyadiccorona}.
\end{proof}

\begin{lemma}
Let $u<0$, and $\alpha \in (0,1)$. Then, for any integers $A<B$, we have
\begin{itemize}
    \item if $u>-1$, there exists a constant $C(u)>0$ such that
    $
        \sum_{A \leq p < q \leq B} (q-p)^u \leq C(u)\,(B-A)^{u+2},
    $
    \item if $u \leq -1$, there exists a constant $c(u)>0$ such that
    $
        \sum_{A \leq p < q \leq B} (q-p)^u \geq c(u)\,(B-A).
    $
    \item there exists $C(u,\alpha)>0$ such that \[
\sum_{A\leq p<q\leq B}\frac{p^u}{(q-p)^{1+\alpha u}}
\leq C(u,\alpha)\,(B-A)^{1+u-\alpha u}.
\]
\end{itemize}
\end{lemma}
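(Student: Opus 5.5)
The three items are elementary, and I would prove each one directly by rewriting the double sum over $A\le p<q\le B$ in terms of the gap $d\coloneq q-p\in\{1,\dots,L\}$, where $L\coloneq B-A\ge 1$. Throughout I use two facts: for each fixed $d$ there are exactly $L-d+1\le L+1\le 2L$ admissible pairs $(p,q)$, and a monotone sum can be compared with the corresponding integral. For the third item I take $A\ge 1$, so that $p^u$ is finite, and $-1<u<0$. This is the range used in the weighted-norm bounds. It is also the range where the stated exponent $1+u-\alpha u$ is correct: for $u\le -1$ the sum $\sum_p p^u$ stays bounded while $(B-A)^{1+u-\alpha u}$ may tend to $0$.

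\emph{First item ($-1<u<0$).} Writing the sum by gaps gives
\[
\sum_{A\le p<q\le B}(q-p)^u=\sum_{d=1}^{L}(L-d+1)\,d^u\le 2L\sum_{d=1}^{L}d^u .
\]
Since $d\mapsto d^u$ is decreasing and $u>-1$, the comparison $\sum_{d=1}^L d^u\le 1+\int_1^L x^u\,\dd x\le \frac{L^{1+u}}{1+u}+1$ holds, and the right-hand side is at most $C(u)L^{1+u}$ because $L\ge1$. Multiplying by $2L$ gives $C(u)(B-A)^{u+2}$.

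\emph{Second item ($u\le -1$).} All terms are nonnegative, so I keep only the pairs with gap $d=1$. There are exactly $L$ of them, each contributing $1^u=1$, so the sum is at least $B-A$ and one can take $c(u)=1$.

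\emph{Third item.} I first sum over $q$ for fixed $p$. The exponent of $d$ is $-(1+\alpha u)=-1+\alpha|u|$, which is strictly larger than $-1$ because $\alpha|u|>0$. Hence
\[
\sum_{q=p+1}^{B}(q-p)^{-1-\alpha u}\le\sum_{d=1}^{L}d^{-1+\alpha|u|}\le C\,\frac{L^{\alpha|u|}}{\alpha|u|}
\]
uniformly in $p$, using $\alpha|u|\le1$ so that $d^{-1+\alpha|u|}$ is nonincreasing and the integral comparison applies. It remains to bound $\sum_{p=A}^{B}p^u$ uniformly in $A$. This is the only step where arbitrary $A$ matters, and it is where I expect the main care to be needed. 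Since $p\mapsto p^u$ is decreasing and there are $L+1$ terms, the sum is largest when the window is shifted down to start at $p=1$:
\[
\sum_{p=A}^{B}p^u\le\sum_{p=1}^{L+1}p^u\le C(u)\,L^{1+u},
\]
by the same computation as in the first item, valid because $u>-1$. Multiplying the two bounds gives $C(u,\alpha)L^{1+u+\alpha|u|}=C(u,\alpha)(B-A)^{1+u-\alpha u}$. This argument uses no lower bound of the form $A\ge c(B-A)$, so it covers arbitrary integers $1\le A<B$.
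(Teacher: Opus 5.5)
The paper states this lemma without proof (it is used directly in the proof of Theorem~\ref{Bounded}), so there is no argument of the authors to compare with. Judged on its own, your proof is correct. The gap decomposition with $L-d+1$ pairs per gap, plus the integral comparison, settles the first item. The $L$ pairs with $q-p=1$ give the second item with $c(u)=1$. For the third item, bounding the sum by $\sup_p\sum_{d\le L}d^{-1+\alpha|u|}\lesssim L^{\alpha|u|}$ times $\sum_{p=A}^{B}p^u\le\sum_{p=1}^{L+1}p^u\lesssim L^{1+u}$ is valid for $A\ge 1$ and $-1<u<0$.

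Your restriction of the third item is genuinely necessary, but the justification you give is loose. At $u=-1$ the sum $\sum_p p^{-1}$ is not bounded. For $-1/(1-\alpha)\le u\le-1$ the exponent $1+u-\alpha u=1-(1-\alpha)|u|$ is nonnegative, so the right-hand side does not tend to $0$. A counterexample valid for every $u\le -1$ is $A=1$, $B\to\infty$:
\begin{itemize}
\item for $u<-1$, the terms with $p=1$ alone give at least $c\,B^{\alpha|u|}$, which outgrows $(B-1)^{1+u+\alpha|u|}$ since $1+u<0$;
\item for $u=-1$, the terms with $p\le B/2$ give at least $c\,B^{\alpha}\log B$, against $(B-1)^{\alpha}$.
\end{itemize}

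There is also a different repair, closer to how the lemma is used. The sums in Theorem~\ref{Bounded} run over dyadic windows $2^{N-n-2}\le p<q\le 2^{N-n-1}$, where $A\ge c\,(B-A)$. Under that hypothesis the third item holds for every $u<0$. Indeed $p^u\le A^u\le c^uL^u$, and $\sum_{d=1}^{L}d^{-1+\alpha|u|}\le CL^{\alpha|u|}$ (bound each term by $L^{-1+\alpha|u|}$ when $\alpha|u|\ge1$). So the total is $\lesssim L\cdot L^{u}\cdot L^{\alpha|u|}=L^{1+u-\alpha u}$.

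Your version trades that comparability for the restriction $u>-1$; both are sound. However, your side remark that $-1<u<0$ is the range needed downstream does not match the paper. The exponents actually fed into the lemma there are $\frac p2-1-p\alpha$ in the first bullet and the power $p$ on the numerator in the third, with $p$ the integrability exponent. Both are positive once $p$ is large.
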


\begin{thm}\label{Bounded}
For $0<\alpha <\frac 12$ and $s<\frac 12$, the sequence $(A_N)_{N\geq 0}$ is tight in the weighted space $\mathcal{W}^{\alpha;\alpha-1;p;s}(\mathrm{Cyl},\g)$.    
\end{thm}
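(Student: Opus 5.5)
The argument goes through Lemma~\ref{BoundAnSpace}: it suffices to prove that for some $\alpha'\in(\alpha,\frac12)$ and some $s'$ slightly above $s$ (still $<\frac12$) we have
\[
\sup_{N\ge 0}\mathbb{E}\Big[\|A_N\|^p_{\mathcal{W}^{\alpha';\alpha'-1;p;s'}(\mathrm{Cyl},\g)}\Big]<\infty .
\]
Tightness in the target space then follows from the compact embedding of Proposition~\ref{prop:compactembeddingweighted}. To bound the norm I start from Proposition~\ref{MainBound}, which splits it into two pieces: a sum over dyadic scales $0\le n\le N$ of discrete double Gagliardo sums, and a single boundary term at scale $N$. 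The unweighted part, coming from the finite cover away from the saddle points, is handled the same way using the area bound $\sigma(\square)\simeq 2^{-2N}$ of Lemma~\ref{lem:dyadiccorona}. It is the $n=0$ case of the argument below.

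\textbf{Boundary term.} This is exactly Lemma~\ref{sCondition}. The constraint there, $s<\frac12+2\alpha$, is implied by $s<\frac12$, so the term is bounded uniformly in $N$.

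\textbf{Scale-$n$ terms.} I take expectations inside the sum and apply Proposition~\ref{EstimImpo} with $2\beta=p$. At scale $n$ the indices lie in a corona at distance $\simeq 2^{-n}$ from the saddle point, so the preceding lemma gives face areas $\lesssim 2^{-2N+n}$. Each summand is therefore bounded by
\[
(2^{-2N+n})^{p/2}\,|l-k|^{p/2}(m-n')^{p/2} \;+\; (2^{-2N+n})^{p}\,|l-k|^{p/2}\,(n')^{p},
\]
where $n'\lesssim 2^{N-n}$ is the radial index. Dividing by $|l-k|^{1+p\alpha}(m-n')^{1+p\alpha}$ and summing over the box of side $2^{N-n}$, I use the counting lemma: since $\alpha<\frac12$, the exponent $u=\frac p2-1-p\alpha>-1$, so its first item applies in each variable. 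Its third item handles the term containing the power $(n')^p$. The double sum is then $\lesssim (2^{N-n})^{2(p/2-p\alpha+1)}$. Multiplying by the prefactor $2^{np(s-2\alpha)}2^{-2N(1-p\alpha)}$, all powers of $N$ should cancel. (This is a consistency check: the $r$-integral on a rectangle of $r$-extent $\ell$ scales like $\ell^{2+p}$ times a Brownian-sheet normalization.) What remains is a geometric factor of the form $2^{np(s-\frac12)}$, possibly with an additional $2^{-2n}$ coming from the $\frac2p$ shift in the weight. The second, curvature-correction term carries an extra factor $2^{(-2N+n)p/2}2^{(N-n)p}=2^{-np/2}$, so it is even better.

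\textbf{Conclusion and main obstacle.} Summing over $n\le N$ gives a bound uniform in $N$ exactly when $s<\frac12$, which is the hypothesis. The main obstacle is the bookkeeping of exponents in this second step: matching the normalizations of Lemma~\ref{Membre1} and of the weight $2^{n(s-2/p)}$ consistently, so that the $N$-dependence cancels exactly and leaves a geometric series in $n$. I will do this by first checking the computation against the continuum scaling estimate of the regularity lemma for $\tilde A$, and only then carrying it out in the discrete setting. A secondary point is the near-boundary strips, where the $\theta$-strip area is only $O(N2^{-N})$ (Lemma~\ref{areaestim}). That logarithmic loss is absorbed by taking $\alpha'$ strictly below $\frac12$ and $p$ large.
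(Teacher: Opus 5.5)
Your proposal follows the paper's proof essentially step for step. You use Proposition~\ref{MainBound} to split the norm, Lemma~\ref{sCondition} for the boundary term (since $s<\frac12$ implies $s<\frac12+2\alpha$), and Proposition~\ref{EstimImpo} with the corona bound $\sigma\lesssim 2^{-2N+n}$ and the counting lemma to reach the geometric series $\sum_n 2^{n(ps-\frac p2-2)}$, finite for $s<\frac12$, before concluding with Lemma~\ref{BoundAnSpace}. Bounding the norm at a slightly larger $\alpha'$ and $s'$ is, if anything, more careful than the paper, which applies the compact embedding with the same weight $s$ even though Proposition~\ref{prop:compactembeddingweighted} loses some weight.
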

\begin{proof}
The lemma \ref{sCondition} shows that the second term of \ref{MainBound} is uniformly bounded in $N$. In fact, condition $0<\alpha<\frac 1 2$ and $s<\frac 1 2$ implies condition $s<2\alpha+\frac 1 2$. 
\par It remains to show that the first term of \ref{MainBound} is uniformly bounded in $N$. 
    We have 
    \begin{align*}
         \sum_{0\leq n \leq N }&  \frac{2^{np(s-2\alpha)}}{2^{2N(1-p\alpha)}}\sum_{\substack{ 2^{-n+N-2} \leq n < m \leq 2^{-n+N-1}\\  2^{-n+N-2}\leq k , l \leq 2^{-n+N-1}  }}\frac{\mathbb{E}\left\vert A_N(k2^{-N}\xrightarrow{m2^{-N}} l2^{-N})-A_N(k2^{-N}\xrightarrow{n2^{-N}} l2^{-N})\right\vert^p}{\vert l-k \vert ^{1+p\alpha}(m-n)^{1+p\alpha}} \\
         \leq &\sum_{0\leq n \leq N }  \frac{2^{np(s-2\alpha)}}{2^{2N(1-p\alpha)}} \sum_{\substack{ 2^{-n+N-2} \leq n < m \leq 2^{-n+N-1}\\  2^{-n+N-2}\leq k , l \leq 2^{-n+N-1}  }} 2^{\frac{p}{2}(n-2N)}(k-l)^{\frac p 2-1-p\alpha}(m-n)^{\frac p 2-1-p\alpha} \\
`         &+\sum_{0\leq n \leq N }  \frac{2^{np(s-2\alpha)}}{2^{2N(1-p\alpha)}} \sum_{\substack{ 2^{-n+N-2} \leq n < m \leq 2^{-n+N-1}\\  2^{-n+N-2}\leq k , l \leq 2^{-n+N-1}  }} 2^{p(n-2N)}(k-l)^{ \frac p2 -1-p\alpha}\frac{n^{p}}{(m-n)^{1+p\alpha}}.
    \end{align*}
    Using the previous lemma, we have that for $0<\alpha <\frac  12$, 
    the first term is bounded by 
    \[\sum_{n\geq 0} 2^{n(ps-\frac p 2 -2)},\]
    which is finite for $s<\frac 12$
 and 
    the second goes to $0$ as $N\to\infty$.  Now using Lemma \ref{BoundAnSpace} we conclude that the sequence $(A_N)_{N\geq 0}$ is tight in $\mathcal{W}^{\alpha;\alpha-1;p;s}(\mathrm{Cyl},\g)$.
\end{proof} 

\section{Proof of the main theorem}\label{ProveMainTH}

\subsection{From the cylinder back to the surface}
\begin{proof}[proof of Proposition \ref{prop:boundarycase}] We already managed the singular part of $A$ formed of unstable currents. We only need to explain what happens in the bulk part. We identified the limit in Section  \ref{s:CLT}, and we showed the tightness in $\mathcal{W}^{\alpha;\alpha;p}(\mathrm{Cyl};\g)$ in Proposition \ref{BoundAnSpace}. Since $\mathcal{W}^{\alpha;\alpha;p}(\mathrm{Cyl};\g)$ is continuously embedded in $\mathcal{C}^{\alpha-\frac{1}{p};\alpha-\frac{1}{p};p}(\mathrm{Cyl};\g)$, we have 
\[A_N\xrightarrow[N\to \infty]{\mathcal{C}^{\alpha-\frac{1}{p};\alpha-\frac{1}{p};p}(\mathrm{Cyl};\g)}  \partial_\theta\left\langle \xi, 1_{\square(r,\theta)} \right\rangle  \dd \theta.\]
Therefore, taking $p$ large enough, we get from Proposition \ref{ContPullBack},
\[\Psi^*A_N\xrightarrow[N\to \infty]{\mathcal{C}^{\frac 1 2 -}(S)'} \Psi^*\left(\partial_\theta\left\langle \xi, 1_{\square(r,\theta)} \right\rangle  \dd \theta\right).\]
 Note that the space $\mathcal{C}^{\frac 1 2 -}(S)'$ is just a global space. However, locally, functional spaces look exactly like on the cylinder, and we have better local convergence properties.    
\end{proof}
\subsection{Closing the Surface}\label{ss:closing}
 In the previous section, we worked with the free boundary Yang--Mills measure. This means we considered a surface with one outgoing boundary, with no prescribed boundary conditions. In the current section, we would like to see how we can adapt this construction to the case of a closed surface. Following \cite{BCDRT}, the main idea  is to condition the holonomy on the boundary to be equal to $1_G$.
 \par Consider a closed surface $\Sigma$, and a Morse function on $\Sigma$ verifying the Morse--Smale condition. Blow-up the surface at the maximum to get a new surface, that we will call $\mathcal{S}$, with boundary $\partial \mathcal{S}$. Note that $\partial \mathcal{S}$ is now exactly the maximum set of the morse function. The idea is that the Yang--Mills measure on the closed surface is the Yang--Mills measure on the blown-up surface, in which we condition the holonomy on $\partial \Sigma$ to be $1_G$; this is summarized in Figure~\ref{fig:conditionning}.In this section, we will study this conditioning. 
 
 \begin{figure}[t]    
     \centering
     \includegraphics[width=\linewidth,trim={0cm 3cm 0cm 4cm}]{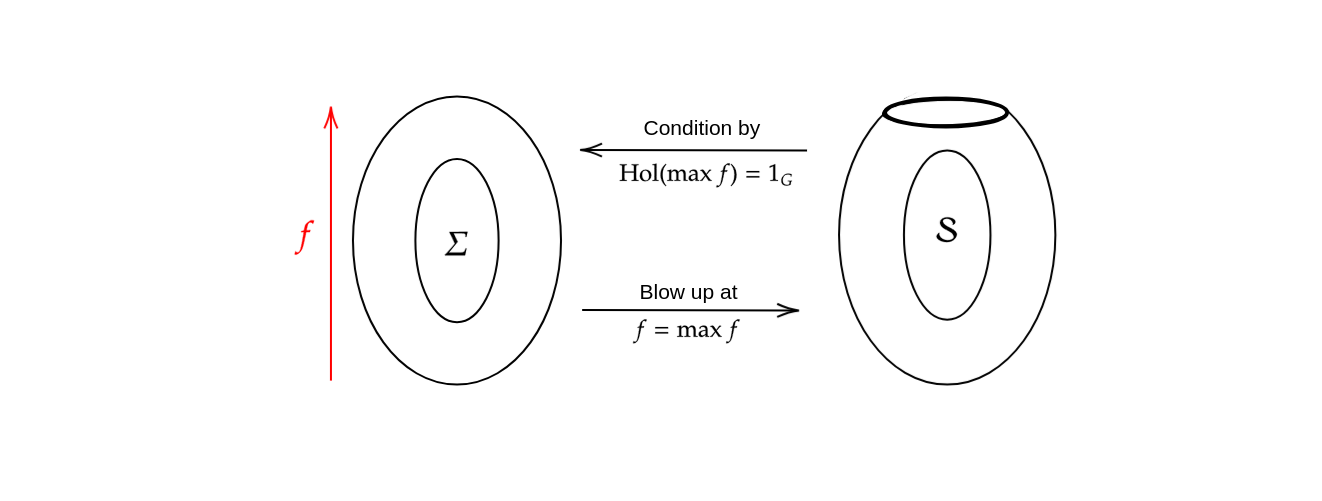}
     \caption{Measure on the closed surface.}
     \label{fig:conditionning}
 \end{figure}
 \par We will start in the lattice, where conditioning is much easily handled, and then show that the sequence of conditioned measure converges. 
We will more precisely prove the following theorem. 
 \par     Let $(\mu_t)_{t}$ be a family of probability measures verifying the conditions $(H)$. Recall the definition of the sequence of random connections  $(A^{(N)}_{(M,U)})_{N\geq 0}$ introduced in Section~\ref{s:functspaces}. Let $\mathcal{B}$ be a Banach space of distributions in which $(A^{(N)}_{(M,U)})_{N\geq 0}$ converges.
\begin{thm}\label{condThm}
     For each $N\geq 1$, let $(\mathbb{M}_N(\cdot | [g]))_{[g]\in G\backslash  \mathrm{Ad} (G)}$ be the disintegration of the law of $A^{(N)}_{(M,U)}$ with respect to $\mathrm{Hol}(A^{(N)}_{(M,U)},\partial \Sigma)$, seen as a random variable valued in $G\backslash  \mathrm{Ad} (G)$. Then, in $\mathcal{M}(\mathcal{B})$,
     \[\forall g\in G, \mathbb{M}_N(\cdot | [g]) \xrightarrow[N\to\infty]{} \mathbb{M}(\cdot |[g]).\]
     
\end{thm}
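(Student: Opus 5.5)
The plan is to disintegrate explicitly and reduce the convergence of the conditioned laws to the convergence of the free laws, weighted by a density that converges in $C^\infty$. Write $Y_N\coloneq \mathrm{Hol}(A^{(N)}_{(M,U)},\partial\Sigma)$. On the lattice the boundary holonomy is an explicit word: the product of the face holonomies (arranged along the outermost level) interleaved with the stable variables $U_a$ and their inverses. This is exactly as in the computation of Subsection~\ref{pant}. Hence the law of $Y_N$ has the density
\[
Z_N(g)=\int_{G^{2g}}\Big(\mathop{\bigstar}_{F\in\mathbb{F}_N}\rho_{\sigma(F)}\Big)\big(g[a_1,b_1]\cdots[a_g,b_g]\big)\,\dd a\,\dd b
\]
with respect to Haar measure. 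This is the Segal amplitude of Theorem~\ref{ConvSegalAmp} with one boundary component. By Theorem~\ref{convck}, whose area hypothesis holds for the Morse lattice by Lemma~\ref{lem:dyadiccorona} (all faces away from the saddles have area $\asymp 2^{-2N}$), we get $Z_N\to Z_\Sigma$ in $C^\infty(G)$. Moreover $Z_\Sigma>0$ everywhere, since the heat kernel is positive.

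\emph{Step 1: a regular version of the disintegration.} Rather than conditioning on the null event directly, I would define, for a class function $h$ and bounded continuous $F$ on $\mathcal{B}$, the quantities $\mathbb{E}[F(A_N)h(Y_N)]$. The goal is to show these converge to $\mathbb{E}[F(A)h(Y)]$, where $Y=\mathrm{Hol}(A,\partial\Sigma)$ is defined via the Wong--Zakai/Stratonovich construction of Section~\ref{s:fundamentalformula}. The conditional law is then
\[
\mathbb{M}_N(\cdot\,|\,[g])=\lim_{\varepsilon\to0}\frac{\mathbb{E}[\,\cdot\,h_\varepsilon(Y_N)]}{\mathbb{E}[h_\varepsilon(Y_N)]}
\]
with $h_\varepsilon$ an approximate class delta at $[g]$. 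To get convergence uniform in $N$, I would use the following trick. Pick a small collar near $\partial\mathcal{S}$, of area $\delta$, and split $Y_N=X_N\cdot R_N$, where $R_N$ is the holonomy contribution of the faces in the collar. Conditionally on the configuration outside the collar, $R_N$ has a density $q_N$ (a convolution of the $\rho_{\sigma(F)}$ over the collar faces). By Theorem~\ref{convck} applied to the collar, $q_N\to p_\delta$ in $C^\infty$. Therefore $\mathbb{M}_N(\cdot\,|\,[g])$ has the explicit form
\[
\mathbb{E}\big[F(A_N)\,q_N(X_N^{-1}g)\big]/Z_N(g),
\]
up to the collar fluctuation of $A_N$ itself.

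\emph{Step 2: passage to the limit.} The free convergence $A_N\to A$ in $\mathcal{B}$ is Proposition~\ref{prop:boundarycase}. I would upgrade it to joint convergence of $(A_N^{\mathrm{out}},X_N)$, where $A_N^{\mathrm{out}}$ is the part outside the collar. This follows from the same characteristic-function argument of Section~\ref{s:CLT} together with the Donsker-type convergence of the Lie-group random walk along the level (Section~\ref{LieGroupRW}). Since $q_N\to p_\delta$ uniformly, $Z_N\to Z_\Sigma>0$ uniformly, and $F$ is bounded continuous, the ratio converges to the corresponding continuum expression. This expression is $\mathbb{M}(\cdot\,|\,[g])$ by the Markov property of the continuum measure across the level.

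\emph{Step 3: removing the collar.} Finally, the collar part of $A_N$ has to be reinserted. Its conditional law given $R_N=r$ must be controlled, for instance by conditioning a Lie-group random bridge. The tightness bounds of Section~\ref{s:functspaces} should go through under the bridge measure by absolute continuity on the collar, since the collar has positive area and the bridge density is bounded.

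I expect the main obstacle to be identifying $X_N\to X$ jointly with $A_N$. This is a non-commutative Donsker statement for the holonomy, which is not continuous in the distributional topology. I would first try to prove it via the Stratonovich/Wong--Zakai convergence on each hexagon, reusing the independence structure of Proposition~\ref{DriverSenguptaDiscrete}.
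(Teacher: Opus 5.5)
Your Steps~1 and~2 follow the paper's proof. The paper proves a causal Markov property across a dyadic level $j$ just below $\max f$ and turns it into the Bayes-type formula of Proposition~\ref{lem:keyLemmacondition}, namely $\mathbb{E}[F(A_N)\mid H_{N;\max f}=1_G]=\mathbb{E}[F(A_N)\,p_N(H_{N;j})]/f_{H_{N;\max f}}(1_G)$, where $p_N$ is the convolution of the collar face densities. It gets $p_N\to p_\infty$ uniformly from Theorem~\ref{convck} and concludes with joint convergence of $(A_N,H_{N;j})$ (Lemma~\ref{jointMeas}). The paper does not obtain that joint convergence from Wong--Zakai. The driver along the level is a walk with non-Gaussian increments, not an interpolation of a fixed Brownian path, so what is needed is convergence in law of the walk together with its L\'evy area. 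The paper lifts $W_N$ to a step-$2$ rough path, uses the rough-path Donsker theorem of Bayer--Friz, and then applies It\^o--Lyons continuity to $\dd U_N=U_N\circ\dd W_N$.

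The genuine gap is Step~3. You are right that functionals which see the boundary require controlling the collar variables under the conditioning. However, the claim that the bridge density is bounded on the collar is false near $\partial\mathcal{S}$.
\begin{itemize}
\item By the same Markov property, the conditioned law restricted to the bonds below level $\max f-\eta$ has density $p^{(\eta)}_N(X_{N,\eta}^{-1}g)/Z_N(g)$ with respect to the free law. Here $X_{N,\eta}$ is the holonomy at level $\max f-\eta$, and $p^{(\eta)}_N$ is the convolution over the strip $\{\max f-\eta\le f\le \max f\}$, whose area is $\asymp\eta$.
\item By Theorem~\ref{convck}, $p^{(\eta)}_N$ tends to a heat kernel at time $\asymp\eta$, whose supremum is of order $\eta^{-\dim(G)/2}$. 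At $\eta=0$ the conditioned law is even singular with respect to the free one, since it is carried by the null event $\{Y_N\in[g]\}$.
\item Positivity of the collar area only bounds the transition density across the whole collar, which is what you use in Step~1. It gives no uniform control on the pinned configuration inside the collar. So the bounds of Section~\ref{s:functspaces} cannot be transferred up to the boundary by absolute continuity.
\end{itemize}
The paper avoids this step. It only tests against $F\in C_b(\mathcal{B})$ with $\mathrm{supp}\,F\subset\{f\le \max f-\epsilon\}$, and it places $j$ in $\{\max f-2\epsilon\le f\le \max f-\epsilon\}$, so no collar variable ever has to be reinserted. This yields convergence only locally on $\Sigma\setminus\max(f)$, consistent with Theorem~\ref{thm:mainthmintro}. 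Near the north pole the paper claims only a weighted statement, with weight $\mathbf{d}(\cdot,\max f)^{-\dim G-\varepsilon}$. To go further you would need a dyadic decomposition in $\eta$, with norms weighted to absorb the $\eta^{-\dim(G)/2}$ blow-up, instead of a bounded-density argument.
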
 
 
\subsubsection{Disintegrating the lattice measure}

\par Consider again the Morse lattice on $\Sigma$ of resolution $2^{-N}$. Consider a Morse-Gauge fixed Yang--Mills measure associated to a family $\mu$ of probability measure. This means that we are dealing with two kinds of independent random variables:
\begin{itemize}
    \item the variables $M_{\theta \xrightarrow r \theta_+}$ for $\theta\in \Theta^N_-$ and $r\in R^N$,
    \item the $2g$ uniform random variables $(U_b)_{b=1}^{2g}$ in $G$.
\end{itemize}

\paragraph{The strategy using a Bayes type formula.}

We mimick in the discrete case the proof from~\cite{BCDRT} done in the continuum.
One of the key
idea to construct the Yang--Mills measure on the closed surface is to start from the free boundary lattice measure and to disintegrate the free boundary lattice measure with respect to the holonomy $\mathrm{Hol}(\max f)$. The problem is that applying the standard disintegration theorem yields a family of conditional measures defined only \emph{almost everywhere}, which is insufficient since we need the conditioned measure at precisely one conditioning value: $1_G$. Without further justification, there is absolutely no reason for this conditional family to be well defined at $1_G$. To overcome this, we establish a Bayes-type formula: the law of any observable depending on the bonds below $f = \max f - \epsilon$, conditioned on $\mathrm{Hol}(\max f) = 1_G$, equals the average over $g$ of the conditional expectation of the observable given $\mathrm{Hol}(\max f - \epsilon) = g$, multiplied by a transition kernel of the form $\mathbb{P}(\mathrm{Hol}(\max f) = 1_G \mid \mathrm{Hol}(\max f - \epsilon) = g)$. Since we now only require an average over $g$, the almost-everywhere disintegration of the observable along the level sets $\mathrm{Hol}(\max f - \epsilon) = g$ becomes sufficient. In the sequel we will give a way of formalizing this idea.

\paragraph{Some natural $\sigma$--algebras of the lattice gauge theory.}

\par Let us fix a $j\in \Theta_-^N$, with $j\geq 2g+1$ (just to mean that we consider a height higher than the last critical point of $f$), and introduce 
\[H_{N;j}\coloneq \mathrm{Hol}(0\xrightarrow{j} 2\pi).\]
For $r\in R^N$, let us define the sigma algebras 
\[\mathcal{F}^N_{\leq r}\coloneq \sigma\left(\left\{M_{\theta \xrightarrow{\rho}\theta_+}; \theta \in \Theta^N_-, \rho\leq r
\right\} \bigcup\{U_b;b\leq 2g\}\right)\]
\[ \mathcal{F}^N_{\geq r}\coloneq \sigma\left(\left\{M_{\theta \xrightarrow{\rho}\theta_+}; \theta \in \Theta^N_-, \rho\geq r
\right\}\bigcup \{U_b;b\geq r\}\right),\]
and 
\[\mathcal{F}^N_{r}\coloneq \sigma\left(\left\{M_{\theta \xrightarrow{r}\theta_+}; \theta \in \Theta^N_-
\right\}\bigcup \{U_b;b\leq 2g\}\right).\]
The sigma algebra $\mathcal{F}^N_{\leq r}$ (resp. $\mathcal{F}^N_{\geq r}, resp \mathcal{F}^N_{r}$) contains the information on all the bonds of the lattice $\Lambda^N$ located below the level $f=r$ (rep. above $f=r$, resp. exactly at $f=r$), in addition to \emph{all} uniform random variables associated to unstable curves. This is why $r$ is always supposed to be above the last critical point. 

\paragraph{The Markov property.}

\par First, we would like to understand what is the law of the holonomy of the circle $0\xrightarrow {k2^{-N}} 2\pi $ at dyadic level $k2^{-N}$ knowing all the bonds situated below level $j2^{-N}$. This is the purpose of the following lemma. 
\begin{lemma}[Causal Markov property]
    Let $f:G\rightarrow \R$ be a measurable central function, and let $j\leq k$. Then 
    \[\mathbb{E}[f(H_{N;k})|\mathcal{F}_{\leq j}]=\int_Gf(g)p_N(H_{N;j}^{-1}g)\dd g, \text{ where } p_N(x)=\sum_{\lambda \in \widehat{G}} \Bigg(\prod_{\substack{j\leq r\leq k \\ \theta\in\Theta^N_-}} \frac{\widehat{\mu}_{\sigma(\square(r,r_+,\theta,\theta_+))}}{d_\lambda} \Bigg) \chi_\lambda(x).\]
\end{lemma}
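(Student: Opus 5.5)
We first express $H_{N;k}$ in terms of $H_{N;j}$ and the bonds strictly between the levels $j$ and $k$. In the discrete Morse gauge the holonomy of the circle at level $r$ is the ordered product $H_{N;r}=\prod_{\theta\in\Theta^N_-}M_{\theta\xrightarrow{r}\theta_+}$, up to the flat contributions from the $U_b$ inserted at the crossings with the unstable curves. Since $r\ge 2g+1$ is above every critical level, these contributions appear at the same angular positions at every level. We then telescope in $r$, one dyadic level at a time. Writing $M_{\theta\xrightarrow{r_+}\theta_+}=M_{\theta\xrightarrow{r}\theta_+}Y_{r,\theta}$, where by Proposition~\ref{DriverSenguptaDiscrete} the face variables $Y_{r,\theta}\sim\mu_{\sigma(\square(r,r_+,\theta,\theta_+))}$ are independent and independent of $\mathcal{F}^N_{\leq r}$, we obtain
\[
H_{N;r_+}=\prod_\theta \big(M_{\theta\xrightarrow{r}\theta_+}Y_{r,\theta}\big)\quad(\text{with the } U_b \text{ interspersed}).
\]

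Next we use conjugation invariance to move every $Y$ to the end of the word. Each $\mu_t$ is Ad-invariant, so if $Y\sim\mu_t$ is independent of a $\sigma$-algebra $\mathcal{G}$ and $a$ is $\mathcal{G}$-measurable, then $aYa^{-1}\sim\mu_t$ and it is still independent of $\mathcal{G}$. Rewriting $x_1Y_1x_2Y_2\cdots=x_1x_2\cdots\,\tilde Y_1\tilde Y_2\cdots$ with $\tilde Y_i$ suitable conjugates, and conditioning successively on the face variables, we get $H_{N;r_+}\overset{d}{=}H_{N;r}\,Z_r$ given $\mathcal{F}^N_{\leq r}$. Here $Z_r$ is independent of $\mathcal{F}^N_{\leq r}$ and has law $\bigstar_\theta\mu_{\sigma(\square(r,r_+,\theta,\theta_+))}$. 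Iterating from $j$ to $k$, with each new layer independent of everything below it, gives that conditionally on $\mathcal{F}_{\leq j}$ we have $H_{N;k}\overset{d}{=}H_{N;j}Z$, where $Z$ has density $\rho^{*}:=\bigstar_{j\le r<k,\theta}\rho_{\sigma(\square(r,r_+,\theta,\theta_+))}$.

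Consequently $\mathbb{E}[f(H_{N;k})\mid\mathcal{F}_{\le j}]=\int_G f(H_{N;j}z)\rho^*(z)\,\dd z=\int_G f(g)\rho^*(H_{N;j}^{-1}g)\,\dd g$. It remains to identify $\rho^*=p_N$. A convolution of central densities is central, so we expand it in characters: $\rho^*=\sum_\lambda \prod\big(\widehat{\mu}(\lambda)/d_\lambda\big)\chi_\lambda$, exactly as in the proof of Lemma~\ref{Fourier_expansion}. This is the stated formula, and the centrality of $f$ is not even needed at this point.

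The main obstacle is the bookkeeping in the second step. Each conditional conjugation must be taken with respect to a $\sigma$-algebra that contains the conjugating prefix but is independent of the $Y$ being moved, and the flat $U_b$ factors must fit the same scheme. We handle the $U_b$ by placing them in $\mathcal{F}_{\le r}$, so they act as deterministic conjugators. We also need $\sigma(\square(r,r_+,\theta,\theta_+))>0$ so that the densities exist.
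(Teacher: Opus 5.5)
Your argument is correct and is essentially the paper's. You freeze the $\mathcal{F}_{\leq j}$-measurable letters of the word for $H_{N;k}$ (including the $U_b$), use Ad-invariance to slide the independent increments to the right of $H_{N;j}$, and identify the resulting conditional density as a convolution expanded in characters; the only difference is that you telescope one dyadic layer at a time with the face variables, whereas the paper moves the column increments $M_{\theta\xrightarrow{k}\theta_+}\bigl(M_{\theta\xrightarrow{j}\theta_+}\bigr)^{-1}$ in a single step.

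One caveat, inherited from the paper rather than introduced by you: the character expansion of a convolution of central densities is $\sum_{\lambda}d_\lambda\prod\bigl(\widehat{\mu}(\lambda)/d_\lambda\bigr)\chi_\lambda$. So the displayed $p_N$, like the formula in Lemma~\ref{Fourier_expansion} that you cite, is missing a factor $d_\lambda$. Also, the product should run over $j\leq r<k$, as you wrote, rather than $j\leq r\leq k$. Neither point affects the Markov property itself.
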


The Lemma tells us that all the law of the holonomy of the circle $0\xrightarrow {k2^{-N}} 2\pi $ at dyadic level $k2^{-N}$ conditionally on all the bonds located below level $j2^{-N}$ only depends on the knowledge of the holonomy at dyadic level $j2^{-N}$. This is exactly a Markov property of the discrete lattice gauge theory.

\begin{proof}
    Let $j\leq k$, we have $H_{N;k}=\prod_{b=0}^{2g-1} U_b  M_{I_b} U_{b}^{-1}M_{I'_b}$, 
where each $M_{I_b}$ is a product of 
\[M_{\theta \xrightarrow{k}\theta_+}\left(M_{\theta \xrightarrow{j}\theta_+}\right)^{-1} M_{\theta \xrightarrow{j}\theta_+}.\]
The Figure~\ref{fig:holcond} represents $U_1$, $M_{I_1}$, and $M_{I_1'}$.
\begin{figure}
    \centering

\tikzset{every picture/.style={line width=0.75pt}} 

\begin{tikzpicture}[x=0.75pt,y=0.75pt,yscale=-1,xscale=1]

\draw   (152,63.29) -- (243.43,63.29) -- (243.43,255.29) -- (152,255.29) -- cycle ;
\draw [color={rgb, 255:red, 255; green, 0; blue, 0 }  ,draw opacity=1 ]   (121.43,256.29) -- (121.43,66.29) ;
\draw [shift={(121.43,64.29)}, rotate = 90] [color={rgb, 255:red, 255; green, 0; blue, 0 }  ,draw opacity=1 ][line width=0.75]    (10.93,-3.29) .. controls (6.95,-1.4) and (3.31,-0.3) .. (0,0) .. controls (3.31,0.3) and (6.95,1.4) .. (10.93,3.29)   ;
\draw    (151.43,138.29) -- (243.43,138.29) ;
\draw    (170.43,62.71) -- (172.43,255.29) ;
\draw    (191.43,64.29) -- (192.43,255.29) ;
\draw    (220.43,64.29) -- (221.43,255.29) ;
\draw    (151.43,238.29) -- (172.43,238.29) ;
\draw    (152.43,218.29) -- (173.43,218.29) ;
\draw    (152.43,119.29) -- (173.43,119.29) ;
\draw    (172.43,238.29) -- (193.43,238.29) ;
\draw    (172.43,218.29) -- (193.43,218.29) ;
\draw    (221.43,239.29) -- (242.43,239.29) ;
\draw    (222.43,219.29) -- (243.43,219.29) ;
\draw    (171.43,119.29) -- (192.43,119.29) ;
\draw    (221.43,120.29) -- (242.43,120.29) ;
\draw   (262,63.29) -- (353.43,63.29) -- (353.43,255.29) -- (262,255.29) -- cycle ;
\draw    (261.43,138.29) -- (353.43,138.29) ;
\draw    (301.43,64.29) -- (302.43,255.29) ;
\draw    (330.43,64.29) -- (331.43,255.29) ;
\draw    (261.43,238.29) -- (282.43,238.29) ;
\draw    (262.43,218.29) -- (283.43,218.29) ;
\draw    (262.43,119.29) -- (283.43,119.29) ;
\draw    (282.43,238.29) -- (303.43,238.29) ;
\draw    (282.43,218.29) -- (303.43,218.29) ;
\draw    (331.43,239.29) -- (352.43,239.29) ;
\draw    (332.43,219.29) -- (353.43,219.29) ;
\draw    (281.43,119.29) -- (302.43,119.29) ;
\draw    (331.43,120.29) -- (352.43,120.29) ;
\draw  [color={rgb, 255:red, 255; green, 0; blue, 0 }  ,draw opacity=1 ] (243.43,63.29) -- (262.43,63.29) -- (262.43,255.29) -- (243.43,255.29) -- cycle ;
\draw [color={rgb, 255:red, 255; green, 0; blue, 0 }  ,draw opacity=1 ]   (244.43,138.29) -- (261.43,138.29) ;
\draw   (419,256) .. controls (423.67,255.98) and (425.99,253.64) .. (425.97,248.97) -- (425.77,208.32) .. controls (425.74,201.65) and (428.05,198.31) .. (432.72,198.29) .. controls (428.05,198.31) and (425.7,194.99) .. (425.67,188.32)(425.68,191.32) -- (425.47,147.68) .. controls (425.45,143.01) and (423.11,140.69) .. (418.44,140.71) ;
\draw  [color={rgb, 255:red, 255; green, 0; blue, 0 }  ,draw opacity=1 ] (353.43,63) -- (372.43,63) -- (372.43,255) -- (353.43,255) -- cycle ;
\draw [color={rgb, 255:red, 255; green, 0; blue, 0 }  ,draw opacity=1 ]   (354.43,138) -- (371.43,138) ;
\draw [color={rgb, 255:red, 255; green, 0; blue, 0 }  ,draw opacity=0.68 ]   (162.43,272.29) -- (162.33,46) ;
\draw [shift={(162.38,155.94)}, rotate = 89.98] [fill={rgb, 255:red, 255; green, 0; blue, 0 }  ,fill opacity=0.68 ][line width=0.08]  [draw opacity=0] (8.04,-3.86) -- (0,0) -- (8.04,3.86) -- (5.34,0) -- cycle    ;
\draw [color={rgb, 255:red, 208; green, 2; blue, 27 }  ,draw opacity=1 ] [dash pattern={on 0.84pt off 2.51pt}]  (162.33,46) -- (162.33,20) ;
\draw [color={rgb, 255:red, 208; green, 2; blue, 27 }  ,draw opacity=1 ] [dash pattern={on 0.84pt off 2.51pt}]  (183.33,297) -- (183.33,271) ;
\draw [color={rgb, 255:red, 255; green, 0; blue, 0 }  ,draw opacity=0.59 ]   (183.33,272) -- (183.24,45.71) ;
\draw [shift={(183.28,155.06)}, rotate = 89.98] [fill={rgb, 255:red, 255; green, 0; blue, 0 }  ,fill opacity=0.59 ][line width=0.08]  [draw opacity=0] (8.93,-4.29) -- (0,0) -- (8.93,4.29) -- (5.93,0) -- cycle    ;
\draw [color={rgb, 255:red, 208; green, 2; blue, 27 }  ,draw opacity=1 ] [dash pattern={on 0.84pt off 2.51pt}]  (183.24,45.71) -- (183.24,19.71) ;
\draw [color={rgb, 255:red, 255; green, 0; blue, 0 }  ,draw opacity=0.59 ]   (234.33,269) -- (231.37,48) ;
\draw [shift={(231.33,45)}, rotate = 89.23] [fill={rgb, 255:red, 255; green, 0; blue, 0 }  ,fill opacity=0.59 ][line width=0.08]  [draw opacity=0] (8.93,-4.29) -- (0,0) -- (8.93,4.29) -- (5.93,0) -- cycle    ;
\draw [color={rgb, 255:red, 208; green, 2; blue, 27 }  ,draw opacity=1 ] [dash pattern={on 0.84pt off 2.51pt}]  (234.33,298.33) -- (234.33,269) ;
\draw    (280.43,62.71) -- (282.43,255.29) ;
\draw    (261.43,238.29) -- (282.43,238.29) ;
\draw    (282.43,238.29) -- (303.43,238.29) ;
\draw    (331.43,239.29) -- (352.43,239.29) ;
\draw    (281.43,119.29) -- (302.43,119.29) ;
\draw [color={rgb, 255:red, 255; green, 0; blue, 0 }  ,draw opacity=0.68 ]   (272.43,272.29) -- (272.33,46) ;
\draw [shift={(272.38,155.94)}, rotate = 89.98] [fill={rgb, 255:red, 255; green, 0; blue, 0 }  ,fill opacity=0.68 ][line width=0.08]  [draw opacity=0] (8.04,-3.86) -- (0,0) -- (8.04,3.86) -- (5.34,0) -- cycle    ;
\draw [color={rgb, 255:red, 208; green, 2; blue, 27 }  ,draw opacity=1 ] [dash pattern={on 0.84pt off 2.51pt}]  (272.33,46) -- (272.33,20) ;
\draw [color={rgb, 255:red, 208; green, 2; blue, 27 }  ,draw opacity=1 ] [dash pattern={on 0.84pt off 2.51pt}]  (293.33,297) -- (293.33,271) ;
\draw [color={rgb, 255:red, 255; green, 0; blue, 0 }  ,draw opacity=0.59 ]   (293.33,272) -- (293.24,45.71) ;
\draw [shift={(293.28,155.06)}, rotate = 89.98] [fill={rgb, 255:red, 255; green, 0; blue, 0 }  ,fill opacity=0.59 ][line width=0.08]  [draw opacity=0] (8.93,-4.29) -- (0,0) -- (8.93,4.29) -- (5.93,0) -- cycle    ;
\draw [color={rgb, 255:red, 208; green, 2; blue, 27 }  ,draw opacity=1 ] [dash pattern={on 0.84pt off 2.51pt}]  (293.24,45.71) -- (293.24,19.71) ;
\draw [color={rgb, 255:red, 255; green, 0; blue, 0 }  ,draw opacity=0.59 ]   (344.33,269) -- (341.37,48) ;
\draw [shift={(341.33,45)}, rotate = 89.23] [fill={rgb, 255:red, 255; green, 0; blue, 0 }  ,fill opacity=0.59 ][line width=0.08]  [draw opacity=0] (8.93,-4.29) -- (0,0) -- (8.93,4.29) -- (5.93,0) -- cycle    ;
\draw [color={rgb, 255:red, 208; green, 2; blue, 27 }  ,draw opacity=1 ] [dash pattern={on 0.84pt off 2.51pt}]  (344.33,298.33) -- (344.33,269) ;

\draw (103,144.38) node [anchor=north west][inner sep=0.75pt]  [color={rgb, 255:red, 255; green, 0; blue, 0 }  ,opacity=1 ] [align=left] {$\displaystyle f$};
\draw (375,129) node [anchor=north west][inner sep=0.75pt]  [font=\footnotesize] [align=left] {$\displaystyle j2^{-N}$};
\draw (376,53) node [anchor=north west][inner sep=0.75pt]  [font=\footnotesize] [align=left] {$\displaystyle k2^{-N}$};
\draw (193,179) node [anchor=north west][inner sep=0.75pt]   [align=left] {$\displaystyle \cdots $};
\draw (303,179) node [anchor=north west][inner sep=0.75pt]   [align=left] {$\displaystyle \cdots $};
\draw (244.44,253.3) node [anchor=north west][inner sep=0.75pt]  [font=\scriptsize,color={rgb, 255:red, 255; green, 0; blue, 0 }  ,opacity=1 ,rotate=-270.2] [align=left] {Zero area band $\displaystyle U_{1}$};
\draw (444,188) node [anchor=north west][inner sep=0.75pt]   [align=left] {$\displaystyle \mathcal{F}_{j}$};
\draw (354.44,253.01) node [anchor=north west][inner sep=0.75pt]  [font=\scriptsize,color={rgb, 255:red, 255; green, 0; blue, 0 }  ,opacity=1 ,rotate=-270.2] [align=left] {Zero area band $\displaystyle ( U_{1})^{-1}$};
\draw (220,20) node [anchor=north west][inner sep=0.75pt]  [font=\footnotesize] [align=left] {$\displaystyle M_{I_{1}}{}$};
\draw (329,20) node [anchor=north west][inner sep=0.75pt]  [font=\footnotesize] [align=left] {$\displaystyle M_{I_{1}} '$};
\draw (303,179) node [anchor=north west][inner sep=0.75pt]   [align=left] {$\displaystyle \cdots $};

\end{tikzpicture}
    \caption{A part of the lattice wich represents $U_1$, $M_{I_1}$, and $M_{I_1'}$.}
    \label{fig:holcond}
\end{figure}
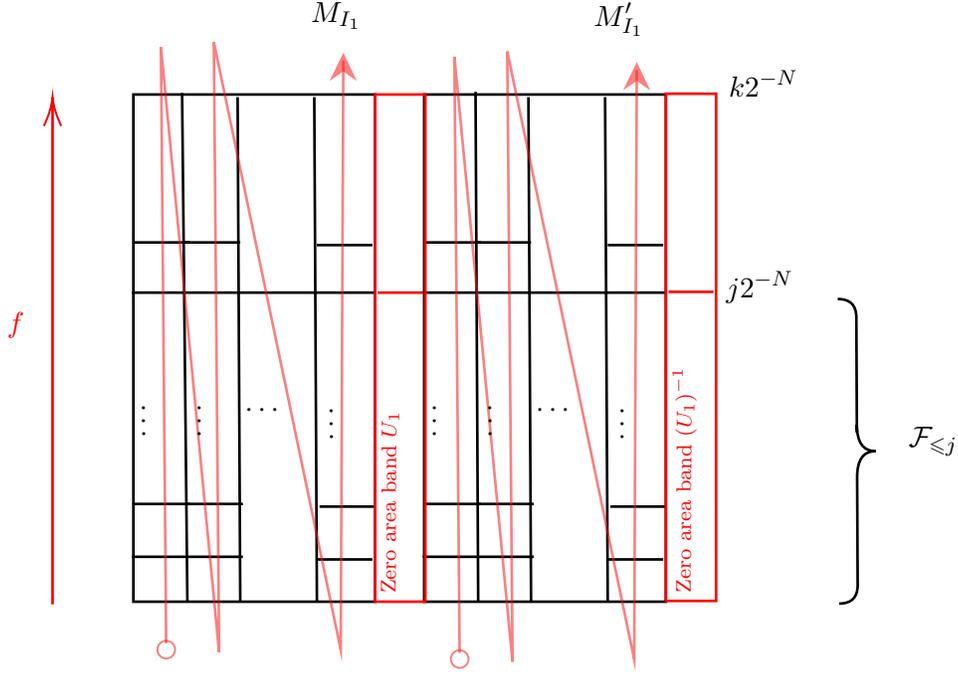
Therefore, $H_{N;k}$ is a word $\mathbf{w}$ of some variables that are $F_{\leq j}$ measurable, and others that are independent of $\mathcal{F}_{\leq j}$. Therefore,

\[\mathbb{E}[f(H_{N;k})|\mathcal{F}_j]= \Lambda\left(\left(M_{\theta\xrightarrow{r}\theta_+}\right)_{\substack{r\leq j\\\theta\in\Theta^N_-}},(U_b)_{0\leq b\leq 2g-1}\right),\]
where 
\[\Lambda\left(\left(m_{\theta\xrightarrow{r}\theta_+}\right)_{\substack{r\leq j\\\theta\in\Theta^N_-}},(u_b)_{0\leq b\leq 2g-1}\right)=\mathbb{E}[f(\mathbf{w}')],\]
where $\mathbf{w}'$ is the same word as $\mathbf{w}$ where we have replaced the $\mathcal{F}_{\leq j}-$measurable variables by deterministic constants represented by small letter.
By Ad-invariance of the law of $M_{\theta \xrightarrow{k}\theta_+}\left( M_{\theta \xrightarrow{j}\theta_+}\right)^{-1}$, we have 

\[\Lambda\left(\left(M_{\theta\xrightarrow{r}\theta_+}\right)_{\substack{r\leq j\\\theta\in\Theta^N_-}},(U_b)_{0\leq b\leq 2g-1}\right)=\mathbb{E}\left[f\left(h_j \prod_{\theta \in \Theta^N_-} M_{\theta \xrightarrow{k}\theta_+}\left( M_{\theta \xrightarrow{j}\theta_+}\right)^{-1}\right)\right]=\int_{G}f(g)p_N(h_j^{-1}g)\dd g,\]
where $p_N$ is the density of the convolution of all the $M_{\theta \xrightarrow{k}\theta_+}\left( M_{\theta \xrightarrow{j}\theta_+}\right)^{-1}$, and therefore given in Fourier representation by 
\begin{equation*}
p_N(x)=\sum_{\lambda \in \widehat{G}} \Bigg(\prod_{\substack{j\leq r\leq k \\ \theta\in\Theta^N_-}} \frac{\widehat{\mu}_{\sigma(\square(r,r_+,\theta,\theta_+))}}{d_\lambda} \Bigg) \chi_\lambda(x).     
\end{equation*}

\end{proof}

As mentioned above, the important observation of the previous lemma is that the law of $H_{N;k}$ knowing $\mathcal{F}_{\leq j}$ is only a function of $H_{N;j}$, not of all the bonds situated under the level set $j2^{-N}$ which is nothing but a causal Markov property. Therefore, we conclude the following lemma. 
\begin{lemma}\label{lem:preabelianization}
    We have, for any central function,
    $\mathbb{E}[f(H_{N;k})|\mathcal{F}_j]=\mathbb{E}[f(H_{N;k})|H_{N;j}].$
\end{lemma}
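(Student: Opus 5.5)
The plan is to derive the statement directly from the causal Markov property just established, using the tower property of conditional expectation. Note that $\mathcal{F}_j$ (the bonds at level exactly $j2^{-N}$ together with all the $U_b$) sits between $\sigma(H_{N;j})$ and $\mathcal{F}_{\leq j}$. Indeed, $H_{N;j}$ is a word in the level-$j$ horizontal variables $M_{\theta\xrightarrow{j}\theta_+}$ and in the $U_b$, as in the decomposition $H_{N;k}=\prod_b U_bM_{I_b}U_b^{-1}M_{I'_b}$ used in the previous proof. So the first step is to check the inclusions
\[
\sigma(H_{N;j})\subset\mathcal{F}_j\subset\mathcal{F}_{\leq j}.
\]

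With these inclusions in hand, I would apply the previous lemma. It gives
\[
\mathbb{E}[f(H_{N;k})\mid\mathcal{F}_{\leq j}]=\Phi(H_{N;j}),\qquad \Phi(h)\coloneq\int_G f(g)\,p_N(h^{-1}g)\,\dd g.
\]
The map $\Phi$ is measurable (indeed continuous, since $p_N$ is an integrable class function and $f$ is bounded). Hence $\Phi(H_{N;j})$ is $\sigma(H_{N;j})$-measurable, and by the inclusions it is also $\mathcal{F}_j$-measurable.

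Next I would condition down. Since $\mathcal{F}_j\subset\mathcal{F}_{\leq j}$, the tower property gives
\[
\mathbb{E}[f(H_{N;k})\mid\mathcal{F}_j]=\mathbb{E}\big[\Phi(H_{N;j})\mid\mathcal{F}_j\big]=\Phi(H_{N;j}).
\]
The same argument with $\sigma(H_{N;j})\subset\mathcal{F}_{\leq j}$ gives $\mathbb{E}[f(H_{N;k})\mid H_{N;j}]=\Phi(H_{N;j})$. Comparing the two identities proves the lemma.

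The main obstacle is the measurability claim $H_{N;j}\in\mathcal{F}_j$. One has to confirm from the lattice description that the boundary-circle holonomy at a level above all critical values involves only level-$j$ horizontal bonds and the stable variables $U_b$; vertical bonds are trivial in Morse gauge, so they contribute nothing. This follows from the Morse gauge construction: no faces lie strictly between $0\xrightarrow{j}2\pi$ and the level-$j$ edges themselves. If one prefers to avoid this point, the argument works unchanged with $\mathcal{F}_j$ replaced by any $\sigma$-algebra between $\sigma(H_{N;j})$ and $\mathcal{F}_{\leq j}$. A second, minor point is that $f$ is only assumed measurable and central; I would assume it bounded, or nonnegative, so that the conditional expectations are defined. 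Centrality is needed only through the previous lemma, where it lets the Ad-invariance reduce the conditional law to a function of $H_{N;j}$.
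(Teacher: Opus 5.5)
Your proposal is correct and is essentially the paper's argument: the paper simply says the lemma is a direct consequence of the causal Markov property. You make explicit the tower-property step that the paper leaves implicit, using $\sigma(H_{N;j})\subset\mathcal{F}_j\subset\mathcal{F}_{\leq j}$ together with the fact that $\mathbb{E}[f(H_{N;k})\mid\mathcal{F}_{\leq j}]$ is a measurable function of $H_{N;j}$.
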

\begin{proof}
Direct consequence of the previous lemma.
\end{proof}

\paragraph{From joint law to the conditional law.}

Now, we want to compute the conditional law of $H_{N;j}$ with respect to $H_{N;k}$. A first step is to understand the joint law of these two random variables. 
\begin{lemma}
    We have, for all measurable functions $f$ and $g$
    \[\mathbb{E}[f(H_{N;j})g(H_{N;k})]=\int_{G^2}f(x)g(yx)f_{H_{N;j}}(x)p_{N}(y)\dd x \dd y,\]
    where 
    \[p_{N}(y)=\sum_{\lambda \in \widehat{G}} \Bigg(\prod_{\substack{j\leq r\leq k \\ \theta\in\Theta^N_-}} \frac{\widehat{\mu}_{\sigma(\square(t,t_+m\theta,\theta_+))}}{d_\lambda} \Bigg) \chi_\lambda(y). \]
\end{lemma}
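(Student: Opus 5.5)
The plan is to compute the joint law by first conditioning on $\mathcal{F}^N_{\leq j}$ and then invoking the causal Markov property just established. By the tower property,
\[
\mathbb{E}[f(H_{N;j})g(H_{N;k})]=\mathbb{E}\big[f(H_{N;j})\,\mathbb{E}[g(H_{N;k})\mid\mathcal{F}^N_{\leq j}]\big].
\]
This works because $H_{N;j}$ is a word in the bonds at level $j2^{-N}$ and in the variables $U_b$, so it is $\mathcal{F}^N_{\leq j}$-measurable.

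For the inner conditional expectation we use the Causal Markov lemma. For central $g$ it gives
\[
\mathbb{E}[g(H_{N;k})\mid\mathcal{F}^N_{\leq j}]=\int_G g(z)\,p_N(H_{N;j}^{-1}z)\,\dd z .
\]
Substituting $z=yH_{N;j}$ and using left invariance of the Haar measure turns this into $\int_G g(yH_{N;j})\,p_N(y)\,\dd y$.

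If $g$ is not assumed central, we repeat the argument of that lemma directly. Conditionally on $\mathcal{F}^N_{\leq j}$, the holonomy factorizes as $H_{N;k}=h\,H_{N;j}$ up to conjugation, where $h$ is a product of the independent increments $M_{\theta\xrightarrow{k}\theta_+}(M_{\theta\xrightarrow{j}\theta_+})^{-1}$. These increments are $\mathrm{Ad}$-invariant, so their convolution has the class-function density $p_N$. That $\mathrm{Ad}$-invariance is exactly what lets the conjugations by the $U_b$ and by the lower bonds be absorbed, leaving the factor in the stated form.

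It then remains to integrate against the law of $H_{N;j}$. Assuming $H_{N;j}$ has a density $f_{H_{N;j}}$, we get
\[
\mathbb{E}\Big[f(H_{N;j})\int_G g(yH_{N;j})p_N(y)\,\dd y\Big]=\int_{G^2}f(x)\,g(yx)\,f_{H_{N;j}}(x)\,p_N(y)\,\dd x\,\dd y
\]
by Fubini, which is justified because $f$ and $g$ are bounded and $p_N\geq 0$ integrates to one. The density $f_{H_{N;j}}$ exists because $H_{N;j}$ contains at least one factor of the form $M_{\theta\xrightarrow{j}\theta_+}$, which is a product of independent $\rho$-distributed increments and is absolutely continuous. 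Alternatively, one can write $f_{H_{N;j}}$ explicitly through the character expansion, as in Section~\ref{TwoCLT}.

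The Fourier formula for $p_N$ is the standard one: the character coefficients of a convolution of central densities multiply, giving $\prod \widehat{\mu}_{\sigma(\cdot)}(\lambda)/d_\lambda$ times $\chi_\lambda$. Convergence of the series is not needed at this stage, since $p_N$ is a finite convolution of $L^1$ central functions, and the $C^\infty$ convergence follows in any case from Theorem~\ref{convck}.

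I expect the main obstacle to be the second step: making precise that the conjugations hidden in the word $\mathbf{w}$ reduce to a single left multiplication $yx$ for general, non-central $f$ and $g$. Because the order of multiplication matters there, I would first prove the identity for central $f$ and $g$, where it is immediate. I would then argue that the joint law of $(H_{N;j},H_{N;k})$ is invariant under simultaneous conjugation, which follows from the uniform $U_b$ and $\mathrm{Ad}$-invariance. That invariance should let the identity be extended by testing against matrix coefficients.
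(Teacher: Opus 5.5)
Your main line (tower property over $\mathcal{F}^N_{\leq j}$, the causal Markov lemma, then Fubini against $f_{H_{N;j}}$) is the route implicit in the paper, which states this lemma right after the Markov lemma without proof. That route is complete when $g$ is central and $f$ is arbitrary. One small correction: the substitution $z=yH_{N;j}$ uses right invariance of Haar measure together with centrality of $p_N$, so that $p_N(H_{N;j}^{-1}yH_{N;j})=p_N(y)$.

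The gap is the non-central case, which the statement asserts. The extension you commit to in your last paragraph fails. Identities for central $f,g$ determine only the joint law of the pair of classes $([H_{N;j}],[H_{N;k}])$. Invariance under simultaneous conjugation does not recover a law on $G\times G$ from that. For instance, take $X,V$ independent and Haar distributed on a non-abelian $G$. Then $(X,X)$ and $(X,VXV^{-1})$ are both invariant under simultaneous conjugation and induce the same law on pairs of classes, yet their laws differ. Testing against matrix coefficients cannot help, since those are not central.

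What closes the gap is your third paragraph made exact, with no ``up to conjugation''. Set $E_\theta:=M_{\theta\xrightarrow{j}\theta_+}^{-1}M_{\theta\xrightarrow{k}\theta_+}$. This telescopes into the product of the face increments of column $\theta$ between levels $j$ and $k$. By the product form of $\mathbb{M}_N$, the $E_\theta$ are independent of one another and of $\mathcal{F}^N_{\leq j}$, with Ad-invariant laws.

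Substitute $M_{\theta\xrightarrow{k}\theta_+}=M_{\theta\xrightarrow{j}\theta_+}E_\theta$ into the boundary word. This gives $H_{N;k}=a_0E_1a_1E_2\cdots E_na_n$, where the $a_i$ are $\mathcal{F}^N_{\leq j}$-measurable, each $E_i$ occurs exactly once, and $a_0a_1\cdots a_n=H_{N;j}$. Pushing each increment to the left gives
\[
H_{N;k}=Y\,H_{N;j},\qquad Y:=\prod_{i=1}^{n}c_iE_ic_i^{-1},\qquad c_i:=a_0a_1\cdots a_{i-1}.
\]
Conditionally on $\mathcal{F}^N_{\leq j}$ the $c_i$ are constants. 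So the factors $c_iE_ic_i^{-1}$ are conditionally independent with the laws of the $E_i$, and the conditional law of $Y$ is $p_N(y)\,\dd y$. This does not depend on the conditioning, so $Y$ is independent of $\mathcal{F}^N_{\leq j}$. Hence
\[
\mathbb{E}[g(H_{N;k})\mid\mathcal{F}^N_{\leq j}]=\int_G g(yH_{N;j})\,p_N(y)\,\dd y
\]
for every bounded measurable $g$. Your Fubini step then yields the lemma in full generality, without passing through central test functions.
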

As a result, we get the desired conditional law. 
\begin{lemma}\label{lem:abel2}
      We have, for all measurable functions $f$
      \[\mathbb{E}[f(H_{N;j})|H_{N;k}]=\int_Gf(x) \kappa_N (H_{N;k},x)\dd x, \text{ where } \kappa_n(g,x)\coloneq \frac{f_{H_{N;j}}(x)p_{N}(gx^{-1})}{f_{H_{N;k}}(g)}.\]
\end{lemma}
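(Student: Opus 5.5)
This is Bayes' rule applied to the joint law given in the preceding lemma. The plan has three steps: identify the densities, check the candidate kernel against the definition of conditional expectation, and deal with the points where the denominator vanishes.

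\emph{Step 1: the marginal densities.} The preceding lemma states that for all measurable $f,g$,
\[\mathbb{E}[f(H_{N;j})g(H_{N;k})]=\int_{G^2}f(x)g(yx)f_{H_{N;j}}(x)p_N(y)\,\dd x\,\dd y .\]
So the pair $(H_{N;j},H_{N;k})$ has a density with respect to Haar measure on $G^2$. To see it, substitute $z=yx$ in the $y$-integral; this uses right invariance of Haar measure. The joint density is then $(x,z)\mapsto f_{H_{N;j}}(x)\,p_N(zx^{-1})$. Taking $f\equiv 1$ and integrating in $x$ gives the marginal density of $H_{N;k}$:
\[f_{H_{N;k}}(z)=\int_G f_{H_{N;j}}(x)\,p_N(zx^{-1})\,\dd x = \bigl(p_N\star f_{H_{N;j}}\bigr)(z).\]

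\emph{Step 2: checking the defining property.} Set $\Phi(z)=\int_G f(x)\,\kappa_N(z,x)\,\dd x$ with $\kappa_N$ as in the statement. This $\Phi$ is measurable in $z$ by Fubini. It remains to show $\mathbb{E}[f(H_{N;j})g(H_{N;k})]=\mathbb{E}[\Phi(H_{N;k})g(H_{N;k})]$ for every bounded measurable $g$. Write the right-hand side as an integral against $f_{H_{N;k}}(z)\,\dd z$. The factor $f_{H_{N;k}}(z)$ cancels the denominator of $\kappa_N$, leaving
\[\int_{G^2} g(z)f(x)f_{H_{N;j}}(x)p_N(zx^{-1})\,\dd x\,\dd z .\]
By the computation in Step 1, this equals the left-hand side.

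\emph{Step 3: the denominator.} The only point needing care is the set $\{f_{H_{N;k}}=0\}$, where $\kappa_N$ is undefined. This set has probability zero under the law of $H_{N;k}$, so defining $\kappa_N$ arbitrarily there (say, equal to $1$) does not affect the identity. Better, I expect the set to be empty. At fixed $N$ every $\widehat\mu_t(\lambda)/d_\lambda$ is real, by the symmetry assumption in $(H)$. The convolution $p_N$ should then be strictly positive: it is a finite convolution of the densities $\rho_t$, and condition (4) of $(H)$ together with the Fourier decay keeps these from vanishing identically near $1_G$. Positivity of $p_N$ forces $f_{H_{N;k}}=p_N\star f_{H_{N;j}}>0$. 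If strict positivity cannot be established cleanly, the almost-everywhere statement suffices, and this is the only mild obstacle.
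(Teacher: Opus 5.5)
Your argument is correct and is the same Bayes-rule computation the paper intends. The paper gives no separate proof and simply reads $\kappa_N$ off as the joint density $f_{H_{N;j}}(x)\,p_N(zx^{-1})$ (from the preceding joint-law lemma, after the right-invariant substitution $z=yx$) divided by the marginal $f_{H_{N;k}}$. The strict-positivity argument in your Step~3 is not supported by $(H)$: condition (4) and the Fourier decay are only upper bounds, and real Fourier coefficients do not imply positivity. It is also unnecessary, since the statement only asserts a version of the conditional expectation, and your formula supplies one on the full-measure set $\{f_{H_{N;k}}>0\}$.
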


Before stating the main proposition of this subsection, let us first define precisely the support of a functional in $C_b(\mathcal{B},\R)$, we refer the reader to~\cite[Def III.1 p.~8]{BDLR} for more information.
\begin{definition}
    Let $F\in C_b(\mathcal{B},\R)$. 
 The support of $F$ is the smallest closed subset $\text{supp}(F)\subset \Sigma$ of $\Sigma$ with the following property~:
    \[\forall \varphi\in C^\infty(\Sigma), \,\, F(\varphi+h)=F(\varphi), \forall h\in C^\infty_c(\Sigma \setminus \text{supp}(F) ).\]
\end{definition}

\begin{proposition}\label{lem:keyLemmacondition}
Let $F\in C_b(\mathcal{B})$ such that the support of $F$ is included in $\{f\leq j\}$. For $k>j$, we have
We have 
\begin{equation*}
\mathbb{E}\left[F(A_N)|H_{N;k}=1_G \right]= 
\int_G \mathbb{E}\left[F(A_N)|H_{N;j}=x \right]\kappa_N(1_G,x)\dd x.
\end{equation*}    
\end{proposition}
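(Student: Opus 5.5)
The identity is a Bayes formula combined with the causal Markov property, and at fixed $N$ everything lives on a finite-dimensional space with densities. So the plan is to replace the conditioning on $\{H_{N;k}=1_G\}$ by its concrete meaning, which is a ratio of densities.

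First, I will check that $F(A_N)$ is $\mathcal{F}^N_{\leq j}$-measurable. Since $\operatorname{supp}(F)\subset\{f\leq j\}$, the value $F(A_N)$ is unchanged under modification of $A_N$ on $\{f>j\}$. The construction of $A^{(N)}_{(M,U)}$ (piecewise constant in $\theta$, piecewise affine in $r$, plus unstable currents) shows that on $\{f\leq j\}$ it depends only on the bonds $M_{\theta\xrightarrow{\rho}\theta_+}$ with $\rho\leq j$ and on the $U_b$. A small care is needed at the strip $[j,j_+]$, where the affine interpolation also uses level $j_+$. I would handle this by noting that the support condition is a closed condition, and by replacing $j$ with $j-1$ if necessary, or by using the convention that $\{f\leq j\}$ is closed.

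Second, I will prove the identity in integrated form. For any central bounded measurable $g$,
\[\mathbb{E}[F(A_N)g(H_{N;k})]=\mathbb{E}\big[F(A_N)\,\mathbb{E}[g(H_{N;k})|\mathcal{F}_{\leq j}]\big]=\mathbb{E}\Big[F(A_N)\int_G g(y)\,p_N(H_{N;j}^{-1}y)\,\dd y\Big],\]
using the causal Markov lemma. Conditioning on $H_{N;j}=x$ inside, with the a.e.-defined disintegration, gives
\[\int_G\int_G \mathbb{E}[F(A_N)|H_{N;j}=x]\,f_{H_{N;j}}(x)\,p_N(x^{-1}y)\,g(y)\,\dd x\,\dd y.\]
Since this holds for all $g$, the conditional density of $F(A_N)$ given $H_{N;k}=y$ is
\[\frac{1}{f_{H_{N;k}}(y)}\int_G \mathbb{E}[F(A_N)|H_{N;j}=x]\,f_{H_{N;j}}(x)\,p_N(x^{-1}y)\,\dd x .\]
Centrality lets one swap $x^{-1}y$ and $yx^{-1}$ in the class function $p_N$, which matches $\kappa_N$ of Lemma~\ref{lem:abel2}.

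Third, and this is the main obstacle, the right-hand side must be shown to be continuous in $y$, so that it defines the conditional expectation at the specific point $y=1_G$ and not only almost everywhere. This is the point of the Bayes formula. I will show that $p_N$ is continuous, indeed $C^\infty$, through its character expansion: the factors $\widehat{\mu}_t(\lambda)/d_\lambda$ decay by the bounds used in Theorem~\ref{convck}, since the number of faces between levels $j$ and $k$ is large, of order $2^N$. I will also show that $f_{H_{N;k}}$ is continuous and strictly positive near $1_G$; its expansion has the same structure, and near the identity it is close to a heat kernel. Dominated convergence then gives continuity of the numerator, since $\mathbb{E}[F|H_{N;j}=x]\,f_{H_{N;j}}(x)$ is bounded by $\|F\|_\infty f_{H_{N;j}}$, which is integrable. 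The conditional law $\mathbb{M}_N(\cdot|[1_G])$ is then defined as this continuous version, and evaluating at $y=1_G$ gives the stated formula.
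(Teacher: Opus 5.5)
Your proposal is correct and follows the paper's proof. You integrate against central test functions of $H_{N;k}$, use the causal Markov property to replace $\mathbb{E}[g(H_{N;k})|\mathcal{F}^N_{\leq j}]$ by $\int_G g(y)\,p_N(H_{N;j}^{-1}y)\,\dd y$, condition on $H_{N;j}$, and read off $\kappa_N$; conditioning directly on $\mathcal{F}^N_{\leq j}$, instead of passing through $\mathcal{F}^N_{\geq j}$ and $\mathcal{F}^N_j$ as the paper does, is only a cosmetic shortcut.

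Your third step, producing a continuous version so that evaluation at $y=1_G$ is meaningful, makes explicit what the paper leaves implicit. Two small remarks on it. First, continuity of $p_N$ and $f_{H_{N;k}}$ already follows from boundedness of $\rho_t$ under (H), since a convolution in $L^1\ast L^\infty$ on a compact group is continuous; you do not need the $C^\infty$ bounds of Theorem~\ref{convck}. Second, positivity of $f_{H_{N;k}}$ at $1_G$ via closeness to a heat kernel is only a large-$N$ statement. That is all the later proof of Theorem~\ref{condThm} uses, and it is presupposed anyway by the denominator of $\kappa_N(1_G,\cdot)$.
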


\begin{proof}
For a central function $f$,
    \begin{eqnarray*}
        \mathbb{E}[F(A_N)f(H_{N;k})]&=&\mathbb{E}\left[\mathbb{E}[F(A_N)f(H_{N;k})|\mathcal{F}_{\geq j}]\right]=\mathbb{E}\left[\mathbb{E}[F(A_N)|\mathcal{F}_{\geq j}]f(H_{N;k})\right],
    \end{eqnarray*}
    where we have used that $H_{N;k}$ is $\mathcal{F}_{\geq j}-$measurable.
    Since $F(A_N)$ is $\mathcal{F}_{\leq j}-$measurable, then 
        \begin{eqnarray*} 
        \mathbb{E}[F(A_N)f(H_{N;k})]&=&\mathbb{E}\big[\mathbb{E}[F(A_N)|\mathcal{F}_{j}]f(H_{N;k})\big]=\mathbb{E}\Big[\mathbb{E}\big[\mathbb{E}[F(A_N)|\mathcal{F}_{j}]f(H_{N;k})\big| \mathcal{F}_j\big]\Big]\\
        &=&\mathbb{E}\left[\mathbb{E}[F(A_N)|\mathcal{F}_{j}]\mathbb{E}[f(H_{N;k})|\mathcal{F}_j]\right]=\mathbb{E}\left[\mathbb{E}[F(A_N)|\mathcal{F}_{j}]\mathbb{E}[f(H_{N;k})|H_{N;j}]\right]\\
        &&\text{by classical properties of conditional expectation and Lemma \ref{lem:preabelianization}} \\
        &=&\mathbb{E}\left[\mathbb{E}[\mathbb{E}[F(A_N)|\mathcal{F}_{j}]|H_{N;j}]\mathbb{E}[f(H_{N;k})|H_{N;j}]\right]\\
        &=&\mathbb{E}\left[\mathbb{E}[F(A_N)|H_{N;j}]\mathbb{E}[f(H_{N;k})|H_{N;j}]\right]
    \end{eqnarray*}

    again by classical properties of conditional expectations and since $H_{N;j}$ is $\mathcal{F}_j$ measurable.  
    By Lemma~\ref{lem:abel2}, we can now write 
\[\mathbb{E}[f(H_{N;k})|H_{N;j}]=\int_Gf(g)p_N(H_{N;j}^{-1}g)\dd g,\] 
which gives 
\begin{eqnarray*}
        \mathbb{E}[F(A_N)f(H_{N;k})] &=&\mathbb{E}\left[\mathbb{E}[F(A_N)|H_{N;j}]\int_Gf(g)p_N(H_{N;j}^{-1}g)\dd g\right]\\
        &=&\int_G\int_G\mathbb{E}[F(A_N)|H_{N;j}=x]f(g)p_N(x,g)\mathbb{P}(H_{N;j}\in \dd x)\dd g \\
        &=&\int_Gf(g)\mathbb{P}(H_{N;k}\in \dd g)\int_G\mathbb{E}[F(A_N)|H_{N;j}=k]\kappa_N(g,k)\dd k.
 \end{eqnarray*}       
 Therefore, 
 \[\mathbb{E}[F(A_N)|H_{N;k}=1_G]= \int_G\mathbb{E}[F(A_N)|H_{N;j}=x]\kappa_{N}(1_G,x)\dd x.\]
\end{proof}

\subsubsection{Convergence of the conditioned measures}
\paragraph{A sequence of measures on $\mathcal{B}\times G$.}
The main intermediate results to prove the convergence of the conditioned measures is to study the convergence of the pair
$\left(A^{(N)}_{(M,U)},\mathrm{Hol}(A^{(N)}_{(M,U)},\partial \Sigma)\right)$. 
\par Indeed, from the free boundary measure $\mu_N$ together with the holonomy map $\mathrm{Hol}(A_N,\partial\Sigma )$ (which is a random matrix when $A_N$ is chosen randomly under the probability measure $\mu_N$), we define a measure $\mathbf{m}_N$
on $\mathcal{B}\times G$. This measure describes the \emph{joint law} of the pair 
$\left(A_N,\mathrm{Hol}_{\partial\mathcal{S}_{\mathrm{out}} }(A_N)  \right)\in \mathcal{B}\times G $.

What we must prove is 
the following statement~:
\begin{lemma}\label{jointMeas}
The sequence of measures $\mathbf{m}_N$ converges to a limiting measure $\mathbf{m}$ describing the joint law of 
$\left(A,\mathrm{Hol}_{\partial\mathcal{S}_{\mathrm{out}} }(A)  \right)\in \mathcal{B}\times G $
where $A$ is chosen randomly under the limiting measure $\mu$.
\end{lemma}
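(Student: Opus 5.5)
To prove Lemma~\ref{jointMeas}, I will establish tightness of the pairs $(A_N,H_N)$, where $H_N:=\mathrm{Hol}_{\partial\mathcal{S}_{\mathrm{out}}}(A_N)$, and then identify every subsequential limit.

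\textbf{Tightness.} The first marginal $(A_N)$ is tight in $\mathcal{B}$: its bulk part is handled by Theorem~\ref{Bounded} and Lemma~\ref{BoundAnSpace}, and its unstable part does not depend on $N$. The second marginal lives in the compact group $G$, so it is automatically tight. Hence $(\mathbf{m}_N)$ is tight on $\mathcal{B}\times G$.

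\textbf{Reduction via a cutoff level.} To identify a limit point, I will test against products $F(a)\,h(g)$, where $F\in C_b(\mathcal{B})$ and $h\in C(G)$. By Peter--Weyl density it suffices to take $h$ to be a matrix coefficient of some representation $\lambda$. The difficulty is that $H_N$ is not a continuous functional of $A_N$ in $\mathcal{B}$, so continuous mapping cannot be applied directly. I will therefore localize $F$ below a level $j<\max f$ and exploit the causal Markov structure:
\begin{itemize}
    \item Lemma~\ref{lem:preabelianization} and the computation in Proposition~\ref{lem:keyLemmacondition} give
    \[
    \mathbb{E}[F(A_N)h(H_N)]=\mathbb{E}\Big[\mathbb{E}[F(A_N)\,|\,\mathcal{F}_{j}]\int_G h(H_{N;j}\,y)\,p_N(y)\,dy\Big].
    \]
    \item For general $F$, I will approximate by $F\circ\pi_j$, where $\pi_j$ restricts the current to $\{f\le j\}$. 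The error is controlled by the total area of the strip $\{j<f\le\max f\}$, which is small uniformly in $N$ by Lemma~\ref{areaestim} together with the moment bound of Lemma~\ref{estim}.
    \item By Theorem~\ref{convck}, $p_N\to p_{\sigma(\{f\ge j\})}$ in $C^\infty(G)$, and this limit is exactly the transition kernel of the continuum holonomy process.
\end{itemize}

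\textbf{Main obstacle.} It remains to show that the pair $(A_N,H_{N;j})$ converges jointly at a fixed interior level $j$. I plan to realize $H_{N;j}$ as the solution of a discrete product equation driven by the horizontal-edge logarithms along the level circle $\{f=j\}$. This is a Lie-group random walk whose increments converge, by the characteristic-function computation of Lemma~\ref{EstimBruitBlanc}, to the Brownian sheet increments of $\tilde A$, interleaved with the Haar variables $U_b$ inserted at the unstable crossings. I will then invoke a Wong--Zakai type continuity, for example a Donsker theorem for Lie-group-valued random walks with Stratonovich limit, to conclude that $(W_N(j,\cdot),U)\mapsto H_{N;j}$ converges jointly with $A_N$ to $(A,\mathrm{Hol}(A,\{f=j\}))$. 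This is the step I expect to be hardest: the BCH corrections must be shown to vanish, using the $(H)$ moment estimates and Lemma~\ref{devlim} to bound the second-order terms uniformly.

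\textbf{Conclusion.} Passing to the limit $N\to\infty$ first, and then letting $j\uparrow\max f$ using continuity of the heat kernel, identifies every limit point with the joint law of $(A,\mathrm{Hol}_{\partial\mathcal{S}_{\mathrm{out}}}(A))$. Since the limit is unique, the whole sequence $\mathbf{m}_N$ converges.
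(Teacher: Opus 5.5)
Your decisive step is the same as the paper's, but the Markov reduction around it does not help and introduces a step that fails as written.

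\textbf{The reduction is redundant.} It does not remove the obstacle you identified: $H_{N;j}$ is no more a continuous functional of $A_N$ in $\mathcal{B}$ than $H_N$ is. So everything still rests on proving joint convergence of $A_N$ with a holonomy along a level circle. Once you have that argument at level $j$, it applies verbatim to $\partial\mathcal{S}_{\mathrm{out}}$, and that is exactly the paper's proof:
\begin{itemize}
\item lift the top-level walk $W_N$ to the rough path $\mathbf{W}_N$;
\item get its convergence and tightness in a rough-path H\"older topology from the Bayer--Friz Donsker theorem, using $\sup_{N,\theta}\mathbb{E}|2^{N/2}\log M_{\theta\to\max f}|^p<\infty$;
\item conclude by It\^o--Lyons continuity.
\end{itemize}
The kernel $p_N$, Theorem~\ref{convck} and the limit $j\uparrow\max f$ are then unnecessary.

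\textbf{The approximation of general $F$ by $F\circ\pi_j$ fails.} Multiplication by $1_{\{f\le j\}}$ is not continuous on the anisotropic spaces, since it creates a jump at $f=j$; hence $F\circ\pi_j\notin C_b(\mathcal{B})$. Moreover, in Morse gauge $A_N(r,\cdot)$ on the strip $\{j<f\le\max f\}$ is not small. It carries the noise integrated from the bottom of the surface, and the strip's area only controls the $r$-increments across it. Lemma~\ref{areaestim} also concerns vertical strips $[\theta,\theta_+]$, not this horizontal one. If you keep the localization, test only against $\exp(i\langle\cdot,\phi\rangle)h$ with $\phi$ supported in $\{f<j\}$. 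Together with tightness these functionals identify the limit, and no approximation is needed. Also note that the causal Markov lemma is stated for central functions. For matrix coefficients $h$ it still holds because $p_N$ is a class function, but you should say so.

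\textbf{The main step is misdirected.} What must be controlled is not BCH remainders. At fixed $N$ the lattice holonomy is exactly the solution of $\dd U=U\,\dd W_N$ along the piecewise linear interpolation, each segment contributing $\exp(\Delta W_N)$. The real content is twofold:
\begin{itemize}
\item tightness of $\mathbf{W}_N=(W_N,\int W_N\otimes\dd W_N)$ in a rough-path topology;
\item joint convergence with $A_N$. The pair $(A_N,W_N)$ converges by the characteristic-function computation. Any limit point of $(A_N,\mathbf{W}_N)$ has second level equal to the Stratonovich area of its first level, because the marginal is (time-changed) enhanced Brownian motion, whose area is a measurable function of the path. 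This pins down the joint law.
\end{itemize}
A Donsker theorem giving only the law of the holonomy would not suffice, since that marginal is already known. Your insertion of the Haar variables $U_b$ at the unstable crossings is correct, and more careful than the paper's written proof, which drives $U_N$ by the $M$'s alone. The resulting product is continuous in $(\mathbf{W}_N,U)$.
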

\begin{proof}
 For every $N$, there is a well--defined holonomy map $\mathrm{Hol}_{\partial\mathcal{S}_{\mathrm{out}}}$ which is described as the solution of
 $\dd U_N=U_N \dd W_N,$ 
 where 
 \[\dd W_N=\sum_{\theta\in \Theta_-^N} \log\left (M_{ \theta \xrightarrow{\max f} \theta_+^N }\right )  2^N 1_{[\theta,\theta_+]}(\theta)\dd\theta,\]
 and $W_N$ is an affine interpolation of a $\g-$valued random walk that converges as $N\rightarrow +\infty$ to a re-parametrized $\mathfrak{g}$--valued Brownian motion. 
 Since we have 
 \[\forall p\in \mathbb{N},\,\,  \sup_{\substack{N\geq 0 \\\theta \in \Theta_-^N}}\mathbb{E}\left[ \left\vert 2^{N/2} \log\left (M_{ \theta \xrightarrow{\max f}\theta_+^N }\right )\right\vert^p \right] <+\infty,\] 
 we deduce that $\mathbf{W}_N \rightarrow \mathbf{W}^{\mathfrak{g}}$ in the rough path topology for variations of order $2$ by the proof of  Bayer--Friz~\cite[Thm 3.3 p.~269]{Bayer-Friz}. Indeed, this reference proves a Donsker Theorem in the rough path setting for walks which are independent but not necessarily identically distributed, itself being a generalization of ~\cite[Thm 1 p.~3489]{BreuillardRW}.

Let us recall the proof. Consider the geometric rough path \[\mathbf{W}_N\coloneq \left(W_N(s,t) \in \mathfrak{g} ; \int_s^t W_N\otimes \dd W_N \in \mathfrak{g}\otimes \mathfrak{g} \right).\]
Let 
 \[\mathbf{W}^{\mathfrak{g}}=\left(W_{s,t}, \int_s^t W \otimes \circ \dd W\right)\]
 denote the $\mathfrak{g}$--valued enhanced Brownian motion in the sense of Stratonovich. We would like to show that the \emph{pair} $(A_N,\mathbf{W}_N)$ converges in law in $\mathcal{B}\times \mathcal{C}^\alpha$ where the second space is the space of rough paths of H\"older regularity $\alpha\in (0,\frac{1}{2})$. Let us start by the easy part. The pair $A_N,W_N\coloneq \int_{\{\max(f)\}\times[0,\theta]} A_N$ converges in law where
\[\dd W_N=\sum_{\theta\in \Theta_-^N} \log(M_{ \theta \overset{\max(f)}{\longrightarrow} \theta_+^N })  2^N 1_{[\theta,\theta_+^N]}(\theta)\dd \theta \]
and $W_N$ is an affine interpolation of a $\g-$valued random walk that converges to some re-parametrized $\mathfrak{g}$--valued Brownian motion as $N\rightarrow +\infty$. To show the convergence in law of the pair $A_N,U_N(2\pi)$ using the continuity of the RDE, we need to enhance  $W_N$ to the rough path $\mathbf{W}_N$.
The strategy is to reduce everything to the proof of the usual central limit Theorem and prove some kind of tightness in H\"older spaces of rough paths. 
The first idea is to consider the interpretation of rough paths as Lie group valued paths. Start from the Lie algebra $\mathfrak{g}$, and consider any rough path as some element of the tensor algebra~:
\[\mathbf{W}\coloneq (1,W_{s,t},\mathbb{W}_{s,t})\in \mathbb{R}\oplus \mathfrak{g} \oplus \mathfrak{g}\otimes \mathfrak{g}=T^2\mathfrak{g}.\] This forms a non-commutative tensor algebra whose unit reads $(1,0,0)$~\cite[section 2.3 p.~17]{friz_hairer}. 
The elements of the form $(1,b,c)\in T^{(2)}(\mathfrak{g})$ form a group denoted by $T_1^{(2)}(\mathfrak{g})$ sitting inside $T^{(2)}(\mathfrak{g})$. This group is called the step--$2$ nilpotent Lie group. Note that $\mathbf{W}_{s,t}=\mathbf{W}_{0,t}\circ \mathbf{W}_{0,s}^{-1}$ is therefore a rough path and can be interpreted as a path valued in the Lie group $T_1^{(2)}(\mathfrak{g})$; and $\mathbf{W}_{s,t}$ is just an increment of the above path. This group has a natural Carnot--Carathéodory distance $\mathbf{d}_C$ and geometric rough paths of H\"older regularity $\alpha$ can be identified with H\"older maps of regularity $\alpha$ valued in $T_1^{(2)}(\mathfrak{g})$ for the distance  $\mathbf{d}_C$~\cite[]{friz_hairer}.
The idea is to prove some sort of Donsker theorem and the corresponding CLT for random walks valued into the nilpotent step $2$ group.
If we have some centered, independent $\mathfrak{g}$--valued random variables $\left(\xi^{n}_i, 1\leq i\leq n\right)$, we denote by
$e^{\xi_i^{(n)}}\coloneq (1,\xi_i,\xi_i\otimes \xi_i)$ the corresponding element in $T_1^{(2)}(\mathfrak{g})$. Assume that, for all $v,w\in \g$,
\[\mathbb{E}\left( \left\langle v,\xi_i^{(n)} \right\rangle_{\mathfrak{g}} \left\langle w,\xi_i^{(n)} \right\rangle_{\mathfrak{g}} \right)=a_i^{(n)} \left\langle v,w\right\rangle_{\mathfrak{g}} +o(\frac{1}{n}), \text{ with } a_i^{(n)}\sim \frac{1}{n},  \sum_{i=1}^n a_i^{(n)}=1.\]
Then by the CLT proved in~\cite[Lemma 4.1 p.~271]{Bayer-Friz},
we know that 
$e^{\xi_1^{(n)}}\otimes \dots \otimes e^{\xi_n^{(n)}}$ converges in law to the time $1$ of the Brownian motion on $T_1^{(2)}(\mathfrak{g})$ whose infinitesimal generator is the natural left-invariant sub--Laplacian $\sum_{i=1}^{\dim(\mathfrak{g})} X_i^2$ in  $T_1^{(2)}(\mathfrak{g})$. Then the next step consists in reducing to  
some combinatorial estimate. 
One key idea is that a geodesic $t\xi$ in $\mathfrak{g}$ lifts uniquely to a geodesic $e^{t\xi}$ in $T_1^{(2)}(\mathfrak{g})$ for the Carnot--Carathéodory distance. So linear (geodesic) interpolation on $\mathfrak{g}$ lifts functorially to geodesic interpolation in $T_1^{(2)}(\mathfrak{g})$ for the Carnot--Carathéodory distance~\cite[3.6 p.~40]{friz_hairer}. 

The hard part of the proof is the control of tightness in the H\"older space $\mathcal{C}^\alpha$ which is essential for rough differential equations.
By independence of increments and left invariance of the Carnot--Carathéodory distance,  
it is enough to prove that
\begin{equation}
\sup_n\mathbb{E}\left(d_C\left( \mathbf{W}_u,\mathbf{W}_v \right)^a  \right) \leq c \vert u-v \vert^{1+b}    
\end{equation}
for the ratio $\frac{b}{a}$ close to $\frac{1}{2}$ and $[u,v]=[0,\sum_{i=1}^k a_i^{(n)}]$. This rewrites
\begin{align*}
\mathbb{E}\left( \Vert e^{\xi_1^{(n)}}\otimes \dots \otimes e^{\xi_k^{(n)}}  \Vert^a\right) \leq \left(\sum_{i=1}^n a_i^{(n)}\right)^{1+b}    
\end{align*}
that we need to prove.
By changing the scaling again, we are reduced to proving an estimate of the form~:
\begin{align*}
\mathbb{E}\left( \Vert e^{\xi_1}\otimes \dots \otimes e^{\xi_k} \Vert^{2p}  \right) \lesssim  \left(\mathbb{E}\left(\sum_{i=1}^k \Vert\xi_i\Vert_{\mathfrak{g}}^2 \right) \right) ^{4p}    
\end{align*}
for any sequence $\xi_1,\dots, \xi_n,\dots$ of $\mathfrak{g}$--valued independent centered random variables, 
for all $p\geq 2$ and $k$ large enough.
The proof of this bound is exactly the content of~\cite[Prop 4.3 p.~273]{Bayer-Friz} and this tells us the sequence $(A_N,\mathbf{W}_N)\in \mathcal{B}\times \mathcal{C}^\alpha$ converges in law to the pair $(A,\mathbf{W})\in \mathcal{B}\times \mathcal{C}^\alpha$ where the second factor $\mathbf{W}_N\rightarrow \mathbf{W}$ as $\mathcal{C}^\alpha$ geometric rough path for $\alpha\in (\frac{1}{3},\frac{1}{2})$. Since $U_N$ solves the SDE $\dd U_N=U_N\circ \dd W_N$ in Stratonovich form, we can deduce by Ito-Lyons continuity of the RDE in the driving signal, that the pair $(A_N,U_N)$ converges in law to some pair $(A,U)\in \mathcal{B}\times G$.  
\end{proof}
\begin{proof}[Proof of the first part of Theorem~\ref{condThm}]
    Let $\epsilon>0$, and $F\in C_b(\mathcal{B})$, such that $\mathrm{supp\ }  F \subset \{f\leq \max f-\epsilon \}$. Choose $N$ large enough, and pick a dyadic level $j$ in $\{\max f -2\epsilon \leq f \leq \max f-\epsilon \}$. We have 
    \begin{eqnarray*}
        \mathbb{E}[F(A_N)|H_{N;\max f}=1_G]&=& \frac{1}{f_{H_{N;\max f}}(1_G)}\int_Gp_{N}(x) \mathbb{E}[F(A_N)|H_{N;j}=x]\mathbb{P}(H_{N;j}\in \dd x) \\
        &=&\frac{1}{f_{H_{N;\max f}}(1_G)}\int_G(p_{N}(x)-p_{\infty}(x)) \mathbb{E}[F(A_N)|H_{N;j}=x]\mathbb{P}(H_{N;j}\in \dd x)\\
        &+&\frac{1}{f_{H_{N;\max f}}(1_G)}\int_Gp_{\infty}(k) \mathbb{E}[F(A_N)|H_{N;j}=x ]\mathbb{P}(H_{N;j}\in \dd x).
    \end{eqnarray*}
   The first term  
   \[\left\vert\int_G(p_{N}(x)-p_{\infty}(x)) \mathbb{E}[F(A_N)|H_{N;j}=x]\mathbb{P}(H_{N;j}\in \dd x)\right\vert \leq \|p_{N}-p_{\infty}\|_{\infty} \|F\|_{\infty}\to 0,\]
   as a direct consequence of Theorem~\ref{convck},
   and   by Lemma~\ref{jointMeas}, the second term 
   \[\int_Gp_{\infty}(x) \mathbb{E}[F(A_N)|H_{N;j}=x]\mathbb{P}(H_{N;j}\in \dd x)=\mathbb{E}[F(A_N)p_{\infty}(H_{N;j})]\to  \mathbb{E}[F(A)p_{\infty}(H_j)],\]
\end{proof}

\subsubsection{Control on H\"older--Besov norms}
\label{sss:regimeofregularity}

We would like to control the H\"older--Besov regularities of the conditioned random connections.
The first direct result is that 
\[A^{(N)}_{(M,U)}|\mathrm{Hol}(A^{(N)}_{(M,U)},\partial \Sigma)=[g] \ \text{ converges in } \ \mathcal{C}^{-\beta-2}_{loc}(\Sigma\setminus \max(f),\g), \forall \beta+\alpha-1>0. \]

Moreover, we have precisely four regimes~:
\begin{enumerate}
    \item outside the union $\overline{\cup_{a\in \text{Crit}(f)_1} W^u(a)}$ of unstable curves, the sequence $A^{(N)}_{(M,U)}|\mathrm{Hol}(A^{(N)}_{(M,U)},\partial \Sigma)=[g] $ converges in $\mathcal{C}^\alpha_{r,\mathrm{loc}}\mathcal{C}^{\alpha-1}_\theta$,
    \item near unstable curves but away from saddle points, it converges in $\mathcal{C}^{-1-\varepsilon}_{loc}(\Sigma\setminus \max(f))$,  
    \item near saddle points, the sequence \[A^{(N)}_{(M,U)}|\mathrm{Hol}(A^{(N)}_{(M,U)},\partial \Sigma)=[g] \] converges in $\mathcal{C}^{-\beta-2}_{loc}(\Sigma\setminus \max(f),\g), \forall \beta+\alpha-1>0$,
    \item finally, near the north pole, bookkeeping the proof of conditioning, the reader can verify that it converges in some weighted H\"older space $\mathbf{d}(.,\max(f))^{-\dim G-\varepsilon}\mathcal{C}^{-1-\varepsilon}(\Sigma)$ with singular weight  $-\dim(G)-\varepsilon$, $\forall \varepsilon>0$ at $\max(f)$ .
\end{enumerate}

We refer the reader to figure \ref{fig:regregions} for the pictures of these four different regions.

\begin{figure}
    \centering
    \includegraphics[width=1.1\linewidth]{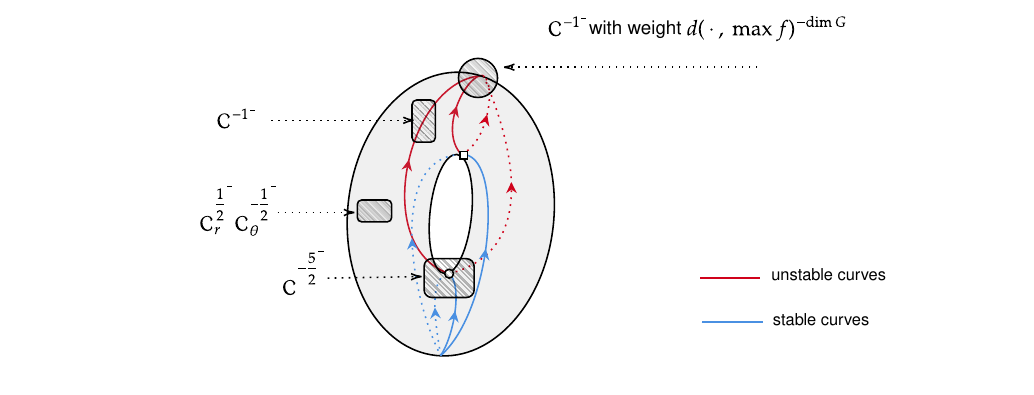}
    \caption{Four regions with different regularities for limiting connection.}
    \label{fig:regregions}
\end{figure}

Now, we will control the convergence in  $\mathcal{C}^\alpha_{r,\mathrm{loc}}\mathcal{C}^{\alpha-1}_\theta$, for all $\alpha\in(0,\frac{1}{2})$ outside the union $\overline{\cup_{a\in \text{Crit}(f)_1} W^u(a)}$. This finishes the proof of Theorem~\ref{thm:mainthmintro}.

\begin{proof}[Proof of Theorem~\ref{thm:mainthmintro}]
We only discuss in detail the regularity in a flowbox outside the union of unstable curves. 
Set $\alpha\in (0,\frac{1}{2})$, and let $\psi$ be any function in $C^\infty_c( \Sigma\setminus \partial\Sigma)$. Then we choose $m$ big enough, and $j=2^m\max(f)-1$ so that 
$F\coloneq  \mathcal{A}\in \mathcal{B}\mapsto  \Vert \mathcal{A}\psi\Vert_{C^\alpha_r C^{\alpha -1}_\theta}\in \mathbb{R}$
be $\mathcal{F}^{(N)}_{\leq j}$ measurable, for all $N\geq m$. From Theorem~\ref{BoundAnSpace}, we have
\[\sup_{N \geq 0} \mathbb{E}\left [F(A^{(N)}_{(M,U)})\right ] \leq \sup_{N \geq 0} \mathbb{E}\left [\|A^{(N)}_{(M,U)}\|\right ]   <\infty,\]
where we have used the continuity of the multiplication by $\psi$ in $C^\alpha_r C^{\alpha -1}_\theta$. Therefore, 
\[\int_G \mathbb{E}\left [F(A^{(N)}_{(M,U)})|H^{(N)}_j=g \right ] \mathbb{P}(H^{(N)}_1\in \dd g) \leq C \]
which gives, using the $C^\infty$  convergence of $ \mathbb{P}(H^{(N)}_1\in \dd g)$ to $p_{\sigma(\Sigma)}(g)\dd g$ (direct consequence of Theorem~\ref{convck}),which is bounded below by a non-zero positive number,
\[\int_G \mathbb{E}\left [F(A^{(N)}_{(M,U)})|H^{(N)}_j=g \right ]\dd g \leq C'. \]
Now, this concludes from  Proposition~\ref{lem:keyLemmacondition} that 
\[\sup_{N\geq 0} \mathbb{E}\left [F\left(A^{(N)}_{(M,U)}\right)\Big |\mathrm{Hol}(A^{(N)}_{(M,U)},\partial \Sigma)=1_G\right ] <\infty. \]
This means that for every test function $\psi\in C^\infty_c( \Sigma\setminus \partial\Sigma )$, the sequence $A^{(N)}_{(M,U)}\psi$ is tight in ${\mathcal{C}^\alpha_{r,\mathrm{loc}}\mathcal{C}^{\alpha-1}_\theta}$. Since the uniqueness of the limit has already been established in the previous section, we have finally that the sequence $A^{(N)}_{(M,U)}\psi$ converges in ${\mathcal{C}^\alpha_{r,\mathrm{loc}}\mathcal{C}^{\alpha-1}_\theta}$.
The other cases are left to the reader but they parallel the discussion of subsection~\ref{ss:singpullback} on singular pull--backs. The regularity estimates near 
unstable curves come from Lemma~\ref{lem:regunstablecurrents} in the appendix on the H\"older regularity of unstable currents. 
\end{proof}

\appendix

\section{Classical results in harmonic analysis}
\subsection{Path-wise integral of some rough differential forms}
In this appendix, we record a lemma that we occasionally use, which allows for the path-wise integration of rough differential forms. We also take this opportunity to note that the integration of differential forms is currently an active area of research, with many recent and interesting developments. We mention, in particular, the works of Z\"ust~\cite{Zust}, Alberti--Stepanov--Trevisan\cite{AlbertiStepanovTrevisan} ; and very recently Chandra--Singh~\cite{ChandraSingh}, and Jaffard~\cite{Jaffard}. 
\begin{lemma}\label{lem:pairing}
Let $M$ be some $C^\infty$ manifold.
Let $T\in \mathcal{D}^\prime(M)$ be a top degree current whose coefficients are in the 
H\"older--Besov space $\mathcal{C}^{\alpha}$ for $\alpha>-1$. Then for every 
domain $\Omega\subset M$ which has smooth boundary, 
the pairing
$$ \int_\Omega T=\int_M T 1_\Omega  $$
is well--defined. It is bilinear continuous when we endow the indicator function $1_\Omega$ with the topology of $\mathcal{B}^{-\alpha}_{1,1}$.
\end{lemma}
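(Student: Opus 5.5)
The plan is to prove the statement through the duality between Besov spaces. I will first localize with a finite partition of unity $(\chi_i)$ subordinate to coordinate charts. Each piece $\chi_i T$ is then a compactly supported distribution $t_i\,\dd x$ on $\mathbb{R}^d$ with $t_i\in\mathcal{C}^\alpha=B^\alpha_{\infty,\infty}$. Each piece of the indicator becomes $\chi_i 1_\Omega$, viewed in local coordinates as $\tilde\chi_i 1_{\tilde\Omega_i}$ with $\tilde\Omega_i$ a domain with smooth boundary. The pairing will be defined as $\sum_i \langle t_i, \tilde\chi_i 1_{\tilde\Omega_i}\rangle$. Once the local statement is proved, independence of the partition and of the charts follows from density of smooth functions together with continuity.

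The core step is to show that $1_{\tilde\Omega}\tilde\chi\in B^{\gamma}_{1,1}$ for every $\gamma<1$. I will do this by a direct Littlewood--Paley estimate. The block $\Delta_j(\tilde\chi 1_{\tilde\Omega})$ is essentially supported in a $2^{-j}$-neighbourhood of $\partial\tilde\Omega$, up to rapidly decaying tails, and it is bounded there in sup norm. Its $L^1$ norm is therefore $O(2^{-j})$, so $\sum_j 2^{j\gamma}\|\Delta_j\|_{L^1}<\infty$ whenever $\gamma<1$. Flattening the boundary locally reduces this to the case of the half-space $\{x_d>0\}$.

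For the half-space, $\Delta_j$ acting on $1_{x_d>0}$ is a function of $x_d$ alone, equal to $\check\phi_j*1_{\mathbb{R}_+}$, and it decays rapidly away from $|x_d|\lesssim 2^{-j}$. Multiplying by $\tilde\chi$ costs only a commutator term, which I will handle with the standard paraproduct estimate.

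Next I apply the duality pairing $B^{\alpha}_{\infty,\infty}\times B^{-\alpha}_{1,1}\to\mathbb{R}$, defined by $\sum_{|j-k|\le1}\langle\Delta_j t,\Delta_k h\rangle$. It is bilinear and continuous, with $|\langle t,h\rangle|\lesssim\|t\|_{B^\alpha_{\infty,\infty}}\|h\|_{B^{-\alpha}_{1,1}}$ by Hölder's inequality blockwise. Since $\alpha>-1$, we have $-\alpha<1$, so the preceding paragraph puts $1_\Omega$ into $B^{-\alpha}_{1,1}$, and the pairing is well defined. The pairing agrees with $\int_\Omega T$ for smooth $T$. Continuity in the stated topology is then immediate from the bound.

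The main obstacle will be the boundary estimate $\|\Delta_j(\tilde\chi 1_{\tilde\Omega})\|_{L^1}\lesssim 2^{-j}$ uniformly, in particular controlling the curved boundary and the tails. I will first try the change of variables flattening $\partial\tilde\Omega$. Besov spaces $B^\gamma_{1,1}$ with $|\gamma|<1$ are invariant under diffeomorphisms, so this reduces the estimate to the half-space computation.
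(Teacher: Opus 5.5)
Your proposal is correct and follows essentially the same route as the paper: reduce by charts and a partition of unity to a half-space, show that $\chi 1_\Omega\in\mathcal{B}^{\gamma}_{1,1}$ for every $\gamma<1$, and conclude with the bounded duality pairing $\mathcal{B}^{\alpha}_{\infty,\infty}\times\mathcal{B}^{-\alpha}_{1,1}\to\mathbb{R}$, using $-\alpha<1$. The only difference is cosmetic. The paper gets the key membership by noting $\delta_{\{0\}}\in\mathcal{B}^0_{1,\infty}\subset\mathcal{B}^{-\varepsilon}_{1,1}$ and taking a primitive, which is your block computation $\Delta_j 1_{\{x_d>0\}}=\Phi(2^jx_d)$ in another form, where $\Phi$ is the primitive of a mean-zero Schwartz kernel.
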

\begin{proof}
Without loss of generality using partitions of unity, the invariance of H\"older--Besov spaces by diffeomorphisms and adapted charts, we reduce to some half--space $ \{ (x_1,\dots,x_n); x_1\geqslant 0 \} $ in $\mathbb{R}^n$ and we want to give meaning to
$ \int_{\mathbb{R}^n} T\chi \Theta(x_1) $ where $\Theta$ is the Heaviside function.
The delta distribution $\delta_{\{0\}}^{\mathbb{R}}$ in $1d$ belongs to the Besov space $ \mathcal{B}^0_{1,\infty} $ hence in $\mathcal{B}^{-\varepsilon}_{1,1}$ for all $\varepsilon>0$. Therefore by taking primitive, the Heaviside function should belong to $\mathcal{B}^{1-\varepsilon}_{1,1, loc}$ for all $\varepsilon>0$. It follows that 
the product $ \chi \Theta(x_1)$ belongs to the Besov space $\mathcal{B}^{1-\varepsilon}_{1,1}$ for all $\varepsilon>0$ since $\chi\in C^\infty_c$ is a multiplier for Besov spaces. Then we conclude using $\mathcal{C}^\alpha=\mathcal{B}_{\infty,\infty}^\alpha$ and by the duality in Besov spaces $\left(\mathcal{B}^{-\alpha}_{1,1}\right)^\prime=\mathcal{B}^{\alpha}_{\infty,\infty}$. Beware that the duality is only in the sense stated, we do not have the converse :$\left(\mathcal{B}^{\alpha}_{\infty,\infty}\right)^\prime\neq \mathcal{B}^{-\alpha}_{1,1} $.
\end{proof}

\subsection{Restriction in Besov spaces}

We would like to make the following remark.
For $\alpha\in (0,\frac{1}{2})$, when we are given a distribution $T\in \mathcal{C}^{\alpha-1}(\mathbb{R})$ then Lemma ~\ref{lem:pairing} tells us that for any interval $I$, we can make sense of
the product $T1_{I}$ which is the distribution $T$ localized on the interval $I$. Intuitively, the product $T1_{I}$ is the unique distribution supported on $I$ which coincides with $T$ when acting on $C^\infty_c(I)$. However, some analysis using the Bony decomposition tells us that the product $T1_{I}$ can no longer be expected to belong to the H\"older space 
$\mathcal{C}^{\alpha-1}(\mathbb{R})$ because of the singularities of the indicator function $1_I$.

\begin{lemma}[Restriction Lemma in Besov regularity]\label{lem:restrictionBesov}
Under the assumption of Lemma~\ref{lem:pairing}, the product $T1_{\Omega}$ belongs to the Besov space $\mathcal{B}^{\alpha}_{1,1}$.  \end{lemma}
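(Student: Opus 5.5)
The proof will be carried out by localization and Littlewood--Paley (Bony) decomposition, in the spirit of the proof of Lemma~\ref{lem:pairing}.

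\textbf{Reduction.} Using a finite partition of unity $(\chi_i)$ adapted to $\partial\Omega$, together with the diffeomorphism invariance of $\mathcal{C}^\alpha=\mathcal{B}^\alpha_{\infty,\infty}$ and of $\mathcal{B}^\alpha_{1,1}$, it suffices to treat the product $T\,h$ on $\mathbb{R}^n$. Here $h:=\chi\,\Theta(x_1)$ with $\chi\in C^\infty_c$, and $T\in\mathcal{C}^\alpha$ may be assumed compactly supported after multiplying by a cutoff. Pieces of the partition whose support misses $\partial\Omega$ are either $0$ or $\chi_i T$. For those, the bound amounts to the regularity of a compactly supported $\mathcal{C}^\alpha$ distribution, which is handled by the same term-by-term analysis below with $h$ smooth.

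\textbf{Regularity of $h$.} As recalled in Lemma~\ref{lem:pairing}, $\delta^{\mathbb{R}}_{\{0\}}\in\mathcal{B}^0_{1,\infty}$. Integrating once in $x_1$ and using that $\chi$ is a multiplier gives $h\in\mathcal{B}^{1}_{1,\infty}$. Concretely, $\Vert\Delta_j h\Vert_{L^1}\lesssim 2^{-j}$ and $\Vert S_j h\Vert_{L^1}\lesssim1$, uniformly in $j$.

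\textbf{Bony decomposition.} Write $Th=T\prec h+T\succ h+T\circ h$ and estimate each block $\Delta_k$ in $L^1$.

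The low--high term $T\prec h$ gives the estimate $\Vert S_{j-1}T\Vert_\infty\Vert\Delta_jh\Vert_1\lesssim 2^{-j}\,2^{-j\min(\alpha,0)^-}$. Hence this term lies in $\mathcal{B}^{1+\min(\alpha,0)-}_{1,\infty}$. This space embeds in $\mathcal{B}^\alpha_{1,1}$ as soon as $\alpha<1$, which is the only case of interest.

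The resonant term $T\circ h$ gives $\sum_{|i-j|\le1}\Vert\Delta_iT\Vert_\infty\Vert\Delta_jh\Vert_1\lesssim 2^{-j(1+\alpha)}$. Since $1+\alpha>0$, the localized blocks $\Delta_k(T\circ h)$ are summable, and this term lies in $\mathcal{B}^{1+\alpha}_{1,\infty}\subset\mathcal{B}^\alpha_{1,1}$. This is exactly where the hypothesis $\alpha>-1$ enters.

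\textbf{Main obstacle.} The high--low term $T\succ h=\sum_j\Delta_jT\,S_{j-1}h$ is the difficult one. The naive bound $\Vert\Delta_jT\Vert_\infty\Vert S_{j-1}h\Vert_1\lesssim2^{-j\alpha}$ only gives $\ell^\infty$ control, i.e.\ membership in $\mathcal{B}^\alpha_{1,\infty}$. Summability must therefore come from somewhere else.

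My first attempt will be to split $S_{j-1}h=h+(S_{j-1}h-h)$.

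The correction $S_{j-1}h-h$ is concentrated in a $2^{-j}$-neighbourhood of $\{x_1=0\}$, with $L^1$ norm $\lesssim2^{-j}$. Its contribution is then $\lesssim2^{-j(1+\alpha)}$, which is summable.

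The remaining piece $\sum_j\Delta_jT\cdot h$ is essentially $T$ multiplied by the fixed compactly supported function $h$. Controlling it in $\mathcal{B}^\alpha_{1,1}$ requires $\sum_j2^{j\alpha}\Vert\Delta_k(\Delta_jT\,h)\Vert_1<\infty$. Here I will use that only $j\simeq k$ contributes, and then I must extract decay from the $L^1$ integration over the bounded support.

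I expect this step to be the genuine difficulty. A general $\mathcal{C}^\alpha$ distribution only has $\ell^\infty$ summability in $j$. So the argument will likely need to invoke the slight gain in regularity available in the applications: there, $T$ lies in $\mathcal{C}^{\alpha'}$ for every $\alpha'<\alpha_0$ by the anisotropic embeddings of Proposition~\ref{prop:continuousinjection}. With such a gain, $2^{-j\alpha'}$ with $\alpha'>\alpha$ makes the series converge.

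I will first try to close the estimate using that gain, and then check whether the stated $\mathcal{B}^\alpha_{1,1}$ claim can be obtained without it.
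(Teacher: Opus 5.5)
You follow the same route as the paper: localization, $1_\Omega\in\mathcal{B}^1_{1,\infty}$, then a Bony decomposition. You also correctly isolate the one term that does not close, the high--low paraproduct $\sum_j\Delta_jT\,S_{j-1}h$. This gap cannot be filled, because the statement is false as written, for every $\alpha>-1$.

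Here is a counterexample. In a chart, let $T=\psi(x)W(x_1)\,\dd x$, where $W(x_1)=\sum_{j\geq1}2^{-j\alpha}\cos(2^jx_1)\in\mathcal{C}^\alpha$ and $0\neq\psi\in C^\infty_c(\Omega)$, so that $T1_\Omega=T$.
\begin{itemize}
\item The test functions $g_k=2^{k\alpha}\psi(x)\cos(2^kx_1)$ are bounded in $\mathcal{B}^{-\alpha}_{\infty,\infty}$ and frequency-localized, up to Schwartz tails, at $|\xi|\simeq2^k$.
\item Hence for every $f\in\mathcal{B}^\alpha_{1,1}$ one has $|\langle f,g_k\rangle|\lesssim\sum_l2^{l\alpha}\Vert\Delta_lf\Vert_{L^1}\,2^{-N|k-l|}\to0$.
\item Yet $\langle T,g_k\rangle\to\frac12\int\psi^2\neq0$, so $T1_\Omega\notin\mathcal{B}^\alpha_{1,1}$.
\end{itemize}
This $T$ is exactly one of your ``interior'' pieces $\chi_iT$, which you said the same analysis handles. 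A compactly supported $\mathcal{C}^\alpha$ distribution lies in $\mathcal{B}^\alpha_{1,\infty}$, but not in general in $\mathcal{B}^\alpha_{1,1}$. Your splitting $S_{j-1}h=h+(S_{j-1}h-h)$ does not help either: the leftover $\sum_j\Delta_jT\,h$ is just $Th$, the product you set out to estimate.

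The paper's proof has the same hole. It asserts that this paraproduct of $T$ with $1_\Omega$ lies in $\mathcal{B}^\alpha_{1,1}$ ``by the usual rules''. The usual rule is $\Vert\sum_jS_{j-1}u\,\Delta_jv\Vert_{\mathcal{B}^{s}_{1,q}}\lesssim\Vert u\Vert_{L^1}\Vert v\Vert_{\mathcal{B}^{s}_{\infty,q}}$, and it inherits $q=\infty$ from $T$. It therefore gives only $\mathcal{B}^\alpha_{1,\infty}$, which is your naive bound, and that bound is sharp.

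What your three estimates do prove is the corrected statement: for $-1<\alpha<1$, $T1_\Omega\in\mathcal{B}^\alpha_{1,\infty}\subset\mathcal{B}^{\alpha-\varepsilon}_{1,1}$ for all $\varepsilon>0$. For $\alpha\geq1$ one cannot exceed $\mathcal{B}^1_{1,\infty}$, already when $T$ is smooth and nonvanishing on $\partial\Omega$. Alternatively, under the stronger hypothesis $T\in\mathcal{C}^{\alpha+\varepsilon}$ with $\alpha<1$, the conclusion $T1_\Omega\in\mathcal{B}^\alpha_{1,1}$ does follow. That is your proposed use of the gain from Proposition~\ref{prop:continuousinjection}. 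Either amended version appears to be all the later uses of the lemma require, since they always leave an $\varepsilon$ of room. Neither is the lemma as stated, and no argument will give that.
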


Note that there is a serious loss in regularity since the space $\mathcal{B}^{\alpha}_{1,1}$ is much larger than the H\"older space $\mathcal{B}^{\alpha}_{\infty,\infty}$.

\begin{proof}
We use $1_\Omega\in \mathcal{B}^{1-\varepsilon}_{1,1}$.
We use the product decomposition of Bony 
$ T1_\Omega= T\succ 1_{\Omega}+T\prec 1_{\Omega}+ T\circ 1_{\Omega} $ where $T\succ 1_{\Omega}\in \mathcal{B}^\alpha_{1,1}$, $T\prec 1_{\Omega}\in \mathcal{B}^{1-\varepsilon}_{1,1}  $,  $ T\circ 1_{\Omega}\in \mathcal{B}^{\alpha+1-\varepsilon}_{1,1} $ by 
the usual rules on the paraproduct and resonant products acting on Besov spaces as proved in  \cite[Lemma 2.1.34 p.~40]{Martin} from the thesis of Martin which generalizes certain statements from~\cite{BCD11}. This allows to conclude. 
\end{proof}

\subsection{Young products of distributions and duality}

Since we are often using it in our work, we need to recall the Young criterion for multiplying H\"older--Besov distributions with H\"older functions. 

\begin{lemma}[Young product of H\"older functions and distributions]\label{lem:young}
The product of smooth functions  
$$ (f_1,f_2)\in C^\infty(\mathbb{R}^n)\times C^\infty(\mathbb{R}^n) \mapsto f_1f_2\in C^\infty(\mathbb{R}^n) $$ extends uniquely as a bilinear continuous map
from $\mathcal{C}^\alpha(\mathbb{R}^n)\times \mathcal{C}^\beta(\mathbb{R}^n) \mapsto \mathcal{C}^{\inf(\alpha,\beta)}(\mathbb{R}^n) $ for $\alpha+\beta>0$.    

A consequence of the above is that for all compact set $K\subset \mathbb{R}^n$, the set $\mathcal{C}^\alpha_K(\mathbb{R}^n)$ of H\"older distributions of regularity $\alpha<0$ and supported by $K$ injects continuously in the dual of $\mathcal{C}^\beta(\mathbb{R}^n)$ provided $\alpha+\beta>0$.
\end{lemma}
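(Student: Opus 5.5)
The plan is to reduce both statements to Littlewood--Paley characterizations of H\"older--Besov spaces, $\mathcal{C}^\gamma=\mathcal{B}^\gamma_{\infty,\infty}$, and to use Bony's paraproduct decomposition
\[
f_1f_2=f_1\prec f_2+f_1\succ f_2+f_1\circ f_2,
\]
as was already done in the proof of Lemma~\ref{lem:restrictionBesov}. Assume without loss of generality that $\alpha\le\beta$. Write $\Delta_j$ for the dyadic blocks and $S_{j}=\sum_{i<j-1}\Delta_i$ for the low-frequency cut-offs.

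\textbf{Paraproducts.} The standard estimate gives $\|S_{j}f_1\,\Delta_jf_2\|_\infty\lesssim \|f_1\|_{L^\infty\text{ or }\mathcal{C}^{\alpha}}\,2^{-j\beta}\|f_2\|_{\mathcal{C}^\beta}$. Two cases arise.
\begin{itemize}
\item If $\alpha>0$, we have $\|S_jf_1\|_\infty\lesssim\|f_1\|_{\mathcal{C}^\alpha}$, so $f_1\prec f_2\in\mathcal{C}^\beta\subset\mathcal{C}^{\alpha}$.
\item If $\alpha<0$, we have $\|S_jf_1\|_\infty\lesssim 2^{-j\alpha}\|f_1\|_{\mathcal{C}^\alpha}$, so $f_1\prec f_2\in\mathcal{C}^{\alpha+\beta}$. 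Since $\beta>0$ in this case, this space embeds in $\mathcal{C}^{\alpha}$.
\end{itemize}
The same reasoning handles $f_1\succ f_2$ with the roles of the factors swapped, which places it in $\mathcal{C}^{\alpha}$. The spectral supports of these blocks lie in annuli, so the bounds above summarize to the stated Besov norms.

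\textbf{Resonant term.} The block $\Delta_jf_1\,\Delta_{j'}f_2$ with $|j-j'|\le1$ has spectrum in a ball of radius $\sim2^j$ and sup norm $\lesssim 2^{-j(\alpha+\beta)}$. A sum of ball-supported pieces with positive decay exponent converges in $\mathcal{C}^{\alpha+\beta}$, and this is exactly where $\alpha+\beta>0$ is needed. This step is the only genuine obstacle, because without positivity of $\alpha+\beta$ the resonant series diverges. The proof will therefore track constants here carefully.

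\textbf{Extension.} Bilinear continuity on smooth functions, together with density, yields the extension. Since $C^\infty$ is not dense in $\mathcal{C}^\gamma$, density should be used in the weaker sense of $\mathcal{C}^{\gamma-\varepsilon}$, or alternatively one should simply define the product by the paraproduct formula. Uniqueness is understood with respect to this approximation, and the extension agrees with the classical product on smooth functions.

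\textbf{Duality.} For the consequence, take $T\in\mathcal{C}^\alpha_K$ with $\alpha<0$ and $\varphi\in\mathcal{C}^\beta$ with $\alpha+\beta>0$. Fix $\chi\in C^\infty_c$ equal to $1$ near $K$. The product $T\,\chi\varphi$ lies in $\mathcal{C}^{\alpha}$, is compactly supported, and is defined by the first part. Define
\[
\langle T,\varphi\rangle:=\langle T\chi\varphi,1\rangle=\sum_j\langle \Delta_j(T\chi\varphi),\tilde\chi\rangle,
\]
where $\tilde\chi\in C^\infty_c$ equals $1$ near $\operatorname{supp}\chi$. Only finitely many low blocks contribute after using the resonance structure; more directly, pairing a compactly supported $\mathcal{C}^\alpha$ distribution against a constant is continuous. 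This gives
\[
|\langle T,\varphi\rangle|\lesssim_K\|T\|_{\mathcal{C}^\alpha}\|\varphi\|_{\mathcal{C}^\beta}.
\]
The pairing is independent of the choice of $\chi$ because the support of $T$ is $K$. Hence the map $T\mapsto\langle T,\cdot\rangle$ is a continuous injection $\mathcal{C}^\alpha_K\hookrightarrow(\mathcal{C}^\beta)'$. It is injective because smooth test functions already separate distributions.
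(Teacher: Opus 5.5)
Your proof is correct and is essentially the paper's argument: the paper only cites the paraproduct and resonant-term estimates of \cite[Lemma 2.1]{GIP} (see also \cite[Thm 2.52]{BCD11}), which are exactly the three Bony-decomposition bounds you derive, with $\alpha+\beta>0$ needed only for the resonant term. You also fill in details the paper leaves implicit, namely uniqueness via density only in $\mathcal{C}^{\gamma-\varepsilon}$ and the derivation of the duality consequence; the only small omission is the borderline case $\alpha=0$, where $\|S_jf_1\|_\infty\lesssim j\,\|f_1\|_{\mathcal{C}^0}$ still puts $f_1\prec f_2$ in $\mathcal{C}^{\beta-\varepsilon}\subset\mathcal{C}^0$.
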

\begin{proof}
The proof can be found in \cite{GIP} and follows immediately from \cite[Lemma 2.1 p.~11]{GIP}, see also~\cite[Thm 2.52]{BCD11} and \cite[thm 13.16 p.~201]{friz_hairer}.    
\end{proof}

We also recall some useful result on the topological dual $\mathcal{C}^\beta(\mathcal{S})^\prime$ to the H\"older space $\mathcal{C}^\beta(\mathcal{S})$ whenever $\beta\in (0,1)$. 
\begin{lemma}\label{lem:continjectiondualHolder}
On a smooth surface $\mathcal{S}$,
for every $\beta\in (0,1)$, the topological dual $\mathcal{C}^{\beta}(\mathcal{S})^\prime$ injects continuously in $\mathcal{C}^{-\beta-2-\varepsilon}(\mathcal{S})$ and $H^{-\beta-1-\varepsilon}(\mathcal{S})$ for all $\varepsilon>0$. 
\end{lemma}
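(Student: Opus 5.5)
We prove Lemma~\ref{lem:continjectiondualHolder} by duality, viewing the dual of a H\"older space as sitting inside the dual of a Sobolev or Besov space, and then using the standard embeddings between Besov spaces on a compact manifold.

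\textbf{Step 1: localization.} Using a finite partition of unity $(\chi_i)$ subordinate to coordinate charts, together with the invariance of H\"older--Besov spaces under diffeomorphisms, we reduce to compactly supported objects on $\mathbb{R}^2$. Concretely, for $T\in\mathcal{C}^\beta(\mathcal{S})'$ we write
$$T=\sum_i \chi_i T,$$
and each $\chi_i T$ is a continuous functional on $\mathcal{C}^\beta$ supported in a fixed compact set $K\subset\mathbb{R}^2$. Multiplication by $\chi_i$ is bounded on $\mathcal{C}^\beta$, so the map $T\mapsto\chi_i T$ is continuous on the dual.

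\textbf{Step 2: the H\"older target $\mathcal{C}^{-\beta-2-\varepsilon}$.} We use the Littlewood--Paley characterization
$$\|u\|_{\mathcal{C}^{\gamma}}\simeq\sup_j 2^{j\gamma}\|\Delta_j u\|_{L^\infty}.$$
For $x\in\mathbb{R}^2$ we have $\Delta_j u(x)=\langle u,\phi_{j,x}\rangle$, where $\phi_{j,x}(y)=2^{2j}\check\phi(2^j(y-x))$ is localized near $K$ (using a cutoff equal to $1$ near $K$). A direct computation gives
$$\|\phi_{j,x}\|_{\mathcal{C}^\beta}\lesssim 2^{j(2+\beta)}.$$
Hence
$$|\Delta_j T(x)|\le \|T\|_{(\mathcal{C}^\beta)'}\,2^{j(2+\beta)},$$
which is exactly the statement $T\in\mathcal{C}^{-\beta-2}$. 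The $\varepsilon$ loss absorbs the cutoff and tail issues: the Schwartz tails of $\check\phi$ are handled by the cutoff, at a cost of at most an $\varepsilon$ in the exponent. This gives the embedding into $\mathcal{C}^{-\beta-2-\varepsilon}$, uniformly and continuously.

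\textbf{Step 3: the Sobolev target $H^{-\beta-1-\varepsilon}$.} Here we argue by duality. On the compact surface we have $H^{\beta+1+\varepsilon}\hookrightarrow \mathcal{C}^{\beta+\varepsilon/2}\hookrightarrow\mathcal{C}^\beta$; the first inclusion is the Sobolev embedding in dimension $2$, which loses exactly one derivative. Moreover, $C^\infty$ is dense in $H^{\beta+1+\varepsilon}$. Therefore restricting $T\in(\mathcal{C}^\beta)'$ to $H^{\beta+1+\varepsilon}$ defines an element of $(H^{\beta+1+\varepsilon})'=H^{-\beta-1-\varepsilon}$. This map is injective on the image of distributions, since $C^\infty$ is dense in both test spaces and $T$ is determined by its values on $C^\infty$. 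It is also continuous, with norm controlled by the embedding constant.

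\textbf{Main obstacle.} The main subtlety is interpretive rather than computational. The space $(\mathcal{C}^\beta)'$ is not a space of distributions in the strict sense, because $C^\infty$ is not dense in $\mathcal{C}^\beta$. The injection must therefore be understood as restriction to $C^\infty$, equivalently to the little H\"older space. We take care to state that the composite map $T\mapsto T|_{C^\infty}$ is the one meant, as is done implicitly in Lemmas~\ref{lem:extcylinder} and~\ref{lem:extsurface}. With that convention, Steps 2 and 3 give the continuous injections claimed in the lemma.
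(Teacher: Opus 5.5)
Your proof is correct. For the Sobolev target it is the paper's argument: transpose the embedding $H^{\beta+1+\varepsilon}\hookrightarrow\mathcal{C}^\beta$. For the H\"older target you take a different route. The paper again argues by transposition, combining $\mathcal{B}^{\beta+2+\varepsilon}_{1,1}\hookrightarrow\mathcal{C}^\beta$ with the duality $(\mathcal{B}^{s}_{1,1})'=\mathcal{B}^{-s}_{\infty,\infty}$. You instead test $T$ directly against the Littlewood--Paley kernels and bound $\|\phi_{j,x}\|_{\mathcal{C}^\beta}\lesssim 2^{j(2+\beta)}$.

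The paper's version is a one-line appeal to Besov embedding and duality theorems. Yours is elementary and makes the exponent transparent: $2^{2j}$ comes from the $L^\infty$ normalization of the kernel in dimension two, and $2^{j\beta}$ from its H\"older seminorm. It is also sharper. The cutoff $\tilde\chi$ multiplies the $\mathcal{C}^\beta$ norm only by a constant independent of $j$ and $x$, so your bound already gives $T\in\mathcal{C}^{-\beta-2}$ with no loss. Your remark that the $\varepsilon$ is needed to absorb Schwartz tails is therefore a misdiagnosis. The $\varepsilon$ in the statement is pure slack; it is not needed in the paper's route either, because $\mathcal{B}^{\beta+2}_{1,1}\hookrightarrow\mathcal{B}^{\beta}_{\infty,1}\hookrightarrow\mathcal{C}^\beta$ in dimension two.

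There is one slip. In Step~3 you justify injectivity by saying that $C^\infty$ is dense in both test spaces. That is false for $\mathcal{C}^\beta$, and it contradicts your own final paragraph.

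Your final paragraph is the correct reading. By Hahn--Banach there are nonzero functionals on $\mathcal{C}^\beta$ that vanish on the closure of $C^\infty$ (the little H\"older space). So restriction to $C^\infty$ is continuous but not injective on all of $\mathcal{C}^\beta(\mathcal{S})'$. It is injective on functionals that are continuous, on bounded subsets of $\mathcal{C}^\beta$, for some weaker $\mathcal{C}^{\beta'}$ topology: approximate $\phi$ by mollifications that stay bounded in $\mathcal{C}^\beta$. This class includes the norm limits of Young pairings produced in Lemmas~\ref{lem:extcylinder} and~\ref{lem:extsurface}.

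The paper's proof carries the same unstated caveat. The transpose of the non-dense inclusion $\mathcal{B}^{\beta+2+\varepsilon}_{1,1}\hookrightarrow\mathcal{C}^\beta$ is not injective either, so on this point your proposal is more careful than the original.
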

\begin{proof}
The proof follows from the continuous injections $H^{\beta+1+\varepsilon}\hookrightarrow \mathcal{C}^{\beta}$ and $\mathcal{B}^{\beta+2+\varepsilon}\hookrightarrow \mathcal{C}^\beta$ and by duality in Besov spaces.      
\end{proof}

\subsection{Notion of wave front set of a distribution or current}
\label{ss:WF}

We recall the notion of wave front set which is used in the present paper to construct the global angular variable. This notion measures singularities of distributions in $\mathcal{D}^\prime(M)$ in \textbf{phase space}, which is the cotangent space $T^*M$ of $M$ and therefore refines the notion of singular support of a distribution in $\mathcal{D}^\prime(M)$ which is a closed subset of the base space $M$.
We refer to~\cite{BDHsmooth} for a pedagogical introduction to the notion of wave front set.

\begin{definition}[The wave front set of a distribution]
Let $U\subset \mathbb{R}^n$ be some open subset of $\mathbb{R}^n$.
Given a distribution $T\in \mathcal{D}^\prime(U)$, the wave front set $WF(T)\subset T^*U\setminus \underline{0} $ is the closed conical subset with the following property:
an element $(x_0;\xi_0)\notin WF(T)$ if there exists a neighborhood $U_0$ of $x_0$, a closed conic neighborhood  $V_0\subset \mathbb{R}^{n*}$ of $\xi_0$ such that for all $\chi\in C^\infty_c(U_0)$, for all $N$~:
\begin{eqnarray*}
\vert \widehat{T\chi}(\xi) \vert \leqslant C_{\chi,N,V_0} \left(1+\vert \xi\vert \right)^{-N} 
 \end{eqnarray*}
 uniformly in $\xi\in V_0$.
\end{definition}

By an important Theorem of H\"ormander~\cite[Thm 8.2.4 p.~263]{HormanderI}, the wave front set behaves functorially by pull--back by diffeomorphisms. More precisely,
for $f:U_1\mapsto U_2$ a diffeomorphism from open subsets of $\mathbb{R}^n$, given a distribution $T\in \mathcal{D}^\prime(U_2)$, the wave front set of the pulled back distribution $f^*T$ is given by
\begin{equation}
WF\left(f^*T \right)\subset \{(x;\xi); (f(x);\eta)\in WF(T), \xi=\eta\circ df(x)   \}.    
\end{equation}

Therefore, the notion of wave front set makes sense on smooth manifolds~\cite[p.~265]{HormanderI} and to bound the wave front set of some distribution $T\in \mathcal{D}^\prime(M)$, $M$ being some smooth manifold, it suffices to bound it on local charts. Now for a closed conical set $\Gamma\subset T^*M\setminus \underline{0}$, we will denote by $\mathcal{D}^\prime_\Gamma(M)$ the space of distributions whose wave front set is contained in the cone $\Gamma$. 

The above notion extends immediately to currents, given a current $T\in \mathcal{D}^{\prime,k}(M)$ of degree $k$, $U\subset M$ an open chart, the wave front set $WF(T)\cap T^*U$ over the chart $U$ is defined
as the union $\cup_\alpha WF(T_\alpha)$ of the wave front sets of its coefficients $(T_\alpha)_\alpha$, $\alpha$ are multi--indices~:
$$ T= \sum_{\vert\alpha\vert=k} T_\alpha dx^\alpha$$
when we express $T$ in local coordinates $(x^i)_{i=1}^n$ defined near $U$. The above definition does not depend on the choice of local coordinates used to write $T$.

The vector space $\mathcal{D}^\prime_\Gamma(M) $ can be endowed with a structure of locally convex topological vector space as explained in detail in~\cite[p.~204]{BDH} see also ~\cite[Def 7.2 and 7.3 p.~1842]{DRWitten} for quick recollection.
In the case of $M=\mathbb{R}^n$ and $\Gamma\subset T^*\mathbb{R}^n\setminus \underline{0}$, it is enough to consider the topology defined by the seminorms
\begin{enumerate}
    \item $\Vert T\Vert_{N,V,\chi}:=\sup_{\xi\in V} \vert \left(1+\vert \xi\vert\right)^N\widehat{T\chi}(\xi) \vert$ for all $\chi\in C^\infty_c(\mathbb{R}^n)$ and closed cone $V\subset \mathbb{R}^{n*}$ s.t. $\left(\text{supp}(\chi)\times V\right)\cap \Gamma=\emptyset$. These are the \textbf{continuous seminorms} that probe the microlocal regularity of $T$ outside the closed conic set $\Gamma$. 
    \item $ \sup_{\varphi\in B} \vert \left\langle T,\varphi \right\rangle \vert $
    where $B$ is a bounded set in $C^\infty(M)$. These are the continuous seminorms of the strong topology on $\mathcal{D}^\prime(\mathbb{R}^n)$. 
\end{enumerate}
The manifold case follows immediately from the case of $\mathbb{R}^n$ again by the continuity 
properties of the pull--back by a diffeomorphism.

Moreover, we recall some useful result on the wedge product of distributions with transverse wave front set~:
\begin{lemma}\label{lem:prodWF}
Let $M$ be a smooth manifold and $\Gamma_1,\Gamma_2$ be two closed conic sets in $T^*M\setminus \underline{0}$,   assume that the convex sum $\Gamma_1+\Gamma_2:=\{(x;\xi_1+\xi_2); (x;\xi_1)\in \Gamma_1,(x;\xi_2)\in \Gamma_2\}$ does not meet the zero section $\underline{0}$. 
Then the wedge product
$$ T_1,T_2\in \mathcal{D}^\prime_{\Gamma_1
 } \times \mathcal{D}^\prime_{\Gamma_2} \mapsto T_1\wedge T_2 $$
 is bilinear hypocontinuous. In particular, it is sequentially continuous.
\end{lemma}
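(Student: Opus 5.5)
The plan is to reduce to local Fourier estimates in a chart. By the functoriality of wave front sets under diffeomorphisms recalled above, and by a partition of unity, it suffices to treat $M=\mathbb{R}^n$. We also reduce to distributions rather than currents: in coordinates the wedge product $T_1\wedge T_2$ is a finite sum of products of coefficients $T_{1,\alpha}T_{2,\beta}$ times constant forms. The wave front sets of the coefficients are contained in $\Gamma_1$ and $\Gamma_2$ respectively, so the statement reduces to the classical H\"ormander product theorem for $u\in\mathcal{D}'_{\Gamma_1}$, $v\in\mathcal{D}'_{\Gamma_2}$ with $\Gamma_1+\Gamma_2$ avoiding the zero section, cf.~\cite[Thm 8.2.10]{HormanderI}.

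The product is defined locally. Fix $x_0$ and $\chi\in C^\infty_c$ near $x_0$, and set
\[
\widehat{\chi^2 uv}(\xi)=(2\pi)^{-n}\int \widehat{\chi u}(\eta)\,\widehat{\chi v}(\xi-\eta)\,\dd\eta .
\]
Using compactness of the sphere and closedness of the cones, we choose closed cones $V_1\supset\Gamma_{1,x}$ and $V_2\supset\Gamma_{2,x}$, uniformly for $x\in\mathrm{supp}\chi$ with $\chi$ of small support, such that $V_1\cap(-V_2)=\{0\}$. We then split the $\eta$-integral into the regions $\eta\in V_1$ and $\eta\notin V_1$, and similarly for $\xi-\eta$ relative to $V_2$. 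Three of the four resulting pieces converge absolutely. Their contributions are bounded by products of the seminorms $\Vert\cdot\Vert_{N,V,\chi}$ (rapid decay outside the cones) and the polynomial bounds $|\widehat{\chi u}(\eta)|\le C(1+|\eta|)^{m}$. The polynomial bounds come from the strong-topology seminorms, since $\chi e^{-i\langle\cdot,\eta\rangle}$ ranges over a set bounded by powers of $|\eta|$.

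The fourth region, $\eta\in V_1$ and $\xi-\eta\in V_2$, is the main obstacle. Transversality gives $|\eta|+|\xi-\eta|\le C|\xi|$ there, because $V_1\cap(-V_2)=\{0\}$ and homogeneity apply. The domain of integration is therefore bounded in terms of $\xi$, and the integral converges with polynomial growth in $\xi$. Consequently the integral defines a tempered distribution, which is independent of the cutoffs and agrees with the smooth product on smooth inputs. The last two facts follow from approximation by $u*\rho_\varepsilon$, since mollification converges in $\mathcal{D}'_\Gamma$, together with the estimates above.

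For hypocontinuity, every estimate is of the form
\[
\text{seminorm}(uv)\le C\,p(u)\,q(v),
\]
with $p,q$ finitely many continuous seminorms of $\mathcal{D}'_{\Gamma_1}$ and $\mathcal{D}'_{\Gamma_2}$. In the one unconditional step, the polynomial-order bound, the constant depends on $u$ only through a bounded set in the strong dual. Hence on bounded sets of one factor the map is equicontinuous in the other, which is hypocontinuity. Sequential continuity follows because convergent sequences are bounded.
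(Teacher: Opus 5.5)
The paper gives no proof of this lemma; it is quoted from the literature (existence and the wave-front bound from \cite[Thm 8.2.10]{HormanderI}, hypocontinuity from Brouder--Dang--H\'elein \cite{BDH}). Your localisation, the transverse cones with $V_1\cap(-V_2)=\{0\}$, and the four-piece splitting of $\widehat{\chi u}*\widehat{\chi v}$ are the standard H\"ormander argument. They correctly give existence of the product and polynomial bounds.

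\textbf{The gap.} It lies in your final paragraph, which is exactly the part of the statement that goes beyond H\"ormander.

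Your bound $|\widehat{\chi v}(\zeta)|\le C_v(1+|\zeta|)^{m_v}$ does not come from a strong-dual seminorm. The family $\{\chi e^{-i\langle\cdot,\zeta\rangle}(1+|\zeta|)^{-m}\}_\zeta$ is bounded in $C^m$ but not in $C^\infty_c$. So $C_v$ is uniform on bounded (hence equicontinuous) sets, but no continuous seminorm of $\mathcal{D}'_{\Gamma_2}$ dominates it once $\Gamma_2$ is non-empty over $\mathrm{supp}\,\chi$. To see this, take $v=\lambda^K\phi\,e^{i\lambda\langle x,\zeta_0\rangle}$ with $(x_0;\zeta_0)\in\Gamma_2$, $\chi(x_0)\neq0$, and $\phi\geq0$ supported close enough to $x_0$. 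Any given continuous seminorm of $v$ tends to $0$ as $\lambda\to\infty$, while $C_v\to\infty$ for fixed $m<K$.

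Consequently, for $u$ in a bounded set $B_1$, the two pieces with $\xi-\eta\in V_2$ are only bounded by $C_{B_1}C_v$. This shows that the product maps bounded$\times$bounded sets to bounded sets. It does not show that $\{v\mapsto uv\}_{u\in B_1}$ is equicontinuous, and the same problem occurs in the other variable.

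Your summary that every estimate has the form $C\,p(u)\,q(v)$ with continuous seminorms cannot hold in any case, since it would give joint continuity, which is false. Take $M$ compact, $\Gamma_1=T^*M\setminus\underline{0}$, $\Gamma_2=\emptyset$. Composing the product with $w\mapsto\langle w,1\rangle$ would make the duality pairing $\mathcal{D}'(M)\times C^\infty(M)\to\mathbb{C}$ jointly continuous, forcing $C^\infty(M)$ to be normable. In this example everything lies in the piece $\eta\in V_1$, $\xi-\eta\notin V_2$. Your bound $C_u\Vert v\Vert_{N,\mathbb{R}^n,\chi}$ gives no equicontinuity in $u$, whereas the correct estimate comes from $\langle uv,\varphi\rangle=\langle u,v\varphi\rangle$ with $\{v\varphi\}$ bounded in $C^\infty_c$.

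\textbf{The missing idea.} Keep the merely polynomially bounded factor inside a duality pairing instead of bounding its Fourier transform pointwise. For a target seminorm $\sup_{\varphi\in B}|\langle\chi^2uv,\varphi\rangle|$, integrate first in $\eta$. Up to constants, the transverse piece equals $\int\widehat{\chi v}(\zeta)\Psi_{u,\varphi}(\zeta)\,\dd\zeta=\langle v,\chi\,\widehat{\Psi_{u,\varphi}}\rangle$, where $\Psi_{u,\varphi}(\zeta)=1_{V_2}(\zeta)\int_{V_1}\widehat{\chi u}(\eta)\,\widehat{\varphi}(-\eta-\zeta)\,\dd\eta$. Because $|\eta+\zeta|\geq c(|\eta|+|\zeta|)$ on $V_1\times V_2$, this $\Psi_{u,\varphi}$ is rapidly decreasing uniformly in $u\in B_1$ and $\varphi\in B$. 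Hence $\{\chi\widehat{\Psi_{u,\varphi}}\}$ is bounded in $C^\infty_c$, and you obtain a genuine strong seminorm of $v$.

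The piece $\eta\notin V_1$, $\xi-\eta\in V_2$ is treated the same way. So are the microlocal seminorms, if you want the target $\mathcal{D}'_{\Gamma_1\cup\Gamma_2\cup(\Gamma_1+\Gamma_2)}$, which your proposal does not address. Alternatively, argue as in \cite{BDH}, via hypocontinuity of $u\otimes v$ and continuity of the pull-back by the diagonal.

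The paper only uses sequential continuity, and this does follow from your estimates with one extra observation. A bounded set has a uniform order $m$, and on it $w\mapsto\sup_\eta(1+|\eta|)^{-m-1}|\widehat{\chi w}(\eta)|$ is continuous: strong convergence is uniform on $|\eta|\leq R$, and the tail is $O(R^{-1})$. Writing $u_kv_k-uv=(u_k-u)v_k+u(v_k-v)$ then closes the argument.
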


\subsection{H\"older regularity of currents of integration}

We will also use the following
\begin{lemma}[H\"older and Sobolev regularity of unstable currents]\label{lem:regunstablecurrents}
Let $M$ be a smooth compact manifold and $Y\subset M$ a smooth hypersurface. Assume both $M$ and $Y$ are oriented, then the current of integration $[Y]$ on the submanifold $Y$ belongs to the H\"older--Besov space $\mathcal{C}^{-1-\varepsilon}(M)$ and the Sobolev space $H^{-\frac{1}{2}-\varepsilon}(M)$ , $\forall \varepsilon>0$. 
In particular, the unstable currents $U_a, a\in \text{Crit}(f)_1$ have H\"older regularity
$-1-\varepsilon$ for all $\varepsilon>0$.
\end{lemma}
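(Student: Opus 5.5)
The plan is to prove the statement locally and then globalize. By compactness of $M$ and a finite partition of unity $(\chi_i)$ subordinate to charts adapted to $Y$, we write $[Y]=\sum_i \chi_i[Y]$. In each chart meeting $Y$ we choose coordinates $(x_1,x')$ in which $Y=\{x_1=0\}$ with its orientation. In these coordinates the current reads $\chi_i[Y]=\chi_i(0,x')\,\delta_0(x_1)\,\dd x_1$, up to a sign fixed by the orientations; this is exactly the local form recorded for $U_a$ near saddle points in Section~\ref{s:morse}. Since Hölder--Besov and Sobolev spaces are invariant under diffeomorphisms and stable under multiplication by smooth compactly supported functions, it suffices to place the compactly supported distribution $\phi(x')\delta_0(x_1)$, with $\phi\in C^\infty_c$, in $\mathcal{C}^{-1-\varepsilon}(\mathbb{R}^n)$ and in $H^{-\frac12-\varepsilon}(\mathbb{R}^n)$.

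For the Hölder--Besov bound we use the Littlewood--Paley characterization $\Vert u\Vert_{\mathcal{B}^{s}_{\infty,\infty}}=\sup_j 2^{js}\Vert\Delta_j u\Vert_{L^\infty}$. The key step is the tensor structure: $\phi\otimes\delta_0$ is a smooth function in $x'$ tensored with a one-dimensional delta. We have $\delta_0\in\mathcal{B}^{-1}_{\infty,\infty}(\mathbb{R})$ by scaling, since $\Vert\Delta_j\delta_0\Vert_\infty\simeq 2^{j}$. The factor $\phi$ is Schwartz, so its high-frequency blocks decay rapidly. Decomposing the $n$-dimensional annulus into pieces where either $\vert\xi_1\vert\gtrsim 2^j$ or $\vert\xi'\vert\gtrsim 2^j$ gives $\Vert\Delta_j(\phi\otimes\delta_0)\Vert_\infty\lesssim 2^{j}$. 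Alternatively, one can test directly against the rescaled kernel: $\vert\langle \phi\otimes\delta_0,2^{jn}\psi(2^j(x-\cdot))\rangle\vert\lesssim 2^{jn}2^{-j(n-1)}=2^{j}$. Either way we get membership in $\mathcal{C}^{-1}\hookrightarrow\mathcal{C}^{-1-\varepsilon}$. The $\varepsilon$ loss is harmless and also covers the passage between $\mathcal{B}^{-1}_{\infty,\infty}$ and the space actually used.

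For the Sobolev bound we compute the Fourier transform: $\widehat{\phi\otimes\delta_0}(\xi_1,\xi')=\widehat\phi(\xi')$, which is constant in $\xi_1$. We then integrate $\vert\widehat\phi(\xi')\vert^2(1+\vert\xi_1\vert^2+\vert\xi'\vert^2)^{s}$ first in $\xi_1$. The inner integral converges exactly when $2s<-1$ and gives a factor $(1+\vert\xi'\vert^2)^{s+\frac12}$. The remaining integral against the Schwartz function $\widehat\phi$ is finite. Hence the distribution lies in $H^{s}$ for every $s<-\frac12$.

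Finally we globalize. Summing the finitely many localized pieces, and pulling back by the chart diffeomorphisms with continuity of pullback on these spaces, gives $[Y]\in\mathcal{C}^{-1-\varepsilon}(M)\cap H^{-\frac12-\varepsilon}(M)$. For the unstable currents $U_a$ with $a\in\mathrm{Crit}(f)_1$, the curve $W^u(a)$ is a smooth embedded curve. On the closed surface it is a closed curve through $\max(f)$, and the statement is needed away from $\max(f)$ (respectively on the blown-up surface $\mathcal{S}$, where it meets $\partial\mathcal{S}_{\mathrm{out}}$ transversally). So the same local argument applies on $\Sigma\setminus\max(f)$ and near a transverse boundary crossing.

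The main obstacle is this endpoint at $\max(f)$. There $W^u(a)$ is not a hypersurface with smooth closure in a neighbourhood, because several unstable curves meet at the maximum. I would handle it either by working on the blow-up $\mathcal{S}$, where the curves become disjoint arcs hitting the boundary transversally and the half-space version of the argument applies, or by noting that in the Morse chart at $\max(f)$ each $W^u(a)$ is a straight ray. The current of integration on a ray still satisfies the same scaling bound $\lesssim 2^{j}$, so the estimate persists.
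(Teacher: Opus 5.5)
Your proof is correct. For the H\"older part you follow the same route as the paper: straighten $Y$ to $\{x_1=0\}$, use that $\delta_0$ scales like $\lambda^{-1}$, then globalize by diffeomorphism invariance and a partition of unity. Your explicit test against $2^{jn}\psi(2^j(x-\cdot))$ makes precise two points the paper leaves implicit: the bound is uniform in the centre, and it survives the tensor factor $\phi(x')$. It also yields $\mathcal{B}^{-1}_{\infty,\infty}$ with no $\varepsilon$ loss.

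For the Sobolev part the routes differ. The paper obtains $[Y]\in H^{-\frac12-\varepsilon}(M)$ by duality from the trace theorem on $H^{\frac12+\varepsilon}(M)$, which is intrinsic and needs no coordinates. You instead compute $\widehat{\phi\otimes\delta_0}(\xi)=\widehat\phi(\xi')$ and integrate out $\xi_1$. This is self-contained and is in effect the proof of the flat trace theorem, so nothing is lost either way.

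Your last paragraph goes beyond the paper. Its proof treats $W^u(a)$ as a smooth hypersurface and passes to $U_a$ with an unexplained \emph{in particular}. However, $\overline{W^u(a)}$ is in general only piecewise smooth at $\max(f)$, where all $4g$ unstable branches arrive. You observe that in the adapted Morse chart at the maximum these branches are straight rays whose integration currents still satisfy the $2^{j}$ bound; alternatively, on $\mathcal{S}$ they become arcs meeting $\partial\mathcal{S}_{\mathrm{out}}$ transversally. Either observation is exactly what makes the statement hold on all of $\Sigma$. Without it one only gets the result on $\Sigma\setminus\max(f)$, which is, however, all the paper uses later.
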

\begin{proof}
The claim on the Sobolev regularity is quick to prove. By Sobolev trace theorem, one can restrict any $f\in H^{\frac{1}{2}+\varepsilon}(M)$ to the hypersurface $Y$, therefore  
by duality of Sobolev spaces the current of integration belongs to $H^{-\frac{1}{2}-\varepsilon}(M)$ for all $\varepsilon>0$. 

For the H\"older regularity, in a system $(x_1,\dots,x_n)$ of local coordinates on the open subset $U\subset M$ where $Y\cap U$ is locally given by $Y\cap U=\{x_1=0\}$, then $[Y]|_U=\pm\delta_{\{0\}}^{\mathbb{R}}(x_1)dx_1$. Now we use the fact that 
$\delta_{\{0\}}^{\mathbb{R}}$ belongs to $\mathcal{C}^{-1-\varepsilon}, \forall \varepsilon>0$ since $\delta_{\{0\}}^{\mathbb{R}}$ scales like 
$\delta_{\{0\}}^{\mathbb{R}}(\lambda.)=\lambda^{-1}\delta_{\{0\}}^{\mathbb{R}}(.)$, $\forall\lambda\in \mathbb{R}_{>0}$
and by the usual characterization of H\"older--Besov spaces by scaling that we recall in equation~(\ref{eq:Holderscaling}). By diffeomorphism invariance, we deduce that $[Y]|_U$ belongs to $\mathcal{C}^{-1-\varepsilon}_{loc}(U)$. Then by gluing with partition of unity and using that $\mathcal{C}^{-1-\varepsilon}(M)$ is stable by multiplication with $C^\infty$, we deduce the global result $[Y]\in \mathcal{C}^{-1-\varepsilon}(M)$. 
\end{proof}

\section{Recollection on the spectral analysis of Morse--Smale flows}

Let us collect the needed tools from~\cite{DR16}.
Our spectral analysis of Morse--Smale flows is very much inspired by the microlocal approach developped by Faure--Sj\"ostrand~\cite{FaureSjostrand} and Dyatlov--Zworski~\cite{DZ16}
to study the correlation spectrum of Anosov flows, which contain geodesic flows on negatively
curved manifolds. We briefly describe the general strategy. We will denote by $(x;\xi)$ elements of the cotangent $T^*\Sigma$ where $x$ (resp) denotes position (resp momentum). For a given function $m(x;\xi)$
in $S^0(T^*\Sigma)$, sometimes also called symbol of order $0$, 
we define the following Sobolev space of distributions of variable order 
\begin{align*}
\mathcal{H}^m\coloneq  \mathbf{Op}\left( (1+\vert \xi\vert^2_{g(x)})^{m(x;\xi)}  \right)L^2(\Sigma)   
\end{align*}
where $\mathbf{Op}$ means a quantization of symbols on the cotangent of space $T^*\Sigma$ of the surface $\Sigma$, it is chosen in such a way that $\mathbf{Op}\left( (1+\vert \xi\vert^2_{g(x)})^{m(x;\xi)}  \right)$ is a formally self--adjoint operator with principal symbol $(1+\vert \xi\vert^2_{g(x)})^{m(x;\xi)} $.   

Denote by $H_{V}\coloneq \left\langle\xi,V(x) \right\rangle \in C^\infty(T^*\Sigma) $ the Hamiltonian derived from the vector field $V=\nabla f$ and by $\left(\Phi^t_{H_V}\right)_{t\in \mathbb{R}}:T^*\Sigma \mapsto T^*\Sigma$ the corresponding Hamiltonian flow.
The order function $m$ is constructed in such a way that away from the zero section, it has good decay property along the forward Hamiltonian flow, there exists $R>0$ s.t. for all $\vert \xi \vert_{g(x)} \geqslant R $~:
$$  X_{H_V} \log(1+\vert \xi\vert^2_{g(x)})m(x;\xi)\leqslant -c<0 .$$

For our purpose, we shall need to extend the above definition on distributions to currents. This extension relies on tools from Hodge theory.
We consider the bundle $\Lambda^kT^*\Sigma \mapsto \Sigma$ of $k$ forms. Given some order function $m$ as above, we define $(1+\vert \xi\vert^2_{g(x)})^{m(x;\xi)} \mathbf{Id}\in C^\infty(T^*\Sigma, \mathrm{End}(\Lambda^k T^*\Sigma)  )$ which is the product of some symbol on $T^*\Sigma$ with the identity section $\mathbf{Id}\in C^\infty \left( \mathrm{End}(\Lambda^k T^*\Sigma) \right) $ acting on the bundle of $k$--forms. Using the Hodge star $\star$, we can define a scalar product on  $C^\infty(\Lambda^kT^*\Sigma)$ as
$\left\langle\alpha,\beta \right\rangle\coloneq \int_\Sigma \alpha\wedge \star\beta $,
the completion for this scalar product defined the space $L^2(\Lambda^kT^*\Sigma )$, or one could use the induced metric on the bundle $\Lambda^kT^*\Sigma$ of $k$ forms itself. Then the Sobolev space of currents of order $m$ and degree $k$ is defined as
\begin{equation}
\mathcal{H}^{m;k}(\Sigma)\coloneq  \mathbf{Op}\left(  (1+\vert \xi\vert^2_{g(x)})^{-m(x;\xi)} \mathbf{Id} \right)    L^2  \left(\Lambda^kT^*\Sigma \right)
\end{equation}
where $ \mathbf{Op}\left(  (1+\vert \xi\vert^2_{g(x)})^{-m(x;\xi)} \mathbf{Id} \right) $ is formally self--adjoint with principal symbol $ (1+\vert \xi\vert^2_{g(x)})^{-m(x;\xi)} \mathbf{Id} $, we refer to ~\cite[Appendix C.1]{DZ16} for a recollection on pseudodifferential operators acting on bundles.

One can state the main results of \cite{DR16} as follows:
\begin{thm}
Assume $\Sigma$ is a smooth closed compact surface, $f$ a Morse function on $\Sigma$ with adapted metric $g$ such that $V=\nabla f$ satisfies the Smale transversality condition and $1$ is the only Lyapunov exponent of $V=\nabla f$. 
For any $L>0$, there exists an order function $m\in C^\infty(T^*\Sigma)$ in the previous sense such that for all $k\in \{0,1,2\}$,
the resolvent $$\left(\mathcal{L}_V+z\right)^{-1}:\mathcal{H}^{m,k}\left( \Sigma\right)\mapsto \mathcal{H}^{m,k}\left(\Sigma \right)$$
has a meromorphic extension to the half--space $\Re(z)>-L$. Moreover for $\Re(z)>0$, the resolvent $\left(\mathcal{L}_V+z\right)^{-1}$ acting on $C^\infty(\Lambda^kT^*\Sigma)$ is given by the formula
\begin{equation}
  \left(\mathcal{L}_V+z\right)^{-1}\psi:=\int_0^\infty e^{-tz} \left(\varphi_f^{-t*} \psi\right) dt 
\end{equation}
for all $\psi\in C^\infty(\Lambda^kT^*\Sigma)$.

For every pair $\psi_1\in \mathcal{H}^{-m,k}, \psi_2\in \mathcal{H}^{m,2-k}$, we have the following asymptotic expansion~:
\begin{equation}
\int_\Sigma \psi_1\wedge \varphi_f^{-t*}\psi_2=\sum_{a\in \text{Crit}(f)} \left(\int_{\Sigma} \psi_1 \wedge U_a \right)   \left(\int_{\Sigma} S_a \wedge \psi_2 \right)+O\left( e^{-t} \Vert \psi_1 \Vert_{\mathcal{H}^{-m,k}} \Vert \psi_2 \Vert_{\mathcal{H}^{m,2-k}} \right)     
\end{equation}
where $(U_a)_{a\in \text{Crit}(f)}$ (resp $(S_a)_{a\in \text{Crit}(f)}$) are anisotropic Sobolev currents supported on the unstable manifolds $\overline{W^u(a)},a\in \text{Crit}(f)$ (resp stable manifolds $\overline{W^s(a)},a\in \text{Crit}(f)$).
\end{thm}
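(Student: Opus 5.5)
The statement to prove is the last theorem recalled in the appendix: meromorphic continuation of the resolvent of $\mathcal{L}_V$ on anisotropic spaces $\mathcal{H}^{m,k}$, together with exponential mixing of the pairing $\int_\Sigma \psi_1\wedge\varphi_f^{-t*}\psi_2$. I would follow the microlocal scheme of Faure--Sj\"ostrand and Dyatlov--Zworski, adapted to Morse--Smale gradient flows as in~\cite{DR16}.

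\emph{Step 1: the escape function.} First I construct the order function $m\in S^0(T^*\Sigma)$. Near the conormal bundles $N^*W^s(a)$, the lifted Hamiltonian flow $\Phi^t_{H_V}$ is repelling in the fibers. Near $N^*W^u(a)$ it is attracting. Using Lyapunov functions for the cotangent lift, which exist by the Smale transversality assumption and linearity of $V$ in the Morse charts, I pick $m$ with three properties: $m\geq N$ in a conic neighborhood of $\cup_a N^*W^s(a)$, $m\leq -N$ near $\cup_a N^*W^u(a)$, and
\[
X_{H_V}\bigl(m\log(1+|\xi|^2_{g(x)})\bigr)\leq -c<0
\]
for $|\xi|\geq R$. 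Choosing $N$ large in terms of $L$ makes the propagation threshold below $-L$.

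\emph{Step 2: Fredholm property and the resolvent formula.} Conjugating by $\mathbf{Op}((1+|\xi|^2)^{m}\mathbf{Id})$ turns $\mathcal{L}_V+z$ on $\mathcal{H}^{m,k}$ into $\mathcal{L}_V+z+\mathbf{Op}(X_{H_V}(m\log\langle\xi\rangle))+$(lower order). Since $\Lambda^kT^*\Sigma$ carries a diagonal principal symbol, the bundle is harmless. A G\aa rding-type estimate plus a compact remainder $\chi(\mathbf{Op})$ localized near the zero section shows that $\mathcal{L}_V+z$ is Fredholm of index zero on $\{\mathrm{Re}\,z>-L\}$, and invertible for $\mathrm{Re}\,z$ large. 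Analytic Fredholm theory then gives the meromorphic extension. For $\mathrm{Re}\,z>0$, the Laplace-transform formula $\int_0^\infty e^{-tz}\varphi_f^{-t*}\psi\,dt$ holds on smooth forms by direct computation, and it agrees with the inverse there by uniqueness.

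\emph{Step 3: spectrum and asymptotics.} This is the main obstacle. I need to show two things: on $\mathrm{Re}\,z>-1$ the only pole is $z=0$, and its residue has rank one per critical point, with projector $\sum_a U_a\otimes S_a$. I would first try local normal forms. In Morse charts the flow is linear with exponents $\pm1$ (the adapted metric), and one can compute explicitly that the Ruelle resonances are $-\mathbb{N}$-combinations of the Lyapunov exponents. The resonant states at $0$ are supported on $\overline{W^u(a)}$ by wavefront and propagation arguments, and the co-resonant states lie on $\overline{W^s(a)}$. Globalizing uses the Smale ordering of critical points to build a filtration. Pole order one follows from a semisimplicity argument using $\mathcal{L}_V U_a=0$ and the duality $\int U_a\wedge S_b=\delta_{ab}$.

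\emph{Step 4: contour shift.} With the poles identified, I shift the inverse-Laplace contour from $\mathrm{Re}\,z=\varepsilon$ to $\mathrm{Re}\,z=-1+\varepsilon$. The residue at $z=0$ produces the leading sum. The remainder is $O(e^{-t})$ times the norms of $\psi_1$ and $\psi_2$, using resolvent bounds high in frequency; those bounds come from the semiclassical version of the estimate in Step 2, and $\varepsilon$ is absorbed by enlarging $L$.
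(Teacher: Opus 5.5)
The paper does not prove this theorem: it quotes it from \cite{DR16}. Your outline follows the same architecture as that reference, modelled on Faure--Sj\"ostrand and Dyatlov--Zworski:
\begin{itemize}
\item an escape function adapted to $\bigcup_a N^*W^s(a)$ and $\bigcup_a N^*W^u(a)$;
\item Fredholm theory on $\mathcal{H}^{m,k}$ and analytic Fredholm theory;
\item identification of the resonant states through the linear models at the critical points and the Smale order;
\item a contour deformation.
\end{itemize}
As an outline, Steps 1--3 are acceptable. The genuine problem is the last sentence of Step 4.

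Shifting the contour to $\mathrm{Re}\,z=-1+\varepsilon$ only gives a remainder $O(e^{-(1-\varepsilon)t})$, and enlarging $L$ cannot absorb the $\varepsilon$. $L$ only fixes the half-plane on which the resolvent is meromorphic; it does not move the poles. When all Lyapunov exponents equal $1$, there are resonances exactly on $\mathrm{Re}\,z=-1$ in every degree. On functions, for instance, these are the linear coordinates of the Morse chart at $\min f$ transported by the flow, and the transverse delta $\delta(x)$ along each $W^u(a)$.

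To reach $O(e^{-t})$ you would have to deform past $-1$ and show that the residue at $z=-1$ has no Jordan block. Your semisimplicity argument only concerns $z=0$, and at $z=-1$ the conclusion fails in general. Work in the hyperbolic box $(x,y)$ of a saddle $a$, with flow $(e^{-t}x,e^{t}y)$. There the transported coordinates of $\min f$ read $F_\pm(xy)/|x|$ on $\{\pm x>0\}$. Let $v_\pm:=\lim_{\tau\to\infty}e^{\tau}\varphi_f^{-\tau}(\pm1,0)$, computed in the chart at $\min f$. Then $F_\pm(0)$ are the components of $v_\pm$, and unless $v_++v_-=0$ these functions admit no homogeneous extension across $W^u(a)$. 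The logarithmic correction couples them to $\delta(x)$.

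Concretely, take a smooth function $\psi_2$ with $\psi_2(\min f)=0$, and a smooth density $\psi_1=\psi_1(x,y)\,\dd x\wedge \dd y$ supported across $W^u(a)$ away from $a$. Splitting according to the backward exit time $-\log|x|$ from the box gives
\[
\int_\Sigma \psi_1\,\psi_2\circ\varphi_f^{-t}=t\,e^{-t}\,\dd\psi_2(\min f)\bigl(v_++v_-\bigr)\int\psi_1(0,y)\,\dd y+O(e^{-t}).
\]
Since $v_+$ and $v_-$ are the asymptotic positions of the two branches of $W^s(a)$, they are generically not opposite. This correlation is therefore generically not $O(e^{-t})$.

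What your scheme can honestly deliver is $O_\varepsilon\bigl(e^{-(1-\varepsilon)t}\bigr)$ for every $\varepsilon>0$. The $O(e^{-t})$ in the statement should be weakened accordingly rather than derived. This weaker bound is all the paper uses: in Lemma~\ref{lem:angvar1} only an estimate $Ce^{-Kt}$ for some $K>0$ is needed.
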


Let us also give a proof of a convergence result 
that we used in the construction of polar coordinates.

\begin{lemma}\label{lem:convergencecycles}
Let $\gamma$ be a smooth, oriented, closed curve in $\Sigma\setminus \min(f)$ transverse to $V$. We denote by $[\gamma]$ the corresponding current of integration induced in the blow--up surface $\mathcal{S}$. Then 
\begin{align*}  
\varphi_f^{t*}[\gamma]\rightarrow  \sum_{a\in \mathrm{Crit}(f)_1} \left(\int_\Sigma U_a\wedge [\gamma]\right) S_a +\left( \int_{\Sigma}[\gamma_0]\wedge [\gamma] \right) [\partial\mathcal{S}_{in}]     
\end{align*}
when $t\rightarrow +\infty$ where the convergence on the r.h.s. holds in the sense of currents in the blow up surface $\mathcal{S}$. 
\end{lemma}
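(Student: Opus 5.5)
We prove the convergence of $\varphi_f^{t*}[\gamma]$ by testing against a smooth $1$-form $\omega$ on $\mathcal{S}$ vanishing to infinite order near $\partial\mathcal{S}_{\mathrm{out}}$ (but smooth up to $\partial\mathcal{S}_{\mathrm{in}}$). The plan splits $\omega$ with a cutoff $\chi$ equal to $1$ near $\partial\mathcal{S}_{\mathrm{in}}$ and supported in the polar chart where $V=r\partial_r$. We write $\omega=\chi\omega+(1-\chi)\omega$ and study
\[
\int_{\mathcal{S}}\varphi_f^{t*}[\gamma]\wedge\omega=\int_\gamma \varphi_f^{-t*}\omega .
\]
For the part $(1-\chi)\omega$, which is smooth and compactly supported away from the blow-up circle, everything descends to the closed surface $\Sigma$. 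There we apply the asymptotic expansion of \cite{DR16} recalled in the appendix, with $\psi_1=[\gamma]\in\mathcal{H}^{-m,1}$ (after choosing $m$ so that this holds, using that $WF([\gamma])=N^*\gamma$ is transverse to the unstable directions since $\gamma$ is transverse to $V$) and $\psi_2=(1-\chi)\omega$. This yields
\[
\sum_a\Big(\int_\Sigma[\gamma]\wedge U_a\Big)\Big(\int_\Sigma S_a\wedge(1-\chi)\omega\Big)+O(e^{-t}).
\]
Only index-$1$ critical points contribute by degree. Indeed, $U_a$ for $a$ the maximum is a $2$-current and cannot pair with a curve. For the minimum, $U_{\min}=1$ is a $0$-current, so the term is $\int[\gamma]\wedge 1$ with $S_{\min}$ of degree $2$, and this is killed by degree matching with the $1$-form. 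Hence the only surviving terms are the saddle ones, giving the first sum.

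The part $\chi\omega$ is where the boundary term comes from, and it is the main obstacle, since \cite{DR16} works on the closed surface where $\chi\omega$ would be singular at $\min(f)$. We handle it by hand in polar coordinates. Flow lines through $\gamma$ which, under backward flow, approach $\min(f)$ enter the chart. There $\varphi_f^{-t}(r,\theta)=(e^{-t}r,\theta)$, so $\varphi_f^{-t}(\gamma)$ converges in $C^1$ (away from finitely many arcs near the stable curves of saddles) to its radial projection onto $\{r=0\}$, parametrized by $\theta$. Writing $\omega=a\,dr+b\,d\theta$, the $dr$ contribution is $O(e^{-t})$. The $d\theta$ contribution converges to $\int_{\partial\mathcal{S}_{\mathrm{in}}} b\,d\theta$, counted with the winding multiplicity of $\theta\circ\varphi_f^{-t}\circ\gamma$. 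That multiplicity is the total algebraic crossing of $\gamma$ with a single flow line $\gamma_0$ from $\min$ to $\max$, which is exactly $\int_\Sigma[\gamma_0]\wedge[\gamma]$. Arcs of $\gamma$ close to the stable curves $W^s(a)$ spend long times near saddles. Their contribution is controlled uniformly because the angular measure of directions whose backward orbit passes within $\delta$ of a saddle is $O(\delta)$, and $\omega$ is bounded.

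Finally, the two pieces are reconciled. The terms $\int S_a\wedge(1-\chi)\omega$ and $\int S_a\wedge\chi\omega$ differ by the portion of $W^s(a)$ inside the chart, which is a radial segment. Integrating $\chi\omega$ along it gives the $a\,dr$ part, which we add back using the same explicit chart computation, so that the limit is $\sum_a(\int U_a\wedge[\gamma])\langle S_a,\omega\rangle+(\int[\gamma_0]\wedge[\gamma])\int_{\partial\mathcal{S}_{\mathrm{in}}}\omega$. This gives the statement in the weak topology, as claimed.
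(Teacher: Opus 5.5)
Your use of the Dang--Rivi\`ere expansion for $(1-\chi)\omega$ is fine; it is the same input the paper uses in the interior. The chart computation for $\chi\omega$, however, fails in two places.

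\textbf{The radial term is not negligible.} The troublesome arcs of $\gamma$ are those near $\gamma\cap W^u(a)$, not near $W^s(a)$. Their backward orbits linger at $a$ and then run down the two branches of $W^s(a)$. Inside the chart, their images are therefore nearly radial curves stretching from $r\approx r_0$ down to $r\to 0$. Their $a\,\dd r$-contribution does not vanish: it tends to $c_a\int_{W^s(a)}\chi\, a\,\dd r$, where $c_a=\int_\Sigma U_a\wedge[\gamma]$. Your $O(\delta)$ angular bound only controls the $b\,\dd\theta$ part. So the claim that the $\dd r$-contribution is $O(e^{-t})$ is false, and the term you ``add back'' when reconciling is one your own chart estimate declared negligible.

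\textbf{There is no winding multiplicity.} The global angle $\theta$ jumps across every unstable curve. At a point of $\gamma\cap W^u(a)$ it passes between the landing points $p_a^-\neq p_a^+$ on $\partial\mathcal{S}_{in}$ of the two branches of $W^s(a)$. Hence the $b\,\dd\theta$-part converges to the push-forward $\theta_*[\gamma]$, which is a sum of arcs with $\partial(\theta_*[\gamma])=\pm\sum_a c_a(\delta_{p_a^+}-\delta_{p_a^-})$. The signed number of preimages of an angle then depends on which flow line $\gamma_0$ you pick.

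This cannot be repaired, because the statement itself fails whenever some $c_a\neq 0$. Test against $\dd g$ with $g$ smooth up to $\partial\mathcal{S}_{in}$ and supported near it. The left side is $\int_{\varphi_f^{-t}(\gamma)}\dd g=0$ for every $t$, since $\varphi_f^{-t}(\gamma)$ is a closed loop. The right side equals $\pm\sum_a c_a\bigl(g(p_a^+)-g(p_a^-)\bigr)$, because the lift of $W^s(a)$ is an arc with endpoints $p_a^\pm$ on $\partial\mathcal{S}_{in}$, while $[\partial\mathcal{S}_{in}]$ is closed.

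Since the curves $\overline{W^u(a)}$ span $H_1(\Sigma)$, some $c_a\neq 0$ as soon as $\gamma$ is not null-homologous. An example is a meridian level circle on one tube of a tilted upright torus, taken between the two saddle values. What your method actually yields, once the arcs near $\gamma\cap W^u(a)$ are computed rather than bounded, is
\[
\sum_a c_a\bigl(S_a+[J_a]\bigr)+\Bigl(\int_\Sigma[\gamma_0]\wedge[\gamma]\Bigr)[\partial\mathcal{S}_{in}].
\]
Here $J_a\subset\partial\mathcal{S}_{in}$ is the arc joining $p_a^\pm$ that avoids the landing point of $\gamma_0$, oriented so that $S_a+[J_a]$ is closed.

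The paper's cohomological proof does not avoid this either. $S_a$ is a cycle only relative to $\partial\mathcal{S}_{in}$, and there $[\partial\mathcal{S}_{in}]$ is zero. So one only gets $[\gamma]-\sum_a c_aS_a=\dd T+R$ with some current $R$ supported on $\partial\mathcal{S}_{in}$. That $R$ is $\sum_a c_a[J_a]+n[\partial\mathcal{S}_{in}]$, not $n[\partial\mathcal{S}_{in}]$.

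For null-homologous $\gamma$ all $c_a$ vanish. In that case your approach, corrected as above, does prove $\varphi_f^{t*}[\gamma]\to\bigl(\int_\Sigma[\gamma_0]\wedge[\gamma]\bigr)[\partial\mathcal{S}_{in}]$.
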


We greatfully acknowledge Nguyen Bac Dang for explaining to us how to use cohomological arguments to prove convergence of dynamical correlators.

\begin{proof}
In fact we already know from the work~\cite{DR16} that
$\varphi_f^{t*}[\gamma]\rightarrow  \sum_{a\in \mathrm{Crit}(f)} \left(\int_\Sigma U_a\wedge [\gamma]\right) S_a \in \mathcal{D}^\prime(\mathrm{int}(\mathcal{S})) $
where the convergence holds in the sense of currents in $\mathcal{D}^\prime(\mathrm{int}(\mathcal{S}))$, these are dual to test forms which are supported in the \emph{interior of the surface}.
In fact in the interior, we know more: convergence takes place in suitable anisotropic Sobolev spaces of currents.

What we need to control is what happens at the boundary $\partial\mathcal{S}$.
We consider homology of currents on $\mathcal{S}$ which is \emph{relative} w.r.t. the ingoing boundary $\partial\mathcal{S}_{in}$ and \emph{absolute} w.r.t. the outgoing boundary $\partial\mathcal{S}_{out}$. Concretely, it means that for us 
a current $T\in \mathcal{D}^{\prime,\bullet}(\mathcal{S})$ is closed if $\text{supp}\left(\partial T\right)\subset \partial\mathcal{S}_{in}$.

For this we use a topological argument, first $[\gamma]$ defines a closed current of degree $1$, so we can decompose the corresponding cohomology class in the basis 
spanned by $([S_a],[\partial\mathcal{S}_{in}])_{a\in \mathrm{Crit}(f)_1} $. A dual basis for the Lefschetz pairing is
$(U_a,[\gamma_0] ])_{a\in \mathrm{Crit}(f)_1} $.
By a cohomological argument based on duality, we use the fact that de Rham cohomology is finite dimensional and Lefschetz pairing is non degenerate, yields that the closed current
$\sum_{a\in \mathrm{Crit}(f)_1} \left(\int_\Sigma U_a\wedge [\gamma]\right) S_a +\left( \int_{\Sigma}[\gamma_0]\wedge [\gamma] \right) [\partial\mathcal{S}_{in}] $
belongs to the same cohomology class as $[\gamma]$. 

From this, we deduce that $[\gamma]-\left( \sum_{a\in \mathrm{Crit}(f)_1} \left(\int_\Sigma U_a\wedge [\gamma]\right) S_a +\left( \int_{\Sigma}[\gamma_0]\wedge [\gamma] \right) [\partial\mathcal{S}_{in}]  \right)=dT$ is an exact current. Now we make the following elementary observation, if we remove $\gamma\cup \cup_{a\in \mathrm{Crit}(f)_1} W^s(a)$ from $\mathcal{S}$, we see that $dT=0$ in $\mathcal{D}^\prime(\mathrm{int}(\mathcal{S}) \setminus (\gamma\cup \cup_{a\in \mathrm{Crit}(f)_1} W^s(a))  )$. By the constancy Theorem, this means that $T$ is locally constant on each connected component of $\mathrm{int}(\mathcal{S}) \setminus (\gamma\cup \cup_{a\in \mathrm{Crit}(f)_1} W^s(a)) $ with bounded value. Since $[\gamma]-\left( \sum_{a\in \mathrm{Crit}(f)_1} \left(\int_\Sigma U_a\wedge [\gamma]\right) S_a +\left( \int_{\Sigma}[\gamma_0]\wedge [\gamma] \right) [\partial\mathcal{S}_{in}]  \right)$ is an integral current of finite mass and $T$ is defined uniquely up to constant, we know that up to subtracting a constant we may assume that $T=0$ in the connected compoment of $\partial \mathcal{S}_{out}$.    

Now $T$ is a bounded integer valued function which is vanishing near $\partial\mathcal{S}_{out}$ hence $\varphi^{t*}T \rightarrow 0 $ in $\mathcal{D}^\prime (\mathcal{S})$ when $t\rightarrow +\infty$. There are many ways to prove such result. Observe that  $\varphi^{t*}T, t\geqslant 0$
forms a bounded family of $L^2$ functions, hence by compactness of $L^2$ for the weak* topology, we may assume that we have a convergent subsequence. Denote by $T_\infty\in L^2$ any weak limit, its support should be invariant by the backward flow and does not contain a neighborhood of $\partial\mathcal{S}_{out}$, so $\mathrm{supp}(T_\infty)$ should be contained in $\cup_{a\in \mathrm{Crit}(f)_1} W^s(a)\cup \partial\mathcal{S}_{in}$ which has measure $0$, $T_\infty\in L^2$ is a measurable function hence $T_\infty=0$.  

We can conclude by using the commutation between pull--back and $d$ that~:
\begin{align*}
&\varphi^{t*}_f[\gamma]=\varphi_f^{t*}dT+ \varphi_f^{t*}\left(\sum_{a\in \mathrm{Crit}(f)_1} \left(\int_\Sigma U_a\wedge [\gamma]\right) S_a +\left( \int_{\Sigma}[\gamma_0]\wedge [\gamma] \right) [\partial\mathcal{S}_{in}] \right) \\
&\underset{t\rightarrow +\infty}{\rightarrow}  \sum_{a\in \mathrm{Crit}(f)_1} \left(\int_\Sigma U_a\wedge [\gamma]\right) S_a +\left( \int_{\Sigma}[\gamma_0]\wedge [\gamma] \right) [\partial\mathcal{S}_{in}]     
\end{align*}
since $\varphi_f^{t*} dT=d \varphi_f^{t*}T\rightarrow 0$ when $t\rightarrow +\infty$ and the current \[ \sum_{a\in \mathrm{Crit}(f)_1} \left(\int_\Sigma U_a\wedge [\gamma]\right) S_a +\left( \int_{\Sigma}[\gamma_0]\wedge [\gamma] \right) [\partial\mathcal{S}_{in}]     \] is invariant.
\end{proof}

\section{Continuous and compact injections in anisotropic spaces} \label{C}

In this appendix, we prove the necessary results on continuous and compact injections in our anisotropic spaces of currents.

We start by the continuous injections.

\begin{proposition}\label{prop:continuousinjection}

Let $\alpha\in (0,\frac{1}{2})$.
    If $\frac{1}{p}<\alpha$, we have the following continuous injection
    \[\mathcal{W}^{\alpha,\alpha-1,p}_{r,\theta}(\mathbf{Cyl},\g)\subset \mathcal{C}^{\alpha-\frac{1}{p},\alpha-1-\frac{1}{p}}_{r,\theta}(\mathbf{Cyl},\g)\]
    when we restrict to functions which are $0$ at $r=0$.
\end{proposition}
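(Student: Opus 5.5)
By definition, $\Vert \tilde A\Vert_{\mathcal{W}^{\alpha,\alpha-1,p}}=\Vert W\Vert_{\mathcal{W}^{\alpha,\alpha;p}}$ and $\Vert \tilde A\Vert_{\mathcal{C}^{\alpha-\frac1p,\alpha-1-\frac1p}}=\Vert W\Vert_{\mathcal{C}^{\alpha-\frac1p,\alpha-\frac1p}}$, where $W(r,\theta)=\int_0^\theta \tilde A(r,u)\,du$. The statement is therefore a Sobolev--Morrey embedding for the mixed-difference space on a product of intervals:
\[
\Vert W\Vert_{\mathcal{C}^{\alpha-\frac1p,\alpha-\frac1p}_{r,\theta}(I\times J)}\lesssim \Vert W\Vert_{\mathcal{W}^{\alpha,\alpha;p}_{r,\theta}(I\times J)},
\]
for functions with $W(\cdot,0)=0$ (and, as required, vanishing at $r=0$). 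I would prove it on a single rectangle and then cover $\mathbf{Cyl}$ by finitely many rectangles.

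The first tool is the one-dimensional fractional Morrey embedding $W^{\alpha,p}(I)\hookrightarrow \mathcal{C}^{\alpha-\frac1p}(I)$, valid for $\alpha p>1$. I would cite it from \cite{Nezzafractional} or \cite{Leoni}. It can also be proved directly by Campanato averaging: compare the averages of $f$ on dyadic intervals shrinking to two points $x,y$, and bound each average difference by Jensen and H\"older through the Gagliardo integrand on the corresponding square. The embedding holds with values in any Banach space $E$, since the proof only uses the triangle inequality.

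Next, apply the vector-valued embedding in the $r$ variable with $E=W^{\alpha,p}_\theta(J)$. The first term of the norm (\ref{eq:weightedSobo}) is exactly the $E$-valued Gagliardo seminorm of $r\mapsto W(r,\cdot)$. For the $L^p$ part, I would use the boundary condition at $r=0$ to control $\Vert W(r,\cdot)\Vert_{E}$ through a fractional Poincar\'e inequality. This shows that $r\mapsto W(r,\cdot)$ is $(\alpha-\frac1p)$-H\"older with values in $W^{\alpha,p}_\theta(J)$. Then apply the scalar Morrey embedding in $\theta$, pointwise in $r$, to the difference $W(r_1,\cdot)-W(r_2,\cdot)$. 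This gives
\[
\Vert W(r_1,\cdot)-W(r_2,\cdot)\Vert_{\mathcal{C}^{\alpha-\frac1p}_\theta}\lesssim \Vert W(r_1,\cdot)-W(r_2,\cdot)\Vert_{W^{\alpha,p}_\theta}\lesssim |r_1-r_2|^{\alpha-\frac1p}\,\Vert W\Vert .
\]
The sup term is handled the same way: $W(\cdot,0)=0$ together with the H\"older bound controls $\sup|W|$.

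The main obstacle is that the Morrey step in $r$ is not pointwise-in-$\theta$. Because of the definition (\ref{eq:weightedSobo}), the vector-valued embedding needs $\Vert W(r_1,\cdot)-W(r_2,\cdot)\Vert_{W^{\alpha,p}_\theta}$ raised to the power $p$, and this is exactly what appears in the norm. The delicate point is the inhomogeneous part: the low-frequency term $\Vert W(r,\cdot)\Vert_{L^p_\theta}$ inside the $E$-norm and the $L^p$ term. If the Poincar\'e argument fails, I would instead add $\Vert W\Vert_{L^p}$ and argue via averages over $r$-intervals, which is harmless on bounded rectangles. Finally, I would need the exponents to be mutually consistent; if the target seminorm is read with mixed rectangular increments, the same scheme applies to $\Delta_{r}\Delta_\theta W$.
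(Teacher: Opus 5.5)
Your argument is correct. It uses the same two ingredients as the paper's proof, namely the one-dimensional fractional Morrey inequality applied once in each variable and the vanishing conditions used to pass from seminorms to the full norm, but you apply them in the opposite order.

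You apply Morrey first in $r$, which forces a Banach-valued version with $E=W^{\alpha,p}_\theta(J)$, and then apply it in $\theta$ pointwise in $r$. The paper goes the other way. Because the $\theta$-norm sits inside the $r$-integral, it applies the scalar Morrey inequality to $\theta\mapsto W(r_1,\theta)-W(r_2,\theta)$ for each fixed pair $(r_1,r_2)$. For any fixed $\theta_1\neq\theta_2$ this gives $\Vert W(r_1,\cdot)-W(r_2,\cdot)\Vert^p_{W^{\alpha,p}_\theta}\gtrsim |\theta_1-\theta_2|^{1-\alpha p}\,|W(r_1,\theta_1)-W(r_2,\theta_1)-W(r_1,\theta_2)+W(r_2,\theta_2)|^p$. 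The $r$-integral then becomes an ordinary scalar Gagliardo seminorm of $g(r)=W(r,\theta_1)-W(r,\theta_2)$. A second scalar Morrey step yields the mixed bound $|\Delta_r\Delta_\theta W|\lesssim |r_1-r_2|^{\alpha-\frac1p}|\theta_1-\theta_2|^{\alpha-\frac1p}\Vert W\Vert$.

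With that ordering, what you call the main obstacle (the $r$-step not being pointwise in $\theta$) disappears, and no vector-valued embedding is needed. Your ordering has its own advantage: it directly outputs the quantity in the definition of the target norm, $\Vert W(r_1,\cdot)-W(r_2,\cdot)\Vert_{\mathcal{C}^{\alpha-1/p}_\theta}\lesssim|r_1-r_2|^{\alpha-1/p}\Vert W\Vert$. The paper instead has to recover this from the mixed-difference bound using $W(\cdot,0)=0$.

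In either route, the fractional Poincar\'e step and the concern about inhomogeneous terms are unnecessary. The homogeneous Gagliardo seminorms already control the H\"older seminorms. The two vanishing conditions control the remaining sup terms, and you do need both of them, at $\theta=0$ and at $r=0$, for $\sup|W|$.
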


\begin{proof}
    The proof is a simple consequence of 
    the classical continuous injections in $1$ dimension.
    Without loss of generality, we may 
prove the continuous injection in the case
 \[\mathcal{W}^{\alpha,\alpha,p}_{r,\theta}(\mathbf{Cyl},\g)\subset \mathcal{C}^{\alpha-\frac{1}{p},\alpha-\frac{1}{p}}_{r,\theta}(\mathbf{Cyl},\g).\]
Then taking derivatives in the sense of distributions in the $\theta$ variable recovers the result we want to establish.

Also without loss of generality because we can glue things with partitions of unity, we will restrict to nice rectangles of the form $I_1\times I_2$ and prove the stronger injection
\[\mathcal{W}^{\alpha,\alpha,p}_{r,\theta}(I_1\times I_2,\g)\subset \mathcal{C}^{\alpha-\frac{1}{p},\alpha-\frac{1}{p}}_{r,\theta}(I_1\times I_2,\g).\]

The second observation is that if we restrict to distributions $f(.,.)$ on $\mathbf{Cyl}$ that vanish at $r=0$, we can drop the term $\Vert f\Vert_{L^p(I_1\times I_2)}$ in the definition of the norm and only work with the Sobolev semi-norm $ \left(
\int_{I_1^2} \frac{\Vert W(r_1,.)-W(r_2,.) \Vert^p_{W^{\alpha,p}_{\theta,I_2}}}{\vert r_1-r_2\vert^{1+\alpha p}} \dd r_1\dd r_2 \right)^{\frac{1}{p}} $.
For any smooth function $f$ on some interval $I=[a,b]$ which vanishes at $a$, we always have the a priori estimates, for all $\alpha\in (0,1)$,
\begin{align*}
\Vert f \Vert_{W^{s,p}(I)} = \left(\int_{I^2} \frac{\vert f(s_1)-f(s_2) \vert^p}{\vert s_1-s_2\vert^{1+\alpha p}}\dd s_1\dd s_2 \right)^{\frac{1}{p}} \geq C_I \sup_{s_1\neq s_2} \frac{\vert f(s_1)-f(s_2) \vert}{\vert s_1-s_2\vert^{\alpha -  \frac{1}{p}  } }
\end{align*}    
 where the constant $C_I$ does not depend on $f$. This estimate always holds true by the injection of homogeneous Sobolev spaces into their homogeneous H\"older counterparts~\cite[Thm 7.23 p.~275]{Leoni}. 

 Let us apply this estimate twice,
\begin{align*}
\Vert f\Vert^p_{\mathcal{W}^{\alpha,\alpha;p}_{r,\theta}(I_1\times I_2)}&=\int_{I_1^2} \frac{\Vert f(r_1,.)-f(r_2,.) \Vert^p_{W^{\alpha,p}_{\theta}(I_2)} }{ \vert r_1-r_2 \vert^{1+\alpha p}  }\dd r_1\dd r_2 \\
 &\geq C_{I_2} \int_{I_1^2}  \sup_{\theta_1\neq \theta_2} \frac{\vert f(r_1,\theta_1)-f(r_2,\theta_1)-f(r_1,\theta_2)+f(r_2,\theta_2) \vert^p }{ \vert r_1-r_2 \vert^{1+\alpha p} \vert \theta_1-\theta_2 \vert^{\alpha p-1} } \dd r_1\dd r_2 \\
 &\geq \frac{C_{I_2}}{\vert \theta_1-\theta_2 \vert^{\alpha p-1}} \int_{I_1^2}   \frac{\vert f(r_1,\theta_1)-f(r_1,\theta_2)-f(r_2,\theta_1)+f(r_2,\theta_2) \vert^p }{ \vert r_1-r_2 \vert^{1+\alpha p}  } \dd r_1\dd r_2
\end{align*} 
for all $\theta_1\neq \theta_2 $. Therefore,
\begin{align*}
\Vert f\Vert^p_{\mathcal{W}^{\alpha,\alpha;p}_{r,\theta}(I_1\times I_2)}&\geq \frac{C_{I_2}}{\vert \theta_1-\theta_2 \vert^{\alpha p-1}}
\Vert f(.,\theta_1)-f(.,\theta_2) \Vert_{W^{s,p}_{r,I_1}}^p\\
&\geq \sup_{r_1\neq r_2}  \frac{C_{I_1}C_{I_2}\vert f(r_1,\theta_1)-f(r_1,\theta_2)-(f(r_2,\theta_1)-f(r_2,\theta_2)) \vert^p}{\vert \theta_1-\theta_2 \vert^{\alpha p-1}\vert r_1-r_2 \vert^{\alpha p-1}  }\\
&\geq
\frac{C_{I_1}C_{I_2}\vert f(r_1,\theta_1)-f(r_1,\theta_2)-(f(r_2,\theta_1)-f(r_2,\theta_2)) \vert^p}{\vert \theta_1-\theta_2 \vert^{\alpha p-1}\vert r_1-r_2 \vert^{\alpha p-1}  }
\end{align*}
for all $r_1\neq r_2$, $\theta_1\neq \theta_2$. Then taking again the $\sup$ over $\theta$ then the sup over $r$ yields the final estimates
\begin{align*}
\Vert f\Vert^p_{\mathcal{W}^{\alpha,\alpha;p}_{r,\theta}(I_1\times I_2)}\geq C_{I_1}C_{I_2} \Vert f\Vert^p_{\mathcal{C}^{\alpha,\alpha;p}_{r,\theta}(I_1\times I_2)} .
\end{align*}
Once we glue together the estimates thanks to  cut off functions in the $\theta$  variable,
this concludes the proof of continuous injections.
\end{proof}

We prove another continuous injection which compares the anisotropic spaces 
$\mathcal{C}^{\alpha,\alpha-1}$ with the classical non anisotropic H\"older--Besov
spaces.
\begin{lemma}\label{lem:aniso_inj_holder}
For every $\alpha\in (0,1)$, any element $T\in \mathcal{C}^{\alpha,\alpha-1}_{x,y}(I\times J)$ for every $\chi\in C^\infty_c(J)$, the product $T1_I\chi$ which is well--defined
also belongs to the H\"older space $\mathcal{C}^{\alpha-1}(I\times J)$.     
\end{lemma}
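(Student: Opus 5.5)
The plan is to split $T$ into its $\theta$-primitive and control everything through the mixed increments of that primitive. By definition of $\mathcal{C}^{\alpha,\alpha-1}_{x,y}(I\times J)$, we write $T=\partial_y W$ with $W\in\mathcal{C}^{\alpha,\alpha}_{x,y}(I\times J)$ and $W(\cdot,0)=0$ (the half-Dirichlet normalisation). The target space $\mathcal{C}^{\alpha-1}$ has negative regularity, so we test it through the scaling characterisation of H\"older--Besov spaces. The claim to prove is
\[
\bigl|\langle T1_I\chi,\phi^\lambda_{z}\rangle\bigr|\lesssim \lambda^{\alpha-1}\|W\|_{\mathcal{C}^{\alpha,\alpha}},
\]
uniformly over points $z=(x_0,y_0)$, scales $\lambda\in(0,1]$, and normalised bump functions $\phi$ supported in the unit ball, where $\phi^\lambda_z=\lambda^{-2}\phi((\cdot-z)/\lambda)$.

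First we make sense of $T1_I\chi$ as a distribution. Integrating by parts in $y$ gives
\[
\langle T1_I\chi,\phi\rangle=-\int_I\!\int_J W(x,y)\,\partial_y(\chi\phi)(x,y)\,\dd y\,\dd x .
\]
This is well defined because $W$ is continuous and bounded on $I\times J$, and $\chi$ vanishes near $\partial J$, so no boundary terms arise. The indicator $1_I$ causes no difficulty: it only truncates an absolutely convergent integral in $x$. This is why the $x$-direction is the harmless one, and it also shows that the pairing is continuous in $W$.

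Next we estimate the rescaled pairing. Substituting $\phi^\lambda_z$, we get
\[
\langle T1_I\chi,\phi^\lambda_z\rangle=-\int_I\!\int_J W\,\bigl[(\partial_y\chi)\phi^\lambda_z+\chi\,\partial_y\phi^\lambda_z\bigr].
\]
The first term is bounded by $\|W\|_\infty\lesssim\|W\|_{\mathcal{C}^{\alpha,\alpha}}$, which is enough since $\lambda^{\alpha-1}\ge1$. For the second term we use that $\int\partial_y\phi^\lambda_z(x,y)\,\dd y=0$ for each fixed $x$, and that $\chi$ is smooth. Together these let us replace $W(x,y)$ by $W(x,y)-W(x,y_0)$, at the cost of a commutator term $\int W(x,y_0)\,(\chi(y)-\chi(y_0))\,\partial_y\phi^\lambda_z$. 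That term is $O(\|W\|_\infty)$, since $|\chi(y)-\chi(y_0)|\lesssim\lambda$ compensates the $\lambda^{-1}$ coming from $\partial_y\phi^\lambda$.

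It remains to bound the main part. The norm $\mathcal{C}^{\alpha,\alpha}$ controls mixed increments, and thanks to the normalisation $W(\cdot,0)=0$ it also controls pure $y$-increments: writing $W(x,y)-W(x,y_0)$ as a mixed increment against the line $x=0$ plus $W(0,y)-W(0,y_0)$ gives
\[
|W(x,y)-W(x,y_0)|\lesssim|y-y_0|^\alpha\,\|W\|_{\mathcal{C}^{\alpha,\alpha}}.
\]
Integrating this against $|\partial_y\phi^\lambda_z|$, which has total mass $\lesssim\lambda^{-1}$ over a set where $|y-y_0|\lesssim\lambda$, yields the bound $\lambda^{\alpha-1}$. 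The $1_I$ cutoff only restricts the $x$-integral, so the estimate is uniform in $z$, including points $z$ near $\partial I$.

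The main obstacle is this last step. The definition of the anisotropic H\"older norm involves the sup of $\mathcal{C}^\alpha_\theta$ norms of $r$-differences, and one has to extract from it a pure $\alpha$-H\"older bound in $y$ that holds uniformly in $x$. If the normalisation $W(\cdot,0)=0$ were unavailable, I would instead use the $\mathcal{C}^\alpha_\theta$ control of $W(x_1,\cdot)-W(x_2,\cdot)$ together with $\sup|W|$ at a single reference slice.
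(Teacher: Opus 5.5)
Your route is essentially the paper's. Both arguments use the scaling characterisation of $\mathcal{C}^{\alpha-1}$, reduce to a bound on each slice $T(x,\cdot)$ in $\mathcal{C}^{\alpha-1}_y$ that is uniform in $x$, and then integrate over $x$, where $1_I$ only truncates the $x$-integral. Your integration by parts against the primitive $W$, the mean-zero property of $\partial_y\phi^\lambda_z$ in $y$, and the commutator estimate for $\chi$ are all correct. They make explicit the uniform slice bound that the paper simply asserts (\emph{the fact that $T\chi$ belongs to the anisotropic space already tells us} \dots).

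The one step not justified as written is the pure $y$-increment bound, which you rightly single out as the crux. In the paper's definition, $W(\cdot,0)=0$ is the normalisation at $\theta=0$, i.e.\ $W(x,0)=0$ for all $x$. It only fixes $W$ as the $\theta$-primitive of $T$ and gives no control of $W(0,y)-W(0,y_0)$.

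In fact, mixed increments together with $\sup|W|$ and $W(x,0)=0$ give no $y$-regularity at all. Take $W(x,y)=g(y)$ with $g$ bounded, $g(0)=0$, and $g$ not H\"older. Then $\|W\|_{\mathcal{C}^{\alpha,\alpha}}=\sup|g|<\infty$, yet $1_I\,\partial_yW\,\chi\notin\mathcal{C}^{\alpha-1}$.

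What your decomposition against the line $x=0$ actually uses is $W(0,\cdot)\equiv0$. This is the half-Dirichlet condition $\tilde A(0,\cdot)=0$ at $r=0$, which the paper imposes on the completed space and again in Proposition~\ref{prop:continuousinjection}. With it, $W(x,y)-W(x,y_0)$ is a pure mixed increment, bounded by $|x|^\alpha|y-y_0|^\alpha\|W\|$. You should state this hypothesis explicitly. If $0\notin\bar I$, you also need either the norm on a rectangle reaching $x=0$ or, equivalently, a $\mathcal{C}^\alpha_y$ bound on one reference slice.

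For the same reason, your fallback using $\sup|W|$ on a reference slice would not suffice; you would need $\|W(x_*,\cdot)\|_{\mathcal{C}^\alpha_y}$ there. With this correction your proof goes through. The same hypothesis is implicitly needed for the paper's own uniform slice bound.
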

\begin{proof}
We will use the following characterization of the H\"older--Besov space of negative regularity by scalings~\cite[p.~201 especially remark 13.15]{friz_hairer}.
$T\chi\in \mathcal{C}^{\alpha-1}$ if and only if for all compact balls $B$,
\begin{equation}\label{eq:Holderscaling}
\sup_{\vert \varphi\vert_{C^1_c(B)}=1} \sup_{a\in B}\sup_{n\geqslant 1} \vert 2^{n(\alpha-1)} \left\langle \mathcal{S}^{2^{-n}*}_{a} \left(T\chi\right),\varphi \right\rangle  \vert  \leqslant C_B.
\end{equation}

The fact that $T\chi$ belongs to the anisotropic space already tells us that for all $\varphi\in C^1_c(J)$~: 
$$\sup_{x\in I} \sup_{y_0\in J}\vert \left\langle T\chi(x, 2^{-n} (.-y_0),\varphi \right\rangle \vert\leqslant C 2^{-n(\alpha-1)}\Vert \varphi\Vert_{C^1} . $$

Therefore for  $a=(x_0,y_0)$, 
\begin{align*}
&\vert\left\langle \mathcal{S}^{2^{-n}*}_{a} \left(T\chi\right),\varphi \right\rangle \vert = 
\vert\left\langle  \left(T\chi\right)(2^{-n}(.-x_0), 2^{-n}(.-y_0)),\varphi \right\rangle \vert\\
&=\vert\int_{x\in I} \left\langle  \left(T\chi\right)(2^{-n}(x-x_0), 2^{-n}(.-y_0)),\varphi(x,.) \right\rangle_{y} dx \vert \\
&\leqslant \int_{x\in I}\vert \left\langle  \left(T\chi\right)(2^{-n}(x-x_0), 2^{-n}(.-y_0)),\varphi(x,.) \right\rangle_{y}\vert dx \\
&\leqslant C_B\vert I\vert  2^{n(1-\alpha)} \sup_{x\in I } \Vert\varphi(x,.) \Vert_{C^1(J)}
\end{align*}
where the constant on the r.h.s. does not depend on $a\in B$ which concludes the proof of the Lemma.

 For the second claim, we use the fact that $T$ is a continuous function of $x$ valued in distributions of $y$ of regularity $\alpha-1$ hence Lemma~\ref{lem:pairing}
allows us to make sense of $T1_J$ which is still continuous of $x$ valued in distributions of $y$ of regularity $\alpha-1$.
\end{proof}

We next prove compact injections of the anisotropic Sobolev and H\"older spaces.
\begin{proposition}
For $\varepsilon>0$,
the injection $\mathcal{W}^{\alpha+\varepsilon,\alpha-1+\varepsilon,p}(I\times \mathbb{S}^1) \hookrightarrow \mathcal{W}^{\alpha,\alpha-1,p}(I\times \mathbb{S}^1) $ is compact. Moreover for every pair of intervals $(I,J)$, for every $\chi\in C^\infty_c(J)$, the map
$T\in \mathcal{W}^{\alpha+\varepsilon,\alpha-1+\varepsilon,p}(I\times J) \mapsto \left(T\chi \right)\in \mathcal{W}^{\alpha,\alpha-1,p}(I\times J) $ is also compact.
\end{proposition}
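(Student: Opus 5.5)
The plan is to reduce both claims to a Rellich--Kondrachov type compactness argument for the integrated object $W$. By definition, $\|\tilde A\|_{\mathcal{W}^{\alpha,\alpha-1,p}}=\|W\|_{\mathcal{W}^{\alpha,\alpha;p}}$ with $W=\int_0^\theta \tilde A$. It therefore suffices to show that bounded sets of $\mathcal{W}^{\alpha+\varepsilon,\alpha+\varepsilon;p}(I\times\mathbb{S}^1)$ are precompact in $\mathcal{W}^{\alpha,\alpha;p}(I\times\mathbb{S}^1)$. The space $\mathcal{W}^{\alpha,\alpha;p}$ is a mixed (tensor-type) Gagliardo space: its seminorm is the $W^{\alpha,p}_r$ Gagliardo seminorm of the map $r\mapsto W(r,\cdot)\in W^{\alpha,p}_\theta$. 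Up to the lower-order $L^p$ term, this is the dominating-mixed-smoothness space $S^{\alpha,\alpha}_{p}W=W^{\alpha,p}_r\widehat\otimes_p W^{\alpha,p}_\theta$.

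\textbf{Step 1.} Take a bounded sequence $(W_k)$ in the stronger space. I will first gain uniform control on truncations. Split the Gagliardo double integral of $W_k$ in $\mathcal{W}^{\alpha,\alpha;p}$ into the region $\{|r_1-r_2|<\delta\}\cup\{|\theta_1-\theta_2|<\delta\}$ and its complement. On the near-diagonal region, the weight comparison $|r_1-r_2|^{-1-\alpha p}\le \delta^{\varepsilon p}|r_1-r_2|^{-1-(\alpha+\varepsilon)p}$, and likewise in $\theta$, bounds the contribution by $C\delta^{\varepsilon p}\sup_k\|W_k\|^p_{\mathcal{W}^{\alpha+\varepsilon,\alpha+\varepsilon;p}}$. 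For the $\theta$-diagonal this is done at the level of the rectangular increments appearing in the explicit formula of Section~\ref{DistSpaces}. This part is therefore uniformly small.

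\textbf{Step 2.} I will extract a subsequence converging in $L^p(I\times\mathbb{S}^1)$. The mixed seminorm dominates the isotropic $W^{\alpha+\varepsilon,p}$ seminorm of each slice. Together with the half-Dirichlet condition $W(\cdot,0)=0$ (and $W(0,\cdot)=0$ where applicable), one-dimensional Poincaré-type inequalities applied in each variable give a uniform bound in the isotropic $W^{\alpha+\varepsilon,p}(I\times\mathbb{S}^1)$ up to constants. The classical fractional Rellich theorem, as in \cite{Nezzafractional}, then yields $L^p$ precompactness. Off the diagonal region, the kernel $|r_1-r_2|^{-1-\alpha p}|\theta_1-\theta_2|^{-1-\alpha p}$ is bounded by $\delta^{-2-2\alpha p}$. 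The rectangular increment is a sum of four point values, so this part of the seminorm of $W_k-W_l$ is bounded by $C_\delta\|W_k-W_l\|^p_{L^p}$. Combining with Step 1 and letting $\delta\to0$ shows the subsequence is Cauchy in $\mathcal{W}^{\alpha,\alpha;p}$. Differentiating in $\theta$ then transfers the result to $\tilde A$.

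\textbf{Step 3.} For the localized statement on $I\times J$, I will reduce to Steps 1--2, after checking that multiplication by $\chi\in C^\infty_c(J)$ is bounded on $\mathcal{W}^{\alpha+\varepsilon,\alpha-1+\varepsilon,p}$. At the level of $W$, this means $\int_0^\theta \chi T=\chi W-\int_0^\theta \chi' W$. The first term is handled by a Leibniz estimate on rectangular increments; the second is smoother in $\theta$. The result is compactly supported in $J$ and can be viewed periodically on $\mathbb{S}^1$, where the first claim applies.

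\textbf{Main obstacle.} I expect the hardest part to be Step 2's claim that the mixed seminorm controls an isotropic norm giving $L^p$ compactness. A rectangular increment bound alone does not control $W$ itself; it needs the boundary vanishing conditions. If the Poincaré route proves awkward, my fallback is to use the hypothesis through Proposition~\ref{prop:continuousinjection}, which gives a uniform $\mathcal{C}^{\alpha+\varepsilon-1/p,\alpha+\varepsilon-1/p}$ bound for $p$ large. Since $W$ vanishes on the axes, that gives equicontinuity of $(W_k)$, and Arzelà--Ascoli then provides the $L^p$ (indeed uniform) convergence.
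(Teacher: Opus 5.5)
Your proof is correct, but it takes a different route from the paper. It relies on the standing conventions of the paper's spaces: both $W(\cdot,0)=0$ and $W(0,\cdot)=0$, the latter coming from the half-Dirichlet condition $\tilde A(0,\cdot)=0$. It also needs $(\alpha+\varepsilon)p>1$; the paper's own proof assumes $\alpha>1/p$.

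The paper argues in frequency in $\theta$. After multiplying by $\chi$ so that $T\chi$ lives on $I\times\mathbb{S}^1$, it expands in Fourier modes. It observes that each coefficient $t\mapsto\widehat{T_n\chi}(t,k)$ is bounded in $W^{\alpha+\varepsilon,p}(I)$, and extracts a diagonal subsequence converging mode by mode via the one-dimensional compact embedding $W^{\alpha+\varepsilon,p}(I)\hookrightarrow W^{\alpha,p}(I)$. The $\varepsilon$-gain then bounds the Littlewood--Paley tail $j\geq N$ uniformly by $C2^{-Np\varepsilon}$, so only finitely many modes matter.

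You stay in physical space. The $\varepsilon$-gain in both variables makes the near-diagonal part of the kernel uniformly small, and the off-diagonal part is dominated by $C_\delta\|W_k-W_l\|^p_{L^p}$. Everything then reduces to $L^p$ precompactness, which your fallback settles cleanly. Proposition~\ref{prop:continuousinjection} bounds rectangular increments uniformly; vanishing on both axes turns this into a uniform H\"older bound on $W$ itself; Arzel\`a--Ascoli then applies. Your primary Poincar\'e route also works in this regime, but its first sentence (domination of slice seminorms) is false without $W(0,\cdot)=0$.

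Your route avoids periodization, Littlewood--Paley theory and diagonal extraction. It proves compactness of the injection on $I\times J$ itself, so the localized claim is just its composition with the bounded multiplication by $\chi$. It also isolates the one genuinely non-trivial input, compactness of the lower-order $L^p$ part of the norm. The Fourier argument needs this too (to handle the $\|W\|_{L^p}$ term) but leaves it implicit. The paper's route, in turn, separates variables and reduces to one-dimensional compactness in $r$ plus finite-dimensionality in $\theta$, which adapts naturally to norms defined through Littlewood--Paley blocks in $\theta$.

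Two small points. First, in Step~3, $\int_0^\theta\chi T=\chi W-\int_0^\theta\chi'W$ is not compactly supported in $J$. To the right of $\mathrm{supp}\,\chi$ it equals $-\int_J\chi'(u)W(r,u)\,\dd u$, which need not vanish. This is harmless: Steps~1--2 never use periodicity and run verbatim on $I\times J$, and this primitive still vanishes on both axes.

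Second, the hypotheses you invoke as ``where applicable'' and ``for $p$ large'' are genuinely necessary and should be stated. The functions $W_k(r,\theta)=\sin(k\theta)$ have vanishing rectangular increments and bounded $L^p$ norm, but no $L^p$-convergent subsequence. They are admissible once $W(0,\cdot)=0$ is dropped. They also belong to the completion when $(\alpha+\varepsilon)p<1$, because that condition does not survive the completion: approximate $1$ by $u_n(r)$ with $u_n(0)=0$ and $[u_n]_{W^{\alpha+\varepsilon,p}}\to0$.
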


One can also formulate the second claim by taking strictly smaller interval $J^\prime\subsetneq J$.

\begin{proof}
Recall that given a bounded sequence $(\varphi_n)_n$ in $L^p(\mathbb{S}^1)$ 
such that $\mathrm{supp}(\widehat{\varphi_n})\subset B(0,R)$ then this sequence has convergent subsequence in $L^p(\mathbb{S}^1)$, this is trivial since we deal with finite dimensional vector spaces. 

 Without loss of generality, for arbitrary $\chi\in C^\infty_c(I_2)$, we may 
prove the map
\[ T\in \mathcal{W}^{\alpha+\varepsilon ,\alpha+\varepsilon,p}_{r,\theta}(I_1\times I_2,\g)\mapsto T\chi \in \mathcal{W}^{\alpha,\alpha}_{r,\theta}(I_1\times I_2,\g)\]
is compact.
Then taking derivatives in $\theta$ and multiplying with another cut--off function in $\theta$ recovers the desired compact injection.

First principle, when we multiply $T$ defined on $I\times J$  with a cut--off function $\chi\in C^\infty_c(J)$ where $\chi$ depends only on the second variable, the function $T\chi$ can be considered
as living on $I\times\mathbb{S}^1$. So we can Fourier expand in the second variable $\theta$.
\begin{align*}
 \Vert T\chi\Vert_{\mathcal{W}^{\alpha,\alpha,p}}:= \int_{I^2} \frac{ \Vert T\chi(t_1,.)-T\chi(t_2,.) \Vert^p_{W^{\alpha,p}_\theta} }{\vert t_2-t_1\vert^{1+\alpha p}}    \dd t_1\dd t_2 
\end{align*}
where 
\begin{align*}
 \Vert T\chi(t_1,.)-T\chi(t_2,.) \Vert^p_{W^{\alpha,p}_\theta}=\sum_{j=0}^\infty 2^{jp\alpha} \Vert \psi_j(\sqrt{-\Delta}) \left( T\chi(t_1,.)-T\chi(t_2,.) \right)  \Vert_{L^p}^p    
\end{align*}
where we used the definition of $W^{\alpha,p}$ spaces in terms of Littlewood--Paley 
decomposition.

Assume that we have a bounded sequence $(T_n)_n$ in the anisotropic space $\mathcal{W}^{\alpha+\varepsilon,\alpha+\varepsilon,p}$, then note that 
$\left( t\mapsto \widehat{T_n\chi}(t,k) \right)_{k\in \mathbb{Z}} $ forms a bounded sequence of functions in $W^{\alpha+\varepsilon,p}(I)$ by the a priori bound

$$  \int_{I^2} \frac{ \vert \widehat{T\chi}(t_1,k)-\widehat{T\chi}(t_2,k) \vert^p }{\vert t_2-t_1\vert^{1+\alpha p}}    \dd t_1\dd t_2 \lesssim  \Vert T\chi\Vert_{\mathcal{W}^{\alpha,\alpha,p}} \left\langle k \right\rangle^{-\alpha-\varepsilon} $$
since
$$ \vert \left\langle T_n\chi(t_1,.)-T_n\chi(t_2,.), e^{ik.}\right\rangle \vert \leqslant C \Vert T_n\chi(t_1,.)-T_n\chi(t_2,.)\Vert_{W^{\alpha+\varepsilon,p}_\theta} \left\langle k \right\rangle^{-\alpha-\varepsilon}  $$
where $p^{-1}+q^{-1}=1$.

Assume $\alpha-\frac{1}{p}>0$.
By a diagonal extraction argument using the compact injection $W^{\alpha+\varepsilon,p}(I)\hookrightarrow W^{\alpha,p}(I)$ where $I$ a compact interval~\cite[item 4) Thm 2.1]{Vasquez} \footnote{In fact, it is a bit indirect to get the form we need. So first given a function $u\in W^{s,p}(I)$ extend it as $\tilde{u}\in W^{s,p}(\mathbb{R})$, the extension is linear continuous by \cite[Thm 5.4 p.~548]{Nezzafractional} and then apply  \cite[item 4) Thm 2.1]{Vasquez} to get the compact injection } ,  
we can extract a subsequence 
so that for every $k$, $\widehat{T_n\chi}(.,k)$ converges in $W^{\alpha+\varepsilon,p}$ to a limit $T_\infty(.,k)$. We need to prove that this is a Cauchy sequence in $W^{\alpha,\alpha,p}$.

By boundedness of the sequence $(T_n\chi)_n $ in $\mathcal{W}^{\alpha+\varepsilon,\alpha+\varepsilon,p}$
\begin{align*}
\sup_n\int_{I^2} \frac{\sum_{j=0}^\infty 2^{jp(\alpha+\varepsilon)} \Vert \psi_j(\sqrt{-\Delta})( T_n\chi(t_1,.)-T_n\chi(t_2,.))  \Vert_{L^p}^p }{\vert t_2-t_1\vert^{1+(\alpha+\varepsilon) p}}    \dd t_1\dd t_2 \leqslant C,     
\end{align*}
we deduce that for every integer $N\geqslant 1$, $\delta>0$, for all $n$:
\[
\int_{I^2 } \frac{\sum_{j=N}^\infty 2^{jp\alpha} \Vert \psi_j(\sqrt{-\Delta}) (T_n\chi(t_1,.)-T_n\chi(t_2,.) ) \Vert_{L^p}^p }{\vert t_2-t_1\vert^{1+\alpha p}}    \dd t_1\dd t_2 \leqslant C 2^{-Np\varepsilon},\]
and 
\[\int_{ \vert t_2-t_1\vert \geqslant \delta } \frac{\sum_{j=0}^\infty 2^{jp\alpha} \Vert \psi_j(\sqrt{-\Delta}) (T_n\chi(t_1,.)-T_n\chi(t_2,.))  \Vert_{L^p}^p }{\vert t_2-t_1\vert^{1+\alpha p}}    \dd t_1\dd t_2 \leqslant C \delta^{\varepsilon p}.\]

Now using the above two bounds we can bound the following difference term
\begin{align*}
 &\Vert (T_{n_1}-T_{n_2})\chi\Vert_{\mathcal{W}^{\alpha,\alpha,p}}\\
 &= \int_{I^2} \frac{ \sum_{j=0}^\infty 2^{jp\alpha} \Vert \psi_j(\sqrt{-\Delta})( T_{n_1}\chi(t_1,.)-T_{n_1}\chi(t_2,.)-T_{n_2}\chi(t_1,.)+T_{n_2}\chi(t_2,.)  )\Vert_{L^p}^p }{\vert t_2-t_1\vert^{1+\alpha p}}    \dd t_1\dd t_2 \\
 &\leqslant \int_{I^2} \frac{ \sum_{j=0}^N 2^{jp\alpha} \Vert \psi_j(\sqrt{-\Delta})( T_{n_1}\chi(t_1,.)-T_{n_1}\chi(t_2,.)-T_{n_2}\chi(t_1,.)+T_{n_2}\chi(t_2,.) ) \Vert_{L^p}^p }{\vert t_2-t_1\vert^{1+\alpha p}}    \dd t_1\dd t_2  +2C  2^{-Np\varepsilon}. 
\end{align*}
Now by choosing $N$ large enough, we can make the error term $2C  2^{-Np\varepsilon} $ as small as we want.
Since the partial sum $\sum_{j=0}^N 2^{jp\alpha} \Vert \psi_j(\sqrt{-\Delta})( T_{n_1}\chi(t_1,.)-T_{n_1}\chi(t_2,.)-T_{n_2}\chi(t_1,.)+T_{n_2}\chi(t_2,.) ) \Vert_{L^p}^p$  \emph{depends only on a finite number of Fourier modes}, letting $n_1,n_2\rightarrow +\infty$, we have the term $$ t\in I\mapsto \Vert \psi_j(\sqrt{-\Delta})\left(T_{n_1}\chi(t,.)-T_{n_2}\chi(t,.) \right)\Vert_{L^p_\theta}^p $$ 
goes to $0$ in $W_t^{\alpha,p}(I)$ hence we conclude since the term
\[\int_{I^2} \frac{ \sum_{j=0}^N 2^{jp\alpha} \Vert \psi_j(\sqrt{-\Delta})( T_{n_1}\chi(t_1,.)-T_{n_1}\chi(t_2,.)-T_{n_2}\chi(t_1,.)+T_{n_2}\chi(t_2,.) ) \Vert_{L^p}^p }{\vert t_2-t_1\vert^{1+\alpha p}}    \dd t_1\dd t_2 \] goes to zero. 

Let us make a remark on the above proof if we wanted not to use the cut--off by $\chi$.
Given any $T$ in $\mathcal{W}^{\alpha+\varepsilon ,\alpha+\varepsilon,p}_{r,\theta}(I_1\times I_2,\g)$, we can first extend it in the variable $\theta$ in a linear continuous way as a function 
$\tilde{T}\in \mathcal{W}^{\alpha+\varepsilon ,\alpha+\varepsilon,p}_{r,\theta}(I_1\times \mathbb{R},\g)$ by \cite[Thm 5.4 p.~548]{Nezzafractional}, now up to multiplication with a cut--off function $\chi\in C^\infty_c(0,2\pi)$ that equals $1$ on $I_2$, 
we may assume that the extension $\tilde{T}$ belongs to $\mathcal{W}^{\alpha+\varepsilon ,\alpha+\varepsilon,p}_{r,\theta}(I_1\times \mathbb{S}^1,\g)$ and coincides with $T$ on $I_1\times I_2$. Then repeat the previous proof based on Fourier decomposition in the variable $\theta$ yields in fact a compact injection
\[\mathcal{W}^{\alpha+\varepsilon ,\alpha+\varepsilon,p}_{r,\theta}(I_1\times I_2,\g)\subset \mathcal{W}^{\alpha,\alpha}_{r,\theta}(I_1\times I_2,\g).\]
\end{proof}

We also need to prove some compact injections in the weighted spaces since we would like to go from local to global.
\begin{proposition}[Compact injections weighted versions.]\label{prop:compactembeddingweighted}
For $\alpha\in (0,1)$, $\varepsilon>0$ s.t. $\alpha+\varepsilon\in (0,1)$,
we have the compact injections of the weighted spaces
$\mathcal{C}^{\alpha,\alpha-1,s} \hookrightarrow \mathcal{C}^{\alpha+\varepsilon,\alpha-1+\varepsilon,s-\varepsilon} $ is compact.
\end{proposition}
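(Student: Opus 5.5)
The plan is to split the weighted norm into a \emph{tail}, made small uniformly by the shift $s\mapsto s-\varepsilon$ of the scaling exponent, and finitely many \emph{local} pieces, each handled by an unweighted compactness statement on a fixed compact region avoiding $\Psi(\mathrm{Crit}(f)_1)$. This is the standard proof of compactness of weighted embeddings. One step concerns the direction of the regularity shift, and I flag it below as the main obstacle.

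\textbf{Step 1 (reduction).} Fix a bounded sequence $(\tilde A_k)_k$ in $\mathcal{C}^{\alpha,\alpha-1,s}$ with norms at most $1$. The norm is a supremum over finitely many singular points $(r_0,\theta_0)\in\Psi(\mathrm{Crit}(f)_1)$, finitely many rectangles $I\times J$ covering the corona $K$, and dyadic scales $n\geq 0$. Since there are finitely many points and rectangles, it suffices to treat one point, placed at $(0,0)$, and one rectangle $I\times J\subset K$. For each $n$ set $B^{(n)}_k:=\mathcal{S}^{2^{-n}*}\tilde A_k$ restricted to $I\times J$, so that $\sup_n 2^{ns}\Vert B^{(n)}_k\Vert_{\mathcal{C}^{\alpha,\alpha-1}(I\times J)}\leq 1$.

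\textbf{Step 2 (tail estimate).} In the target norm scale $n$ carries the weight $2^{n(s-\varepsilon)}$. Granting the local comparison of Step 3 with a constant $C$ independent of $n$, we get for every $N$
\[
\sup_{n>N}2^{n(s-\varepsilon)}\Vert B^{(n)}_k-B^{(n)}_{k'}\Vert\leq 2C\,2^{-N\varepsilon}.
\]
This is uniform in $k,k'$, so all scales beyond a fixed $N$ contribute arbitrarily little.

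\textbf{Step 3 (finitely many scales).} For $n\leq N$, pass to a subsequence so that each $B^{(n)}_k$ is Cauchy in the target local space on $I\times J$. This uses the local compact injection between anisotropic Hölder spaces, proved for the Sobolev scale in the Proposition just before this one, together with its Hölder analogue obtained via Arzelà--Ascoli applied to the integrated object $W$ with $W(\cdot,0)=0$. Step 1 of the proof of Proposition~\ref{prop:continuousinjection} shows how to pass between $\tilde A=\partial_\theta W$ and $W$. The cutoff $\chi$ in $\theta$ is absorbed by shrinking the cover slightly, using overlapping rectangles. A diagonal extraction over $N\to\infty$ then combines Steps 2 and 3 into a Cauchy subsequence in the target weighted norm.

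\textbf{Main obstacle.} The obstacle is the local comparison in Step 3. As printed, the target local regularity $(\alpha+\varepsilon,\alpha-1+\varepsilon)$ is \emph{higher} than the source regularity $(\alpha,\alpha-1)$. A local embedding $\mathcal{C}^{\alpha,\alpha-1}\hookrightarrow\mathcal{C}^{\alpha+\varepsilon,\alpha-1+\varepsilon}$ does not hold for the Gagliardo/Hölder difference quotients of Definition~\ref{def:anisocyl}. The weight shift $s\mapsto s-\varepsilon$ only controls the behaviour in $n$; it cannot produce regularity on the fixed corona $K$.

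I would therefore first recheck the index convention of the weighted norms, in particular the sign conventions of $\mathcal{S}^{2^{-n}*}$ and the $2^{ns}$ factor, to see whether larger labels actually denote a larger space. If they do not, the argument above proves the statement in the form in which it is used in Lemma~\ref{BoundAnSpace}: from the more regular space to the less regular space, with the scaling exponent decreased by $\varepsilon$. In that form Step 3 is exactly the unweighted compact injection already established, and Step 2 supplies the uniform tail control.
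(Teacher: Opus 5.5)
Your reading is right: as printed the regularity indices are reversed. The intended statement, the one needed for tightness, is $\mathcal{C}^{\alpha+\varepsilon,\alpha-1+\varepsilon,s}\hookrightarrow\mathcal{C}^{\alpha,\alpha-1,s-\varepsilon}$, and the loss in $s$ is genuinely needed, since otherwise suitably normalized profiles concentrating at dyadic scale $2^{-k}$ around a singular point give a bounded sequence with no convergent subsequence. For that statement your argument (uniform tail control from the weight shift, the unweighted local compact injection on finitely many scales, then diagonal extraction) is exactly what the paper's one-line proof (``an easy consequence of the above compactness embedding just taking the weights into account'') intends, written out in full.
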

It is important that on the r.h.s. 
the weight has decreased.
\begin{proof}
Recall that a larger scaling exponent $s$ in the weighted Sobolev norms means your distribution has better decay near the singular points.  
The proof is an easy consequence of the above compactness embedding just taking the weights into account. 
\end{proof}

\section{Symbolic index} \label{symb}

We collect in this appendix commonly used symbols of the article, together
with their meaning and, if relevant, the page where they first occur.

 \begin{center}
\renewcommand{\arraystretch}{1.1}
\begin{longtable}{lll}
\toprule
Symbol & Meaning & Page\\
\midrule
\endfirsthead
\toprule
Symbol & Meaning & Page\\
\midrule
\endhead
\bottomrule
\endfoot
\bottomrule
\endlastfoot
$G$ & A compact Lie group &\\
$1_G$& Identity on G&\\
$\log$& A local inverse of $\exp$ extended to all $G$\\
$[g]$& Conjugacy class of the element $g\in G$\\
$\widehat{G}$ & The set of equivalence classes of irreps of $G$ &\\
$\lambda$ & An equivalent class of an irrep &\\
$\chi_\lambda$ & The character of $\lambda$ &\\
$d_\lambda$ & The dimension of any representation in $\lambda$ &\\

$\g$ & Lie algebra og $G$&\\
$\Sigma, M$&  A compact surface & \\
$\sigma$& Smooth area form \\
$\partial \Sigma$ & Boundary of $\Sigma$\\
$A$ & A $\g-$valued $1-$form on $\Sigma$&\\
$\mathrm{Tr}$ & A trace&\\
$\Omega^1(M,\g)$ & Space of $g-$valued $1-$forms on $M$\\
 $\mathrm{Hol}(A,c)$ & Parallel transport, or holonomy of $A$ along the curve $c$& \pageref{hol} \\
 $S_{YM}(A)$& Yang--Mills action of $A$& \pageref{YMaction} \\
 $\mathcal{G}$ & The gauge group $C^{\infty}(M,G)$&\pageref{GaugeGroup}\\
 $g\cdot A$ & The gauge transform of $A$
  by $g\in \G$ & \pageref{GaugeTransform}\\
  $Z_{\mathcal{T}}$&Segal amplitude&\pageref{SegalAmp}\\
  $\dd A$ & Exterior differential of $A$&\\
  $A\wedge A$ & Exterior product of $A$ and $A$ for the Lie bracket&\\
  $\xi$& A white noise& \\
  $C^\infty(.,*)$ & Smooth $*-$valued functions on $.$&\\
  $\mathcal{D}'(.,*)$& Topological dual of  $C^\infty(.,*)$ \\
  $\theta_1 \xrightarrow{r}\theta_2$ & The segment $\{(r,\theta_1+t); 0\leq t \leq \theta_2-\theta_1\}$ of a level set& \\
  $\circ \dd$ & Stratonovich differential & \pageref{Strato}\\
  $\square(r,r',\theta,\theta')$& The region of $\Sigma$ bounded by the flowlines $\theta$, $\theta'$ and the level sets $r$ and $r'$ \\
  $B^s_{p,q}$& Besov regularity scale \\
$\mathcal{W}^{s,p}$& Sobolev regularity scale \\
$\mathcal{C}^{s}$& H\"older--Besov regularity scale \\
$\mathbb{P}(X\in \dd x), f_{X}$ & Probability density of $X$ with respect to $\dd x$\\
 &  Morse-Smale  &\pageref{MorseSmale} \\
 &  Morse Chart  &\pageref{MorseChart} \\
 &  Adapted Metric  &\pageref{AdaptedMetric} \\
$W^s(a),W^u(a)$ & Stable, Unstable manifolds of $a$ &    \pageref{StableUnstable} \\
$U_a$& Unstable current & \pageref{UnstableCurrents}\\
$\mathcal{M}(\star)$ & Set of probability measures on $\star$ equipped with weak topology & \\

 \end{longtable}
 \end{center}

\end{document}